\documentclass{amsart}
\usepackage{amssymb, enumerate}
\usepackage[dvipsnames]{xcolor}

\usepackage{amssymb}
\usepackage{amsthm}
\usepackage{amsmath}
\usepackage{paralist}
\usepackage{graphics}
\usepackage{epsfig}
\usepackage[colorlinks=true]{hyperref}
\hypersetup{urlcolor=blue, citecolor=red}

\usepackage{graphicx}

\newcommand{\cn}{\color{cyan}}

\newcommand{\dvol}{\,d\operatorname{vol}}
\newcommand\Symb{{\operatorname{Symb}}}
\newcommand\ADN{{Douglis-Nirenberg}}

\newtheorem{theorem}{Theorem}[section]
\newtheorem{lemma}[theorem]{Lemma}
\newtheorem{corollary}[theorem]{Corollary}
\newtheorem{proposition}[theorem]{Proposition}
\newtheorem{problem}[theorem]{Problem}

\theoremstyle{definition}
\newtheorem{definition}[theorem]{Definition}
\newtheorem{example}[theorem]{Example}
\newtheorem{notation}[theorem]{Notation}

\newtheorem{assumption}[theorem]{Assumption}

\theoremstyle{remark}
\newtheorem{remark}[theorem]{Remark}

\usepackage[normalem]{ulem}
\renewcommand\sout{\bgroup\markoverwith
{\textcolor{red}{\rule[0.7ex]{3pt}{1.4pt}}}\ULon}

\newcommand\seq{\, = \,}
\newcommand\ede{\mathrel{\, := \, }}

\newcommand\esssup{\operatorname{ess-sup}}
\newcommand\dist{\operatorname{dist}}
\newcommand\Hom{\operatorname{Hom}}
\newcommand\End{\operatorname{End}}
\newcommand\supp{\operatorname{supp}}

\newcommand\Def{\operatorname{Def}\, }

\newcommand{\CC}{\mathbb{C}}

\newcommand{\II}{\mathbb{I}}

\newcommand{\MM}{\mathbb M}
\newcommand{\NN}{\mathbb N}

\newcommand{\RR}{\mathbb R}

\newcommand{\ZZ}{\mathbb Z}

\newcommand{\maA}{\mathcal A}
\newcommand{\maB}{\mathcal B}
\newcommand{\maC}{\mathcal C}
\newcommand{\maD}{\mathcal D}

\newcommand{\maF}{\mathcal F}

\newcommand{\maL}{\mathcal L}

\newcommand{\maN}{\mathcal N}

\newcommand{\maP}{\mathcal P}
\newcommand{\maQ}{\mathcal Q}
\newcommand{\maR}{\mathcal R}
\newcommand{\maS}{\mathcal S}

\newcommand{\maV}{\mathcal V}
\newcommand{\maW}{\mathcal W}

\newcommand{\CI}{\maC^{\infty}}
\newcommand{\CIc}{\maC^{\infty}_{c}}
\newcommand\pa{\partial}
\newcommand\canDec{M_{0} \cup \big[M' \times (-\infty, 0) \big]}

\newcommand\inv{\operatorname{inv}}
\newcommand\ess{\operatorname{ess}}
\newcommand\iPS[1]{\Psi_{\operatorname{inv}}^{#1}}
\newcommand\iPSsus[3]{\psi_{\operatorname{sus}}^{#1}(#2; #3)}
\newcommand\ePS[1]{\Psi_{\operatorname{ess}}^{#1}}
\newcommand\ePSsus[3]{\Psi_{\operatorname{sus}}^{#1}(#2; #3)}
\newcommand\cl{\operatorname{cl}}
\newcommand\In{{\maR_\infty}}

\newcommand\bop{\boldsymbol T_{\bsnu}}
\newcommand\bopstar{\boldsymbol T_{\bsnu}^{*}}
\newcommand\bss{\boldsymbol s}

\newcommand\bst{\boldsymbol t}
\newcommand\bsu{\boldsymbol u}
\newcommand\bsv{\boldsymbol v}
\newcommand\bsw{\boldsymbol w}
\newcommand\bsnu{\boldsymbol \nu}
\newcommand\bsf{\boldsymbol f}
\newcommand\bsh{\boldsymbol h}
\newcommand\Dnu{\boldsymbol D_{\bsnu}}
\newcommand\bsL{\boldsymbol L}
\newcommand\bsXi{\boldsymbol \Xi}
\newcommand\bsK{\boldsymbol K}
\newcommand\bsS{\boldsymbol S}
\newcommand\bsP{\boldsymbol P}
\newcommand\Defstar{\operatorname{Def}^{*}\, }

\newcommand\tbop{\tilde{\boldsymbol T}_{\bsnu}}
\newcommand\tbopstar{\tilde{\boldsymbol T}_{\bsnu}^{*}}
\newcommand\sgn{\operatorname{sgn}}
\newcommand\loc{\operatorname{loc}}
\newcommand\comp{\operatorname{comp}}

\newcommand\psdinv{\bsXi^{(-1)}}
\newcommand\Smbmn{S^{m}(\RR^{n} \times \RR^{n})}
\newcommand\pv{\operatorname{p.\!\!v.}}

\newcommand\lpar{\rm(}
\newcommand\rpar{\rm)}

\newcommand\JC{\operatorname{\mathfrak L}}
\newcommand\mfkV{\operatorname{\mathfrak V}}
\newcommand\mfkS{\mathfrak S}
\newcommand\mfkM{\mathfrak M}

\newcommand\jap[1]{\langle #1 \rangle}
\renewcommand\div{\operatorname{div}}
\newcommand\cvector[2]{\left(
  \begin{array}{c} {#1} \\ {#2}
  \end{array}
\right)}

\newcommand\newtilde{\widehat }

\numberwithin{equation}{section}

\begin{document}

\title[Well-posedness for a generalized Stokes operator]{Well-posedness
of a generalized Stokes operator on domains with cylindrical ends via layer-potentials}

\author[M. Kohr]{Mirela Kohr}
\address{Faculty of Mathematics and Computer Science,
Babe\c{s}-Bolyai University, 1 M. Kog\u{a}l\-niceanu Str., 400084
Cluj-Napoca, Romania} \email{mkohr@math.ubbcluj.ro}

\author[V. Nistor]{Victor Nistor}
\address{Universit\'{e} de Lorraine, CNRS, IECL, F-57000 Metz, France}
\email{victor.nistor@univ-lorraine.fr}

\author[W.L. Wendland]{Wolfgang L. Wendland}
\address{Institut f\"ur Angewandte Analysis und Numerische Simulation,
Universit\"at Stuttgart, Pfaffenwaldring 57, 70569 Stuttgart,
Germany}
\email{wendland@mathematik.uni-stuttgart.de}

\thanks{M.K. has been partially supported by
  AGC35124/31.10.2018. V.N. has been partially supported by
  ANR-14-CE25-0012-01.}

\date\today

\subjclass[2000]{Primary 35R01; Secondary 76M}

\date{\today}

\keywords{
The Stokes operator; manifolds with straight cylindrical ends; Sobolev spaces;
Stokes layer potentials; Deformation operator;
Dirichlet problem.
}
\dedicatory{In memory of Professor Gabriela Kohr, with deep respect}

\begin{abstract}
  We study the \emph{generalized Stokes operator}
  \begin{equation*} 
    \bsXi \ede \bsXi _{V,V_{0}} \ede
    \left(\begin{array}{ccc}  \bsL + V & \nabla \\
    \nabla^* &  -V_{0}
    \end{array}\right)
  \end{equation*}
  on a \emph{domain with straight cylindrical ends} $\Omega$ using \emph{the method of layer
  potentials} on a larger manifold with straight cylindrical ends $M$, $\Omega \subset M$.
  The operator $\bsXi \ede \bsXi _{V,V_{0}}$ recovers the classical Stokes operator $\bsXi_{0, 0}$
  when the potentials $V$ and $V_{0}$ vanish. Under suitable positivity assumptions
  at infinity on $V$ and $V_{0}$, we prove that $\bsXi$ is Fredholm on
  $L^{2}(M; TM \oplus \CC)$. This allows us
  then to define the single- and double-layer potential operators $\bsS$ and
  $\frac12 + \bsK$. Under further positivity assumptions at infinity, we prove
  that $\bsS$ and $\frac12 + \bsK$ are also Fredholm. Under slightly stronger assumptions
  on $V$ and $V_{0}$ (including non-negativity everywhere,
  i.e., $V, V_{0} \ge 0$), we prove \emph{the invertibility} of
  the operators $\bsXi$, $\bsS$, and $\frac12 + \bsK$. As it is well-known, the invertibility
  of these operators leads to \emph{well-posedness results} for the associated (linear)
  Stokes boundary value problem with Dirichlet boundary conditions on domains with
  straight cylindrical ends. The proofs of these results required us to develop many other
  related tools. In particular, we develop an
  ``algebra tool kit'' to deal with \emph{limit and jump relations of layer potential operators,}
  in general, and on manifolds with straight cylindrical ends, in particular. We do that first on
  $\RR^{n}$, then on (possibly) non-compact manifolds, and then we deal with the specific
  case of manifolds with straight cylindrical ends. We use these results to study the limit and jump
  relations of the layer potential operators associated to our generalized Stokes operator
  $\bsXi  := \bsXi_{V, V_{0}}$ on manifolds with cylindrical ends.
  We also develop Green formulas and energy estimates for our
  generalized Stokes operator $\bsXi$ on manifolds with straight
  cylindrical ends, which requires, in particular, a careful geometric study of the related
  differential operators, such as the deformation operator $\Def$. We also use suitable
  classes of pseudodifferential operators on manifolds with straight cylindrical ends that
  were studied in some previous papers of ours (including ``The Stokes operator on manifolds with
  cylindrical ends,'' J. Diff. Equations, 2024). For completeness, we include a review of the
  needed results on these pseudodifferential operators.
  As an application of all these results,
  we prove the well-posedness result for the Dirichlet problem for the generalized
  Navier-Stokes system with small data on a domain with cylindrical ends.
  We expect our results to have applications to the
  study of Navier-Stokes equations on domains with conical points.
\end{abstract}

\maketitle
\tableofcontents

\section{\cn Introduction}

We prove the \emph{well-posedness of the Dirichlet boundary value problem
for a generalized Stokes system
\begin{equation}\label{eq.bvp.Brinkman.explicit}
  \begin{cases}
    \ \bsL \bsu + V \bsu + \nabla p \seq \bsh \  & \ \mbox{ in } \Omega\\
    \ \, \nabla^* \bsu  - V_{0} p \seq q  \  & \ \mbox{ in } \Omega\\
    \ \ \bsu \seq \bsf  \  & \ \mbox{ on } \pa \Omega\\
  \end{cases}
\end{equation}
for a domain with straight cylindrical ends} $\Omega$ using the method of
\emph{layer potentials.} (Here $\nabla$ is the gradient, $\bsL$ is the deformation
Laplacian, and $V$ and $V_{0}$ are potentials, see the next subsection.
See also Equation \eqref{eq.bvp.Brinkman} for a reformulation of this system
using the generalized Stokes operator $\bsXi := \bsXi_{V, V_{0}}$ \eqref{eq.def.bsXi}.)
In the process, we obtain many other results of independent interest on the generalized
Stokes operator $\bsXi := \bsXi_{V, V_{0}}$, on related differential
operators, and on their associated layer potentials. We carefully develop the needed
concepts and methods and prove in detail our results.

Before stating our main results, let us first introduce the setting of our paper.

\subsection{The setting of our well-posedness results: Domains and manifolds with
straight cylindrical ends}
\label{ssec.intro.setting}
We will work on manifolds $M$ and domains $\Omega$ with straight cylindrical ends
(see the next figure, Figure 1, as well as Figure 2 at the beginning
of Section \ref{sec.NLimits} for pictorial description of our setting).
\begin{figure}[h]
  \label{fig1_no_bdry}
  \centering
  \includegraphics[width=0.4\textwidth]{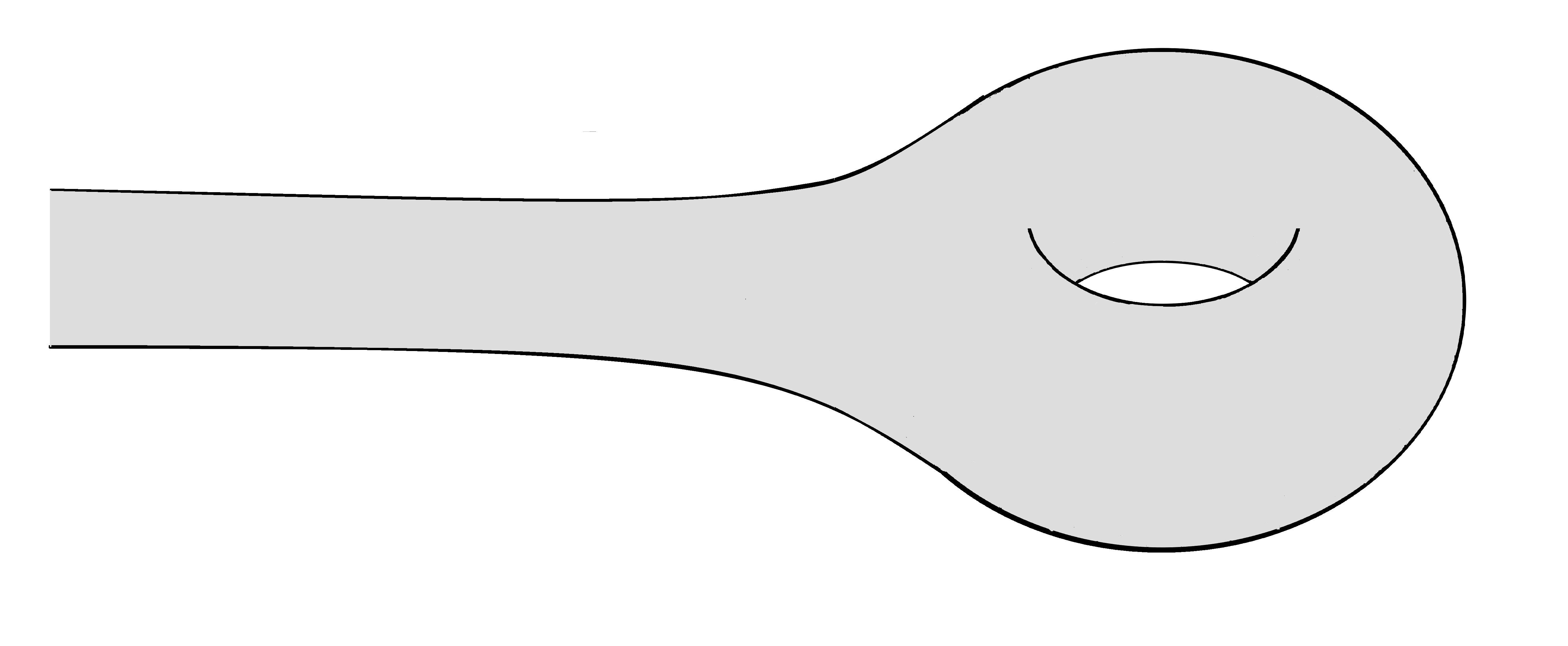}
  \caption{A {\it boundaryless} manifold with straight cylindrical ends.}
\end{figure}

\subsubsection{Manifolds with straight cylindrical ends}
Intuitively, a manifold with straight cylindrical ends is one that, outside
a bounded set, it is a (half-)infinite cylinder $M' \times (-\infty, R]$ with
a product metric, where $M'$ is a closed manifold (i.e. a smooth, compact manifold
without boundary).

More precisely, by a \emph{manifold with straight cylindrical ends} $M$ we will mean
a non-compact Riemannian manifold without boundary (i.e., ``boundaryless'')
isometric to one of the form
\begin{equation}\label{eq.def.cyl.end}
  M \seq M_{0}\cup \big[M'\times (-\infty ,R_{M}]\big]\,,
\end{equation}
where
\begin{itemize}
  \item $(M_{0}, g_{0})$ is a compact Riemannian manifold \emph{with boundary} $\pa M_{0}$,
  \item $M':=\partial M_{0} \neq \emptyset$
  has induced metric $g_{\pa}$ and is identified with $M' \times \{0\}$, and
  \item the
  \emph{cylindrical end} $M'\times (-\infty ,R_{M})$
  is endowed with the product metric $g = g_{\pa} + (dt)^{2}$, $t \in (-\infty ,R_{M})$,
\end{itemize}
(where we followed the conventions in \cite{KMNW-2025,KNW-22, KohrNistor-Stokes}; but see
also \cite{bbendhia21, BNSV22, BBKO, bourgeois23, epsteinP2, EFK, KrejcirikZuazua}, where
manifolds with cylindrical ends arise in the setting of ``wave-guides''). There is no loss
of generality to assume that \emph{$M$ is connected, so we will do that throughout the
paper,} in order to simplify our statements. (Also, we may assume $R_{M} = 0$,
which we will often do.)

\subsubsection{Domains with straight cylindrical ends}
In fact, we are more interested in the \emph{smooth domain with straight cylindrical ends}
$\Omega$, \textbf{on which our boundary value problem \eqref{eq.bvp.Brinkman} is formulated}.
Often $\Omega$ will be contained naturally in a manifold with straight cylindrical ends $M$
in a compatible way, in the sense that
\begin{equation}\label{eq.def.Omega}
  \Omega \cap \big[M'\times (-\infty , R_{\Omega})\big]
  \seq \Omega'\times (-\infty , R_{\Omega})
\end{equation}
(up to an isometry), for some $R_{\Omega} \le R_{M}$, and
its boundary $\Gamma := \pa \Omega$ is a smooth manifold. (So $\Gamma$ will
also be a manifold with straight cylindrical ends, see Figure 2 at the beginning
of Section \ref{sec.NLimits}.)
Again, there is no loss of generality to assume that $\Omega$ is connected,
so we will do that throughout the paper.

We will also assume throughout this paper that \emph{$\Omega$ is on one side
of its boundary,} meaning that $\Gamma$ is the boundary of $\Omega_{-} := M \smallsetminus
\overline{\Omega}$ as well (see Assumption \ref{assumpt.1side}).
This assumption is useful when studying boundary value problems on
$\Omega$ using layer potentials. This implies that $\Omega'$ is also on one side of
its boundary $\Gamma' := \pa \Omega' \subset M'$.

\subsubsection{The Dirichlet boundary value problem for the generalized
Stokes operator $\bsXi := \bsXi_{V,V_{0}}$} Our main interest is, in fact,
in the Dirichlet boundary value problem for a generalized Stokes operator $\bsXi
:= \bsXi_{V, V_{0}}$, this problem is formulated below, see Equation \eqref{eq.bvp.Brinkman}.
The generalized Stokes operators $\bsXi := \bsXi_{V, V_{0}}$ are defined as follows. Let
\begin{equation*}
  \Def : \CI(M; TM) \to \CI(M; T^{*}M \otimes T^{*}M)
\end{equation*}
be the deformation operator (see formula \eqref{eq.def.Def} for the definition of $\Def$ and,
in general, see Section \ref{sec.Green} for more details). Let then $\bsL := 2 \Defstar \Def$
be the ``Deformation Laplacian.'' Let $\End(E)$ be the set of linear maps $E \to E$
(the endomorphisms of $E$). For suitable ``potentials'' $V \in \CI(M; \End(TM))$ and $V_{0}
\in \CI(M)$ that are \emph{translation invariant} in a neighborhood of infinity  (Definition
\ref{def.CIinv}), we consider the operator
\begin{equation} \label{eq.def.bsXi}
  \bsXi \ede \bsXi _{V,V_{0}} \ede
  \left(\begin{array}{ccc}  \bsL + V & \nabla \\
  \nabla^* &  -V_{0}
  \end{array}\right)
  : \CI(M; TM \oplus \CC) \to \CI(M; TM \oplus \CC)\,.
\end{equation}
This operator extends to Sobolev spaces in distribution sense. Let
\begin{equation*}
  U \ede \cvector{\bsu}{p} \in L^2(\Omega ; TM \oplus \CC)\,,
\end{equation*}
so that $\bsu$ is the ``vector part'' of $U$. We then consider
the \emph{Dirichlet boundary value problem, Equation
\eqref{eq.bvp.Brinkman.explicit}, which, in this notation, becomes}
\begin{equation}\label{eq.bvp.Brinkman}
  \begin{cases}
    \bsXi U \ede \bsXi_{V, V_{0}} U \seq 0 & \mbox{ in } \Omega\\
    \bsu \seq \bsh & \mbox{ on } \pa \Omega\,.
  \end{cases}
\end{equation}
When $V$ and $V_{0}$ vanish identically on $M$, this is nothing but \emph{the classical
Dirichlet problem for the Stokes operator.}

\subsection{Main results}
Our main results are for manifolds with straight cylindrical ends $M$ and for
compatible open subsets $\Omega \subset M$ with boundary $\Gamma := \pa \Omega$, as above.
Our main results also hold for closed manifolds (i.e., compact, smooth, boundaryless
manifolds), see \cite{KNW-2025} (some of these results are recalled in Subsection \ref{ssec.ToAdd}).
{\cn Let us thus assume in this introduction that $M = \canDec$ is a manifold with straight
cylindrical ends (see \ref{ssec.intro.setting} or Definition \ref{def.cyl.end}); this will be
the case for the most part of our paper.}

Our \emph{first main
results} are to prove the \emph{invertibility on $\Gamma$ of the single layer and double
layer potential operators $\bsS$ and $\frac12 + \bsK$,} associated to suitable
generalizations $\bsXi := \bsXi_{V, V_{0}}$ of the Stokes operator $\bsXi_{0, 0}$, which
will be defined shortly, where $V$ and $V_{0}$ are suitable non-negative potentials (as in
\ref{ssec.intro.setting}), see Theorems \ref{thm2.S} and \ref{thm2.K}. (We usually denote
the identity operator $\II$ with 1, we thus write $\frac12 + \bsK$ instead of $\frac12 \II + \bsK$.)
We also obtain, for suitable potentials $V$ and $V_{0}$ satisfying slightly weaker assumptions,
that $\bsS$ and $\frac12 + \bsK$ are Fredholm.

In order to define our layer potential operators $\bsS$ and $\frac12 + \bsK$, we first
prove that $\bsXi$ is Fredholm under suitable positivity assumptions at infinity for
$V$ and $V_{0}$. We prove that it ($\bsXi$) is even invertible if the positivity conditions hold
everywhere (Theorem \ref{thm2.main.invert}). We use these results to
define the single and double layer potential operators $\maS_{\rm{ST}}$, $\bsS$,
$\maD_{\rm{ST}}$, and $\bsK$ associated to the Dirichlet boundary value problem
for $\bsXi$. We prove that they satisfy the usual mapping and ``jump'' properties
and that they have the usual principal symbols (Theorem \ref{thm.jump.rel}; but in order
to obtain this result, we develop an extensive theory: energy and representation formulas,
geometric representations and formulas for our operators, the definition and suitable classes
of integral operators, a ``took-kit'' for proving jump
relations, and many other results of independent interest). Then, using the jump
relations and principal symbol properties of the layer potential operators, we prove that
$\bsS$ and $\frac12 + \bsK$ are invertible.
As is well known, this result implies the solvability of the Dirichlet problem
\eqref{eq.bvp.Brinkman} (see Equation \eqref{eq.bvp.Brinkman} for a more
explicit form of this equation), Theorem \ref{thm.main.WP} next, which is the
\emph{main result of this paper.} We write $v \succ 0$ for a non-negative function that does
not vanish identically on any connected component of its domain (see Definition \ref{def.succ}).
The space $\CI_{\inv}(M)$ consists of functions (or sections of a vector bundle in case of
$\CI_{\inv}(M; E)$) that are \emph{translation invariant} in a neighborhood of infinity
(Definition \ref{def.CIinv}).

\begin{theorem}\label{thm.main.WP}
  Let $M = \canDec$ be a manifold with straight cylindrical ends,
  let $\Omega \subset M$ be a compatible smooth domain \lpar i.e., with straight
  cylindrical ends, Equation \eqref{eq.def.Omega}\rpar\ that is on
  one side of its boundary $\Gamma := \pa \Omega \neq \emptyset$
  \lpar Assumption \ref{assumpt.1side}\rpar, and let $m \in \ZZ_{+}$.
  Let $V \in \CI_{\inv}(M; TM)$ and $V_{0} \in \CI_{\inv}(M)$ be such that:
  \begin{enumerate}[\rm (i)]
    \item $V \ge 0$ and $V_{0} \ge 0$ on $M$,
    \item $V_{0} \succ 0$ on $\Omega$,
    \item $\newtilde V_{0} \succ 0$ on $\Omega'$, and
    \item if $\Omega_{0}$ is a connected component of $M'$ contained in $\Omega'$,
    then $\newtilde V \not \equiv 0$ on $\Omega_{0}$.
  \end{enumerate}
  Then, there exists $C_{m} \ge 0$ such that, for any $\bsf \in H^{m+1/2}(\Gamma; TM)$, the 
  homogeneous Dirichlet problem \eqref{eq.Dirichlet.pr} has a unique solution 
  $U = (\bsu \ \ p)^{\top} \in H^{m+1}(\Omega; TM) \oplus H^{m}(\Omega)$ and this solution 
  satisfies
  \begin{equation*}
    \|\bsu\|_{H^{m+1}(\Omega; TM)} + \|p\|_{H^{m}(\Omega)} \le C_{m}
    \|\bsf\|_{H^{m+1/2}(\Gamma; TM)}\,.
  \end{equation*}
\end{theorem}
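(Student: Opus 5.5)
The plan is the classical layer potential method, built on the invertibility results announced in the introduction. Under (i)--(iv), Theorem \ref{thm2.main.invert} makes $\bsXi := \bsXi_{V,V_{0}}$ invertible on $M$, with inverse $\bsXi^{-1}$ in the calculus of pseudodifferential operators on manifolds with straight cylindrical ends. Hence the single and double layer potentials $\maS_{\rm{ST}}$, $\maD_{\rm{ST}}$ and the boundary operators $\bsS$, $\bsK$ are well defined. I will look for the solution as a double layer potential,
\begin{equation*}
  U \seq \maD_{\rm{ST}}\, \boldsymbol{g}\,, \qquad \boldsymbol{g} \in H^{m+1/2}(\Gamma; TM)\,.
\end{equation*}
By the mapping properties in Theorem \ref{thm.jump.rel}, $U$ lies in $H^{m+1}(\Omega;TM)\oplus H^{m}(\Omega)$, depends continuously on $\boldsymbol{g}$, and satisfies $\bsXi U = 0$ in $\Omega$. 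The jump relation in the same theorem gives the interior trace of the vector part as $(\frac12 + \bsK)\boldsymbol{g}$ (with the sign convention of the paper). The boundary condition $\bsu|_{\Gamma} = \bsf$ therefore becomes $(\frac12+\bsK)\boldsymbol{g} = \bsf$.

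\textbf{Existence and the estimate.} By Theorem \ref{thm2.K}, whose hypotheses are exactly (i)--(iv), $\frac12+\bsK$ is invertible on $L^{2}(\Gamma;TM)$. I need invertibility on $H^{m+1/2}(\Gamma;TM)$. Since $\frac12+\bsK$ belongs to the (elliptic) pseudodifferential calculus on the manifold with cylindrical ends $\Gamma$ and is Fredholm, elliptic regularity in this calculus shows the $L^{2}$ inverse preserves $H^{s}$. Equivalently, the inverse lies in the same calculus, which is spectrally invariant. So I set $\boldsymbol{g} := (\frac12+\bsK)^{-1}\bsf$, which gives $\|\boldsymbol g\|_{H^{m+1/2}} \le C\|\bsf\|_{H^{m+1/2}}$. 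Composing with the bound for $\maD_{\rm{ST}}$ yields the stated estimate with some constant $C_{m}$.

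\textbf{Uniqueness.} It suffices to show that $U \in H^{m+1}\oplus H^{m}$ with $\bsXi U = 0$ in $\Omega$ and $\bsu|_{\Gamma}=0$ must vanish, and $m\ge 1$ lets me work in $H^{1}\oplus L^{2}$. I will use the Green formula and energy identity for $\bsXi$ on domains with cylindrical ends (Section \ref{sec.Green}). Pairing $\bsXi U$ with $U$, the boundary terms vanish because $\bsu|_\Gamma = 0$, and the mixed terms $(\nabla p,\bsu)$ and $(\nabla^*\bsu, p)$ cancel after taking real parts. This leaves
\begin{equation*}
  2\|\Def \bsu\|^{2} + (V\bsu,\bsu) + (V_{0}p,p) \seq 0\,.
\end{equation*}
By (i) each term is nonnegative, so $\Def \bsu = 0$, i.e.\ $\bsu$ is a Killing field, and $V^{1/2}\bsu = 0$ and $V_{0}^{1/2}p=0$. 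A Killing field in $H^{1}$ vanishing on $\Gamma$ vanishes identically on connected $\Omega$, because a Killing field is determined by its 1-jet at a point. Then $\nabla p = 0$, so $p$ is constant. Since $V_{0}\succ 0$ on $\Omega$ by (ii), $V_0 p = 0$ forces $p = 0$.

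\textbf{Main obstacle.} The real difficulty is the invertibility of $\frac12+\bsK$, i.e.\ Theorem \ref{thm2.K}, which I will take as given. There, Fredholmness needs invertibility of the limit operators at infinity, which is where (iii) and (iv) enter. Injectivity then needs a Rellich-type/energy argument on both $\Omega$ and $\Omega_-$. Within the present proof, the delicate points are these:
\begin{itemize}
  \item checking that the Green formula is valid on the non-compact domain with no contributions from infinity, which holds since $H^{1}$ functions on cylindrical ends have $L^{2}$ decay in the appropriate integrated sense;
  \item the $H^{s}$-regularity of the $L^{2}$-inverse of $\frac12+\bsK$.
\end{itemize}
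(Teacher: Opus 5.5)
You follow the paper's route: the double layer potential $U=\maD_{\rm{ST}}\big((\frac12+\bsK)^{-1}\bsf\big)$ for existence, and the energy identity behind Corollary~\ref{cor.e.est} for uniqueness. The uniqueness part is fine. The gap is your claim that (i)--(iv) are ``exactly'' the hypotheses of Theorem~\ref{thm2.K}, and that they make $\bsXi$ invertible via Theorem~\ref{thm2.main.invert}. Both theorems require Assumption~\ref{assumpt.VV0}, i.e.\ $\newtilde V\succ 0$ and $\newtilde V_{0}\succ 0$ on \emph{all} of $M'$. Theorem~\ref{thm2.K} further requires $\newtilde V\succ 0$ on $\Omega'_{-}$ and $V\succ 0$ on $\Omega_{-}$. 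The last condition is what forces $\bsu=0$ on $\Omega_{-}$ in the injectivity argument, where only the traction condition $[\bop U]_{-}=0$ is available. Hypotheses (i)--(iv) impose nothing on $\Omega_{-}$ beyond $V,V_{0}\ge 0$. For example, if $\Omega$ is bounded then (iii) and (iv) are vacuous, and $V=0$ with $V_{0}$ a nonnegative bump supported in $\Omega$ satisfies (i)--(iv). Then $\In(\bsXi)=\bsXi_{0,0}$ on $M'\times\RR$, whose indicial operator $\widehat{\bsXi}(0)$ annihilates $(\pa_{t},0)$ and $(0,1)$. So $\bsXi$ is not Fredholm, and $\maD_{\rm{ST}}$, $\bsK$ are not even defined. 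Your first step therefore fails as stated.

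The missing idea is that the Dirichlet problem on $\Omega$ only sees the restrictions of $V$ and $V_{0}$ to $\Omega$, so you may change them on $\Omega_{-}$. Add to $V$ (and, where needed, to $V_{0}$) a smooth nonnegative term that is translation invariant at infinity, supported in $\Omega_{-}$, and positive definite somewhere in every connected component of $\Omega_{-}$ and of $\Omega'_{-}$. This produces the required positivity at infinity on every component of $M'$ meeting $\Omega'_{-}$. The remaining components are contained in $\Omega'$, since $\Omega'$ lies on one side of its boundary. There no modification is possible, so positivity must come from the hypotheses: (iv) for $\newtilde V$ and (iii) for $\newtilde V_{0}$. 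This is precisely why (iii)--(iv) are phrased on $\Omega'$. The modified potentials satisfy the hypotheses of Theorems~\ref{thm2.main.invert} and \ref{thm2.K}, and the modified operator coincides with $\bsXi_{V,V_{0}}$ on $\Omega$. Hence the double layer potential built from it solves the original problem, and your regularity and estimate argument then goes through. The bounds come from Proposition~\ref{prop.H2estimates}, and the trace formula $\maW_{\rm{ST}}(\bsh)_{+}=(\frac12+\bsK)\bsh$ from Theorem~\ref{thm.jump.K1}. This modification on $\Omega_{-}$ is the step of the paper's proof that your proposal omits.
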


The proof of this theorem is provided in Subsection \ref{ssec.WP}. At this time,
a proof in general of this theorem using energy (or variational) methods (i.e.,
the Riesz representation formula) seems out of reach, in the sense that it does not
follow from the Riesz representation formula using standard methods. We also obtain
applications of this well-posedness results to the Dirichlet-to-Neumann operator and
to the solvability of the stationary Navier-Stokes system on domains with straight
cylindrical ends of small dimension.

We have outlined here the most important results. However, in this paper, we obtain
many other results of independent interest that lead to the main results (they were
mentioned briefly above). We also include a lot of background material, which is, hence
not entirely new. However, our presentation is original and it gathers many results
that are difficult to find in earlier papers. Moreover, most of our background
material is presented in great detail, so that to make our main result more accessible
(a notable exception is, though, the background material on pseudodifferential
operators, for which many other well-developed works exist).
These other results are discussed in much more detail in the
part on the contents of the paper, which is next.

\subsection{Contents of the paper}
Our paper consists of \ref{sec.NS} sections (including an introduction),
each divided into \emph{subsections}. The assumptions of this paper (especially on
the manifold $M$ on which we work) change from section to section. These assumptions are
always stated at the beginning of that section and are reminded regularly, especially at the
beginning of the subsections and in the statement of the most important results.
(Note, however, that, beginning with Section \ref{sec.psdos}, we work almost exclusively on
manifolds with straight cylindrical ends.) The first six sections are devoted mostly to
background material, but the presentation is original (and, except for the part on
pseudodifferential operators, quite detailed, including details and even results that are hard
to find in the literature).

The first section is this introduction. Section \ref{sec.Green} is organized as follows.
Its first subsection, Subsection \ref{ssec.do}, is devoted to the review of some classical
differential operators that appear in the definition and in the study of the Stokes operator
$\bsXi = \bsXi_{V, V_{0}}$ on a Riemannian manifold (recall that $\bsXi_{V, V_{0}}$ was introduced
in Equation \eqref{eq.def.bsXi}), such as the deformation operator $\Def$ and the ``traction''
$\bop$. In Subsection \ref{ssec.abs.int}, we recall an abstract integration
by parts formula, Proposition \ref{prop.bdry.op}. This general formula is used in
Subsection \ref{ssec.Green1}, where we obtain Green-type formulas for the Stokes
operator $\bsXi$, as well as some representation formulas and energy estimates.
In this section, we work on a general Riemannian manifold $M$.

The third section is devoted to some basic preliminary results such as the concepts
of traces and normal lateral limits, some useful Fourier transforms, the basic definitions
of symbols and pseudodifferential operators. In particular, they explain in detail
the main ``Fourier transform calculation'' that is the prototypical ``jump/limit
result'', see Lemma \ref{lemma.first.jump} and Corollary \ref{cor.first.jump}.
These two results ultimately are responsible and explain all the other jump/limit
results. These results are formulated on $\RR^{n}$.
However, in the last subsection of this section, we work on manifolds with straight
cylindrical ends. In this subsection, we recall the basic definitions
related to manifolds with cylindrical ends and their compatible vector bundles,
differential operators, and Sobolev spaces. Although the results of this section are
classical, our presentation is new. (More references and more related background
material can be found in our book \cite{KMNW-2025}.)

In the forth section, we discuss the jump relations for pseudodifferential
operators $a(x, D)$ at the hyperplane $\{x_{n} = 0\}$ in $\RR^{n}$. This is a long
and technical section in which we perform in great detail all the necessary local
calculations needed to obtain the jump and limit relations that are so important for
the study of layer potentials. We first establish the jump relations for operators of
order $m < -1$ (the ``jump'' in this case is zero, so these are rather ``limit''
results). We prove our results for operators with Kohn-Nirenberg type symbols.
We then establish the jump relations and the other usual results for operators of
order $-1$ with homogeneous, odd principal symbol. The presentation expands
and details the one in our book \cite{KMNW-2025}. For instance, in this paper,
we do not restrict ourselves to classical pseudodifferential operators
(however, for operators of order $-1$, we
give our results directly in the case of an odd symbol, the only one needed in our paper).
Our presentation in this section provides a complete, yet concise, new introduction to
the subject of limit and jump relations for potential operators on a half-space. They are
the first step in the development of an ``algebra tool kit''
to study the limit/jump relations. These
results, together with those in Section \ref{sec.NLimits} provide a short--but complete--introduction
to jump relations on manifolds with cylindrical ends. (Results for manifolds with bounded
geometry can be found in \cite{KNW-2025}.)

The setting of our paper, up to this point, was on $\RR^{n}$ or on a general Riemannian
manifold (except Subsection \ref{ssec.cyl.ends}, where we defined manifolds with straight
cylindrical ends). Beginning with the next section, Section \ref{sec.psdos},
however, we will work almost exclusively on manifolds with straight cylindrical ends.
(Sometimes, we need to consider the particular case of straight cylinders.) In particular,
Section \ref{sec.psdos} is devoted to pseudodifferential operators on manifolds with straight
cylindrical ends. Here we introduce the main two classes of pseudodifferential operators used
in this paper. The differential operators considered in this paper will belong to the
first class, while their parametrices and layer potential operators will belong to
the second of these classes. More precisely, in the first half of this section, we recall the
definition and main properties of the $\inv$-calculus of translation invariant operators in a
neighborhood of infinity, and, in
the second half we recall the definition and the main properties of the $\ess$-calculus introduced
in \cite{KNW-22} (see also \cite{KMNW-2025, KohrNistor-Stokes}). This calculus contains
the $\inv$-calculus, but it has further important properties due to the fact that it contains the parametrices of its operators (i.e., it is spectrally invariant).
Since this material is well-known,
the proofs are short. However, all the necessary definitions and properties are
discussed in great detail. In particular, we introduce the ``limit operator'' $\In(P)$
associated to an operator $P$ in any of our two calculi. The limit operator
$\In(P)$ is a translation invariant operator on the infinite cylinder $M' \times \RR$
associated to our manifold with straight cylindrical ends $M = \canDec$. The concept of
limit operator is crucial for the study of operators on manifolds with cylindrical ends.
We also introduce the indicial family $\widehat P(\tau)$, which is the Fourier transform
of $\In(P)$. See \cite{GrieserBCalc, MazzeoMelroseAsian, MelroseAPS, SchulzeWongBCalc}
for the closely related $b$-calculus.

In the sixth section, we extend the ``jump/limit' results of the forth section to the case
of Riemannian manifolds with straight cylindrical ends. We thus study here the normal lateral
limits on manifolds with straight cylindrical ends. To this end, we develop an ``algebra tool kit''
to study the limit/jump relations. We prove, for example, that normal
lateral limits are compatible with limits at infinity (i.e., with the limit operator
map $\In$), a result that was not published before in a refereed journal (to our knowledge),
see Theorem \ref{thm2.indicial.jump1}. This result is key in the study of the layer potentials
in the remaining sections. In order to define the normal lateral limits on
manifolds, we need to develop some geometric tools, including the normal geodesics,
and to make some corresponding geometric assumptions, which we prove are satisfied for
our manifolds with cylindrical ends. Our generalized Stokes operator $\bsXi$ is a
differential operator of order two, but is not elliptic in the usual
sense. In order to use the theory of elliptic operators, we recall the definition
of \ADN-operators and ellipticity in this setting in the last subsection of section.
The normal lateral limit results extend right away to \ADN-operators. The content
of this section is motivated by the need to establish the \ADN-ellipticity of the
Stokes operator $\bsXi$. Our results extend those in \cite{KMNW-2025, KNW-2025}.

The seventh section is devoted to the study of some of the main technical results on a
cylinder. We begin by studying the form of our differential operators ($\bsXi$,
$\Def$, and $\nabla$) on a cylinder $\mfkS \times \RR$ (which is a
particular case of a manifold with cylindrical ends), where $\mfkS$ is a boundaryless, smooth,
compact manifold (i.e., a
closed manifold). Then we extend the results of
Section \ref{sec.Green} on Green formulas and energy estimates to the cylinder $\mfkS \times \RR$.
We also extend the Green formulas and the energy estimates of Section \ref{sec.Green}
to the indicial family $\widehat{\bsXi}(\tau)$ of $\bsXi$.
As an application, we obtain criteria for the invertibility of the translation invariant,
generalized Stokes operator $\bsXi$
on the cylinder $\mfkS \times \RR$, under suitable positivity assumptions on $V$ and $V_{0}$
(Assumption \ref{assumpt.VV0weak}).

In Section \ref{sec.8}, we begin the study of the main operator of this paper, namely,
the generalized Stokes operator $\bsXi := \bsXi_{V, V_{0}}$ (see Equation \eqref{eq.def.bsXi})
\emph{acting on a manifold with straight cylindrical ends.}
(Recall that, if $V$ and $V_{0}$ vanish, then $\bsXi$ becomes the usual Stokes operator.)
We first obtain \emph{Fredholm and invertibility results} for
the Stokes operator $\bsXi$ under suitable positivity conditions on the potentials
$V$ and $V_{0}$ (Assumption \ref{assumpt.VV0weak} and, respectively, Assumption
\ref{assumpt.VV0}). These results are interesting in themselves. Moreover, they play a crucial
role in our paper, as these Fredholm and invertibility results are used then (in this
section) to define the various layer potentials for the Stokes operator.
In the last subsection of this section, we prove the jump and limit
properties of these potentials. (These results were proved for the usual Stokes
operator on Lipschitz domains in \cite{M-W}, but our method is different.) The proof of
the jumpt/limit relations is based on the ``algebra tool kit'' developed for
this purpose in the fourth and sixth sections of this paper.

In the ninth section, we specialize again to the case of a cylinder $M = \mfkS \times \RR$
with an invariant metric and study the Fourier transforms (or indicial operators)
of the corresponding layer potential operators. In combination with the Green formulas
on cylinders, this allows us to establish the invertibility of the main layer
potential operators $\bsS$ and $\frac12 + \bsK$ on a cylinder. This result is interesting in
itself and is then used to study these operators on a general manifold with straight
cylindrical ends.

The tenth section is devoted to Fredholmness and invertibility of the Stokes layer potential
operators when $M$ is a general manifold with straight cylindrical ends, depending on the
properties of our potentials $V$ and $V_{0}$. Thus, under suitable positivity assumptions on
$V$ and $V_{0}$, we prove the
invertibility of the single layer potential operator $\bsS$ and of the operator $\frac{1}{2} +\bsK$,
where $\bsK$ is the corresponding double layer potential operator, on Sobolev spaces on the
boundary of a domain with straight cylindrical ends. These are some of the main results of
this paper and rely on the results developed in the previous sections. (The presentation
in this paper is such as to lead as quickly as possible to this invertibility result, while
preserving clarity and completeness, but sacrificing occasionally the generality.)

In the eleventh section, we prove the main well-posedness result of this paper, that is,
the well-posedness of the Dirichlet problem for the generalized
Stokes system, Theorem \ref{thm.main.WP}, assuming that
$\Omega$ is a smooth domain with straight cylindrical ends in a connected manifold with straight
cylindrical ends $M$ and suitable positivity assumptions on $V$ and $V_{0}$. Our proof uses
layer potentials. This well-posedness result is useful to define the Dirichlet-to-Neumann
operator for the generalized Stokes system and to prove that the conormal derivative of the
Stokes double layer potential has no jump across the boundary of the domain $\Omega $, Theorem
\ref{jump-conormal-dl}.

The twelfth (and last) section is devoted to applications and extensions.
As an application, we obtain an existence and uniqueness result for
the Dirichlet problem for the generalized Navier-Stokes system \eqref{NS-Dirichlet} in a smooth
domain $\Omega $ with straight cylindrical ends assuming
$k\in \ZZ _+$, $\operatorname{dim}\, M =n < 2(k+3)$, that the given data is in
$H^k(\Omega ;TM)\oplus H^{k+3/2}(\partial \Omega ; TM)$
and is sufficiently small in a sense that is described in detail in Theorem \ref{thm.main.NS}.
The proof is based on the well-posedness of the Dirichlet problem
for the non-homogeneous generalized Stokes system, Theorem \ref{thm.main.WP-non-1}, and a
fixed point theorem.
In the last subsection of this section (and of this paper), we state the well-posedness of
the Dirichlet problem for the generalized Stokes system on a closed manifold \cite{KNW-2025}.
We also explain what modifications need to be brought to the statements of our results
for manifolds with straight cylindrical ends so that they are valid also in the case of
closed manifolds. These results were proved in \cite{KNW-2025} (for closed manifolds), so
in this paper we content ourselves with reminding a few of these results.

\subsection{Overview of the existing literature}
The layer potentials method is a classical method for solving elliptic boundary value problems
\cite{Chandler-Wilde-1, Costabel88, Massimo-book, D-M, Galdi, H-W, JerisonKenig95, J-K,
Khavinson-Putinar, KMNW-2025, Ladyzhenskaya, Lean, D-I-M-GHA, SchwabBook, 24}.
It plays a crucial role in the analysis of elliptic boundary value
problems in various function spaces, their main advantage consisting in the fact that they
provide explicit formulas for the the solutions and that they provide a convenient way of extending
the results to $L^{p}$-type Sobolev spaces. There are also results that can be proved
at this time only using layer potential methods. An example is provided by our well-posedness
theorem, Theorem \ref{thm.main.WP}. More on the history of the method of layer potentials
can be found, for example, in \cite{Agr, Fa-Ke-Ve, Galdi, H-W, J-K-81, Khavinson-Putinar,
Ladyzhenskaya, Lean, Russo, Varnhorn}. See also
\cite{epsteinBook, GNS08, J-K-82}.

Let us now review some of the more specialized results in the subject,
most of them from research papers devoted to the analysis of the Stokes
and Navier-Stokes equations on domains, in Euclidean spaces, or on Riemannian manifolds.
Fabes, Kenig and Verchota \cite{Fa-Ke-Ve} initiated the study of boundary value problems for
the Stokes system in Lipschitz domains of $\RR^n$ with data in $L^2$-based Sobolev spaces
and obtained various mapping properties for the corresponding layer potentials. They also
obtained well-posedness results for the Dirichlet and Neumann problems in $L^p$-based spaces,
with $p$ in a neighborhood of $2$, by combining Rellich type formulas with layer potential
approaches. Mitrea and Wright \cite{M-W} used layer potential methods and extended the
results in \cite{Fa-Ke-Ve} to a variety of $L^p$, Sobolev, Bessel potential, and Besov
spaces. They also obtained well-posedness results for the main boundary value problems
for the constant coefficient Stokes system in arbitrary Lipschitz domains in Euclidean
spaces, together with the determination of the optimal range for $p$.
The authors of \cite{K-L-M-W} obtained mapping properties for the constant-coefficient
Stokes and Brinkman layer potential operators in standard and weighted Sobolev spaces
on $\RR^3$. Shen \cite{Shen} used a layer potential approach and obtained resolvent
estimates in $L^p$ spaces for the Stokes operator in Lipschitz domains in the Euclidean
spaces. Layer potential theoretical methods have been also combined with fixed point
theorems in the analysis of boundary value problems for (linear and nonlinear) elliptic
systems with nonlinear boundary conditions. For instance, the authors in \cite{K-L-W}
used a layer potential approach and the Leray-Schauder fixed point theorem to show the
existence result in $L^p$, Sobolev, and Besov spaces for the constant-coefficient Stokes
and Brinkman systems with nonlinear Neumann-transmission conditions. Girault and
Sequeira \cite{Gi-Se} used a variational approach to prove the well-posedness of
the exterior Dirichlet problem for the Stokes system on exterior Lipschitz domains
in $\RR^n$, $n=2,3$. Galdi \cite[Chapters 5, 11, 12]{Galdi} used variational
approaches in the analysis of steady Stokes and Navier-Stokes flows in exterior domains.
Medkov\'{a} \cite{medkova26} used an integral equation method to analyze the Dirichlet
problem for the Brinkman and Darcy-Forchheimer-Brinkman systems in Sobolev spaces on
unbounded domains with compact Lipschitz boundary in $\RR^n$. Exterior Dirichlet
and transmission problems for anisotropic {Stokes} and {Navier}-{Stokes} systems with
$L_\infty$ variable  coefficients have been studied in \cite{KMW-DCDS2021} and
\cite[Chapters 5-7]{KMNW-2025} using a variational approach and in \cite{medkova09}.
Other boundary problems for the Stokes system in Sobolev spaces on Lipschitz domains via
layer potential theoretical methods have also been studied in \cite{Medkova13, Medkova15b,
Medkova15a, Russo, Sa-Se, Varnhorn-2004}.

Chandler-Wilde et al. \cite{Chandler-Wilde-Perfekt, Chandler-Wilde-1} studied boundary
integral equations on locally-dilation-invariant Lipschitz domains and fractal screens.
Cakoni, Hsio, and Wendland have studied a mixed boundary value problem for the
biharmonic equation using boundary integral equations.
Haller-Dintelmann, R. and Jonsson, A. and Knees, D. and Rehberg
have studied the regularity of a mixed, second order
boundary value problem in divergence form \cite{H-J-K-R}.
Singularly perturbed boundary value problems based on a functional analytic approach
have been studied by M. Dalla Riva, M. Lanza de Cristoforis, and P. Musolino
\cite{Massimo-book}. This approach, which also uses a layer potential analysis, is very
useful in the study of various linear nonlinear elliptic problems. A nonvariational form
of the acoustic single layer potential has been obtained by Lanza de Cristoforis
\cite{Massimo-CPAA-26}. A nonvariational form of the Neumann problem for H\"{o}lder
continuous harmonic functions, based also on a layer potential analysis, has been
obtained in \cite{Massimo-JDE-25}. Ando, Kang, Miyanishi,
and Putinar have obtained Carleman factorizations of layer potentials \cite{AKMP}. The
recent book \cite{D-I-M-GHA} of D. Mitrea, I. Mitrea and M. Mitrea is a rigorous
interplay between Harmonic Analysis, Geometric Measure Theory, Function Space Theory,
and Partial Differential Equations, and provide a powerful tool in the analysis of
boundary problems for complex coefficient elliptic systems in various geometric settings,
including the class of Lipschitz domains. The theory of Fredholm and layer potential
operators in Euclidean spaces and in Riemannian manifolds plays an important role in
their book \cite{D-I-M-GHA} (and also in our book \cite{KMNW-2025}).
See also \cite{CaKr13, CYKL18, CDDM}.

Layer potentials play an important role in applications, for instance in
the study of numerical methods. In this direction, in an important paper,
Costabel \cite{Costabel88} has establishes many
useful properties of layer potentials using elementary methods.
Buffa, Hiptmair, Petersdorff, and Schwab \cite{Hiptmair} studied numerical
methods for Maxwell's transmission
problems in Lipschitz domains by using boundary element methods. Labarca-Figueroa and
Hiptmair \cite{Hiptmair25} used coupled boundary and volume integral equations in the
study of electromagnetic scattering. Steinbach and Wendland \cite{Ste-We} have
obtained resulsts on a Neumann series method for solving boundary integral equations.
Costabel, Stephan, and Wendland have studied boundary integral equations of the first kind
for the bi-{Laplacian} in a polygonal plane domain \cite{CSW}.
See also \cite{ABMH20, CoDa96, EGK, MunnierBi, MunnierCusp, Pacheco-Steinbach}.

The layer potential methods are also very useful in the analysis of elliptic boundary
value problems on manifolds. Next we provide a brief overview of some of the important
contributions related especially to the Stokes and Navier-Stokes equations in the setting
of compact or non-compact Riemannian manifolds.
Mitrea and Taylor \cite{M-T} and Dindo\u{s} and Mitrea \cite{D-M} obtained the well-posedness
of the Dirichlet problem for the Stokes and Navier-Stokes systems with smooth coefficients
on Lipschitz domains in closed Riemannian manifolds and in $L^p$, Sobolev and Besov spaces,
by using the mapping properties of Stokes layer potentials,
(see also \cite{KNW-2025} for similar results for a modified Stokes operator).
In \cite{K-M-W}, the authors studied transmission problems for the smooth coefficients
Navier-Stokes type equations in Lipschitz domains on closed Riemannian manifolds by using
a layer potential approach combined with a fixed point theorem.
Mitrea and Monniaux \cite{M-Mon1} studied the analyticity of the semigroup generated by the
Stokes operator equipped with Neumann-type boundary conditions on $L^p$-spaces on Lipschitz
domains of closed Riemannian manifolds.
In \cite{K-W}, the authors proved the well-posedness of Dirichlet problems for the Stokes and
Navier-Stokes systems with $L^{\infty }$-variable coefficients in $L^2$-based Sobolev spaces
in Lipschitz domains on closed Riemannian manifolds. They used a variational approach
to define the Newtonian and layer potentials for the non-smooth coefficients Stokes
system on Lipschitz surfaces in closed Riemannian manifolds. Benavides, Nochetto, and
Shakipov \cite{BNS} used a variational approach and studied the well-posedness and
regularity in $L^p$-based Sobolev spaces, $p\in (1,\infty )$, for the weak formulations
of the stationary tangent Stokes and tangent Navier-Stokes systems on a
compact and connected $d$-dimensional manifold without boundary of class $C^m$, $m\geq 2$,
embedded in $\RR^{d+1}$, in terms of the regularity of the source terms and the manifold.

The analysis on non-compact manifolds poses more challenges than that the one on compact manifolds.
A significant issue in this sense is the construction of the classes of pseudodifferential
operators. In this regard, we refer to the contributions of Ammann, Lauter and Nistor
\cite{aln1, ALN-2007},
Mazzeo and Melrose \cite{MazzeoMelroseAsian}, Melrose \cite{MelroseActa}, Melrose and Mendoza
\cite{Melrose-Mendoza}, Nistor \cite{Nistor-05}, Nistor, Weinstein, and Xu \cite{NWX}
Schrohe \cite{Schrohe, SchroheFrechet}, Schulze \cite{SchulzeBook91, Schulze},
Schulze and Wong \cite{SchulzeWongBCalc}, Shubin \cite{Shubin}, to mention just a few
among very many others. For instance, some results relevant for boundary value problems
were obtained by Grieser \cite{Fr-Grrieser-Sc, GSV26}, who used the Calderon projection in the
framework of the cusp-edge calculus \cite{MazzeoMelroseAsian} for related boundary value
problems for (classically) elliptic operators. The cusp-edge calculus is a completion of a
groupoid calculus \cite{NWX} corresponding to a structural Lie
algebra of vector fields that is, in fact, more general than the one appearing in our
setting of manifolds with straight cylindrical ends.

Another important issue in the analysis on non-compact manifolds
is finding the ``right definition'' of the relevant function spaces. In this paper,
as in \cite{KohrNistor1}, we have defined our function spaces by using connections. See
also \cite{AmannSobolev, AmannFunctSp, Schneider}.
Amann \cite{AmannFunctSp} and Ammann, Gro{\ss}e, Nistor
\cite{AGN-1, AGN-2} studied function spaces on manifolds with bounded geometry
and obtained well-posedness results for the Laplace operator. Using the
uniform Shapiro-Lopatinski conditions, Gro{\ss}e and Nistor \cite{GN17} obtained
regularity results for elliptic boundary value problems on manifolds
with bounded geometry. Lie manifolds are special manifolds with bounded geometry
\cite{sobolev, aln1}, which allow the implementation of the layer potential methods.

The layer potential theory for the Laplace operator on manifolds with cylindrical ends was
initiated by Lewis and Parenti \cite{LewisParenti} and further extended by Mitrea and Nistor
\cite{Mitrea-Nistor}. Manifolds with cylindrical ends are very useful
in the study of boundary value problems on manifolds with conical points. See, for example,
the papers \cite{sobolev, BMNZ, CSS07-CPDE, daugeBook, GrieserBCalc, Kondratiev, Ko-Ma-Ro,
Kral-Wendland, LauterMoroianu1, Ma-Ro, Ma-Ni, MelroseAPS, SchulzeWongBCalc},
among many others devoted to the subject. The reason is provided by the fact that a domain with
a conical point is mapped
via the Kondratiev transformation \cite{Kondratiev} to a manifold with cylindrical ends. This
fact was exploited by Qiao and Nistor \cite{Qiao-Nistor} to obtain a solvability result for the
Dirichlet problem for the Laplace operator in an open straight cone in $\RR^{n}$, $n\ge 3$, in
the setting of Kondratiev-Babu\v{s}ka weighted (Sobolev) spaces.
Roidos, Schrohe and Seiler \cite{Ro-Sch-Se} studied the $H_\infty $-calculus in $L^p$-based
Sobolev spaces for parameter-elliptic boundary value problems on manifolds with conical
singularities at the boundary. As an application they treated the porous medium equation on
manifolds with conical singularities (see also \cite{Ro-Sch-16} and the references
therein). Mitrea and Nistor \cite{Mitrea-Nistor} and Kohr, Nistor and Wendland
\cite{KMNW-2025, KNW-22, KohrNistor-Stokes} developed an essentially translation
invariant pseudodifferential calculus on manifolds with cylindrical ends, which is
very useful to provide the invertibility and structure of the Stokes operator and,
as a consequence, the construction and the invertibility the Stokes layer potential
operators on manifolds with cylindrical ends. Examples of domains with conical points
are the polygonal domains and some polyhedral domains. Manifolds with cylindrical ends
are particular cases of Lie manifolds \cite{aln1, CarvalhoNistorQiao} and of manifolds with
bounded geometry \cite{AGN-1, AGN2, AGN-2} (see also the references therein).
The papers by
Carvalho and Qiao \cite{CarvalhoQiao}, Qiao and Li \cite{QiaoLi}, Qiao \cite{QiaoEdge},
Putinar and Perfect \cite{PutinarPerfekt1, PutinarPerfekt2} contain
results on layer potential on polyhedral domains. See also \cite{Ammari19b, Ammari19}.

In \cite{Grosse-Kohr-Nistor-23}, Gro{\ss}e, Kohr, and Nistor have obtained the $L^2$-unique
continuation property for the deformation operator $\Def$ on manifolds with bounded geometry,
a result that was used for the analysis of the Stokes various classes of manifolds,
including manifolds with cylindrical ends, in \cite{KMNW-2025, KNW-22, KohrNistor-Stokes}.
Manifolds with cylindrical ends appear in may applications that involve
``wave-guides'' or ``layers'' (see e.g. \cite{bbendhia21, BCN18, bbendhia23, bbendhia94,
BNSV22, BNSV23, BBKO, bourgeois23, epsteinP1, epsteinP2, EFK, KMNW-2025, KohrNistor-Stokes,
KrejcirikZuazua, Wendland24}). Let us finally add that the main operator in the structure
of the Stokes and Navier-Stokes equations on Riemannian manifolds is the deformation Laplacian
$\bsL := 2\Def^*\Def$. In their groundbreaking paper \cite{Ebin-Marsd}, Ebin and Marsden described
the deformation Laplacian as the right Laplace type operator to describe the Navier-Stokes
equation on compact Riemannian manifolds. Pierfelice \cite{Pierfelice} also used the
deformation-Laplacian for the definition and the analysis of the Navier-Stokes equation on
a non-compact Riemannian manifold with negative Ricci curvature.
This is the choice that we also adopt in the description of our Stokes operator
$\bsXi$ of Equation \eqref{eq.def.bsXi}. Our generalized Stokes operator is
closely related to the Brinkman operator. The Brinkman flow was studied
by Karageorghis, Lesnic, and Marin in \cite{KLM21b, KLM21}, and B\u{a}cu\c{t}\u{a}, Hassell, Hsiao, and
Sayas in \cite{B-H}. A class of solutions to the Navier-Stokes equations for flows in plane-parallel
channels and related singular perturbation problems were studied by Mazzucato and Taylor
\cite{Mazzucato-Taylor}.

\subsection{Acknowledgements}
We thank Massimo Lanza de Cristoforis, Dorina, Irina, and Marius Mitrea, Sergey
Mikhailov, and Mihai Putinar for useful discussions. We also thank Daniel Grieser
and Karl-Mikael Perfekt for useful suggestions.

\section{\cn Green formulas for the Stokes operator}
\label{sec.Green}

In this section we introduce the basic differential operators used for the study of our
(generalized) Stokes operators and establish Green formulas and energy estimates for
the Stokes operator. More precisely, in the first part of this section, Subsection
\ref{ssec.do}, we introduce and study the deformation operator $\Def$, Equation
\eqref{eq.def.Def}. The deformation operator appears in the definition of the
the Stokes operator $\bsXi = \bsXi_{V, V_{0}}$ introduced in Equation \eqref{eq.def.bsXi}.
We also introduce a few other basic differential operators needed for the study of
the Stokes operator. In the second subsection, we recall an abstract integration by
parts formulas, Proposition \ref{prop.bdry.op}. The last subsection,
Subsection \ref{ssec.Green1} establishes the needed Green function formulas for the Stokes
operator, as well as some related energy estimates and representation formulas.

The material contained in this section is thus mostly background material, thus we also
introduce here other needed concepts or notation, mostly related to the differential operators
appearing in the study of the Stokes operator. The results of this section, while mostly
known, are sometimes hard to find in the literature. We thus provide here a complete and
largely self-contained presentation of these results (a different approach is contained in
Section A.3.3 of our book \cite{KMNW-2025}).

In this section, \emph{we assume that $(M, g)$ is a Riemannian manifold, occasionally
with some additional properties. More precisely, in the third subsection of
this section, we assume that $M$ has bounded geometry.} Recall that a manifold with
bounded geometry is one that has positive injectivity radius and all the covariant derivatives
of its curvature are bounded. (See \cite{Grosse-Kohr-Nistor-23} for references and for
more on manifolds with bounded geometry. We note that the results of this section will
be used \emph{only when $M$ has straight cylindrical ends}, and that the manifolds with
straight cylindrical ends have bounded geometry.)

\subsection{Differential operators}
\label{ssec.do}
We now introduce the basic differential operators used for the study of our
(generalized) Stokes operators. If $E \to M$ is a (smooth) vector bundle, then
$\CI(M; E)$ denotes the set of smooth sections of $E$, whereas $\CIc(M; E) \subset
\CI(M; E)$ denotes the subset of compactly supported (smooth) sections of $E$.
When necessary, the real vector bundles and their sections will be complexified.
All our vector bundles will be \emph{smooth.} If $E = \CC$, it will
be omitted from the notation, thus $\CI(M)$ denotes the set of smooth functions on $M$.

One of the most basic differential operators is the \emph{derivation}
\begin{equation*}
  X : \CI(M) \to \CI(M)
\end{equation*}
defined by a smooth vector field $X$. (Recall that a smooth vector field is a section
of the \emph{tangent} bundle $TM \to M$, whose dual is the \emph{cotangent} bundle to
$M$ and is denoted $T^{*}M$.) Another very basic differential operators is then the
\emph{Levi-Civita connection}
\begin{equation*}
  \nabla^{LC} : \CI(M; TM) \to \CI(M; T^{*}M \otimes TM)\,.
\end{equation*}
(Recall that the Levi-Civita connection $\nabla^{LC}$ is the unique torsion-free,
metric preserving connection on $TM$.) The Levi-Civita
connection on $TM$ then induces connections on the tensor bundles $T^{\otimes k} M
\otimes T^{*\otimes l}M$, also be denoted $\nabla^{LC}$. If $X$ is a smooth vector
field on $M$, then contraction with $X$ (on the first component tensor)
defines a map $\iota_{X} : \CI(M; T^{*}M \otimes TM) \to \CI(M; TM)$ by
$\iota(X)(\omega \otimes Y) := \omega(X) Y$, where $\omega \in
\CI(M; T^{*}M)$ (i.e., it is a 1-form) and $Y \in \CI(M; TM)$ (i.e., $Y$ is a smooth
vector field). This yields the differential operator
\begin{equation*}
  \nabla_{X}^{LC} \ede \iota_{X} \circ \nabla^{LC}  : \CI(M; TM) \to \CI(M; TM)
\end{equation*}
that will often be used below. We obtain similar maps
$\nabla_{X}^{LC} \ede \iota_{X} \circ \nabla^{LC}  : \CI(M; T^{*\otimes j} M
\otimes T^{\otimes k} M) \to \CI(M; T^{*\otimes j} M
\otimes T^{\otimes k} M)$ between sections of tensor bundles.

We shall often use the \emph{musical isomorphism} $\sharp : T^{*}M \to TM$
defined by the metric $g$, explicitly, if $X$ and $Y$ are vector fields
on $M$, then
\begin{equation*}
  \sharp : T^{*}M \to TM \ \mbox{ satisfies }\
  \langle X^{\sharp}, Y \rangle \ede X^{\sharp} (Y) \ede g(X, Y)\,.
\end{equation*}
The inverse of $\sharp$ is denoted by the same
symbol: $\sharp : TM \to T^{*}M$. One should be careful not to confuse
$\nabla^{LC}X \in \CI(M; T^{*}M \otimes TM)$,
$X \in \CI(M; TM)$ (a smooth vector field),
with the \emph{gradient} $\nabla f := (df)^{\sharp} \in \CI(M; TM)$,
$f \in \CI(M)$, a smooth function.

A crucial role in this paper will be played by the
\emph{deformation operator}
\begin{equation*}
  \Def : \CI(M; TM) \to \CI(M; T^{*}M \otimes T^{*}M)\,,
  \quad \Def(X) \ede \frac12 \maL_{X}g\,.
\end{equation*}
Let $X, Y$, and $Z$ be smooth vector
fields on $M$ and let $X \cdot Y = g(X, Y)$ denote the scalar product induced by the
metric $g$ on $M$. The following alternative defining formula of $\Def$ will be more useful:
\begin{equation}\label{eq.def.Def}
    \Def(X)(Y, Z) \seq \langle \Def(X), Y \otimes Z \rangle
    \seq \frac12 \big [ (\nabla_{Y}^{LC} X) \cdot Z
    + (\nabla_{Z}^{LC} X) \cdot Y \big]\,.
\end{equation}

To introduce other needed operators, let $\bsnu$ be a fixed smooth vector field on $M$
(in our applications, $\bsnu$ will extend the \emph{outer unit normal}
vector field to a domain $\Omega$ on which our boundary value problems are
formulated; this explains why, in this paper, we require $\Omega$ to be
\emph{on one side of its boundary}, that is, to satisfy Assumption
\ref{assumpt.1side}). The chosen
vector field $\bsnu$ induces a map
\begin{equation*}
  \bsnu \otimes \sharp : T^{*}M \otimes T^{*}M \to TM\,, \quad
  (\bsnu \otimes \sharp)(\xi \otimes \eta) = \xi(\bsnu) \eta^{\sharp}\,.
\end{equation*}
The map $\sharp \otimes \bsnu : T^{*}M \otimes T^{*}M \to TM$ is defined analogously.
We then let
\begin{equation}
  \label{eq.def.Dnu}
  \begin{gathered}
    \Dnu  : \CI(M; TM) \to \CI(M; TM)\\
    \Dnu  X \ede \frac12 (\bsnu \otimes \sharp +
    \sharp \otimes \bsnu) \, \Def(X) \seq \big\langle\, \Def(X)\, ,
    \, \bsnu \otimes 1\, \big\rangle^{\sharp}     \,.
  \end{gathered}
\end{equation}
The operator $\Dnu $ and its (formal) adjoint $\Dnu ^{*}$ will play an important role
in what follows. (In the last formula, $\langle \ \cdot \ ,\, \bsnu \otimes 1\,\rangle
: T^{*}M \otimes T^{*}M \to \CC \otimes T^{*}M =T^{*}M$ is the contraction
with $\bsnu$ on the first variable.)
This allows us to introduce the operator
\begin{equation}\label{def.conormal.Tderiv}
  \begin{gathered}
    \bop : \CI(M; TM \oplus \CC) \to \CI(M; TM)\\
    \bop U \ede -2 \Dnu(\bsu) + p \bsnu \,,
  \end{gathered}
\end{equation}
where $U := (\bsu\ \  p)^{\top }$ is a smooth, compactly supported section of
$TM \oplus \CC$ (see also formula \eqref{eq.def.Dnu}). The formula
$\Dnu X := (\Def(X) \bsnu \otimes 1)^{\sharp}$ is also used \cite{D-M, M-T, 24}.
We shall also write
\begin{equation}\label{def.conormal.Tderiv2}
  \tbop  U \ede  \cvector{-2 \Dnu(\bsu) + p \bsnu}{0} \seq
  \left(
    \begin{array}{cc}
      -2 \Dnu & \bsnu \\
      0 & 0
    \end{array}
  \right)
  U\,.
\end{equation}

Let $V , V_{0} : M \to [0, \infty)$ be measurable functions (usually smooth).
Recall from the introduction
that the \emph{deformation Laplacian} is the second order differential operator
$\bsL \ede 2\Defstar\Def$. The operator
\begin{equation*}
  \bsL_{V} \ede 2\Defstar \Def + V
\end{equation*}
will be called the \emph{perturbed deformation Laplacian.} Also, recall that the
\emph{generalized Stokes operator} (Equation \eqref{eq.def.bsXi}) is the operator
\begin{equation*} 
  \bsXi \ede \bsXi_{V, V_{0}}  \ede \left(\begin{array}{ccc}  \bsL_{V} & \nabla \\
  \nabla^* &  -V_{0}
  \end{array}
  \right)  \in \End(\CIc(M; TM \oplus \CC))\,.
\end{equation*}
(Recall that for a module $E$ over some ring $R$, $\End(E)$ denotes the set
of $R$-linear maps $E \to E$, that is, the set of \emph{endomorphisms} of $E$.)

We now study some of the properties of these operators.
Then a direct calculation gives right away the following result.

\begin{lemma}\label{lemma.bop.star}
  We have
  \begin{equation*}
    \bopstar (\bsu) \seq
    \left (
      \begin{array}{c}
      - 2\Dnu^{*}\bsu \\
      \bsnu \cdot \bsu
      \end{array}
    \right )
    \seq \left ( \begin{array}{c}
      - 2\Dnu^{*} \\
      \bsnu^{\sharp}
    \end{array}
    \right )\, \bsu \seq
    \left(
    \begin{array}{cc}
      -2 \Dnu^{*} & 0 \\
      \bsnu^{\sharp} & 0
    \end{array}
  \right)
  U  \,.
  \end{equation*}
\end{lemma}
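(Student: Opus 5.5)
The plan is to compute the formal adjoint of $\bop$ directly from its block-matrix form \eqref{def.conormal.Tderiv2}, using the $L^{2}$ pairings on sections of $TM$ and of $TM \oplus \CC$ defined by the metric $g$ and the volume form. Since $\bop = \big(-2\Dnu \ \ \bsnu\big)$ maps $TM \oplus \CC \to TM$, its formal adjoint maps $TM \to TM \oplus \CC$. It is determined by
\begin{equation*}
  (\bop U, \bsw) \seq (U, \bopstar \bsw)
\end{equation*}
for all compactly supported smooth sections $U = (\bsu\ \ p)^{\top}$ and $\bsw$.

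First, I split the pairing into its two components:
\begin{equation*}
  (\bop U, \bsw) \seq -2(\Dnu \bsu, \bsw) + (p\bsnu, \bsw)\,.
\end{equation*}
In the first term, the definition of the formal adjoint $\Dnu^{*}$ (which exists because $\Dnu$ is a differential operator with smooth coefficients) gives $(\Dnu\bsu, \bsw) = (\bsu, \Dnu^{*}\bsw)$. The second term is a zeroth-order multiplication:
\begin{equation*}
  (p\bsnu, \bsw) \seq \int_{M} p\, \overline{\bsnu\cdot \bsw}\dvol \seq (p, \bsnu\cdot\bsw)_{L^{2}(M)}\,.
\end{equation*}
Here I use that $\bsnu$ is a real vector field, so no conjugation issue arises. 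I then note that $\bsnu \cdot \bsw = \bsnu^{\sharp}(\bsw)$ by the definition of the musical isomorphism.

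Combining the two terms gives
\begin{equation*}
  (\bop U, \bsw) \seq \big(U, (-2\Dnu^{*}\bsw \ \ \bsnu^{\sharp}\bsw)^{\top}\big)\,,
\end{equation*}
which is the first displayed expression in the statement, with the generic argument renamed $\bsu$. The remaining equalities are only rewriting. One is the column-operator form. The other is the $2\times 2$ matrix form acting on $U$, which ignores the second (scalar) component; this is exactly the transpose-conjugate of the matrix in \eqref{def.conormal.Tderiv2} after deleting the zero row.

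There is no real obstacle here. The only point requiring care is bookkeeping: the adjoint of a row operator is a column operator, and the $2\times 2$ matrix in the statement should be read as the adjoint $\tbopstar$ of $\tbop$ restricted appropriately, equivalently as acting on $U$ through its vector part $\bsu$ only. I would not expand $\Dnu^{*}$ explicitly, since the statement leaves it in abstract form.
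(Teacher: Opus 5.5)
Your proposal is correct and is the ``direct calculation'' the paper invokes: you pair $\bop U = -2\Dnu\bsu + p\bsnu$ with $\bsw$, move $\Dnu$ across as $\Dnu^{*}$, and move the real field $\bsnu$ across as $\bsnu^{\sharp}$, which gives the column form. One small wording fix: the $2\times 2$ matrix in the statement is the full conjugate transpose of the matrix in \eqref{def.conormal.Tderiv2}, i.e.\ $\tbopstar$, with no row deleted; deleting the zero row is what gives the row operator $\bop$, and its adjoint is the column form.
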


In this paper, $\imath$ will denote \emph{the imaginary unit:} $\imath^{2} = -1$.
The \emph{principal symbols} of the operators of interest can be determined from the
functorial properties of the principal symbol using also the following formula
(stationary phase approximation \cite{Hormander1, H-W, Taylor1}).

\begin{lemma}\label{lemma.stat.phase}
  Let $\phi : M \to \RR$ be a smooth function and $P$ be a first order
  differential operator $\CIc(M; E) \to \CIc(M; F)$, where $E, F \to M$ are
  smooth vector bundles. Then
  \begin{equation*} 
    \sigma_{1}(P; d\phi(x)) u(x) \seq \lim_{t \to \infty}
    t^{-1} e^{-\imath t \phi(x)}
    \big[ P(e^{\imath t \phi}u) \big](x)\ \mbox{ if }\ d\phi(x) \neq 0\,.
  \end{equation*}
\end{lemma}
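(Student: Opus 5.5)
The plan is to reduce everything to a computation in local coordinates and local trivializations, where a first order differential operator has an explicit form. Then we compare the resulting limit with the definition of the principal symbol. Fix $x \in M$ with $d\phi(x) \neq 0$ and choose a coordinate chart $(x_1, \ldots, x_n)$ around $x$, together with local trivializations of $E$ and $F$ near $x$. In these trivializations we can write
\begin{equation*}
  P \seq \sum_{j=1}^{n} a_{j}\, \pa_{j} + b\,,
\end{equation*}
where $a_j$ and $b$ are smooth matrix-valued functions. Since both sides of the claimed identity are local at $x$ (they only involve values and first derivatives at $x$), we may assume $u$ is supported in the chart. Both sides depend only on the germ of $\phi$ and $u$ at $x$.

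The key step is the Leibniz rule:
\begin{equation*}
  P(e^{\imath t \phi} u) \seq e^{\imath t\phi}\Big( \imath t \sum_{j} a_{j}\, (\pa_{j}\phi)\, u + P u \Big)\,.
\end{equation*}
Multiplying by $t^{-1} e^{-\imath t \phi(x)}$ and evaluating at $x$ gives
\begin{equation*}
  \imath \sum_j a_j(x)\, \pa_j\phi(x)\, u(x) + t^{-1} (Pu)(x)\,.
\end{equation*}
As $t \to \infty$, this converges to $\imath \sum_j a_j(x)\, \pa_j\phi(x)\, u(x)$.

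Next we identify this limit with $\sigma_1(P; \xi)u(x)$ at $\xi = d\phi(x) = \sum_j \pa_j\phi(x)\, dx_j$. With the convention that the principal symbol of $\pa_j$ is $\imath \xi_j$ (equivalently, that of $D_j = -\imath \pa_j$ is $\xi_j$), the symbol of $P$ is $\sigma_1(P;\xi) = \imath \sum_j a_j(x)\xi_j$, and the two expressions agree. If the paper's convention differs by a factor of $\imath$, the same computation applies with that factor adjusted.

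Finally, we note that the left-hand limit is manifestly independent of the chart and of the trivializations, since it is defined intrinsically. This gives a coordinate-free confirmation that $\sigma_1(P;\cdot)$ is a well-defined section of $\Hom(\pi^*E, \pi^*F)$ over $T^*M$. The condition $d\phi(x) \neq 0$ is used only so that $\xi$ lies in $T^*M \smallsetminus 0$, where the principal symbol is considered.

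There is no real obstacle here. The only point requiring care is bookkeeping the sign and $\imath$ convention for the principal symbol so that it matches the one used later in the paper.
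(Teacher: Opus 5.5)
Your argument is correct, and it is the standard one. The paper gives no proof of its own, only a reference to standard texts (H\"ormander, Hsiao--Wendland, Taylor), where the lemma is obtained by exactly your Leibniz-rule computation $P(e^{\imath t\phi}u) = e^{\imath t\phi}\big(\imath t \sum_j a_j (\pa_j\phi) u + Pu\big)$ in local coordinates and trivializations.

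Your hedge about conventions is unnecessary. The paper fixes $\sigma_{1}(\pa_{j}; x, \xi) = \imath \xi_{j}$ in \eqref{eq.ps.dj}, which is forced by its quantization $a(x,D)u(x) = (2\pi)^{-n}\int e^{\imath x\cdot\xi}a(x,\xi)\hat u(\xi)\,d\xi$ and is precisely the convention you used.
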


The principal symbol will be discussed in general in Section \ref{sec.nllRn},
see Definition \ref{def.princ.symb}. For more details on the principal symbol,
see the above mentioned textbooks \cite{Hormander1, H-W, Taylor1}
or \cite{KMNW-2025} (or any basic monograph
on pseudodifferential operators).

We shall use repeatedly in what follows the following notation.

\begin{notation} \label{not.def.endomorphism}
  Let $\mathfrak W$ be a vector space and let $v \in \mathfrak W$ and $\xi \in
  \mathfrak W^{*}$. We shall then regard $v \otimes \xi \in \mathfrak W \otimes
  \mathfrak W^{*}$ as an endomorphism $v \otimes \xi \in \End(\mathfrak W) \simeq
  \mathfrak W \otimes \mathfrak W^{*}$ via the formula
  $(v \otimes \xi)x := \xi(x) v$, for any $x \in \mathfrak W$.
  In particular, if $\mathfrak W$ is a real vector space with an inner product
  denoted $\cdot$ and $\sharp : \mathfrak W \to \mathfrak W^{*}$ is the musical
  isomorphism (i.e., $v^{\sharp}(w) := v \cdot w$), then,
  for all $v, w, x \in \mathfrak W$, we have
  \begin{equation*}\label{eq.def.endomorphism}
    \begin{gathered}
      (v \otimes w^{\sharp}) x \ede  (x \cdot w) v \ \  \mbox{ and}\\
      (v \otimes w^{\sharp})^{*} \seq  w \otimes v^{\sharp} \,,
    \end{gathered}
  \end{equation*}
  where, for the last formula, we have used the fact that $\mathfrak W$ is
  a real vector space.
\end{notation}

For any vector space or vector bundle $E$, we let $S^{2}E \subset E\otimes E$
be the symmetric part of the tensor product $E \otimes E$.
The following formulas are well known,
see \cite{D-M, M-T, Taylor1}. We state them here for further use and
we prove them for completeness.

\begin{proposition}\label{prop.prop.Def}
  Let $X$, $Y$, and $Z$ be three smooth vector fields on $M = M' \times \RR$. Then
  \begin{enumerate}[\rm (i)]
    \item $\sigma_{1}(\Def; \xi)X \seq \frac{\imath}2 \big[ \xi \otimes X^{\sharp}
      + X^{\sharp} \otimes \xi] \in S^{2}T^{*}M
      \subset T^{*\otimes 2}M$ and
    \item $\sigma_{1}(\Defstar; \xi) \big( Y^{\sharp} \otimes Z^{\sharp} \big)
      \seq -\frac{\imath}2\big [ \xi(Y)Z + \xi(Z)Y \big]\,.$
  \end{enumerate}
\end{proposition}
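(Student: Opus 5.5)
For (i), I will use the stationary phase formula of Lemma \ref{lemma.stat.phase} together with the defining formula \eqref{eq.def.Def}. Fix $x \in M$ and $\xi \in T^{*}_{x}M$, $\xi \neq 0$. Choose a smooth real function $\phi$ with $d\phi(x) = \xi$. For all smooth vector fields $Y, Z$ the Leibniz rule for the Levi-Civita connection gives
\begin{equation*}
  \nabla^{LC}_{Y}(e^{\imath t\phi}X) \seq \imath t\, d\phi(Y)\, e^{\imath t\phi} X + e^{\imath t \phi}\nabla^{LC}_{Y}X\,.
\end{equation*}
Inserting this into \eqref{eq.def.Def}, multiplying by $t^{-1}e^{-\imath t\phi}$, and letting $t \to \infty$, the terms without $t$ vanish. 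At $x$ we are left with
\begin{equation*}
  \frac{\imath}{2}\big[\xi(Y)\,(X\cdot Z) + \xi(Z)\,(X\cdot Y)\big]
  \seq \frac{\imath}{2}\big[\xi\otimes X^{\sharp} + X^{\sharp}\otimes \xi\big](Y,Z)\,.
\end{equation*}
This is (i). It is symmetric in $(Y,Z)$, so it lies in $S^{2}T^{*}M$. The product structure $M = M'\times\RR$ plays no role, since the computation is pointwise.

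For (ii), I will use the standard fact that the principal symbol of the formal adjoint is the pointwise adjoint of the principal symbol: $\sigma_{1}(P^{*};\xi) = \sigma_{1}(P;\xi)^{*}$. The adjoint is taken with respect to the fiber metrics induced by $g$ on $TM$ and on $T^{*}M\otimes T^{*}M$. This follows from the adjointness of $P$ and $P^{*}$ in $L^{2}$, applied to oscillatory test sections $e^{\imath t\phi}u$, or it can be cited from \cite{Hormander1, Taylor1}.

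For the pairing with a general $X$, write $(\cdot,\cdot)$ for the Hermitian inner product, antilinear in the second variable. Since $\xi$ is real, $\overline{\imath/2} = -\imath/2$, and $\langle \alpha\otimes\beta, Y^{\sharp}\otimes Z^{\sharp}\rangle = \alpha(Y)\beta(Z)$. By (i),
\begin{equation*}
  \big(\sigma_{1}(\Def;\xi)X,\, Y^{\sharp}\otimes Z^{\sharp}\big)
  \seq \frac{\imath}{2}\big[\xi(Y)(X\cdot Z) + (X\cdot Y)\xi(Z)\big]
  \seq \Big(X,\, -\frac{\imath}{2}\big[\xi(Y)Z + \xi(Z)Y\big]\Big)\,.
\end{equation*}
Reading off the right-hand side identifies $\sigma_{1}(\Defstar;\xi)(Y^{\sharp}\otimes Z^{\sharp})$ and gives (ii).

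The only delicate point is the bookkeeping of conventions. One must check that the sign and the complex conjugation of $\imath$ are consistent with the convention in Lemma \ref{lemma.stat.phase}. One must also check the normalization of the inner product on $T^{*}M\otimes T^{*}M$, namely the plain tensor product metric, with no factor from symmetrization.
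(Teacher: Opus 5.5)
Your proposal is correct and follows essentially the same route as the paper. Part (i) comes from inserting $e^{\imath t\phi}X$ into \eqref{eq.def.Def} and applying Lemma \ref{lemma.stat.phase}; part (ii) then follows from $\sigma_{1}(\Defstar;\xi)=\sigma_{1}(\Def;\xi)^{*}$ by pairing against an arbitrary $X$. The only cosmetic difference is that you pair $\sigma_{1}(\Def;\xi)X$ with $Y^{\sharp}\otimes Z^{\sharp}$, whereas the paper writes the pairing in the opposite order; the two are equivalent by conjugate symmetry.
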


Note that, in the last equation, $\sigma_{1}(\Defstar; \xi)$ was tacitly extended
to act on all tensors $T^*M \otimes T^*M$ (not just on $S^{2}T^{*}M
\subset T^{*\otimes 2}M$). Nevertheless, this extension is canonical, in the
sense that it vanishes on anti-symmetric tensors:
\begin{equation*}
  \sigma_{1}(\Defstar; \xi) \big( Y^{\sharp} \otimes Z^{\sharp}
  - Z^{\sharp} \otimes Y^{\sharp}\big)
  \seq -\frac{\imath}2\big [ \xi(Y)Z + \xi(Z)Y
  - \xi(Z)Y - \xi(Y)Z \big] \seq 0\,.
\end{equation*}

\begin{proof}
  Let $x \in M$ and $\phi : M \to \RR$ be a smooth function such that $d\phi(x) \neq 0$.
  Let us replace $X$ with $e^{\imath t \phi}X$ in
  Equation \eqref{eq.def.Def}. Lemma \ref{lemma.stat.phase} then gives:
  \begin{align*}
    \langle \sigma_{1}(\Def; d\phi)X, Y \otimes Z\rangle & \seq
    \lim_{t \to \infty} \frac{e^{-\imath t \phi}}{2t}
     \big [ Z \cdot \nabla_{Y}^{LC} ( e^{\imath t \phi} X)
    + Y \cdot \nabla_{Z}^{LC} (e^{\imath t \phi} X) \big]\\
    & \seq \frac{\imath}2 \big [ d\phi(Y)X \cdot Z + d\phi(Z)X \cdot Y  \big]\\
    & \seq \frac{\imath}2 \langle d\phi \otimes X^{\sharp} + X^{\sharp} \otimes d\phi ,
    Y \otimes Z \rangle \,.
  \end{align*}
  This proves (i). From this, using that the principal symbol is stable for
  adjoints (i.e., $\sigma_{m}(P^{*}; \xi) = \sigma_{m}(P; \xi)^{*}$), we obtain
  \begin{align*}
    (\sigma_{1}(\operatorname{Def}^{*}; \xi) \big( Y^{\sharp} \otimes Z^{\sharp}\big), X)
    & \seq (Y^{\sharp} \otimes Z^{\sharp}, \sigma_{1}(\Def; \xi) X)\\
    & \seq \big(Y^{\sharp} \otimes Z^{\sharp}, \frac{\imath}2 \big[ \xi \otimes X^{\sharp}
    + X^{\sharp} \otimes \xi] \big)\\
    & \seq -\frac{\imath}2 \big[ \xi(Y) X \cdot Z + \xi(Z) X \cdot Y \big] \\
    & \seq -\frac{\imath}2 \big( \xi(Y)Z + \xi(Z) Y,  X \big)\,.
  \end{align*}
  This proves also (ii) and completes the proof.
\end{proof}

For any first order differential operator $P : \CI(M; E) \to \CI(M; F)$, we let
\begin{equation}\label{eq.def.panu}
  \pa_{\bsnu}^{P} \ede -\imath \sigma_{1}(P; \bsnu^{\sharp})\,,
\end{equation}
where $\bsnu$ is the chosen vector field (in this section, this vector field is
only required to be smooth, but, beginning with the next subsection, we will require
$\bsnu$ to be outer unit normal at the boundary of our given domain $\Omega$).
This definition is motivated by the abstract integration formula of the
following subsection.

The next formulas are an immediate consequence of the formulas of
Propositions \ref{prop.bdry.op} and \ref{prop.prop.Def}. (See Section A.3.4
from \cite{KMNW-2025} for more details. Also, recall the formula
$v \otimes w^{\sharp}x \ede  (w\cdot x) v$ from Notation \eqref{not.def.endomorphism}.)

\begin{corollary} \label{cor.formulas0}
  The deformation operator $\Def$ of Equation \eqref{eq.def.Def} satisfies
  the following formulas:
  \begin{enumerate}[\rm (i)]
    \item $\sigma_{1}(\Def; \xi) =
    \frac{\imath}2\big[ \xi \otimes \sharp
    + \sharp \otimes \xi \big] \in \Hom(TM; T^{*\otimes 2}M)$;
    \item $\pa_{\bsnu}^{\Def} := - \imath \sigma_{1}(\Def; \bsnu^{\sharp})
    =  \frac12 \big[ \bsnu^{\sharp}
    \otimes \sharp + \sharp \otimes \bsnu^{\sharp} \big] \in \Hom(TM; T^{*\otimes 2}M)$;
    \item $\sigma_{1}(\Defstar; \xi) =
    - \frac{\imath}2\big[ \xi^{\sharp} \otimes \sharp
    + \sharp \otimes \xi^{\sharp} \big] \in \Hom(T^{*\otimes 2}M; TM)$;
    \item $\pa_{\bsnu}^{\Defstar} := - \imath \sigma_{1}(\Def^{*}; \bsnu^{\sharp})
    = - \frac12 \big[ \bsnu
    \otimes \sharp + \sharp \otimes \bsnu \big] \in \Hom(T^{*\otimes 2}M; TM)$;
  \end{enumerate}
\end{corollary}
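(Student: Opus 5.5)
The plan is to read all four formulas off Proposition \ref{prop.prop.Def}, rewriting them in the endomorphism notation of Notation \ref{not.def.endomorphism} and then using the definition \eqref{eq.def.panu} of $\pa_{\bsnu}^{P}$.

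\textbf{Formula (i).} Proposition \ref{prop.prop.Def}(i) gives $\sigma_{1}(\Def;\xi)X = \frac{\imath}{2}[\xi\otimes X^{\sharp} + X^{\sharp}\otimes \xi]$. Here $\xi\otimes\sharp$ denotes the bundle map $X\mapsto \xi\otimes X^{\sharp}$, and $\sharp\otimes\xi$ denotes $X\mapsto X^{\sharp}\otimes\xi$. With this reading, (i) is just a restatement of Proposition \ref{prop.prop.Def}(i) as an element of $\Hom(TM; T^{*\otimes 2}M)$.

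\textbf{Formula (iii).} Proposition \ref{prop.prop.Def}(ii) gives $\sigma_{1}(\Defstar;\xi)(Y^{\sharp}\otimes Z^{\sharp}) = -\frac{\imath}{2}[\xi(Y)Z+\xi(Z)Y]$. Since $\xi(Y) = \xi^{\sharp}\cdot Y$, the map $\xi^{\sharp}\otimes\sharp$ should be read as the contraction $Y^{\sharp}\otimes Z^{\sharp}\mapsto (\xi^{\sharp}\cdot Y)Z$ on the first factor. Likewise, $\sharp\otimes\xi^{\sharp}$ is the contraction $Y^{\sharp}\otimes Z^{\sharp}\mapsto(\xi^{\sharp}\cdot Z)Y$ on the second factor. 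This matches the convention of Notation \ref{not.def.endomorphism} and of $\bsnu\otimes\sharp$ in \eqref{eq.def.Dnu}. By linearity these identities on decomposable tensors determine the map on all of $T^{*\otimes 2}M$, which gives (iii). As a consistency check, (iii) is also the pointwise adjoint of (i), because $\sigma_1(P^*;\xi)=\sigma_1(P;\xi)^*$ and $(v\otimes w^{\sharp})^{*}=w\otimes v^{\sharp}$.

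\textbf{Formulas (ii) and (iv).} Substitute $\xi=\bsnu^{\sharp}$ into (i) and (iii) and multiply by $-\imath$. Since $(-\imath)(\frac{\imath}{2})=\frac12$ and $(-\imath)(-\frac{\imath}{2})=-\frac12$, this yields (ii) and (iv). In (iv) we also use $(\bsnu^{\sharp})^{\sharp}=\bsnu$, since $\sharp$ is an involution by convention.

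The only real obstacle is notational. One has to fix the meaning of $\xi\otimes\sharp$ and $\sharp\otimes\xi^{\sharp}$ as homomorphisms: as insertion of tensors in (i) and (ii), and as contraction in the corresponding slot in (iii) and (iv). These readings must be consistent with Notation \ref{not.def.endomorphism} and with \eqref{eq.def.Dnu}, so that adjoints correspond. Once they are fixed, every step above is a direct substitution.
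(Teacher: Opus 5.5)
Your proposal is correct and follows essentially the same route as the paper. Both read (i) and (iii) off the two parts of Proposition \ref{prop.prop.Def}, interpreting $\xi\otimes\sharp$ as insertion and $\xi^{\sharp}\otimes\sharp$ as contraction. Both then obtain (ii) and (iv) by setting $\xi=\bsnu^{\sharp}$, multiplying by $-\imath$, and using $\sharp^{2}=\mathrm{id}$. Your explicit statement of these conventions, checked against \eqref{eq.def.Dnu}, makes precise what the paper leaves implicit.
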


\begin{proof}
  The first point is nothing but a reformulation of the first point of
  the last proposition. The second point follows right away from the first point.
  The third point point is obtained by applying again Proposition \ref{prop.bdry.op}.
  Indeed, the point (ii) of that proposition,
  namely, $\sigma_{1}(\Defstar; \xi) \big( Y^{\sharp} \otimes Z^{\sharp} \big)
  \seq -\frac{\imath}2\big [ \xi(Y)Z + \xi(Z)Y \big]$ gives right away
  \begin{equation*}
    - \frac{\imath}2\big[ \xi^{\sharp} \otimes \sharp
    + \sharp \otimes \xi^{\sharp} \big]\big( Y^{\sharp} \otimes Z^{\sharp} \big)
     \ede - \frac{\imath}2 \xi(Y) Z + \xi(Z) Y \\
     \seq \sigma_{1}(\operatorname{Def}^{*}; \xi)\big( Y^{\sharp} \otimes Z^{\sharp} \big)\,.
  \end{equation*}
  The last point follows from (iii) just proved using also
  that $\sharp$ is involutive, that is, $\sharp^2 = id$.
\end{proof}

We also obtain the following slightly more difficult formulas.

\begin{corollary} \label{cor.formulas}
  The deformation operator $\Def$ and the differential operator $\Dnu$ of Equation
  \eqref{eq.def.Dnu} satisfy the following formulas:
  \begin{enumerate}[\rm (i)]
    \item $\pa_{\bsnu}^{\Defstar}\Def = -\Dnu $;
    \item $\sigma_{1}(\Dnu ; \xi) = \frac{\imath}2 \big [ \xi(\bsnu)
     + \xi^{\sharp} \otimes \bsnu^{\sharp} \big]$;
    \item $\sigma_{1}(\Dnu ^{*}; \xi) = - \frac{\imath}2
    \big [ \xi(\bsnu) + \bsnu \otimes \xi \big]$; and
    \item $\sigma_{2}(\Defstar\Def; \xi) = \frac12(| \xi|^{2}
    + \xi^{\sharp} \otimes \xi)$.
  \end{enumerate}
\end{corollary}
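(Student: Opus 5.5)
All four formulas will be obtained from the symbol formulas of Corollary \ref{cor.formulas0}, the multiplicativity of principal symbols, $\sigma_{2}(PQ;\xi) = \sigma_{1}(P;\xi)\sigma_{1}(Q;\xi)$, and compatibility with adjoints, $\sigma_{m}(P^{*};\xi) = \sigma_{m}(P;\xi)^{*}$. Throughout, everything is evaluated on decomposable tensors $Y^{\sharp}\otimes Z^{\sharp}$ and on vector fields $X$, using Notation \ref{not.def.endomorphism}.

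For (i), we use Corollary \ref{cor.formulas0}(iv): $\pa_{\bsnu}^{\Defstar} = -\frac12[\bsnu\otimes\sharp + \sharp\otimes\bsnu]$. By the same computation as in Proposition \ref{prop.prop.Def}(ii), this is the map $Y^{\sharp}\otimes Z^{\sharp} \mapsto -\frac12[(Y\cdot\bsnu)Z + (Z\cdot\bsnu)Y]$. Comparing with the definition \eqref{eq.def.Dnu}, $\Dnu = \frac12(\bsnu\otimes\sharp + \sharp\otimes\bsnu)\Def$, applied to the tensor $\Def(X)$, gives $\pa_{\bsnu}^{\Defstar}\Def X = -\Dnu X$. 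The only care needed is to match the convention for which slot $\bsnu$ is contracted into. This is harmless because $\Def(X)$ is symmetric, so both contractions agree.

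For (ii), since $\Dnu$ is $\Def$ composed with a zeroth order bundle map, its symbol is that map composed with $\sigma_{1}(\Def;\xi)X = \frac{\imath}2[\xi\otimes X^{\sharp} + X^{\sharp}\otimes\xi]$. Contracting with $\bsnu$ in the first slot and applying $\sharp$ gives
\[
\tfrac{\imath}2[\xi(\bsnu)X + (X\cdot\bsnu)\xi^{\sharp}] = \tfrac{\imath}2[\xi(\bsnu) + \xi^{\sharp}\otimes\bsnu^{\sharp}]X .
\]
For (iii), we take adjoints pointwise. The real scalar $\xi(\bsnu)$ is self-adjoint, $\imath$ becomes $-\imath$, and $(\xi^{\sharp}\otimes\bsnu^{\sharp})^{*} = \bsnu\otimes\xi$ by Notation \ref{not.def.endomorphism}.

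For (iv), we compose (iii) of Corollary \ref{cor.formulas0} with (i):
\[
\sigma_{2}(\Defstar\Def;\xi)X = -\tfrac{\imath}2\cdot\tfrac{\imath}2\,\big[\xi^{\sharp}\otimes\sharp + \sharp\otimes\xi^{\sharp}\big]\big(\xi\otimes X^{\sharp} + X^{\sharp}\otimes\xi\big).
\]
Expanding with the rule $Y^{\sharp}\otimes Z^{\sharp}\mapsto \xi(Y)Z + \xi(Z)Y$, each of the two terms contributes $|\xi|^{2}X + \xi(X)\xi^{\sharp}$. The total is therefore $\frac14\cdot 2(|\xi|^{2}X + \xi(X)\xi^{\sharp}) = \frac12(|\xi|^{2} + \xi^{\sharp}\otimes\xi)X$.

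The main obstacle is bookkeeping rather than analysis. One has to keep the $\sharp$'s straight, and in particular to read $\xi^{\sharp}\otimes\xi$ as the endomorphism $X\mapsto\xi(X)\xi^{\sharp}$. One also has to track the signs coming from $\imath^{2} = -1$ and from the adjoint in (iii).
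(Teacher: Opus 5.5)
Your proposal is correct and follows essentially the same route as the paper: (i) comes from comparing $\pa_{\bsnu}^{\Defstar} = -\frac12[\bsnu\otimes\sharp+\sharp\otimes\bsnu]$ with the definition of $\Dnu$, (ii) from composing that bundle map with $\sigma_{1}(\Def;\xi)$, (iii) from taking pointwise adjoints, and (iv) from multiplicativity of the principal symbol applied to Corollary \ref{cor.formulas0}(i) and (iii). The only cosmetic difference is that in (ii) you contract $\bsnu$ into the first slot rather than using the symmetrized contraction $\frac12(\bsnu\otimes\sharp+\sharp\otimes\bsnu)$; this gives the same result because $\sigma_{1}(\Def;\xi)X$ is symmetric.
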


\begin{proof}
  The point (i) follows directly from $\Dnu \ede \frac12 (\bsnu \otimes \sharp +
    \sharp \otimes \bsnu) \, \Def$, see Equation \eqref{eq.def.Dnu}, and of the formula
  $\pa_{\bsnu}^{\Defstar} = - \frac12 \big[ \bsnu \otimes \sharp
  + \sharp \otimes \bsnu \big] \in \Hom(T^{*\otimes 2}M; TM)$ of
  Corollary \ref{cor.formulas0}(iv).

  To prove (ii), we use again the definition of $\Dnu
  \ede \frac12 (\bsnu \otimes \sharp + \sharp \otimes \bsnu) \, \Def$ and Corollary
  \ref{cor.formulas0}(i) to obtain
  \begin{multline*}
    \sigma_{1}(\Dnu ; \xi) X  \seq \frac12 \big[ \bsnu
    \otimes \sharp + \sharp \otimes \bsnu \big]\, \sigma_{1}(\Def; \xi)X
    \seq \frac12 \big[ \bsnu
    \otimes \sharp + \sharp \otimes \bsnu \big]\, \frac{\imath}2\big[ \xi \otimes \sharp
    + \sharp \otimes \xi \big]X\\
    \seq \frac{\imath} 4 \big[ \bsnu
    \otimes \sharp + \sharp \otimes \bsnu \big]\, ( \xi \otimes X^{\sharp}
    + X^{\sharp} \otimes \xi)\\
    \seq \frac{\imath} 4 \big( \xi(\bsnu) X + (\bsnu \cdot X) \xi^{\sharp}
    + (\bsnu \cdot X) \xi^{\sharp} + \xi(\bsnu) X \big)\\
    \seq \frac{\imath} 2 \big( \xi(\bsnu) X
    + (\bsnu \cdot X) \xi^{\sharp} \big) \seq \frac{\imath} 2 \big( \xi(\bsnu)
    + \xi^{\sharp} \otimes \bsnu^{\sharp}\big) X\,.
  \end{multline*}
  The transformation properties of the principal symbol with
  respect to adjoints and the formula $(v \otimes w)^{*} = w^{\sharp} \otimes v^{\sharp}$
  give right away (iii).
  Finally, taking into account that $\sharp^{2} = id$ and that
  $ \xi^{\sharp}( \xi) = | \xi|^{2}$ and using the multiplicativity
  of the principal symbol, we obtain
  \begin{align*}
    \sigma_{2}(\Defstar\Def; \xi)
    &\seq \sigma_{1}(\Defstar; \xi) \sigma_{1}(\Def; \xi)\\
    &\seq \frac14( \xi^{\sharp} \otimes \sharp + \sharp \otimes \xi^{\sharp} )
    ( \xi \otimes \sharp + \sharp \otimes \xi^{\sharp} )
    \seq \frac12(| \xi|^{2} + \xi^{\sharp} \otimes \xi)\,,
  \end{align*}
  as claimed in (iv).
\end{proof}

\subsection{An abstract integration by parts formula}
\label{ssec.abs.int}
{\cn We continue to assume that $(M, g)$ is a Riemannian manifold.}
We now recall a general (abstract) integration by parts formula that
will often be used in conjunction with the formulas of the previous subsection.

Our integration by parts formulas will be used either on the manifold $M$
or on some open subset $\Omega \subset M$ with smooth boundary $\Gamma := \pa \Omega$.

\begin{assumption}\label{assumpt.1side}
  Let $\Omega \subset M$ be an open subset. Recall that $\overline{\Omega}$
  denotes the \emph{closure} of $\Omega$ and that $\Gamma$ denotes the boundary
  $\pa \Omega := \overline{\Omega} \smallsetminus \Omega$ of $\Omega$.
  \cn We require in this paper that $\Omega$ \emph{be on one side of its boundary.}
  This means that we require one of the following two equivalent conditions:
  \begin{enumerate}[\rm (i)]
    \item If $\Omega_{-} := M \smallsetminus \overline{\Omega}$, then
    $\Gamma := \pa \Omega = \pa \Omega_{-}$.
    \item $\Omega$ is the interior of $\overline{\Omega}$.
  \end{enumerate}
\end{assumption}

This assumption is necessary in order to define a smooth vector field
$\bsnu$ that, at $\Gamma$, coincides with the \emph{unit outer normal vector}
to $\Omega$.
In the case of integration over $M$, integration by parts will be performed for
\emph{compactly supported} functions (or sections), so there will be no boundary
terms. However, when using integration by parts on the domain $\Omega$, we will
obtain \emph{boundary terms.} It is the purpose of this subsection to give a general
formula for these boundary terms. There is no loss of generality to assume that
$\Omega$ is connected

We endow $\pa \Omega$ with the measure $dS_{g}$ induced by the metric on $M$.
This allows us to introduce the inner product on the boundary $\Gamma$ of $\Omega$.
Also, we let $u \cdot v$ denote the (pointwise) scalar product of two sections
of our Hermitian vector bundle $E \to M$, as before. We then let
\begin{equation}\label{eq.def.inner.prod}
  \begin{gathered}
    (u, v) \seq (u, v)_{\Omega} \ede \int_{\Omega} u(x) \cdot v(x)\dvol_{g}(x)
    \quad \mbox{ and}\\
    %
    (u, v)'
    \ede \int_{\Gamma = \pa \Omega} u(x) \cdot v(x) \,
    dS_{g}(x)
  \end{gathered}
\end{equation}
denote the $L^{2}$-inner product of sections of $E$ over $\Omega$ and,
respectively, over $\Gamma := \pa \Omega$.

We will need some additional formulas continuing the discussion of the previous
subsection. We include them in a remark.

\begin{remark}\label{rem.basic.ip}
  Recall that $\sharp : TM \to T^{*}M$ is the vector bundle isomorphism defined by the
  metric $g$ on $TM$. Also, if $E, F \to M$ are two Hermitian vector bundles, then recall
  that $\sigma_{1}(P; \xi) \in \Hom(E; F)$ is the value at $\xi \in T^{*}M$ of
  the principal symbol of an order one differential operator $P : \CI(M; E)
  \to \CI(M; F)$ and
  $\pa_{\bsnu}^{P} :=   -\imath \sigma_{1}(P; \bsnu^{\sharp})$
  (see Equation \eqref{eq.def.panu}). Let also $\dvol_{g}$ be the
  volume element associated to the metric $g$ on $M$.
  We then have the following simple results.
  \begin{enumerate}[\rm (i)]
    \item or any vector field $X$, we have \cite[page 49]{petersen:98}
    \begin{equation}
      \maL_{X}\dvol_{g} \seq \div(X) \dvol_{g}\,,
    \end{equation}
    where $\maL_{X}$ is the Lie derivative in the direction of $X$.
    \item If $P$ is a first order differential operator,
    then $\sigma_{1}(P^{*}) = \sigma_{1}(P)^{*}$, and, hence
    $\pa_{\bsnu}^{P^*} = - (\pa_{\bsnu}^{P})^{*}$.

    \item If $X$ is a smooth vector field on $M$ and $P = X$, then
    $\sigma_{1}(X; \xi) = \imath \xi(X)$, and hence
    \begin{equation*}
      \pa_{\bsnu}^{X} \seq -\imath \sigma_{1}(X; \bsnu^{\sharp})
      \seq - \imath \big ( \imath \bsnu^{\sharp}(X) \big) \seq
      \bsnu \cdot X \,,
    \end{equation*}
    which, as we will see in the next proposition,
    is consistent with the well-known \emph{divergence formula:}
      \begin{equation*} 
      (X(f), h)_{\Omega} \seq -(f,  X(h) + \div(X) h)_{\Omega}
      + ((X \cdot \bsnu)f, h)'\,,
    \end{equation*}
    \cite{H-W, petersen:98, Wl-Ro-La}.
    (See \cite{KMNW-2025} for more details, where the above formula was used as the first
    step in the proof of Proposition \ref{prop.bdry.op}.)

    \item If $P := \nabla := \sharp d : \CI(M) \to \CI(M; TM)$ (the \emph{gradient}),
    then $\sigma_{1}(\nabla; \xi) = \imath \xi^{\sharp}$
    and a particular case of Proposition \ref{prop.bdry.op} next is the following integration
    by parts formula:
    \begin{equation*}  
      (\nabla f, X)_{\Omega} \seq
      (f, \nabla^{*}X)_{\Omega} + (f \bsnu, X )_{\pa \Omega}
      \seq (f, \nabla^{*}X)_{\Omega} + (f , \bsnu \cdot X)_{\pa \Omega}\,.
    \end{equation*}
    Recalling that $\nabla_{X} := \iota_{X} \circ \nabla$, we obtain that
    $\sigma_{1}(\nabla; \xi) = \iota_{X} \circ \imath \xi^{\sharp} = \imath \xi(X)$.
  \end{enumerate}
\end{remark}

We are ready to formulat now the needed abstract integration by parts formula,
which is Proposition 9.1 from Chapter 2 of \cite{Taylor1}, see also Proposition
A.3.14 from \cite{KMNW-2025}.

\begin{proposition} \label{lemma.prop.panu}
  \label{prop.bdry.op}
  Let $\Omega \subset M$ be an open subset with smooth boundary $\Gamma := \pa \Omega$ and
  let $P : \CI(M; E) \to \CI(M; F)$ be a first order differential operator. Let
  \begin{equation*}
    \pa_{\bsnu}^{P} \seq - \imath \sigma_{1}(P; \bsnu^{\sharp}) \in \Hom(E; F)
  \end{equation*}
  and let $P^{*} : \CI(M; F) \to \CI(M; E)$ be the \emph{formal adjoint} of $P$.
  Then, for all $u \in \CIc(M; E)$ and $v \in \CIc(M; F)$, we have
  \begin{equation*} 
    (Pu, v)_{\Omega} \seq (u, P^{*}v)_{\Omega}
    + (\pa_{\bsnu}^{P} u, v)'
  \end{equation*}
  and $\pa_{\bsnu}^{P} : \CI(M; E) \to \CI(\Gamma; F)$ is the unique operator with this property.
  \lpar Recall that $(\ , \ )'$ is the inner product on the boundary $\pa \Omega$.\rpar
\end{proposition}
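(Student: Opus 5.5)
The plan is to reduce to the scalar divergence formula of Remark \ref{rem.basic.ip}(iii) by a partition of unity and local trivializations. Both sides of the identity are sesquilinear in $(u, v)$ and local in $u$ and $v$. So we choose a locally finite cover of $\operatorname{supp} u \cap \operatorname{supp} v$ (compact) by coordinate charts $W_{j}$ over which $E$ and $F$ are trivial. We take a subordinate partition of unity $\{\phi_{j}\}$ and write $u = \sum_{j} \phi_{j} u$, a finite sum. This reduces the claim to the case where $u$ is supported in a single chart $W$ on which $E \simeq W \times \CC^{k}$ and $F \simeq W \times \CC^{l}$.

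In such a chart we write $P = \sum_{i} A_{i} \pa_{i} + B$, with matrix-valued coefficients $A_{i}$ and $B$. Equivalently, $P u = \sum_{i} A_{i} X_{i}(u) + B u$, with $X_{i} = \pa_{i}$ the coordinate vector fields. The term $Bu$ is of order zero: it contributes nothing to the boundary and $(Bu, v)_{\Omega} = (u, B^{*}v)_{\Omega}$ pointwise.

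For each term $A_{i} X_{i}(u)$, we pair componentwise with $v$, using the Hermitian metrics, and absorb the metric and the matrix $A_{i}$ into $v$. This leaves pairings $(X_{i}(f), h)_{\Omega}$ for scalar functions $f$, $h$, to which we apply the divergence formula of Remark \ref{rem.basic.ip}(iii). The interior terms collect into $(u, P^{*}v)_{\Omega}$, since they are exactly the terms defining the formal adjoint: the same computation with $\Omega = M$ has no boundary term and characterizes $P^{*}$. The boundary terms sum to
\begin{equation*}
  \Big(\sum_{i} (X_{i} \cdot \bsnu)\, A_{i} u, v\Big)'\,.
\end{equation*}
By Remark \ref{rem.basic.ip}(iii) and linearity of the principal symbol, $\sigma_{1}(P; \xi) = \imath \sum_{i} \xi(X_{i}) A_{i}$. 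Hence
\begin{equation*}
  -\imath\, \sigma_{1}(P; \bsnu^{\sharp}) \seq \sum_{i} (\bsnu \cdot X_{i})\, A_{i} \seq \pa_{\bsnu}^{P}\,,
\end{equation*}
which is the claimed formula. The main point requiring care is that the bundle metrics are not constant in the chosen frames. I would handle this by using local orthonormal frames, so that the Hermitian pairing is the standard one. Any derivative of the frame then enters only through zero-order terms, which never produce boundary contributions.

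For uniqueness, suppose $Q \in \Hom(E; F)$ (or an operator $\CI(M; E) \to \CI(\Gamma; F)$) satisfies the same identity. Subtracting the two identities gives $((\pa_{\bsnu}^{P} - Q) u, v)' = 0$ for all compactly supported $u$ and $v$. Since restrictions to $\Gamma$ of compactly supported smooth sections $v$ are dense in $L^{2}(\Gamma; F)$, every extension of a section of $F|_{\Gamma}$ being allowed, we get $(\pa_{\bsnu}^{P} - Q) u = 0$ on $\Gamma$ for all $u$. I do not expect any serious obstacle; the only delicate bookkeeping is the localization step.
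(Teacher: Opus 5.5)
Your proposal is correct and follows essentially the same route as the paper: by linearity you reduce to zero-order terms (no boundary contribution) and to vector-field terms with zero-order coefficients, apply the divergence formula of Remark \ref{rem.basic.ip}(iii), and identify the boundary coefficient as $-\imath\sigma_{1}(P;\bsnu^{\sharp})$. The paper phrases this through a global decomposition of $P$ into operators of the form $aXb$ and $a$, whereas you work in local orthonormal frames, which is the same argument made concrete; your uniqueness argument (restrictions of compactly supported sections exhaust $\CIc(\Gamma; F)$) also supplies a point the paper's sketch leaves implicit.
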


For completeness, we sketch now a proof. See \cite{KMNW-2025} for more details.

\begin{proof}
  First, if $P = a$, an order zero multiplication operator, then the
  formula is obviously true.
  Second, if $P = X$, a smooth vector field, the claimed formula is nothing
  but the divergence formula (Remark \eqref{rem.basic.ip}(ii)), which is
  well known. One checks
  easily that if the formula of the proposition is true for $P$ and $a$
  and $b$ are suitable endomorphisms, then the formula is true also for
  $aPb$. Since the formula is linear and all first order differential
  operators can be written as linear combinations of differential operators
  of the form $aXb$ and $a$, with $X$ a smooth vector field, the result follows.
\end{proof}

\subsection{Green formulas and energy estimates for the Stokes operator}
\label{ssec.Green1}
In this subsection, $(M, g)$ is a Riemannian manifold with bounded geometry, that
is, we assume that it has positive injectivity radius and all covariant derivatives
of the curvature are bounded. (See \cite{Grosse-Kohr-Nistor-23} for more on
manifolds with bounded geomery, including references.) Also, $E \to M$
is a Hermitian vector bundle with connection, also assume to have bounded geometry,
meaning that all the covariant derivatives
of its curvature are bounded. We will let $\Omega \subset M$ be an open
subset with boundary $\Gamma :=\pa \Omega$, assumed to be a submanifold with
bounded geometry of $M$ (see \cite{AGN1, KMNW-2025} definitions).
In this paper, we are interested in the case when $M$ is a manifold with straight
cylindrical ends (Definition \ref{def.cyl.end}), $E \to M$ a compatible
Hermitian vector bundle with metric preserving connection (Definition \ref{def.comp.vb}),
and $\Omega \subset M$ also compatible with the straight cylindrical ends structure
of $M$ (Equation \eqref{eq.def.Omega}). Then $M$, $E$, and $\Omega$ fit into the
framework of this subsection (that is $M$, $E$, and $\Gamma$ have bounded
geometry), so the results proved next apply to the setting we are interested in.

Our Green-type formulas are formulated in terms of
the {\it scalar product} on $E$, denoted $\xi \cdot \eta$, and on the innner-product on
$L^{2}(\Omega; E)$, which, we recall, is given by
\begin{equation*}
  (f,g)_{\Omega} \ede \int_{\Omega} f(x) \cdot g(x)\, \dvol_{g}(x)\,.
\end{equation*}
(In particular, we have $(f,g)_{\Omega} \ede \int_{\Omega} f(x) \overline{g(x)}\,
\dvol_{g}(x)$ if $f$ and $g$ are functions.) Our convention is that our scalar products
are conjugate linear in the {\it second variable} and linear in the first.
Let also $\bsu, \bsw \in H^{1}(\Omega; E)$. For the simplicity of
the notations, we shall write
\begin{equation}\label{eq.bdry.conv}
  (\bsu, \bsw)' \ede (\bsu_{+}, \bsw_{+})'\,.
\end{equation}
To avoid confusion (for instance, when $u$ is defined on both sides of
$\Gamma := \pa \Omega$), we shall sometimes write $(\bsu_{+}, \bsw_{+})'$ instead
of $(\bsu, \bsw)'$.
Recall that $\textbf{1}_{A}$ denotes the characteristic function of the set $A$
(that is, $\textbf{1}_{A}(x) = 1$ if $x \in A$ and $\textbf{1}_{A}(x) = 0$ if
$x \notin A$). We shall also need the following notation, which will be
{\it fixed from now on}.

\begin{notation}\label{not.def.UWB}
  We will use the following notation throughout this section:
  \begin{enumerate}[\rm (i)]
    \item $U \ede \cvector{\bsu}{p} \seq (\bsu \ \ p)^{\top}$ and
    $W \ede \cvector{\bsw}{q} \seq (\bsw \ \ q)^{\top}$ will denote
    two generic sections of $H^{2}(\Omega; TM) \oplus H^{1}(\Omega)$;
    thus $\bsu, \bsw \in H^{2}(\Omega; TM)$ are the {\it vector components}
    and $p, q \in H^{1}(\Omega)$ are the { \it scalar components} of $U$ and,
    respectively, $W$.
    \item The sesquilinear form
    \begin{equation*}
      B_{\Omega}(U, W) \ede 2(\Def \bsu, \Def \bsw)_{\Omega}
      + (\nabla^{*} \bsu, q)_{\Omega}
      + (p, \nabla^{*} \bsw)_{\Omega} + (V\bsu, \bsw)_{\Omega} - (V_{0}p, q)_{\Omega}
    \end{equation*}
    and we sometimes let $\mathfrak v \ede (V\bsu, \bsw)_{\Omega}
      - (V_{0}p, q)_{\Omega}$;
    \item If $w$ is the section over $\Omega$ of some vector bundle over $M$, then
    $\textbf{1}_{\Omega}w$ will denote its {\it extension with zero outside
    $\Omega$.}
  \end{enumerate}
\end{notation}

Recall that, for all $h \in H^{1}(\Omega; E)$, $h_{+}$ denotes its
trace (or restriction) at the boundary $\pa \Omega$. Also, recall the definition of
distributions of the form $h \otimes \delta_{\Gamma}$,
\begin{equation}  \label{eq.def.hdelta2}
  \langle h \otimes \delta_{\Gamma}, \phi \rangle \ede \int_{\Gamma}
  h(x) \cdot \phi(x) \, dS_{\Gamma}(x)\,.
\end{equation}
We then have the following two Green-type formulas formula that relate our generalise
Stokes operator $\bsXi := \bsXi_{V, V_{0}}$ with the form $B_{\Omega}$

\begin{proposition} \label{prop.Green}
  Let us assume that $M$ has bounded geometry and that $\pa \Omega \subset M$ is a
  submanifold with bounded geometry, as before. We use the
  Notation \ref{not.def.UWB}, in particular, $U := (\bsu \ \ p)^{\top}$ and
  $W := (\bsw \ \ q)^{\top}$ are in $H^{2}(\Omega; TM) \oplus H^{1}(\Omega)$.
  Then we have the following relations:
  \begin{enumerate}[\rm (i)]
    \item $ \big ( \bsXi U, W \big)_{\Omega}
    \seq B_{\Omega} (U, W)
    + (\bop U, \bsw)'
    \seq B_{\Omega} (U, W) + \big(\tbop  U, W\big)' \,.$

    \item $ \big ( \bsXi U, W \big )_{\Omega} - \big (U, \bsXi W \big )_{\Omega}
    \seq (\bop  U, \bsw)' - (\bsu , \bop  W)'  \seq
    (\tbop  U, W)' - (U,\tbop  W)'\,.$

    \item We let $\textbf{1}_{\Omega} U$ be the section of $TM \oplus \CC \to M$
    that is equal to $U$ on $\Omega$ and equal to $0$ outside $\Omega$. Then
    we have the following representation formula
    \begin{equation*}
      \bsXi \big( \textbf{1}_{\Omega} U \big)
      \seq \textbf{1}_{\Omega} \big(  \bsXi U  \big)
      - (\tbop  U)_{+}  \otimes \delta_{\Gamma}
      + \tbopstar (U_{+} \otimes \delta_{\Gamma})\,.
    \end{equation*}
  \end{enumerate}
\end{proposition}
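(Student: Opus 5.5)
The plan is to derive (i) from the abstract integration by parts formula, Proposition \ref{prop.bdry.op}, applied separately to each first order factor of $\bsXi$. Then (ii) follows from (i) by Hermitian symmetry of $B_{\Omega}$, and (iii) follows from (ii) by duality with test sections. We first assume $U, W$ are restrictions of sections in $\CIc(M; TM \oplus \CC)$. The general case $U, W \in H^{2}(\Omega; TM)\oplus H^{1}(\Omega)$ then follows by density, since every term is continuous for these norms: the traces $\bsu_{+}, \bsw_{+}$, $(\Def \bsu)_{+}$, $p_{+}$ are continuous on $\Gamma$, which has bounded geometry.

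For (i), write $(\bsXi U, W)_{\Omega} = 2(\Defstar\Def \bsu, \bsw)_{\Omega} + (V\bsu,\bsw)_{\Omega} + (\nabla p, \bsw)_{\Omega} + (\nabla^{*}\bsu, q)_{\Omega} - (V_{0}p,q)_{\Omega}$ and treat each term in turn.

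\textbf{Deformation term.} Apply Proposition \ref{prop.bdry.op} to $P = \Defstar$ with $u = \Def\bsu$. Since $(\Defstar)^{*} = \Def$, this gives $(\Defstar \Def\bsu, \bsw)_{\Omega} = (\Def \bsu, \Def\bsw)_{\Omega} + (\pa_{\bsnu}^{\Defstar}\Def\bsu, \bsw)'$. By Corollary \ref{cor.formulas}(i), the boundary term equals $-(\Dnu\bsu, \bsw)'$.

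\textbf{Pressure term.} Apply Remark \ref{rem.basic.ip}(iv), which gives $(\nabla p, \bsw)_{\Omega} = (p, \nabla^{*}\bsw)_{\Omega} + (p\bsnu, \bsw)'$.

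\textbf{Result.} Summing the terms gives
\[
(\bsXi U, W)_{\Omega} = B_{\Omega}(U,W) + (-2\Dnu\bsu + p\bsnu, \bsw)' = B_{\Omega}(U,W) + (\bop U, \bsw)' .
\]
The second form, $(\tbop U, W)'$, is immediate from \eqref{def.conormal.Tderiv2}, since the scalar component of $\tbop U$ is zero.

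For (ii), check that $B_{\Omega}(U,W) = \overline{B_{\Omega}(W,U)}$. The deformation term and the potential terms are Hermitian because $V$ and $V_{0}$ are real, and symmetric for $V$. The mixed terms $(\nabla^{*}\bsu, q)_{\Omega} + (p, \nabla^{*}\bsw)_{\Omega}$ are swapped by conjugation. Subtracting (i) for $(U,W)$ from the conjugate of (i) for $(W,U)$ then gives (ii). If $V$ is not assumed symmetric, one must replace it by $V^{*}$ in $\bsXi W$; I will assume symmetry, as positivity $V\ge 0$ suggests.

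For (iii), pair $\bsXi(\textbf{1}_{\Omega}U)$ in distribution sense with a test section $\Phi \in \CIc(M; TM\oplus\CC)$. This gives $(\textbf{1}_{\Omega}U, \bsXi\Phi)_{M} = (U, \bsXi \Phi)_{\Omega}$, using that $\bsXi$ is formally self-adjoint. By (ii) this equals
\[
(\bsXi U, \Phi)_{\Omega} - (\tbop U, \Phi)' + (U, \tbop\Phi)' .
\]
The middle term is $-\langle (\tbop U)_{+}\otimes\delta_{\Gamma}, \Phi\rangle$ by \eqref{eq.def.hdelta2}. The last term equals $\langle U_{+}\otimes\delta_{\Gamma}, \tbop\Phi\rangle = \langle \tbopstar(U_{+}\otimes\delta_{\Gamma}), \Phi\rangle$, where $\tbopstar$ is the distributional transpose of the order-one operator $\tbop$. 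Note that $\tbop$ involves $\Dnu$ with $\bsnu$ extended smoothly off $\Gamma$, so this makes sense. This yields the representation formula.

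\textbf{Main obstacle.} The only delicate point is bookkeeping. One must ensure the boundary term from $\Defstar$ is exactly $-\Dnu$, which holds because $\Def\bsu$ is symmetric and $\pa_{\bsnu}^{\Defstar}$ symmetrizes. One must also keep the conjugation conventions consistent, which fixes the signs in (ii) and (iii).
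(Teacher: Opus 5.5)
Your proposal is correct and follows essentially the same route as the paper. Both prove (i) for smooth compactly supported sections via Proposition \ref{prop.bdry.op}, using $\pa_{\bsnu}^{\Defstar}\Def = -\Dnu$ and the gradient integration by parts, and then extend by density; (ii) comes from (i), and (iii) comes from (ii) by pairing with test sections. Your explicit observation that (ii) requires $B_{\Omega}(U,W)=\overline{B_{\Omega}(W,U)}$, and hence $V$ symmetric and $V_{0}$ real, is a useful clarification of the paper's terse appeal to ``sesquilinearity.''
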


If $\Omega = M$, the results are still true, but we must remove the boundary
terms, that is the ones involving $(\ , \ )'$. More precisely, this proposition
reduces to
\begin{equation}  \label{eq.prop.Green.M}
  \big ( \bsXi U, W \big)_{M} \seq B_{\Omega} (U, W) \seq \big ( \bsXi U, W \big)_{M}\,.
\end{equation}

\begin{proof}
  Let us assume first that $U$ and $W$ are smooth and compactly supported.
  Let $\mathfrak v := (\bsu, V \bsw)_{\Omega} - (p, V_{0}q)_{\Omega}
  = (V \bsu, \bsw)_{\Omega} - (V_{0} p, q)_{\Omega}$ and $B_{\Omega}$,
  be as in Notation \ref{not.def.UWB}. Then,
  using the relation $\pa_{\bsnu}^{\Defstar}\Def = -\Dnu $
  of Corollary \ref{cor.formulas}(i), we obtain the following relations:
  \begin{multline*}
    \big (  \bsXi  U , W \big )_{\Omega}
     \seq (2\Defstar \Def \bsu, \bsw)_{\Omega} + (\nabla^{*} \bsu, q)_{\Omega}
    + (\nabla p, \bsw)_{\Omega} + \mathfrak v \\
    \seq 2(\Def \bsu, \Def \bsw)_{\Omega} - 2(\Dnu \bsu,  w)'
    + (\nabla^{*} \bsu, q)_{\Omega}
    + ( p, \nabla^{*} \bsw)_{\Omega}
    + (p \bsnu, \bsw)' + \mathfrak v \\
     \seq B_{\Omega} (U, W)
    - 2(\Dnu \bsu,  w)' + ( p \bsnu, \bsw)'
    \seq B_{\Omega} (U, W) + (\bop U, \bsw)'\\
    \seq B_{\Omega} (U, W) + (\tbop  U, W)'
    \,.
  \end{multline*}
  This proves (i) for $U$ and $W$ smooth, compactly supported. To complete the proof of
  (i), it suffices to notice that all the terms in (i) are continuous on
  $H^{2}(M; TM) \oplus H^{1}(M)$ and that $\CIc(M; TM \oplus \CC)$ is dense in the latter,
  because $M$ has bounded geometry \cite{GrosseSchneider, KMNW-2025}.

  The second point is an immediate consequence of the first one using the fact
  that $B_{\Omega}$ is sesquilinear. Indeed,
  \begin{align*}
    \big ( \bsXi U, W \big )_{\Omega} - \big (U, \bsXi W \big )_{\Omega}
    & \seq \big ( \bsXi U, W \big )_{\Omega} -
    \overline{\big (\bsXi W , U \big )}_{\Omega}\\
    & \seq B_{\Omega}(U, W) + (\bop  U, \bsw)' - \overline{B_{\Omega}(W, U)}
    - \overline{(\bop  W, \bsu)}\,{}_{\pa \Omega}\\
    & \seq (\bop  U, \bsw)' - (\bsu , \bop  W)'\,.
  \end{align*}
  The function $\textbf{1}_{\Omega} U$ is potentially discontinuous on $M$, and hence
  $\bsXi \big( \textbf{1}_{\Omega} U\big)$ is defined in distribution sense on $M$. Its
  definition and the second point (already proved) finally give
  \begin{align*}
    \langle \bsXi \big( \textbf{1}_{\Omega} U \big), W \rangle
    & \ede \big \langle \textbf{1}_{\Omega} U,  \bsXi W \big \rangle
    \ede \big ( \textbf{1}_{\Omega} U,  \bsXi W \big)_{M}
    \seq \big (U,  \bsXi W \big)_{\Omega} \\
    & \seq \big (  \bsXi  U, W \big )_{\Omega}
    - ((\tbop  U), W)' + (U, \tbop  W)'\\
    & \seq \big (\textbf{1}_{\Omega}  \bsXi  U, W \big )
    - ((\tbop  U)_{+}, W)' + (U_{+}, \tbop  W)'\\
    & \seq \big (\textbf{1}_{\Omega} \bsXi  U, W \big )
    - \langle (\tbop  U)_{+} \otimes \delta_{\Gamma}, W \rangle +
    \langle U_{+} \otimes \delta_{\Gamma}, \tbop  W \rangle \\
    & \seq \big \langle \textbf{1}_{\Omega} \bsXi  U, W \big \rangle
    - \langle (\tbop  U)_{+} \otimes \delta_{\Gamma}, W \rangle +
    \langle \tbopstar (U_{+} \otimes \delta_{\Gamma}),  W \rangle \,.
  \end{align*}
  Because $W$ was arbitrary, this proves the relation (iii).
\end{proof}

We will need the following definition from \cite{Grosse-Kohr-Nistor-23}.

\begin{definition} \label{def-L2-cont}
  Let $M$ be a manifold and $E$ and $F$ be two vector bundles over $M$. If $M$ is connected,
  we say that a differential operator $T : L^2(M; E) \to \CIc(M; F)'$ satisfies
  the \textit{$L^2$-unique continuation property} if, given $u \in L^2(M; E)$
  that vanishes in a \emph{non-empty} open subset of $M$ and satisfies $T u \seq 0$,
  then $u \seq 0$ \emph{everywhere} on $M$. For general $M$, we say
  that $T$ satisfies the \textit{$L^2$-unique continuation property} if
  it satisfies this property on any connected component of $M$.
\end{definition}

Clearly, an operator that is injective satisfies the $L^2$-unique continuation property.
The concept of $L^{2}$-unique continuation property just recalled
allows us to obtain the following corollary. Recall that a \emph{Killing vector field}
$X$ is a vector field that preserves the metric, equivalently, $\Def X = 0$.
We shall need the following strong form of positivity for the matrix valued potential
$V_{0}$ (in our previous works, we assumed that $V_{0}$ is a scalar multiple of the
identity matrix, but here we allow it to be more general).

\begin{definition}\label{def.succ}
  Let $A$ be a topological space and $V : A \to M_{n}(\CC)$. We shall write $V \succ 0$
  on $A$ if,
  \begin{enumerate}[\rm (i)]
    \item $V \ge 0$ on $A$ and,
    \item for every connected component $A_{0}$ of $A$, there is a point
    $a_{0} \in A_{0}$ such that $V(a_{0}) > 0$ (that is, in addition to $V(a_{0})\ge 0$, we also
    have that $V(a_{0})$ is invertible).
  \end{enumerate}
  The same definition applies to sections of endomorphism bundles.
\end{definition}

If $V$ is scalar valued or a scalar multiple of the identity $1 \in M_{n}(\CC)$
(as in our previous papers on the subject), the condition
$V \succ 0$ on $A$ is equivalent to $V \succ 0$ on every connected component of $A$.
We have the following corollary.

\begin{corollary}\label{cor.e.est}
  Let $V, V_{0} \ge 0$ and
  $U = \cvector{\bsu}{p} \in H^{2}(\Omega; TM) \oplus H^{1}(\Omega)$
  satisfy $\bsXi U = 0$ in $\Omega$ and $(\bop  U, \bsu)' = 0$.
  \begin{enumerate}[\rm (i)]
    \item We have $\Def \bsu \seq 0$, $V \bsu \seq 0$, $\nabla^{*} \bsu \seq 0$,
    $V_{0}p \seq 0$, and $\nabla p \seq 0$
    in $\Omega$.

    \item
    If, furthermore, $V_{0} \succ 0$ in $\Omega$, then
    $p = 0$ on $\Omega$.

    \item Similarly, if one of the following three conditions is satisfied:
    \begin{enumerate}[\rm (a)]
      \item No connected component of $\Omega$ has non-zero Killing vector fields;
      \item $V \succ 0$ on $\Omega$; or
      \item $\pa \Omega \neq \emptyset$ and $\bsu = 0$ on $\pa \Omega$;
    \end{enumerate}
    then $\bsu = 0$ in $\Omega$.
  \end{enumerate}
\end{corollary}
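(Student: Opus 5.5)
The plan is to apply the Green formula of Proposition \ref{prop.Green}(i) with $W = U$. Since $\bsXi U = 0$ in $\Omega$ and $(\bop U, \bsu)' = 0$, it gives $0 = B_{\Omega}(U, U)$. Writing out $B_{\Omega}$ from Notation \ref{not.def.UWB}:
\begin{equation*}
  0 \seq 2\|\Def \bsu\|^{2}_{\Omega} + (\nabla^{*}\bsu, p)_{\Omega} + (p, \nabla^{*}\bsu)_{\Omega}
  + (V\bsu, \bsu)_{\Omega} - (V_{0}p, p)_{\Omega}\,.
\end{equation*}
The middle terms are $2\,\mathrm{Re}(\nabla^{*}\bsu, p)_{\Omega}$, which is real but has no sign. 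The $V_{0}$ term also has the wrong sign. So the real part of this identity alone does not suffice, and I would use the second component of the equation $\bsXi U = 0$, namely $\nabla^{*}\bsu = V_{0}p$ in $\Omega$.

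Substituting, $(\nabla^{*}\bsu, p) + (p, \nabla^{*}\bsu) = 2(V_{0}p, p)$, because $V_{0}\ge 0$ is self-adjoint and so the quantity is real. The identity then becomes
\begin{equation*}
  0 \seq 2\|\Def\bsu\|^{2}_{\Omega} + (V\bsu,\bsu)_{\Omega} + (V_{0}p,p)_{\Omega}\,.
\end{equation*}
All three terms are non-negative, so each vanishes. This gives $\Def\bsu = 0$. From $V\ge 0$ and $(V\bsu,\bsu)=0$ we get $V^{1/2}\bsu=0$, hence $V\bsu=0$; likewise $V_{0}p=0$. Then $\nabla^{*}\bsu = V_{0}p = 0$. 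Finally, the first component of the equation gives $\nabla p = -2\Defstar\Def\bsu - V\bsu = 0$, using that $\Def\bsu=0$ holds as a function, so $\Defstar$ of it vanishes distributionally. This proves (i).

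For (ii), $\nabla p = 0$ means $p$ is constant on each connected component $\Omega_{0}$ of $\Omega$. At the point $a_{0}\in\Omega_{0}$ where $V_{0}(a_{0})>0$, the relation $V_{0}p=0$ forces the constant to be zero. By continuity of $V_{0}$, positivity holds on a neighborhood, so this works in the $L^{2}$ sense as well. Hence $p=0$.

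For (iii), on each component $\Omega_{0}$, $\bsu$ is a Killing field since $\Def\bsu=0$; elliptic regularity of $\Defstar\Def$ gives smoothness. In case (a), $\bsu=0$ directly. In case (b), $V\bsu=0$ with $V>0$ near $a_{0}$ gives $\bsu=0$ on an open set. Killing fields satisfy unique continuation: a Killing field is determined by its value and covariant derivative at one point (the $L^{2}$-unique continuation of $\Def$ from \cite{Grosse-Kohr-Nistor-23}), so $\bsu=0$ on $\Omega_{0}$. In case (c), I would extend $\bsu$ by zero across $\Gamma$ into a small collar; the extension lies in $H^{1}$ because the trace vanishes, and it satisfies $\Def=0$ in a neighborhood. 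Unique continuation then gives $\bsu=0$. Alternatively, use that a Killing field vanishing on a hypersurface has vanishing derivative there as well, since the normal derivative is determined by the tangential ones via antisymmetry of $\nabla\bsu$.

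The main obstacle is case (c): justifying the unique continuation across the boundary. I would rely on the $L^{2}$-unique continuation property for $\Def$ applied on a slightly enlarged open set, connected to $\Omega_{0}$, where the zero-extended field still satisfies $\Def\bsu=0$.
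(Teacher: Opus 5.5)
Your proof is correct, and it differs from the paper's only in how the energy identity in (i) is obtained.

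\textbf{Part (i).} The paper takes $W=(\bsu\ \ {-p})^{\top}$ in Proposition \ref{prop.Green}(i). The cross terms then become $-(\nabla^{*}\bsu,p)_{\Omega}+(p,\nabla^{*}\bsu)_{\Omega}$, which is purely imaginary, and the $V_{0}$ term acquires the good sign. Taking real parts gives $2\|\Def\bsu\|^{2}+(V\bsu,\bsu)+(V_{0}p,p)=0$ without using the second row of $\bsXi U=0$ separately. You take $W=U$ and use the constraint row $\nabla^{*}\bsu=V_{0}p$ to turn the indefinite cross terms into $2(V_{0}p,p)_{\Omega}$. This absorbs the $-(V_{0}p,p)_{\Omega}$ term and lands on the same identity.

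The paper's sign flip holds for arbitrary $U$: it is the accretivity of $\varepsilon\bsXi$ recorded in the remark after Corollary \ref{cor.e.est.new}, so it is the form one would use for estimates with non-zero data. Your version is tied to solutions of the divergence equation but is just as short for the statement at hand.

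\textbf{Part (iii).} The paper only cites the $L^{2}$-unique continuation of $\Def$. Your zero-extension argument for case (c) supplies the step needed to make that citation apply:
\begin{itemize}
\item The extension is in $H^{1}$ and satisfies $\Def=0$ distributionally across $\Gamma$. This follows from Proposition \ref{prop.bdry.op} with $P=\Def$, whose boundary term is of order zero and vanishes when $\bsu_{+}=0$.
\item The extension vanishes on the non-empty open set $\Omega_{-}$, which is non-empty because $\Omega$ lies on one side of $\Gamma\neq\emptyset$.
\end{itemize}
Your alternative 1-jet argument also works. It needs $C^{1}$ control of $\bsu$ up to $\Gamma$, obtainable from the $H^{2}$ traces together with the Killing ODE, so the extension argument is the cleaner route.
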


In particular, this corollary gives that
$\bsu = 0$ on $\supp(V) \cap \Omega$ and $p = 0$ on
$\supp(V_{0}) \cap \Omega$. Recall that by the statement ``$\phi \not \equiv 0$
on $A$,'' we mean that
there exists $a$ in the domain of $\phi$ such that $\phi(a) \not = 0$. To
negate this statement, we shall write ``$\phi = 0$ in $A$.''

\begin{proof}
  The real part $\operatorname{Re}(\mathfrak w)$
  of $\mathfrak w := (p, \nabla^{*}\bsu)_{\Omega}- (\nabla^{*}\bsu, p)_{\Omega}$
  vanishes. Let
  \begin{equation*}
    W \ede \cvector{\bsw}{q} \seq \cvector{\bsu}{-p} \, =: \, U'
  \end{equation*}
  in the formula $\big ( \bsXi U, W \big )_{\Omega} \seq B_{\Omega} (U, W)
  + (\bop  U, \bsw)'$ of Proposition \ref{prop.Green}. Together with
  the definition of $B_{\Omega}$ in Notation \ref{not.def.UWB} and with
  $\operatorname{Re}(\mathfrak w) =0$,
  this gives
  \begin{align*}
    0 & \seq \operatorname{Re} \big[\big ( \bsXi U, U' \big )_{\Omega}
    - (\bop U, \bsu)'\big]
    \seq \operatorname{Re} \big[  B_{\Omega} (U, U')\big]\\
    &
    \seq \operatorname{Re} \big[ 2(\Def \bsu, \Def \bsu)_{\Omega}
    - (\nabla^{*}\bsu, p)_{\Omega}
    + (p, \nabla^{*}\bsu)_{\Omega} + (V \bsu, \bsu)_{\Omega}
    + (V_{0}p, p)_{\Omega} \big]
    \\
    &
    \seq 2(\Def \bsu, \Def \bsu)_{\Omega} + (V \bsu, \bsu)_{\Omega}
    + (V_{0}p, p)_{\Omega}\,.
  \end{align*}
  Because $V, V_{0} \ge 0$, all three terms in the last sum are non-negative,
  so each of them equals zero. Therefore $\Def \bsu = 0$, $V \bsu = 0$,
  and $V_{0}p = 0$ in $\Omega$. We also have
  \begin{equation*}
    0 \seq \bsXi U \seq \cvector{2\Defstar \Def \bsu + V \bsu + \nabla p}
    {\nabla^{*} \bsu - V_{0}p}
    \seq \cvector{\nabla p}{\nabla^{*} \bsu}\,,
  \end{equation*}
  and hence we obtain (i). The condition $\nabla p = 0$ just proved implies that $p$
  is locally constant. Since, moreover, $V_{0}p = 0$,
  this constant is zero on the connected components of $\Omega$ on
  which $V_{0} \not \equiv 0$, and this proves (ii). (Notice that this is exactly
  the $L^{2}$-unique continuation property of $\nabla$, see
  Definition \ref{def-L2-cont}.) Similarly, (iii)
  follows from the fact that $\Def$ satisfies the $L^{2}$-unique
  continuation property (see \cite{Grosse-Kohr-Nistor-23}).
\end{proof}

The above result holds true also if $\Omega = M$ (with the same proof).
More precisely, we have the following consequences.

\begin{corollary}\label{cor.e.est.new}
  Let $V, V_{0} \ge 0$ and $U = \cvector{\bsu}{p} \in H^{2}(M; TM) \oplus H^{1}(M)$
  satisfy $\bsXi U = 0$ in $M$, which, we recall, is assumed to be connected.
  Then the following results hold:
  \begin{enumerate}[\rm (i)]
    \item $\Def \bsu \seq 0$, $V \bsu \seq 0$, $\nabla^{*} \bsu \seq 0$,
    $V_{0}p \seq 0$, and $\nabla p \seq 0$ in $M$.

    \item
    If, furthermore, $V_{0} \succ 0$ in $M$, then
    $p = 0$ on $M$.

    \item Similarly, if one of the following two conditions is satisfied:
    \begin{enumerate}[\rm (a)]
      \item $M$ has no non-zero Killing vector fields or
      \item $V \succ 0$ on $M$
    \end{enumerate}
    then $\bsu = 0$ in $M$.
  \end{enumerate}
\end{corollary}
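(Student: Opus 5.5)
The plan is to repeat the energy argument of Corollary \ref{cor.e.est} with $\Omega = M$, so that all boundary terms disappear. The only new point is justifying the Green identity \eqref{eq.prop.Green.M} on the non-compact manifold $M$ for $U \in H^{2}(M; TM) \oplus H^{1}(M)$.

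First I would establish, for all $U, W \in H^{2}(M; TM) \oplus H^{1}(M)$,
\begin{equation*}
  (\bsXi U, W)_{M} \seq 2(\Def \bsu, \Def \bsw)_{M} + (\nabla^{*}\bsu, q)_{M} + (p, \nabla^{*}\bsw)_{M} + (V\bsu, \bsw)_{M} - (V_{0}p, q)_{M}\,.
\end{equation*}
For $U, W \in \CIc(M; TM \oplus \CC)$ this is plain integration by parts, i.e.\ Proposition \ref{prop.bdry.op} with empty boundary, applied exactly as in the proof of Proposition \ref{prop.Green}(i). For general $U, W$ I would argue by density of $\CIc(M; TM \oplus \CC)$ in $H^{2}(M; TM) \oplus H^{1}(M)$, which holds because $M$ has bounded geometry. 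Both sides are continuous in this topology as long as $V$ and $V_{0}$ are bounded, which is the case in our setting since they are translation invariant near infinity. This density step is the only place where some care is needed, and I regard it as the main (though mild) obstacle.

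Next I would take $W := (\bsu \ \ -p)^{\top}$ and use $\bsXi U = 0$. Taking real parts kills the cross terms, because $(p, \nabla^{*}\bsu)_{M} - (\nabla^{*}\bsu, p)_{M}$ is purely imaginary. This leaves
\begin{equation*}
  0 \seq 2\|\Def \bsu\|^{2}_{M} + (V\bsu, \bsu)_{M} + (V_{0}p, p)_{M}\,.
\end{equation*}
Since $V, V_{0} \ge 0$, each term vanishes. Hence $\Def \bsu = 0$, and $V^{1/2}\bsu = 0$, so $V\bsu = 0$; likewise $V_{0}p = 0$. Substituting these into $\bsXi U = 0$ gives $\nabla p = 0$ and $\nabla^{*}\bsu = 0$. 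This proves (i).

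For (ii), $\nabla p = 0$ and the connectedness of $M$ show that $p$ is constant. At a point $a_{0}$ with $V_{0}(a_{0}) > 0$ we have $V_{0}(a_{0})p = 0$, so $p = 0$ everywhere.

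For (iii)(a), $\bsu$ is a Killing field, so $\bsu = 0$. For (iii)(b), $V\bsu = 0$ together with $V > 0$ near some point $a_{0}$, by continuity, gives $\bsu = 0$ on an open set. Since $\Def \bsu = 0$, the $L^{2}$-unique continuation property of $\Def$ from \cite{Grosse-Kohr-Nistor-23} then yields $\bsu = 0$ on all of $M$.
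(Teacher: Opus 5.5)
Your proposal is correct and follows the paper's argument. The paper simply notes that the proof of Corollary~\ref{cor.e.est} goes through verbatim with $\Omega = M$: it uses the boundary-free Green identity \eqref{eq.prop.Green.M}, justified as in Proposition~\ref{prop.Green} by density of compactly supported sections under bounded geometry, the test section $W = (\bsu \ \ {-p})^{\top}$, and the $L^{2}$-unique continuation of $\Def$ for (iii). Your remarks on the boundedness of $V, V_{0}$ and on deducing $V\bsu = 0$ from $(V\bsu, \bsu)_{M} = 0$ via $V^{1/2}$ just make explicit what the paper leaves tacit.
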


We make the following simple observation.

\begin{remark}
  Let $\varepsilon$ be the diagonal matrix $1 \oplus (-1)$.
  The last corollary and the proof of Corollary \ref{cor.e.est} give
  that the operator
  \begin{equation*} 
    \varepsilon \bsXi \ede \varepsilon \bsXi _{V,V_{0}} \ede
    \left(\begin{array}{ccc}  1 & 0 \\
    0 &  -1
    \end{array}\right) \left(\begin{array}{ccc}  \bsL + V & \nabla \\
    \nabla^* &  -V_{0}
    \end{array}\right)
    \seq
    \left(\begin{array}{ccc}  \bsL + V & \nabla \\
    - \nabla^* &  V_{0}
    \end{array}\right)
  \end{equation*}
  has positive real part, in the sense that $\operatorname{Re}
  (\varepsilon \bsXi U, U)_{M} \ge 0$
  for all $U$ smooth with compact support.
\end{remark}


\section{\cn General background material and manifolds with cylindrical ends}
\label{sec.background}

This section contains some preliminary material on pseudodifferential operators, normal
lateral limits, Fourier transforms and their relation to normal lateral limits, and
manifolds with straight cylindrical ends (definition and vector bundles and  Sobolev spaces
on these manifolds). Several related results are contained in the book \cite{KMNW-2025},
to which we refer for further details as well as for the concepts not defined here.
See also \cite{Hormander1, Hormander3, Taylor1} for related background material.
Except the last subsection, where we introduce manifolds with straight cylindrical
ends, in the other three subsections we work on $\RR^{n}$.

\subsection{Basic function spaces and some Fourier transforms}
\label{ssec.Fourier}
We now recall some basic function spaes, the Fourier transform, some other basic
concepts, and compute some Fourier transforms that will be needed in what follows,
especially for the limit and jump relations.

Let $p \in [1, \infty]$ and $(\mfkM , \mu)$ be a measure space.
If $u : \mfkM \to \CC$ is a measurable function, then its \emph{$L^p$-norm}
is defined by
\begin{equation}\label{eq.def.normp}
  \|u\|_{L^p(\mfkM, \mu)} \ede \,
  \begin{cases}
    \ \left(\int_{\mfkM} |u(x)|^p \, d\mu(x)\right)^{1/p}\,, &
    \quad \mbox{if } \ p < \infty \\
    \ \, \esssup_{x \in X}\, |u(x)|  \,, &
    \quad \mbox{if } \  p = \infty\,.
  \end{cases}
\end{equation}
Of course, it is possible that $\|u\|_{L^p(\mfkM, \mu)} = \infty$.
We identify functions that coincide except on a zero measure set, to
obtain the Lebesgue spaces
\begin{equation*} 
  L^p(\mfkM; \mu) \ede \{ u : \mfkM \to \CC \mid \ u \mbox{ measurable and }
  \|u\|_{L^p(\mfkM)}
  < + \infty \}/\ker (\|\cdot \|_{L^p(\mfkM)})\,.
\end{equation*}
When the measure $\mu$ is clear from the context, we shall write $L^p(\mfkM)$ instead
of $L^p(\mfkM; \mu)$. For instance, unless otherwise explicitly stated, on $\RR^{n}$
we shall consider the Lebesque measure. Thus, $L^{p}(\RR^{n})$ denotes the usual
$L^{p}$-spaces defined using the Lebesque measure.

Our convention is that $\NN := \{1, 2, \ldots \}$ and we let
$\ZZ_{+} := \NN \cup \{0\}$. The elements $\alpha \in \ZZ_{+}^{n}$
will be called \emph{multi-indices}. If $x = (x_{1}, x_{2}, \ldots, x_{n}) \in
\RR^{n}$ and $\alpha \in \ZZ_{+}^{n}$ is a multi-index, we let
$x^{\alpha} := x_{1}^{\alpha_{1}} x_{2}^{\alpha_{2}} \ldots x_{n}^{\alpha_{n}}$.
We also let $\pa_{x_{j}}\pa_{j}:= \frac{\partial}{\partial x_{j}}$ be the $j$th
partial derivative and $\pa^{\alpha} := \pa_{1}^{\alpha_{1}} \pa_{2}^{\alpha_{2}}
\ldots \pa_{n}^{\alpha_{n}}$. Recall then that
\begin{equation}\label{eq.def.maS}
  \maS(\RR^{n}) \ede
  \{ u : \RR^{n} \to \CC \mid
  x^{\alpha} \pa^{\beta} u \in L^{2}(\RR^{n})\,,
  \ \forall\, \alpha, \beta \in \ZZ_{+}^{n} \}
\end{equation}
denotes the space of \emph{Schwartz functions} on $\RR^{n}$ (i.e. smooth rapidly
decaying functions at infinity). By $\maS'(\RR^{n})$ we denote the dual of
$\maS(\RR^{n})$, called the {\it space of tempered distributions} on $\RR^n$.

We let $\maC^{k}(\RR^{n})$ denote the space of $k$-times differentiable functions
on $\RR^{n}$ and $\maC(\RR^{n}) := \maC^{0}(\RR^{n})$ (simply, the space of
continuous functions on $\RR^{n}$).
If $\alpha \in \ZZ_{+}^{n}$, then $|\alpha| := \alpha_{1} + \alpha_{2} +
\ldots + \alpha_{n}$.
We let $\maC_{0}^{k}(\RR^{n})$ denote the set of functions $f : \RR^{n} \to \CC$
such that $\lim_{|x| \to \infty}|\pa^{\alpha} f(x)| = 0$, for all $|\alpha| \le k$.
As usual, $\CIc(\RR^{n})$ denotes the set of smooth functions with compact support in
$\RR^{n}$ (this definition extends to manifolds right away).

We let $\imath ^2=-1$ and $x \cdot \xi := \sum_{j=1}^{n} x_{j} \xi_{j}$ denote the
inner product of two vectors $x, \xi \in \RR^{n}$. The \emph{Fourier transform} of
tempered distributions will be denoted $\maF : \maS'(\RR^{n}) \to \maS'(\RR^{n})$.
More precisely, our convention is that, if
$f \in L^{1}(\RR^{n})$, then $\maF$ and its inverse $\maF^{-1}$ and are given by
\begin{equation}\label{eq.def.inv.F}
  \begin{gathered}
    \hat f(x) \ede \maF f(x) \ede \int_{\RR^{n}}
    e^{-\imath x \cdot \tau} f(\tau)
    \, d\tau\qquad \mbox{and}\\
    \maF^{-1} f(x) \seq \frac1{(2\pi)^{n}} \int_{\RR^{n}}
    e^{\imath x \cdot \tau} f(\tau)
    \, d\tau\,.
  \end{gathered}
\end{equation}
It follows that $\maF f, \maF^{-1}f \in \maC_{0}(\RR^{n}) := \maC_{0}^{0}(\RR^{n})$,
because $f$ was assumed to be integrable.

Let us recall a few basic calculations with the Fourier transform for
later use.

\begin{remark}\label{rem.eq.Fderiv}
First, for $f \in \maS(\RR^{n})$, we have
\begin{equation*}
    \maF(f')(x) \seq \int_{\RR} e^{-\imath x \tau} f'(\tau)\, d\tau
    \seq -
    \int_{\RR} \big(e^{-\imath x \tau} \big)' f(\tau)\, d\tau
    \seq \imath x \hat f(x)\,.
\end{equation*}
An equivalent way of writing this relation is $\maF \pa_{x} f = \imath x \maF f$,
which remains valid for $f \in \maS'(\RR)$ by continuity using
the density of $\maS(\RR)$ in $\maS'(\RR)$ in the weak topology. Similarly, if
$f \in \maS'(\RR^{n})$,
$\pa_{j} := \frac{\pa}{\pa x_{j}},$ and $x_{j}$ denotes also the operator of
multiplication by $x_{j}$, then we have the following equalities of
tempered distributions:
    $\maF( \pa_{j} f) \seq \imath x_{j} \maF(f)$,\
    $\maF( \tau_{j} f) \seq \imath \pa_{j} \maF(f)$,\
    $\maF^{-1}( \pa_{j} f) \seq -\imath x_{j} \maF^{-1}(f)$,\ and\
    $\maF^{-1}( \tau_{j} f) \seq -\imath \pa_{j} \maF^{-1}(f).$
\end{remark}

Recall that $\pv \, x^{-1} =: \mfkV$ denotes the distribution
\begin{equation}\label{eq.def.pvinvx}
    \langle \pv \, x^{-1}, \phi \rangle \ede
    \lim_{\epsilon \searrow 0} \ \int_{|x| > \epsilon} \frac{\phi(x)}{x}\, dx
    \, =: \, \langle \mfkV, \phi \rangle \,, \ \ \phi \in \maS(\RR)\,.
\end{equation}
(sometimes called \emph{Hadamard's ``partie finie'' integral}). (We
use $\mfkV := \pv \, x^{-1}$ in order to streamline the notation.)
We shall need the explicit form of the
Fourier transform $\maF \mfkV := \maF(\pv \, x^{-1})$.
This is well-known and is expressed in terms of the $\sgn$ function:
\begin{equation}\label{eq.def.sgn}
  \sgn(x) \ede \frac{x}{|x|} \seq
    \begin{cases}
        \ \ 1 & \ \mbox{ if } x > 0 \\
        \ \ 0 & \ \mbox{ if } x = 0 \\
        \ -1 & \ \mbox{ if } x < 0 \,,
    \end{cases}
\end{equation}
(the value $\sgn(0)$ can, in fact, be chosen arbitrarily). As usual, we let
\begin{equation*}
  u(0+) \ede \lim_{\epsilon \searrow 0} u(\epsilon) \quad
  \mbox{and} \quad u(0-) \ede \lim_{\epsilon \searrow 0} u(-\epsilon)\,,
\end{equation*}
when these limits exist. For example, $\sgn(0+) - \sgn(0-) = 2$.

For any function $f : \RR \to \CC$, we let
\begin{equation}\label{eq.def.tilde}
  \tilde f(x) \ede f(-x)\,,
\end{equation}
a definition that extends, by duality, to distributions. As for functions, a distribution
$u \in \CIc(\RR)'$ is called \emph{even} (respectively, \emph{odd}) if
$\tilde u = u$ (respectively, $\tilde u = -u$). It is well-known (and
very easy to check) that $\maF(\tilde u) = \widetilde{\maF(u)}$ for all $u \in
\maS'(\RR)$. This shows that the Fourier transform of an even (respectively, odd)
tempered distribution is even (respectively, odd).
This observation will be needed for the proof of the following result
(which is undoubtedly well-known, see, for instance, Proposition 8.2 in
Chapter 3 in \cite{Taylor1}; it was stated without proof in \cite{KMNW-2025}).
For $x \in \RR^{n}$, we let, as usual
\begin{equation}\label{eq.def.jap}
  \jap{x} \ede \sqrt{1 + |x|^{2}} \seq \big ( 1 + x_{1}^{2} + \ldots
  + x_{n}^{2} \big)^{1/2}\,.
\end{equation}

\begin{lemma} \label{lemma.Fpvx-1}
    Let $\mfkV := \pv \, x^{-1}$ be as in \eqref{eq.def.pvinvx}. Then
    \begin{enumerate}[\rm (i)]
      \item $\mfkV \in \maS'(\RR)$ and $\maF^{-1} \mfkV = \frac{\imath}2 \sgn$.

      \item Let $\chi_{0} \in \CIc(\RR)$ be an even function with $\chi_{0}(x) = 1$
      for $x$ close to $0$, let
      \begin{equation*}
        Z \ede (1-\chi_{0})\mfkV \in \maS'(\RR)\,,\ \mbox{ and let }\
        W \ede \maF^{-1} Z \ede \maF^{-1} \big( (1-\chi_{0}) \mfkV )\,.
      \end{equation*}
      Then $W$ extends to a smooth function on each of the intervals $[0,\infty)$
      and $(-\infty, 0]$ and $W(0+) = \frac{\imath}2 = -W(0-)$.

      \item Also,
      for all $j, k \in \ZZ_{+}$, there exist $C_{j,k} > 0$ such that,
      for all $x \neq 0$, we have
      \begin{equation*}
        |\pa_{x}^{j}W(x)| \le C_{j, k} \jap{x}^{-k}\,.
      \end{equation*}
      The same result holds for the restriction of $W$ to $(-\infty, 0)$.
  \end{enumerate}
\end{lemma}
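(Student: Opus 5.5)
For (i), I will check that $\mfkV$ is tempered directly: for $\phi \in \maS(\RR)$ write
\begin{equation*}
  \langle \mfkV, \phi\rangle = \int_{|x|<1} \frac{\phi(x)-\phi(0)}{x}\,dx + \int_{|x|\ge 1}\frac{\phi(x)}{x}\,dx ,
\end{equation*}
since the odd kernel kills $\phi(0)$ on $(-1,1)$. This gives $|\langle \mfkV,\phi\rangle| \le C(\|\phi'\|_{\infty} + \|x\phi\|_{\infty})$, so $\mfkV \in \maS'(\RR)$.

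To compute $\maF^{-1}\mfkV$, I use $x\,\mfkV = 1$ as distributions. By Remark \ref{rem.eq.Fderiv}, $\maF^{-1}(x f) = -\imath\,\pa\,\maF^{-1} f$, and $\maF^{-1}1 = \delta$ with the paper's convention. Hence $-\imath\,(\maF^{-1}\mfkV)' = \delta$, that is $(\maF^{-1}\mfkV)' = \imath\delta$. This forces $\maF^{-1}\mfkV = \frac{\imath}2 \sgn + c$ for some constant $c$. Since $\mfkV$ is odd, its inverse Fourier transform is odd, which gives $c=0$.

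For (ii), I split $W = \maF^{-1}\mfkV - \maF^{-1}(\chi_{0}\mfkV) = \frac{\imath}2\sgn - \maF^{-1}(\chi_{0}\mfkV)$. The distribution $\chi_{0}\mfkV$ has compact support, so by Paley--Wiener its inverse Fourier transform is an entire, in particular smooth, function $G$. Since $\chi_{0}\mfkV$ is odd, $G$ is odd and $G(0)=0$. Therefore $W = \frac{\imath}2\sgn - G$ is smooth up to $0$ from each side, with $W(0\pm) = \pm\frac{\imath}2$.

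For (iii), which is the main point, I will get decay from the smoothness of $Z = (1-\chi_{0})\mfkV$. On $\RR$, $Z$ is the smooth function $(1-\chi_{0}(x))/x$, which vanishes near $0$ and is a symbol of order $-1$: $|\pa^{l} Z(x)| \le C_{l}\jap{x}^{-1-l}$. The plan is to show that, away from the origin, $\maF^{-1}Z$ has all derivatives rapidly decreasing.
\begin{itemize}
  \item Derivatives: by Remark \ref{rem.eq.Fderiv}, $\pa^{j} W = \maF^{-1}\big((\imath x)^{j} Z\big)$, where $x^{j}Z$ is a symbol of order $j-1$.
  \item Decay: $x^{k}\,\pa^{j}W$ is, up to constants, $\maF^{-1}\big(\pa^{k}(x^{j}Z)\big)$, and $\pa^{k}(x^{j}Z)$ is a symbol of order $j-1-k$.
\end{itemize}
Choosing $k \ge j+1$ makes this symbol integrable, so $\maF^{-1}\big(\pa^{k}(x^{j}Z)\big)$ is bounded and continuous. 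Hence $|x|^{k}|\pa^{j}W(x)| \le C$ for $x \neq 0$, and this holds for every large $k$.

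The obstacle is near $x=0$, where this argument fails. There I use the explicit form $W = \frac{\imath}2\sgn - G$ from (ii): on each half-line near $0$, the derivatives of $W$ are derivatives of the entire function $G$, hence bounded on compact sets. Combining the bound near $0$ with the rapid decay for $|x| \ge 1$ gives $|\pa^{j}W(x)| \le C_{j,k}\jap{x}^{-k}$ for all $x \neq 0$, including on $(-\infty,0)$.
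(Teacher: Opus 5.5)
Your proof is correct. For (i) and (ii) it follows the paper's route. The paper divides by $x$ on the Fourier side, using $\imath x\,\maF(\sgn)=2$ and $x\mfkV=1$, and removes the leftover multiple of $\delta_{0}$ by parity. You run the dual argument on the inverse side, getting $(\maF^{-1}\mfkV)'=\imath\delta$, and remove the additive constant by parity. You also verify temperedness explicitly, which the paper takes for granted. The splitting $W=\frac{\imath}2\sgn-\maF^{-1}(\chi_{0}\mfkV)$ and the oddness argument for $W(0\pm)$ are the same as in the paper.

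In (iii), the decay of $W$ itself ($j=0$) is obtained exactly as in the paper, via $(-\imath x)^{k}W=\maF^{-1}(Z^{(k)})$. The two arguments differ only for the derivatives $j\ge 1$:
\begin{itemize}
\item The paper uses the identity $x\chi_{0}\mfkV=\chi_{0}$ to get $\pa_{x}\maF^{-1}(\chi_{0}\mfkV)=\imath\maF^{-1}(\chi_{0})\in\maS(\RR)$. This puts $W\vert_{[0,\infty)}$ in $\maS([0,\infty))$ in one stroke, with the limiting constant forced by $W\to 0$ at infinity.
\item You apply the multiplication/differentiation duality to $x^{j}Z$, a symbol of order $j-1$, and treat $|x|\le 1$ separately using the smoothness of $\maF^{-1}(\chi_{0}\mfkV)$.
\end{itemize}
Your route is the general mechanism: it works for any symbol of order $-1$, as in Lemma \ref{lemma.Fsymbols}(i). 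The paper's route is shorter but exploits the special relation $x\mfkV=1$.

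One point you could make explicit. The distributional derivative $\pa^{j}W$ contains the term $\imath\delta^{(j-1)}$ at the origin, which $x^{k}$ annihilates once $k\ge j$. Since you only claim the bound for $x\neq 0$, this causes no difficulty.
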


Compare with Proposition 8.2 in Chapter 3 in \cite{Taylor1}.

\begin{proof}
  Let $\delta_{0}$ be the Dirac distribution at $0 \in \RR$.
  The derivative $\sgn'$ of the $\sgn$ function of Equation \eqref{eq.def.sgn}
  is given (in distribution sense) by
  \begin{equation*}
    \sgn' \seq [\sgn(0+)- \sgn(0-)] \delta_{0} \seq 2 \delta_{0}\,.
  \end{equation*}
  Remark \eqref{rem.eq.Fderiv} then gives
  \begin{equation*}
    \imath x \maF(\sgn)(x) \seq \maF(\sgn')(x) \seq 2 \maF(\delta_{0})(x)\seq 2\,.
  \end{equation*}
  It can be checked directly that $x \mfkV := x \big (\operatorname{pv}\, x^{-1}\big) = 1$.
  Therefore
  \begin{equation*}
    x \, \Big( \frac{\imath}{2}\maF(\sgn)(x) - \mfkV \Big )
    \seq 0 \in \maS'(\RR)\,.
  \end{equation*}
  Thus the difference $\frac{\imath}{2}\maF(\sgn)(x) - \mfkV$ is a
  multiple of $\delta_{0}$. Since both $\maF(\sgn)(x)$ and $\mfkV$
  are \emph{odd} distributions and $\delta_{0}$ is even, it follows that
  $\frac{\imath}{2}\maF(\sgn)(x) - \mfkV = 0$. Applying $\maF^{-1}$ to this
  equality yields the point (i).

  To prove (ii), we first notice that the distribution $\chi_{0}\mfkV$
  is compactly supported, and hence the distribution
  $u := \maF^{-1} (\chi_{0}\mfkV)$ is given by a smooth function \cite{Hormander1}.
  (Although we will not use this, note that $u$ is the convolution of $\maF^{-1}\mfkV$
  with a function in $\maS$ with integral one.) Since,
  \begin{equation*} 
    W \ede \maF^{-1} \big( (1-\chi_{0}) \mfkV\big ) \seq \maF^{-1} \mfkV -
    \maF^{-1} (\chi_{0}\mfkV) \seq \frac{\imath}2 \sgn -
    u\,,
  \end{equation*}
  we obtain that $W$ is given by a \emph{locally} $L^{1}$ function
  that is smooth outside 0 and extends to a smooth function
  on $[0,\infty)$. Moreover, $W$ is an odd function and has jump
  $W(0+) - W(0-) = \frac{\imath}2 (\sgn(0+) - \sgn(0-)) = \imath$ at $0$.
  Therefore $W$ also extends to a smooth function on $(-\infty, 0]$ and
  $W(0+) = - W(0-) = \frac{\imath}2$.

  It remains to prove (iii), that is, that
  $\pa_{x}^{j} W$ has rapid decay at $\infty$. Indeed, for all $k \ge 1$,
  $\big[ (1-\chi_{0}) \mfkV\big ]^{(k)} \in L^{1}(\RR)$, and hence
  we have
  \begin{equation}\label{eq.aux.W}
    (-\imath x)^{k} W \seq (-\imath x)^{k} \maF^{-1} \big[ (1-\chi_{0}) \mfkV\big ]
    \seq \maF^{-1} \Big[ \big[ (1-\chi_{0}) \mfkV\big ]^{(k)} \Big]\in \maC_{0}(\RR)\,.
  \end{equation}
  The last displayed equation gives that, for all $k \ge 1$ (and, hence,
  for all $k$), there exists $C_{k} > 0$ such
  that $|W(x)| \le C_{k} |x|^{-k}$, for all $|x| \ge 1$. This proves the result
  for $j = 0$, and hence $W$ has rapid decay at $\pm \infty$.

  Recall that $u := \maF^{-1}(\chi_{0}\mfkV) := \maF^{-1}(\chi_{0}[\pv\, x^{-1}])$.
  To obtain the result
  for the other values of $j$ (and, hence, prove (iii)), let us use Remark
  \eqref{rem.eq.Fderiv} to obtain that
  \begin{equation*}
      \pa_{x} u \ede \pa_{x} \maF^{-1} (\chi_{0}\mfkV)
      \seq \imath \maF^{-1} (x \chi_{0}\mfkV)
      \seq \imath \maF^{-1} (\chi_{0}) \in \maS(\RR) \,.
  \end{equation*}
  By integrating to obtain $u$ from $\pa_{x}u$ (up to a constant),
  this gives that there exists a constant $L \in \CC$
  such that $u\vert_{[0,\infty)} -L \in \maS([0,\infty))$.
  Since $W(x) = \frac{\imath}2 \sgn(x) - u(x)$ has zero limits at infinity
  (see also Equation \eqref{eq.aux.W}), we obtain
  that  $W\vert_{[0,\infty)} \in \maS([0,\infty))$,
  as claimed. (We also obtain $L = \frac{\imath}2$, but that is not
  needed for the proof.)
\end{proof}

As a consequence, we obtain the following lemma on jump-values of certain Fourier
transforms. It contains one of the most important calculations
for the proof of jump relations (see Lemma A.5.5 from \cite{KMNW-2025}).

\begin{lemma} \label{lemma.first.jump}
  Let $L \in \RR$ and $u : \RR \to \RR$ be a continuous function such that
  there exist $C \ge 0$ and $\epsilon > 0$ such that, for all $x \in \RR$, we have
  \begin{equation}\label{eq.cond.lfj}
    \jap{x}^{\epsilon}|x u(x) - L| \le C\,.
  \end{equation}
  Let $\tilde u (x) = u(-x)$, as before. Then $u + \tilde u$ is integrable,
  $u$ is a tempered distribution (i.e., $u \in \maS'(\RR)$), and its
  inverse Fourier transform $\maF^{-1} u$ is a function that is
  continuous everywhere, except maybe at $0 \in \RR$,
  with one-sided limits in $0$ given by
  \begin{equation*}
    \begin{gathered}
      \maF^{-1}u(0+) + \maF^{-1}u(0-) \seq
      \frac1{2 \pi} \, \int_{\RR} \big[ u(x) + u(-x) \big ]\, dx\,,
      \\
      \maF^{-1}u(0+) - \maF^{-1}u(0-) \seq \imath L \,,  \quad \mbox{ and}\\
      \lim_{x \to \infty} xu(x) \seq \lim_{x \to -\infty} xu(x) \seq L\,.
    \end{gathered}
  \end{equation*}
\end{lemma}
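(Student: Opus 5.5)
The plan is to split $u$ into a piece that behaves like $L\,\mfkV$ at infinity and a remainder that is integrable, and then invoke Lemma \ref{lemma.Fpvx-1}. Fix $\chi_{0}$ as in Lemma \ref{lemma.Fpvx-1}(ii) (even, compactly supported, $\chi_{0} = 1$ near $0$), and write
\begin{equation*}
  u \seq L (1-\chi_{0})\mfkV + r\,, \qquad r \ede u - L(1-\chi_{0})\mfkV\,.
\end{equation*}
The first step is to check that $r \in L^{1}(\RR)$. On the support of $\chi_{0}$, $r$ is continuous because $u$ is continuous and $(1-\chi_{0})/x$ is smooth, so $r$ is bounded there. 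For $|x|$ large, $r(x) = (xu(x) - L)/x$, and \eqref{eq.cond.lfj} gives $|r(x)| \le C\jap{x}^{-\epsilon}|x|^{-1}$, which is integrable. This immediately yields the remaining claims about $u$:
\begin{itemize}
  \item $u \in \maS'(\RR)$, since it is a sum of a tempered distribution and an $L^{1}$ function;
  \item $u + \tilde u = r + \tilde r$, since $(1-\chi_{0})\mfkV$ is odd, so $u + \tilde u$ is integrable;
  \item the limit $\lim_{x\to\pm\infty} xu(x) = L$ follows directly from \eqref{eq.cond.lfj}.
\end{itemize}

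Next, $\maF^{-1}u = L\,W + \maF^{-1}r$, with $W$ as in Lemma \ref{lemma.Fpvx-1}(ii). Since $r \in L^{1}$, the function $\maF^{-1} r$ is continuous everywhere (it lies in $\maC_{0}$). By Lemma \ref{lemma.Fpvx-1}(ii), $W$ is continuous away from $0$ and has one-sided limits $W(0\pm) = \pm\frac{\imath}{2}$. Hence $\maF^{-1}u$ is continuous except possibly at $0$, and
\begin{equation*}
  \maF^{-1}u(0+) - \maF^{-1}u(0-) \seq L\big(W(0+) - W(0-)\big) \seq \imath L\,.
\end{equation*}

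For the sum of the one-sided limits, we have $\maF^{-1}u(0+) + \maF^{-1}u(0-) = L\cdot 0 + 2\,\maF^{-1}r(0) = \frac{2}{2\pi}\int_{\RR} r$. Since $\int r = \frac12\int (r + \tilde r) = \frac12 \int (u + \tilde u)$, this equals $\frac{1}{2\pi}\int_{\RR}[u(x)+u(-x)]\,dx$, as claimed.

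The only point needing care is that the split is legitimate as an identity in $\maS'$: $u$ itself need not be integrable, so $\maF^{-1}u$ has to be understood distributionally. The decomposition handles this because it is linear and both pieces are tempered. There is also no real obstacle at $0$: $u$ is continuous there, so $\chi_{0}u$ is fine, and the term $L\chi_{0}\mfkV$ is never used, since only $(1-\chi_{0})\mfkV$ appears in the split.
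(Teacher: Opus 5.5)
Your proof is correct and is essentially the paper's argument. The paper splits $u = L\mfkV + w_{1} + w_{2}$ with $w_{1} = \chi_{0}(u - L\mfkV)$ (compactly supported, so $\maF^{-1}w_{1}$ is smooth) and $w_{2} = (1-\chi_{0})(u - L\mfkV) \in L^{1}$, reads off the jump from $\maF^{-1}\mfkV = \frac{\imath}{2}\sgn$ (Lemma \ref{lemma.Fpvx-1}(i)), and gets the sum of the one-sided limits separately from $\maF(\tilde u) = \widetilde{\maF(u)}$ and the integrability of $u + \tilde u$. Your remainder $r = \chi_{0}u + w_{2}$ absorbs the singular piece $L\chi_{0}\mfkV$, so it is an honest continuous $L^{1}$ function, and both limit formulas follow from the single identity $\maF^{-1}u = LW + \maF^{-1}r$ together with Lemma \ref{lemma.Fpvx-1}(ii) and the oddness of $(1-\chi_{0})\mfkV$. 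This is the same idea with slightly more economical bookkeeping.
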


\begin{proof}
  As in Lemma \ref{lemma.Fpvx-1}, let $\chi_{0} \in \CIc(\RR)$ be an even
  function with $\chi_{0} = 1$ in a neighborhood of 0 and let
  $\mfkV := \pv \, x^{-1} \in \maS'(\RR)$. Using these distributions, we now decompose
  our given function $u$ as
  \begin{equation}\label{eq.one.one}
    u \seq L\mfkV + w_{1} + w_{2} \in \maS'(\RR)\,,
  \end{equation}
  where $w_{1} := (u-L\mfkV)\chi_{0}$, $w_{2} := (u-L\mfkV)(1-\chi_{0})$, and $L \in \RR$
  is as in the statement.

  The distribution $w_{1} := (u-L\mfkV)\chi_{0}$ is compactly supported, and hence $\maF^{-1}w_{1}$
  is a smooth function on $\RR$ (an easy fact, see, for instance, \cite{Hormander1}).
  In turn, the function $w_{2} := (u-L\mfkV)(1-\chi_{0})$ is a continuous
  function on $\RR$ that vanishes in a neighborhood of $0$. Our assumptions
  then imply that there exists $C' \ge 0$ such that, for all $x \in \RR$, we have
  $|w_{2}(x)| \le C' \jap{x}^{-1-\epsilon}$.
  Consequently,
  $w_{2} \in L^{1}(\RR)$ and thus $\maF^{-1}w_{2}$ is continuous.
  Combining with Equation \eqref{eq.one.one}, we obtain
  \begin{equation}
    \maF^{-1}u - L \maF^{-1}\mfkV \seq \maF^{-1}w_{1}
    + \maF^{-1}w_{2} \in \maC(\RR)\,.
  \end{equation}
  Therefore $\maF^{-1}u$ and $L \maF^{-1}\mfkV$ have the same jump at 0.
  The relation $\maF^{-1}\mfkV := \maF^{-1}(\pv\, x^{-1} = \frac{\imath}2$ of Lemma
  \ref{lemma.Fpvx-1}(i) then gives
  \begin{equation}
    \maF^{-1}u(0+) - \maF^{-1}u(0-)
    \seq L \big[\maF^{-1}\mfkV(0+) - \maF^{-1}\mfkV(0-)\big]
    \seq \imath L\,.
  \end{equation}
  This proves the second displayed relation.

  To prove the first relation, let $w := u + \tilde u$, where
  $\tilde u(x) = u(-x)$, as before. Then the assumption gives that there
  exists $C'' > 0$ such that, for all $x \in \RR$, we have
  $|w(x)| \le C'' \jap{x}^{-1-\epsilon}$. Consequently, $w$ is integrable and
  hence $\maF^{-1} w$ is continuous on $\RR$. Combining this fact with
  the relation $\maF(\tilde u) = \widetilde{\maF(u)}$, we obtain
  \begin{multline*}
    \maF^{-1}u(0+) + \maF^{-1}u(0-) \seq \maF^{-1}u(0+) + \widetilde{\maF^{-1}u}(0+) \seq
    \maF^{-1}u(0+) + \maF^{-1}\tilde u(0+)\\ \seq  \maF^{-1} w(0+)
    \seq \maF^{-1} w(0) \seq \frac1{2 \pi} \int_{\RR} w(x)dx
    \seq \frac1{2 \pi} \int_{\RR} \big[ u(x) + u(-x) \big] dx\,.
  \end{multline*}
  The last statement of this lemma (the limit) follows the assumption that
  $\jap{x}^{\epsilon}|x u(x) - L| \le C$,
  which gives $\lim_{|x| \to \infty} (x u(x) - L) = 0$.
\end{proof}

We will use the last lemma in the following form.

\begin{corollary}
  \label{cor.first.jump}
  Let $u : \RR \to \RR$ be a function satisfying the assumptions of Lemma
  \ref{lemma.first.jump} (that is, satisfying the condition
  $\jap{x}^{\epsilon}|x u(x) - L| \le C$ of Equation \eqref{eq.cond.lfj}).
  Then
  \begin{enumerate}[\rm (i)]
    \item $u$ defines a tempered distribution on $\RR$;
    \item $\maF^{-1}u$ is given by a locally integrable function that is
    continuous on $\RR \smallsetminus \{0\}$;
    \item the following limit exists:
    \begin{equation*}
      \JC(u) \ede \lim_{|x| \to \infty} x u(x)\,;
    \end{equation*}
    \item $u + \tilde u$ is integrable on $\RR$, and
    \begin{equation}\label{eq.jump.explicit}
      \maF^{-1}u(0\pm) \seq \pm \frac{\imath \JC(u)}2
      + \frac1{2 \pi}\, \int_{\RR} \frac{u(x) + u(-x)}{2} \, dx\,.
    \end{equation}
  \end{enumerate}
\end{corollary}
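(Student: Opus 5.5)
This corollary is essentially a repackaging of Lemma \ref{lemma.first.jump}, so I will read each of its four points off that lemma and its proof. I take $L$ to be the constant in the hypothesis \eqref{eq.cond.lfj}, that is, $\jap{x}^{\epsilon}|xu(x)-L|\le C$.

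\textbf{Points (i) and (iii).} For (i), the hypothesis gives $|u(x)| \le (|L| + C)|x|^{-1}$ for $|x|\ge 1$. Since $u$ is also continuous, it has polynomial growth, hence defines a tempered distribution. For (iii), the same bound gives $|xu(x) - L| \le C\jap{x}^{-\epsilon} \to 0$ as $|x|\to\infty$. Thus $\JC(u)$ exists and equals $L$; this is also the last displayed relation of Lemma \ref{lemma.first.jump}.

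\textbf{Point (ii).} I reuse the decomposition \eqref{eq.one.one} from the proof of Lemma \ref{lemma.first.jump}:
\begin{equation*}
  u = L\mfkV + w_{1} + w_{2}\,.
\end{equation*}
The terms behave as follows:
\begin{itemize}
  \item $\maF^{-1}w_{1}$ is smooth, because $w_{1}$ is compactly supported.
  \item $\maF^{-1}w_{2}$ is continuous, because $w_{2}\in L^{1}(\RR)$.
  \item $\maF^{-1}\mfkV = \frac{\imath}{2}\sgn$, by Lemma \ref{lemma.Fpvx-1}(i).
\end{itemize}
Hence $\maF^{-1}u = \frac{\imath L}{2}\sgn + (\text{continuous})$. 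This is a locally bounded function, so it is locally integrable, and it is continuous away from $0$. The same decomposition shows that the one-sided limits $\maF^{-1}u(0\pm)$ exist.

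\textbf{Point (iv).} Integrability of $u+\tilde u$ is part of Lemma \ref{lemma.first.jump}. The lemma gives
\begin{equation*}
  S \ede \maF^{-1}u(0+) + \maF^{-1}u(0-) = \frac{1}{2\pi}\int_{\RR}\big[u(x)+u(-x)\big]\,dx\,,
  \qquad
  D \ede \maF^{-1}u(0+) - \maF^{-1}u(0-) = \imath L\,.
\end{equation*}
Solving, $\maF^{-1}u(0\pm) = \frac{S}{2} \pm \frac{D}{2}$. Substituting $L = \JC(u)$ and writing $\frac{S}{2} = \frac{1}{2\pi}\int_{\RR}\frac{u(x)+u(-x)}{2}\,dx$ gives exactly \eqref{eq.jump.explicit}.

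The main point needing care is the existence of the one-sided limits, which is implicit in the lemma's statement. It is secured by the explicit $\frac{\imath L}{2}\sgn$ plus continuous-function decomposition above, so no step is a genuine obstacle.
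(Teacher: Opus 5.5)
Your proposal is correct and takes the paper's route: the paper gives no separate proof and presents the corollary as a direct reformulation of Lemma~\ref{lemma.first.jump}. You obtain (i)--(iii) from the hypothesis and from the decomposition $u = L\mfkV + w_{1} + w_{2}$, and then solve the lemma's sum and jump relations for $\maF^{-1}u(0\pm)$ with $L = \JC(u)$, which is exactly that unpacking (the lemma's statement already asserts the existence of the one-sided limits, so your extra check is harmless but not needed).
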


\subsection{Pseudodifferential operators on $\RR^{n}$}
\label{ssec.pdos}

Most of the operators that we will work with (if not all of them!) will be
\emph{pseudodifferential operators.} We thus recall now some basic definitions
that are needed in the following. First, we recall that for $x \in \RR^{n}$, we
use the notation
  $\jap{x} \ede \sqrt{1 + |x|^{2}} \seq \big ( 1 + x_{1}^{2} + \ldots
  + x_{n}^{2} \big)^{1/2},$ see Equation \eqref{eq.def.jap}.

Let $\ZZ_{+}^{n}:= \left\{\alpha =(\alpha _1,\ldots,\alpha _n)\in \ZZ^n:
\alpha _i\geq 0\,, \ 1\leq i\leq n\right\}$, and, for $\alpha \in \ZZ_{+}^n$,
\begin{equation*}
  \pa_{x}^{\alpha} \ede \pa^{\alpha} \ede \frac{\pa ^{\alpha _1}}{\pa x_1^{\alpha _1}}\cdots
  \frac{\pa ^{\alpha _n}}{\pa x_n^{\alpha _n}}\,,
\end{equation*}
as before. We now recall the classical, well-known definition of Kohn-Nirenberg symbols
(see also \cite{Hormander3})

\begin{definition}\label{def.pseudos}
  We let $\Smbmn$ denote the set of functions $a : \RR^{n} \times \RR^{n} \to \CC$
  such that, for every pair of multi-indices $\alpha, \beta \in \ZZ_{+}^{n}$, there exists
  $C_{\alpha\beta} \ge 0$ such that
  \begin{equation*} 
    |\pa_{x}^{\alpha} \pa_{\xi}^{\beta} a(x, \xi)|
    \le C_{\alpha \beta}\jap{\xi}^{m-|\beta|}\,.
  \end{equation*}
  \begin{enumerate}[\rm (i)]
    \item The function $a$ will be called a \emph{symbol.}
    \item For $a \in \Smbmn$, we define $a(x, D): \CIc(\RR^{n}) \to \CI(\RR^{n})$ by the formula
    \begin{equation*}  
      a(x, D) u (x) \ede \frac1{(2 \pi)^{n}}
      \,\int_{\RR^{n}} e^{\imath x \cdot \xi} a(x, \xi) \hat u( \xi) \, d \xi\,.
    \end{equation*}
    The operator $a(x, D)$ is called the \emph{pseudodifferential operator with symbol} $a$.
  \end{enumerate}
\end{definition}

For $a \in \Smbmn$, we shall denote by $a(x, \xi)$ its values, where $x, \xi \in \RR^{n}$.
The elements of $M_{k}(\Smbmn)$ are also called symbols.
See also \cite{H-W, KMNW-2025, Taylor1}.
In order to clarify some later conventions, let us mention here that the second variable
in $a(x, \xi)$,
namely $\xi$, is really in the dual of $\RR^{n}$, which is nevertheless identified with
$\RR^{n}$ using
the standard inner product (this will be relevant for our study of the jump relations,
where the last basis vector in the $\xi$ variable will be denoted $e_{n}^{\sharp}$ instead
of simply $e_{n} = (0, \ldots, 0, 1)$).

The operator $a(x, D)$ is a particular case of a
\emph{pseudodifferential operator} on $\RR^{n}$ and, locally, every differential operator
on $\RR^{n}$ is of this form \cite{Hormander3}. For our purposes, we will not need to use
pseudodifferential operators more general than $a(x, D)$ on $\RR^{n}$.
We shall denote
by $\Psi^{m}(\RR^{n})$ the set of all pseudodifferential operators on $\RR^{n}$,
it is the
set of operators $T : \CIc(\RR^{n}) \to \CI(\RR^{n})$ such that, for all
$\phi \in \CIc(\RR^{n})$,
the operator $\phi T \phi$ is of the form $a(x, D)$ for a suitable symbol $a \in \Smbmn$
depending on $T$ and $\phi$. This definition extends right away to
operators acting between sections of vector bundles on manifolds.
Thus, if $M$ is a smooth manifold
and $E, F \to M$ are smooth vector bundles, then
\begin{multline}\label{def.psdos.M}
  \Psi^{m}(M; E, F) \ede \{ T : \CIc(M; E) \to \CI(M; F) \mid \forall \phi \in \CIc(M)
  \mbox{ supported} \\
  \mbox{in a coordinate neighborhood that trivializes both } E \mbox{ and } F\,,
  \phi T \phi \seq a(x, D)\}\,,
\end{multline}
where, this time, $a \in M_{k,l}(\Smbmn)$ depending on $T$ and $\phi$, $n$ is the dimension
of $M$, $E$ has rank $l$, and $F$ has rank $k$.

If $b \in S^{m}(\RR^{n-1} \times \RR^{n-1})$, we let $b(x', D')$ denote
its associated operator. In general, symbols with a prime (i.e. ${}'$) will refer to objects
on $\RR^{n-1}$. The most important example is that, if $x \in \RR^{n}$,
then $x' \in \RR^{n-1}$ and $x_{n} \in \RR$ are its projections, so that
$x = (x', x_{n})$.

Recall that \emph{the distribution kernel} $k_{a(x, D)}$ of $a(x, D)$ is the
distribution on $\RR^{n} \times \RR^{n}$ such that, for all $\phi, \psi \in \CIc(\RR^{n})$,
we have
\begin{equation}\label{eq.def.dkaD}
  \langle a(x, D)\phi, \psi \rangle \seq \langle k_{a(x, D)}, \psi \boxtimes \phi \rangle\,,
\end{equation}
where $\psi \boxtimes \phi \in \CIc(\RR^{n} \times \RR^{n})$ is given by
$\psi \boxtimes \phi(x, y) = \psi(x) \phi(y)$ and the pairings $\langle \ , \ \rangle$
are those of distributions and test functions (for the first pairing, on $\RR^{n}$, and,
for the second pairing, on $\RR^{n} \times \RR^{n}$). For further use, let us record the
following well-known lemma \cite{Hormander3, H-W, Taylor1}.

\begin{lemma}\label{lemma.d.k}
  The distribution kernel $k_{a(x, D)}(x, y)$ of $a(x, D)$
  is smooth for $x \neq y$  and there it is given by
    $k_{a(x, D)}(x, y) \seq \maF_{\xi}^{-1}a(x, x-y)\,.$
\end{lemma}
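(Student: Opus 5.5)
We have to prove Lemma \ref{lemma.d.k}: the kernel $k_{a(x,D)}$ is smooth off the diagonal and there equals $\maF_{\xi}^{-1}a(x, x-y)$, meaning the inverse Fourier transform of $a(x,\cdot)$ in the second variable, evaluated at $x-y$. The plan is to compute the kernel as an oscillatory integral, identify it with a partial inverse Fourier transform, and then use decay of derivatives of the symbol to get smoothness away from $x=y$.

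\emph{Step 1: write down the kernel.} For $\phi, \psi \in \CIc(\RR^{n})$, insert $\hat\phi(\xi) = \int e^{-\imath y\cdot\xi}\phi(y)\,dy$ into Definition \ref{def.pseudos} and pair with $\psi$. This gives
\begin{equation*}
  \langle a(x,D)\phi, \psi\rangle \seq \frac{1}{(2\pi)^{n}}\int\!\!\int\!\!\int e^{\imath (x-y)\cdot\xi} a(x,\xi)\phi(y)\psi(x)\,dy\,d\xi\,dx\,.
\end{equation*}
When $m<-n$, $a(x,\cdot)$ is integrable in $\xi$ and Fubini applies directly. The kernel is then the continuous function
\begin{equation*}
  k(x,y) \seq (2\pi)^{-n}\int e^{\imath (x-y)\cdot\xi}a(x,\xi)\,d\xi \seq \maF_{\xi}^{-1}a(x, x-y)\,,
\end{equation*}
using the convention \eqref{eq.def.inv.F}.

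\emph{Step 2: general $m$.} For general $m$, fix $x$ and view $a(x,\cdot)\in\maS'(\RR^{n})$. The identity $k_{a(x,D)}(x,y)=\maF_{\xi}^{-1}a(x,\cdot)(x-y)$ then holds in the sense of distributions on $\RR^{2n}$. One way to see this is to approximate $a$ by $a_{\epsilon}(x,\xi)=a(x,\xi)\chi(\epsilon\xi)$ with $\chi\in\CIc$ equal to $1$ near $0$. These symbols are bounded in $S^{m}$ and converge to $a$ in $S^{m+1}$. The operators $a_{\epsilon}(x,D)\phi$ then converge pointwise to $a(x,D)\phi$ by dominated convergence, since $\hat\phi$ is Schwartz. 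Each $a_{\epsilon}$ falls under Step 1, and the kernels $\maF_{\xi}^{-1}a_{\epsilon}(x,x-y)$ converge distributionally.

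\emph{Step 3: smoothness off the diagonal (the main point).} Set $z=x-y$. By Remark \ref{rem.eq.Fderiv},
\begin{equation*}
  z^{\gamma}\,\maF_{\xi}^{-1}a(x,\cdot)(z) \seq c_{\gamma}\,\maF_{\xi}^{-1}\big(\pa_{\xi}^{\gamma}a(x,\cdot)\big)(z)\,,
\end{equation*}
with $c_{\gamma}$ a power of $\imath$. Moreover $\pa_{\xi}^{\gamma}a \in S^{m-|\gamma|}$. Choose $|\gamma|$ large, so that $\pa_{x}^{\alpha}\pa_{\xi}^{\gamma}a(x,\xi)\,\xi^{\beta}$ is integrable in $\xi$ uniformly in $x$ for the finitely many $\alpha,\beta$ of interest. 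Then $z^{\gamma}\maF_{\xi}^{-1}a(x,z)$ is $\maC^{k}$ in $(x,z)$ for any prescribed $k$. Covering $\{z\neq0\}$ by the open sets $\{z_{j}\neq 0\}$ and using $\gamma = N e_{j}$, we can divide by $z_{j}^{N}$. Hence $\maF_{\xi}^{-1}a(x,z)$ is $\maC^{k}$ on $z\neq 0$ for every $k$, so it is smooth there.

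The only delicate point is justifying the distributional identity of Step 2 for large $m$ and matching it with the pointwise function of Step 3. Both sides are continuous limits of the $\epsilon$-regularizations, so they agree as distributions on $\{x\neq y\}$. Since the right-hand side is a smooth function there, the kernel is given by it, which proves the lemma.
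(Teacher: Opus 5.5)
Your argument is correct and is the standard textbook one. The paper gives no proof of its own here: it records the lemma as well known and cites H\"ormander, Hsiao--Wendland, and Taylor, where exactly this argument is used (regularize the symbol, then trade factors of $(x-y)^{\gamma}$ for $\xi$-derivatives of $a$). The one step worth tightening is Step 2, which is cleanest stated as follows: $a_{\epsilon}\to a$ in $\maS'(\RR^{2n})$ by dominated convergence; the partial inverse Fourier transform $\maF_{\xi}^{-1}$ and the linear change of variables $(x,y)\mapsto(x,x-y)$ are continuous on $\maS'(\RR^{2n})$; and a distribution kernel is determined by its pairings with the tensor products $\psi\boxtimes\phi$.
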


A smooth function $b : \RR^{n} \times \RR^{n} \to \CC$ will be called \emph{essentially
homogeneous} of order $m$ if
\begin{equation}\label{eq.def.ess.hom}
  b(x, \lambda \xi) \seq \lambda^{m} b(x, \lambda \xi)\,, \qquad
  \mbox{for all }
  | \xi| \ge 1 \ \mbox{ and } \ \lambda | \xi| \ge 1\,.
\end{equation}
A symbol $a \in \Smbmn$ will be called \emph{classical} if there exist
essentially homogeneous symbols $a_{j} \in S^{m-j}(\RR^{n} \times \RR^{n})$
of order $m - j$
such that, for every $N \in \NN$, $a - \sum_{j=0}^{N} a_{j} \in
S^{m-N-1}(\RR^{n} \times \RR^{n})$. The set of classical symbols of order $m$
will be denoted $S^{m}_{\cl}(\RR^{n} \times \RR^{n})$.
We now recall the definition of the principal symbol, which will
play an important role in what follows.

\begin{definition}\label{def.princ.symb}
  If $a \in S^{m}_{\cl}(\RR^{n} \times \RR^{n})$ and $\xi \in \RR^{n}$,
  $\xi \neq 0$, we let
  \begin{equation*} 
    \sigma_{m}(a; x, \xi) \ede \lim_{\lambda \to \infty}
    \lambda^{-m} a(x, \lambda \xi)
  \end{equation*}
  denote its \emph{principal part} (or \emph{symbol}).
  We shall say that $\sigma_{m}(a(x, D)) := \sigma_{m}(a)$ is \emph{the principal
  symbol} of the pseudodifferential operator $a(x, D)$.
\end{definition}

Therefore, $\sigma_{m}(a) : \RR^{n} \times (\RR^{n} \smallsetminus \{0\}) \to \CC$
satisfies $\sigma_{m}(a; x, \lambda \xi) = \lambda^{m} \sigma_{m}(a; x, \xi)$, for
all $\lambda > 0$. For example,
\begin{equation}\label{eq.ps.dj}
  \sigma_{1}(\pa_{j}; x, \xi) \seq  \imath \xi_{j}\,.
\end{equation}
Given $a \in S_{\cl}^{m}(\RR^{n} \times \RR^{n})$, we
can always find an essentially homogeneous function $b_{m}$ of order $m$,
$b_{m} \in S_{\cl}^{m}(\RR^{n} \times \RR^{n})$,
such that $\sigma_{m}(a; x, \xi) = b_{m}(x, \xi)$, for all $| \xi| \ge 1$
\cite{Hormander3, H-W, Taylor2}.
We shall say that $b_{m}$ is an \emph{essential homogeneous representative
of} $\sigma_{m}(a)$. Also, note that $a \in S^{m}_{\cl}(\RR^{n} \times \RR^{n})$
will have a different principal symbol if regarded as an element of
$S^{m+1}_{\cl}(\RR^{n} \times \RR^{n})$, in fact, then, $\sigma_{m+1}(a) = 0$.

\subsection{Traces and normal lateral limits on $\RR^{n}$}
\label{ssec.problem}

We shall need some general results on the \emph{normal lateral limits} $[a(x, D)u]_{\pm}$
at $\Gamma := \{x_{n} = 0\}$ of the value $a(x, D)u$ of a pseudodifferential operator
$a(x, D)$ acting on suitable distribution $u = h \otimes \delta_{\Gamma}$ supported on
$\Gamma$. This problem will be formulated precisely next and will be studiend in detail
on Euclidean space in Section \ref{sec.nllRn} and on manifolds with cylindrical ends
in Section \ref{sec.NLimits} following \cite{KMNW-2025, KNW-22, KohrNistor-Stokes}.

It will be convenient to use the following notation. Let $\Gamma_{\epsilon} \subset \RR^{n}$
be the hyperplane
\begin{equation}\label{eq.def.Gamma}
  \Gamma_{\epsilon} \ede \{x = (x', x_{n})
  \in \RR^{n-1} \times \RR \mid x_{n} = \epsilon \}\,, \qquad \Gamma := \Gamma_{0}
\end{equation}
with the induced Euclidean measure. We parameterize $\Gamma_{\epsilon}$ via the diffeomorphism
$\Gamma \ni (x', \epsilon) \mapsto x' \in \RR^{n-1}$.
Recall that by $\CIc(\RR^{n})$ we denote the set
of smooth functions with compact support in $\RR^{n}$. We let $e_{n} = (0, 0, \ldots, 0, 1)
\in \RR^{n}$. Then $\Gamma _\epsilon \ede \Gamma + \epsilon e_{n} \ede \{x = (x', x_{n})
\in \RR^n: x_{n} = \epsilon\} \subset \RR^{n}$. Then $\tau_{-\epsilon} : H^{s}(\Gamma_{\epsilon})
\to H^{s}(\Gamma)$ will be the natural isometries induced by translations
$\tau_{\epsilon}(x) := x - \epsilon e_{n}$, $\tau_{\epsilon} : \Gamma_{\epsilon}
\to \Gamma_{0} = \Gamma$. (Note that, in this setting, the role of $\Omega$ is played
by $\Omega = \RR_{+}^{n} := \{ x_{n} > 0\}$, so that $\Gamma := \pa \Omega$,
as usual.)

The following lemma is often used in this paper (see also Lemma A.5.1 in \cite{KMNW-2025}).

\begin{lemma}\label{lemma.def.hdelta}
  Let $\Gamma := \{x_{n} = 0\} \simeq \RR^{n-1}$ be as in \eqref{eq.def.Gamma},
  $s \in \RR$, and
  $h \in H^{s}(\Gamma)$, $s \in \RR$.
  \begin{enumerate}[\rm (i)]

  \item
  Let $s \ge 0$ and let $h \otimes \delta_{\Gamma}$ be defined by
  \begin{equation*}
    \langle h \otimes \delta_{\Gamma}, \phi \rangle \ede \int_{\RR^{n-1}}
    h(x') \phi(x', 0) \, dx'\,, \qquad \forall\, \phi \in \CIc(\RR^{n})\,.
  \end{equation*}
  Then $h \otimes \delta_{\Gamma} \in H^{s'}(\RR^{n}; E)$
  for all $s' < -1/2$.

  \item
  Let $\langle \ , \ \rangle_{\Gamma}$ be the pairing of distributions
  and test functions on $\Gamma$. If $s < 0$, the formula
  \begin{equation*}
    \langle h \otimes \delta_{\Gamma}, \phi \rangle \ede \langle
    h,  \phi\vert_{\Gamma} \rangle_{\Gamma}\,, \qquad \forall\,
    \phi \in \CIc(\RR^{n})\,,
  \end{equation*}
  defines a distribution $h \otimes \delta_{\Gamma} \in H^{s - 1/2}(\RR^{n}; E)$.

  \end{enumerate}
\end{lemma}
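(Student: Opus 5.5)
The plan is to compute the Fourier transform of $h \otimes \delta_{\Gamma}$ explicitly and then read off its Sobolev regularity. For part (i), I first treat $h \in \maS(\RR^{n-1})$ and use density of $\maS(\RR^{n-1})$ in $H^{s}(\Gamma)$. Writing $\xi = (\xi', \xi_{n})$, the defining formula gives directly
\begin{equation*}
  \maF(h \otimes \delta_{\Gamma})(\xi', \xi_{n}) \seq \hat h(\xi')\,,
\end{equation*}
which is independent of $\xi_{n}$, where $\hat h$ is the $(n-1)$-dimensional Fourier transform. This holds in $\maS'(\RR^{n})$ because $\langle h \otimes \delta_{\Gamma}, \phi\rangle$ depends only on $\phi(\cdot, 0)$. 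The $H^{s'}$-norm squared is then, up to the $(2\pi)^{-n}$ normalization,
\begin{equation*}
  \int_{\RR^{n-1}} |\hat h(\xi')|^{2} \int_{\RR} (1 + |\xi'|^{2} + \xi_{n}^{2})^{s'}\, d\xi_{n}\, d\xi'\,.
\end{equation*}

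The key step is the one-dimensional integral. Substituting $\xi_{n} = \jap{\xi'} t$ gives
\begin{equation*}
  \int_{\RR} (\jap{\xi'}^{2} + \xi_{n}^{2})^{s'}\, d\xi_{n} \seq c_{s'} \jap{\xi'}^{2s'+1}\,, \qquad c_{s'} \ede \int_{\RR} (1+t^{2})^{s'}\, dt\,,
\end{equation*}
and $c_{s'}$ is finite exactly when $s' < -1/2$. This yields
\begin{equation*}
  \|h \otimes \delta_{\Gamma}\|_{H^{s'}}^{2} \seq C\, \|h\|_{H^{s'+1/2}(\RR^{n-1})}^{2}\,.
\end{equation*}
For (i), we have $s \ge 0 > s' + 1/2$, so $\|h\|_{H^{s'+1/2}} \le \|h\|_{H^{s}}$. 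Extending by continuity to all of $H^{s}(\Gamma)$ then completes (i). One should also check that the extension agrees with the integral formula. This follows because, for $h \in L^{2}$, both sides are continuous in $h$ when paired with a fixed test function.

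For (ii), $h \in H^{s}$ with $s < 0$ is a tempered distribution, and I take the pairing formula as the definition. The same Fourier identity $\maF(h \otimes \delta_{\Gamma}) = \hat h(\xi')$ still holds: check it on $\phi \in \maS(\RR^{n})$ via Parseval in $x'$, using that the restriction of $\phi$ to $\{x_{n} = 0\}$ has Fourier transform $(2\pi)^{-1} \int \hat\phi(\xi', \xi_{n})\, d\xi_{n}$. The norm computation above, with $s' = s - 1/2 < -1/2$ (so that $c_{s'} < \infty$), gives exactly
\begin{equation*}
  \|h \otimes \delta_{\Gamma}\|_{H^{s-1/2}} \seq C\, \|h\|_{H^{s}}\,.
\end{equation*}

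The only delicate point is justifying the Fourier identity in the distributional case. I would handle it by density of $\maS$ in $H^{s}$ combined with the continuity established by the norm identity itself. The vector-bundle coefficient $E$ is trivial here, so everything is applied componentwise.
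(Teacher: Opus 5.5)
Your proof is correct, but it takes a different route from the paper.

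The paper argues by duality and never computes a Fourier transform. For $s'<-1/2$ the trace map $H^{-s'}(\RR^{n})\to L^{2}(\Gamma)$ is continuous because $-s'>1/2$. Hence $u\mapsto\int_{\Gamma} h\,u\vert_{\Gamma}\,dx'$ is a bounded functional on $H^{-s'}(\RR^{n})$, i.e.\ an element of $H^{s'}(\RR^{n})$; this is done for $h\in L^{2}(\Gamma)$, which covers all $s\ge 0$. Part (ii) is identical, using the trace $H^{-s+1/2}(\RR^{n})\to H^{-s}(\Gamma)$ and the pairing $\langle h,u\vert_{\Gamma}\rangle_{\Gamma}$.

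Your identity $\maF(h\otimes\delta_{\Gamma})=\hat h(\xi')$, followed by the $\xi_{n}$-integration, is in effect a direct proof of the dual of that trace theorem. It gains you three things:
\begin{itemize}
\item an exact identity $\|h\otimes\delta_{\Gamma}\|_{H^{s'}}=C_{s'}\|h\|_{H^{s'+1/2}(\Gamma)}$ for every $s'<-1/2$;
\item a unification of (i) and (ii), since in (i) one only needs $h\in H^{s'+1/2}(\Gamma)$, not $h\in L^{2}(\Gamma)$;
\item a proof that the threshold is sharp, because $c_{s'}$ diverges at $s'=-1/2$.
\end{itemize}

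The paper's duality argument instead gains portability. It uses nothing but continuity of the trace, so it transfers verbatim to Lemma \ref{lemma.def.hdelta2} on manifolds with straight cylindrical ends by invoking the trace theorem for $\Gamma\subset M$. Your Fourier computation is tied to the flat hyperplane and would first have to be localized in coordinates.
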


\begin{proof}
  Let us assume first that $s' < -\frac12$, $u \in H^{-s'}(\RR^{n})$, and $s = 0$
  (so $h \in L^{2}(\Gamma)$). Then the restriction (or trace) $u\vert_{\Gamma} \in
  L^{2}(\Gamma)$ is well defined and depends continuously on $u$, by classical results
  \cite{Taylor1}, because $-s' > 1/2$. Consequently,
  $\int_{\RR^{n-1}} u(x', 0)h(x')\, dx'$
  is also well defined and depends continuously on $u$, hence it defines an element
  in $H^{-s'}(\RR^{n})^{*} \simeq H^{s'}(\RR^{n})$, as claimed. Thus
  $h \otimes \delta_{\Gamma} \in H^{s'}(\RR^{n})$.

  Finally, let us consider $s < 0$. Let $u \in H^{-s+1/2}(\RR^{n})$. Then
  $u \vert_{\Gamma} \in H^{-s}(\Gamma)$, because $-s+1/2 > 1/2$ and the pairing
  $\langle h , u \vert_{\Gamma}\rangle_{\Gamma}$ is well defined and continuous with
  respect to $u$. Therefore $h \otimes \delta_{\Gamma} \in H^{s-1/2}(\RR^{n})$. This
  completes the proof.
\end{proof}

Let $a \in \Smbmn$ and $a(x, D)$ be as in Definition \ref{def.pseudos}(ii), $m \in \RR$.
Then we define the \emph{potential operator}
\begin{equation}\label{eq.def.maSa}
  \maS_{a}h \ede \maS_{a(x, D)}h \ede a(x, D)(h \otimes \delta_{\Gamma})\,.
\end{equation}
Let $s < -1/2$ and $h \in H^{s}(\Gamma)$. Then Lemma \ref{lemma.def.hdelta} and the
mapping properties of the pseudodifferential
operators \cite{Hormander3, KNW-22, Taylor1} show that
\begin{equation*}
  \maS_{a}h \ede \maS_{a(x, D)}h \ede a(x, D)(h \otimes \delta_{\Gamma})
  \in H^{s - m}(\RR^{n})\,.
\end{equation*}
Thus, if $m < -1$, we can choose $s$ close to $-1/2$ such that
$s-m > 1/2$, and hence the trace $(\maS_{a}h)\vert_{\Gamma}$ is well defined.
(In the case $m = -1$, this result does not hold. See Theorem \ref{thm.main.jump0}
for a detailed discussion in this case.)

\begin{definition}\label{def.limit.values0}
If $u : \RR^{n} \to \CC$ is smooth enough, we then let
\begin{equation*} 
  \begin{gathered}
    u_{\epsilon} \ede u \vert_{\Gamma _\epsilon }\,,\\
    u_{\pm} \ede  \lim_{\epsilon \searrow 0} \tau_{\mp \epsilon}
    u_{\pm \epsilon} \ \mbox{ {\bf =\,the normal lateral limits of }} u, \mbox{ and}\\
    \tau_{-\epsilon} \big[ a(x, D)(h \otimes \delta_{\Gamma}) \big]_{\epsilon}
    \, =:\, a_{\epsilon}(x', D') h\,,
  \end{gathered}
\end{equation*}
whenever these definitions make sense.
\end{definition}

It is known that, for a suitable pseudodifferential operator $a$ and $\epsilon \neq 0$,
$a_{\epsilon}(x', D')$ is defined and is again a pseudodifferential operator. We will prove
this in the cases in which we are interested in Section \ref{sec.nllRn} and
we will study the properties of the family $a_{\epsilon}$, $\epsilon \neq 0$.

We let $\RR_{\pm}^{n} := \{x = (x', x_{n})   \in \RR^{n-1} \times \RR \mid  \pm x_{n} > 0 \}$
be the two open half-spaces defined by the hyperplane $\Gamma = \{x_{n} = 0\}$.
Let $s > 1/2$. If $u \in H_{\loc}^{s}(\RR_{\pm}^{n})$, we let
\begin{equation}
  \gamma_{\pm}(u) \ede \mbox{ the trace of } u \mbox{ at }
  \Gamma \ \mbox{, so that }\ \gamma_{\pm}(u) \in
  H^{s-1/2}(\Gamma)\,,
\end{equation}
using, of course, that $\Gamma = \pa \RR_{\pm}^{n}$. If, furthermore,
$u \in H_{\loc}^{s}(\RR^{n})$, we shall write $u\vert_{\Gamma} :=
\gamma_{+}(u) = \gamma_{-}(u)$.
Then, we have the following useful result.

\begin{lemma}\label{lemma.limits=traces}
  Let $u \in H_{\loc}^{s}(\RR_{\pm }^{n})$, $s > 1/2$. Then
  \begin{equation*}
    u_{\pm } \ede \lim_{t \searrow 0} \tau_{\mp t} u_{t} \seq \gamma_{\pm }(u)
    \in H_{\loc}^{s-1/2}(\Gamma)\,.
  \end{equation*}
  In particular, if $a \in \Smbmn$ for some $m < -1$ and $h \in L^{2}(\RR^{n})$, then
  \begin{equation*}
    \big[ a(x, D)(h \otimes \delta_{\Gamma}) \big]_{+}
    \seq \big[ a(x, D)(h \otimes \delta_{\Gamma}) \big]_{-}
    \seq \big[ a(x, D)(h \otimes \delta_{\Gamma}) \big]_{\Gamma}\,.
  \end{equation*}
\end{lemma}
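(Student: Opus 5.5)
The plan is to prove the equality of the normal lateral limits with the traces first, and then specialize to the layer-potential operator. Since $u_{\pm}$ and $\gamma_{\pm}(u)$ are local objects, we may multiply $u$ by a cut-off $\phi \in \CIc(\RR^{n})$ and assume that $u \in H^{s}(\RR^{n}_{\pm})$. Because $\RR^{n}_{\pm}$ is a half-space, there is a continuous extension operator $H^{s}(\RR^{n}_{\pm}) \to H^{s}(\RR^{n})$, for instance the one obtained by higher order reflection. We may therefore assume that $u \in H^{s}(\RR^{n})$ and treat only the case $+$, the case $-$ being identical.

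The key step is the continuity of the map $\RR \ni t \mapsto \tau_{-t}(u\vert_{\Gamma_{t}}) \in H^{s-1/2}(\Gamma)$. Note that $\tau_{-t}(u\vert_{\Gamma_{t}}) = \gamma(T_{t}u)$, where $T_{t}u(x) := u(x + t e_{n})$ is the translation and $\gamma : H^{s}(\RR^{n}) \to H^{s-1/2}(\RR^{n-1})$ is the trace at $\Gamma$. The trace $\gamma$ is continuous for $s > 1/2$. The translations $T_{t}$ form a strongly continuous group of isometries on $H^{s}(\RR^{n})$; to see this, use the Fourier side: $\widehat{T_{t}u}(\xi) = e^{\imath t \xi_{n}}\hat u(\xi)$ and apply dominated convergence. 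Hence $\gamma(T_{t}u) \to \gamma(u)$ in $H^{s-1/2}(\Gamma)$ as $t \searrow 0$. This gives $u_{+} = \gamma_{+}(u)$ in $H^{s-1/2}_{\loc}(\Gamma)$, once we undo the cut-off, which is harmless since all objects are local. The only care needed is to check that, for $t>0$ small, $u\vert_{\Gamma_{t}}$ computed from the extension agrees with the original $u$ on $\Gamma_{t} \subset \RR^{n}_{+}$. This holds because the extension equals $u$ on $\RR^{n}_{+}$ and $u \in H^{s}$ there, so the restriction to the interior hyperplane $\Gamma_{t}$ is the trace of $u$ itself.

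For the second statement, let $m < -1$ and $h \in L^{2}(\Gamma)$. By Lemma \ref{lemma.def.hdelta}(i), $h \otimes \delta_{\Gamma} \in H^{s'}(\RR^{n})$ for every $s' < -1/2$. By the mapping properties of $a(x, D)$, we get $a(x, D)(h \otimes \delta_{\Gamma}) \in H^{s'-m}_{\loc}(\RR^{n})$, and we choose $s'$ close enough to $-1/2$ so that $s'-m > 1/2$. The function $a(x,D)(h\otimes\delta_{\Gamma})$ then lies in $H^{s'-m}_{\loc}$ of the whole of $\RR^{n}$, so in particular of both half-spaces. Its two one-sided traces coincide with its restriction $[\,\cdot\,]_{\Gamma}$, because for functions in $H^{\sigma}_{\loc}(\RR^{n})$ with $\sigma > 1/2$ the traces from both sides equal the global trace. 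This is seen by density of smooth functions together with continuity of all three trace maps. Applying the first part on each side yields the stated equalities.

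The main, though modest, obstacle is the strong continuity of the translations combined with the localization and extension bookkeeping. Everything else reduces to standard continuity of the trace for $s > 1/2$.
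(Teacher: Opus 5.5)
Your proof is correct and is essentially the paper's argument. The paper checks the first relation for $u \in \CIc(\RR^{n})$ and concludes by density together with the uniform (in $t$) boundedness of the restriction maps to $\Gamma + t e_{n}$; you package the same fact as strong continuity of the translations $T_{t}$ on $H^{s}(\RR^{n})$, after extending from the half-space, composed with the continuous trace $\gamma$. The second part is handled in both by Lemma \ref{lemma.def.hdelta}, the mapping properties of $a(x, D)$, and a choice of $s'$ with $s' - m > 1/2$, so nothing is missing; you also correctly read the hypothesis $h \in L^{2}(\RR^{n})$ as $h \in L^{2}(\Gamma)$.
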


\begin{proof}
  The first relation of the lemma is clearly true if $u \in \CIc(\RR^{n})$.
  The first relation then follows
  from the uniform continuity (in $t$) of the restrictions
  $H_{\loc}^{s}(\RR_{+}^{n}) \to H_{\loc}^{s-1/2}(\Gamma + t e_{n})$
  and the density of $\CIc(\RR^{n})$ in $H_{\loc}^{s}(\RR_{+}^{n})$.
  The result on $\RR_{-}^{n}$ is proved exactly in the same way.

  The result on pseudodifferential operators follows from the mapping properties of
  pseudodifferential operators, from the properties
  of the traces on the spaces $H_{\loc}^{s}$, and from Lemma
  \ref{lemma.def.hdelta}.
\end{proof}

Typically, in this paper, we shall work with functions (or sections of
vector bundles) to which the above lemma applies, and hence we will not need to
distinguish between the limits $u_{\pm}$ and the traces $\gamma_{\pm}(u)$.
Of course, there exist important situations when the limits $u_{\pm}$ exist
but the traces $\gamma_{\pm}(u)$ do not exist.

\subsection{Manifolds with cylindrical ends,
compatible vector bundles and differential operators}
\label{ssec.cyl.ends}
We continue with some standard background material, but
now we turn from the case of $\RR^{n}$ to that of a manifold with cylindrical
ends $M$. We begin by recalling a definition of manifolds with straight
cylindrical ends that is equivalent to the one given in the introduction
(see \cite{KMNW-2025, KNW-22, KohrNistor-Stokes}).

\begin{definition} \label{def.cyl.end}
  A \emph{manifold with straight cylindrical ends} $M$ is a complete non-compact
  Riemannian manifold that is the disjoint union
  \begin{equation*} 
    M \seq M_{0}\cup \left[M'\times (-\infty , R_{M})\right]\,,
  \end{equation*}
  where $(M_{0}, g_{0})$ is a compact Riemannian manifold, $M':=\partial M_{0}
  \neq \emptyset$, $R_{M} \in \RR$, and the \emph{cylindrical end}
  $M'\times (-\infty , R_{M})$ is endowed with a product metric.
  Any Riemannian manifold isometrically diffeomorphic to one of this form will
  also be called a manifold with straight cylindrical ends.
\end{definition}

There is no loss of generality to assume that $M$ is connected, so we will do that
throughout the paper, in order to simplify our statements. It follows from the fact
that $M$ is complete that the isometric embedding $M'\times (-\infty ,0)
\hookrightarrow M$ extends to an isometry $M'\times (-\infty ,0] \hookrightarrow M$
that maps $M' \times \{0\}$ to $M' \simeq \pa M_{0}$ diffeomorphically. There is
no loss of generality to assume that this diffeomorphism is the canonical one identifying
$(x', 0) \in M' \times \{0\}$ to $x' \in M' \simeq \pa M_{0}$. So, again, we will assume
that this is the case throughout the paper. Also, there is no loss of generality to assume
$R_{E} =0$, and, again, we will usually (but not always) do that. For instance, for a
\emph{cylinder}, it is convenient to take $R_{E} = -1$. The case of a cylinder is a very
important one, so we discuss it more formally in the following remark.

\begin{remark}\label{rem.cylinder}
  Recall that a \emph{closed manifold} is a smooth, compact manifold without
  boundary. Let $\mfkS$ be a closed manifold. An important example of a manifold with straight
  cylindrical ends is $M = \mfkS \times \RR$. In this case, $M_{0} = \mfkS \times [-1, 1]$,
  $R_{M} = -1$ and $M' := \pa M_{0} = \mfkS \times \{-1, 1\}$.  Then we have a canonical isometry
  \begin{equation*}
    \mfkS \times \RR \, =: \, M \simeq M_{0} \cup_{\psi} \big[\mfkS \times \{-1, 1\} \times
    (-\infty, -1] \big]\,,
  \end{equation*}
  where, on the right hand side $\psi : \mfkS \times \{-1, 1\} \times \{-1\}
  \to \mfkS \times \{-1, 1\} := \pa M_{0}$ is given by $\psi (x', -1) = x'$,
  for all $x' \in \mfkS \times \{-1, 1\}$. (The symbol $\cup_{\psi}$ means that
  we identify $y$ to $\psi(y)$ in the union.)

  On cylinders, it is important to notice that $M := \mfkS \times \RR$ carries a
  natural action of $\RR$ by translations and we let $\CI(M)^{\RR} =
  \CI(\mfkS \times \RR)^{\RR} \simeq \CI(\mfkS)$
  denote the space of smooth functions on $\mfkS \times \RR$ that are invariant by translations.
  Many objects on manifolds with cylindrical ends are defined by first defining their
  translation invariant versions on cylinders. If $M = \canDec$ is a general manifold
  with cylindrical ends, then $M' \times (-\infty, 0]$ is its \emph{end} and
  $M' \times \RR$ is its associated cylinder with the induced Riemannian metric
  (induced from $\pa M_{0}$, which, in turn, is induced from the metric on $M$).
\end{remark}

In the following, whenever we will write
$M \seq M_{0}\cup \left[M'\times (-\infty ,0)\right]$, we will
understand that $M$ is a manifold with straight cylindrical ends as in Definition
\ref{def.cyl.end}. It is easy (and well-known) to prove (see e.g. \cite{Grosse-Kohr-Nistor-23}
and the references therein)
that all manifolds with straight cylindrical ends have bounded geometry,
so the results of Subsection \ref{ssec.Green1} apply to them.

\begin{definition}\label{def.comp.vb}
  Let $M = M_{0}\cup \left[M'\times (-\infty ,0)\right]$ be a manifold with straight
  cylindrical ends \lpar Definition \ref{def.cyl.end}\rpar\ and let $E \to M$ be a
  Hermitian vector bundle endowed with a metric preserving connection $\nabla^{E}$.
  We shall say that the pair $(E, \nabla^{E})$ is {\it compatible with the straight
  cylindrical ends structure on $M$} (simply, \emph{compatible})
  if there exists $R_E \le 0$ such that $E\vert_{M' \times (-\infty, R_E)}$
  is isometric to the pull-back to $M' \times (-\infty, R_E)$ of a Hermitian
  vector bundle $E' \to M' $ with connection $\nabla^{E'}$ via an isomorphism
  that maps the connection $\nabla^E$ of $E$ to the pull-back connection of
  $\nabla^{E'}$ on $M' \times (-\infty, R_E)$.
\end{definition}

If $E = \CC$ (the one dimensional trivial bundle), then $\nabla^{E} f := df$,
where $d$ is the de Rham differential. Let $\sharp : T^{*}M \to TM$ be the
vector bundle isomorphism defined by the Riemannian metric $g$ of $M$.
Note then that the {\it gradient} $\nabla f := \sharp d f$ has a similar
notation to a connection, however the two should not be confused.

Let $E$ be a vector bundle on $M' \times \RR$ obtained from the pull-back of
vector bundle on $M'$. Thus $E \to M' \times \RR$ is, in, in particular, a
compatible vector bunde on the manifold with straight cylindrical ends $M' \times \RR$.
We let $\CI(M' \times \RR)^{\RR}$ denote the set of translation invariant sections of
$E \to M' \times \RR$. In the rest of this subsection, $M$ will be a
manifold with straight cylindrical ends. (The results are true also for closed manifolds,
see \cite{KNW-2025} and some of them are recalled in Subsection \ref{ssec.ToAdd}.)
We next introduce an important class of smooth sections.

\begin{definition}\label{def.CIinv}
  \index{translation invariant in a neighborhood of infinity}
  Let $E \to M$ be a compatible vector bundle and let $u \in \CI(M; E)$.
  We shall say that $u$ is {\it translation invariant in a neighborhood
  of infinity} if there exists $R_u \le R_E$ such that $u(x', t) = u(x', s)$ for all
  $s, t \le R_u$ and all $x' \in M' := \pa M_{0}$. We let $\CI_{\inv}(M; E)$
  denote the set of sections of $E$ that are translation invariant in a neighborhood
  of infinity and let $\CI(M'  \times \RR; E)^{\RR}$ be the set of
  translation invariant sections on $M' \times \RR$.
  Then we let $\In: \CI_{\inv}(M; E) \to \CI(M'  \times \RR; E)^{\RR}$ be defined by
  \begin{equation*}
    \In(u)(x', t) \seq u(x', t)\,, \quad \mbox{ for }\ t \le R_{u}\,.
  \end{equation*}
\end{definition}

\begin{remark}\label{rem.on.def}
  If $u$ is as in Definition \ref{def.CIinv}, then there exists $u_{0} \in \CI(M'; E)$
  such that $u(x', t) = u_{0}(x')$ for $t \le R$. Thus $\In u(x', t) \seq u_{0}(x')
  \seq u(x', t) \in E$ for $t \le R_u$. Thus
  $u_{0}$ and $\In(u)$ \emph{do not coincide}, but they correspond to each other under the
  natural isomorphism $\CI(M' \times \RR; E)^{\RR} \simeq \CI(M' ; E)$.
  Because of this, we will sometimes regard $\In(u)$ as an element of $\CI(M' ; E)$.
\end{remark}

We let $\CI_{\inv}(M) := \CI_{\inv}(M; \CC)$.
We next introduce differential operators following, for instance \cite{KohrNistor1}.

\begin{definition}\label{def.not.diff.ops}
  Let $E, F \to M$ be compatible vector bundles on $M$ with connections $\nabla^{E}$
  and $\nabla^{F}$. We let $\operatorname{Diff}_{\inv}^m(M; E, F)$ be the set of
  differential operators $P$ acting on sections of $E$ with values sections of $F$
  with the property that there exist $a_{j} \in \CI_{\inv}(M; \Hom(T^{* \otimes j}
  M \otimes E, F))$, $j = 0, \ldots, m$, (see Definition \ref{def.CIinv}) such that
  \begin{equation*}
    Pu  \seq \sum_{j=0}^m a_j (\nabla^{E})^j u\,.
  \end{equation*}
  For such a $P$, we let
  \begin{equation*}
    \In (P) \ede \sum_{j=0}^m \In (a_j) (\nabla^{E})^j
  \end{equation*}
  be the associated differential operator on $M'  \times \RR$ obtained by ``freezing
  the coefficients'' of $P$ near infinity. It will be called the {\it limit at infinity}
  operator associated to $P$.
\end{definition}

Therefore, $\In (P)$ will be a translation invariant differential operator
on $M' \times \RR$. An example is the Levi-Civita connection
\begin{equation*}
  \nabla^{LC} \in \operatorname{Diff}^1_{\inv}(M; TM, T^*M \otimes TM) \,,
\end{equation*}
in which case $\In \nabla^{LC} = \nabla^{LC}$.
The choice of manifolds (i.e., with straight cylindrical ends) and of the associated
functions and differential operators is motivated by practical
questions, since these are the objects that are the most likely to appear in practical
applications. However, our results extend to the more general case of manifolds with
cylindrical ends (not necessarily straigh, but asymptotically straight). One just needs
to slightly enlarge the class of functions, vector bundles, differential operators, and,
especially, pseudodifferential operators. These questions will be considered in another
paper.

\emph{For the rest of this section, $M  = \canDec$ will be a manifold with straight
cylindrical ends and $E \to M$ will be a fixed compatible Hermitian vector bundle.}

Let us use the notation of Definition \ref{def.comp.vb}. Then the compatible vector
bundle $E \to M$ will have a product structure on $M' \times (-\infty, R_E)$. Hence
$\nabla^{E} = \nabla^{E'} + dt \otimes \partial_t$ on $M' \times (-\infty, R_E)$,
where $\nabla^{E'}$ is the connection on $E' \to M'$. Consequently, the
\emph{Bochner Laplacian} of $E$ satisfies
\begin{equation*}
  \Delta_{E} \ede (\nabla^{E})^{*}\nabla^{E}
  \seq (\nabla^{E'})^{*}\nabla^{E'} - \partial _t^2\,.
\end{equation*}

\begin{definition}\label{def.rem.Bochner}
  Let $\nabla^{E}$ denote the compatible connexion on the compatible vector bundle $E \to M$.
  Let $\Delta_{E} \ede (\nabla^{E})^{*}\nabla^{E}$ be the Bochner Laplacian. The Sobolev
  space $H^s(M; E)$ is defined as the domain of $(1 + \Delta_E)^{\frac{s}{2}}$ for $s \ge 0$.
  The norm on this space is
  $\|u\|_{H^{s}(M; E)} := \|(1 + \Delta_E)^{\frac{s}{2}}u\|_{L^2(M; E)}^{1/2}$.
  For $s\leq 0$, the Sobolev space $H^s(M; E)$ is defined as the dual of the Sobolev space
  $H^{-s}(M; E)$.
\end{definition}

See \cite{AmannFunctSp, AGN-1, AGN2, GN17, GrosseSchneider, KohrNistor1, KNW-22, Strichartz}.

\begin{remark}\label{rem.C0dense}
  The space $\CIc(M; E)$
  is dense in $H^{s}(M; E)$ for all $s \in \RR$. This is a result that holds in
  the greater generality of manifolds with bounded geometry, see
  \cite{AmannFunctSp, AmannFunctSp25, sobolev, GrosseSchneider, Schneider}.
\end{remark}

See \cite{KMNW-2025} for a more thorough discussion of the Sobolev spaces on
manifolds with cylindrical ends, including a definition using dyadic partitions
of unity. See \cite{GrosseSchneider, TriebelBG} for a discussion of Sobolev spaces on
manifolds with bounded geometry.

\section{\cn Normal lateral limits of pseudodifferential operators on $\RR^{n}$}
\label{sec.nllRn}

In this section we provide a detailed study on $\RR^{n}$ of the \emph{normal lateral limits}
\begin{equation*}
  [a(x, D)u]_{\pm} \in H^{s}(\Gamma)
\end{equation*}
at $\Gamma := \{x_{n} = 0\}$ for $u = h \otimes \delta_{\Gamma}$.
Here $a(x, D)$ is a pseudodifferential operator (Definition \ref{def.pseudos}), the
normal lateral limits were defined in Definition \ref{def.limit.values0}, and
$u = h \otimes \delta_{\Gamma}$ was defined in Lemma \ref{lemma.def.hdelta}.
The hyperplane $\Gamma := \{x_{n} = 0\}$ separates the two half-spaces
\begin{equation*}
  \Omega_{+} \ede \RR^{n}_{+} \ede \{x \in \RR^{n} \mid x_{n} > 0\}
  \ \mbox{ and } \ \Omega_{-} \ede \RR^{n}_{-} \ede \{x \in \RR^{n} \mid x_{n} < 0\}\,.
\end{equation*}
(We remark in passing, that the open set $\Omega_{+} := \RR^{n}_{+}$ is
on one side of its boundary, as required throughout this paper.)
Recall that if $a(x, D)u$ is smooth enough on either of the half-spaces
$\RR_{\pm}^{n}$, then the corresponding normal lateral limit
$[a(x, D)u]_{\pm}$ at $\Gamma = \pa \Omega_{\pm}$ coincides with the trace
$\gamma_{\pm} a(x, D)u$ of $a(x, D)u$ at the boundary $\Gamma$ of that
half-space (see Lemma \ref{lemma.limits=traces}).

We have included in the previous section and in this section a few very basic
definition related to pseudodifferential operators. If needed, results not recalled or
proved here about pseudodifferential operators can be found in one
of the following books \cite{HAbelsBook, HintzML, Hormander1, RT-book2010, Taylor, Wong},
as well as in may others. For instance, the book \cite{Taylor2} contains
a clear but concise account of pseudodifferential operators and applications to layer potentials.
An even shorter introduction to some basic facts and definitions related to pseudodifferential
operators and geared towards our applications can be found in \cite{KNW-22}.

A complete and general treatment of normal lateral limits distributions of
the form $[a(x, D) h \otimes \delta_{\Gamma}]_{\pm}$ for pseudodifferential operators $a(x, D)$
is contained in the book by Hsiao and Wendland \cite{H-W}. Here we only deal with the results
(and calculations) needed to treat suitable elliptic, second order differential operators.
Our presentation in this section provide a complete, yet concise, introduction to the
subject of limit and jump relations for potential operators on a half-space. These results,
together with those in Section \ref{sec.NLimits} and in \cite{KMNW-2025} provide a
short--but complete--introduction to jump relations on manifolds with cylindrical ends.
(They also shed light on jump relations on general manifolds.)

The main question discussed in this section is the following.

\begin{problem}\label{problem}
  Given $a \in \Smbmn$, to study the existence and properties of the
  \emph{normal lateral limits} (see Definition \eqref{def.limit.values0}):
  \begin{equation*}
    a_{\pm}(x', D') h \ede \big[ a(x, D)(h \otimes \delta_{\Gamma}) \big]_{\pm}\,,
  \end{equation*}
  in particular, to study their
  relation to the traces $\gamma_{\pm}a(x, D)(h \otimes \delta_{\Gamma})$.
\end{problem}

Often in the literature, the \emph{non-tangential limits} at the boundary
are studied instead of the \emph{normal lateral limits} that we study in this paper.
The non-tangential limits are a stronger form of boundary limit and hence, they
exist less often than the normal lateral limits, however, for functions that are smooth enough,
they are the same. (In fact, they are the same for most of the applications studied in
this paper. We stress, however, that proving the existence of non-tangential limits
is more difficult than the proof of the existence of the normal lateral limits. In
particular, the normal lateral limits may exist even if
the non-tangential limits do not exist.)

A word now about the notation. First, we often parametrize $\Gamma := \{x_{n} = 0\}$
with $\RR^{n-1}$. Thus $\Gamma \subset \RR^{n}$, but $\Gamma \simeq \RR^{n-1} \not
\subset \RR^{n}$. This is the reason we need the isometries $\tau_{t}$,
$\tau_{t}(x) := x - t e_{n}$ mapping
$\Gamma_{t} := \RR^{n-1} \times \{t\}$ to $\Gamma$ (see Equation \eqref{eq.def.Gamma}
and the discussion following it). We distinguish $\Gamma$ from
$\RR^{n-1}$ to make it easier to transition in the later sections to an arbitrary
domain (i.e., and open connected subset) with boundary $\Gamma$. However, in this
paper, there will be no situation when confusions can arise if we omit the
isometries (i.e., identifications)
$\tau_{t}$ from the notation, so we shall do that from now on.

The results in this section expand on those in \cite{KMNW-2025}. The reader should
compare to that book. Part of these results were stated also in \cite{KNW-2025}, but
without proofs.

\subsection{General results on normal lateral limits of pseudodifferential operators}
We shall use the notation of Subsection \ref{ssec.pdos}. Also, when a function $f(x, y, \ldots)$
depends on several variables, $x, y, \dots$ and we are computing its Fourier transform
(only) with respect to the variable $x$, we shall write $\maF_{x}(f)(\xi, y, \ldots)$
for the resulting function. The same comment applies to the inverse Fourier transform.

Let $a \in S_{\cl}^{m}(\RR^{n} \times \RR^{n})$ be a classical symbol with principal part
(or symbol) $\sigma_{m}(a)$, regarded as a homogeneous function in the second variable
(see Definition \ref{def.princ.symb} and the discussion following it). We shall say that
$\sigma_{m}(a)$ is \emph{odd} if $\sigma_{m}(a; - \xi) = -\sigma_{m}(a; \xi)$. We then have
the following result (Lemma A.5.6 from \cite{KMNW-2025}), which is a consequence of Lemma
\ref{lemma.first.jump}. Recall that $e_{n} = (0, \ldots, 0, 1) \in \RR^{n}$ and
that $\tilde u (x) := u(-x)$, see Equation \eqref{eq.def.tilde}.

\begin{lemma} \label{lemma.Fsymbols}
  Let $a \in \Smbmn$, $m \in \RR$.
  \begin{enumerate}[\rm (i)]
    \item For any fixed $x \in \RR^{n}$ and $ \xi' \in \RR^{n-1}$, the function
    \begin{equation*}
      \phi_{x, \xi'} : \RR \to \CC\,, \quad \phi_{x, \xi'}( \xi_{n})
      := a(x, \xi', \xi_{n})\,,
    \end{equation*}
    defines a tempered
    distribution on $\RR$ such that $\maF^{-1} \phi_{x, \xi'}
    := \maF_{ \xi_{n}}^{-1} \phi_{x, \xi'}$ coincides
    with a smooth function outside 0.

    \item If $m < -1$, then $\phi_{x, \xi'}$ is integrable on $\RR$, and hence
    $\maF^{-1} \phi_{x, \xi'} \in \maC_{0}(\RR)$.

    \item Assume that $a$ is a classical symbol of order $-1$
    and that $\sigma_{-1}(a)$ is odd. Let $e_{n} := (0, \ldots, 0, 1) \in \RR^{n}$,
    as before, and $\JC(a; x) := \sigma_{-1}(a; x, e_{n})$.
    Then, for all $x \in \RR^{n}$ and $\xi' \in \RR^{n-1}$,
    $\phi_{x, \xi'}$ satisfies
    the assumptions of Lemma \ref{lemma.first.jump} for $\epsilon = 1$ and $L := \maL(a; x)$.
    That is, for all $(x, \xi') \in \RR^{n} \times \RR^{n-1}$ and $\tau \in \RR$, there
    exists $C_{x, \xi'} > 0$ such that
    \begin{equation*}
      |\tau \phi_{x, \xi'}(\tau) - \JC(a; x)| \ede
      |\tau a(x, \xi', \tau) - \sigma_{-1}(a; x, e_{n})|
      \le C_{x, \xi'} /\jap{\tau}\,;
    \end{equation*}
  \end{enumerate}
\end{lemma}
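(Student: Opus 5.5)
Fix $x \in \RR^{n}$ and $\xi' \in \RR^{n-1}$ and write $\phi := \phi_{x, \xi'}$. The symbol estimate of Definition \ref{def.pseudos} with $\alpha = \beta = 0$ gives $|\phi(\tau)| \le C_{00}\jap{(\xi', \tau)}^{m} \le C \jap{\tau}^{|m|}\jap{\xi'}^{|m|}$. Thus $\phi$ is a continuous function of polynomial growth, hence a tempered distribution on $\RR$.

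For (i), the smoothness of $\maF^{-1}\phi$ away from $0$ will follow from the standard ``differentiate in frequency'' argument, as in the proof of Lemma \ref{lemma.Fpvx-1}(iii). By Remark \ref{rem.eq.Fderiv}, for every $k \in \ZZ_{+}$ and $j \in \ZZ_{+}$,
\begin{equation*}
  (-\imath y)^{k}\, \pa_{y}^{j}\, \maF^{-1}\phi \seq \maF^{-1}\big[ \pa_{\tau}^{k}\big( (\imath\tau)^{j}\phi \big)\big]\,.
\end{equation*}
The symbol estimates with $\beta = k e_{n}$ give $|\pa_{\tau}^{k}((\imath\tau)^{j}\phi)(\tau)| \le C \jap{\tau}^{m + j - k}$. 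Choosing $k > m + j + 1$ makes the right-hand side integrable, so $y^{k}\pa_{y}^{j}\maF^{-1}\phi \in \maC_{0}(\RR)$. Hence each $\pa_{y}^{j}\maF^{-1}\phi$ is a continuous function on $\RR \smallsetminus \{0\}$ (one divides by $y^{k}$ there), and consequently $\maF^{-1}\phi$ is smooth outside $0$ (and even decays rapidly at infinity). To be careful one should use a cutoff $(1-\chi_{0})(y)$ and then identify the distribution with a function on $y \neq 0$; this is routine.

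For (ii), if $m < -1$ then $|\phi(\tau)| \le C\jap{\tau}^{m}$ with $\jap{\tau}^{m} \in L^{1}(\RR)$. So $\phi \in L^{1}(\RR)$, and $\maF^{-1}\phi \in \maC_{0}(\RR)$ by the Riemann–Lebesgue remark after Equation \eqref{eq.def.inv.F}.

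For (iii), the main point is to compare $a(x,\xi',\tau)$ with its principal part along the ray through $e_{n}$. Choose an essentially homogeneous representative $b_{-1}$ of $\sigma_{-1}(a)$, so that $a - b_{-1} \in S^{-2}$ and hence $|\tau a(x,\xi',\tau) - \tau b_{-1}(x,\xi',\tau)| \le C\jap{\tau}^{-1}$. By homogeneity, for $|\tau|$ large $\tau b_{-1}(x,\xi',\tau) = \tau|\tau|^{-1}\sigma_{-1}(a; x, \xi'/|\tau|, \sgn\tau)$. Oddness gives $\sgn(\tau)\,\sigma_{-1}(a; x, 0, \sgn\tau) = \sigma_{-1}(a; x, e_{n})$ for both signs of $\tau$. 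Smoothness of $\sigma_{-1}(a; x, \cdot)$ near $\pm e_{n}$ then gives $|\sigma_{-1}(a;x,\xi'/|\tau|,\pm1) - \sigma_{-1}(a;x,\pm e_{n})| \le C|\xi'|/|\tau|$. Combining these bounds yields the required estimate for $|\tau| \ge R(\xi')$. For bounded $\tau$ the left side is bounded by continuity, and we absorb it into $C_{x,\xi'}$. The one step requiring care is the uniformity of the Taylor bound near $\pm e_{n}$: we use only that $\sigma_{-1}(a;x,\cdot)$ is $\maC^{1}$ on the unit sphere, so the constant is uniform, and the oddness is exactly what makes the limits at $\pm\infty$ agree.
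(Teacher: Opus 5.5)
Your proof is correct and follows essentially the same route as the paper. For (i) and (ii) you use the symbol bounds on $\tau$-derivatives of $\phi_{x,\xi'}$ to get decay, hence integrability and regularity of $\maF^{-1}\phi_{x,\xi'}$ away from $0$; the paper shows that a power of $t$ times $\maF^{-1}\phi_{x,\xi'}$ lies in $\maC_{0}^{k}(\RR)$, which is the same argument with the operations taken in the other order. For (iii) you split $a = b_{-1} + r_{-2}$ and combine a Lipschitz bound for the homogeneous part near $\pm e_{n}$ with oddness, as the paper does; the only cosmetic difference is that you apply oddness just at $\pm e_{n}$, while the paper first extends the homogeneity of $b_{-1}$ to negative dilations.
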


Usually, the ``jump factor'' $\JC$ will be associated rather to a
pseudodifferential operator $a(x, D)$ (instead of its symbol $a$), so we will
write
\begin{equation}\label{eq.full.notation}
    \JC(a(x, D); x) \ede \JC(a; x) \ede \sigma_{-1}(a; x, e_{n})\,.
\end{equation}

Recall that $\maC_{0}^{k}(\RR^{n})$ denotes the set of functions $f : \RR^{n} \to \CC$
such that, for all $|\alpha| \le k$, we have $\lim_{|x| \to \infty}|\pa^{\alpha} f(x)| = 0$.

\begin{proof}
  The property that $\phi_{x, \xi'}$ defines a tempered distribution
  follows from the inequality $\phi_{x, \xi'}(\tau) \le
  C_{x, \xi'}\jap{\tau}^{m}$. When $m < -1$, this implies that
  $\phi_{x, \xi'} \in L^{1}(\RR)$, and hence its inverse Fourier transform
  is in $\maC_{0}(\RR)$. This proves (ii). To complete the proof of (i),
  let $n > m + k +1$. We will use $t$ as dual variable to $ \xi_{n}$ for
  the Fourier transform in $ \xi_{n} \in \RR$.
  We let $t$ denote both the variable of $\RR$ and
  the multiplication operator by $t$ on $\maS(\RR)$ and on $\maS'(\RR)$.
  Then $t^{n} \maF_{ \xi_{n}}^{-1} \phi_{x, \xi'} = \imath^{n}
  \maF_{\xi_{n}}^{-1} (\pa_{ \xi_{n}}^{n}\phi_{x, \xi'}) \in \maC_{0}^{k}(\RR)$
  (all derivative up to order $k$ tend to 0 at infinity)
  where the last relation is due to the fact that
  $|\pa_{ \xi_{n}}^{n}\phi_{x, \xi'}( \xi_{n})| \le C \jap{ \xi_{n}}^{m-n}$
  and $m-n < -k-1$. This shows that $\maF_{ \xi_{n}}^{-1} \phi_{x, \xi'}$ coincides
  with a smooth function on $\RR \smallsetminus \{0\}$.

  Finally, to prove (iii), we will use Corollary \ref{cor.first.jump} for each of the functions
  $\phi_{x, \xi'}$, $(x, \xi')\in \RR^{n} \times \RR^{n-1}$,
  with $\JC_{\pm}(\phi_{x, \xi}) =: \JC_{\pm}(x)$. To that end, we need to
  first check that the hypotheses of Lemma \ref{lemma.first.jump} are satisfied by all
  functions $\phi_{x, \xi'}$. To that end,
  let us write
  \begin{equation*}
    a \seq b_{-1} + r_{-2}\,, \qquad
    b_{-1} \in S^{-1}(\RR^{n} \times \RR^{n})\,, \ \mbox{ and }\
    r_{-2} \in S^{-2}(\RR^{n} \times \RR^{n})\,,
  \end{equation*}
  where $b_{-1}$ is essentially homogeneous of order $-1$. Recall that this means
  that $b_{-1}(x, \lambda \xi) = \lambda^{-1} b_{-1}(x, \xi)$
  whenever $|\xi|, \lambda | \xi| \ge 1$. It follows that $r_{-2}$ is a
  classical symbol. The hypothesis that $\sigma_{-1}(a; x, \xi)$ is odd in $\xi$
  and the fact that $b_{-1}(a; x, \xi) = \sigma_{-1}(a; x, \xi)$ for $|\xi| \ge 1$ imply
  that $b_{-1}(x, \lambda \xi) = \lambda^{-1} b_{-1}(x, \xi)$
  whenever $|\xi|, |\lambda \xi| \ge 1$ (that is, we have extended the previous relation to
  $\lambda$ negative as well).
  We have that all derivatives of $b_{-1}(x, \xi)$
  are bounded, and hence there exists a constant $C>0$ such that, for $|\tau |\geq 1$,
  \begin{equation*}
    |b_{-1}(x, \tau^{-1} \xi', 1) - b_{-1}(x, 0, 1)| \le C |\tau^{-1} \xi'|\,.
  \end{equation*}
  By increasing $C$, we may assume that $|r_{-2}(x, \xi)| \le C |\xi|^{-2}$ for
  all $|\xi| \ge 1$, because $r_{-2}$ is a symbol of order $-2$.
  Let $L = b_{-1}(x,0, 1) = b_{-1}(x,e_{n}) := \sigma_{-1}(a; e_{n})$.
  For $|\tau|\ge 1$ and $(x, \xi') \in \RR^{n} \times \RR^{n-1}$, we let $\xi := (\xi', \tau)$
  with $|\xi| \ge |\tau| \ge 1$ and use the homogeneity relation for $\tau^{-1}\xi$ to obtain
  \begin{multline*}
    |\tau^{2}a(x, \xi', \tau) - L \tau |
    \leq |\tau^{2}b_{-1}(x, \xi', \tau) - L \tau | + |\tau^{2} r_{-2}(x, \xi', \tau)|     \\
    \seq |\tau| \, \big |\,  b_{-1}(x, \tau^{-1} \xi', 1)
    - b_{-1}(x, e_{n})  \big |
    + |\tau^{2} r_{-2}(x, \xi', \tau)|
    \leq C |\xi'| + C \, =: \, C_{x, \xi'}\,.
  \end{multline*}
  Moreover, the function $\phi_{x, \xi'}(\tau):= a(x, \xi', \tau)$ is bounded for
  $|\tau| \le 1$ (because $a$ is a symbol of order $-1$). This completes the proof.
\end{proof}

Recall that if $x = (x_{1}, x_{2}, \ldots, x_{n}) \in \RR^{n}$, we shall write
$x = (x', x_{n})$, where, $x' := (x_{1}, x_{2}, \ldots, x_{n-1})
\in \RR^{n-1}$. Similarly, if $\xi = (\xi_{1}, \xi_{2}, \ldots, \xi_{n})\in \RR^{n}$,
we shall write $\xi = (\xi', \xi_{n})$, where $\xi' \in \RR^{n-1}$.
As detailed in the Corollary \ref{cor.lemma.Fsymbols} next,
the last point of the last lemma in conjunction with Corollary \ref{cor.first.jump}
allows us to introduce the following definition.

\begin{definition}\label{def.aepsilon}
  Let $a \in \Smbmn$ with $m \in \RR$ and $t \in \RR$ and let
  $\phi_{x, \xi'}( \xi_{n}) := a(x, \xi', \xi_{n})$, $\widetilde \phi_{x, \xi'}( \xi_{n})
  := a(x, \xi', -\xi_{n})$, as before. If $m \ge -1$, we assume
  $t \neq 0$. We then define:
  \begin{enumerate}[\rm (i)]
    \item $a_{x_{n}, t}(x', \xi') := \maF_{ \xi_{n}}^{-1} a(x', x_{n}, \xi', t)
    := \big[ \maF^{-1}\phi_{x, \xi'}\big](t) .$

    \item If $a \in S_{\cl}^{-1}(\RR^{n} \times \RR^{n})$
    and $\sigma_{-1}(a; x, \xi)$ is \emph{odd} in $\xi$, we also let
    \begin{enumerate}[(a)]
      \item $a_{x_{n}, 0}(x', \xi') \ede \frac{1}{2\pi}
        \int_{\RR}  \frac{a(x', x_{n}, \xi', \xi_{n})
        + a(x', x_{n}, \xi', - \xi_{n})}{2} \, d \xi_{n} \seq
        \int_{\RR} \frac{\phi_{x, \xi'} + \tilde \phi_{x, \xi'}}{4\pi}\, dt$;
      \item $\JC(a; x) := \sigma_{-1}(a; x, e_{n})$; and
      \item $a_{x_{n}, 0\pm}(x', \xi') \ede \pm \frac{\imath \sigma_{-1}(a; x, e_{n})}2
        + a_{x_{n}, 0}(x', \xi') \ede \pm \frac{\imath}2 \JC(a;x)
        + a_{x_{n}, 0}(x', \xi')$.
    \end{enumerate}
  \end{enumerate}
\end{definition}

The reason for introducing Definition \ref{def.aepsilon} comes from Section
\ref{ssec.problem}. Of course, if $m < -1$ in this definition, we have, for
all $t \in \RR$,
\begin{equation}\label{eq.def.aepsilon.explicit}
  a_{x_{n}, t}(x', \xi') \ede
  \maF_{ \xi_{n}}^{-1} a(x', x_{n}, \xi', t)
  \ede \frac{1}{2\pi}
  \int_{\RR} e^{\imath t \xi_{n}} a(x, \xi', \xi_{n}) \, d \xi_{n}\,.
\end{equation}

Recall that $\tilde \psi(t) := \psi(-t)$.
The last point of the last lemma, in conjunction with Corollary \ref{cor.first.jump}
yields the following very useful consequence.

\begin{corollary}\label{cor.lemma.Fsymbols}
  Let $a$ be a classical symbol of order $-1$, $\phi_{x, \xi'}( \xi_{n}) :=
  a(x, \xi', \xi_{n})$, $\phi_{x, \xi'} : \RR \to \CC$,
  $e_{n} := (0, \ldots, 0, 1) \in \RR^{n}$,
  and $\JC(a; x) := \sigma_{-1}(a; x, e_{n})$, as before.
  We assume that $\sigma_{-1}(a)$ is odd in the sense that, for all $x, \xi \in \RR^{n}$,
  $\xi \neq 0$, we have $\sigma_{-1}(a; x, - \xi) = -\sigma_{-1}(a; x, - \xi)$. Then, for
  all $x \in \RR^{n}$ and $\xi' \in \RR^{n-1}$, we have the following properties:
    \begin{enumerate}[\rm (i)]
      \item $\phi_{x, \xi'}$ defines a tempered distribution on $\RR$.

      \item $\maF^{-1} \phi_{x, \xi'} \in \maS'(\RR)$ is a locally integrable function that is
      continuous on $\RR \smallsetminus \{0\}$.

      \item
        $\JC(a; x) \ede \sigma_{-1}(a; x, e_{n}) \seq \lim_{|\tau| \to \infty}
        \tau a(x, \xi', \tau) \in \CC.$

      \item $\phi_{x, \xi'} + \tilde \phi_{x, \xi'} \in L^{1}(\RR)$.

      \item The one-sided limits $\maF^{-1} \phi_{x, \xi'} (0\pm)$,
      \begin{equation*}
        \begin{cases}
          \ \maF^{-1} \phi_{x, \xi'} (0+)
          \ede \lim_{t \to 0, t > 0}\maF^{-1} \phi_{x, \xi'} (t)\\
          \ \maF^{-1} \phi_{x, \xi'} (0-)
          \ede \lim_{t \to 0, t < 0}\maF^{-1} \phi_{x, \xi'} (t)\,,
        \end{cases}
      \end{equation*}
      of $\maF^{-1} \phi_{x, \xi'} \in \maS'(\RR)$ at $0$ are given by
      \begin{align*} 
        \maF^{-1} \phi_{x, \xi'} (0\pm) & \seq
        \pm \frac{\imath \sigma_{-1}(a; x, e_{n})}2
        + \frac1{2 \pi}\, \int_{\RR} \frac{a(x, \xi', \tau) +
        a(x, \xi', -\tau)}{2} \, d\tau\\
        & \, =: \,
        \pm \frac{\imath \JC(a; x)}2
        + a_{x_{n}, 0}(x', \xi') \, =: \, a_{x_{n}, 0\pm}(x', \xi')\,.\\
      \end{align*}
    \end{enumerate}
\end{corollary}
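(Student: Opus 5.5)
The plan is to reduce Corollary \ref{cor.lemma.Fsymbols} to Corollary \ref{cor.first.jump}, applied for each fixed $(x, \xi') \in \RR^{n} \times \RR^{n-1}$ to the function $u := \phi_{x, \xi'}$. The only hypothesis of Corollary \ref{cor.first.jump} to check is the decay condition \eqref{eq.cond.lfj}, namely $\jap{\tau}^{\epsilon}|\tau u(\tau) - L| \le C$. Lemma \ref{lemma.Fsymbols}(iii) provides exactly this, with $\epsilon = 1$, $L = \JC(a; x) = \sigma_{-1}(a; x, e_{n})$, and constant $C_{x, \xi'}$. Continuity of $u$ holds because $a$ is smooth. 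Corollary \ref{cor.first.jump} (and Lemma \ref{lemma.first.jump}) is stated for real-valued $u$, whereas $a$ may be complex valued. I would handle this by applying it separately to $\operatorname{Re} \phi_{x,\xi'}$ and $\operatorname{Im} \phi_{x,\xi'}$. Each satisfies the estimate with $L$ replaced by $\operatorname{Re} L$ and $\operatorname{Im} L$ respectively. All conclusions are linear in $u$ and $L$, so they recombine. Alternatively, I would note that the proof of Lemma \ref{lemma.first.jump} never uses real-valuedness.

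With the hypothesis in place, items (i)--(iv) follow directly from the corresponding items of Corollary \ref{cor.first.jump}:
\begin{itemize}
  \item (i) is the tempered-distribution statement, which also follows from the symbol bound $|\phi_{x,\xi'}(\tau)| \le C\jap{\tau}^{-1}$.
  \item (ii) gives local integrability and continuity off $0$; Lemma \ref{lemma.Fsymbols}(i) even gives smoothness there.
  \item (iii) identifies $\lim_{|\tau|\to\infty} \tau a(x, \xi', \tau)$ with $L = \sigma_{-1}(a; x, e_{n})$. This holds because the estimate of Lemma \ref{lemma.Fsymbols}(iii) forces $\tau\phi_{x,\xi'}(\tau) \to L$ as $\tau \to \pm\infty$. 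Oddness of $\sigma_{-1}$ is what makes the limits at $+\infty$ and $-\infty$ agree: $-\tau\, b_{-1}(x, \xi', -\tau) \to -\sigma_{-1}(a;x,-e_{n}) = \sigma_{-1}(a;x,e_{n})$.
  \item (iv) is the integrability of $u + \tilde u$.
\end{itemize}

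For (v), substitute $L = \JC(a;x)$ into formula \eqref{eq.jump.explicit}. This gives $\maF^{-1}\phi_{x,\xi'}(0\pm) = \pm \frac{\imath}{2}\JC(a;x) + \frac{1}{2\pi}\int_{\RR}\frac{\phi_{x,\xi'}(\tau)+\phi_{x,\xi'}(-\tau)}{2}\,d\tau$. By Definition \ref{def.aepsilon}(ii)(a),(c), the integral term is $a_{x_{n},0}(x',\xi')$ and the full expression is $a_{x_{n},0\pm}(x',\xi')$. The definition writes the integral with normalization $\frac{1}{4\pi}\int(\phi+\tilde\phi)$, which is the same quantity.

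There is no genuine obstacle; the proof is bookkeeping. The one point I would double-check is the oddness hypothesis. As printed in the statement it has a typo (both sides read $-\xi$); the intended condition is $\sigma_{-1}(a;x,-\xi) = -\sigma_{-1}(a;x,\xi)$. It is used precisely where Lemma \ref{lemma.Fsymbols}(iii) extends the homogeneity of $b_{-1}$ to negative $\lambda$, so that the single constant $L$ serves for both $\tau \to +\infty$ and $\tau \to -\infty$. I would also verify the normalization $\frac{1}{2\pi}$ of $\maF^{-1}$ from \eqref{eq.def.inv.F}, which is what produces the constant in front of the integral.
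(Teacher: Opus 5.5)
Your proposal is correct and follows the paper's route: the paper obtains this corollary directly by feeding the estimate of Lemma \ref{lemma.Fsymbols}(iii) (with $\epsilon=1$ and $L=\JC(a;x)$) into Corollary \ref{cor.first.jump}, and then reads off (v) from \eqref{eq.jump.explicit} together with Definition \ref{def.aepsilon}. Two of your points go beyond what the paper writes out, and both are legitimate: splitting into real and imaginary parts is needed because Lemma \ref{lemma.first.jump} is stated only for real-valued $u$ and real $L$, and the oddness hypothesis should indeed read $\sigma_{-1}(a;x,-\xi)=-\sigma_{-1}(a;x,\xi)$.
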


In our book \cite{KMNW-2025}, the last lemma and the last corollary
were proved for the case $m = -1$
under a slightly more general assumption, as we explain next.

\begin{remark}
Let $m = -1$, $e_{n} = (0, \ldots, 0, 1) \in \RR^{n}$ (as before), and
let us assume that $a$ is \emph{classical} (but not necessarily odd).
Then, for all $x \in \RR^{n}$ and $ \xi' \in \RR^{n-1}$,
the following limits exist
\begin{equation*}
  \JC_{\pm}(a; x) \ede \lim_{\tau \to \pm \infty}
  \tau a(x, \xi', \tau)  \seq  \pm \sigma_{-1}(a; x, \pm e_{n}) \in \CC\,.
\end{equation*}
In our book, we have proved the last lemma under the weaker assumption that $\JC_{+}(x) = \JC_{-}(x)$.
In this paper, this relation is a consequence of the fact that $\sigma_{-1}(a)$ is assumed to be odd.
This simplifies the presentation
and is anyway the only case needed in this paper. From now on, we shall rather use this assumption
(odd principal symbol, instead of the less restrictive assumption $\JC_{+}(x) = \JC_{-}(x)$
used in our book).
\end{remark}

Before turning to the study of Problem \ref{problem}, let us
recall the following basic results.

\begin{theorem}\label{thm.aux.derivUnderInt}
  Let $(\mfkM, \mu)$ be a measure space, $N \in \ZZ_{+} \cup \{\infty\}$, and
  $U \subset \RR^{k}$ be an open subset. Let also
  $f : U \times \mfkM \to \CC$ be a measurable function satisfying,
  for all multi-indices $\alpha \in \ZZ_{+}^{k}$,
  $|\alpha| \le N$, the following three conditions:

  \begin{enumerate}[\rm (i)]
    \item The derivative $\pa_{x}^{\alpha} f(x, \omega)$ is defined
    for all $(x, \omega) \in U \times \mfkM$.

    \item For all $\omega \in \mfkM$, the function
    $x \mapsto \pa_{x}^{\alpha} f(x, \omega)$
    is continuous in $x \in U$.

    \item There is an \emph{integrable} function $G_{\alpha} : \mfkM \to [0, \infty)$
    such that, for all $x \in U$,
    \begin{equation*}
      |\pa_{x}^{\alpha} f(x, \omega)| \le G_{\alpha}(\omega)\,.
    \end{equation*}
  \end{enumerate}
  Then the function $F: \RR^{k} \to \CC$,
    $F(x) := \int_{\mfkM} f(x, \omega)\, d\mu(\omega)$,
  has the property that $\pa_{x}^{\alpha}F$ is defined and continuous for all $|\alpha| \le N$,
  and it is given by
  \begin{equation*}
    \pa_{x}^{\alpha} F(x) = \int_{\mfkM} \pa_{x}^{\alpha} f(x, \omega)\, d\mu(\omega)\,,
  \end{equation*}
  and $|\pa_{x}^{\alpha}F(x)| \le \int_{\mfkM} G_{\alpha}(\omega)\, d\mu(\omega)$.
\end{theorem}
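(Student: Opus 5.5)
The plan is to reduce everything to the one-variable dominated convergence theorem and the mean value theorem, arguing by induction on $|\alpha|$. The case $\alpha = 0$ needs nothing: $F$ is well defined, and its bound follows from (iii) with $\alpha = 0$. Continuity of $F$ comes from dominated convergence. If $x_j \to x$ in $U$, then $f(x_j, \omega) \to f(x, \omega)$ for each $\omega$ by (ii), and the whole family is dominated by the integrable function $G_0$, so $F(x_j) \to F(x)$.

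For the inductive step, I suppose the formula $\pa_x^{\beta} F = \int_{\mfkM} \pa_x^{\beta} f \, d\mu$ is known for some $\beta$ with $|\beta| < N$, and I take $\alpha = \beta + e_i$. Fix $x \in U$ and choose $r > 0$ with the closed segment $[x, x + h e_i] \subset U$ for all $|h| < r$. Consider the difference quotient
\begin{equation*}
  \frac{\pa_x^{\beta} F(x + h e_i) - \pa_x^{\beta} F(x)}{h}
  \seq \int_{\mfkM} \frac{\pa_x^{\beta} f(x + h e_i, \omega) - \pa_x^{\beta} f(x, \omega)}{h}\, d\mu(\omega)\,.
\end{equation*}
By (i), for each $\omega$ the map $s \mapsto \pa_x^{\beta} f(x + s e_i, \omega)$ is differentiable on the segment, with derivative $\pa_x^{\alpha} f(x + s e_i, \omega)$. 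The mean value theorem, applied to the real and imaginary parts separately, bounds the integrand by $2 G_{\alpha}(\omega)$. The integrand also converges pointwise to $\pa_x^{\alpha} f(x, \omega)$ as $h \to 0$. Dominated convergence then gives existence of $\pa_x^{\alpha} F(x)$ together with the claimed formula.

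Continuity of $\pa_x^{\alpha} F$ follows exactly as in the base case, using (ii) and $G_{\alpha}$. The bound $|\pa_x^{\alpha} F(x)| \le \int_{\mfkM} G_{\alpha}\, d\mu$ is immediate from the formula and (iii). When $N = \infty$, the induction runs over all $\alpha$.

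The only delicate point is the mean value step for complex-valued functions, handled by splitting into real and imaginary parts (which costs the factor 2), and keeping the segment inside the open set $U$. Measurability of $\pa_x^{\alpha} f(x, \cdot)$ is automatic, since it is a pointwise limit of measurable difference quotients. I expect no genuine obstacle here; this is the standard differentiation-under-the-integral theorem.
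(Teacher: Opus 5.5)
Your proposal is correct and is essentially the paper's argument. The paper only says the result follows from Lebesgue's dominated convergence theorem (citing Evans), and your induction on $|\alpha|$, with the mean value theorem supplying the dominating function $2G_{\alpha}$ for the difference quotients, is the standard argument behind that citation. One detail worth making explicit: when writing $\alpha = \beta + e_i$, take $i$ to be the first index with $\alpha_i \neq 0$, so that $\pa_x^{\alpha} = \pa_{x_i}\pa_x^{\beta}$ holds literally for the paper's convention $\pa^{\alpha} = \pa_1^{\alpha_1}\cdots\pa_k^{\alpha_k}$ and hypothesis (i) applies without invoking symmetry of mixed partials.
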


\begin{proof}
  The result follows by applying Lebesgue's dominated convergence theorem.
  See \cite[Appendix E]{Evans}.
\end{proof}

If $N = 0$ (hence $\alpha = 0$), then this theorem simply states
that $F(x)$ is defined for all $x \in \RR^{k}$,
that it is continuous in $x$, and that it is bounded by
$|F(x)| \le \int_{\mfkM} G_{0}(\omega)\, d\mu(\omega).$ An immediate
consequence of this theorem is the following strong operator
convergence result.

To identify the limits of $\big[ a(x, D)(h \otimes \delta_{\Gamma}) \big]_{\epsilon}$
as $\epsilon \to 0$, we shall need the following
well-known, standard result on pseudodifferential operators.

\begin{lemma}\label{lemma.so.conv}
  Let $m \in \RR$, let $b_{\epsilon} \in \Smbmn$,
  $\epsilon \in [0, 1]$, and assume that the set
  $\{b_{\epsilon}\mid \epsilon \in [0, 1]\}$ is bounded in
  $\Smbmn$ (that is, for all $(x, \xi) \in
  \RR^{n} \times \RR^{n}$, we have $|\pa_{x}^{\alpha}
  \pa_{ \xi}^{\beta} b_{\epsilon}(x, \xi)| \le C_{\alpha, \beta}\jap{ \xi}^{m-|\beta|}$,
  with $C_{\alpha, \beta}$ independent of $\epsilon$). Let us also assume
  that, for all $(x, \xi) \in \RR^{n} \times \RR^{n}$ we have
  $\lim_{\epsilon \to 0} b_{\epsilon}(x, \xi) = b_{0}(x, \xi)$. Then, for
  all $h \in H^{s}(\RR^{n})$, we have the following convergence in
  $H^{s-m}(\RR^{n})$
  \begin{equation*}
    \lim_{\epsilon \to 0} b_{\epsilon}(x, D) h \seq
    b_{0}(x, D) h \in H^{s-m}(\RR^{n})\,.
  \end{equation*}
\end{lemma}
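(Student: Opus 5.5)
We reduce the statement to the $L^{2}$-boundedness of pseudodifferential operators whose symbols are uniformly bounded in $\Smbmn$, plus dominated convergence. First we normalize the Sobolev index. Let $\Lambda^{r} := \jap{D}^{r}$, with symbol $\jap{\xi}^{r}$. Write
\begin{equation*}
  \Lambda^{s-m} b_{\epsilon}(x, D) \Lambda^{-s} \seq c_{\epsilon}(x, D)\,,
\end{equation*}
where $c_{\epsilon} \in S^{0}(\RR^{n} \times \RR^{n})$. By the composition formula for Kohn--Nirenberg symbols, the seminorms of $c_{\epsilon}$ are controlled by finitely many seminorms of $b_{\epsilon}$. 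Hence the family $\{c_{\epsilon}\}$ is bounded in $S^{0}$.

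Pointwise convergence also passes to $c_{\epsilon}$. Every $\xi$-derivative of $b_{\epsilon}$ converges pointwise, by interpolating the uniform bounds on higher derivatives, or by Arzel\`a--Ascoli on compacts. The oscillatory integral in the composition formula then converges by dominated convergence, so $c_{\epsilon}(x, \xi) \to c_{0}(x, \xi)$ pointwise. Since $\Lambda^{s}$ is an isometry $H^{s}(\RR^{n}) \to L^{2}(\RR^{n})$, it suffices to prove
\begin{equation*}
  c_{\epsilon}(x, D) u \to c_{0}(x, D) u \quad \mbox{in } L^{2}(\RR^{n}), \qquad u \in L^{2}(\RR^{n})\,.
\end{equation*}

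Next we use a density argument. By the Calder\'on--Vaillancourt theorem, $\|c_{\epsilon}(x, D)\|_{L^{2} \to L^{2}}$ is bounded by finitely many $S^{0}$-seminorms of $c_{\epsilon}$. It is therefore uniformly bounded in $\epsilon$. A standard $\varepsilon/3$ argument then reduces the claim to $u \in \maS(\RR^{n})$.

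For $u \in \maS(\RR^{n})$ we have $\hat u \in \maS$. For each fixed $x$,
\begin{equation*}
  c_{\epsilon}(x, D) u(x) \seq (2\pi)^{-n} \int e^{\imath x \cdot \xi} c_{\epsilon}(x, \xi) \hat u(\xi)\, d\xi
\end{equation*}
converges to $c_{0}(x, D) u(x)$ by Theorem \ref{thm.aux.derivUnderInt}, with dominating function $C |\hat u(\xi)|$. To upgrade this to $L^{2}$ convergence we need an $\epsilon$-independent majorant in $x$. Integrating by parts with $(1 - \Delta_{\xi})^{N}$ and using $\jap{x}^{2N} e^{\imath x \cdot \xi} = (1 - \Delta_{\xi})^{N} e^{\imath x \cdot \xi}$ gives
\begin{equation*}
  |c_{\epsilon}(x, D) u(x)| \le C_{N} \jap{x}^{-2N}\,,
\end{equation*}
uniformly in $\epsilon$, because the $\xi$-derivatives of $c_{\epsilon}$ are uniformly bounded and $\hat u \in \maS$. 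Dominated convergence in $x$, with majorant $\jap{x}^{-4N}$ for $N > n/4$, then yields $L^{2}$ convergence.

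The main obstacle is the first step: checking that the conjugation by $\Lambda^{s-m}$ and $\Lambda^{-s}$ preserves both uniform boundedness and pointwise convergence of the symbols. If this proves cumbersome, we would avoid conjugation altogether. Instead we would directly prove uniform boundedness $H^{s} \to H^{s-m}$ from the uniform seminorm bounds. We would then run the density argument with $u \in \maS(\RR^{n})$ and show $\jap{D}^{s-m}(b_{\epsilon} - b_{0})(x, D) u \to 0$ in $L^{2}$. For this we would expand $\jap{D}^{s-m}$ acting on $b_{\epsilon}(x, D) u$ using finitely many $x$-derivatives of $b_{\epsilon}$, all of which converge pointwise and are uniformly bounded.
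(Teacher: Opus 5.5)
Your argument is correct and is essentially the paper's proof written out in full. The paper only combines two facts: the operator norm of $a(x,D)$ is controlled by finitely many seminorms of $a$, and dominated convergence (Theorem \ref{thm.aux.derivUnderInt}); your conjugation to order zero, Calder\'on--Vaillancourt bound, $\varepsilon/3$ density argument, and uniform $\jap{x}^{-2N}$ majorant are exactly the details that make that sketch rigorous. One small correction: in the composition formula for $\Lambda^{s-m} b_{\epsilon}(x,D)$, the integrations by parts put $x$-derivatives (not $\xi$-derivatives) on $b_{\epsilon}$, so what you need is pointwise convergence of every $\pa_{x}^{\alpha} b_{\epsilon}$. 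Your Arzel\`a--Ascoli argument on compacts does supply this, since it gives convergence of all mixed derivatives.
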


\begin{proof}
  The norm of $a(x, D)$ depends on finitely many of the semi-norms
  defining the topology on $\Smbmn$ (see
  \cite{KNW-22} in addition
  to the standard textbooks mentioned above). The result
  follows by combining this property with Theorem \ref{thm.aux.derivUnderInt}.
\end{proof}We now turn to the study of Problem \ref{problem}. To that end, recall that
$\Gamma := \{(x', x_{n}) \in \RR^{n} \mid x_{n} = 0 \}$ and
that $h \otimes \delta_{\Gamma}$ is the distribution
\[\langle h \otimes \delta_{\Gamma}, \phi \rangle := \int_{\Gamma}
h(x') \phi(x', 0) \, dx',\] where $\phi \in \CIc(\RR^{n})\,,$
see Equation \eqref{eq.def.Gamma} and
Lemma \ref{lemma.def.hdelta}. In particular, if $h \in L^{2}(\Gamma)$, then
$h \otimes \delta_{\Gamma} \in H^{-1/2 - \epsilon}(\RR^{n})$, for
all $\epsilon > 0$.
If $u : \RR^{n} \to \CC^{k}$ is continuous enough, recalling the notation
introduced in Definition \eqref{def.limit.values0}, we obtain that
then $u_{\epsilon} : \RR^{n-1} \to \CC^{k}$ is given by
  $u_{\epsilon} (x') \ede u(x', \epsilon)$, $x' \in \RR^{n-1}$.
Note that, we identify $\Gamma _\epsilon := \Gamma + \epsilon e_{n} :=
\{x=(x',x_{n})\in \RR^n : x_{n} = \epsilon\}$ with $\Gamma$ and their associated
function spaces with the translation $\tau_{-\epsilon}$, $\tau_{t}(x) := x + t e_{n}$,
as explained in Subsection \ref{ssec.problem}. Recall the potential operator
\begin{equation*} 
  \maS_{a}(h) \ede a(x, D)(h \otimes \delta_{\Gamma}) \in
  \CI(\RR^{n} \smallsetminus \Gamma)
\end{equation*}
of Equation \eqref{eq.def.maSa}.

Here is our first result on $\big[ a(x, D)(h \otimes \delta_{\Gamma}) \big]_{\epsilon}$,
regarded as a function on $\RR^{n-1} \simeq \{x_{n} = \epsilon\}$, as explained
in Subsection \ref{ssec.problem}, see especially Definition \eqref{def.limit.values0}.

\begin{proposition}\label{prop.side.limits}
  Let $h \in L^{2}(\RR^{n-1})$, let $a \in \Smbmn$, $m \in \RR$,
  and let $a_{x_{n}, t}$, $t \neq 0$, be as in Definition \ref{def.aepsilon}.
  Then, for any $\epsilon \neq 0$, $a(x, D) (h \otimes \delta_{\Gamma})$
  coincides with a smooth function on $\Gamma_{\epsilon} := \Gamma + \epsilon e_{n}$ and
  its restrictions to these sets satisfy
  \begin{equation*}
    \big[\maS_{a}(h)\big]_{\epsilon} \ede
    \big[a(x, D)(h \otimes \delta_{\Gamma})\big]_{\epsilon}
    \seq a_{\epsilon, \epsilon}(x', D') h\,.
  \end{equation*}
\end{proposition}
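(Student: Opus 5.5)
The plan is to compute $a(x, D)(h \otimes \delta_{\Gamma})$ explicitly through the Fourier transform and to separate the $\xi_{n}$-integration from the $\xi'$-integration. Take first $h \in \maS(\RR^{n-1})$ and argue by density at the end. Since $\langle h \otimes \delta_{\Gamma}, e^{-\imath x\cdot \xi}\rangle = \int h(y') e^{-\imath y'\cdot \xi'} dy'$, we have $\maF(h \otimes \delta_{\Gamma})(\xi', \xi_{n}) = \hat h(\xi')$, independently of $\xi_{n}$. Formally, Definition \ref{def.pseudos} then gives
\begin{equation*}
  a(x, D)(h\otimes \delta_{\Gamma})(x', x_{n}) \seq \frac1{(2\pi)^{n-1}} \int_{\RR^{n-1}} e^{\imath x'\cdot\xi'} \hat h(\xi')
  \Big[\frac1{2\pi}\int_{\RR} e^{\imath x_{n}\xi_{n}} a(x, \xi', \xi_{n})\, d\xi_{n}\Big] d\xi'.
\end{equation*}
The bracket is $\big[\maF^{-1}\phi_{x,\xi'}\big](x_{n})$, that is, $a_{x_{n}, x_{n}}(x', \xi')$ in the notation of Definition \ref{def.aepsilon}(i). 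Setting $x_{n} = \epsilon$, the right-hand side becomes exactly $a_{\epsilon,\epsilon}(x', D')h$.

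To justify this formal computation when $m \ge -1$, where the $\xi_{n}$-integral need not converge absolutely, I will use the integration-by-parts trick from the proof of Lemma \ref{lemma.Fsymbols}(i). For $\epsilon \neq 0$ and $N$ large, the identity $(\imath x_{n})^{N} e^{\imath x_{n}\xi_{n}} = \pa_{\xi_{n}}^{N} e^{\imath x_{n}\xi_{n}}$ lets me write
\begin{equation*}
  a_{x_{n},t}(x', \xi') \seq (-\imath t)^{-N} \frac1{2\pi}\int_{\RR} e^{\imath t\xi_{n}}\, \pa_{\xi_{n}}^{N} a(x, \xi', \xi_{n})\, d\xi_{n}.
\end{equation*}
Because $|\pa_{\xi_{n}}^{N} a| \le C\jap{\xi}^{m-N}$, this integral converges absolutely for $N > m+1$. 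To make the interchange of integrals rigorous, I will insert a cutoff $\chi(\delta\xi_{n})$ with $\chi \in \CIc$ and $\chi = 1$ near $0$. The regularized expression is legitimate by Fubini, and I then let $\delta \to 0$ using dominated convergence (Theorem \ref{thm.aux.derivUnderInt}) after the $N$-fold integration by parts. The terms in which derivatives fall on $\chi(\delta\cdot)$ are bounded by $\delta^{k}\jap{\xi}^{m-N+k}$, so they vanish in the limit. The same estimates, differentiated in $x$ and $\xi'$, show that $a_{\epsilon,\epsilon}$ is a symbol in $S^{-\infty}(\RR^{n-1}\times\RR^{n-1})$ for each fixed $\epsilon \neq 0$. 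Indeed, $\int \jap{(\xi',\xi_{n})}^{m-N} d\xi_{n} \lesssim \jap{\xi'}^{m-N+1}$ and $N$ is arbitrary. The same bounds show that $a(x, D)(h \otimes \delta_{\Gamma})$ is smooth in $x_{n} \neq 0$, which gives smoothness near $\Gamma_{\epsilon}$.

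The last step passes from Schwartz $h$ to $h \in L^{2}$. Both sides depend continuously on $h$: the left side because $h \otimes \delta_{\Gamma}$ depends continuously on $h$ in $H^{s'}$ by Lemma \ref{lemma.def.hdelta}, and $a(x, D)$ maps it continuously into distributions; the right side because $a_{\epsilon,\epsilon}(x', D')$ is bounded on $L^{2}$. Since the two sides agree on a dense set, they agree on all of $L^{2}(\RR^{n-1})$, and the restriction to $\Gamma_{\epsilon}$ is meaningful by local smoothness in $x_{n}$.

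The main obstacle is the justification step for $m \ge -1$: the interchange of integrals is not absolutely convergent, and the uniform symbol estimates for the regularized integrals have to be tracked carefully. The rest is bookkeeping.
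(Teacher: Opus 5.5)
Your proof is correct, and for $m \ge -1$ it takes a different route from the paper's.

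\emph{The paper's route.} It regularizes the full symbol, $b_\delta := a\,\phi(\delta\xi)$ with $\phi\in\maS(\RR^n)$ and $\phi(0)=1$. Then $b_\delta$ has order $-\infty$, so the absolutely convergent Fubini computation of the case $m<-1$ applies verbatim. It then lets $\delta\to0$ using the strong convergence of Lemma \ref{lemma.so.conv}.

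\emph{Your route.} You cut off only in $\xi_n$, which amounts to replacing $h\otimes\delta_\Gamma$ by $h\otimes\rho_\delta$ with $\rho_\delta := \maF^{-1}[\chi(\delta\,\cdot)]$. You use $x_n=\epsilon\neq 0$ directly, through the identity $a_{x_n,t}=(-\imath t)^{-N}\maF^{-1}_{\xi_n}(\pa_{\xi_n}^N a)(t)$ from the proof of Lemma \ref{lemma.Fsymbols}(i). This makes the $\xi_n$-integral absolutely convergent, and dominated convergence finishes the argument.

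\emph{What each buys.} The paper's version is shorter because it recycles the order $<-1$ case. However, its limiting step tacitly needs that $(b_\delta)_{\epsilon,\epsilon}$ stay bounded in a symbol class on $\RR^{n-1}$ and converge pointwise to $a_{\epsilon,\epsilon}$, and that is exactly your integration-by-parts estimate. Your version makes the estimate explicit and yields more: $a_{\epsilon,\epsilon}\in S^{-\infty}(\RR^{n-1}\times\RR^{n-1})$, with bounds uniform for $|\epsilon|$ in compact subsets of $(0,\infty)$. This gives smoothness off $\Gamma$ without appealing to pseudolocality.

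Three points each need an extra line.

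(a) State explicitly that the regularized integrals equal $a(x,D)(h\otimes\rho_\delta)$, and that $h\otimes\rho_\delta\to h\otimes\delta_\Gamma$ in $H^{s'}(\RR^n)$ for $s'<-1/2$. Together with the uniform convergence on $\{|x_n|\ge c\}$, this is what identifies the distribution $a(x,D)(h\otimes\delta_\Gamma)$ with your function there.

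(b) For the Leibniz terms with $k\ge1$, the bound $\delta^k\jap{\xi}^{m-N+k}$ alone is not integrable in $\xi_n$ once $m-N+k\ge -1$. You must use that these terms are supported where $|\xi_n|\sim\delta^{-1}$, so $\delta^k\lesssim\jap{\xi_n}^{-k}$. Combined with $\jap{\xi}\le\jap{\xi'}\jap{\xi_n}$, they are $O\big(\jap{\xi'}^{|m|}\jap{\xi_n}^{m-N}\big)$ on $\{|\xi_n|\gtrsim\delta^{-1}\}$. That integrates in $\xi_n$ to $o(1)$, and the polynomial growth in $\xi'$ is absorbed by $\hat h$.

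(c) In the density step, continuity of $h\mapsto a(x,D)(h\otimes\delta_\Gamma)$ into distributions on $\RR^n$ does not control restrictions to $\Gamma_\epsilon$. Use your uniform $S^{-\infty}$ bounds instead, including $x_n$-derivatives. They show that $h\mapsto a_{x_n,x_n}(x',D')h$ is continuous from $L^2(\RR^{n-1})$ into $C^\infty(\{c<|x_n|<C\})$. Hence the identity, valid as distributions on $\{x_n\neq0\}$ for dense $h$, extends to all $h\in L^2$, and only then is restricted to $\Gamma_\epsilon$.
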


\begin{proof}
  Let $h \in \CIc(\Gamma)$ and $H:=h \otimes \delta_{\Gamma}$,
  see Lemma \ref{lemma.def.hdelta}. Then $H \in H^{-1}(\RR^{n})$ (because $-1 < -1/2$), so
  $a(x, D)H$ is defined. Since $H$ has singularities only on $\Gamma$,
  $a(x, D)H$ will also have singularities only on $\Gamma$, because pseudodifferential
  operators preserve the singular support. Therefore, $a(x, D)H$ coincides
  with a smooth function outside $\Gamma$. Moreover,
  \begin{equation*}
    \widehat H( \xi) \ede \maF(h \otimes \delta_{\Gamma})( \xi) \seq \maF_{x'}(h)( \xi')
    \seq \hat h( \xi')\,,
  \end{equation*}
  where $\maF_{x'}$ is the Fourier transform on $\Gamma$. For $a$ of order $< -1$,
  we then obtain
  \begin{align*}
    a(x, D)H(x) & \ede \frac{1}{(2\pi)^{n}}
    \int_{\RR^{n}} e^{\imath x \cdot \xi } a(x, \xi) \widehat H( \xi) \,d \xi\\
    & \seq  \frac{1}{(2\pi)^{n}}\int_{\RR^{n-1}} e^{\imath x'\cdot \xi'}
    \left ( \int_{\RR} e^{\imath x_{n}\cdot \xi_{n}}
    a(x, \xi) \hat h( \xi') \,d \xi_{n} \right ) \, d \xi'\\
    & \seq \frac{1}{(2\pi)^{n-1}} \int_{\RR^{n-1}} e^{\imath x'\cdot \xi'}
    a_{x_{n}, x_{n}}(x', \xi') \hat h( \xi') \, d \xi'\\
    & \, =:\, a_{x_{n}, x_{n}}(x', D')h(x')\,,
  \end{align*}
  with all integrals being absolutely convergent, because $\hat h(\xi')$ has
  rapid decay in $\xi'$ and $a$ is of order $-1$, which ensures the integrability in
  the $\xi_{n}$ variable. (This justifies the
  use of Fubini's theorem in the second equality).

  For $a$ of order $m \ge -1$, let $\phi \in \maS(\RR^{n})$, regarded as a symbol
  on $\RR^{n}$ that is constant in $x$. Let $\phi_{\epsilon}( \xi) := \phi(\epsilon \xi)$
  and assume that $\phi(0) = 1$. Then $b_{\epsilon} := a \phi_{\epsilon}$ is a symbol of
  order $-\infty$ and, for all $u \in H^{s}(\RR^{n})$, $b_{\epsilon}(x, D) u
  \to a(x, D)u$ in $H^{s - m}(\RR^{n})$ as $\epsilon \searrow 0$. The desired result
  for $h \in \CIc(\RR^{n-1})$ is obtained by applying the statement
  already proved to $b_{\epsilon}$ and passing to the limit for $\epsilon \searrow 0$
  and using Lemma \ref{lemma.so.conv}. The result for general $h$ follows by
  continuity in $h$ and the density of the space $\CIc(\RR^{n-1})$ in $L^2(\RR^{n-1})$.
\end{proof}

We have the following simple remark that simplifies our notation.

\begin{remark}\label{rem.aee}
  Proposition \ref{prop.side.limits} gives the equality $a_{\epsilon} = a_{\epsilon, \epsilon}$,
  which thus reconciles the definitions of $a_{\epsilon}$ (introduced in
  Definition \ref{def.limit.values0}) and that of $a_{\epsilon, \epsilon}$
  (introduced in Definition \ref{def.aepsilon}).
\end{remark}

We shall need the following standard lemma.

\begin{lemma}\label{lemma.homogeneity}
  Let $a \in \Smbmn$, $m \in \RR$, be essentially
  homogeneous of order $m$ (i.e., $a(x, \lambda \xi) = \lambda^{m}a(x, \xi)$
  for $\lambda |\xi|, |\xi| \ge 1$, see Equation \eqref{eq.def.ess.hom}).
  Let $a_{x_{n},t}$ be as in Definition \ref{def.aepsilon}(i).
  \begin{enumerate}[\rm (i)]
    \item For all $\lambda |\xi'|, |\xi'| \ge 1$ and all
    $t \neq 0$, we have
    \begin{equation*}
      a_{x_{n},t}(x', \lambda \xi') \seq \lambda^{m+1} a_{x_{n},
      \lambda t}(x', \xi')\,.
    \end{equation*}

    \item Let $\varepsilon \in \{-1, 1\}$ and assume that $a(x, -\xi) = \varepsilon a(x, \xi)$
    for $|\xi| \ge 1$, then, for all $\lambda |\xi'|, |\xi'| \ge 1$, we have
    \begin{equation*}
      a_{x_{n},t}(x', - \xi') \seq \varepsilon a_{x_{n}, - t}(x', \xi')\,.
    \end{equation*}

    \item If $m < -1$, the results of the previous two points hold also for $t = 0$.
  \end{enumerate}
\end{lemma}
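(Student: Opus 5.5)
The identity is a change of variables in the one-dimensional inverse Fourier transform $\maF_{\xi_{n}}^{-1}$ defining $a_{x_{n},t}$ in Definition \ref{def.aepsilon}(i). The point that makes it work is that, once $|\xi'| \ge 1$ (and $\lambda|\xi'| \ge 1$), every point $(\xi', \xi_{n})$ satisfies $|(\xi',\xi_{n})| \ge |\xi'| \ge 1$ and $\lambda|(\xi',\xi_{n})| \ge 1$. Hence the essential homogeneity relation \eqref{eq.def.ess.hom} holds along the \emph{whole} line $\xi' \times \RR$, with no exceptional compact set to handle. Throughout, $\lambda > 0$, as implicit in the definition of essential homogeneity.

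For (i), fix $x$ and $\xi'$ with $|\xi'|, \lambda|\xi'| \ge 1$, and let $\phi_{x,\xi'}(\eta) := a(x,\xi',\eta)$ as in Lemma \ref{lemma.Fsymbols}. Homogeneity gives the identity of functions on $\RR$
\begin{equation*}
  \phi_{x,\lambda\xi'}(\lambda\eta) \seq \lambda^{m}\phi_{x,\xi'}(\eta)\,, \quad \eta \in \RR\,.
\end{equation*}
Both sides are tempered distributions by Lemma \ref{lemma.Fsymbols}(i). I will then use the dilation rule in $\maS'(\RR)$: if $f_{\lambda}(\eta) := f(\lambda\eta)$, then $\maF^{-1}(f_{\lambda})(t) = \lambda^{-1}(\maF^{-1}f)(t/\lambda)$. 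This is checked on $\maS(\RR)$ by the substitution $\xi_{n} = \lambda\eta$ and extended by duality. Applying it with $f = \phi_{x,\lambda\xi'}$ gives the equality of distributions
\begin{equation*}
  \lambda^{-1}(\maF^{-1}\phi_{x,\lambda\xi'})(t/\lambda) \seq \lambda^{m}(\maF^{-1}\phi_{x,\xi'})(t)\,.
\end{equation*}
By Lemma \ref{lemma.Fsymbols}(i), both sides coincide with smooth functions on $\RR\smallsetminus\{0\}$. So the equality holds pointwise for $t \neq 0$, and replacing $t$ by $\lambda t$ yields $a_{x_{n},t}(x',\lambda\xi') = \lambda^{m+1}a_{x_{n},\lambda t}(x',\xi')$. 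When $m < -1$ this is literally the substitution $\xi_{n} = \lambda\eta$ in the absolutely convergent integral \eqref{eq.def.aepsilon.explicit}.

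For (ii), the argument is the same with the reflection $\eta \mapsto -\eta$ in place of the dilation. The hypothesis gives $\phi_{x,-\xi'}(-\eta) = \varepsilon\phi_{x,\xi'}(\eta)$ for all $\eta$, again because $|(\xi',\eta)| \ge 1$. Combining this with $\maF^{-1}(\tilde u) = \widetilde{\maF^{-1}u}$ in $\maS'(\RR)$ gives $(\maF^{-1}\phi_{x,-\xi'})(-t) = \varepsilon(\maF^{-1}\phi_{x,\xi'})(t)$ as distributions. Pointwise equality for $t \neq 0$ then follows from smoothness away from $0$, exactly as in (i).

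For (iii), when $m < -1$ the functions $\phi_{x,\xi'}$ are integrable, so $\maF^{-1}\phi_{x,\xi'} \in \maC_{0}(\RR)$ by Lemma \ref{lemma.Fsymbols}(ii). The identities of (i) and (ii) then extend to $t = 0$ by letting $t \to 0$, or directly by the substitutions above in \eqref{eq.def.aepsilon.explicit} at $t = 0$. The only delicate step is justifying pointwise (rather than distributional) equality when $m \ge -1$, where the integrals do not converge. This is handled by the distributional dilation and reflection rules together with the smoothness off $0$ from Lemma \ref{lemma.Fsymbols}(i).
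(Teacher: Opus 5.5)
Your proof is correct and takes essentially the same route as the paper. You use the substitution $\xi_{n} = \lambda\tau$ (respectively the reflection $\xi_{n}\mapsto -\xi_{n}$) in the inverse Fourier transform, treat the case $m \ge -1$ distributionally, and then read the identity off pointwise on $\RR\smallsetminus\{0\}$ using smoothness away from the origin; this is what the paper means by ``pairing test functions supported away from $0$,'' and you additionally make explicit that $|\xi'|\ge 1$ and $\lambda|\xi'|\ge 1$ guarantee that the homogeneity (and parity) relation holds along the whole line $\{\xi'\}\times\RR$, a point the paper leaves implicit.
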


\begin{proof}
  Assume first that $m < -1$. The substitution $ \xi_{n} = \lambda \tau$ yields
  \begin{multline*}
    a_{x_{n},t}(x', \lambda \xi') \ede
    \maF_{ \xi_{n}}^{-1} a(x', x_{n}, \lambda \xi', t)
    \ede \frac{1}{2\pi}
    \int_{\RR} e^{\imath t \xi_{n}} a(x', x_{n}, \lambda \xi', \xi_{n})
    \, d \xi_{n}\\
    \seq \frac{1}{2\pi}
    \int_{\RR} e^{\imath t \lambda \tau} a(x', x_{n}, \lambda \xi', \lambda \tau)
    \, \lambda d\tau\seq \frac{\lambda^{m+1}}{2\pi}
    \int_{\RR} e^{\imath t \lambda \tau} a(x', x_{n}, \xi', \tau)
    \, d\tau \\
    \seq \lambda^{m+1} a_{x_{n}, \lambda t}(x', \xi')\,.
  \end{multline*}
  If $m \ge -1$, the proof is the same by pairing test functions supported away from 0.
  This proves (i). The proof carries over to (ii) is completely similar.
  If $m < -1$, the proofs of (i) and (ii) obviously work also for $t = 0$,
  which proves (iii).
\end{proof}

\subsection{Normal lateral limits of pseudodifferential operators of orders $<-1$}
Let $\Gamma := \{x_{n} = 0\} \subset \RR^{n}$, as always in this paper.
For further use, let us record the following calculation.

\begin{lemma}\label{lemma.est.japs}
  If $s < -1$, then there exists $C_{s} > 0$ such that
  $\int_{\RR} \jap{ \xi}^{s} \, d \xi_{n} \seq C_{s}\jap{ \xi'}^{s+1}.$
\end{lemma}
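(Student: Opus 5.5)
The plan is a scaling substitution in the $\xi_{n}$ variable, which turns the left-hand side into $\jap{\xi'}^{s+1}$ times an integral that no longer depends on $\xi'$. Write $\jap{\xi}^{2} = 1 + |\xi'|^{2} + \xi_{n}^{2} = \jap{\xi'}^{2} + \xi_{n}^{2}$. For fixed $\xi' \in \RR^{n-1}$, put $\xi_{n} = \jap{\xi'}\, \tau$, so that $d\xi_{n} = \jap{\xi'}\, d\tau$. Then
\begin{equation*}
  \jap{\xi}^{s} \seq \big(\jap{\xi'}^{2}(1+\tau^{2})\big)^{s/2}
  \seq \jap{\xi'}^{s}\, \jap{\tau}^{s}\,,
\end{equation*}
where $\jap{\tau} = (1+\tau^{2})^{1/2}$ is the one-dimensional bracket.

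Substituting, we will obtain
\begin{equation*}
  \int_{\RR} \jap{\xi}^{s}\, d\xi_{n} \seq \jap{\xi'}^{s+1} \int_{\RR} \jap{\tau}^{s}\, d\tau\,.
\end{equation*}
So we set $C_{s} := \int_{\RR} (1+\tau^{2})^{s/2}\, d\tau$. This constant is clearly positive. It is finite precisely because $s < -1$, since the integrand is bounded near the origin and behaves like $|\tau|^{s}$ as $|\tau| \to \infty$.

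There is no real obstacle here. The only point that needs checking is the finiteness of $C_{s}$, which is exactly where the hypothesis $s < -1$ is used.
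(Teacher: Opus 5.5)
Your proposal is correct and follows the paper's proof exactly: the substitution $\xi_{n} = \jap{\xi'}\,t$ factors out $\jap{\xi'}^{s+1}$, leaving $C_{s} = \int_{\RR}(1+t^{2})^{s/2}\,dt$. As you say, this integral is finite precisely because $s<-1$.
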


\begin{proof}
  The substitution $ \xi_{n} = \jap{ \xi'} t$ gives:
  \begin{equation*}
    \int_{\RR} \jap{ \xi}^{s} \, d \xi_{n} = \int_{\RR}
    \big (\jap{ \xi'}^{2} + \jap{ \xi'}^{2} t^{2} \big )^{s/2} \jap{ \xi'}\, dt
    = \jap{ \xi'}^{s+1}\int_{\RR} (1 + t^{2})^{s/2} \, dt
    =: C_{s}\jap{ \xi'}^{s+1},
  \end{equation*}
where the last integral is convergent because $s < -1$.
\end{proof}

We can now prove the needed results for order $m < -1$ operators.
Recall the symbols $a_{x_{n}, \epsilon}$, $\epsilon \in \RR$, of Definition
\ref{def.aepsilon}.

\begin{proposition} \label{prop.symbols.zero}
  Let $m < -1$ and let $a \in \Smbmn$.
  \begin{enumerate}[\rm (i)]
    \item For any $(x_{n}, t) \in \RR^{2}$, we have that
    the map $\RR^{n-1} \times \RR^{n-1} \ni
    (x', \xi') \to a_{x_{n}, t}(x', \xi') \in \CC$ defines a symbol in
    $S^{m+1}(\RR^{n-1} \times \RR^{n-1})$.

    \item The family $\{a_{x_{n}, t} \, \mid \, (x_{n}, t) \in
    \RR^{2} \}$ is bounded in $S^{m+1}(\RR^{n-1} \times \RR^{n-1})$.

    \item The function $\RR^{n} \times \RR^{n} \ni (x, \xi', t) \to
    a_{x_{n}, t}(x', \xi') \in \CC$ is continuous.

    \item If $h \in H^{s}(\Gamma)$, then
    the function $\RR^{2} \ni (x_{n}, t) \to a_{x_{n},t}(x', D')h
    \in H^{s-m-1}(\Gamma) \simeq H^{s-m-1}(\RR^{n-1})$ is continuous.

    \item If $a \in S_{\cl}^{m}(\RR^{n} \times \RR^{n})$, then,
    for all $x_{n} \in \RR$,
    $a_{x_{n}, 0} \in S_{\cl}^{m+1}(\RR^{n-1} \times \RR^{n-1})$,
    that is, $a_{x_{n}, 0}$ is also a classical symbol.
  \end{enumerate}
\end{proposition}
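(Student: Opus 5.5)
The plan is to work directly from the explicit formula \eqref{eq.def.aepsilon.explicit},
\begin{equation*}
  a_{x_{n}, t}(x', \xi') \seq \frac{1}{2\pi}\int_{\RR} e^{\imath t \xi_{n}} a(x', x_{n}, \xi', \xi_{n})\, d\xi_{n}\,,
\end{equation*}
which is absolutely convergent because $m < -1$. For (i) and (ii), we differentiate under the integral sign using Theorem \ref{thm.aux.derivUnderInt}. For multi-indices $\alpha, \beta \in \ZZ_{+}^{n-1}$, the symbol estimates give
\begin{equation*}
  |\pa_{x'}^{\alpha}\pa_{\xi'}^{\beta} a(x, \xi', \xi_{n})| \le C_{\alpha\beta}\jap{\xi}^{m-|\beta|}\,,
\end{equation*}
with the constant independent of $x$. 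Since $m-|\beta| < -1$, this right-hand side is an integrable majorant in $\xi_{n}$. Lemma \ref{lemma.est.japs} then yields
\begin{equation*}
  |\pa_{x'}^{\alpha}\pa_{\xi'}^{\beta} a_{x_{n},t}(x',\xi')| \le (2\pi)^{-1} C_{\alpha\beta} C_{m-|\beta|}\jap{\xi'}^{m+1-|\beta|}\,.
\end{equation*}
The factor $e^{\imath t\xi_{n}}$ has modulus one, so these constants do not depend on $(x_{n},t)$. This gives (i) and the uniform boundedness claimed in (ii) at the same time.

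For (iii), we apply Theorem \ref{thm.aux.derivUnderInt} with $N=0$ in the variables $(x,\xi',t)$, working locally in $\xi'$. The integrand is continuous in $(x,\xi',t)$ and is dominated by $C\jap{\xi_{n}}^{m}$, which is integrable and independent of the parameters. For (iv), we combine (ii) and (iii) with Lemma \ref{lemma.so.conv}, applied on $\RR^{n-1}$ along any sequence $(x_{n},t)\to(x_{n}^{0},t^{0})$. The family is bounded in $S^{m+1}$ and converges pointwise, so $a_{x_{n},t}(x',D')h \to a_{x_{n}^{0},t^{0}}(x',D')h$ in $H^{s-m-1}$. (The lemma is stated with a parameter in $[0,1]$, but its proof works for any bounded family with pointwise convergence.)

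For (v), we write $a \sim \sum_{j} a_{j}$ with each $a_{j}$ essentially homogeneous of order $m-j$, and set $r_{N} := a - \sum_{j\le N} a_{j} \in S^{m-N-1}$. By linearity of the map $a\mapsto a_{x_{n},0}$ and by (i), we have $(r_{N})_{x_{n},0} \in S^{m-N}$. It therefore suffices to show that each $(a_{j})_{x_{n},0}$ is essentially homogeneous of order $m-j+1$. This is the content of Lemma \ref{lemma.homogeneity}(i) and (iii) at $t=0$.

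The main obstacle is in (v). Lemma \ref{lemma.homogeneity} is applied to a symbol that is only essentially homogeneous, i.e. homogeneous for $|\xi|\ge1$, and the substitution $\xi_{n}=\lambda\tau$ also passes through the region $|\xi|<1$. This is harmless once $|\xi'|\ge1$, because then $|\xi|\ge|\xi'|\ge1$ along the whole line of integration. The lemma should therefore be invoked only in that regime, which is exactly the regime required by the definition of essential homogeneity.
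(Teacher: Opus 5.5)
Your proposal is correct and follows essentially the same route as the paper. You differentiate under the integral sign with an integrable majorant and apply Lemma \ref{lemma.est.japs} for (i)--(iii), use Lemma \ref{lemma.so.conv} for (iv), and use Lemma \ref{lemma.homogeneity} at $t=0$ together with linearity for (v); your explicit handling of the remainder $r_N$ and of the regime $|\xi'|\ge 1$ just spells out what the paper leaves implicit.

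One cosmetic point: to differentiate in $\xi'$ the majorant must be independent of $\xi'$, so take $C_{\alpha\beta}\jap{\xi_n}^{m-|\beta|}$ using $\jap{\xi}\ge\jap{\xi_n}$, as the paper does. Keep $\jap{\xi}^{m-|\beta|}$ only for the final estimate.
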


\begin{proof}
  Let $a \in \Smbmn$.
  Then, by definition, for each $\alpha, \beta \in \ZZ_{+}^{n}$,
  there exists $C_{\alpha, \beta} > 0$ such that, for all $x, \xi \in \RR^{n}$,
  \begin{equation*}
    |\pa_{x}^{\alpha} \pa_{ \xi}^{\beta} a(x, \xi)|
    \le C_{\alpha, \beta} \jap{ \xi}^{m-|\beta|}\,.
  \end{equation*}
  Since $\jap{ \xi} \ge \jap{\xi_{n}}$ and $m -|\beta| \le m < -1$, we have
  $|\pa_{x}^{\alpha} \pa_{ \xi}^{\beta} a(x, \xi)| \le C_{\alpha, \beta}
  \jap{\xi_{n}}^{m-|\beta|}$. The bound on the right hand side
  of the last inequality is integrable and independent
  of $(x, \xi')$. The hypothesis of the theorem of derivation
  under the integral sign, Theorem \ref{thm.aux.derivUnderInt}, are thus satisfied
  and we obtain, for all $x = (x', x_{n})$ that
  \begin{equation*}
    a_{x_{n},t}(x', \xi') \ede
    \maF_{ \xi_{n}}^{-1} a(x', x_{n}, \xi', t)
    \ede \frac{1}{2\pi}
    \int_{\RR} e^{\imath t \xi_{n}} a(x', x_{n}, \xi', \xi_{n}) \, d \xi_{n}\,.
  \end{equation*}
  is well defined (i.e. the integral converges). We also obtain the relation
  \begin{equation*}
    \pa_{x}^{\alpha} \pa_{ \xi'}^{\beta} a_{x_{n}, t} (x', \xi')
    \seq \frac1{2\pi}\, \int_{\RR}
    \pa_{x}^{\alpha} \pa_{ \xi'}^{\beta} e^{\imath t \xi_{n}} a(x, \xi) \, d \xi_{n}
  \end{equation*}
  and the estimate
  \begin{equation}\label{eq.cont.integr}
    2 \pi \, |\pa_{x}^{\alpha} \pa_{ \xi'}^{\beta} a_{t}(x, \xi)|
    \, \leq \, C_{\alpha, \beta}\, \int_{\RR}  \jap{ \xi}^{m-|\beta|}\,
    d \xi_{n} \leq C_{m-|\beta|} C_{\alpha, \beta}\jap{ \xi'}^{m+1-|\beta|}\,,
  \end{equation}
  by Lemma \ref{lemma.est.japs}. This proves (i), (ii), and (iii) at once.

  The point (iv) is a consequence of Lemma \ref{lemma.so.conv} and the points already
  proved. The point (v) follows from Lemma \ref{lemma.homogeneity}, by linearity.
\end{proof}

We are now ready to formulate our main result concerning the limit/jump
values of classical \emph{matrix valued} pseudodifferential operators of order $< -1$.
So far, we have formulated and proved our results for \emph{scalar} symbols.
The proofs extend, however, immediately to the vector valued case.
We thus formulate the following results for sections of a trivial
$k$-dimensional vector bundle.

\begin{theorem}\label{thm.main.jump-2}
  Let $a \in M_{k}(\Smbmn)$ for some $m < -1$ and $k \in \NN$. We use the
  relation $a_{\epsilon} = a_{\epsilon, \epsilon}$, see Remark \ref{rem.aee}.
  \begin{enumerate}[\rm (i)]
    \item The matrix valued function $a_{0}$ is an order $m+1$ symbol given by
    \begin{equation*}
      a_{0}(x', \xi') \seq \frac1{2\pi}
      \int_{\RR} a( x', 0, \xi', \xi_{n}) \, d \xi_{n}\,.
    \end{equation*}
    If $a$ is classical, then $a_{0}$ is also classical.

    \item
    For all $s \in \RR$ and all $h \in H^{s}(\Gamma; \CC^{k}) = H^{s}(\Gamma)^{k}$ and
    all $\epsilon \neq 0$ small, we have $\big[ a(x, D)(h \otimes \delta_{\Gamma})]_{\epsilon}
    = a_{\epsilon}(x', D')h$.

    \item For all $s \in \RR$ and all $h \in H^{s}(\Gamma; \CC^{k})
    = H^{s}(\Gamma)^{k}$, we have
    \begin{multline*}
      [\maS_{a}h]_{\pm} \ede \big[ a(x, D)(h \otimes \delta_{\Gamma})]_{\pm}
      \ede \lim_{\epsilon \to 0\pm} \big[ a(x, D)(h \otimes \delta_{\Gamma})]_{\epsilon}\\
      \seq \lim_{\epsilon \to 0\pm} a_{\epsilon}(x', D')h \seq
      a_{0} (x', D') h \in H^{s - m - 1 }(\Gamma)^{k}\,.
    \end{multline*}
    (The distribution $h \otimes \delta_{\Gamma}$ was introduced in
    Lemma \ref{lemma.def.hdelta} and $u_{\epsilon}$ is the restriction of $u$
    to $\Gamma + \epsilon e_{n} \simeq \RR^{n-1}$.)

    \item If $h \in L^{2}(\Gamma)^{k}$, then
    $\maS_{a}h \ede a(x, D)(h \otimes \delta_{\Gamma})
    \in H^{s'}(\RR^{n})^{k}$ for any $s' \in (1/2, -m - 1/2)$, and hence we have the equality
    of traces (or restrictions)
    \begin{equation*}
      [\maS_{a}h]_{+} \seq [\maS_{a}h]_{-} \seq [\maS_{a}h]\vert_{\Gamma}
      \seq a_{0}(x', D')h \in H^{s'-1/2}(\Gamma)^{k}\,.
    \end{equation*}

    \item Let $k_{a_{0}(x', D')}$ be the distribution kernel of $a_{0}(x', D')$
    and $k_{a(x, D)}$ be the distribution kernel of $a(x, D)$. Then
    \begin{equation*}
      k_{a_{0}(x', D')}(x', y') \seq k_{a(x, D)}(x', 0, y', 0)\,,
      \quad x', y' \in \RR^{n-1}\,, \ x' \neq y'\,.
    \end{equation*}
  \end{enumerate}
\end{theorem}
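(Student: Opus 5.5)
The plan is to reduce everything to the scalar results of Proposition \ref{prop.side.limits} and Proposition \ref{prop.symbols.zero}, applied entrywise. A matrix symbol $a = (a_{ij}) \in M_{k}(\Smbmn)$ acts on $h = (h_{1}, \ldots, h_{k})$ by $\big(a(x, D)(h \otimes \delta_{\Gamma})\big)_{i} = \sum_{j} a_{ij}(x, D)(h_{j} \otimes \delta_{\Gamma})$. All the constructions involved are linear: $a \mapsto a_{x_{n}, t}$, restriction to $\Gamma_{\epsilon}$, and limits as $\epsilon \to 0\pm$. So it suffices to treat $k = 1$.

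\textbf{Points (i)--(iii).} For (i), note that $m < -1$, so by Lemma \ref{lemma.Fsymbols}(ii) and Equation \eqref{eq.def.aepsilon.explicit} the formula with $t = 0$, $x_{n} = 0$ is an absolutely convergent integral. Proposition \ref{prop.symbols.zero}(i) gives $a_{0} \in S^{m+1}$, and part (v) of the same proposition gives the classical statement. Point (ii) is Proposition \ref{prop.side.limits} combined with Remark \ref{rem.aee}. One extension is needed there: Proposition \ref{prop.side.limits} was stated for $h \in L^{2}$. For general $h \in H^{s}(\Gamma)$ I would rerun its proof. For $h \in \CIc$ the Fubini computation is valid verbatim, and both sides depend continuously on $h$: the left side because $h \mapsto h \otimes \delta_{\Gamma}$ is continuous (Lemma \ref{lemma.def.hdelta}) and $a(x,D)$ is pseudolocal, the right side by Proposition \ref{prop.symbols.zero}(ii). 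Density then finishes. For (iii), apply Proposition \ref{prop.symbols.zero}(iv) with $(x_{n}, t) = (\epsilon, \epsilon) \to (0, 0)$. This gives $a_{\epsilon, \epsilon}(x', D')h \to a_{0,0}(x', D')h$ in $H^{s-m-1}(\Gamma)$ from either side.

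\textbf{Point (iv).} Take $h \in L^{2}$. By Lemma \ref{lemma.def.hdelta}(i), $h \otimes \delta_{\Gamma} \in H^{s''}$ for every $s'' < -1/2$, hence $\maS_{a}h \in H^{s''-m}(\RR^{n})$. Every $s' < -m - 1/2$ is of this form, and $-m - 1/2 > 1/2$ because $m < -1$, so the interval for $s'$ is nonempty. For $s' > 1/2$, Lemma \ref{lemma.limits=traces} identifies both normal lateral limits with the trace, and (iii) identifies that trace with $a_{0}(x', D')h$.

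\textbf{Point (v).} By Lemma \ref{lemma.d.k}, $k_{a(x,D)}(x, y) = \maF_{\xi}^{-1}a(x, x - y)$ for $x \neq y$. Writing $\maF_{\xi}^{-1} = \maF_{\xi'}^{-1}\maF_{\xi_{n}}^{-1}$ and evaluating at $x_{n} = y_{n} = 0$ gives
\begin{equation*}
  k_{a(x,D)}(x', 0, y', 0) = \maF_{\xi'}^{-1}\big[a_{0,0}(x', \cdot)\big](x' - y'),
\end{equation*}
which is $k_{a_{0}(x', D')}(x', y')$ by Lemma \ref{lemma.d.k} on $\RR^{n-1}$.

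The main obstacle is justifying this iterated Fourier transform off the diagonal, since $a(x, \cdot)$ need not be integrable in $\xi'$. I would handle it by the regularization $a\phi_{\delta}$ used in the proof of Proposition \ref{prop.side.limits}. For the regularized symbol the identity is plain Fubini. The regularized kernels converge to the true kernel smoothly on compact sets away from the diagonal, and the corresponding boundary symbols converge boundedly in $S^{m+1}$ by Lemma \ref{lemma.so.conv}. Passing to the limit $\delta \to 0$ then gives the identity for $a$ itself.
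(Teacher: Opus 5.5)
Your proposal is correct and follows the paper's proof essentially step by step: reduction to $k=1$, Propositions \ref{prop.symbols.zero} and \ref{prop.side.limits} for (i)--(iii), Lemma \ref{lemma.limits=traces} for (iv), and the identity $\maF_{\xi}^{-1} = \maF_{\xi'}^{-1}\maF_{\xi_{n}}^{-1}$ together with Lemma \ref{lemma.d.k} for (v). Your density extension of (ii) from $L^{2}$ to $H^{s}$ and your regularization by $a\phi_{\delta}$ in (v) add care the paper skips; one attribution needs fixing, though. In (v), the uniform $S^{m+1}$ bounds and the pointwise convergence of the boundary symbols come from \eqref{eq.cont.integr} and dominated convergence, not from Lemma \ref{lemma.so.conv}, which then supplies the operator convergence.
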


\begin{proof}
  We can assume that $k = 1$. The general case is completely similar.

  The fact that $a_{0}$ is a symbol of order $m + 1$ in (i) was proved already in Proposition
  \ref{prop.symbols.zero}. (The fact that $a_{0}$ is classical if $a$ is
  will be proved in a slightly greater generality in Theorem \ref{thm.main.jump0}.)
  Finally, the stated equality in (i) follows from the
  fact that $\RR \ni \xi_{n} \to a(x, \xi', \xi_{n}) \in \CC$ is integrable
  on $\RR$ and the definition of $a_{0} = a_{0, 0}$ (see also
  Equation \eqref{eq.def.aepsilon.explicit}).

  The point (ii) is the content of Proposition \ref{prop.side.limits}.
  The first two equalities in (iii) are definitions. The third equality in (iii)
  is the consequence of (ii) just discussed, and, finally, the last
  equality in (iii) follows from Proposition \ref{prop.symbols.zero}(iv) for
  $x_{n} = t = \epsilon$. The results and limits are in the stated space
  because $a_{0}$ is a symbol of order $m +1$, as proved in (i).

  The first two equalities in (iv) follow from Lemma
  \ref{lemma.limits=traces} and the last equality is a consequence of (iii)
  already proved.

  Finally, let us prove (v). Let $\maF_{\xi}$ is the Fourier transform in the second variable
  (in $\xi$). Since $a$ defines a tempered distribution, $\maF_{\xi}^{-1}a$ is also a
  tempered distribution. We next use Lemma \ref{lemma.d.k}
  and the formula for $a_{0} = a_{0, 0}$ in (i) to obtain
  \begin{multline*}
    k_{a_{0}(x', D')}(x', y')  \seq \maF_{\xi'}^{-1}a_{0}(x', x'-y')
     \seq \maF_{\xi'}^{-1}\maF_{\xi_{n}}^{-1} a(x', 0, x'-y', 0)\\
     \seq \maF_{\xi}^{-1} a (x', 0 , x'-y', 0)
    \seq k_{a(x, D)} (x', 0 , y', 0) \,,
  \end{multline*}
  as claimed.
\end{proof}

\begin{remark}\label{rem.u.det}
  It is known that the distribution kernel of a negative order operator
  cannot be supported on the diagonal (because the pseudodifferential
  operators supported on the diagonal are local, and hence they are multiplication
  operators, by Peetre's theorem \cite{H-W, Hormander1, Hormander3}, that is,
  order zero operators).
  Therefore, the restriction of the distribution kernel of a negative
  order operator outside the diagonal completely determines the operator
  (this is relevant in view of Theorems \ref{thm.main.jump-2}(v)
  and \ref{thm.main.jump0}(i) and (ii), for whose proof, a similar argument was used).
  Thus, if $m < -1$, the knowledge of the distribution kernel $k_{a(x, D)}$
  of a pseudodifferential operator $a(x, D)$ of order $m$
  completely determines the distribution kernel $k_{a_{0}(x', D')}$
  and, hence, it completely determines $a_{0}$ and $a_{0}(x', D')$.
\end{remark}

\subsection{Normal lateral limits of pseudodifferential operators of orders $-1$}
We now study normal lateral limits of pseudodifferential
operators of order $m = -1$. For simplicity, we consider only
\emph{classical pseudodifferential operators.} Several points of the next
proposition have already been proved, but the most significant calculation
(the point (iii) below) has not yet been performed. We again formulate
our result for matrices of symbols.

\begin{proposition} \label{prop.symbols.m=-1}
  Let $a \in M_{k}(S^{-1}_{\cl}(\RR^{n} \times \RR^{n})$. We let $a_{x_{n}, t}(x', \xi')$
  be as in Definition \ref{def.aepsilon} (we apply that definition to each
  entry of the matrix). We assume that $\sigma_{-1}(a; x, \xi)$ is \emph{odd} in
  $\xi \in \RR^{n}$.
  \begin{enumerate}[\rm (i)]
    \item For any multi-indices $\alpha \in \ZZ_{+}^{n}$ and
    $\beta \in \ZZ_{+}^{n-1}$,
    there exists $C_{\alpha, \beta} > 0$ such that, for all $(x, \xi', t)
    \in \RR^{n} \times \RR^{n-1} \times \RR$, we have
    \begin{equation}\label{eq.estimates}
      |\pa_{x}^{\alpha}\pa_{ \xi'}^{\beta} a_{x_n,t}(x', \xi')|
      \le C_{\alpha, \beta}\jap{ \xi'}^{-|\beta|}\,.
    \end{equation}

    \item The set $\{ a_{x_{n}, t}, a_{x_{n}, 0\pm} \mid x_{n}, t \in \RR \}$ is
    a bounded subset of $S^{0}(\RR^{n-1} \times \RR^{n-1})$.

    \item The function $\RR^{n} \times \RR^{n-1} \times \RR \ni (x, \xi', t)
    \mapsto a_{x_{n}, t}(x', \xi') \in M_{k}(\CC)$ is continuous except at $t = 0$,
    where it has lateral limits $a_{x_{n}, 0\pm}(x', \xi')$.
  \end{enumerate}
\end{proposition}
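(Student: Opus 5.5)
We plan to reduce everything to the decomposition $a = b_{-1} + r_{-2}$ already used in the proof of Lemma \ref{lemma.Fsymbols}, where $b_{-1}$ is essentially homogeneous of order $-1$, odd in $\xi$ for $|\xi|\ge 1$, and $r_{-2} \in M_{k}(S^{-2}(\RR^{n}\times\RR^{n}))$. By linearity (and working entrywise, so $k=1$), it suffices to treat the two pieces separately. For $r_{-2}$, Proposition \ref{prop.symbols.zero} with $m=-2$ gives at once that $\{(r_{-2})_{x_{n},t}\}$ is bounded in $S^{-1}(\RR^{n-1}\times\RR^{n-1}) \subset S^{0}$ and continuous in $(x,\xi',t)$ including at $t=0$, with no jump; in particular its contribution to $a_{x_{n},0\pm}$ is just $(r_{-2})_{x_{n},0}$. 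So all the work is on $b := b_{-1}$.

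For $b$ we split further with a cut-off $\chi_{0}(\xi_{n})$ as in Lemma \ref{lemma.first.jump}: write $b(x,\xi',\xi_{n}) = L(x,\xi')\,\mfkV(\xi_{n})(1-\chi_{0}(\xi_{n})) + w(x,\xi',\xi_{n})$, with $L(x,\xi') := \lim_{\tau\to\infty}\tau b(x,\xi',\tau) = \sigma_{-1}(a;x,e_{n})$ (independent of $\xi'$ by homogeneity). Then $\maF^{-1}_{\xi_{n}}$ of the first term is $\JC(a;x) W(t)$ with $W$ as in Lemma \ref{lemma.Fpvx-1}(ii)--(iii): smooth up to $0$ from each side, rapidly decaying, jump $\imath$ at $0$, and independent of $\xi'$; this piece is trivially a bounded family in $S^{0}$ (in fact constant in $\xi'$) and produces exactly the jump $\pm\frac{\imath}{2}\JC(a;x)$. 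The remainder $w$ satisfies, by the estimate in the proof of Lemma \ref{lemma.Fsymbols}(iii), $|w(x,\xi',\xi_{n})| \lesssim \jap{\xi'}\jap{\xi_{n}}^{-2}$ uniformly, hence is integrable in $\xi_{n}$, so $\maF^{-1}_{\xi_{n}} w$ is continuous in $(x,\xi',t)$ by Theorem \ref{thm.aux.derivUnderInt}; its value at $t=0$, combined with the oddness, recovers the average $a_{x_{n},0}$ of Definition \ref{def.aepsilon}, matching Corollary \ref{cor.lemma.Fsymbols}(v). This yields (iii).

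The main obstacle is the symbol estimate (i) uniformly in $t$, since the crude bound above is not uniform in $\xi'$ with the correct order $0$. Here we will use the homogeneity relation of Lemma \ref{lemma.homogeneity}(i) with $m=-1$: for $|\xi'|\ge 1$, $b_{x_{n},t}(x',\xi') = b_{x_{n},|\xi'|t}(x',\xi'/|\xi'|)$, and $\pa_{\xi'}^{\beta}$ of this falls either on the unit vector (giving $|\xi'|^{-|\beta|}$) or produces factors $t\,\pa_{\xi'}|\xi'|$ times $\pa_{t}$ derivatives at the point $|\xi'|t$. Thus we need, for $\omega'$ in the unit sphere, bounds $|s^{j}\pa_{s}^{j}\pa_{\omega'}^{\gamma}\pa_{x}^{\alpha} b_{x_{n},s}(x',\omega')| \le C$ uniformly in $s\ne0$. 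Since $s^{j}\pa_{s}^{j}$ on the Fourier side corresponds to $(\pa_{\xi_{n}}\xi_{n})^{j}$ acting on the symbol, which preserves $S^{-1}$ and the odd-homogeneous structure, the same splitting (jump part plus integrable part) gives uniform boundedness. For $|\xi'|\le 1$ boundedness follows from the crude estimate and compactness. Derivatives in $x$ commute with everything. This proves (i), and (ii) follows from (i) together with the explicit form of $a_{x_{n},0\pm}$ as limits (bounds pass to limits, and $\pm\frac{\imath}{2}\JC$ is a bounded order-zero symbol).
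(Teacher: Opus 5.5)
Your proposal is correct. Part (iii) is essentially the paper's argument. The paper also reduces to $a=b_{-1}$ and subtracts $\JC(a;x)\,\xi_n^{-1}(1-\chi_0(\xi_n))$. Your remark that the integrable remainder gives joint continuity in $(x,\xi',t)$ is slightly more explicit than the paper's appeal to Corollary \ref{cor.lemma.Fsymbols}.

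The genuine difference is in (i) for $|\xi'|\ge 1$.

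\emph{Your route.} You put $\lambda=|\xi'|$ inside the homogeneity identity and differentiate the composite $\xi'\mapsto b_{x_n,|\xi'|t}(x',\xi'/|\xi'|)$. This produces $s\partial_s$-derivatives. You therefore need $s\partial_s\leftrightarrow-\partial_{\xi_n}\xi_n$ to preserve the class of odd, essentially homogeneous order $-1$ symbols. That holds, since $\xi_n\partial_{\xi_n}b$ is again odd and homogeneous of degree $-1$. Your $s^j\partial_s^j$ should be read as polynomials in $s\partial_s$, and note the sign in the correspondence.

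\emph{The paper's route.} It avoids $t$-derivatives altogether by freezing $\lambda$. The identity $a_{x_n,t}(x',\xi')=a_{x_n,\lambda t}(x',\lambda^{-1}\xi')$ then holds on an open set of $\xi'$. Differentiating with $\lambda$ constant gives the factor $\lambda^{-|\beta|}$, and only afterwards does one evaluate where $|\lambda^{-1}\xi'|$ is bounded. Since the bounded-$\xi'$ estimates are already uniform in $t\neq0$, the substitution $t\mapsto\lambda t$ costs nothing.

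So the paper's argument is shorter. Yours gives, as a by-product, uniform bounds on $t\partial_t$-derivatives (conormal regularity of the family in $t$), which are not needed here.

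One small point: Lemma \ref{lemma.homogeneity}(i) is stated only for $t\neq0$, so your homogeneity argument proves (i) only for $t\neq0$. The case $t=0$ follows from your limit argument, using $a_{x_n,0}=\frac12(a_{x_n,0+}+a_{x_n,0-})$ and the fact that uniform derivative bounds pass to pointwise limits. You should say this explicitly. The paper instead treats $t=0$ directly, with the symmetrized symbol $\frac12\big(a(x,\xi',\tau)+a(x,\xi',-\tau)\big)$ and a substitution in the integral.
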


\begin{proof}
  We proceed as in the proof of Lemma \ref{lemma.Fsymbols}, that is, we write
  \begin{equation*}
    a \seq b_{-1} + r_{-2}\,, \qquad
    b_{-1} \in S^{-1}(\RR^{n} \times \RR^{n})\ \mbox{ and }\
    r_{-2} \in S^{-2}(\RR^{n} \times \RR^{n})\,,
  \end{equation*}
  where $b_{-1}$ is essentially homogeneous of order $-1$. It follows that $r_{-2}$ is a
  classical symbol and hence it satisfies all the statements of our proposition,
  by Proposition \ref{prop.symbols.zero} (in the same order of points).
  Since all the statements of our proposition
  depend linearly on $a$, it is enough to assume that $a = b_{-1}$, that is,
  that $a$ is \emph{is eventually
  homogeneous of order $-1$.}

  To prove (i), we shall consider separately the cases $t \neq 0$ and $t = 0$.
  \smallskip

  \textbf{Case 1:\ $t\neq 0$; Step 1.1: The main calculations}\
  The idea is to substract from $a$ the ``main singularity''
  using the function  $Z(x) := x^{-1}(1 - \chi_{0}(x))$ of Lemma \ref{lemma.Fpvx-1},
  whose notation we will use in the rest of the proof.
  This will allow us to express the desired $a_{x_{n}, t}(x', \xi')$ in terms of
  the inverse Fourier transform of an integrable function and of $W := \maF^{-1}Z$.
  Recall from Lemma \ref{lemma.Fpvx-1} that
  $\chi_{0} \in \CIc(\RR)$ is an even function that is equal to 1 in a neighborhood
  of $0$. In addition to the assumptions of that lemma, let us assume that
  $\chi_{0}$ has support in $(-1, 1)$, because then $Z$ will also be essentially
  homogeneous of order $-1$. We shall thus study the function
  \begin{equation}\label{eq.def.u}
    u(x, \xi', \tau) \ede  a(x, \xi', \tau) - \JC(a; x) Z(\tau)
    \seq a(x, \xi', \tau) - a(x, e_{n})\tau^{-1}(1 - \chi_{0}(\tau))\,,
  \end{equation}
  which is essentially homogeneous of order $-1$ (because the support of $\chi_{0}$
  is contained in $(-1, 1)$). The function $u$ will play an important role in what
  follows. It will be written, for short, as $u := a - \JC Z$.
  For any multi-index $\alpha \in \ZZ_{+}^{n}$, the
  fact that $a$ and $u$ are essential homogeneous of order $-1$ with
  respect to $\tau$ gives (by dividing by $|\tau| \ge 1$)
  \begin{align*}
    \tau^{2} | \pa_{x}^{\alpha} u (x, \xi', \tau) |
    & \ede |\tau|\, \big |  \pa_{x}^{\alpha} a(x, |\tau|^{-1} \xi', \sgn(\tau))
    - \pa_{x}^{\alpha} a(x, e_{n})\sgn(\tau) \big |\\
    & \seq |\tau|\, \big |  \pa_{x}^{\alpha} a(x, |\tau|^{-1} \xi', \sgn(\tau))
    - \pa_{x}^{\alpha}
    a(x, 0, \sgn(\tau)) \big |
    \le C| \xi'|\,,
  \end{align*}
  where $C = \sup | \nabla_{ \xi }\pa_{x}^{\alpha} a(x, \xi) |$, which is
  finite, because $a$ is an order $-1$ symbol and where we have used the
  hypothesis that $\sigma_{-1}(a; x, \xi)$ is odd in $\xi$. For $|\tau| \ge 1$, this shows that
  \begin{equation} \label{eq.est.u}
    |\pa_{x}^{\alpha} u (x, \xi', \tau)| \ede |\pa_{x}^{\alpha}
    a(x, \xi', \tau) - \pa_{x}^{\alpha} a(x, e_{n})Z(\tau)|
    \le G_{\alpha} \jap{ \xi'} \jap{\tau}^{-2}\,,
  \end{equation}
  for some $G_{\alpha} > 0$ independent of $(x, \xi')$. The left hand side
  $|\pa_{x}^{\alpha} u (x, \xi', \tau)|$ of the last displayed relation is bounded,
  so the inequality \eqref{eq.est.u} actually holds for all $\tau$ (with possibly
  different constants $G_{\alpha}$).
  Consequently, $\jap{ \xi'}^{-1}\pa_{x}^{\alpha} u (x, \xi', \tau)$ is integrable
  with respect to $\tau$ uniformly in $(x, \xi')$. That is, the assumptions of the
  theorem giving the derivability under the integral sign (Theorem
  \ref{thm.aux.derivUnderInt}) are satisfied and
  we obtain that, for all $(x, \xi') \in \RR^{n} \times \RR^{n-1}$ and $t \neq 0$,
  \begin{equation*}
    |\pa_{x}^{\alpha} \maF_{ \xi_{n}}^{-1} u(x, \xi', t)| \le G_{\alpha}' \jap{ \xi'}\,.
  \end{equation*}
  Let $W := \maF^{-1} Z$, again as in Lemma \ref{lemma.Fpvx-1},
  and recall, from that lemma, that $W\in L^{\infty}(\RR)$, more
  precisely, we shall use $|W| \le 1$. By
  applying the inverse Fourier transform, we then obtain that
  \begin{multline}\label{eq.proof.e0}
    |\pa_{x}^{\alpha} a_{x_{n}, t}(x', \xi')| \ede
    \big | \pa_{x}^{\alpha} \maF_{ \xi_{n}}^{-1} a(x, \xi', t) \big |
    \seq \big |\pa_{x}^{\alpha} \maF_{ \xi_{n}}^{-1} u(x, \xi', t) +
    \pa_{x}^{\alpha} a(x, e_{n}) \maF_{ \xi_{n}}^{-1} Z(t) \big |\\
    \le \big |\pa_{x}^{\alpha} \maF_{ \xi_{n}}^{-1} u (x, \xi', t)\big | +
    \big | \pa_{x}^{\alpha} a(x, e_{n}) W(t) \big |
    \le G_{\alpha}'\jap{ \xi'} + \sup | \pa_{x}^{\alpha} a(x, e_{n}) |
    \le C_{\alpha}\jap{ \xi'}\,.
  \end{multline}

  For $|\beta| > 0$, the derivatives $\pa_{\xi_{j}}$ kill the term $a(x, e_{n})Z(\tau)$,
  in $u := a - \JC Z$ (Equation \eqref{eq.def.u})
  which is independent of $\xi$ and hence,
  in analogy with the estimate \eqref{eq.est.u}, we have
  \begin{multline}\label{eq.est.u2}
    |\pa_{ \xi'}^{\beta} \pa_{x}^{\alpha} u (x, \xi', \tau)| \ede
    |\pa_{ \xi'}^{\beta} \pa_{x}^{\alpha}
    a(x, \xi', \tau) - \pa_{ \xi'}^{\beta} \pa_{x}^{\alpha} a(x, e_{n})Z(\tau)| \\
    \seq |\pa_{ \xi'}^{\beta} \pa_{x}^{\alpha} a(x, \xi', \tau)|
    \le G_{\alpha, \beta} \jap{ \xi}^{-1-|\beta|}
    \le G_{\alpha, \beta} \jap{\tau}^{-2} \jap{\xi'}^{1 -|\beta|}\,,
  \end{multline}
  for some $G_{\alpha, \beta} > 0$ independent of $(x, \xi')$,
  where we have used that $\jap{\tau} \le \jap{(\xi', \tau)}$ and
  $\jap{\xi'} \le \jap{(\xi', \tau)}$.

  In view of our results above, all the $ \xi'$-derivatives
  $\pa_{ \xi'}^{\beta} \pa_{x}^{\alpha} u (x, \xi', \tau)$ of the
  function $\pa_{x}^{\alpha} u (x, \xi', \tau)$ are integrable in $\tau$ uniformly
  in $(x, \xi')$. Therefore
  \begin{equation*}
    |\pa_{ \xi'}^{\beta} \pa_{x}^{\alpha} \maF_{ \xi_{n}}^{-1} u(x, \xi', \tau)|
    \le G_{\alpha, \beta}' \jap{\xi'}^{1 -|\beta|}\,.
  \end{equation*}
  We then proceed as in \textbf{Step 1.1}, but using that
  $\pa_{ \xi'}^{\beta} \pa_{x}^{\alpha} \big[a(x, e_{n}) \maF_{ \xi_{n}}^{-1} Z(t)\big] = 0$
  (which we have already noticed above). We obtain
  \begin{multline}\label{eq.mult.aux}
    |\pa_{ \xi'}^{\beta} \pa_{x}^{\alpha} a_{x_{n}, t}(x', \xi')| \ede
    \big|\pa_{ \xi'}^{\beta} \pa_{x}^{\alpha} \maF_{ \xi_{n}}^{-1} a(x, \xi', t) \big|\\
    \seq \big |\pa_{ \xi'}^{\beta} \pa_{x}^{\alpha} \maF_{ \xi_{n}}^{-1} u(x, \xi', t) +
    \pa_{ \xi'}^{\beta} \pa_{x}^{\alpha} a(x, e_{n}) \maF_{ \xi_{n}}^{-1} Z(t) \big |
    \seq \big|\pa_{ \xi'}^{\beta} \pa_{x}^{\alpha} \maF_{ \xi_{n}}^{-1}u(x, \xi',t)\big|\\
    \le G_{\alpha, \beta}' \jap{\xi'}^{1 -|\beta|} \le G_{\alpha, \beta}' \,.
  \end{multline}

  \textbf{Step 1.2:\ Proof of (i) when $t \neq 0$ and $| \xi'|\le 1$.}\
  In this case, the estimates \eqref{eq.estimates} of point (i)
  follow from Equation \eqref{eq.mult.aux} (the last displayed equation)
  and from Equation \eqref{eq.proof.e0}.

  \textbf{Step 1.3:\ Proof of (i) when $t \neq 0$ and $| \xi'|\ge 1$.}\
  Again, we notice that we have proved that $\pa_{ \xi'}^{\beta} \pa_{x}^{\alpha}
  a_{x_{n},t}(x', \xi')$ exists for all $(x, \xi')$. We shall now remove the
  constraint $| \xi'| \le 1$ using the essential homogeneity of order $-1$ of $a$
  and the homogeneity of the Fourier transform (this easy fact is proved in
  most basic textbooks, see, for instance, \cite{Hormander1, Taylor1}).
  More precisely, Lemma \ref{lemma.homogeneity}
  states that $a_{x_{n},t}(x', \lambda \xi') \seq  a_{x_{n}, \lambda t}(x', \xi')$
  (we use the case $m = -1$). By replacing $ \xi'$ with $\lambda^{-1} \xi'$
  and by taking the product of derivatives $\pa_{x}^{\alpha}$, we obtain
  $\pa_{x}^{\alpha} a_{x_{n},t}(x', \xi') \seq
  \pa_{x}^{\alpha} a_{x_{n}, \lambda t}(x', \lambda^{-1} \xi')$.
  This relation, for $\lambda := | \xi'|$, gives
  \begin{equation*}
    |\pa_{x}^{\alpha} a_{x_{n},t}(x', \xi')| \seq
    |\pa_{x}^{\alpha} a_{x_{n}, \lambda t}(x', \lambda^{-1} \xi')|
    \le \sqrt{2} C_{\alpha}\,,
  \end{equation*}
  by Equation \eqref{eq.proof.e0} for $t \neq 0$ replaced with $\lambda^{-1} t \neq 0$
  and $ \xi'$ replaced with $\lambda^{-1} \xi'$, the later having length $=1$
  (so $\jap{\lambda^{-1} \xi'} = \sqrt{2}$).
  Further taking the $\pa_{ \xi'}^{\beta}$ derivative, this further gives
  \begin{multline*}
    |\pa_{ \xi'}^{\beta} \pa_{x}^{\alpha} a_{x_{n},t}(x', \xi')| \seq
    \big |\pa_{ \xi'}^{\beta} \big [ \pa_{x}^{\alpha} a_{x_{n}, \lambda t}
    (x', \lambda^{-1} \xi') \big ] \big |\\
    \seq \lambda^{-|\beta|} \big|\big [ \pa_{ \xi'}^{\beta}  \pa_{x}^{\alpha}
    a_{x_{n}, \lambda t}\big ]
    (x', \lambda^{-1} \xi')  \big |
    \le G_{\alpha, \beta}' \jap{\xi'}^{-|\beta|}\,,
  \end{multline*}
  where the last step is by Equation \eqref{eq.mult.aux}.
  The relation \eqref{eq.estimates} is thus also proved in this case.
  \smallskip

  \textbf{Case 2:\ $t= 0$.}\ This is similar to the case $t \neq 0$.
  In fact, the only place where we have used $t \neq 0$ was when we estimated
  the function $u := a - \JC Z$ of Equation \eqref{eq.def.u} and
  its Fourier transform employing the properties of the Fourier transform function
  $W := \maF^{-1}Z$. For $t = 0$, we replace $u$ with the symmetrization
  $u_{0}(x, \xi', \tau) := \frac12 \big (a(x, \xi', \tau) + a(x, \xi', -\tau)\big)$
  and the function $W$ becomes unnecessary. We include now the calculation,
  but we do not repeat the arguments that are easy and very similar to the ones
  used in the previous case. Also, we do not separate the proof formally in substeps.
  The function $u$ will be replaced with a function $u_{0}$ that
  has similar decay properties and is moreover even in $\tau$. We now proceed
  according to this plan.

  First, for any multi-index $\alpha \in \ZZ_{+}^{n}$, the essential homogeneity
  of $a$ with respect to $\tau \ge 1$ gives successively the following:
  \begin{multline*}
    2 \tau | \pa_{x}^{\alpha} u_{0} (x, \xi', \tau) |
    \ede  \big |  \pa_{x}^{\alpha} a(x, \tau^{-1} \xi', 1)
    + \pa_{x}^{\alpha} a(x, \tau^{-1} \xi', -1)  \big |\\
    \le  \big |  \pa_{x}^{\alpha} a(x, \tau^{-1} \xi', 1)
    - \pa_{x}^{\alpha} a(x, 0, 1) \big | +
     \big |  \pa_{x}^{\alpha} a(x, \tau^{-1} \xi', -1)
    - \pa_{x}^{\alpha} a(x, 0, -1) \big |
    \le C| \xi'|/\tau\,,
  \end{multline*}
  where $C = \sup | \nabla_{ \xi }\pa_{x}^{\alpha} a(x, \xi) |$
  and where we have used the hypothesis $\sigma_{-1}(a; x, \xi)$
  is odd in $\xi$. This shows that
  \begin{equation*} 
    |\pa_{x}^{\alpha} u_{0} (x, \xi', \tau)|
    \le G_{\alpha} \jap{ \xi'} \jap{\tau}^{-2}\,,
  \end{equation*}
  for some $G_{\alpha} > 0$ independent of $(x, \xi')$, that is,
  $\jap{ \xi'}^{-1}\pa_{x}^{\alpha} u_{0} (x, \xi', \tau)$
  is uniformly integrable in $(x, \xi')$, with $| \xi'| \le 1$. We can therefore use
  the derivability under the integral sign (Theorem \ref{thm.aux.derivUnderInt})
  to obtain that
  $|\pa_{x}^{\alpha} \maF_{ \xi_{n}}^{-1} u_{0}( x, \xi', \tau)| \le G_{\alpha}' \jap{\xi'}$.
  By applying the inverse Fourier transform, we then obtain for $| \xi'| \le 1$ that
  \begin{equation*}
    |\pa_{x}^{\alpha} a_{x_{n}, 0}(x', \xi')| \ede
    \big |\pa_{x}^{\alpha} \maF_{ \xi_{n}}^{-1} u_{0}( x, \xi', 0) \big |
    \le G_{\alpha}'\jap{ \xi'} \le G_{\alpha}'
  \end{equation*}
  is uniformly bounded in $(x, \xi')$.
  This proves the estimate \eqref{eq.estimates} for $t = 0$, $|\beta| = 0$,
  and $| \xi'| \le 1$.

  Let us assume now that $t = 0$, $|\beta|>0$, and $| \xi'| \le 1$.
  This follows more or less the same calculation as in the previous case.
  For $|\beta| > 0$, in analogy with the estimate \eqref{eq.est.u}, we have
  \begin{equation} \label{eq.est.u3}
    2|\pa_{ \xi'}^{\beta} \pa_{x}^{\alpha} u_{0}(x, \xi', \tau)| \ede
    |\pa_{ \xi'}^{\beta} \pa_{x}^{\alpha} \big[ a(x, \xi', \tau) +
    a(x, \xi', -\tau) \big] |
    \le G_{\alpha, \beta} \jap{\tau}^{-2} \jap{\xi'}^{-|\beta|}\,.
  \end{equation}
  In Equation \eqref{eq.est.u3}, all the derivatives
  $\pa_{ \xi'}^{\beta} \pa_{x}^{\alpha} u_{0} (x, \xi', \tau)$ of the
  function $\pa_{x}^{\alpha} u_{0} (x, \xi', \tau)$ are uniformly
  integrable in $(x, \xi')$. Therefore
  $|\pa_{ \xi'}^{\beta} \pa_{x}^{\alpha} \maF_{ \xi_{n}}^{-1} u_{0}( x, \xi', \tau)|
  \le G_{\alpha, \beta}' \jap{\xi'}^{-|\beta|}$ and
  \begin{equation*}
    |\pa_{ \xi'}^{\beta} \pa_{x}^{\alpha} a_{x_{n}, 0}(x', \xi')| \ede
    \big|\pa_{ \xi'}^{\beta} \pa_{x}^{\alpha}
    \maF_{ \xi_{n}}^{-1}u_{0}( x, \xi', 0)\big|
    \le G_{\alpha, \beta}' \jap{\xi'}^{-|\beta|} \le G_{\alpha, \beta}'\,.
  \end{equation*}
  This gives the estimate \eqref{eq.estimates} with $C_{\alpha, \beta}
  = G_{\alpha, \beta}'$ when $| \xi'| \le 1$.

  To complete the proof of (i), let now $t = 0$ and $| \xi'|\ge 1$.\
  Again, we notice that we have proved that
  $\pa_{ \xi'}^{\beta} \pa_{x}^{\alpha} a_{x_{n}, 0}(x', \xi')$ exists
  for all $(x, \xi')$ and that it is obtained by commuting differentiation
  with integration. We shall now remove the constraint $| \xi'| \le 1$ using
  the essential homogeneity of order $-1$ of $a$ and of $u_{0}$ and
  the homogeneity of the Fourier transform
  \cite{Hormander1, Taylor1}. We can nolonger use Lemma \ref{lemma.homogeneity},
  but we can proceed as in its proof, since we are dealing with integrable functions.
  Again for $\lambda = |\xi'|\ge 1$ and using in the last step the substitution
  $\tau = \lambda s$, $s \in \RR$, we have
  \begin{multline*}
    \pa_{ \xi'}^{\beta} \pa_{x}^{\alpha} a_{x_{n}, 0}(x', \xi') \seq
    \frac1{4 \pi} \pa_{ \xi'}^{\beta} \pa_{x}^{\alpha} \int_{\RR}
    \big( a(x, \xi', \tau) + a(x, \xi', -\tau))\, d\tau \\
    \seq
    \frac1{4 \pi \lambda } \int_{\RR} \pa_{ \xi'}^{\beta} \pa_{x}^{\alpha}
    \big(  a(x, \lambda^{-1} \xi', \lambda^{-1}\tau)
    + a(x, \lambda^{-1} \xi', -\lambda^{-1} \tau))\,
    d\tau\\
    \seq \frac{\lambda^{-1-|\beta|}}{4 \pi} \int_{\RR}
    \big( (\pa_{ \xi'}^{\beta} \pa_{x}^{\alpha}a)(x, \lambda^{-1} \xi',
    \lambda^{-1}\tau) + (\pa_{ \xi'}^{\beta} \pa_{x}^{\alpha}a)(x, \lambda^{-1} \xi',
    -\lambda^{-1} \tau))\, d\tau\\
    \seq \frac{\lambda^{-|\beta|}}{4 \pi} \int_{\RR}
    \big( (\pa_{ \xi'}^{\beta} \pa_{x}^{\alpha}a)(x, \lambda^{-1} \xi', s)
    + (\pa_{ \xi'}^{\beta} \pa_{x}^{\alpha}a)(x, \lambda^{-1} \xi', -s))\,
    ds\,.
  \end{multline*}
  Equation \eqref{eq.est.u3} then gives (note that, in that equation, the term
  $\jap{\xi'}^{-|\beta|} \le 1$)
  \begin{multline*}
    |\pa_{ \xi'}^{\beta} \pa_{x}^{\alpha} a_{x_{n}, 0}(x', \xi')|
    \leq \frac{\lambda^{-|\beta|}}{4 \pi} \int_{\RR}
    \big| (\pa_{ \xi'}^{\beta} \pa_{x}^{\alpha}a)(x, \lambda^{-1} \xi', s)
    + (\pa_{ \xi'}^{\beta} \pa_{x}^{\alpha}a)(x, \lambda^{-1} \xi', -s)|\,
    ds\\
    \leq G_{\alpha, \beta}\lambda^{-|\beta|} \int_{\RR} \jap{s}^{-2}ds
    \seq C_{\alpha, \beta}  \jap{\xi'}^{-|\beta|}\,.
  \end{multline*}
  This completes the proof of (i).

  The point (ii) now follows easily. The point (i) gives by definition
  that $\{a_{x_{n}, t} \mid x_{n}, t \in \RR\}$ is a bounded subset of $S^{0}(\RR^{n}
  \times \RR^{n})$. Then, we notice that
  $\JC(a; x) := \sigma_{-1}(a; x, e_{n}) \in S_{\cl}^{0}(\RR^{n-1} \times \RR^{n-1})$ and
  it is a bounded family of
  order 0 classical symbols on $\RR^{n-1}$ parametrized by $x_{n} \in \RR$ (all these
  symbols are multiplications operators, since they are independent of $\xi'$). The
  formula $a_{x_{n}, 0\pm }(x, \xi')
  \seq \pm \frac{\imath}2 \JC(a; x) + a_{x_{n}, 0}(x, \xi')$ then gives (ii).

  Finally, (iii) follows from Corollary \ref{cor.lemma.Fsymbols} using the notation
  of Definition \ref{def.aepsilon}.
\end{proof}

To formulate our main result on lateral (or boundary) limits of
pseudodifferential operators of order $= -1$ (Theorem \ref{thm.main.jump0}
next), recall the following notation and results:
\begin{enumerate}[\rm (i)]
  \item $u_{\epsilon} := u\vert{\Gamma_{\epsilon}}$ and $u_{\pm}
  := \lim_{\epsilon \to 0\pm} u_{\epsilon}$
  were introduced in Definition \eqref{def.limit.values0}.

  \item The distribution $\langle h \otimes \delta_{\Gamma}, \phi \rangle
  : = \int_{\Gamma} h \phi dx'$ was introduced in Lemma \ref{lemma.def.hdelta}.

  \item We know that $a_{t} = a_{t, t}$, $t \in \RR$, and $a_{0\pm} := a_{0, 0\pm}$,
  see Definition \eqref{def.aepsilon}, Proposition \ref{prop.side.limits}, and
  the comment following its proof.
\end{enumerate}

We again formulate the following theorem for sections of a $k$-dimensional
trivial vector bundle on $\RR^{n}$.

\begin{theorem}\label{thm.main.jump0}
  We use the notation in Definition \ref{def.aepsilon} and the relation
  $a_{\epsilon} = a_{\epsilon, \epsilon}$, see Remark \ref{rem.aee}. Let
  $a \in S^{-1}_{\cl}(\RR^{n} \times \RR^{n}; M_{k}(\CC))$ and assume that
  $\sigma_{-1}(a)$ is odd \lpar in the sense that $\sigma_{-1}(a; x, -\xi)
  = - \sigma_{-1}(a; x, \xi)$ for all $\xi \in \RR^{n}$\rpar.

  \begin{enumerate}[\rm (i)]
    \item Let $k_{a_{0}(x', D')}$ be the distribution kernel of $a_{0}(x', D')$
    and $k_{a(x, D)}$ be the distribution kernel of $a(x, D)$. Then both
    $k_{a_{0}(x', D')}(x', y')$ and $k_{a(x, D)}(x, y)$ are smooth for
    $x', y' \in \RR^{n-1}$, $x' \neq y'$, and
    \begin{equation*}
      k_{a_{0}(x', D')}(x', y') \seq k_{a(x, D)}(x', 0, y', 0)\,.
    \end{equation*}

    \item $\sigma_{0}(a_{0})$ is also odd, and, for
    all $h \in \CIc(\RR^{n-1})$,
    \begin{equation*}
      a_{0}(x', D')h(x) \seq \pv \,
      \int_{\RR^{n-1}} k_{a(x, D)}(x', y') h(y')\,dy'\,.
    \end{equation*}

    \item The three symbols $a_{0} := a_{0, 0}$ and $a_{0\pm} := a_{0, 0\pm}$ are
    order zero classical, with principal parts \lpar or symbols\rpar
    \begin{equation*}
      \sigma_{0}(a_{0}; x', \xi') \seq \frac1{2\pi}
      \int_{\RR} \frac{\sigma_{-1}(a; x', 0, \xi', \xi_{n})
      + \sigma_{-1}(a; x', 0, \xi', - \xi_{n})}2 \, d \xi_{n}
    \end{equation*}
    and $\sigma_{0}(a_{0\pm}; x', \xi') = \pm \frac{\imath}2 \sigma_{-1}(a; x', e_{n})
    + \sigma_{0}(a_{0}; x', \xi')$

    \item
    For all $s \in \RR$ and all $h \in H^{s}(\Gamma; \CC^{k}) = H^{s}(\Gamma)^{k}$ and
    all $\epsilon \neq 0$ small, we have $\big[ a(x, D)(h \otimes \delta_{\Gamma})]_{\epsilon}
    = a_{\epsilon}(x', D')h$.

    \item For all $s \in \RR$ and for all
    $h \in H^{s}(\RR^{n-1}; \CC^{k}) = H^{s}(\RR^{n-1})^{k}$, we have
    \begin{equation*}
      [\maS_{a}h]_{\pm} \ede
      \lim_{\epsilon \to \pm 0} \big[ a(x, D)(h \otimes \delta_{\Gamma})]_{\epsilon}
      \seq \lim_{\epsilon \to \pm 0} a_{\epsilon}(x', D')h \seq
      a_{0\pm} (x', D') h \in H^{s}(\RR^{n-1})^{k}\,.
    \end{equation*}
  \end{enumerate}
\end{theorem}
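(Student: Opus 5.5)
The plan is to reduce everything to the scalar case $k=1$ and to the decomposition $a = b_{-1} + r_{-2}$ already used in the proof of Proposition \ref{prop.symbols.m=-1}. Here $b_{-1}$ is essentially homogeneous of order $-1$ with odd principal part, and $r_{-2}\in S^{-2}_{\cl}$. For $r_{-2}$, all statements follow from Theorem \ref{thm.main.jump-2}: there is no jump, $(r_{-2})_{0\pm}=(r_{-2})_0$ is classical of order $-1$, and it therefore does not affect $\sigma_0$. So the real work concerns $b_{-1}$.

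\textbf{Points (iv) and (v).} Point (iv) is Proposition \ref{prop.side.limits} together with Remark \ref{rem.aee}; it is stated there for $h\in L^2$, and I extend it to $h\in H^s$ by the same regularization and density argument. For (v), Proposition \ref{prop.symbols.m=-1}(ii) shows that the family $\{a_{\epsilon,\epsilon}\}$ is bounded in $S^0(\RR^{n-1}\times\RR^{n-1})$. Proposition \ref{prop.symbols.m=-1}(iii) gives the pointwise convergence $a_{\epsilon,\epsilon}(x',\xi')\to a_{0,0\pm}(x',\xi')$ as $\epsilon\to0\pm$. This uses joint continuity in $(x_n,t)$ for $t\ne0$ and the lateral limits at $t=0$; I write $a_{\epsilon,\epsilon}=u$-part $+\,\JC(a;x)W(\epsilon)$ as in that proof, with the $u$-part continuous in $(x,t)$, and note that $\JC(a;x',\epsilon)\to\JC(a;x',0)$. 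Lemma \ref{lemma.so.conv}, applied separately to $\epsilon\in(0,1]$ and $\epsilon\in[-1,0)$, then yields convergence in $H^{s}$. This is the key analytic step, and I expect the main obstacle to be checking that the convergence of symbols, jointly in $x_n=t=\epsilon$, really is pointwise.

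\textbf{Point (iii).} Classicality is handled through homogeneity. For $|\xi'|\ge1$, the homogeneous part $b_{-1}$ satisfies $(b_{-1})_{x_n,0}(x',\lambda\xi')=(b_{-1})_{x_n,0}(x',\xi')$. To see this, substitute $\tau=\lambda s$ in the symmetrized integral, exactly as in Case 2 of the proof of Proposition \ref{prop.symbols.m=-1}. The error terms come from the region $|\xi|<1$, where $b_{-1}$ is not homogeneous. On that region the integrand differs from $\sigma_{-1}$ on a set of $\xi_n$ of length $O(1)$, with a bounded difference, and after the rescaling this contributes $O(|\xi'|^{-1})$. Iterating with derivatives shows that $b_{-1,0}$ equals a homogeneous degree $0$ function plus a symbol of order $-1$. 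This proves $b_{-1,0}\in S^0_{\cl}$ and gives the stated formula for $\sigma_0(a_0)$, namely the symmetrized integral of $\sigma_{-1}$, which converges because of oddness (the tails cancel at order $\tau^{-2}$). Adding the multiplication operator $\pm\frac{\imath}2\JC(a;x',0)$, which is independent of $\xi'$, gives $\sigma_0(a_{0\pm})$.

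\textbf{Points (i) and (ii).} Smoothness off the diagonal follows from Lemma \ref{lemma.d.k}. For the kernel identity I approximate: take $b_\delta=a\,\phi(\delta\xi)\in S^{-\infty}$ and apply Theorem \ref{thm.main.jump-2}(v) to it. Off the diagonal, $k_{b_\delta}\to k_a$ locally uniformly, and $(b_\delta)_0\to a_0$ as symbols, by Proposition \ref{prop.symbols.m=-1}-type bounds and dominated convergence in the symmetrized integral. Passing to the limit gives the kernel identity for $x'\neq y'$. Oddness of $\sigma_0(a_0)$ follows from Lemma \ref{lemma.homogeneity}(ii) with $\varepsilon=-1$, applied to the symmetrized integral.

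For (ii), the kernel of an order $0$ operator with odd principal symbol is, modulo an integrable remainder, homogeneous of degree $-(n-1)$ and odd. Hence its principal value integral exists. The difference between $a_0(x',D')$ and the principal value operator is supported on the diagonal and of order $\le0$, so it is a multiplication operator (Remark \ref{rem.u.det}). I identify this multiplication operator as $0$ by computing the mean of $\sigma_0(a_0)$ over the unit sphere, which vanishes because the symbol is odd, so that the delta coefficient disappears.
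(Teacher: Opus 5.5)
Your proposal is correct. Parts (iii)--(v) follow the paper: you reduce to the essentially homogeneous part, then use Proposition \ref{prop.symbols.m=-1} together with Lemma \ref{lemma.so.conv}. Your decomposition $a_{\epsilon,\epsilon}=\maF^{-1}_{\tau}u(\cdot,\epsilon)+\JC(a;x',\epsilon)W(\epsilon)$ also supplies the joint limit along $x_n=t=\epsilon$, which the paper leaves implicit when it cites Proposition \ref{prop.symbols.m=-1}(iii). One correction in (iii): the ``error terms from $|\xi|<1$'' do not occur. For $|\xi'|\ge 1$ every $(\xi',\xi_n)$ satisfies $|\xi|\ge1$, so $(b_{-1})_{x_n,0}$ is exactly homogeneous of degree $0$ there.

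The genuine differences are in (i) and (ii).

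For (i), the paper computes directly: $k_{a_0}(x',y')=\maF_{\xi'}^{-1}\maF_{\xi_n}^{-1}\tfrac12(a+\tilde a)(x',0,x'-y',0)=\maF_{\xi}^{-1}a(x',0,x'-y',0)$, where the symmetrization is invisible at $z_n=0$. Your regularization $b_\delta=a\,\phi(\delta\xi)$, combined with Theorem \ref{thm.main.jump-2}(v), avoids manipulating iterated Fourier transforms of non-integrable functions. The price is that you need uniform $S^0$ bounds for $(b_\delta)_0$ and locally uniform off-diagonal convergence of the kernels. Note also that $(b_\delta)_0\to a_0$ requires $\phi$ to be even in $\xi_n$ (e.g. radial). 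Otherwise the odd parts contribute, and the limit is $a_0+c_\phi\,\JC(a;x',0)$ for a constant $c_\phi$. This is harmless off the diagonal, but you should say so.

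For (ii), the paper removes the diagonal term $f_0(x')\delta(x'-y')$ by parity. After arranging $a_0$ to be odd, the kernels of $a_0(x',D')$ and of the principal value operator are odd under $y'\mapsto 2x'-y'$, while $f_0\delta$ is even. You instead identify the $\delta$-coefficient with the spherical mean of $\sigma_0(a_0)$. That rests on the classical structure theorem for inverse Fourier transforms of degree-zero homogeneous functions. It gives more (the coefficient in the non-odd case) but relies on an external result, whereas the paper's parity argument is self-contained.
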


\begin{proof} We again assume $k = 1$. Let us prove (i).
  As is well known the distribution kernel $k_{a(x, D)}(x, y)$ of $a(x, D)$ is
  smooth for $x \neq y$ and there it is given by
    $k_{a(x, D)}(x, y) \seq \maF_{\xi}^{-1}a(x, x-y)\,,$
  where $\maF_{\xi}^{-1}$ is the inverse Fourier transform in the second variable
  (i.e., in $\xi \in \RR^{n}$; see Lemma \ref{lemma.d.k}).
  Let $\tilde a(x, \xi) := a(x, \xi', -\xi_{n})$.
  Using the formula defining $a_{0} = a_{0, 0}$ (see Definition
  \ref{def.aepsilon}), we obtain
  \begin{align*}
    k_{a_{0}(x', D')}(x', y') & \seq \maF_{\xi'}^{-1}a_{0}(x', x'-y')\\
    & \seq \frac12 \maF_{\xi'}^{-1}\maF_{\xi_{n}}^{-1} (a + \tilde a)(x', 0, x'-y', 0)\\
    & \seq \maF_{\xi}^{-1} a (x', 0, x'-y', 0) \\
    & \seq k_{a} (x', 0, x'-y', 0) \,,
  \end{align*}
  because $\maF_{\xi}^{-1} \tilde a (x, y) = \maF_{\xi}^{-1} a (x, y', -y_{n})$
  and $y_{n}=0$ in our equation. This proves the first point.

  Property (ii) is a general result on pseudodifferential operators of order $0$ with
  odd symbol \cite{H-W}. Indeed, (ii) was already proved for operators of order $< -1$,
  see Theorem \ref{thm.main.jump-2}. We can thus assume that $a$ is odd and
  essentially homogeneous of order $-1$. Then it follows from Lemma
  \ref{lemma.homogeneity} that $a_{0}$ is also odd and homogeneous of order 0.
  Let $B u(x') :=  \pv \,
  \int_{\RR^{n-1}} k_{a_{0}(x, D)}(x', y')
  u(y')\,dy'$. In view of (i), it is enough to prove that
  $f := a_{0}(x', D') - B = 0$. The distribution kernel of the operator $f$ is
  hence supported on the diagonal. This means that $f$ preserves supports.
  It is then known (Peetre's theorem) that $f:=a_{0}(x', D') - B$ is a multiplication
  operator by some function $f_{0}$, see, for example,
  Theorem 8.1.8 (ii) in \cite{H-W} (on pages 508 and 509). Thus the distribution
  kernel of $f$ is of the form
  \begin{equation*}
    k_{f}(x', y') \seq f_{0}(x')\delta(x'-y') \seq f_{0}(x')\delta(y'-x')
    \seq k_{f}(x', 2x' - y') \,.
  \end{equation*}
  But $k_{a_{0}(x, D)}(x', y') = - k_{a_{0}(x, D)}(x', 2x' -y')$, since $a_{0}$
  is an odd symbol (we have used a similar calculation to the one just performed,
  the formula for the distribution kernel
  in Lemma \ref{lemma.d.k}, and the fact that the Fourier transform of an odd
  function is an odd function). The same property is satisfied by the distribution
  kernel of $B$. Therefore $k_{f}(x', y') = - k_{f}(x', 2x' - y') = - k_{f}(x', y')$,
  and, hence $f = 0$.

  Let us prove the third point. As in the proof of Proposition \ref{prop.symbols.m=-1},
  we may assume
  that $a$ is essentially homogeneous of order $-1$. Then $\sigma_{-1}(a; x, \xi) =
  a(x, \xi)$ for $| \xi| \ge 1$. Lemma \ref{lemma.homogeneity} then gives that
  $a_{x_{n}, 0}$ is also essentially homogeneous
  of order $0$, and hence $\sigma_{0}(a_{x_{n}, 0}; x', \xi') =
  a_{x_{n}, 0}(x', \xi')$ for $| \xi| \ge 1$. We then have for $|\xi'| \ge 1$
  \begin{multline*}
    \sigma_{0}(a_{x_{n}, 0}; x', \xi') \seq  a_{x_{n}, 0}(x', \xi')
    \seq \int_{\RR} \frac{a(x, \xi', \tau) + a(x, \xi', -\tau)}{4 \pi}\, d\tau\\
    \seq \int_{\RR} \frac{\sigma_{-1}(a; x, \xi', \tau) +
    \sigma_{-1}(a; x, \xi', -\tau)}{4 \pi}\, d\tau\,.
  \end{multline*}
  Setting $x_{n} = 0$ and using the definitions of $a_{0, 0\pm} := a_{0, 0} \pm
  \frac{\imath}2 \JC(a)$ (Definition \ref{def.aepsilon}) we obtain (iii).

  The point (iv) is the content of Proposition \ref{prop.side.limits} (see
  also Remark \ref{rem.aee}).

  Recall that, for every inner product vector space $V$, the map $\sharp : V \to V^{*}$
  is defined by $v^{\sharp}(w) := (w, v)$. Finally, to prove (v), let us consider the
  matrix unit $e_{ij} := e_{i} \otimes e_{j}^{\sharp} \in M_{k}(\CC)$. By splitting our
  symbol as $a = \sum a_{ij} e_{ij}$, we can assume that $k = 1$. The first equality of
  (v) is the definition of $\maS_{a}$. The second equality of (v) is the content of
  (iv) just discussed. Then, Proposition \ref{prop.symbols.m=-1}
  shows that the assumptions of Lemma \ref{lemma.so.conv} are satisfied by the
  family $a_{\epsilon, \epsilon}(x', D')$ as $\epsilon \to 0$ with either
  positive or negative values, which gives the last equality in (v) and
  allows us to conclude (iv) and completes the proof of the theorem.
\end{proof}

Recall that we have $a_{\epsilon} = a_{\epsilon, \epsilon}$ and
that we write $a_{0, 0\pm} =: a_{0\pm}$, for simplicity. Most of the relations of
Theorem \ref{thm.main.jump0} have been written
in a compact form. The expanded form of these relations amounts to the following five
relations:
\begin{equation*}
  \begin{gathered}
    \big[ a(x, D)(h \otimes \delta_{\Gamma})]_{+} \ede
    \lim_{\epsilon \searrow 0} \big[ a(x, D)(h \otimes \delta_{\Gamma})]_{\epsilon}
    \seq \lim_{\epsilon \searrow 0} a_{\epsilon}(x', D')h \seq
    a_{0+} (x', D') h \\
    \big[ a(x, D)(h \otimes \delta_{\Gamma})]_{-} \ede
    \lim_{\epsilon \nearrow 0} \big[ a(x, D)(h \otimes \delta_{\Gamma})]_{\epsilon}
    \seq \lim_{\epsilon \nearrow 0} a_{\epsilon}(x', D')h \seq
    a_{0-} (x', D') h \\
    \sigma_{0}(a_{x_{n}, 0}; x', \xi') \seq \frac1{2\pi}
    \int_{\RR} \frac{\sigma_{-1}(a; x', x_{n}, \xi', \xi_{n})
    + \sigma_{-1}(a; x', x_{n}, \xi', - \xi_{n})}2 \, d \xi_{n}\\
    \sigma_{0}(a_{x_{n}, 0+}; x', \xi') \seq \frac{\imath}2
    \sigma_{-1}(a; \lambda e_{n}) + \sigma_{0}(a_{x_{n}, 0}; x', \xi') \qquad \mbox{and} \\
    \sigma_{0}(a_{x_{n}, 0-}; x', \xi') \seq - \frac{\imath}2
    \sigma_{-1}(a; \lambda e_{n}) + \sigma_{0}(a_{x_{n}, 0}; x', \xi')
    \,.
  \end{gathered}
\end{equation*}

\begin{example}
  \label{ex.DL.Laplacian}
  Let us look at the example of the double layer potential associated to
  $\Delta = \sum_{j=1}^{n} \pa_{j}^{2}$ on the half-space
  $\RR_{+}^{n}$ for $n \ge 3$. The outer unit normal vector field is
  $-e_{n}$. Hence the pseudodifferential operator $a(x, D)$ giving the double layer
  potential has distribution kernel $-\pa_{y_{n}}N(x-y)$, where
  $N(x) = \gamma_{n} |x|^{2-n}$, for some constant $\gamma_{n}$,
  whose value is irrelevant for the current considerations. Let $C_{N}$ be the
  operator with kernel $N(x-y)$. Then $a(x, D) = C_{N}(- \pa_{n})^{*} = C_{N}\pa_{n}$.
  The symbol of the operator $C_{N}$ with
  kernel $N(x-y)$ is $-| \xi|^{-2}$ (that is, the symbol of $\Delta^{-1}$, because
  $\sigma_{2}(\Delta) = - | \xi|^{2}$).
  The symbol of $\pa_{n}$ is $\sigma_{1}(\pa_{n}) = \imath \xi_{n}$.
  Therefore, the symbol of $a(x, D)$ is
  \begin{equation*}
    a(x, \xi) \seq  - \imath \xi_{n}| \xi|^{-2}\,,
  \end{equation*}
  which is essentially homogeneous of order $-1$. This gives $a_{x_{n}, 0} = 0$.
  Consequently
  \begin{equation*}
    [a(x, D)h]_{\pm} \seq \pm \frac{\imath}2 \JC(a) h \seq \pm \frac12 \, h\,.
  \end{equation*}
\end{example}

\section{\cn Pseudodifferential operators on manifolds with straight cylindrical ends}
\label{sec.psdos}

\emph{From now on, $M$ will be a manifold with straight cylindrical ends,
$M = \canDec$. Also, in the following, $E \to M $ will be a fixed compatible
Hermitian vector bundle.}
We now define two needed classes of pseudodifferential operators on $M$, the
``$\inv$-calculus'' and the ``$\ess$-calculus.'' Our presentation is complete, but
short (unlike the previous sections). The reason is that there are many works on
related classes of pseudodifferential operators, beginning with the
$b$- and $c$- calculi of Melrose \cite{MelroseActa, MazzeoMelroseAsian}. Related calculi
have also been considered by Schulze and Schrohe \cite{SchroheFrechet,
SchulzeBook91, SchulzeWongBCalc}. See \cite{KNW-22} for more references and for a discussion
of the relation between our calculi and the ones of Mazzeo, Melrose, Schrohe, and Schulze.
See also \cite{GrieserBCalc, LauterSeiler, LeschBCalc, MelroseNistor, SchulzeWongBCalc}.

The results of this section are close to those in \cite{KMNW-2025, KNW-22},
but the presentation is different and, occasionally, it provides more general
results. For instance, in \cite{KMNW-2025}, we work only with classical pseudodifferential
operators, whereas here we allow the Kohn-Nirenberg classes of symbols recalled in Definition
\ref{def.pseudos}.

\subsection{Translation invariance in a neighborhood at infinity: {\rm{inv}}-calculus}
Let $M = M_{0}\cup \left[M'\times (-\infty ,0)\right]$ be a manifold with straight cylindrical
ends, such that $M_{0}$ is a compact manifold and $M':=\partial M_{0}\neq \emptyset$.
Let $\Psi^{m}(M; E, F)$ denote the set of all order $m \in \RR$ pseudodifferential operators
$P : \CIc(M; E) \to \CI(M; F)$. For any continuous, linear operator
$P : \CIc(M; E) \to \CIc(M; F)'$, we let $k_P$ denote its distribution kernel.

For all $-R, s \geq 0$, we let
\begin{equation}\label{eq.def.Phi}
  \Phi _s :M'  \times (-\infty , R]\to M' \times (-\infty ,R-s] \,,
  \quad \Phi _s(x,t)=(x,t-s) \,,
\end{equation}
which is a bijective isometry (given by the translation with $-s$ in the $t$-direction).
If $s < 0$, then $\Phi _s := \Phi _{-s}$.
We fix $R \ll 0$ (i.e., much less than 0) so the maps $\Phi_s$ extend to sections of our
given compatible vector bundles by
\begin{equation*}
  \begin{cases}
    \Phi_{s}(u) \ede \circ \Phi_{-s} & \mbox{ where } \Phi_{-s} \mbox{ is defined and}\\
    \Phi_{s}(u) \seq 0 & \mbox{ where } \Phi_{-s} \mbox{ is \underline{not} defined.}
  \end{cases}
\end{equation*}
We let $\dist_g$ denote the distance function on $M$ defined by the given complete metric
$g$ (which, we recall, was assumed to be a product on the cylindrical end $M' \times (-\infty, 0]$).
Then, for $\varepsilon >0$, we define the {\it the $\varepsilon$--neighborhood of the diagonal}
to be the set
\begin{equation}\label{eq.def.eps_neigh}
  \{(x,y)\in M \times M : \dist_g(x,y) < \varepsilon \}\,.
\end{equation}
Recall the definition of pseudodifferential operators $\Psi^{m}(M; E, F)$,
Equation \eqref{def.psdos.M}.

\begin{definition} \label{translation-invariant}
  A continuous linear operator $P : \CIc(M,E)\to \CI(M,F)'$ is called
  {\it translation invariant in a neighborhood of infinity} if
  the following two conditions are satisfied:
  \begin{enumerate}[\rm (i)]
    \item There exists $\varepsilon_P > 0$ such that its distribution kernel $k_{P}$ is
    supported in the $\varepsilon_P$--neighborhood of the diagonal, see
    Equations \eqref{eq.def.dkaD} and \eqref{eq.def.eps_neigh}.

    \item \label{item.transl-inv} There exists $R_P < R_{E} - \varepsilon_P$ such
    that, for all $u \in \CIc \big(M' \times (-\infty , R_P),E\big)$ and all $s>0$, we have
      $P \Phi _s(u) = \Phi _s (Pu).$
  \end{enumerate}
  We let $\iPS m(M;E,F)$ denote the {\it space of all classical pseudodifferential
  operators $P : \CIc(M,E)\to \CI(M,F)$ of order $m \in \RR \cup \{\pm \infty\}$
  that are translation invariant in a neighborhood of infinity}.
\end{definition}

Thus $\iPS m(M;E,F) \subset \Psi^{m}(M; E, F)$.  We also let
$\iPS m(M;E) := \iPS m(M;E,E)$, $\iPS m(M) := \iPS m(M;\CC)
:= \iPS m(M;\CC, \CC)$,  $\iPS {-\infty}(M;E,F):=\bigcap_{m\in {\ZZ }}\iPS m(M;E,F)$,
and $\iPS {\infty }(M;E,F) :=\bigcup_{m\in {\ZZ }}\iPS m(M;E,F)$.
We let $\iPS m(M;E,F) := \Psi^{m}(M; E, F)$ if $M$ is closed, and similarly
for the other notation.

\begin{remark}
  Let us notice that, if $u$ has support in $M' \times (-\infty, R)$, $R < 0$ small,
  then $\Phi_s(u)$ has support in $M' \times (-\infty , R - s)$
  and $Pu$ has support in $M' \times (-\infty , R + \varepsilon_P)$.
  Consequently, if $u$ has support in $M' \times (-\infty, R)$ and $s < 0$, then both
  $P\Phi_s (u)$ and $\Phi_s(Pu)$ are defined, because $R_P < R_{E} - \varepsilon_P$.
  Consequently, the condition $P \Phi_s(u) = \Phi_s (Pu)$ of Definition
  \ref{translation-invariant} makes sense.
\end{remark}

The calculus $\iPS{m}(M; E, F)$ of Definition \ref{translation-invariant} will be called
the {\it $\inv$-calculus} \cite{KNW-22}. Our $\inv$-calculus is smaller than the
``$b$-calculus'' as follows from the following remark (see also \cite{KNW-22}).

\begin{remark}
\label{rem.inv.subset.b}
Let $\overline{M}$ denote the compactification of $M$ using the function $r := e^{t}$
(the Kondratiev transform), so that $M = \{r > 0\}$ and
$\pa \overline{M} = \{r = 0\}$. Then
\begin{equation*}
  \iPS{m}(M) \subset \Psi_b^{m}(\overline{M})\,,
\end{equation*}
where $\Psi_b^{m}(\overline{M})$ is the ``$b$-calculus'' of Melrose
\cite{MelroseActa, MelroseAPS} and Schulze \cite{SchulzeBook91,Schulze}.
(Schulze calls it the ``cone-calculus,'' see an important paper by Lauter and
Seiler \cite{LauterSeiler} comparing the two approaches.
See also \cite{CSS07, GrieserBCalc, KNW-22, MazzeoEdge, MazzeoMelroseAsian, SchulzeWongBCalc,
Seiler1999} for further results on the $b$-calculus.)
\end{remark}

Recall that $\Hom(E; F) \to M$ denotes the vector bundle of all vector
bundle homomorphisms $E \to F$. Then $\Hom(E; F) \to M$ is also a compatible vector bundle
over $M$ (because $E$ and $F$ are compatible).
The next remark provides some examples of operators in $\iPS{m}(M; E, F)$.

\begin{remark}\label{rem.def.Cinv}
We have
\begin{align*}
  \CI_{\inv}(M; \Hom(E; F)) & \seq \CI(M; \Hom(E; F)) \cap \iPS{m}(M; E, F)
  \quad \mbox{and}\\
  \operatorname{Diff}_{\inv}^m(M; E, F) & \seq \{P
  \mbox{ a differential operator on } M \} \cap \iPS{m}(M; E, F)\,.
  \end{align*}
  see Definitions \ref{def.CIinv} and \ref{def.not.diff.ops}.
\end{remark}

Recall that $k_{P}$ denotes the distribution kernel of a continuous,
linear operator $P : \CIc(M; E) \to \CIc(M; F)'$.

\begin{definition}\label{def.c.supp}
  Let $\Psi_{\comp}^m(M;E, F)$ denote the subspace of pseudodifferential operators
  $P \in \Psi^{m}(M; E, F)$ with {\it compactly supported distribution kernel} $k_{P}$.
\end{definition}

The following result follows from the definition of the $\inv$-calculus.

\begin{proposition} \label{propv1.comp.k}
  Let $M = \canDec$ be a manifold with cylindrical
  ends. Then $\Psi_{\comp}^m(M;E, F) \subset \iPS{m}(M; E, F)$.
\end{proposition}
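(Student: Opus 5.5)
Let $P \in \Psi_{\comp}^m(M;E,F)$. Membership in $\iPS{m}(M;E,F)$ requires three things: $P$ is a pseudodifferential operator of order $m$ (of the kind used in the calculus), condition (i) of Definition \ref{translation-invariant}, and condition (ii). The first is part of the hypothesis, so the proof reduces to checking (i) and (ii) directly from the compactness of $\supp k_P$.

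\emph{Condition (i).} Since $\supp k_{P} \subset M \times M$ is compact and $\dist_g : M \times M \to [0,\infty)$ is continuous, $\dist_g$ attains a finite maximum $d$ on $\supp k_P$. Set $\varepsilon_P := d + 1$. Then $k_P$ is supported in the $\varepsilon_P$-neighborhood of the diagonal, Equation \eqref{eq.def.eps_neigh}.

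\emph{Condition (ii).} The projections of $\supp k_P$ to the two factors are compact subsets $K_1, K_2 \subset M$. Each is therefore contained in $M_0 \cup [M' \times [R_0, 0)]$ for some $R_0 \le 0$. Choose $R_P < \min(R_0, R_E - \varepsilon_P)$. Take any $u \in \CIc(M' \times (-\infty, R_P); E)$ and any $s > 0$.
\begin{itemize}
\item The supports of $u$ and $\Phi_s(u)$ both lie in $M' \times (-\infty, R_P)$, which does not meet $K_2$. Hence $Pu = 0$ and $P\Phi_s(u) = 0$.
\item It follows that $\Phi_s(Pu) = \Phi_s(0) = 0$.
\end{itemize}
So $P\Phi_s(u) = 0 = \Phi_s(Pu)$, which is condition (ii). The Remark following Definition \ref{translation-invariant} confirms that both sides are defined for this choice of $R_P$.

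The one point needing care is that the paper defines $\iPS{m}$ as a space of \emph{classical} pseudodifferential operators. If $\Psi^m_{\comp}$ is understood with the same classical convention, nothing more is needed. Otherwise the inclusion should be read within the class of symbols used in Section \ref{sec.psdos}, which explicitly allows Kohn-Nirenberg symbols. This bookkeeping of conventions is the only real obstacle; the support arguments above are elementary.
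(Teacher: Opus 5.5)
Your proof is correct and is exactly the direct verification of Definition \ref{translation-invariant} that the paper relies on when it says the inclusion ``follows from the definition of the $\inv$-calculus.'' Compactness of $\supp k_P$ gives condition (i) with $\varepsilon_P$ larger than the maximum of $\dist_g$ on $\supp k_P$. Choosing $R_P$ below the $t$-range of the projection of $\supp k_P$ makes both sides of condition (ii) vanish; this uses the paper's intended convention $\Phi_s(u)(x',t) = u(x',t+s)$, which moves supports toward $-\infty$, and your remark on classical versus Kohn--Nirenberg symbols concerns the paper's own inconsistent conventions rather than a gap in your argument.
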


\subsection{Principal symbol}
The principal symbol plays a crucial
role in the study of the $\inv$-calculus. Let $S^*M$ be the set of unit vectors of $T^*M$ and let
\begin{equation*} 
  \sigma_m : \Psi^{m}(M;E,F) \to \CI(S^*M; \Hom(E, F))
\end{equation*}
be the {\it principal symbol map of classical pseudodifferential operators
of order $\le m$} (see Definition \ref{def.princ.symb}).
We have the usual notion of ellipticity.

\begin{definition}\label{defv1.elliptic}
  An operator $P \in \iPS{m}(M; E, F)$ is called {\it elliptic} if its principal symbol
  $\sigma_m(P) \in \CI(S^*M; \Hom(E, F))$ is invertible
  on $S^*M\subset T^*M$.
\end{definition}

In the case of $\inv$-calculus, we have the additional property that the principal symbol of
a pseudodifferential operator that is translation invariant in a neighborhood of
infinity is also translation invariant at infinity \cite{KNW-22}.

\begin{proposition}\label{prop.def.inv.symb}
  The manifold $S^*M:= \{\xi \in T^*M : \|\xi \| = 1\}$ is also
  a manifold with straight cylindrical ends. The restriction of the
  principal symbol $\sigma_{m} : \Psi^{m}(M; E, F) \to \CI(S^{*}M; \Hom(E; F))$
  to $\iPS{m}(M; E, F)$ maps it to
  $\CI_{\inv}(S^*M; \Hom(E, F))$, and hence we obtain a map
    $\sigma_m : \iPS{m}(M; E, F) \to \CI_{\inv}(S^*M; \Hom(E, F))\,.$
\end{proposition}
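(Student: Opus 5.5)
The plan is to handle the two claims of Proposition \ref{prop.def.inv.symb} separately: first the geometry of $S^{*}M$, then the translation invariance of $\sigma_{m}(P)$.

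\emph{Geometry of $S^{*}M$.} On the end $M'\times(-\infty,0)$ the metric is the product $g_{\pa}+(dt)^{2}$. Hence $T^{*}M$ splits there as $T^{*}M'\times T^{*}\RR$. A covector is written $(\xi',\tau)$ at $(x',t)$, with norm $|\xi'|_{g_{\pa}}^{2}+\tau^{2}$, and this norm does not depend on $t$. We take the Sasaki metric on $T^{*}M$ and restrict it to $S^{*}M$. Over the end this gives an isometry
\begin{equation*}
  S^{*}M\vert_{M'\times(-\infty,0)} \simeq N'\times(-\infty,0)\,,
\end{equation*}
with product metric, where $N':=\{(x',\xi',\tau)\mid |\xi'|^{2}+\tau^{2}=1\}$ is the unit sphere bundle of $T^{*}M'\oplus\RR$ over $M'$, a closed manifold. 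We then set $(S^{*}M)_{0}:=S^{*}M\vert_{M_{0}}$, which is compact with boundary $N'$. This gives the decomposition required by Definition \ref{def.cyl.end}. Completeness follows from the completeness of $M$ and the compactness of the fibres.

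\emph{Invariance of the symbol.} Let $P\in\iPS{m}(M;E,F)$, and fix $\varepsilon_{P}$ and $R_{P}$ as in Definition \ref{translation-invariant}. The translations $\Phi_{s}$ act isometrically on the end and, through the product structures of $E$ and $F$ (Definition \ref{def.comp.vb}), they also act on sections. They lift canonically to $T^{*}M$ by $(x',t,\xi',\tau)\mapsto(x',t-s,\xi',\tau)$, and this lift preserves $S^{*}M$. We use the naturality of the principal symbol under diffeomorphisms covered by bundle isomorphisms: if $\Phi$ is such a map, then $\sigma_{m}(\Phi P\Phi^{-1}) = \Phi_{*}\sigma_{m}(P)$. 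Since the symbol is local, we may work with $u$ supported in $M'\times(-\infty,R_{P})$. There the identity $P\Phi_{s}u=\Phi_{s}Pu$ says that $P$ and $\Phi_{s}P\Phi_{s}^{-1}$ coincide on a neighbourhood of the region $t<R_{P}$. The support condition on $k_{P}$ guarantees that everything is defined. Evaluating symbols at a point $(x',t,\xi',\tau)$ with $t<R_{P}$ then gives
\begin{equation*}
  \sigma_{m}(P)(x',t-s,\xi',\tau)=\sigma_{m}(P)(x',t,\xi',\tau)\qquad\mbox{for all } s>0\,.
\end{equation*}
This is exactly the statement $\sigma_{m}(P)\in\CI_{\inv}(S^{*}M;\Hom(E,F))$ in the sense of Definition \ref{def.CIinv}, with $R_{\sigma}=R_{P}$. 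Smoothness of $\sigma_{m}(P)$ is standard for classical operators.

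\emph{Main obstacle.} The delicate step is making the naturality argument rigorous when $\Phi_{s}$ is only defined on part of $M$. We will localize with a cutoff $\phi\in\CIc(M'\times(-\infty,R_{P}))$. The symbol of $\phi P\phi$ agrees with $\sigma_{m}(P)$ where $\phi=1$. The operator $\phi P\phi$ is then compared with $\Phi_{s}(\phi\circ\Phi_{s})P(\phi\circ\Phi_{s})\Phi_{s}^{-1}$, which is legitimate because $\varepsilon_{P}$ keeps the supports of the kernels inside the domain where the translations are defined. Once this localization is in place, the identity above follows from the invariance of principal symbols under isometries, which we apply in local coordinates $(x',t)$ where $\Phi_{s}$ is just a translation in $t$.
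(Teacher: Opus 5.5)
Your proposal takes the right approach and is correct in substance. The paper gives no argument of its own here; it only quotes the statement from \cite{KNW-22}. The geometric half is fine. On the end, the Sasaki metric of a product splits, so $S^{*}M$ over $M'\times(-\infty,0)$ is isometric to $N'\times(-\infty,0)$ with $N'$ closed. Completeness holds because $S^{*}M\to M$ is a Riemannian submersion with compact fibres.

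In the symbol half, however, the translations point the wrong way in two places, and the localization as written does not go through.

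\emph{The commutation hypothesis.} With the paper's convention $\Phi_s(u)=u\circ\Phi_{-s}$, supports move towards $t=-\infty$. The hypothesis $P\Phi_s u=\Phi_s Pu$ for $\supp u\subset\{t<R_P\}$ therefore says that $\Phi_s^{-1}P\Phi_s=P$ on sections supported in $\{t<R_P\}$. The identity $\Phi_sP\Phi_s^{-1}=P$ holds only on sections supported in $\{t<R_P-s\}$, not ``on a neighbourhood of $t<R_P$''.

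\emph{The localization.} Your comparison operator equals $\phi\,\Phi_sP\Phi_s^{-1}\,\phi$. The cut-off $\phi\circ\Phi_s$ appearing in it is supported in $\{t<R_P+s\}$, which is in general not contained in $\{t<R_P\}$, the only region where $P$ is known to commute with translations; for large $s$ it even reaches $M_0$. Comparing this operator with $\phi P\phi$ would give $\sigma_m(P)(x',t+s,\xi',\tau)=\sigma_m(P)(x',t,\xi',\tau)$, which is false in general.

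\emph{The fix.} Push the cut-off further into the end. Let $\phi_s:=\phi\circ\Phi_{-s}$, which is supported in $\{t<R_P-s\}$. Applying the hypothesis to $u=\phi\cdot\Phi_s^{-1}v$ gives $\phi_sP\phi_s=\Phi_s(\phi P\phi)\Phi_s^{-1}$. Now evaluate principal symbols at $(x',t-s,\xi',\tau)$, where $\phi_s(x',t-s)=\phi(x',t)=1$. Naturality then yields exactly your displayed identity $\sigma_m(P)(x',t-s,\xi',\tau)=\sigma_m(P)(x',t,\xi',\tau)$ for $t<R_P$ and $s>0$.

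With this correction the proof is complete. You should also add the routine remark that the lift of $\Hom(E,F)$ to $S^{*}M$ is a compatible bundle, since over the end it is pulled back from $N'$.
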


\subsection{Algebra properties of the ${\rm{inv}}$-calculus}
We summarize now some of the algebra properties of the $\inv$-calculus $\iPS{m}$.
All of them are direct analogues of the corresponding properties for the usual
pseudodifferential calculus on compact manifolds or for other calculi on manifolds
with cylindrical ends. For simplicity, we shall mention our results only in the case
$E = F$. The general case is obtained easily by replacing $E$ with $E \oplus F$.
See also \cite{Mitrea-Nistor, KMNW-2025, KNW-22, KohrNistor-Stokes}.

\begin{theorem} \label{thm.prop.inv-calc}
  Let $M= M_{0} \cup \big [M'  \times (-\infty, 0) \big ]$ be a manifold with
  straight cylindrical ends and let $E, F, G \to M$ be compatible vector
  bundles on $M$. Then the following properties hold.
  \begin{enumerate}[\rm (i)]
    \item If $P \in \iPS{m}(M; E, F)$, then its adjoint $P^{*} \in \iPS{m}(M; F, E)$, and,
    for $m , m' \in \RR$,
    \begin{equation*}\iPS{m'}(M; F, G)\iPS{m}(M; E, F)
    \subset \iPS{m' + m}(M; E, G)\,.\end{equation*}
    \item The principal symbol $\sigma_m :\iPS{m}(M; E, F)\to \CI_{\inv}(S^*M; \Hom(E; F))$
    of Propositioin \ref{prop.def.inv.symb} is onto and its kernel is $\iPS{m-1}(M; E, F)$.
    It is $*$-multiplicative, in the sense that
    \begin{equation*}
      \sigma_{m' + m}(QP) \seq \sigma_{m'}(Q) \sigma_{m}(P)\
      \mbox{ and }\ \sigma_m(P^*) \seq \sigma_m(P)^*\,,
    \end{equation*}
    if $Q \in \iPS{m'}(M; F, G)$ and $P \in \iPS{m}(M; E, F)$.
    \item
    An operator $P \in \iPS{m}(M; E, F)$ is elliptic if, and only if, there exists
    $Q \in \iPS{-m}(M; F, E)$ such that $QP - 1 \in \iPS{-\infty}(M; E)$ and
    $PQ - 1 \in \iPS{-\infty}(M; F)$.
  \end{enumerate}
\end{theorem}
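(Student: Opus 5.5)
The plan is to reduce all three points to local statements, where the classical calculus on $\RR^{n}$ applies, together with a structural decomposition that isolates the translation invariant part at infinity. Fix a cutoff $\chi\in\CI_{\inv}(M)$ with $\chi=1$ on $M'\times(-\infty,R-1]$ and $\chi=0$ on a neighborhood of $M_{0}$, with $R\ll R_{P}$. Using that $k_{P}$ is supported in an $\varepsilon_{P}$-neighborhood of the diagonal, I will write any $P\in\iPS{m}(M;E,F)$ as
\begin{equation*}
  P \seq P_{c} + \chi P_{\RR}\chi\,,
\end{equation*}
where $P_{c}\in\Psi^{m}_{\comp}$ and $P_{\RR}$ is a translation invariant operator on $M'\times\RR$. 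The operator $P_{\RR}$ is obtained by extending $k_{P}$ from the end by translation invariance; this is possible because of condition (ii) of Definition \ref{translation-invariant}.

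For (i), adjoints are immediate. The support condition is symmetric, and taking adjoints commutes with $\Phi_{s}$, since $\Phi_{s}$ is an isometry on the end. Hence $P^{*}\in\Psi^{m}$ is again translation invariant near infinity. For products, the kernel of $QP$ is supported in the $(\varepsilon_{P}+\varepsilon_{Q})$-neighborhood of the diagonal. Near infinity, $QP\Phi_{s}u=Q\Phi_{s}Pu=\Phi_{s}QPu$ provided $u$ is supported in $M'\times(-\infty,\min(R_{P},R_{Q})-\varepsilon_{P})$, and this gives the invariance. That $QP$ is pseudodifferential of order $m+m'$ is the local composition theorem. The local calculus applies because the kernels are properly supported with uniformly bounded geometry: locally, only finitely many charts, up to translation, are involved.

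For (ii), translation invariance of $\sigma_{m}(P)$ is Proposition \ref{prop.def.inv.symb}. Multiplicativity and the $*$-property are local, so they follow from the classical calculus. I will prove surjectivity and identify the kernel through the decomposition above. Given $a\in\CI_{\inv}(S^{*}M;\Hom(E,F))$, I will quantize $a$ on the compact part with a compactly supported quantization. On the end, I will quantize the invariant limit $\In(a)$ on $M'\times\RR$ translation invariantly, using a kernel cut off near the diagonal; this is a convolution in $t$ combined with a quantization on the closed manifold $M'$. Gluing the two with the cutoff $\chi$ gives an operator whose symbol differs from $a$ only on a compact set, and that difference is corrected by another compactly supported operator. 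If $\sigma_{m}(P)=0$, then locally $P$ has order $m-1$. The invariance and support conditions are unaffected, so $P\in\iPS{m-1}$.

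For (iii), the standard Neumann-series construction carries over. By (ii), choose $Q_{0}\in\iPS{-m}$ with $\sigma_{-m}(Q_{0})=\sigma_{m}(P)^{-1}$; this symbol is in $\CI_{\inv}$ because inversion preserves translation invariance. Then $R:=1-Q_{0}P\in\iPS{-1}$. The step I expect to be the main obstacle is the asymptotic summation of $\sum_{k}R^{k}Q_{0}$ within $\iPS{}$. The summation must preserve both the support bound and the invariance radius, but the support neighborhoods $k\varepsilon$ grow with $k$ and the radii $R_{R^{k}}$ degrade. To handle this, I will first choose the symbols $b_{k}$ of the terms on $S^{*}M$ so that they are invariant on a fixed end. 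I will then perform a Borel-type summation separately on the compact part and on the invariant model on $M'\times\RR$. In the invariant model I quantize with a fixed $\varepsilon$-cutoff kernel, so that all the terms share the same $\varepsilon$ and the same $R$. The resulting remainders are of order $-\infty$ and inherit the invariance, so they lie in $\iPS{-\infty}$, and similarly on the right. Conversely, if $Q$ exists, multiplicativity of the symbol gives $\sigma_{-m}(Q)\sigma_{m}(P)=1$, so $P$ is elliptic.
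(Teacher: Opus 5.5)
The paper gives no proof of this theorem. It calls (i)--(iii) direct analogues of the classical properties and refers to the literature; the splitting $\iPS{m}(M;E,F)=s_{0}(\iPSsus{m}{M'}{E,F})+\Psi^{m}_{\comp}(M;E,F)$ appears only later, in Proposition \ref{propv1.lemma.onto}(iii). Your route, the local calculus plus the splitting $P=P_c+\chi\,\In(P)\chi$, is the standard one. Your arguments for (i), for (ii), and for ``parametrix $\Rightarrow$ elliptic'' in (iii) are fine.

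The gap is in the asymptotic summation in (iii), exactly at the step you flag. Taken literally, your $b_k$ ``on $S^*M$'' are the principal symbols $\sigma_{-1}(R)^k\sigma_{-m}(Q_0)$. These are automatically invariant on a fixed end, so there is nothing to choose. But requantizing them reproduces $R^kQ_0$ only modulo $\iPS{-m-k-1}$. The resulting $Q$ then agrees with $\sum_{k<N}R^kQ_0$ only modulo $\iPS{-m-1}$, and $QP-1$ is in general only in $\iPS{-1}$.

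For the Neumann-series route you need the \emph{full} symbols $b_k\in S^{-m-k}$ of $R^kQ_0$, taken in translation invariant charts modulo $S^{-\infty}$. You also need that these are invariant beyond a radius independent of $k$, which your sketch asserts but does not justify. The kernels of $R^kQ_0$ are a priori invariant only beyond a radius that degrades with $k$ (roughly $R_R-k\varepsilon_R$). The full symbols nevertheless stay invariant on a fixed end because the composition formula is local: the full symbol of $R^kQ_0$ at a point depends only on the full symbols of $R$ and $Q_0$ at that point. The degradation therefore lives entirely in smoothing terms.

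With one fixed invariant quantization $\operatorname{Op}$ (fixed $\varepsilon$ and fixed $R$), the argument then closes as follows.
\begin{itemize}
\item Each $\operatorname{Op}(b_k)-R^kQ_0$ is smoothing, properly supported, and invariant beyond a $k$-dependent radius, hence lies in $\iPS{-\infty}$.
\item A Borel sum $b\sim\sum_k b_k$ with invariant cutoffs gives $Q=\operatorname{Op}(b)$ with $Q-\sum_{k<N}R^kQ_0\in\iPS{-m-N}$, so $QP-1\in\iPS{-N}$ for every $N$.
\item Since $QP-1$ is a single composition, its support width and invariance radius do not depend on $N$, so $QP-1\in\iPS{-\infty}$.
\end{itemize}

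A cleaner alternative avoids powers of $R$ altogether. Set $Q^{(N+1)}=Q^{(N)}+\operatorname{Op}\big(\sigma_{-N-1}(1-Q^{(N)}P)\,\sigma_m(P)^{-1}\big)$. Each $1-Q^{(N)}P$ is one composition with fixed parameters, so these principal symbols are invariant on a fixed end and can be Borel-summed.

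This step is not a formality, because $\iPS{}$ is not asymptotically complete for arbitrary sequences. Take $P_j=\phi_jA_j\phi_j$ with $\phi_j(x',t)=\phi(t+j)$, $\phi(0)\neq0$, and $A_j$ elliptic of order $-j$. This sequence has no asymptotic sum in $\iPS{0}$: translation invariance would force equal local orders near $t=-j$ and near $t=-j-1$.
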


\subsection{Translation invariant operators on a cylinder and their kernels}
\label{ssec.ti}
We now look at the special case $M = \mfkS \times \RR$, where
$\mfkS$ is a given Riemannian manifold. This special example of a manifold
with cylindrical ends is useful in studying the general case. Of course, the cylinder
$M = \mfkS \times \RR$ is a special case of manifolds with cylindrical ends
(with $M_{0} = \mfkS \times [-1, 1]$ and $M' := \{-1, 1\}$,
see Definition \ref{def.cyl.end} and Remark \ref{rem.cylinder}.)

To simplify the notation, we shall set as in Melrose's work
\begin{equation}\label{eq.def.suspended}
  \iPSsus{m} {\mfkS} {E, F} \ede \iPS{m} (\mfkS \times \RR; E, F)^{\RR}\,.
\end{equation}
We begin with some simple observations on the supports of our operators that will allow
us to define the normal (or limit) operator associated to an operator that is
translation invariant in a neighborhood of infinity. Let $E, F \to \mfkS$ be two
Hermitian vector bundles that we lift to compatible vector bundles on
$\mfkS \times \RR$ denoted with the same symbols. We have the following useful
characterization of the space $\iPS{m} (\mfkS \times \RR; E, F)^{\RR}$ \cite{KNW-22}.

\begin{lemma}\label{lemma.rem.inv=pr}
  The space $\iPSsus{m} {\mfkS} {E, F} := \iPS{m} (\mfkS \times \RR; E, F)^{\RR}$
  consists of the set of order $m$, translation invariant, properly supported, pseudodifferential
  operators on $\mfkS \times \RR$ (invariant with respect to the translations by $t \in \RR$),
  that is
  \begin{equation*}\label{eq.inv=pr}
    \iPSsus{m} {\mfkS} {E, F} \seq \{P\in \Psi^{m}(\mfkS \times \RR; E, F)^{\RR}
    : \ P \mbox{ is properly supported}\,\}.
  \end{equation*}
\end{lemma}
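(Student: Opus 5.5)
We prove the two inclusions separately, working on $M = \mfkS \times \RR$ regarded as a manifold with straight cylindrical ends. Its two ends correspond to $t \to +\infty$ and $t \to -\infty$; in the notation of Remark \ref{rem.cylinder} they are identified with $\mfkS\times\{\pm 1\}\times(-\infty,-1]$. Throughout, ``translation invariant'' means invariance under the global $\RR$-action $(x',t)\mapsto(x',t+s)$, which we write $T_{s}$. On each end, $T_{s}$ coincides (up to the sign of $s$) with the maps $\Phi_{s}$ of \eqref{eq.def.Phi}.

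\textbf{Inclusion ``$\subset$''.} Let $P \in \iPS{m}(\mfkS\times\RR;E,F)^{\RR}$.
\begin{enumerate}[\rm (i)]
\item By definition $P$ is a (classical) pseudodifferential operator of order $m$, and it commutes with every $T_{s}$ because it lies in the $\RR$-invariant part.
\item Condition (i) of Definition \ref{translation-invariant} says that $k_{P}$ is supported in $\{\dist_{g}(x,y)<\varepsilon_{P}\}$.
\item $\mfkS$ is compact and the metric is a product, so $\dist_{g}((x',t),(y',s)) \ge |t-s|$. Hence the support of $k_{P}$ lies in $\{|t-s|<\varepsilon_{P}\}$.
\item If $K\subset \mfkS\times\RR$ is compact, then $K\subset \mfkS\times[-R,R]$ for some $R$. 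The preimages of $K$ under the two projections restricted to $\supp k_{P}$ therefore lie in $\mfkS\times[-R-\varepsilon_{P},R+\varepsilon_{P}]\times \mfkS\times[-R,R]$ (and symmetrically), which is compact. So $P$ is properly supported.
\end{enumerate}

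\textbf{Inclusion ``$\supset$''.} Let $P\in\Psi^{m}(\mfkS\times\RR;E,F)^{\RR}$ be properly supported.
\begin{enumerate}[\rm (i)]
\item Translation invariance gives $k_{P}((x',t),(y',s)) = \kappa(x',y',t-s)$ for a distribution $\kappa$ on $\mfkS\times\mfkS\times\RR$.
\item Fix a compact set $K=\mfkS\times[0,1]$. Proper support implies that $\supp k_{P}\cap(\mfkS\times\RR\times K)$ is compact, so it lies in $\{|t-s|\le C\}$ for some $C$.
\item By invariance, $\supp\kappa\subset\{|t-s|\le C\}$ globally. With the product metric, $\{|t-s|\le C\}\subset\{\dist_{g}\le \operatorname{diam}(\mfkS)+C\}$. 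This gives condition (i) of Definition \ref{translation-invariant} with $\varepsilon_{P} = C+\operatorname{diam}(\mfkS)+1$.
\item Condition (ii) is immediate: global commutation with the $T_{s}$ implies commutation with the $\Phi_{s}$ on sections supported far out in either end. We take $R_{P}$ below $R_{E}-\varepsilon_{P}$, so that $\Phi_{s}$ never hits the region where it is undefined; this is exactly the consistency check made in the remark following Definition \ref{translation-invariant}.
\item Classicality and order $m$ are assumed, so $P\in\iPS{m}$, and $P$ is $\RR$-invariant.
\end{enumerate}

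\textbf{Main obstacle.} The only point requiring care is the global one in the ``$\supset$'' direction: deducing a uniform $\varepsilon$-neighbourhood support bound from proper support. This is where translation invariance is essential, since it spreads the compact-slab estimate along the whole line. A secondary bookkeeping issue is matching the global $\RR$-action with the end-wise maps $\Phi_{s}$, whose direction is reversed on the end $t\to+\infty$; this sign causes no difficulty.
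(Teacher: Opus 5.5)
Your proof is correct. The paper gives no argument for this lemma; it states the characterization and attributes it to \cite{KNW-22}. Your direct verification is therefore a genuinely different, self-contained route.

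It isolates the two mechanisms behind the lemma. For the product metric, $\dist_g((x',t),(y',s))\ge|t-s|$, so the $\varepsilon_P$-support condition of Definition~\ref{translation-invariant}(i) already forces proper support. Conversely, proper support bounds $|t-s|$ over the slab $\mfkS\times[0,1]$, and $\RR$-invariance of $\supp k_P$ spreads that bound along the whole line. For condition (ii), it is cleanest to split $u=u_{+}+u_{-}$ according to the two ends, on which $\Phi_s$ moves $t$ in opposite directions. Then use that $Pu_{\pm}$ stays in the end containing $\supp u_{\pm}$.

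The citation buys brevity; your version buys transparency and exposes two tacit hypotheses. First, the inclusion $\{|t-s|\le C\}\subset\{\dist_g\le\operatorname{diam}(\mfkS)+C\}$ needs $\mfkS$ connected. This holds under the paper's convention that $\mfkS\times\RR$ is connected. For disconnected $\mfkS$ (e.g.\ $\mfkS=M'$ in the discussion of limit operators), the inclusion ``$\supset$'' actually fails with the literal Riemannian distance. A properly supported invariant smoothing operator can couple two components of $\mfkS$, which lie at infinite distance from each other. Second, $\Psi^{m}$ in \eqref{def.psdos.M} is a Kohn--Nirenberg class, while Definition~\ref{translation-invariant} requires classical operators. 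So classicality is not literally given by the right-hand side, and the lemma has to be read with the same symbol class on both sides.
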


\subsection{Limit operators}
An important concept associated to the operators in the $b$-calculus is that of
``normal operator'' (in Melrose's terminology) or ``conormal principal symbol''
(in Schulze's terminology, who uses the term ``cone-calculus'' instead of
the $b$-calculus). This concept is specific to non-compact manifolds
and is easy to be defined for the ${\rm{inv}}$-calculus. We shall call these operators
``limit at infinity operators,'' (or ``limit operators,''
for short). None of the results of this section have analogues for closed manifolds.

The translation invariant operators
appear in the following lemma that will provide the definition
of limit operators. The proof of this lemma can be found in \cite{KNW-22}
(see also \cite{KMNW-2025}, \cite{KohrNistor-Stokes}).

\begin{lemma}\label{lemma.limit.operator}
  Let $M = M_{0}\cup \left[M'\times (-\infty ,0)\right]$ be a manifold with straight
  cylindrical ends. Let $P\in \iPS {m}(M;E, F)$ and $u \in \CIc(M'\times \RR; E)$.
  Then, there exists $R_{P, u} > 0$ such that $\Phi _{-s}P\Phi_s(u)$ is defined and
  independent of $s > R_{P, u}$. Then, for $s > R_{P, u}$, we let
  \begin{equation*} 
    \In(u) \ede \Phi_{-s}P\Phi_s(u)\,.
  \end{equation*}
  \begin{enumerate}[\rm (i)]
    \item If $a \in \CI_{\inv}(M; \Hom(E; F)) \subset \iPS{m}(M; E, F)$,
    then $\In(a)$ is the same as the one in Definition {\rm \ref{def.CIinv}}.

    \item In general, $\In (P) \in \iPSsus{m}{M'}{E, F}
    := \iPS{m}(M' \times \RR; E, F)^{\RR}$.

    \item If $M = M' \times \RR$ and $P$ is translation invariant
    {\lpar}that is, $P \in \iPS{m}(M' \times \RR; E, F)^{\RR}${\rpar},
    then $\In(P) = P$.
  \end{enumerate}
\end{lemma}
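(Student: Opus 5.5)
The plan is to argue directly with distribution kernels and the support and translation properties of Definition \ref{translation-invariant}. First I fix the constants. Let $\varepsilon_P$ and $R_P$ be as in that definition. Let $u \in \CIc(M'\times\RR; E)$ have support in $M' \times [-A, A]$.

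For $s > A - R_P$, the translate $\Phi_s(u)$ has support in $M' \times [-A-s, A-s] \subset M'\times(-\infty, R_P)$, so it is a well-defined element of $\CIc(M;E)$. Since $k_P$ is supported in the $\varepsilon_P$-neighborhood of the diagonal, $P\Phi_s(u)$ is supported in the $\varepsilon_P$-neighborhood of $\supp \Phi_s(u)$. That set lies inside the cylindrical end, where $\Phi_{-s}$ is defined, because $R_P+\varepsilon_P < R_E \le 0$. Hence $\Phi_{-s}P\Phi_s(u)$ is defined for all such $s$.

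For independence of $s$, take $s' > s > R_{P,u} := A - R_P$ and write $s' = s + r$ with $r > 0$. Then $\Phi_{s'}(u) = \Phi_r\Phi_s(u)$, and $\Phi_s(u)$ is supported in $M'\times(-\infty,R_P)$. Condition (ii) of Definition \ref{translation-invariant} gives $P\Phi_r\Phi_s(u) = \Phi_r P\Phi_s(u)$. Applying $\Phi_{-s'} = \Phi_{-s}\Phi_{-r}$ yields $\Phi_{-s'}P\Phi_{s'}(u) = \Phi_{-s}P\Phi_s(u)$. Here I must check that $\Phi_{-r}\Phi_r = \mathrm{id}$ on the relevant supports, which holds since they stay in the end.

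For (i), if $P = a$ is multiplication by $a \in \CI_{\inv}$, then $\Phi_{-s} a \Phi_s u = (a\circ\Phi_{s})\,u$ on the support. For $s$ large this equals the translation-invariant extension $\In(a)$ of Definition \ref{def.CIinv}.

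For (ii), I define $\In(P)u$ as above. Translation invariance of $\In(P)$ follows from the same commutation with $\Phi_r$: for $r\in\RR$, the operator $\Phi_r\In(P)\Phi_{-r}$ is $\Phi_{-(s-r)}P\Phi_{s-r}$ with $s$ large, which equals $\In(P)$. The kernel of $\In(P)$ is the translate of $k_P$ restricted to a far region of the end, so it is supported within distance $\varepsilon_P$ of the diagonal. It is therefore properly supported, and it is locally (in coordinate patches) a translate of $P$ and hence in $\Psi^m$ and classical. Lemma \ref{lemma.rem.inv=pr} then gives $\In(P)\in\iPSsus{m}{M'}{E,F}$.

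For (iii), if $P$ commutes with all translations, then $\Phi_{-s}P\Phi_s = P$ for every $s$.

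The main obstacle I expect is the bookkeeping in (ii): verifying rigorously that the operator defined on $\CIc$ has a distribution kernel that is a genuine classical pseudodifferential kernel on all of $M'\times\RR$. I would handle it by identifying $k_{\In(P)}(x,t,y,t')$ with $k_P(x,t-s,y,t'-s)$ for $s$ large, and checking the local symbol estimates on each patch, uniformly in $t$, by translation invariance.
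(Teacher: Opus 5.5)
Your proposal is correct. The paper gives no proof of this lemma; it refers to \cite{KNW-22} and \cite{KMNW-2025}. Your argument is the standard one behind that reference: you push $\supp\Phi_s(u)$ into $\{t<R_P\}$, use condition (ii) of Definition \ref{translation-invariant} to get independence of $s$, and identify $\phi\,\In(P)\,\psi$ with the isometric translate of $(\Phi_s\phi)\,P\,(\Phi_s\psi)$, which is the kernel relation recorded in Proposition \ref{propv1.desc.kInP}.
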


\begin{definition}\label{def.indicial-oper}
  Let $P\in \iPS {m}(M;E,F)$ and
  \begin{equation*}
    \In(P) \in \iPSsus{m}{M'}{E, F} \ede
    \Psi^m(M \times \RR; E, F)^{\RR}
  \end{equation*}
  be as in Lemma \ref{lemma.limit.operator}. Then $\In(P)$ is
    called the {\it limit operator} associated to $P$.
    \index{limit operator (ex. $\In(P)$ or $\In(P)$)}
  The resulting map
  \begin{equation*}
  \In : \iPS {m}(M;E,F)\ni P\longmapsto \In(P)\in \iPSsus{m}{M'  }{E, F}
  \end{equation*}
  will be called the {\it limit operator map.}
\end{definition}

The following proposition provides a good
intuition about the limit operator $\In P$ \cite{KNW-22}.

\begin{proposition}\label{propv1.desc.kInP}
  Let $P \in \iPS{m}(M; E, F)$. Then, for all $(x,t) \neq (y, s)
  \in M' \times (-\infty, 0)$ and all $\lambda \in \RR$ {\rm large enough}
  \begin{equation*}
    k_{\In(P)}(x, t, y, s)  \seq k_{P}(x, t - \lambda, y, s - \lambda)
    \in (F \boxtimes E)_{(x, t, y, s)} \ede F_{x} \otimes E_{y}
  \end{equation*}
\end{proposition}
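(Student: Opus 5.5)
The plan is to compare the Schwartz kernels of $\In(P)$ and $P$ directly, using the construction of $\In(P)$ in Lemma \ref{lemma.limit.operator}. By that lemma, for every test section $u \in \CIc(M' \times \RR; E)$ we have $\In(P)u = \Phi_{-s} P \Phi_{s}(u)$ for all $s > R_{P,u}$. I would test this identity against a second section $v \in \CIc(M' \times \RR; F)$ and then rewrite the pairing $\langle P \Phi_{s}(u), \Phi_{s}(v)\rangle$ through the kernel $k_{P}$, as in \eqref{eq.def.dkaD}.

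\textbf{Step 1 (kernel identity for test sections).} Since $\Phi_{s}$ is a translation in the $t$-direction, hence an isometry, the pairing becomes
\begin{equation*}
  \langle k_{\In(P)}, v \boxtimes u \rangle \seq \langle k_{P}, (v\circ \Phi_{-s}) \boxtimes (u \circ \Phi_{-s})\rangle
  \seq \langle (\Phi_{s} \times \Phi_{s})^{*} k_{P}, v \boxtimes u \rangle
\end{equation*}
for all $s$ large enough. The threshold depends on $u$ and $v$ only through their supports.

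\textbf{Step 2 (localize away from the diagonal).} Fix a compact set $K \subset \big(M' \times (-\infty, 0)\big)^{2}$ that avoids the diagonal. Consider the sections $v \boxtimes u$ supported in $K$; finite sums of these are dense in $\CIc(K^{\circ})$. For all of them a single $\lambda_{K}$ works. This holds because, in Lemma \ref{lemma.limit.operator}, $R_{P,u}$ is controlled by how far the support of $u$ lies from $R_{P}$, which is uniform over $K$. Hence
\begin{equation*}
  k_{\In(P)} \seq (\Phi_{\lambda} \times \Phi_{\lambda})^{*} k_{P} \quad \mbox{ on } K^{\circ}, \ \mbox{ for all } \lambda > \lambda_{K}.
\end{equation*}

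\textbf{Step 3 (pointwise statement).} Off the diagonal, both kernels are smooth. For $k_{P}$ this is the pseudolocality of $P \in \Psi^{m}$, as in Lemma \ref{lemma.d.k}. For $k_{\In(P)}$ it holds because $\In(P) \in \iPSsus{m}{M'}{E,F}$. An equality of smooth sections as distributions is therefore a pointwise equality. This gives $k_{\In(P)}(x,t,y,s) = k_{P}(x, t-\lambda, y, s-\lambda)$ in $F_{x} \otimes E_{y}$, where the two fibers are identified through the product structure of the compatible bundles.

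The main obstacle is the uniformity in Step 2: the threshold $\lambda$ must be independent of the test functions within a fixed support. If the proof of Lemma \ref{lemma.limit.operator} does not state this explicitly, I would derive it from Definition \ref{translation-invariant}. The kernel is supported within $\varepsilon_{P}$ of the diagonal, and $P\Phi_{s} = \Phi_{s}P$ on sections supported in $t < R_{P}$. Together these show that the identity holds as soon as $K$ has been translated into the region $t < R_{P} - \varepsilon_{P}$. Pairs of points at distance greater than $\varepsilon_{P}$ are handled separately: there both kernels vanish.
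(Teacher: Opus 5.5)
Your argument is correct. The paper gives no proof of this proposition and simply defers to \cite{KNW-22}. Computing the kernel of the conjugate $\Phi_{-\lambda}P\Phi_{\lambda}$, with a threshold that is uniform over supports and comes from Definition~\ref{translation-invariant}, and then passing to a pointwise equality off the diagonal, is exactly the definitional argument one would expect there. Your separate treatment of pairs at distance greater than $\varepsilon_P$ is unnecessary, because the distributional identity on $K^{\circ}$ already covers those pairs.
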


\subsection{The structure of the $\inv$-calculus}
We shall make repeated us of the following cut-off
function $\eta$ that will be fixed throughout this paper.

\begin{notation}\label{not.rem.eta}
  We fix a smooth ``cut-off'' function $\eta : M'  \times \RR \to [0, 1]$
  \begin{equation*}
    \eta(x, t) \seq
    \begin{cases}
    \ 1 \ \mbox{ for  }\ t \le -2\\
    \ 0 \ \mbox{ for }\ t \ge -1\,.
    \end{cases}
  \end{equation*}
  We extend $\eta$ to a function on $M= \canDec$ by setting it to be equal to $0$
  on $M_{0}$. Then, for $T \in \iPSsus{m}{M' }{E, F} := \iPS {m}
  (M'  \times \RR ; E, F)^{\RR}$, we let
  \begin{equation*} 
    s_{0}(T) \ede \eta T\eta : \CIc(M; E) \to \CIc(M; F)\,.
  \end{equation*}
\end{notation}

The following lemma from \cite{KNW-22}, proves, in particular, that the limit operator
$\In$ of Definition \ref{def.indicial-oper} is surjective.
Recall that $\Psi_{\comp}^m(M;E, F) \subset \Psi^{m}(M; E, F)$
denote the subspace of pseudodifferential operators with {\it compactly supported
distribution kernel}, see Definition \ref{def.c.supp}. Some of these properties follow
from the corresponding properties of the $b$-calculus.

\begin{proposition}\label{propv1.lemma.onto}
  Let $T \in \iPSsus{m}{M' }{E, F}$ and let $s_{0}(T) := \eta T\eta$ be as in
  \ref{not.rem.eta}. Then the following properties hold.
  \begin{enumerate}[\rm (i)]
    \item $s_{0}(T) \in \iPS{m}(M; E, F)$ satisfies $\In (s_{0}(T)) = T.$

    \item The map $\In : \iPS{m}(M; E, F)\to \iPSsus{m}{M' }{E, F}$ is
    multiplicative and surjective and $\ker( \In ) := \In^{-1}(0) = \Psi_{\comp}^m(M;E, F)$.

    \item The map $P \mapsto \big( s_{0}(\In(P)), P - s_{0}(\In(P)) \big)$ yields the
    following direct sum decomposition
    \begin{equation*}
      \iPS{m}(M; E, F) \seq s_{0}(\iPSsus{m} {M' }{E, F}) + \Psi_{\comp}^m(M; E, F)
      \,.
    \end{equation*}
    \item For any $T \in \iPS{m}(M; E, F)$, we have
    \begin{equation*}
      \sigma_{m}(\In(T)) \seq \In(\sigma_m(T)) \in \CI(S^{*}(M' \times \RR) ;
      \Hom(E; F))^{\RR} \,.
    \end{equation*}
  \end{enumerate}
\end{proposition}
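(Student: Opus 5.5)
The plan is to prove the four points in order, with (ii)--(iv) following quickly from (i), Lemma \ref{lemma.limit.operator} and Proposition \ref{propv1.desc.kInP}.

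\emph{Point (i).} Let $T \in \iPSsus{m}{M'}{E, F}$. By Lemma \ref{lemma.rem.inv=pr}, $T$ is properly supported and invariant under all translations $t \mapsto t+c$.
\begin{enumerate}[\rm (a)]
  \item First I will show that the kernel $k_T$ lies in a band $\{|t-s| \le c\}$. Proper support gives a $c>0$ with $k_T(x,t,y,s) = 0$ for $|t-s|>c$ and $t$ in a fixed compact window, and translation invariance moves this bound to every $t$.
  \item Since $M'$ is compact, such a band lies in an $\varepsilon_T$-neighborhood of the diagonal. Multiplying by $\eta$ on both sides keeps this support condition, keeps the operator pseudodifferential and classical of order $m$, and makes it an operator on $M$, because $\eta$ vanishes near $M_0$.
  \item For translation invariance near infinity, take $u$ supported in $M' \times (-\infty, R_P)$ with $R_P < -2 - \varepsilon_T$. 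Then $\eta = 1$ on the supports of $u$, $Tu$, $\Phi_s u$ and $T\Phi_s u$. Hence $\eta T \eta \Phi_s u = T\Phi_s u = \Phi_s T u = \Phi_s(\eta T \eta u)$, so $s_0(T) \in \iPS{m}(M;E,F)$.
  \item The same computation gives $\Phi_{-s}\, s_0(T)\, \Phi_s u = T u$ for $u \in \CIc(M'\times\RR;E)$ and $s$ large, that is, $\In(s_0(T)) = T$.
\end{enumerate}

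\emph{Point (ii).} Surjectivity of $\In$ follows from (i). For multiplicativity, fix $u$ and take $s$ large. Then $\Phi_s u$ and $Q\Phi_s u$ are supported deep in the end, where $\Phi_s$ and $\Phi_{-s}$ are mutually inverse on the relevant supports. This gives $\Phi_{-s} PQ \Phi_s u = (\Phi_{-s} P \Phi_s)(\Phi_{-s} Q\Phi_s) u = \In(P)\In(Q)u$.

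For the kernel of $\In$, one inclusion is immediate: if $k_P$ is compactly supported, then $\Phi_{-s} P \Phi_s u = 0$ for $s$ large. Conversely, suppose $\In(P) = 0$. Proposition \ref{propv1.desc.kInP} then gives $k_P(x,t-\lambda,y,s-\lambda) = 0$ for $\lambda$ large, so $k_P$ vanishes where both $t$ and $s$ are very negative. Together with the support of $k_P$ in an $\varepsilon_P$-neighborhood of the diagonal, this confines $k_P$ to a set of the form $\{t,s \ge R\} \cup (M_0\text{-part})$. That set is compact because $M_0$ and $M'$ are compact.

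\emph{Point (iii).} Write $P = s_0(\In(P)) + (P - s_0(\In(P)))$. By (i) and (ii), $\In\big(P - s_0(\In(P))\big) = 0$, so the second summand lies in $\Psi_{\comp}^m$. The sum is direct: if $s_0(T)$ is compactly supported, then $T = \In(s_0(T)) = 0$.

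\emph{Point (iv).} The principal symbol is local. Proposition \ref{propv1.desc.kInP} shows that, near $t \to -\infty$, $P$ coincides locally with a translate of $\In(P)$. Hence $\sigma_m(P)$ there equals the symbol of $\In(P)$ at the translated point, and by translation invariance this is $\In(\sigma_m(P))$.

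The main obstacle is step (a) of (i): passing from "properly supported and translation invariant" to a uniform band around the diagonal. This needs compactness of $M'$ and invariance used together.
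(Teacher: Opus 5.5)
Your proofs of (i), (iii), and of surjectivity and multiplicativity in (ii) are fine. The gap is in the two places where you rely on Proposition \ref{propv1.desc.kInP}: the inclusion $\In^{-1}(0)\subset\Psi_{\comp}^m(M;E,F)$ in (ii), and the symbol identity (iv). That proposition compares $k_{\In(P)}$ with a translate of $k_P$ only at points $(x,t)\neq(y,s)$. Both of your conclusions, however, are statements about the diagonal. Off-diagonal vanishing of $k_P$ deep in the end does not imply that $k_P$ vanishes there as a distribution. For example, $\eta\,\pa_t\,\eta$ has kernel supported on the diagonal, so its off-diagonal kernel is identically zero, yet its kernel is not compactly supported. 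For the same reason the principal symbol cannot be recovered from off-diagonal kernel data: $\eta\,\pa_t\,\eta$ and $0$ have the same off-diagonal kernel but different symbols. So ``$P$ coincides locally with a translate of $\In(P)$'', in the sense you need for (iv), is not what Proposition \ref{propv1.desc.kInP} provides.

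The missing ingredient is the operator identity
\[
  Pu = \In(P)u \quad \mbox{for all } u\in\CIc\big(M'\times(-\infty,R_P);E\big).
\]
It follows directly from condition (ii) of Definition \ref{translation-invariant}: $\Phi_{-s}P\Phi_s u=\Phi_{-s}\Phi_s Pu=Pu$, because $Pu$ is supported in $\{t<R_P+\varepsilon_P\}$, where $\Phi_{-s}\Phi_s$ acts as the identity.

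\emph{Kernel in (ii).} If $\In(P)=0$, the identity gives $Pu=0$ for all such $u$. Hence $\supp k_P\subset M\times K$ with $K:=M_0\cup\big(M'\times[R_P,0)\big)$ compact. The $\varepsilon_P$-support condition together with completeness of $M$ then makes $\supp k_P$ compact.

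\emph{Point (iv).} The identity and locality of the principal symbol give $\sigma_m(P)=\sigma_m(\In(P))$ over $T^*\big(M'\times(-\infty,R_P)\big)$. Translation invariance of $\sigma_m(\In(P))$ then yields $\sigma_m(\In(P))=\In(\sigma_m(P))$.

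Incidentally, step (a) of (i), which you call the main obstacle, is unnecessary. That $k_T$ is supported in an $\varepsilon_T$-neighborhood of the diagonal is part of the definition of $\iPS{m}(M'\times\RR;E,F)$. Deriving it from proper support and compactness of $M'$ alone is actually wrong when $M'$ is disconnected (e.g.\ $M'=\mfkS\times\{-1,1\}$ for a cylinder). An invariant, properly supported kernel may couple two components of $M'$, and the corresponding points lie at unbounded distance in the end of $M$.
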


We also have the following characterization of Sobolev spaces \cite{KNW-22, KMNW-2025}.

\begin{corollary} \label{cor.char.Sobolev}
  Let $u \in H^{t}(M; E)$, for some $t \in \RR$.
  We have $u \in H^s(M; E)$ if, and only if, $Pu \in L^2(M; E)$ for
  all $P \in \iPS{s}(M; E)$.
\end{corollary}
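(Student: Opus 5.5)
The plan is to reduce the characterization to two facts about the $\inv$-calculus: a mapping property, and ellipticity together with the existence of parametrices (Theorem \ref{thm.prop.inv-calc}(iii)).

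\textbf{Step 1 (mapping property).} I would first prove that every $P \in \iPS{m}(M; E)$ extends to a bounded map $H^{r}(M; E) \to H^{r-m}(M; E)$ for all $r \in \RR$. By Proposition \ref{propv1.lemma.onto}(iii), write $P = s_{0}(\In(P)) + P_{c}$ with $P_{c} \in \Psi_{\comp}^{m}(M; E)$.
\begin{itemize}
  \item The compactly supported part $P_{c}$ is handled by the classical result on compact manifolds, using localization and the compactness of the support of $k_{P_{c}}$.
  \item For $\eta T \eta$ with $T$ translation invariant and properly supported (Lemma \ref{lemma.rem.inv=pr}), cover the end by the translates $\Phi_{k}$, $k \in \ZZ$, of a fixed relatively compact piece. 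Take a uniform partition of unity $\phi_{k}$ consisting of translates of one function.
  \item Translation invariance gives that the localized operators $\phi_{j} T \phi_{k}$ are translates of finitely many fixed compactly supported operators. Moreover they vanish for $|j-k|$ large, since the kernel is supported in an $\varepsilon_{P}$-neighborhood of the diagonal.
  \item The Sobolev norms are uniformly equivalent to $\ell^{2}$-sums of the local norms of $\phi_{k} u$. This holds because $M$ has bounded geometry and $\Delta_{E}$ is translation invariant on the end (see \cite{GrosseSchneider, KMNW-2025}).
  \item A Schur-type (finite band) argument then yields the uniform bound.
\end{itemize}
This step is the main technical obstacle, since the Sobolev spaces were defined spectrally through $(1+\Delta_{E})^{s/2}$ and not through local charts. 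If that equivalence of norms is hard to justify directly, I would instead establish it for even integer $s$, where $(1+\Delta_{E})^{s/2} \in \operatorname{Diff}^{s}_{\inv}$, and then pass to general $s$ by duality and complex interpolation.

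\textbf{Step 2 (necessity).} If $u \in H^{s}(M;E)$ and $P \in \iPS{s}(M;E)$, then Step 1 with $r = s$ gives $Pu \in L^{2}(M;E)$ directly.

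\textbf{Step 3 (sufficiency).} Choose an elliptic $Q \in \iPS{s}(M;E)$. By surjectivity of $\sigma_{s}$ (Theorem \ref{thm.prop.inv-calc}(ii)), take $Q$ with principal symbol $|\xi|^{s}\,\mathrm{id}_{E}$, which lies in $\CI_{\inv}(S^{*}M; \End(E))$ because the metric is a product on the end. Theorem \ref{thm.prop.inv-calc}(iii) provides $R \in \iPS{-s}(M;E)$ with $RQ = 1 + K$ and $K \in \iPS{-\infty}(M;E)$. For $u \in H^{t}(M;E)$ with $Qu \in L^{2}(M;E)$, we then write
\begin{equation*}
  u \seq R(Qu) - Ku\,.
\end{equation*}
Step 1 gives $R(Qu) \in H^{s}(M;E)$. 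Since $K$ has order $\le t-s$, Step 1 also gives $K : H^{t}(M;E) \to H^{s}(M;E)$, so $Ku \in H^{s}(M;E)$. Hence $u \in H^{s}(M;E)$. In fact this shows that the single condition $Qu \in L^{2}(M;E)$ already suffices.
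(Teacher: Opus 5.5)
Your proposal is correct. It is the standard argument behind the paper's citation (the paper writes out no proof of its own): boundedness of $\iPS{m}(M;E)$ on the Sobolev scale, combined with an elliptic $Q\in\iPS{s}(M;E)$ that has a parametrix modulo $\iPS{-\infty}(M;E)$ by Theorem \ref{thm.prop.inv-calc}(iii). You do not need to reprove Step 1 by localization, since it is Theorem \ref{thm.prop.ePS}(ii) applied through the inclusion $\iPS{m}(M;E)\subset\ePS{m}(M;E)$ of Remark \ref{rem.incl.two}.
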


\subsection{The $\ess$-calculus of essentially translation invariant operators}
We continue to assume that
$M = M_{0}\cup \left[M'\times (-\infty ,0)\right]$ is a manifold with
{\it straight cylindrical ends}. The algebra $\iPS {\infty }(M)$ (the $\inv$-calculus,
Definition
\ref{translation-invariant}) is {\it not stable under inversion} in the sense that
if $T\in \iPS {\infty }(M)$ is an invertible operator on the space $L^2(M)$, then its
inverse does not necessarily belong to $\iPS {\infty }(M)$. In this section
we introduce the space $\ePS{m}(M; E, F)$ of {\it essentially translation
invariant} (classical) pseudodifferential operators $P : \CIc(M; E) \to \CI(M; F)$,
such that $\iPS {\infty} (M) \subset \ePS {\infty} (M)$
and the latter {\it is stable under inversion}.
The operators in $\ePS {\infty} (M)$ are called {\it essentially translation invariant}
\cite{KNW-22} (in \cite{Mitrea-Nistor}, they were called {\it almost invariant}).
The presentation in \cite{KMNW-2025} is very detailed and complete and we refer
the reader to that book for more details.

We shall continue to use the cut-off function $\eta : M\to [0, 1]$ of Notation
\ref{not.rem.eta}. (Recall that $\eta \in \CI(M)$ has support in $M'  \times (-\infty, -1]$
and $\eta = 1$ on $M' \times (-\infty, -2]$.) The function $\eta$ allowed us to associate
to any $T \in \iPSsus{m}{M'}{E} := \iPS{m}(M' \times \RR;E)^\RR$ the operator
\begin{equation*} \label{eq.redef.s0}
  s_{0}(T) \ede \eta T \eta \in \iPS{m}(M; E)\,,
\end{equation*}
which is well-defined (see Notation \ref{not.rem.eta} and Proposition
\ref{propv1.lemma.onto}(i)).

\begin{definition} \label{def.ess.one}
  Let $E \to M$ be a compatible Hermitian vector bundle over the manifold with straight
  cylindrical ends $M$.
  \begin{enumerate}[\rm (i)]
   \item
    Let $\ePSsus{-\infty}{M}{E}$ consist of the linear operators
    $T \in \Psi^{-\infty}(M\times \RR; E)^{\RR}$ with convolution kernel
    $C_T \in \maS(M\times M \times \RR; E \boxtimes E)$.

    \item We define $\Psi_\maS^{-\infty}(M; E)$ to be the set of pseudodifferential
    operators $P \in \Psi^{-\infty}(M; E)$ with distribution kernel
    $k_{P} \in \maS(M\times M; E \boxtimes E)$.

    \item
    Let $\ePSsus{-\infty}{M'}{E} \subset \Psi ^{-\infty }(M' \times \RR;E)^\RR$. We
    then define the space $\ePS{-\infty}(M; E)$
    of {\it essentially translation invariant, order $-\infty$
    pseudodifferential operators} on $M$ by
    \begin{equation*}
      \ePS{-\infty}(M; E) \ede s_{0}(\ePSsus{-\infty} {M' }{E})
      + \Psi_\maS^{-\infty}(M; E)\,.
    \end{equation*}

    \item
    Finally, for $m \in \RR$, we define the space $\ePS{m}(M; E)$
    of {\it essentially translation invariant, order $m$
    pseudodifferential operators} on $M$ by
    \begin{equation*}
      \ePS{m}(M; E) \ede \iPS{m}(M; E) + \ePS{-\infty}(M; E)\,.
    \end{equation*}
  \end{enumerate}
\end{definition}


It then follows easily from the last definition that $\ePSsus{-\infty}{M}{E} =
\ePS{-\infty}(M\times \RR; E)^{\RR}$.

\begin{remark}\label{rem.incl.two}
  The last two definitions provide the following simple inclusions. First,
  \begin{equation*}
    \Psi_{\comp}^{-\infty}(M; E) \subset \Psi_\maS^{-\infty}(M; E)\,,
  \end{equation*}
  because the space of Schwartz sections contains the space of compactly supported
  sections. Let again $s_{0}$ be as in \ref{not.rem.eta}.
  This inclusion implies that
  \begin{multline*}
    \iPS{-\infty}(M; E) \seq s_{0}(\iPSsus{-\infty} {M' }{E})
    + \Psi_{\comp}^{-\infty}(M; E)\\
    \subset s_{0}(\ePSsus{-\infty} {M'}{E})
    + \Psi_\maS^{-\infty}(M; E) \,=: \, \ePS{-\infty}(M; E)\,.
  \end{multline*}
  Finally, for all $m \in \ZZ \cup \{\pm \infty \}$, we also obtain the inclusion
  \begin{equation*}
    \iPS{m}(M; E) \subset \iPS{m}(M; E) + \ePS{-\infty}(M; E)
    \, =: \, \ePS{m}(M; E)\,.
  \end{equation*}
\end{remark}

\subsection{Properties of essentially translation invariant operators}
We continue to assume that $M$ is a manifold with straight
cylindrical ends and that $E \to M$ is a compatible vector bundle.
The next theorem gives the main properties of the calculus
\begin{equation*}
  \ePS{m}(M; E) \ede \iPS{m}(M; E) + \ePS{-\infty}(M; E)
\end{equation*}
introduced in Definition \ref{def.ess.one} (see \cite{KMNW-2025}, \cite{KNW-22} for the proof).

\begin{theorem}\label{thm.prop.ePS}
  Let $m, m', s, s' \in \RR$, let $M = \canDec$ be a manifold with
  straight cylindrical ends, and let $E \to M$ a compatible vector bundle, as before.
  Then we have the following properties.
  \begin{enumerate}[\rm (i)]
    \item $\ePS{m}(M; E)\ePS{m'}(M; E) \subset \ePS{m+m'}(M; E)$
    and $\ePS{m}(M; E)^* \subset \ePS{m}(M; E)$.

    \item If $P \in \ePS{m}(M; E)$, then $P : H^{s}(M; E) \to H^{s-m}(M; E)$
    is well-defined and continuous.

    \item The morphism
    $\sigma_m: \ePS {m}(M;E)/\ePS {m-1}(M;E)\to \CI_{\inv}(S^*M; \End(E))$
    defined by the principal symbol is an isomorphism.

    \item Let $P \in \ePS{m}(M;E)$. Then $P$ is elliptic if, and only if, there exists
    $Q \in \iPS{-m}(M;E)$ such that $PQ - 1, QP - 1 \in \ePS{-\infty}(M;E)$.
  \end{enumerate}
\end{theorem}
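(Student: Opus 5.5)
The plan is to reduce every statement to the corresponding property of the $\inv$-calculus (Theorem \ref{thm.prop.inv-calc}), plus one structural fact: $\ePS{-\infty}(M; E)$ is a two-sided $*$-ideal in $\ePS{\infty}(M; E)$. By Definition \ref{def.ess.one}, every $P \in \ePS{m}(M;E)$ can be written as $P = P_{0} + R$, where $P_{0} \in \iPS{m}(M;E)$ and $R = s_{0}(T) + K$, with $T \in \ePSsus{-\infty}{M'}{E}$ (Schwartz convolution kernel) and $K \in \Psi_\maS^{-\infty}(M;E)$ (Schwartz kernel). I would first prove the ideal property and then derive (i)--(iv) from it.

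\textbf{Step 1: the ideal property.} I would show that $P_{0} R$, $R P_{0}$ and $R R'$ all lie in $\ePS{-\infty}(M;E)$ for $P_{0} \in \iPS{m}$ and $R, R' \in \ePS{-\infty}$.

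For $K \in \Psi_\maS^{-\infty}$, I would use that $k_{P_{0}}$ is supported in an $\varepsilon_{P}$-neighborhood of the diagonal and that $P_{0}$ is translation invariant near infinity. Writing $P_{0} = s_{0}(\In(P_{0})) + C$ with $C \in \Psi_{\comp}^m$ (Proposition \ref{propv1.lemma.onto}), the kernel of $P_{0}K$ is obtained by applying a fixed finite-propagation operator to a Schwartz section in the first variable. This preserves Schwartz decay: derivatives are controlled via bounded geometry, and decay in $t$ survives because the propagation is bounded.

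For $s_{0}(T)$, I would write
\[
P_{0}\, s_{0}(T) \seq s_{0}\big(\In(P_{0})\,T\big) + \big(\text{error}\big).
\]
Here $\In(P_{0})\,T$ still has a Schwartz convolution kernel, since $\In(P_{0})$ is properly supported and translation invariant (Lemma \ref{lemma.rem.inv=pr}). The error consists of terms containing $[P_{0}, \eta]$ or $\eta(1-\eta)$ factors. These are supported in $t \ge -2$ in one variable, and the Schwartz decay of $T$ in $t-s$ turns them into elements of $\Psi_\maS^{-\infty}$.

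Similarly, for products of two such terms,
\[
s_{0}(T_{1})\, s_{0}(T_{2}) \seq s_{0}(T_{1}T_{2}) - \eta T_{1}(1-\eta^{2})T_{2}\eta,
\]
and the last term has a Schwartz kernel, because $1-\eta^{2}$ is supported where $t \ge -2$. Adjoints are handled by the same decomposition, since $s_{0}(T)^{*} = s_{0}(T^{*})$ and Schwartz kernels are stable under adjoints. Statement (i) then follows by combining this with Theorem \ref{thm.prop.inv-calc}(i).

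\textbf{Step 2: boundedness, statement (ii).} The $\inv$-part is bounded $H^{s} \to H^{s-m}$; I would take this from \cite{KNW-22} or argue directly via Corollary \ref{cor.char.Sobolev} together with the uniform local symbol estimates. For $K \in \Psi_\maS^{-\infty}$, the operators $(1+\Delta_{E})^{k} K (1+\Delta_{E})^{l}$ again have Schwartz kernels by Step 1, so they are Hilbert--Schmidt and in particular $L^{2}$-bounded. Hence $K$ maps $H^{s}$ to $H^{t}$ for all $s,t$. For $s_{0}(T)$, I would take the Fourier transform in $t$: this turns $T$ into a smooth family $\widehat T(\tau)$ of smoothing operators on $M'$ whose seminorms decay rapidly in $\tau$, which gives the Sobolev bounds on the cylinder. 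The factors $\eta$ are multipliers in $\CI_{\inv}$ and cause no trouble.

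\textbf{Step 3: symbol and parametrices, statements (iii) and (iv).} For (iii), surjectivity onto $\CI_{\inv}(S^{*}M;\End(E))$ follows from Theorem \ref{thm.prop.inv-calc}(ii). For injectivity modulo $\ePS{m-1}$: since $\sigma_{m}$ vanishes on $\ePS{-\infty}$, $\sigma_{m}(P_{0}+R) = 0$ forces $\sigma_{m}(P_{0}) = 0$. Then $P_{0} \in \iPS{m-1}$, so $P \in \ePS{m-1}$.

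For (iv), if $P = P_{0}+R$ is elliptic then $P_{0}$ is elliptic. Theorem \ref{thm.prop.inv-calc}(iii) then gives $Q \in \iPS{-m}$ with $QP_{0}-1,\, P_{0}Q-1 \in \iPS{-\infty} \subset \ePS{-\infty}$ (Remark \ref{rem.incl.two}). Consequently
\[
QP-1 \seq (QP_{0}-1) + QR \in \ePS{-\infty}
\]
by Step 1, and similarly for $PQ-1$. Conversely, if such a $Q$ exists, multiplicativity of the principal symbol and the vanishing of $\sigma$ on $\ePS{-\infty}$ give $\sigma_{-m}(Q)\,\sigma_{m}(P) = 1$, so $P$ is elliptic.

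\textbf{Main obstacle.} I expect Step 1 to be the hardest part, specifically showing that the composition error terms (the commutators with $\eta$ and the cross terms between $s_{0}(T)$ and compactly supported or Schwartz pieces) have kernels that are genuinely Schwartz in both variables, uniformly in all derivatives. My first approach would be to write every kernel in the coordinates $(x,t,y,s)$ on the end and use the Schwartz decay of $T$ in $t-s$ together with the cut-offs to obtain decay in $t+s$ as well.
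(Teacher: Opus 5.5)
Your proposal is correct. The paper gives no argument for this theorem; it only refers to \cite{KMNW-2025} and \cite{KNW-22}.

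Your route makes explicit the one ingredient beyond the $\inv$-calculus: $\ePS{-\infty}(M;E)$ is a two-sided $*$-ideal in $\ePS{\infty}(M;E)$, and $s_{0}$ is multiplicative modulo $\Psi_\maS^{-\infty}(M;E)$. This is also the structure behind the exact sequence in Theorem \ref{thm.complet.indicial}(iv). Once this is in place, (i), (iii) and (iv) follow formally from Theorem \ref{thm.prop.inv-calc} and Proposition \ref{propv1.lemma.onto}. Statement (ii) reduces to the Sobolev continuity of $\iPS{m}(M;E)$, which you, like the paper, have to import from \cite{KNW-22}. Going through Corollary \ref{cor.char.Sobolev} is not an independent route, since that characterization already contains the continuity of $\iPS{s}(M;E)$ from $H^{s}$ to $L^{2}$. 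The citation buys brevity; your version makes the theorem checkable from facts already recorded in the paper, at the cost of the Schwartz bookkeeping for the cross terms.

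Two precisions for Step 1. First, write $P_{0}=s_{0}(A)+C$ with $A=\In(P_{0})$ and $C\in\Psi_{\comp}^{m}(M;E)$. Then the error in $P_{0}s_{0}(T)$ is exactly $C\eta T\eta-\eta A(1-\eta^{2})T\eta$, so no commutator $[P_{0},\eta]$ appears. The second term has kernel compactly supported in the first variable because $A$ has finite propagation. Second, in $\eta T_{1}(1-\eta^{2})T_{2}\eta$ the factor $1-\eta^{2}$ is \emph{not} compactly supported on $M'\times\RR$. The Schwartz bound instead comes from $r-t\ge|t|-2$ and $r-s\ge|s|-2$ for $r\ge-2$ and $t,s\le-1$. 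After integrating in $r$, this gives decay $\jap{t}^{-N}\jap{s}^{-N}$.
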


\subsection{Translation invariance, the Fourier transform, and indicial operators}
\label{ssec.FT}
The translation invariance of the limit operators allows us to  consider the
one-dimensional Fourier transform to study them. To do that, in this subsection, we
shall restrict ourselves to a cylinder. Let $\mfkS$ be a generic closed manifold
(i.e., a smooth, compact, boundaryless manifold). Thus, in this subsection, we will work on
$\mfkS \times \RR$
and $E, F$ will be vector bundles on $\mfkS \times \RR$
that are lifts of bundles from $\mfkS$. We then consider the {\it one dimensional}
Fourier transform
\begin{equation} \label{Fourier}
  \mathcal F :L^2\big(\mfkS\times \RR; E\big)
  \to L^2\big(\mfkS\times \RR; E\big),\quad
  \mathcal F(f)(y,\tau) \ede  \int _{\RR} e^{- \imath \tau x} f(y,x)dx\,,
\end{equation}
with $\imath^2=-1$ (see also Equation \eqref{eq.def.inv.F}). We no longer assume
that $F = E.$ We next introduce the space $\ePSsus{m}{\mfkS}{E, F}$ in general.

\begin{definition}\label{def.ePSm}
  We let $ \ePSsus{m}{\mfkS}{E, F} := \ePS {m}(\mfkS\times \RR; E, F)^{\RR}$.
\end{definition}

The definition of $s_{0}$ in Notation \ref{not.rem.eta} then
extends to $T \in \ePSsus{m}{M'}{E}$ and
we obtain $s_{0}(\ePSsus{m}{M'}{E}) \subset \Psi^{m}(M; E)$.
We obtain the following remark.

\begin{remark}\label{rem.Fourier}
  Let $m\in \RR$ and $Q \in \ePSsus{m}{\mfkS}{E, F} := \ePS {m}(\mfkS\times \RR; E, F)^{\RR}$.
  Let $u_{0} \in \CI(\mfkS; E)$ and $\tau \in \RR$ and set
  $u(x', t) := e^{\imath t \tau}u_{0}(x')$.
  Recall that $\Phi_{s}$ denotes the translation by $s$, see Equation \eqref{eq.def.Phi}. Then
  \begin{equation*}
      \Phi_{s}(u)(x', t) \ede u(x, t+s) \seq e^{\imath (t+s) \tau}u_{0}(x')
      \seq e^{\imath s \tau}u(x', t)\,.
  \end{equation*}
  Thus, $\Phi_{s} (u) = e^{\imath \tau s}u$. If $Q \in \iPSsus{m}{\mfkS}{E, F}$,
  then $Qu \in \CI(\mfkS\times \RR; F)$ is defined, because $Q$ is properly supported.
  In general, $Qu \in \CI(\mfkS\times \RR; F)$ is defined by continuity. This gives that
  the function $v(x', t) := e^{-\imath t \tau}[Qu](x', t)$ satisfies
  \begin{multline*}
    [\Phi_{s}(v)](x', t) \ede v(x', t+s) \ede
    e^{-\imath (t+s) \tau}[Qu](x', t+s) \seq e^{-\imath (t+s) \tau}\Phi_{s}[Qu](x', t)\\
    \seq e^{-\imath (t+s) \tau}[Q\Phi_{s}(u)](x', t)
    \seq e^{-\imath (t+s) \tau} e^{\imath s \tau}[Q(u)](x', t) \seq v(x', t)\,,
  \end{multline*}
  because $\Phi_{s}$ and $Q$ commute. That is $\Phi_{s}(v) = v$, and hence $v$ depends
  only on $x'$ and can be identified with a function on $M'$ denoted
  $\widehat Q(\tau)u_{0}$.
\end{remark}

This allows us to extend the Fourier transform to operators as follows.

\begin{definition}\label{def.explicit.FT}
  Let $m \in \RR$, let $Q \in \ePSsus{m}{\mfkS}{E, F} := \ePS {m}(\mfkS\times \RR; E, F)^{\RR}$,
  and let $u_{0} \in \CI(\mfkS; E)$. We write
  $u := e^{\imath t \tau} u_{0}$ for the section $u \in \CI(\mfkS \times \RR; E )$
  satisfying $u(x', t) :=  e^{\imath t \tau} u_{0}(x')$. In view of Remark \ref{rem.Fourier},
  we can define $\widehat Q(\tau)u_{0} \in \CI(\mfkS; F)$ by the formula
  \begin{equation*} 
    [\widehat{Q}(\tau)u_{0}](x') \seq e^{-\imath t \tau} [Q(u)](x', t)
    \ede e^{-\imath t \tau}[Q(e^{\imath t \tau}u_{0})](x', t)\,.
  \end{equation*}
\end{definition}

It will be convenient to write the formula in Definition \ref{def.explicit.FT} as
\begin{equation}\label{eq.explicit.FT}
  Q(e^{\imath t \tau}u_{0}) \seq e^{\imath t \tau} [\widehat Q(\tau)u_{0}]\,,
\end{equation}
where $u_{0} \in \CI(\mfkS; E)$, $Q \in \ePS{m}(\mfkS; E, F)$,
and $\widehat Q(\tau) \in \Psi^{m}(\mfkS; E, F)$. This allows us to obtain the
following proposition whose proof can be found in \cite{KMNW-2025} and \cite{KNW-22}.
Recall that, when defined, $k_{P}$ denotes the distribution
kernel of an operator $P$ and that $C_{P}$ denotes its convolution kernel.
In particular, if $Q$ is a multiplication operator only with respect to $x'$, that is,
if $Qu(x', t) = a(x') u(x', t)$, then Definition \ref{def.explicit.FT} gives
\begin{equation} \label{eq.just.new.notation}
    [\widehat{Q}(\tau)u_{0}](x') \seq e^{-\imath t \tau} [Q(u)](x', t)
    \ede e^{-\imath t \tau}[e^{\imath t \tau}a u_{0}](x', t) \seq a(x')u_{0}(x')\,,
\end{equation}
and, hence, $\widehat{Q}(\tau) = a$ (for this particular choice of $Q$ given as a
multiplication operator with $a$, which is a function of $x'$ alone).

\begin{proposition} \label{propv1.mult.indicial}
  Let $Q \in \ePSsus{m}{\mfkS}{E, F} := \ePS{m}(\mfkS \times \RR; E, F)^{\RR}$,
  $P \in \ePS{m'}(\mfkS \times \RR; F, G)$. We then have
  the following properties:
  \begin{enumerate}[\rm (i)]
    \item $\widehat {PQ}(\tau) \seq \widehat {P}(\tau)\widehat {Q}(\tau)$,
    for all $\tau \in \RR$.

    \item The distribution kernel $k_{\widehat{P}(\tau)}
    \in \CI(\mfkS^{2}; G \boxtimes F)$ of $\widehat{P}(\tau)$ and the convolution kernel
    $C_{P} \in \CI(\mfkS^{2} \times \RR; G \boxtimes F)$ of $P$ are related by the formula
    $k_{\widehat{P}(\tau)}(x', y') = \maF C_{P}(x', y', \tau),$ where
    $x', y' \in \mfkS$.

    \item If $u \in \CIc(\mfkS \times \RR; F)$, then $\widehat {Pu}(\tau) \seq
    \widehat {P}(\tau) \hat u(\tau)$.
  \end{enumerate}
\end{proposition}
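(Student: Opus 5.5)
The plan is to reduce all three statements to the defining relation \eqref{eq.explicit.FT}, $Q(e^{\imath t \tau}u_{0}) = e^{\imath t \tau}\widehat Q(\tau)u_{0}$, together with the convolution structure of translation invariant operators on the cylinder. I will write every $Q \in \ePSsus{m}{\mfkS}{E, F}$ via its convolution kernel: $Qu(x',t) = \int_{\RR}\int_{\mfkS} C_{Q}(x',y',t-s)u(y',s)\,dy'\,ds$. This makes sense in distribution sense because $Q$ commutes with all $\Phi_{s}$, so $k_{Q}(x',t,y',s) = C_{Q}(x',y',t-s)$. For the $\iPS{}$ part, $C_{Q}$ is compactly supported in the $\RR$-variable near the diagonal by Lemma \ref{lemma.rem.inv=pr}. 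For the $\ePS{-\infty}$ part, $C_{Q}$ is Schwartz in the $\RR$-variable. In either case $\maF C_{Q}(x',y',\tau)$ is well defined, as a distribution in $(x',y')$ depending smoothly on $\tau$.

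\textbf{Step 1: point (ii).} I insert $u = e^{\imath s\tau}u_{0}(y')$ into the convolution formula and substitute $r = t-s$. This gives $Q(e^{\imath t\tau}u_{0})(x',t) = e^{\imath t\tau}\int_{\mfkS}\big(\int_{\RR}e^{-\imath r\tau}C_{Q}(x',y',r)\,dr\big)u_{0}(y')\,dy'$. Comparing with Definition \ref{def.explicit.FT} identifies the kernel of $\widehat Q(\tau)$ with $\maF C_{Q}(x',y',\tau)$, using the convention \eqref{Fourier}. The argument is first run for smooth kernels, i.e.\ order $-\infty$. For general $m$ it is justified by pairing with test functions in $(x',y')$, since the $r$-integral is a compactly supported or Schwartz distribution in $r$, so its Fourier transform is evaluated pointwise in $\tau$. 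Off the diagonal of $\mfkS^{2}$ the kernels are smooth, which gives the stated smoothness.

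\textbf{Step 2: point (i).} I apply the defining relation twice: $PQ(e^{\imath t\tau}u_{0}) = P\big(e^{\imath t\tau}\widehat Q(\tau)u_{0}\big) = e^{\imath t\tau}\widehat P(\tau)\widehat Q(\tau)u_{0}$. The only thing to verify is that the second equality may be applied, since $\widehat Q(\tau)u_{0}$ lies in $\CI(\mfkS;F)$. That holds because $\widehat Q(\tau)\in\Psi^{m}(\mfkS;E,F)$ maps smooth sections to smooth sections. I also use that $PQ$ is again translation invariant, by Theorem \ref{thm.prop.ePS}(i) combined with the commutation with $\Phi_{s}$. Definition \ref{def.explicit.FT} then yields $\widehat{PQ}(\tau) = \widehat P(\tau)\widehat Q(\tau)$.

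\textbf{Step 3: point (iii).} For $u\in\CIc(\mfkS\times\RR;F)$, I Fourier-invert in $t$: $u = \frac1{2\pi}\int_{\RR} e^{\imath t\tau}\hat u(\cdot,\tau)\,d\tau$. Then I apply $P$ under the integral, using the defining relation pointwise in $\tau$, to get $Pu = \frac1{2\pi}\int e^{\imath t\tau}\widehat P(\tau)\hat u(\tau)\,d\tau$. Equivalently, the convolution theorem applied to $C_{P}\ast_{t}u$ combined with Step 1 gives the same result.

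\textbf{Main obstacle.} The main difficulty is justifying the interchange of $P$ with the $\tau$-integral. The integral converges in $H^{s}(\mfkS\times\RR)$ because $\hat u(\cdot,\tau)$ is Schwartz in $\tau$ with values in $\CI(\mfkS)$. I will use the continuity of $P$ on Sobolev spaces, Theorem \ref{thm.prop.ePS}(ii), and the polynomial bounds in $\tau$ on $\widehat P(\tau)$ as operators $H^{s}(\mfkS)\to H^{s-m}(\mfkS)$. Those bounds follow from the symbol estimates in the $\tau$-direction: a suspended symbol of order $m$, or Schwartz decay for the residual part. I expect this uniform-in-$\tau$ estimate to be the step needing the most care. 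The algebraic identities in (i) and (ii) are immediate once the convolution representation is in place.
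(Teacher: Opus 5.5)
The paper gives no proof of this proposition; it only refers to \cite{KMNW-2025, KNW-22}. Your convolution-kernel argument is the standard one, and Steps 1 and 2 are correct. Reading $P$ as translation invariant, as you do, is forced by (ii) and (iii). Also, as you note, the kernels in (ii) are smooth only off the diagonal unless the order is $-\infty$.

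The justification you give for the ``main obstacle'' in Step 3 is wrong as stated.

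\begin{itemize}
\item Each integrand $e^{\imath t\tau}\hat u(\cdot,\tau)$ has constant modulus in $t$, so it is not in $L^{2}(\mfkS\times\RR)$.
\item Hence $\frac1{2\pi}\int_{\RR}e^{\imath t\tau}\hat u(\tau)\,d\tau$ is not an $H^{s}(\mfkS\times\RR)$-valued Bochner integral.
\item Consequently the boundedness $P: H^{s}\to H^{s-m'}$ of Theorem \ref{thm.prop.ePS}(ii) cannot be used to move $P$ inside the integral.
\item Polynomial bounds on $\|\widehat P(\tau)\|_{H^{s}(\mfkS)\to H^{s-m'}(\mfkS)}$ do not help here. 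They control the fibres, not the action of $P$ on inputs that are not square integrable.
\end{itemize}

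That route could be rescued by working in a space that contains the integrands and on which $P$ still acts continuously. One example is $\jap{t}^{N}H^{s}(\mfkS\times\RR)$ with $N>1/2$, using that $C_P$ is supported in $|r|\le\varepsilon$ up to a part that is Schwartz in $r$.

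The alternative you mention in passing is cleaner and fully sufficient. The function $Pu(x',t)=\int_{\RR}\int_{\mfkS}C_P(x',y',t-s)\,u(y',s)\,dy'\,ds$ is a convolution in $t$ of $u\in\CIc$ with a kernel that is compactly supported in $r$ plus Schwartz in $r$. The exchange formula for the Fourier transform then gives $\maF_t(Pu)(x',\tau)=\int_{\mfkS}\maF C_P(x',y',\tau)\,\hat u(y',\tau)\,dy'$, and this equals $\widehat P(\tau)\hat u(\tau)$ by (ii). Presented this way, (iii) is a direct corollary of (ii), and no uniform-in-$\tau$ estimate is needed.
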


The operators $\widehat Q(\tau)$ are useful for checking the invertibility of
$Q \in \ePSsus{m}{M}{E, F}$, as follows from the following well-known
result \cite{MazzeoMelroseAsian, MelroseActa, Schulze}.

\begin{theorem}\label{thm.rem.invert}
  Let $\mfkS$ be a smooth, closed manifold and $Q \in \ePSsus{m}{\mfkS}{E, F}$.
  The operator $Q:H^s(\mfkS \times \RR; E)\to H^{s-m}(\mfkS \times \RR; F)$
  is invertible if, and only if, all the operators $\widehat{Q}(\tau):
  H^s(\mfkS; E)\to H^{s-m}(\mfkS; F)$, $\tau \in {\RR}$, are invertible.
\end{theorem}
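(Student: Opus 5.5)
The plan is to diagonalize $Q$ with the one-dimensional Fourier transform $\maF$ in the $\RR$-variable of Equation \eqref{Fourier}, and to reduce both directions to statements about the family $\tau \mapsto \widehat Q(\tau)$ of operators on the closed manifold $\mfkS$.

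\textbf{Step 1 (Plancherel for Sobolev spaces).} On $\mfkS\times\RR$ the Bochner Laplacian is $\Delta_{E} = \Delta_{E'} - \pa_t^2$. Hence $\maF$ maps $H^s(\mfkS\times\RR;E)$ isomorphically onto $L^2(\RR, d\tau; H^s_\tau(\mfkS;E))$, where $H^s_\tau$ is $H^s(\mfkS;E)$ with the parameter-dependent norm
\begin{equation*}
  \|v\|_{s,\tau} \ede \|(1+\Delta_{E'}+\tau^2)^{s/2}v\|_{L^2(\mfkS;E)}\,.
\end{equation*}
For each fixed $\tau$ this norm is equivalent to the usual $H^s(\mfkS;E)$-norm, but not uniformly in $\tau$. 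By Proposition \ref{propv1.mult.indicial}(iii), applied first to $u \in \CIc$ and then extended by density (Remark \ref{rem.C0dense}), we get $\widehat{Qu}(\tau)=\widehat Q(\tau)\hat u(\tau)$. Therefore $Q$ is unitarily equivalent to the decomposable operator $\int^\oplus \widehat Q(\tau)\,d\tau$ from $\int^\oplus H^s_\tau$ to $\int^\oplus H^{s-m}_\tau$. Invertibility of $Q$ is then equivalent to: every $\widehat Q(\tau)$ is invertible and
\begin{equation*}
  \sup_\tau \|\widehat Q(\tau)^{-1}\|_{H^{s-m}_\tau\to H^s_\tau}<\infty\,.
\end{equation*}

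\textbf{Step 2 (necessity).} Assume $Q$ is invertible. By Step 1, $\widehat Q(\tau)$ is invertible with uniformly bounded inverse for almost every $\tau$. I will show that $\tau\mapsto\widehat Q(\tau)\in\maL(H^s,H^{s-m})$ is norm continuous. This should follow from the structure of $\ePSsus{m}{\mfkS}{E,F}$: the principal part is a parameter-dependent symbol, and the $\ePSsus{-\infty}{}{}$ part has a Schwartz convolution kernel, whose Fourier transform (Proposition \ref{propv1.mult.indicial}(ii)) is smooth in $\tau$. Since the inverses are uniformly bounded on a dense set of $\tau$, a Neumann series argument then gives invertibility at every $\tau$.

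\textbf{Step 3 (sufficiency).} Assume every $\widehat Q(\tau)$ is invertible. On compact $\tau$-intervals, continuity of $\tau\mapsto\widehat Q(\tau)$ and of inversion give uniform bounds on $\widehat Q(\tau)^{-1}$. The difficulty is $|\tau|\to\infty$. There I plan to use ellipticity: take $R\in\iPSsus{-m}{\mfkS}{F,E}$ with $QR-1,\ RQ-1\in \ePSsus{-\infty}{\mfkS}{}$, as given by Theorem \ref{thm.prop.ePS}(iv) in the translation invariant setting. Then $\widehat R(\tau)$ is bounded uniformly in the $\tau$-dependent norms. The remainders have Schwartz convolution kernels, so their indicial families tend to $0$ rapidly in norm as $|\tau|\to\infty$. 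Consequently $\widehat Q(\tau)\widehat R(\tau)=1+o(1)$, which yields a uniform bound on $\widehat Q(\tau)^{-1}$ for $|\tau|$ large. Step 1 then gives invertibility of $Q$, with inverse $\maF^{-1}\widehat Q(\tau)^{-1}\maF$.

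\textbf{Main obstacle.} The hardest point is the behaviour at $|\tau|\to\infty$, and in particular obtaining the ellipticity of $Q$ that the parametrix argument of Step 3 requires. Pointwise invertibility of the $\widehat Q(\tau)$ alone only controls $\sigma_m(Q)$ at covectors with vanishing $dt$-component. My first attempt is to prove that invertibility of $\widehat Q(\tau)$ for all large $|\tau|$ already forces parameter-ellipticity, i.e.\ invertibility of $\sigma_m(Q)(x',\xi',\tau)$ on the whole cosphere bundle. The idea is to test $\widehat Q(\tau)$ against WKB-type quasimodes concentrated at a point where the symbol degenerates, and to show that the inverse norm then blows up. If this fails, I will state the theorem under the standing assumption that $Q$ is elliptic, which is the case in all applications in this paper (for example the Douglis--Nirenberg elliptic generalized Stokes operator). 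The same quasimode argument also gives ellipticity in the necessity direction, since there it is implied by the invertibility of $Q$ on $H^s$.
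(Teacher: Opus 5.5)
The paper gives no proof of this statement; it quotes it as well known, citing Melrose and Schulze. Your Steps 1--3 are the standard argument behind those references:
\begin{itemize}
\item Plancherel in $t$ turns invertibility of $Q$ into invertibility of every $\widehat Q(\tau)$, with inverses bounded uniformly in the $\tau$-dependent norms.
\item Norm continuity in $\tau$ handles compact $\tau$-sets.
\item A parametrix handles $|\tau|\to\infty$.
\end{itemize}
Your necessity argument (Step 2) is complete as it stands and does not need ellipticity. One small point: Theorem \ref{thm.prop.ePS}(iv) does not itself give a \emph{translation invariant} parametrix. You have to build $R\in\iPSsus{-m}{\mfkS}{F,E}$ directly from a $t$-independent inverse of the principal symbol, which is routine.

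The real gap is in your last paragraph. The plan to deduce parameter-ellipticity from pointwise invertibility of the $\widehat Q(\tau)$ cannot succeed, because the ``if'' direction of the statement is false without an ellipticity hypothesis. Take $E=F=\CC$ and $Q=1+\Delta_{\mfkS}$, a translation invariant differential operator of order $2$, so $Q\in\iPSsus{2}{\mfkS}{\CC,\CC}$.
\begin{itemize}
\item Definition \ref{def.explicit.FT} gives $\widehat Q(\tau)=1+\Delta_{\mfkS}$ for every $\tau$, which is an isomorphism $H^{s}(\mfkS)\to H^{s-2}(\mfkS)$.
\item However, $Q$ is not onto $H^{s-2}(\mfkS\times\RR)$. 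Take $f(x',t)=g(t)$ with $g\in H^{s-2}(\RR)\smallsetminus H^{s}(\RR)$. Since $Q$ is fiberwise injective after Fourier transform, the only solution of $Qu=f$ in any Sobolev space is $u=f$, and $f\notin H^{s}(\mfkS\times\RR)$.
\item In your Step 1 language, testing on constants gives $\|\widehat Q(\tau)^{-1}\|_{H^{s-2}_{\tau}\to H^{s}_{\tau}}\ge 1+\tau^{2}$.
\end{itemize}
This is exactly what a quasimode argument detects: the \emph{uniform} bound fails, while invertibility at each fixed $\tau$ still holds.

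So your fallback is not a convenience but a necessary correction of the statement. The equivalence holds for elliptic $Q$, which is the setting of the cited works. It is also the setting of every use in the paper: $\bsS$ and $\frac12+\bsK$ are elliptic. For the Douglis--Nirenberg elliptic $\bsXi$, your argument goes through with componentwise $\tau$-dependent norms. With that hypothesis added, your proof is correct; without it, only the ``only if'' direction survives.
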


We let $t$ denote the second coordinate function on the half-infinite cylinder
$M'  \times (-\infty, 0)$ and we extend it to a smooth function (denoted also by $t$) on
$M = \canDec$ such that $t \ge 0$ on $M_{0}$ (recall that $M' := \pa M_{0}$). This allows
us to write $M' \times (-\infty, 0) = \{t\in \RR : t < 0\}$. Let $\Phi _s(x,t)=(x,t-s)$
be as in Equation \eqref{eq.def.Phi}. Then we obtain the following norm convergence
result \cite{KMNW-2025}, \cite{KNW-22}.

\begin{lemma}\label{lemma.def.indicial}
  If $P\in \ePS {m}(M;E, F)$ and $u \in \CIc(M'  \times \RR; E)$, then, for $s$
  large enough, $P\Phi_s(u)$ is defined and we have the following norm
  convergence in $L^2(M'  \times \RR; F)$,
  \begin{equation*} 
    \In(P)(u) \ede \lim_{s \to \infty}
    \Phi _{-s}\big [ P\Phi_s(u)\vert_{\{t < 0\}} \big]\,.
  \end{equation*}
  In addition, the resulting operator satisfies
  $\In (P) \in \ePSsus{m}{M'}{E}$ and $\In \circ s_{0}(P_{0}) = P_{0}$
  for $P_{0} \in \ePSsus{m}{M'}{E}$.
\end{lemma}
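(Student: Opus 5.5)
The plan is to reduce the statement to the decomposition of Proposition \ref{propv1.lemma.onto}(iii) together with the definition of $\ePS{m}(M;E,F)$, handling the two summands separately. By Definition \ref{def.ess.one}, every $P \in \ePS{m}(M;E,F)$ can be written as
\begin{equation*}
  P \seq s_{0}(T) + K + s_{0}(R) + S\,,
\end{equation*}
where $T \in \iPSsus{m}{M'}{E,F}$, $K \in \Psi_{\comp}^{m}(M;E,F)$, $R \in \ePSsus{-\infty}{M'}{E,F}$, and $S \in \Psi_\maS^{-\infty}(M;E,F)$. Since both the limit in the statement and the claimed identity are linear in $P$, I will prove convergence for each of the four pieces, identifying the limit as $T + R$. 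This simultaneously shows that $\In(P) \in \ePSsus{m}{M'}{E,F}$ and that it extends the map of Definition \ref{def.indicial-oper}.

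First, fix $u \in \CIc(M'\times\RR;E)$. For $s$ large, $\Phi_s(u)$ is supported in $\{t < -2 - \varepsilon\}$, where $\eta = 1$. For the compactly supported piece $K$, once $s$ is so large that $\supp \Phi_s(u)$ is disjoint from the projection of $\supp k_K$, we get $K\Phi_s(u) = 0$. For $s_{0}(T) = \eta T\eta$, the operator $T$ is properly supported with kernel in an $\varepsilon_T$-neighbourhood of the diagonal. Hence $\eta T \eta \Phi_s(u) = T\Phi_s(u) = \Phi_s(Tu)$ for large $s$, and so $\Phi_{-s}[\,\cdot\,]$ equals $Tu$ exactly on $\{t<0\}$, once $s$ exceeds the support of $Tu$. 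This is Lemma \ref{lemma.limit.operator}, so these two pieces stabilize and need no genuine limit.

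The remaining two pieces are the main obstacle, since their kernels are not compactly supported and the limit is a true $L^2$ limit. For $S$ with Schwartz kernel $k_S$, I estimate $\|S\Phi_s(u)\|_{L^2}$ by Cauchy--Schwarz:
\begin{equation*}
  \|S\Phi_s(u)\|_{L^2} \le \Big(\int\!\!\int_{y \in \supp\Phi_s(u)} |k_S(x,y)|^2\Big)^{1/2}\|u\|_{L^2}\,.
\end{equation*}
Rapid decay of $k_S$ in the $y$-variable, whose $t$-coordinate tends to $-\infty$, forces this to $0$.

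For $s_{0}(R) = \eta R\eta$, I write $\eta R\eta\Phi_s(u) = \eta R\Phi_s(u) = \eta\Phi_s(Ru)$, using translation invariance of $R$. Then
\begin{equation*}
  \Phi_{-s}\big[\eta\Phi_s(Ru)\big] \seq \Phi_{-s}(\eta)\,Ru\,.
\end{equation*}
Here $\Phi_{-s}(\eta) \to 1$ pointwise on $\{t<0\}$ and is bounded by $1$. Moreover $Ru \in L^2$, because its convolution kernel is Schwartz, so dominated convergence gives $\Phi_{-s}(\eta)Ru \to Ru$ in $L^2(\{t<0\})$. The restriction to $\{t<0\}$ in the statement is exactly what makes this convergence clean.

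Summing the four pieces gives $\In(P)u = Tu + Ru$ with $T+R \in \ePSsus{m}{M'}{E,F}$. Finally, the identity $\In\circ s_{0}(P_{0}) = P_{0}$ is the special case $P = s_{0}(P_{0})$ of the above. One checks that writing $P_0 = T + R$ works, because $s_0$ is linear and the definition of $\ePSsus{m}{M'}{E,F}$ gives such a splitting of $P_0$.
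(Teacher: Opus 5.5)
The paper gives no proof of this lemma; it quotes it from \cite{KMNW-2025, KNW-22}. Your argument is a correct, self-contained proof. The four-term splitting obtained from Proposition~\ref{propv1.lemma.onto}(iii) and Definition~\ref{def.ess.one} is legitimate. The $K$ and $s_{0}(T)$ terms stabilize exactly as in Lemma~\ref{lemma.limit.operator}. The $S$ term tends to zero because $k_{S}\in L^{2}(M\times M)$, and the $s_{0}(R)$ term converges by dominated convergence. Finally, the limit does not depend on the chosen splitting, since it is defined intrinsically.

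Two points need tidying.

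First, in the $s_{0}(R)$ step the convergence you need is in $L^{2}(M'\times\RR;F)$, not in $L^{2}(\{t<0\})$. After applying $\Phi_{-s}$, the restriction to $\{t<0\}$ becomes a restriction to $\{t<s\}$. Moreover $\Phi_{-s}(\eta)(x',t)=\eta(x',t-s)\to 1$ for \emph{every} $t\in\RR$, so your dominated-convergence argument already gives convergence on the whole cylinder. The restriction in the statement is there only so that $P\Phi_{s}(u)$, a section over $M$, can be transported to the cylinder; this matters only for the $S$ term. It plays no role in making the $R$ term converge.

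Second, for $\In\circ s_{0}(P_{0})=P_{0}$ you assert that Definition~\ref{def.ePSm} splits $P_{0}$ as $T+R$ with both pieces translation invariant. It does not: it only gives $P_{0}\in\ePS{m}(M'\times\RR;E,F)^{\RR}$. Producing such a splitting needs a translation-invariant cut-off $\phi(t-t')$ near the diagonal, plus a check that the off-diagonal part has a Schwartz convolution kernel. You can bypass this. For $s$ large, $s_{0}(P_{0})\Phi_{s}(u)=\eta\,\Phi_{s}(P_{0}u)$, using only the translation invariance of $P_{0}$. Also $P_{0}u\in L^{2}(M'\times\RR;F)$ by Theorem~\ref{thm.prop.ePS}(ii). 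So your $s_{0}(R)$ argument applies verbatim with $R$ replaced by $P_{0}$.
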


Using the Fourier transform for $Q = \In(P)$ in Definition  \ref{def.explicit.FT},
we obtain Melrose's {\it indicial operators.}

\begin{definition}\label{def.indicial.fam}
  \index{indicial operator (ex. $\widehat{P}(\tau)$)}
  Let $P \in \ePS{m}(M; E)$ and $\widehat{P}(\tau) := \widehat{\In(P)}(\tau)$ be as
  in Remark \ref{rem.Fourier}. Then the family
  \begin{equation*}
    \{\widehat{P}(\tau)\}_{\tau \in \RR} \ede \{\widehat{\In(P)}(\tau)\}_{\tau \in \RR}
    \ede \{\mathcal F \In(P) \mathcal F^{-1}(\tau)\}_{\tau \in \RR}\end{equation*}
  is called the {\it indicial family} of $P$. The operator
  $\widehat{P}(\tau)$ is called the {\it indicial operator} of $P$.
\end{definition}

As for the limit operators, the indicial operators are multiplicative
\cite{KMNW-2025, KNW-22, KohrNistor-Stokes}.

\begin{theorem}\label{thm.complet.indicial}
  Let $M= \canDec$ be a manifold with cylindrical ends and
  $E, F, G \to M$ be compatible Hermitian vector bundles. Let $m , m' \in \RR$.
  \begin{enumerate}[\rm (i)]
    \item If $P \in \ePS m (M; E, F)$ and $Q \in \ePS {m'} (M; F, G)$, then
    $\In (PQ) \seq \In (P)\In(Q)$.

    \item For $P$ and $Q$ as in (i), their indicial families {\rm (Definition
    \ref{def.indicial.fam})} satisfy $\widehat{QP}(\tau) \seq \widehat{Q}(\tau)
    \widehat{P}(\tau)$, $\tau \in \RR\,.$

    \item We have $\In \circ s_{0} = id$ on $\ePSsus{m}{M' }{E}$
    and the map $\In : \ePS{m}(M; E, F) \to \ePSsus{m}{M' }{E}$ is surjective
    with kernel $\Psi_{\maS}^{-\infty}(M; E, F) + \Psi_{\comp}^{m}(M; E, F)$.

    \item We have the following exact sequence of algebras
    \begin{equation*}
      0 \longrightarrow \Psi_\maS^{-\infty}(M; E) \longrightarrow \ePS{-\infty}(M; E)
      \stackrel{\In}{-\!\!\!\longrightarrow} \ePSsus{-\infty}{M'}{E}
      \longrightarrow 0 \,.
    \end{equation*}

    \item For any $T \in \iPS{m}(M;E,F)$, we have
      $\sigma_{m}(\In(T)) \seq \In(\sigma_m(T))\,.$
\end{enumerate}
\end{theorem}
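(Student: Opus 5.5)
The plan is to reduce every statement to the two pieces of the decomposition
\begin{equation*}
  \ePS{m}(M;E,F) \seq \iPS{m}(M;E,F) + s_{0}\big(\ePSsus{-\infty}{M'}{E,F}\big) + \Psi_{\maS}^{-\infty}(M;E,F)
\end{equation*}
(Definition \ref{def.ess.one}). On each piece the limit operator is already understood. On $\iPS{m}$ we have Lemma \ref{lemma.limit.operator} and Proposition \ref{propv1.lemma.onto}. For the other two pieces we use the strong-limit description $\In(P)u = \lim_{s\to\infty}\Phi_{-s}[P\Phi_s u\vert_{\{t<0\}}]$ of Lemma \ref{lemma.def.indicial}.

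First I would settle (iii) and (v). For $T \in \ePSsus{m}{M'}{E}$, the identity $\In\circ s_0(T) = T$ is part of Lemma \ref{lemma.def.indicial}; this also gives surjectivity. For $R \in \Psi_{\maS}^{-\infty}$, the kernel $k_R$ is Schwartz on $M\times M$, so for $u\in\CIc(M'\times\RR;E)$ we get $\|\Phi_{-s}R\Phi_s u\|_{L^2}\to 0$, because the translated kernel decays in both variables. Hence $\In(R)=0$. For $P\in\Psi^m_{\comp}$ we have $\Phi_{-s}P\Phi_s u = 0$ for $s$ large, so $\In(P)=0$. This shows the claimed kernel is contained in $\ker\In$.

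For the converse inclusion, write $P = P_1 + s_0(T) + R$ with $P_1\in\iPS{m}$, $T\in\ePSsus{-\infty}{M'}{E,F}$ and $R\in\Psi_\maS^{-\infty}$. If $\In(P)=0$, then $\In(P_1) = -T$. By Proposition \ref{propv1.lemma.onto}(iii), $P_1 - s_0(\In P_1)\in\Psi^m_{\comp}$. By linearity of $s_0$, $s_0(\In P_1) + s_0(T) = 0$. Hence $P\in\Psi^m_{\comp}+\Psi_\maS^{-\infty}$. Statement (v) is Proposition \ref{propv1.lemma.onto}(iv), because $\In$ is defined consistently on $\iPS{m}$.

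Statement (iv) follows by restricting the argument above to order $-\infty$. If $P\in\ePS{-\infty}$ with $\In P=0$, write $P = C + R$ with $C\in\Psi^m_{\comp}$ and $R\in\Psi_\maS^{-\infty}$. Then $C = P - R$ is smoothing with compactly supported kernel, so $C\in\Psi^{-\infty}_{\comp}\subset\Psi_\maS^{-\infty}$ (Remark \ref{rem.incl.two}). Surjectivity onto $\ePSsus{-\infty}{M'}{E}$ again comes from $s_0$.

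For (i), the key step is the identity
\begin{equation*}
  \Phi_{-s}PQ\Phi_s u \seq (\Phi_{-s}P\Phi_s)(\Phi_{-s}Q\Phi_s u) \qquad \mbox{(up to cut-off at } \{t<0\}\mbox{)}.
\end{equation*}
I would then show that the right-hand side converges to $\In(P)\In(Q)u$. This is the main obstacle: $\Phi_{-s}Q\Phi_s u\to\In(Q)u$ holds only in norm, not with compact supports. So I need uniform boundedness of $\Phi_{-s}P\Phi_s$ on Sobolev spaces, together with convergence of $\Phi_{-s}Q\Phi_su$ in $H^{m'}$ rather than only in $L^2$.

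To get this, I would treat the pieces of the decomposition separately. For $\iPS{}$ pieces, the operators $\Phi_{-s}P\Phi_s$ eventually coincide with $\In(P)$ on functions supported far out, up to compactly supported errors. Those errors are then pushed to $-\infty$ and vanish in the limit. For $\Psi_\maS^{-\infty}$ pieces, Schwartz decay gives convergence to $0$ in every $H^k$. The uniform bounds themselves come from the fact that operator norms depend on finitely many seminorms, which are translation-invariant on the end (compare Theorem \ref{thm.prop.ePS}(ii) and Lemma \ref{lemma.so.conv}). A density argument then gives the equality $\In(PQ)=\In(P)\In(Q)$ on $\CIc$.

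Finally, (ii) follows from (i) by applying Proposition \ref{propv1.mult.indicial}(i) to $\In(Q)$ and $\In(P)$ on the cylinder $M'\times\RR$.
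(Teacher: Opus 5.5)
The paper does not prove Theorem~\ref{thm.complet.indicial}; it quotes it from \cite{KMNW-2025, KNW-22, KohrNistor-Stokes}. Your proposal therefore has to stand on the results stated before it, and for (ii)--(v) it does.
\begin{itemize}
\item The kernel computation in (iii), via $P_1-s_0(\In P_1)\in\Psi^m_{\comp}$ and $s_0(\In P_1)=-s_0(T)$, is exactly right.
\item (iv) follows as you say, (v) is Proposition~\ref{propv1.lemma.onto}(iv), and (ii) is (i) combined with Proposition~\ref{propv1.mult.indicial}(i).
\item Surjectivity tacitly uses $s_0(\ePSsus{m}{M'}{E})\subset\ePS{m}(M;E)$, i.e.\ $\ePSsus{m}{M'}{E}=\iPSsus{m}{M'}{E}+\ePSsus{-\infty}{M'}{E}$. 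This follows by cutting the convolution kernel with $\psi(t-t')$, where $\psi\in\CIc(\RR)$ equals $1$ near $0$.
\end{itemize}

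The step in (i) that does not work as written is the factorization through $\Phi_{-s}P\Phi_s$.
\begin{itemize}
\item The maps $\Phi_{\pm s}$ extend by zero, so $\Phi_s\Phi_{-s}$ is multiplication by the characteristic function of the end. The factorization is therefore exact only on functions vanishing near $M_0$.
\item Suppose $Q$ has a component $R\in\Psi_{\maS}^{-\infty}(M;F,G)$. Then $R\Phi_su\neq0$ on $M_0$, so $\Phi_{-s}R\Phi_su$ jumps at $t=s$ and lies in no $H^k$ with $k\ge\frac12$.
\item The convergence you need is in $H^m$, where $m$ is the order of the outer factor $P$ (not $H^{m'}$). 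For $m\ge\frac12$ it is then false, so the inserted cut-off is not harmless.
\end{itemize}

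The fix is to split by linearity before factoring.
\begin{itemize}
\item If either factor lies in $\Psi_{\maS}^{-\infty}$, both sides vanish directly. Indeed $\|PR\Phi_su\|_{L^2}\le C\|R\Phi_su\|_{H^m(M)}\to0$ by Theorem~\ref{thm.prop.ePS}(ii) and the Schwartz decay of $k_R$. Also $\|RQ\Phi_su\|_{L^2}\to0$, because $Q\Phi_su$ is concentrated near $t=-s$ up to Schwartz tails.
\item For the remaining pieces, $Q\Phi_su$ vanishes near $M_0$ for large $s$, so the factorization is exact and everything is explicit.
\item On $s_0(\ePSsus{-\infty}{M'}{E})$ one has $\Phi_{-s}s_0(A)\Phi_s=\eta_sA\eta_s$ with $\eta_s(x',t):=\eta(x',t-s)\to1$. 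Your sketch omits these pieces.
\item For $P\in\iPS{m}$ write $P=s_0(\In P)+C$ with $C\in\Psi^m_{\comp}$. The conjugated error lives near $t=s$, where $\Phi_{-s}Q\Phi_su$ is rapidly small in every $H^k$.
\end{itemize}

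Alternatively, once (iii) is in hand you can avoid this analysis altogether.
\begin{itemize}
\item Write $P=s_0(\In P)+K_P$ and $Q=s_0(\In Q)+K_Q$ with $K_P,K_Q\in\ker\In=\Psi_{\comp}+\Psi_{\maS}^{-\infty}$.
\item Use the identity $s_0(A)s_0(B)-s_0(AB)=-\eta A(1-\eta^2)B\eta$.
\item Check on kernels that $\Psi_{\comp}+\Psi_{\maS}^{-\infty}$ is a two-sided ideal in $\ePS{\infty}(M)$ containing $\eta A(1-\eta^2)B\eta$. When $A$ is properly supported, the factor $\eta A(1-\eta^2)$ is confined to $-2-\varepsilon\le t\le-1$; otherwise it has Schwartz decay.
\item Then $\In(PQ)=\In(P)\In(Q)$ follows from $\In\circ s_0=\mathrm{id}$.
\end{itemize}
This route is shorter than the strong-limit one and needs only support and decay properties of kernels.
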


The indicial operator of a matrix of operators is defined componentwise.

\begin{remark}\label{rem.indicial2}
  Let us also notice that, given a translation invariant operator
  \begin{equation*}
    T \in \ePSsus{m}{\mfkS}{E, F} \ede \ePS{m}(\mfkS \times \RR; E, F)^{\RR}
    \subset \ePS{m}(\mfkS \times \RR; E, F)\,,
  \end{equation*}
  we can define its indicial operators $\widehat T(\tau)$ in two ways,
  first as an operator in $\ePSsus{m}{\mfkS}{E, F}$ and then as an operator
  in $\ePS{m}(\mfkS \times \RR; E, F)$. The definition in the second case
  gives us ``twice'' each of the operators $\widehat T(\tau)$ defined in
  the first case, so the corresponding families will have the same properties.
  Unless stated otherwise, our definition of $\widehat T(\tau)$ will be given
  by regarding the operator $T$ as an operator in $\ePSsus{m}{\mfkS}{E, F}$
  (i.e., as an invariant operator, which gives us ``fewer'' indicial operators
  $\widehat T(\tau)$).
\end{remark}

\section{\cn Main properties of the $\inv$ and $\ess$ calculi}
\label{sec.NLimits}

In this section, we establish some of the main properties of the operators in
the ``$\inv$'' and ``$\ess$'' calculi of the previous section. We concentrate on the
properties needed in the study of layer potentials. We first adapt the results from
Section \ref{sec.nllRn} on normal lateral limits on half-spaces to smooth, open domains in
manifolds with cylindrical ends and our calculi of the previous section
($\inv$ and $\ess$). Then we recall \ADN-operators and their
Fredholm, (elliptic) regularity, and spectral invariance properties.

{\cn We thus continue to assume that $M = \canDec$ is
a smooth Riemannian manifold with straight cylindrical ends (Definition \ref{def.cyl.end}) and
that $E, F \to M$ are two compatible vector bundle (see Definition \ref{def.comp.vb}).}
Most importantly, we let $\Omega \subset M$ be an open subset with smooth boundary
$\Gamma := \pa \Omega \neq \emptyset$ (so the assumption is that $\Gamma$
is also a smooth manifold) that is \emph{compatible with the structure of cylindrical
ends} of $M$, as in the introduction, Equation \eqref{eq.def.Omega}.
That is:
\begin{equation*} 
  \Omega \cap \big[ M' \times (-\infty, R_{\Omega}] \big] \seq \Omega'
  \times (-\infty, R_{\Omega}]\,,
\end{equation*}
for an open subset $\Omega' \subset M'$.
Note that $\Gamma$ is also a smooth manifold with straight
cylindrical ends if we write
\begin{equation} \label{eq.matching.ce0}
  \Gamma \cap \big[ M' \times (-\infty, R_{\Omega}] \big]
  \seq \Gamma' \times (-\infty, R_{\Omega}]\,,
\end{equation}
where $\Gamma' := \pa \Omega'$. Recall that in this paper we assume that $\Omega$ is
on one side of its boundary, see Assumption \ref{assumpt.1side}
(in particular, $\Gamma$ is also the boundary of $\Omega_{-} := M \smallsetminus
\overline{\Omega}$). It is no loss of generality to assume that
$\Omega$ is connected; by contrast, we cannot assume that $\Omega'$ is connected.
However, $\Omega'$ also must be on one side of its boundary. See Figure 2:

{\begin{figure}[h]
  \label{fig3}
  \centering
  \includegraphics[width=0.4\textwidth]{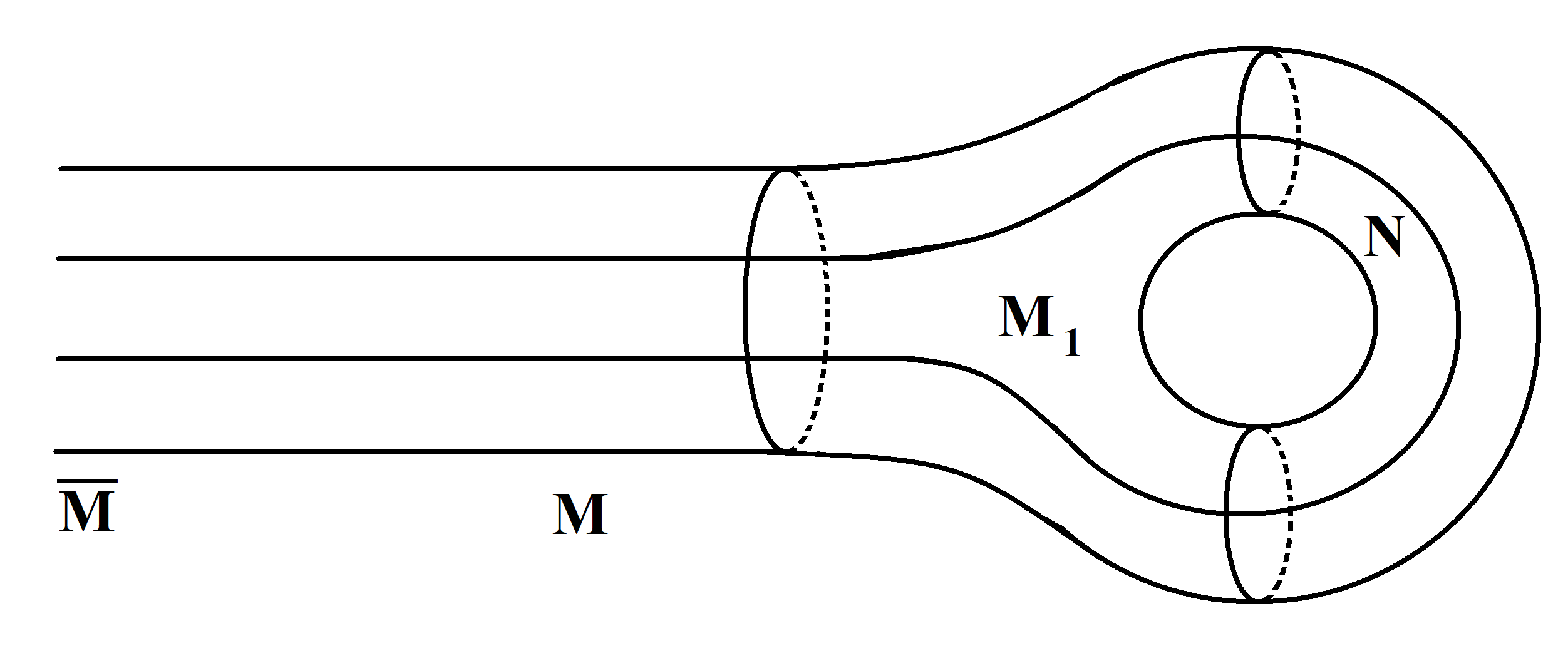}
  \caption{A manifold with boundary and straight cylindrical ends and a
  boundaryless manifold with straight cylindrical ends containing it
  as in Equation \eqref{eq.def.Omega}.}
\end{figure}}

\subsection{Normal tubular neighborhoods}
Recall that the (global) spaces $H^{s}(M)$ are defined using
the metric (see Definition \ref{def.rem.Bochner} and Subsection \ref{ssec.cyl.ends}
in general). Their variants with values in smooth vector bundles are defined similarly.
We shall denote by $\cdot$ the inner product on the compatible vector bundle $E$.
Distributions of the form $h \otimes \delta_{\Gamma}$ (see Equation
\eqref{eq.def.hdelta2}), that is,
\begin{equation*}
  \langle h \otimes \delta_{\Gamma}, \phi \rangle \ede \int_{\Gamma}
  h(x) \cdot \phi(x) \, dS_{\Gamma}(x)\,,
\end{equation*}
will play an important role in what follows. We have the following analog of Lemma
\ref{lemma.def.hdelta}.

\begin{lemma}\label{lemma.def.hdelta2}
  Let $E$ be a compatible, smooth Hermitian vector bundle on $M$ with inner product
  denoted $\cdot$, let $h \otimes \delta_{\Gamma}$ be as in Equation \eqref{eq.def.hdelta2},
  and let $s \in \RR$. Let $s' \in \RR$ be such that
  \begin{equation*}
    \begin{cases}
      \ s' \ede s -1/2 & \mbox{ if } s < 0\\
      \ s' < -1/2 & \mbox{ arbitrary, if }\ s \ge 0\,.
    \end{cases}
  \end{equation*}
  Then, for all $h \in H^{s}(\Gamma; E)$, we have that
  $h \otimes \delta_{\Gamma} \in H^{s'}(M; E)$.
\end{lemma}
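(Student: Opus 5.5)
We argue by duality, as in the proof of Lemma \ref{lemma.def.hdelta}, replacing the flat trace theorem by its analogue on manifolds with straight cylindrical ends. The key input is that the restriction map $\gamma : \CIc(M; E) \to \CIc(\Gamma; E)$, $\gamma(\phi) := \phi\vert_{\Gamma}$, extends to a continuous map
\begin{equation*}
  \gamma : H^{t}(M; E) \to H^{t - 1/2}(\Gamma; E)\,, \qquad t > 1/2\,.
\end{equation*}
Once this is available, for $h \in H^{s}(\Gamma; E)$ the functional $\phi \mapsto \langle h , \gamma(\phi) \rangle_{\Gamma}$ is continuous on $H^{t}(M; E)$ whenever $\gamma(\phi)$ lies in a space dual to (or contained in the dual of) $H^{s}(\Gamma; E)$. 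By Definition \ref{def.rem.Bochner}, $H^{-t}(M; E)$ is the dual of $H^{t}(M; E)$, and by Remark \ref{rem.C0dense} this functional is determined by its values on $\CIc(M; E)$, where it coincides with $h \otimes \delta_{\Gamma}$ as defined in \eqref{eq.def.hdelta2}. Consequently $h \otimes \delta_{\Gamma} \in H^{-t}(M; E)$.

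The choice of $t$ depends on the sign of $s$. If $s < 0$, take $t := -s + 1/2 > 1/2$. Then $\gamma(\phi) \in H^{-s}(\Gamma; E) = H^{s}(\Gamma; E)^{*}$, so $h \otimes \delta_{\Gamma} \in H^{s - 1/2}(M; E)$. If $s \ge 0$, take $t := -s' > 1/2$. Then $\gamma(\phi) \in H^{t - 1/2}(\Gamma; E) \subset L^{2}(\Gamma; E)$, and $|\langle h, \gamma\phi\rangle| \le \|h\|_{L^{2}}\|\gamma \phi\|_{L^{2}} \le C \|h\|_{H^{s}} \|\phi\|_{H^{t}}$, so $h \otimes \delta_{\Gamma} \in H^{s'}(M; E)$. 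These are exactly the two cases of the statement.

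The main obstacle is establishing the global trace theorem with uniform constants on the non-compact $M$. I would first prove it with a finite partition of unity argument. Fix $R \le R_{\Omega}$ and take $\chi \in \CI_{\inv}(M)$ with $\chi = 1$ near $M_{0} \cup [M' \times [R, 0]]$ and support in a compact set. On the compact part $\chi \phi$, the classical trace theorem on compact manifolds (locally Lemma \ref{lemma.limits=traces}) applies.

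The remaining piece $(1-\chi)\phi$ is supported in $M' \times (-\infty, R)$, where by \eqref{eq.matching.ce0} we have $\Gamma = \Gamma' \times (-\infty, R)$ and everything is a product. There I would cover $M' \times (-\infty, R)$ by translates $\Phi_{k}$, $k \in \ZZ$, of a fixed relatively compact slab $M' \times (-2, 0)$, using a uniformly locally finite partition of unity $\{\psi_{k}\}$ obtained by translating one function. The trace estimate on the fixed slab then holds with a single constant for every $k$, by translation invariance of the metric, the bundle $E$, and $\Gamma$. Summing the squares uses the equivalence
\begin{equation*}
  \|u\|_{H^{t}}^{2} \simeq \sum_{k} \|\psi_{k} u\|_{H^{t}}^{2}\,,
\end{equation*}
valid on both $M$ and $\Gamma$ (bounded geometry; see \cite{KMNW-2025, GrosseSchneider}), and yields the desired continuity.

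Alternatively, one can use the characterization of Corollary \ref{cor.char.Sobolev} with translation-invariant operators. Still, the dyadic/uniform partition argument seems the most direct, and the only delicate point is citing the norm equivalence for non-integer $t$.
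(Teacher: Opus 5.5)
Your proposal is correct and follows the paper's argument: duality exactly as in Lemma \ref{lemma.def.hdelta}, with the flat trace theorem replaced by the continuity of the trace $H^{r}(M;E)\to H^{r-1/2}(\Gamma;E)$ for $r>1/2$, and your two choices of $t$ reproduce the two cases of the statement. The only difference is that the paper simply cites this global trace theorem from \cite{GrosseSchneider, KMNW-2025}, whereas you also sketch a proof of it via a translation-invariant uniform partition of unity on the cylindrical ends, which is a standard justification.
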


\begin{proof}
  The proof is almost identically to that of Lemma \ref{lemma.def.hdelta}, except
  that we use
  that the maps $H^{r}(M; E) \to H^{r-1/2}(\Gamma;E)$, $r > 1/2$ are continuous
  \cite{GrosseSchneider, KMNW-2025}.
\end{proof}

Let $P$ be an order $m$  pseudodifferential operator acting on
the sections of $E$, that is $P \in \Psi^{m}(M; E)$. Recall that we
are assuming that $\Omega$ is on one side of its boundary.
We are interested in
studying
\begin{equation}\label{eq.def.layerPot-0}
  \maS_{P} h \ede P(h \otimes \delta_{\Gamma})\,,
\end{equation}
provided that the latter is defined. For the convenience of the notation, we also let
\begin{equation}\label{eq.def.Omega+}
  \Omega_{+} \ede \Omega \ \mbox{ and }\
  \Omega_{-} \ede M \smallsetminus \overline{\Omega}
  \quad \Rightarrow \quad \Gamma \seq \pa \Omega_{-} \seq \pa \Omega_{+}\,,
\end{equation}
where the last relation is equivalent to saying that $\Omega$ is on one side of its
boundary. We are especially interested in the following two restrictions and their traces
\begin{equation}\label{eq.def.layerPot}
  \maS_{P} h \vert_{\Omega_{\pm}}\quad \mbox{ and }\quad
  [\maS_{P} h]_{\pm} \ede [\maS_{P} h\vert_{\Omega_{\pm}}]\vert_{\pa \Omega_{\pm}}\,.
\end{equation}

\begin{notation}\label{not.bsnu}
We let $\bsnu$ be the \emph{outer} unit normal vector to $\Gamma := \pa \Omega$.
We extend this vector field to a global (smooth) vector field on $M$ (not
necessarily unit everywhere), still denoted $\bsnu$. Also, let $\sharp : TM \to T^{*}M$
be the isomorphism defined by the metric of $M$, as before. We shall write
$\bsnu ^{\sharp}:=\sharp \bsnu$.
\end{notation}

If $v \in TM$ is a tangent vector to $M$ in $x$, we let $\exp(tv)$
denote the image of $tv$ under the exponential map, which is defined for $|t|$
small (depending on $v$).

\begin{definition}\label{def.e.nbhd}
  If $\epsilon > 0$ is such that the \emph{normal exponential map}
  \begin{equation*}
    \exp^{\perp} : \Gamma \times (-\epsilon, \epsilon) \ni (x, t)\, \mapsto\,
    \exp(t \bsnu(x)) \in M
  \end{equation*}
  ($\Gamma =\partial \Omega $) is well defined and is a diffeomorphism onto its image,
  then we shall say that $\Gamma$ has \emph{an $\epsilon$-normal
  tubular neighborhood}.
\end{definition}

It is well-known that if $\Gamma$ is compact or if
$M$ has straight cylindrical ends and $\Omega$ is compatible with this
structure, then $\Gamma$ will have an
$\epsilon$-normal tubular neighborhood, for some $\epsilon > 0$ small enough,
see \cite[Corollary 5.5.3]{petersen:98}. The curves $t \mapsto \exp(t\bsnu(x))$,
$x \in \pa \Omega$, will be called the \emph{normal geodesics} to $\pa \Omega$.
If $u$ is a section of $E$ over $M$, $\Gamma$ has an $\epsilon$-normal
tubular neighborhood, and $t \in (-\epsilon, \epsilon)$, we let
\begin{equation}
  u_{t} \ede u\vert_{\exp^{\perp}(\Gamma \times \{t\})} \in
  \CI(\Gamma \times \{t\}; E) \simeq \CI(\Gamma; E)\,,
\end{equation}
where the last isomorphism is obtained via parallel transport
along the normal geodesics $(-\epsilon, \epsilon) \ni t \to \exp(t\bsnu(x)) \in M$,
$x \in \pa \Omega$. (These isomorphisms are the analogues of the ones induced
by $\tau_{\epsilon}(x) := x - x_{n} e_{n}$, which was introduced after Equation
\eqref{eq.def.Gamma}.)

It will be important for us to study the limits
$u_{\pm} := \lim_{t \to \pm 0} u_{t}$ in some function space on $\Gamma := \pa \Omega$,
for suitable $u$. When they exist, we call these limits, the \emph{normal lateral limits}
of $u$. In case $u$ is smooth enough on $\Omega_{+} := \Omega$ and on
$\Omega_{-} := M \smallsetminus \overline{\Omega}$, then $u_{+}$ is the trace of
$u\vert_{\Omega_{+}} := u\vert_{\Omega}$ at the boundary and, similarly,
$u_{-}$ is the trace of $u\vert_{\Omega_{-}} :=
u\vert_{M \smallsetminus \overline{\Omega}}$ at the boundary, see Lemma
\ref{lemma.limits=traces}.

\subsection{Normal lateral limits for manifolds with cylindrical ends}
We now turn to the study of normal lateral limits of pseudodifferential
operators on general Riemannian manifolds with straight cylindrical ends.
Note that, unlike in the other chapters, the case $M$ compact is not
trivial, but it can be handled exactly as the case of manifolds with cylindrical
ends. (This has been done in \cite{KNW-2025}, so we will not review it in detail
here.)

As usual, the case of pseudodifferential
operators of order $m < -1$ is easier.
We begin with the case of operators with compactly supported distribution
kernels in $M \times M$, which will then be used to deal with the general case.
Let $F \to M$ be a \emph{second} hermitian vector bundle (in addition to $E$).
We have the following simple calculation that will be used repeatedly, so we
formulate it as a lemma.

\begin{lemma}\label{lemma.enough.reg}
  Let $\Omega$, $\Gamma := \pa \Omega$, and $M$ have compatible straight cylindrical ends,
  see Equations \eqref{eq.def.Omega} and \eqref{eq.matching.ce0}, with $\Omega$ on one side
  of its boundary $\Gamma$, as before. Also, we assume that $E, F\to M$ are two compatible hermitian
  vector bundles, again, as before. We also let
  \begin{enumerate}[\rm (i)]
    \item $P \in \ePS{m}(M; E, F)$, $m < -1$;
    \item $s' \in (1/2, -m-1/2) \not = \emptyset$; and
    \item $h \in L^{2}(\Gamma; E)$;
  \end{enumerate}
  then $\maS_{P}h := P(h \otimes \delta_{\Gamma}) \in H^{s'}(M; F)$, and hence,
  \begin{equation*}
    [\maS_{P}h]_{+} \seq [\maS_{P}h]_{-} \seq [\maS_{P}h]\vert_{\Gamma}
    \in H^{s'-1/2}(\Gamma; F)\,.
  \end{equation*}
  In particular, the trace (or restriction) $\maS_{P}h\vert_{\Gamma}$
  of $\maS_{P}h := P(h \otimes \delta_{\Gamma})$ at $\Gamma$ is defined
  and it coincides with the traces $[\maS_{P}h]_{\pm}$ associated to the
  domains $\Omega =: \Omega_{+}$ and $\Omega_{-} := M \smallsetminus \overline{\Omega}$
  with boundary $\Gamma$.
\end{lemma}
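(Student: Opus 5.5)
The plan is to reduce the statement to Sobolev mapping properties plus the identification of normal lateral limits with traces, exactly as in Theorem \ref{thm.main.jump-2}(iv) on $\RR^{n}$.

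\textbf{Step 1: the potential lies in $H^{s'}(M;F)$.} First I check that the interval $(1/2, -m-1/2)$ is non-empty. This holds because $m<-1$ gives $-m-1/2>1/2$. Fix $s'$ in this interval and set $r := s'+m$. Then $r < -1/2$, so Lemma \ref{lemma.def.hdelta2}, applied with $s = 0$ and its parameter taken to be $r$, gives $h\otimes\delta_{\Gamma}\in H^{r}(M;E)$ for $h\in L^{2}(\Gamma;E)$, with continuous dependence on $h$. Theorem \ref{thm.prop.ePS}(ii) gives that $P\in\ePS{m}(M;E,F)$ maps $H^{r}(M;E)$ continuously to $H^{r-m}(M;F)=H^{s'}(M;F)$. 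That theorem is stated for $E=F$; to handle $E\ne F$ I would replace $E$ with $E\oplus F$, as the paper does. Hence $\maS_{P}h\in H^{s'}(M;F)$.

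\textbf{Step 2: the trace exists.} Since $s'>1/2$, the trace map $H^{s'}(M;F)\to H^{s'-1/2}(\Gamma;F)$ is well defined and continuous on manifolds with bounded geometry \cite{GrosseSchneider, KMNW-2025}. This map was already used in the proof of Lemma \ref{lemma.def.hdelta2}. The restrictions of $\maS_{P}h$ to $\Omega_{\pm}$ lie in $H^{s'}(\Omega_{\pm};F)$, and their traces $\gamma_{\pm}$ on $\Gamma=\pa\Omega_{\pm}$ both equal $\maS_{P}h\vert_{\Gamma}$. To see this, I first check it for smooth compactly supported sections and then extend by density, using Remark \ref{rem.C0dense}.

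\textbf{Step 3: normal lateral limits equal traces.} This is the main obstacle: showing that the limits $[\maS_{P}h]_{\pm}=\lim_{t\to0\pm}u_{t}$ along the normal exponential map coincide with these traces. This is the manifold analogue of Lemma \ref{lemma.limits=traces}. I would use the $\epsilon$-normal tubular neighborhood of Definition \ref{def.e.nbhd}, which exists because of the compatibility of $\Omega$ with the cylindrical ends. On this neighborhood, the restrictions $H^{s'}(M;F)\to H^{s'-1/2}(\Gamma_{t};F)\simeq H^{s'-1/2}(\Gamma;F)$, identified via parallel transport, are bounded uniformly in $|t|<\epsilon$. Uniformity follows from bounded geometry, or concretely because the geometry is translation invariant on the ends, so a partition of unity reduces to finitely many model charts plus a translation-invariant piece. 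The convergence $u_{t}\to u\vert_{\Gamma}$ is clear for $u\in\CIc(M;F)$. Uniform boundedness combined with density then gives the convergence for all $u\in H^{s'}(M;F)$, and in particular for $u=\maS_{P}h$.
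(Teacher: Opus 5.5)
Your proof is correct and follows the paper's argument. Like the paper, you use Lemma~\ref{lemma.def.hdelta2} to place $h\otimes\delta_{\Gamma}$ in $H^{s'+m}(M;E)$, apply the mapping property of $P$, and then the trace theorem for $s'>1/2$. The only difference is that the paper just invokes the Euclidean Lemma~\ref{lemma.limits=traces} to identify the normal lateral limits with the traces, while your Step~3 writes out its manifold analogue (uniform bounds on the restrictions along the normal tubular neighborhood, plus density), which is the same reasoning as the proof of that lemma.
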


\begin{proof}
  The condition $-m -1/2 > 1/2$ shows that the set $(1/2, -m-1/2)$
  is non-empty, so we can choose $s' \in (1/2, -m-1/2)$.
  Because $s'+ m < -1/2$, Lemma \ref{lemma.def.hdelta2}(i)
  shows that $h \otimes \delta_{\Gamma} \in H^{s'+m}(M; E)$, and
  therefore $\maS_{P}h := P(h \otimes \delta_{\Gamma}) \in H^{s'}(M; E)$.
  Since $s' > 1/2$, the trace $\maS_{P}h \in H^{s'-1/2}(\Gamma; E)$ is defined
  and it coincides with the traces from the two open subsets with boundary $\Gamma$,
  by Lemma \ref{lemma.limits=traces}.
\end{proof}

Because the trace of $\maS_{P}h := P(h \otimes \delta_{\Gamma})$
at $\Gamma$ is defined and it coincide with the traces associated to
the domains $\Omega_{\pm}$ with boundary $\Gamma$, we shall concentrate
on the restriction (or trace) $\maS_{P}h\vert_{\Gamma}$
of $\maS_{P}h$ to $\Gamma$. The behavior of
this restriction is the content of the following theorem.

For the rest of this section, we keep the assumptions of Lemma \ref{lemma.enough.reg},
that is, we assume that $\Omega$, $\Gamma:= \pa \Omega$, and
$M$ have compatible straight cylindrical ends (see Equations \eqref{eq.def.Omega} and
\eqref{eq.matching.ce0}) with $\Omega$ on one side of its boundary $\Gamma$
and that let $E, F\to M$ are two compatible hermitian vector bundles.

\begin{theorem}\label{thm.main.jump1b}
  We keep the assumptions of Lemma \ref{lemma.enough.reg}, as explained above.
  Let $m < -1$ and $s' \in (1/2, -m -1/2)$. Then, for $|\epsilon|$ small
  and any $P \in \ePS{m}(M; E, F)$,
  there exists a unique $P_{\epsilon} \in \ePS{m+1}(\Gamma; E, F)$ with
  the following properties:
  \begin{enumerate}[\rm (i)]
    \item For any $h \in L^{2}(\Gamma; E)$,
    $\maS_{P}h := P(h \otimes \delta_{\Gamma}) \in H^{s'}(M; F)$, and
    hence the traces of $\maS_{P}h := P(h \otimes \delta_{\Gamma})$ at the two sides
    of $\Gamma$ are defined and they satisfy
    \begin{equation*}
      [\maS_{P}h]_{+} \seq [\maS_{P}h]_{-} \seq [\maS_{P}h]\vert_{\Gamma}
      \seq P_{0}h \in H^{s'-1/2}(\Gamma; F)\,.
    \end{equation*}

    \item
    For any $x \in \Gamma := \pa \Omega$ and $\xi' \in T_{x}^{*}\Gamma$,
    let $\xi \in T_{x}^{*}M$ be a lift of $\xi'$.
    The principal symbol of $P_{0}$ is then given by
    \begin{equation*}
      \sigma_{m+1}(P_{0}; \xi') \seq \frac1{2\pi}\int_{\RR} \sigma_{m}(P; \xi
      + t \bsnu^{\sharp}_{x}) \, dt\,.
    \end{equation*}

    \item The distribution kernel of the operator $P_{0}$ satisfies
    $k_{P_{0}}(x', y') = k_{P}(x', y')$ for all $x' \neq y'$ in $\Gamma$, and hence
    $(\phi P \psi)_{0} = \phi P_{0} \psi$, for all $\phi, \psi \in C_{0}^{\infty}(M)$.

    \item Let $s \in \RR$ and $h \in H^{s}(\Gamma; E)$,
    then $[\maS_{P}h]_{\epsilon} \ede \big[ P(h \otimes \delta_{\Gamma})]_{\epsilon} \seq
    P_{\epsilon} h$ and
    \begin{equation*}
      [\maS_{P}h]_{\pm}
      \ede \lim_{\epsilon \to \pm 0}
      \big[ P(h \otimes \delta_{\Gamma})]_{\epsilon} \seq
      \lim_{\epsilon \to \pm 0} P_{\epsilon} h
      \seq P_{0} h \in H^{s -m -1}(\Gamma; F)\,,
    \end{equation*}
    where the limit is in the topology of $H^{s -m -1}(\Gamma; F)$.
  \end{enumerate}
\end{theorem}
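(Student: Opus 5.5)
We reduce Theorem \ref{thm.main.jump1b} to the Euclidean results of Theorem \ref{thm.main.jump-2}, using a partition of unity adapted to the structure of $M$. The plan is to define $P_{\epsilon}$ first on each piece and then check that the pieces glue to an element of $\ePS{m+1}(\Gamma; E, F)$.

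\textbf{Step 1: decomposition of $P$.} By Definition \ref{def.ess.one} and Proposition \ref{propv1.lemma.onto}, we write
\begin{equation*}
  P = s_{0}(\In(P)) + P_{c} + R,
\end{equation*}
where $P_{c} \in \Psi_{\comp}^{m}(M; E, F)$ and $R$ is a sum of $s_{0}(T)$, with $T \in \ePSsus{-\infty}{M'}{E}$, and of an element of $\Psi_{\maS}^{-\infty}(M; E, F)$. Each term is handled separately; uniqueness of $P_{\epsilon}$ is automatic, since $P_{\epsilon}$ is determined by the identity $[\maS_P h]_{\epsilon} = P_{\epsilon} h$ for all $h$.

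\textbf{Step 2: the compactly supported part $P_{c}$.} We cover a neighborhood of $\Gamma \cap \pi(\supp k_{P_{c}})$ by finitely many charts in Fermi coordinates $(x', t) \mapsto \exp^{\perp}(x', t)$. In these coordinates $\Gamma = \{t = 0\}$, $\Gamma_{\epsilon}$ corresponds to $\{t = \epsilon\}$, and $dS_{\Gamma}$ is a smooth density times $dx'$; parallel transport along normal geodesics trivializes $E$ and $F$. With a partition of unity $\sum \phi_{j} = 1$, the terms $\phi_{i} P_{c} \phi_{j}$ with overlapping supports become operators $a(x, D)$ on $\RR^{n}$, and Theorem \ref{thm.main.jump-2} yields $a_{\epsilon}(x', D')$, bounded in $S^{m+1}$ and converging to $a_{0}(x', D')$. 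Terms with disjoint supports have smooth kernels, and their restriction to $\Gamma_{\epsilon}$ is a smoothing operator, continuous in $\epsilon$.

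The principal symbol formula (ii) follows from Theorem \ref{thm.main.jump-2}(i). In Fermi coordinates the normal at $\Gamma$ is $dt = \bsnu^{\sharp}$, and the metric is of product type at $t = 0$, so $\int \sigma_{m}(a; x', 0, \xi', \xi_{n})\, d\xi_{n}$ is exactly the stated integral over lifts $\xi + t\bsnu^{\sharp}$. Independence of the choice of lift follows by a translation in $t$. The kernel identity (iii) comes from Theorem \ref{thm.main.jump-2}(v).

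\textbf{Step 3: the parts at infinity.} For $s_{0}(\In(P)) = \eta\, \In(P)\, \eta$, the cut-offs make $\Gamma_{\epsilon}$ agree with $\Gamma' \times \RR$ on the support, by Equation \eqref{eq.matching.ce0}. The construction of Step 2 can be carried out on the cylinder $M' \times \RR$ with translation-invariant partitions of unity pulled back from $\Gamma'$. This produces a translation-invariant family $\In(P)_{\epsilon}$ in $\iPSsus{m+1}{\Gamma'}{E, F}$ with kernels supported near the diagonal. Then $\eta\, \In(P)_{\epsilon}\, \eta$, restricted to $\Gamma$, belongs to $\iPS{m+1}(\Gamma)$.

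For the Schwartz-kernel part $R$, the kernel is smooth and Schwartz in $t$, so its restriction to $\Gamma_{\epsilon} \times \Gamma$ gives operators in $\ePS{-\infty}(\Gamma)$. These depend continuously on $\epsilon$, including at $\epsilon = 0$, with Schwartz decay.

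\textbf{Step 4: mapping properties and limits.} Part (i) is Lemma \ref{lemma.enough.reg}. Part (iv) follows from Lemma \ref{lemma.so.conv} once the symbols of $P_{\epsilon}$ are shown to be uniformly bounded in $S^{m+1}$, uniformly along the cylindrical end. That uniformity is the \textbf{main obstacle}: the Euclidean estimates have to hold with constants independent of the position on the end. We obtain this from translation invariance: only finitely many distinct chart pieces occur, up to translation, and the $\ess$-remainder is controlled by seminorms of Schwartz kernels. Continuity on $H^{s}(\Gamma)$ then follows from Theorem \ref{thm.prop.ePS}(ii) applied on $\Gamma$, which is itself a manifold with straight cylindrical ends.
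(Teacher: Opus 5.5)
Your proposal is correct and follows essentially the paper's route: a local reduction to Theorem \ref{thm.main.jump-2} in charts adapted to the normal geodesics for compactly supported kernels, followed by translation invariance at infinity and the smoothing part of the $\ess$-calculus, which gives $P_{\epsilon} \in \ePS{m+1}(\Gamma; E, F)$ and the uniform bounds behind (iv); your upfront splitting $P = s_{0}(\In(P)) + P_{c} + R$ only reorganizes the paper's two stages, and your separate treatment of the pieces $\phi_{i} P \phi_{j}$ with disjoint supports is more careful than the paper's. One bookkeeping point in Step 3: apply $s_{0} \circ \In$ to the $\iPS{m}$-component of $P$ and put the $\ePSsus{-\infty}{M'}{E, F}$-part into $R$ (as your Step 1 already suggests), because for general $P \in \ePS{m}(M; E, F)$ the kernel of $\In(P)$ need not be supported near the diagonal.
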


The operator $P_{0}$ will be called the
\emph{restriction at $\Gamma$ operator} associated to $P$.

\begin{proof}
  Let us notice that the relations $[\maS_{P}h]_{+} \seq [\maS_{P}h]_{-}
  \seq [\maS_{P}h]\vert_{\Gamma} \in H^{s'-1/2}(\Gamma; F)$ have already been
  proved (see Lemma \ref{lemma.enough.reg}). Also, the last equality in (iii)
  is an immediate consequence of the equality of kernels (because $k_{\phi P \psi}
  = \phi k_{P} \psi)$. Thus we do not have to prove these properties anymore,
  so we will ignore these questions from now on.

  Let us assume that the distribution kernel $k_{P}$ of $P$ is
  \emph{compactly supported} (in $M \times M$) and prove our theorem in this case.
  We may also assume that $E$ and $F$ are trivial, one dimensional.
  Since we have assumed that the support $\supp k_{P} \subset M \times M$
  of the distribution kernel of $P$ is compactly supported, its two projections
  $K_{1} := p_{1}\supp k_{P} \subset M$ and $K_{2} := p_{2}\supp k_{P} \subset M$
  are also compact. Hence $K := K_{1} \cup K_{2} \cup \supp h$ is also compact.

  For each $x \in \Gamma$, we choose local coordinates $y$ in a neighborhood $V_{x}$
  of $x$ that straighten out the boundary to the hyperplane by mapping it to $\{x_{n} = 0 \}
  \subset \RR^{n}$. We can choose these coordinates such that they map
  $\exp(t \bsnu)$ to $(y', t) \in \RR^{n-1} \times (-\epsilon, \epsilon)$.
  Let us cover $\Gamma \cap K$ with finitely many such neighborhoods $V_{j} := V_{x_{j}}$,
  which is possible since $K$ is compact. Let us then choose a smooth partition of unity
  $\phi_{0}, \phi_{1}, \ldots, \phi_{N}$ on $M$ subordinated
  to  $M \smallsetminus K, V_{1}, \ldots, V_{N}$.
  We can assume that $\phi_{0}$ vanishes in a neighborhood of
  $\Gamma \cap K$. By refining the covering $\{V_{j}\}$ of $\Gamma$,
  we can assume that the support of each $\phi_{i} P \phi_{j}$, $1 \le i , j \le N$,
  is completely contained in a set of the form $V_{x}$. Then we use Theorem
  \ref{thm.main.jump-2} for each of the operators $\phi_{i} P \phi_{j}$ on the
  coordinate neighborhood $V_{x}$ to obtain the limit operator
  $P_{0ij} \in \Psi^{m+1}(\Gamma)$.
  We define $P_{0} := \sum_{i, j=1}^{N} P_{0ij}$. Then, for each of these operators,
  we have
  \begin{equation}\label{eq.lemma.jump1b.aux}
    \begin{gathered}
      \phi_{i} \big[ \maS_{P}(\phi_{j} h) \big]\vert_{\Gamma}
      \seq P_{0ij} h\,,\\
      \sigma_{m+1}(P_{0ij}; \xi')
      \seq \frac{\phi_{i}}{2\pi} \left( \int_{\RR}
      \sigma_{m}(P; \xi
      + t \bsnu^{\sharp}_{x}) \, dt \, \right) \phi_{j}\,, \quad \mbox{and}\\
      k_{P_{0ij}}(x', y') \seq k_{\phi_{i} P \phi_{j}}(x', y')
      \seq \phi_{i}(x') k_{P}(x', y') \phi_{j}(y')
      \,.
    \end{gathered}
  \end{equation}
  by Theorem \ref{thm.main.jump-2}. Adding up all the corresponding relations
  for $i, j = 1, \ldots, N$, and noticing that $\sum_{i=1}^{N} \phi_{i} = 1$ on
  $\Gamma \cap K$ (recall that $\phi_{0}$ vanishes in a neighborhood of
  $\Gamma \cap K$), we obtain (ii) and
  \begin{equation*}
      k_{P_{0}}(x', y') \ede \sum_{i, j=1}^{N} k_{P_{0ij}}(x', y') \seq
      \sum_{i, j=1}^{N} \phi_{i}(x') k_{P}(x', y') \phi_{j}(y') \seq
      k_{P}(x', y')
  \end{equation*}
  for all $x', y' \in \Gamma$, $x' \neq y'$. We have thus proved also (iii).
  To complete (i), let $h \in L^{2}(\Gamma; E)$.
  Then $\maS_{P}(\phi_{j} h) := P\big[ (\phi_{j} h) \otimes \delta_\Gamma \big ]
  \in H^{s'}(M; E)$, by Lemma \ref{lemma.enough.reg},
  because $s' \in (1/2, -m - 1/2)$. Therefore, $\maS_{P} h\in H^{s'}(M; E)$
  as well, by linearity. This gives
  \begin{equation*}
      \big[ \maS_{P} h \big]\vert_{\Gamma} \seq \sum_{i, j=1}^{N}
      \phi_{i} \big[ \maS_{P}(\phi_{j} h) \big]\vert_{\Gamma} \seq
      \sum_{i, j=1}^{N} P_{0ij} h \, =: \, P_{0}h \,,
  \end{equation*}
  where the second equality is from Equation \eqref{eq.lemma.jump1b.aux}
  (a consequence of Theorem \ref{thm.main.jump-2}). This gives
  the last equality of (i) and hence completes the proof of (i) as well.
  (We have already noticed that the first two equalities in (i) are the
  standard properties of Sobolev spaces discussed in Lemma \ref{lemma.enough.reg}.)

  We have thus proved the first three points of our theorem under the additional hypothesis
  that $k_{P}$ is compactly supported. The general case follows immediately from this
  one by using the results already proved for operators of
  the form $\phi P \psi$, where $\phi, \psi : M \to \CC$ are smooth and compactly
  supported. (Operators of this form will have compactly supported distribution
  kernels.) Indeed, there is no loss of generality to assume that $h$ has compact support.
  Let $\psi \in \CIc(M)$ be equal to 1 on
  the support of $h$. Let $x \in \Gamma$ arbitrary and $U$
  a relatively compact neighborhood of $x$ in $U$. Let $\phi \in \CIc(M)$
  be equal to $1$ on $U$. We first define $P_{0}h \vert_{\Gamma \cap U}
  := (\phi P \psi)_{0} h \vert_{\Gamma \cap U} $. This definition is independent
  of $\phi$ and $\psi$ by (iii) for compactly supported distribution kernels already proved.
  We then have
  \begin{equation*}
    [\maS_{P}h] \vert_{\Gamma \cap U}
    \seq [\maS_{\phi P \psi }h]\vert_{\Gamma \cap U}\\
    \seq (\phi P \psi)_{0}h \vert_{\Gamma \cap U}
    \, =:\, P_{0}h\vert_{\Gamma \cap U}\,.
  \end{equation*}
  Since $x$ was arbitrary, we obtain that $[\maS_{P}h] \vert_{\Gamma}$
  and $P_{0}h$ coincide in the neighborhood of every point, and hence they are equal.

  To prove the last point, let us assume first that $m = -\infty$. Then the result
  follows from the theorem on derivation under the integral sign (Theorem
  \ref{thm.aux.derivUnderInt}). Since the result depends linearly on $P$,
  we may therefore assume then that $P \in \iPS{m}(\Gamma; E, F)$ and hence that $P$
  is properly supported. If this is the case and $h$ has compact support,
  then it follows from the point (ii) already proved that $P_{0} \in
  \Psi^{m+1}(\Gamma; E, F)$.
  The fact that $P_{0} \in \iPS{m+1}(\Gamma; E, F)$ then follows from
  the form of the distribution kernels (our constructions commute with translations).
  The result for a general $h \in H^{s}(\Gamma; E)$ (not necessarily compactly supported)
  is obtained using the
  result already proved, the translation invariance at infinity
  and a dyadic partition of unity on the cylindrical ends of $M$,
  as in \cite{KMNW-2025}.
\end{proof}

We shall need mapping properties of the layer potentials, for which our main reference is
\cite{H-W}, where symbols of rational type are discussed in detail and where further references
are given. Recall \cite{H-W} that a symbol is of \emph{rational type} if in every fiber
it is a quotient of polynomial functions. In particular, a symbol of rational type
is classical.

\begin{theorem}\label{thm.mapping.lp}
  We keep the assumptions of Lemma \ref{lemma.enough.reg}.
  Let $P \in \ePS{m}(M; E, F)$ with symbol of \emph{rational type}
  (a quotient of polynomial functions).
  Then, for any $s \in \RR$, there exists $C_{s} \ge 0$ such that
  \begin{equation*}
    \|\maS_{P} h \vert_{\Omega_\pm}\|_{H^{s - m - \frac12}(\Omega_{\pm}; F)} \ede
    \|P (h \otimes \delta_{\Gamma})
    \vert_{\Omega_\pm}\|_{H^{s - m - \frac12}(\Omega_{\pm}; F)}
    \le C_{s} \|h\|_{H^{s}(\Gamma; E)}\,.
  \end{equation*}
\end{theorem}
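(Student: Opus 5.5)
The plan is to reduce the estimate to a local statement, to be handled by the classical theory of Hsiao--Wendland \cite{H-W} for operators with symbols of rational type, and then to glue the local estimates together uniformly along the cylindrical ends, using translation invariance at infinity.

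First, I split $P$ using the structure of the $\ess$-calculus (Definition \ref{def.ess.one} and Proposition \ref{propv1.lemma.onto}):
\begin{equation*}
  P \seq s_{0}(\In(P)) + P_{c} + R,
\end{equation*}
with $P_{c} \in \Psi^{m}_{\comp}(M; E, F)$ and $R \in \ePS{-\infty}(M; E, F)$. The order $-\infty$ part is harmless. By Lemma \ref{lemma.def.hdelta2}, $h \otimes \delta_{\Gamma} \in H^{s'}(M; E)$ for a suitable $s'$ depending continuously on $h$. Since $R$ maps every Sobolev space into every other (Theorem \ref{thm.prop.ePS}(ii)), $R(h\otimes\delta_{\Gamma})$ lies in $H^{N}(M;F)$ for all $N$, and restricting to $\Omega_{\pm}$ only decreases norms.

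For the compactly supported part $P_{c}$, I use a finite partition of unity $\phi_{0},\ldots,\phi_{N}$ as in the proof of Theorem \ref{thm.main.jump1b}. The neighborhoods $V_{j}$ straighten $\Gamma$ to $\{x_{n}=0\}$ via normal geodesic coordinates, and $\phi_{0}$ vanishes near $\Gamma$. The terms $\phi_{i}P_{c}\phi_{0}$ see no singularity, since $\phi_{0}h\otimes\delta_{\Gamma}=0$. The terms $\phi_{0}P_{c}\phi_{j}$ have kernels smooth away from the diagonal, hence smooth near the supports involved, which gives arbitrary smoothing. For the remaining terms $\phi_{i}P_{c}\phi_{j}$, the estimate $\|a(x,D)(h\otimes\delta_{\Gamma})|_{\RR^{n}_{\pm}}\|_{H^{s-m-1/2}}\le C\|h\|_{H^{s}}$ is the classical half-space result for rational-type (transmission property) symbols, which I take from \cite[Ch.~9]{H-W}. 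Rationality is exactly what makes the restriction to the half-space gain the extra $1/2$ over the global mapping $H^{s-1/2-0}\to H^{s-m-1/2-0}$. This local step is the genuine analytic input and the main obstacle. I would not reprove it but cite it, after checking that rational type is preserved under the coordinate changes and cutoffs used here. It is, because principal and lower-order terms stay rational in $\xi$ under linear changes of fiber variables.

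For the end part $s_{0}(\In(P)) = \eta\,\In(P)\,\eta$, I use translation invariance. On $M'\times(-\infty,R_{\Omega})$, $\Gamma = \Gamma'\times(-\infty,R_{\Omega})$, and I cover the end by translates $\Phi_{k}$ of a fixed slab $M'\times(-2,1)$, with a dyadic-type uniformly locally finite partition of unity $\{\psi_{k}\}$ obtained by translating one fixed function. Each localized piece $\psi_{k}\,\In(P)\,\psi_{l}$ with $|k-l|$ bounded is a translate of one of finitely many fixed compactly supported operators. Hence the local estimate of the previous step holds for all of them with the same constant. The pieces with $|k-l|$ large have kernels that are smoothing and decaying (order $-\infty$ with Schwartz-type off-diagonal decay in the $\ess$-calculus), so they are summable in $k,l$. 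Summing with the square-summability of Sobolev norms over a uniformly locally finite cover (the characterization of $H^{s}$ on manifolds with bounded geometry, \cite{KMNW-2025, GrosseSchneider}) yields the global bound. The uniformity over translates is automatic; the only care needed is the off-diagonal tail, for which I would use the properly supported structure (kernel within $\varepsilon_{P}$ of the diagonal for $\iPS{m}$) and put the non-properly-supported remainder into $R$.
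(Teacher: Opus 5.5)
Your proposal is correct and follows the paper's proof. The compactly supported piece comes from the Hsiao--Wendland half-space estimate for rational-type symbols (the paper cites Theorem 9.4.7 of \cite{H-W}), the properly supported part from a translation-invariant partition of unity on the cylindrical ends, and the $\ePS{-\infty}$ remainder from the global mapping properties of the $\ess$-calculus. Your only change is to make the splitting $P = s_{0}(\In(P)) + P_{c} + R$ explicit via Proposition \ref{propv1.lemma.onto}(iii), which the paper leaves implicit.
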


\begin{proof}
  If the distribution kernel $k_{P}$ of $P$ has compact
  support, the result follows from Theorem 9.4.7 on page 584 of \cite{H-W}.
  If $P$ is properly supported, the result follows from this case and
  a translation invariant partition of unity on the cylindrical ends of $M$.
  We have $h \otimes \delta_{\Gamma} \in H^{s'}(M; E)$.
  If $P \in \ePS{-\infty}(M; E, F)$, the mapping properties of the operators
  in the essentially translation invariant
  calculus gives that $\maS_{P}(h) := P(h \otimes \delta_{\Gamma})
  \in H^{\infty}(M; F)$, so the result is true in this case as well.
  Since every $P \in \ePS{m}(M; E, F)$ is the sum of an operator in
  $\iPS{m}(M; E, F)$ (which is properly supported) and an operator in
  $\ePS{-\infty}(M; E, F)$, the result follows from the particular cases
  we have already proved.
\end{proof}

By extending the above mapping property to Besov spaces, one can extend our well-posedness
result to $L^{p}$-type Sobolev and Besov spaces. This will be discussed in a future publication.
Recall from Notation \ref{not.bsnu} that $\bsnu$ is a fixed vector field on $M$ that is the outer
unit normal vector to $\Gamma := \pa \Omega$. Also, recall that $\sharp : TM \to T^{*}M$
is the isomorphism defined by the metric. We now turn to the study of the normal lateral limits
of operators of order $m = -1$. We first need to establish some more notation.

\begin{notation}\label{not.ord-1}
  For $P \in \ePS{-1}(M; E, F)$, we let
  $\sigma_{-1}(P) \in \CI(T^*M \smallsetminus \{0\}; \Hom(E; F))$ denote
  its principal symbol. We shall write $\sigma_{-1}(P; \xi) \in
  \Hom(E_{x}, F_{x})$ for its value at $\xi \in T_{x}^*M \smallsetminus \{0\}$
  and we assume that $\sigma_{-1}(P; \xi)$ is odd in $\xi \in T^{*}M$.
  We then let $\JC(P; x) \in \Hom(E_{x}, F_{x})$ be given by
  \begin{equation*}  
    \JC(P; x) \ede \sigma_{-1}(P; -\bsnu_{x}^{\sharp})\,.
  \end{equation*}
  We also let, for all $0\neq \xi' \in T^{*}\Gamma$,
  $\xi \in T_{x}^{*}M$ be such that it projects onto $\xi'$ and is
  orthogonal to $\bsnu_{x}^{\sharp}$ and
  \begin{equation*}  
    b_{0}(\xi') \ede \frac1{4\pi}\, \int_{\RR}\big[
      \sigma_{-1}(P; \xi + \tau \bsnu_{x}^{\sharp}) +
      \sigma_{-1}(P; \xi - \tau \bsnu_{x}^{\sharp}) \big] \, d\tau
      \in \Hom(E_{x}, F_{x})\,.
  \end{equation*}
\end{notation}

The form of the definition of $\JC$ is due to the fact
that $e_{n} = -\bsnu$ in the Euclidean case (see Section \ref{sec.nllRn}) if
$\Omega = \Omega_{+} = \RR_{+}^{n}$. We let $\maS_{P}h := P(h \otimes \delta_{\Gamma})$,
as usual.

\begin{theorem}\label{thm.main.jump1}
  Let $P \in \ePS{-1}(M; E, F)$. We continue to assume the conditions of
  Lemma \ref{lemma.enough.reg}, as before. Let $\JC(P)$ and $b_{0}$
  be as in Notation \ref{not.ord-1} and assume that $\sigma_{-1}(P; \xi)$ is \emph{odd} in
  $\xi \in T^{*}M$. Then, for $|\epsilon|$ small, there exist pseudodifferential
  operators $P_{\epsilon} \in \ePS{0}(\Gamma; E, F)$, such that,
  if we let $P_{0\pm} \ede \pm \frac{\imath}2 \JC(P) + P_{0}$, then
  we have the following properties.
  \begin{enumerate}[\rm (i)]
    \item $\JC_{\pm}(P) \in C_{\inv}^{\infty}(M; \Hom(E; F))$.
    \item $P_{0\pm} \in \ePS{0}(\Gamma; E, F)$.
    \item $\sigma_{0}(P_{0}) = b_{0}$.
    \item $k_{P_{0}}(x', y') = k_{P}(x', y')$ for all $x' \neq y'$ in $\Gamma$.

    \item for all $s \in \RR$ and all $h \in H^{s}(\Gamma; E)$, we have
    $\big[ \maS_{P} h\big]_{\epsilon} := \big[ P(h \otimes \delta_{\Gamma}) \big]_{\epsilon}
    = P_{\epsilon}h$.
    \item $\big[ \maS_{P} h \big]_{\pm}
      \ede \lim_{\epsilon \to \pm 0}
      \big[ P(h \otimes \delta_{\Gamma}) \big]_{\epsilon}
      \seq \lim_{\epsilon \to \pm 0} P_{\epsilon}h
      \seq P_{0\pm} h \in H^{s}(\Gamma; F)\,.$

    \item Let us assume that $P$ is of rational type,
    that $s > 0$, and that $h \in H^{s}(M; E)$. Then $\maS_{P}(h) \in H^{s+1/2}(\Omega_{\pm})$,
    and hence the traces satisfy
    \begin{equation*}
      \gamma_{\pm}(\maS_{P}(h)) \seq \maS_{P}(h)_{\pm}
      \seq P_{0\pm} h \in H^{s}(\Gamma; F)\,.
    \end{equation*}
  \end{enumerate}
\end{theorem}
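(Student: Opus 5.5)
The plan follows the proof of Theorem \ref{thm.main.jump1b} closely, with Theorem \ref{thm.main.jump0} playing the role that Theorem \ref{thm.main.jump-2} played there. First I would reduce to the case of a compactly supported kernel. By Definition \ref{def.ess.one}, write $P = P_{1} + R$ with $P_{1} \in \iPS{-1}(M; E, F)$ properly supported and $R \in \ePS{-\infty}(M; E, F)$. For $R$, the normal lateral limits have no jump: $R(h \otimes \delta_{\Gamma})$ is smooth, so by Theorem \ref{thm.aux.derivUnderInt} the restrictions $R_{\epsilon}$ converge to $R_{0}$ in $\ePS{-\infty}(\Gamma; E, F)$. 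This part does not contribute to $\JC$ or $b_{0}$, since $\sigma_{-1}(R) = 0$. Point (i) is immediate, because $\sigma_{-1}(P)$ is translation invariant at infinity (Proposition \ref{prop.def.inv.symb}), and so is $\bsnu$, since $\Omega$ is compatible. Hence $\JC(P; x) = \sigma_{-1}(P; -\bsnu^{\sharp}_{x})$ lies in $\CI_{\inv}$.

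For $\phi P \psi$ with $\phi, \psi \in \CIc(M)$, I would use the same finite cover of $\Gamma \cap K$ by coordinate patches $V_{j}$. In these patches $\exp(t\bsnu)$ corresponds to $(y', t)$ and the metric is $dt^{2} + g_{t}$, so $\bsnu^{\sharp}$ corresponds to $dt$. Note that $\bsnu$ is outer while $e_{n}$ points into $\RR^{n}_{+} = \Omega_{+}$, so the coordinate must be $x_{n} = -t$. This sign is why $\JC(P) = \sigma_{-1}(P; -\bsnu^{\sharp})$ matches $\sigma_{-1}(a; e_{n})$. I would then apply Theorem \ref{thm.main.jump0} to each localized symbol $\phi_{i} P \phi_{j}$; its symbol is classical of order $-1$ with odd principal part. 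This gives $(P_{ij})_{\epsilon}$, the lateral limits $\pm\frac{\imath}{2}\phi_{i}\JC\phi_{j} + (P_{ij})_{0}$, the principal symbol formula, and the kernel identity. The measure $dS$ and the parallel-transport identifications only change lower-order terms and smooth densities, so the principal symbols are unaffected. Summing over $i,j$ gives (iii), (iv), (v) and (vi) locally. The kernel identity (iv) also makes $P_{0}$ independent of the cutoffs, exactly as in the earlier proof. Points (iii) and (iv) then glue to global statements.

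To pass to general $P_{1}$, I would use translation invariance at infinity. On $\Gamma' \times (-\infty, R)$, the operator $P_{1}$ commutes with $\Phi_{s}$, and so do the constructions $P_{1} \mapsto (P_{1})_{\epsilon}$. Hence $(P_{1})_{\epsilon} \in \iPS{0}(\Gamma; E, F)$. The real difficulty is proving that the convergence $P_{\epsilon} h \to P_{0\pm} h$ holds in $H^{s}(\Gamma; F)$ for non-compactly supported $h$, globally on the infinite cylinder. I would try a dyadic, translation-invariant partition of unity $\{\chi_{k}\}$ on the end, as cited from \cite{KMNW-2025}. The key point is that the local symbol bounds of Proposition \ref{prop.symbols.m=-1}(ii) are uniform under translation. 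So $\{P_{\epsilon}\}_{0 < \pm\epsilon < \epsilon_{0}}$ is a bounded family in $\iPS{0}$, with uniform operator norms on $H^{s}$. Convergence then follows on the dense subspace of compactly supported $h$, and extends to all of $H^{s}(\Gamma)$ by an $\varepsilon/3$ argument together with Lemma \ref{lemma.so.conv}. This uniform-boundedness step is the one I expect to require care.

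Finally, (vii) follows from Theorem \ref{thm.mapping.lp}: for rational-type $P$, $\maS_{P} h \in H^{s+1/2}(\Omega_{\pm})$ with $s + 1/2 > 1/2$. Then Lemma \ref{lemma.limits=traces}, transported to the normal tubular neighborhood, identifies the traces with the lateral limits $P_{0\pm} h$.
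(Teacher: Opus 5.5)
Your proposal is correct and follows essentially the same route as the paper: split off the $\ePS{-\infty}$ part, localize near $\Gamma$ with a partition of unity and apply the Euclidean result (Theorem \ref{thm.main.jump0}) in normal coordinates, globalize using translation invariance at infinity and a dyadic partition of unity, and obtain (vii) from Theorem \ref{thm.mapping.lp} and Lemma \ref{lemma.limits=traces}. Two of your steps only make explicit what the paper sketches: the orientation bookkeeping ($x_{n} = -t$, so $e_{n}^{\sharp} = -\bsnu^{\sharp}$), and the uniform-boundedness-plus-density argument for convergence in $H^{s}(\Gamma; F)$.
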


\begin{proof}
  We have $\JC(P) \in C_{\inv}^{\infty}(M; \Hom(E; F))
  \subset \iPS{0}(M; E, F)$ by the definitions of $\JC_{\pm}(P)$ and the properties
  of the principal symbol. Except for the first and last points, the
  proof is to a large extent, word-for-word the same as the one of Theorem
  \ref{thm.main.jump1b}, whose notations we use here. We sketch the argument
  for the benefit of the reader.

  We agaoin begin with the case when $P$ has compactly supported
  distribution kernel. Then, the first two relations in the
  crucial Equation \eqref{eq.lemma.jump1b.aux} are replaced with
  \begin{equation*}  
    \begin{gathered}
      \phi_{i} \big[ \maS_{P}(\phi_{j} h) \big]_{\pm}
      \seq \big(P_{0ij} \big)_{\pm} h \qquad \mbox{and}\\
      \sigma_{m+1}(P_{0ij}; \xi')
      \seq \frac{\phi_{i}}{4\pi} \left( \int_{\RR}
      \sigma_{m}(P; \xi + t \bsnu^{\sharp}_{x})
      + \sigma_{m}(P; \xi - t \bsnu^{\sharp}_{x}) \, dt \, \right) \phi_{j} \,,
    \end{gathered}
  \end{equation*}
  where $\xi \perp \bsnu_{x}$. (The last relation of that equation does not change.)

  Let us now turn to points (i) and (vii), which are slightly different from the
  proof of Theorem \ref{thm.main.jump1b}.
  First, the only thing that we need to add to complete the proofs of (i) is
  that $\JC_{\pm}(P) = \sum_{ij=1}^{N} \JC_{\pm}(\phi_{i}P \phi_{j})$.
  Finally, the last point follows from Lemma \ref{lemma.limits=traces} and Theorem
  \ref{thm.mapping.lp}.
\end{proof}

If in the previous theorem $P$ is the Laplacian,
$P = \Delta := -d^{*}d$ or the classical Stokes
operator $ \bsXi_{0, 0}$, then $b_{0}= 0$ and hence $P_{0}$
is of order $-1$, but that is not
true in general. For instance, as we will see below,
it is not true if $P =  \bsXi_{V, V_{0}}$ and
$V_{0}$ does not vanish identically on $\Gamma = \pa \Omega$.

\begin{corollary}\label{cor.adjoints}
  Let $P \in \ePS{m}(M; E, F)$ be as in Theorem \ref{thm.main.jump1b}
  (i.e. $m < -1$) or as in Theorem \ref{thm.main.jump1} (i.e. $m = -1$
  and $P$ is classical). Then $(P^{*})_{0} = (P_{0})^{*}$ and, when
  $P$ is classical, $\JC(P^{*}) = \JC(P)^{*}$.
\end{corollary}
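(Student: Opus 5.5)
The plan is to characterize $P_{0}$ and $(P^{*})_{0}$ through their distribution kernels off the diagonal, using Theorem \ref{thm.main.jump1b}(iii) for $m<-1$ and Theorem \ref{thm.main.jump1}(iv) for $m=-1$. Both theorems give $k_{P_{0}}(x',y') = k_{P}(x',y')$ for $x'\neq y'$ in $\Gamma$. Since $k_{P^{*}}(x,y) = k_{P}(y,x)^{*}$ (the pointwise adjoint in $\Hom(F_y,E_x)$ after identifying fibers via the Hermitian metrics), we get, for $x'\neq y'$,
\begin{equation*}
  k_{(P^{*})_{0}}(x',y') \seq k_{P^{*}}(x',y') \seq k_{P}(y',x')^{*} \seq k_{P_{0}}(y',x')^{*} \seq k_{(P_{0})^{*}}(x',y')\,.
\end{equation*}
Here we also need that $P^{*}\in\ePS{m}(M;F,E)$, by Theorem \ref{thm.prop.ePS}(i), so that the theorems apply to $P^{*}$. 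For $m=-1$ we additionally need that $\sigma_{-1}(P^{*}) = \sigma_{-1}(P)^{*}$ is again odd.

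Next we conclude that $D := (P^{*})_{0} - (P_{0})^{*}$ vanishes. Its kernel is supported on the diagonal of $\Gamma\times\Gamma$, so by Peetre's theorem $D$ is a multiplication operator (cf. Remark \ref{rem.u.det}).

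In the case $m<-1$, $D$ has order $m+1<0$. A multiplication operator of negative order is zero, since its symbol $f_0(x')$ would have to be $O(\jap{\xi'}^{m+1})$.

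In the case $m=-1$, $D$ has order $0$, so we compare principal symbols instead. By Theorem \ref{thm.main.jump1}(iii),
\begin{equation*}
  \sigma_{0}((P^{*})_{0}) \seq \frac1{4\pi}\int_{\RR}\big[\sigma_{-1}(P)^{*}(\xi+\tau\bsnu^{\sharp}) + \sigma_{-1}(P)^{*}(\xi-\tau\bsnu^{\sharp})\big]\,d\tau \seq b_{0}^{*} \seq \sigma_{0}((P_{0})^{*})\,.
\end{equation*}
Therefore $D$ has order $-1$, and being a multiplication operator it vanishes as before. For the more careful version of this step, I would reduce through a partition of unity, as in the proof of Theorem \ref{thm.main.jump1b}, to the local model on $\RR^{n}$. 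There the distinguished representative $a_{0}$ has an odd principal symbol (Theorem \ref{thm.main.jump0}(ii)), and the principal-value formula there determines $a_0(x',D')$ entirely from the kernel. Applying this to both $a^{*}$ and $a$ gives the identity directly. I expect this identification of the diagonal (order-zero) part to be the main obstacle.

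The jump term is pointwise:
\begin{equation*}
  \JC(P^{*};x) \seq \sigma_{-1}(P^{*};-\bsnu_x^{\sharp}) \seq \sigma_{-1}(P;-\bsnu_x^{\sharp})^{*} \seq \JC(P;x)^{*}\,,
\end{equation*}
using Theorem \ref{thm.prop.inv-calc}(ii) (the symbol is $*$-multiplicative). Finally, the non-compact ends need no extra work: everything is local and the kernel identity holds globally, and the $\ePS{-\infty}$ part has smooth kernels, for which the restriction commutes with adjoints directly.
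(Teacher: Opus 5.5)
Your proposal is correct and follows the paper's proof. Both arguments use the off-diagonal kernel identity $k_{(P^{*})_{0}}(x',y') = k_{P}(y',x')^{*} = k_{(P_{0})^{*}}(x',y')$, then the fact that such restriction operators are determined by their kernels off the diagonal, and finally $\sigma_{-1}(P^{*}) = \sigma_{-1}(P)^{*}$ for the jump term.

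The only difference is in the order-zero case $m=-1$. You rule out a leftover multiplication operator by checking directly that both operators have principal symbol $b_{0}^{*}$, whereas the paper appeals to the principal-value formula of Theorem \ref{thm.main.jump0}(ii). Your symbol comparison is already a complete argument, so the ``more careful version'' you sketch, and the obstacle you anticipate there, are not needed.
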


\begin{proof}
  Let $x', y' \in \Gamma$, $x' \neq y'$. Theorems \ref{thm.main.jump1b} and
  \ref{thm.main.jump1} give the second and the last of the following equalities:
  \begin{equation*}
    k_{P_{0}^{*}}(x', y') \seq k_{P_{0}}(y', x')^{*} \seq
    k_{P}(y', x')^{*} \seq k_{P^{*}}(x', y') \seq k_{(P^{*})_{0}}(x', y')\,.
  \end{equation*}
  As we have already noticed, the operators $P_{0}^{*}$ and $(P^{*})_{0}$ are
  determined by the values of their distribution kernels outside the diagonal
  (see Remark \ref{rem.u.det} and Theorem \ref{thm.main.jump0}(ii)).
  Since the corresponding distribution kernels of $(P^{*})_{0}$ and $(P_{0})^{*}$ coincide,
  we have $P_{0}^{*} = (P^{*})_{0}$, as claimed. The last relation of the
  Corollary follows from the sequence of equalities
  \begin{equation*}
    \JC(P^{*}) \seq \sigma_{-1}(P^{*}; -\bsnu^{\sharp})
    \seq \sigma_{-1}(P; -\bsnu^{\sharp})^{*} \seq \JC(P)^{*}\,.
  \end{equation*}
  This completes the proof.
\end{proof}

Without ambiguity, we shall therefore write
$P_{0}^{*} = (P^{*})_{0} = (P_{0})^{*}$ from now on.

As noticed in \cite{KNW-2025, KMNW-2025}, the results we have obtained so far
in this section remain valid in the case $\Omega$ bounded.
We conclude this section with a property that is specific to
manifolds with cylindrical ends. Recall the indicial operator
$\In(P)$ and the indicial family $\widehat{P}(\tau)$, obtained by
taking the Fourier transform of $\In(P)$.

\begin{theorem}\label{thm2.indicial.jump1}
  Let $P \in \ePS{m}(M; E, F)$, $m \le -1$. Then
  \begin{equation*}
    [\In (P)]_{0} \seq \In(P_{0}) \quad \mbox{and}
    \quad
    (\widehat{P}(\tau))_{0} \seq \widehat{(P_{0})}(\tau) \,.
  \end{equation*}
  If $m = -1$, we also have $\In[\JC_{\pm}(P)] \seq \JC_{\pm}[\In(P)]$ and,
  consequently, $\widehat{[\JC_{\pm}(P)]}(\tau) \seq \JC_{\pm}[\widehat{P}(\tau)]$
  and $(\widehat{P}(\tau))_{0\pm} \seq \widehat{(P_{0\pm})}(\tau)$.
\end{theorem}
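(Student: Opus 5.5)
The plan is to compare distribution kernels off the diagonal and use that the restriction operator $P_{0}$ is determined by $k_{P}$ on $\Gamma\times\Gamma$ minus the diagonal (Remark \ref{rem.u.det}, Theorem \ref{thm.main.jump0}(ii)), together with the translation formula for limit kernels (Proposition \ref{propv1.desc.kInP}). We apply Theorems \ref{thm.main.jump1b} and \ref{thm.main.jump1} both to $P$ on $(M,\Gamma)$ and to $\In(P)\in\ePSsus{m}{M'}{E,F}$ on the cylinder $M'\times\RR$ with the invariant hypersurface $\Gamma'\times\RR$. The latter is a manifold with straight cylindrical ends with a compatible domain $\Omega'\times\RR$, so both $[\In(P)]_{0}$ and $\In(P_{0})$ are operators on $\Gamma'\times\RR$. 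By Theorem \ref{thm.main.jump1b}(iv) (resp.\ Theorem \ref{thm.main.jump1}), $P_{0}\in\ePS{m+1}(\Gamma;E,F)$, so $\In(P_{0})$ is defined.

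\emph{Invariant parts.} By linearity and Definition \ref{def.ess.one}, write $P=s_{0}(T)+R$ with $T=\In(P)$ and $R\in\Psi^{m}_{\comp}+\Psi_{\maS}^{-\infty}$. For $R$ with compactly supported or Schwartz kernel, Theorem \ref{thm.main.jump1b}(iii) and Theorem \ref{thm.main.jump1}(iv) give $k_{R_{0}}=k_{R}|_{\Gamma\times\Gamma}$ off the diagonal. This is compactly supported or Schwartz, so $\In(R_{0})=0=[\In R]_{0}$. For $s_{0}(T)=\eta T\eta$, Theorem \ref{thm.main.jump1b}(iii) gives $(\eta T\eta)_{0}=\eta T_{0}\eta$. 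Hence $\In((\eta T\eta)_{0})=\In(\eta T_{0}\eta)=T_{0}$, by Proposition \ref{propv1.lemma.onto}(i) and Lemma \ref{lemma.def.indicial}, using that $T_{0}$ is translation invariant. This invariance holds because $T$ commutes with translations $\Phi_{s}$, the normal exponential map on $\Gamma'\times\RR$ commutes with $\Phi_{s}$, and $h\otimes\delta_{\Gamma}$ transforms equivariantly. Therefore $\In(P_{0})=T_{0}=[\In P]_{0}$.

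\emph{Main obstacle.} The delicate point is that the uniqueness statement only controls kernels off the diagonal. The diagonal part is handled because, for $m<-1$, and for $m=-1$ by Theorem \ref{thm.main.jump0}(ii), the operator equals its principal-value kernel operator. As an independent check, the principal symbols match by Theorem \ref{thm.complet.indicial}(v) together with the symbol formulas (Theorem \ref{thm.main.jump1b}(ii), Theorem \ref{thm.main.jump1}(iii)), which commute with $\In$.

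\emph{Fourier side and the case $m=-1$.} The Fourier statement $(\widehat P(\tau))_{0}=\widehat{(P_{0})}(\tau)$ follows by conjugating with the one-dimensional Fourier transform. Apply $\In(P)$ to $e^{\imath t\tau}(h\otimes\delta_{\Gamma'})=(e^{\imath t\tau}h)\otimes\delta_{\Gamma'\times\RR}$ and take normal lateral limits, which commute with multiplication by $e^{\imath t\tau}$. Using Equation \eqref{eq.explicit.FT}, this gives $\widehat{\In(P)}(\tau)$ restricted to $\Gamma'$ on one side and $\widehat{[\In P]_{0}}(\tau)$ on the other. For $m=-1$, $\JC(P)=\sigma_{-1}(P;-\bsnu^{\sharp})$ is a smooth invariant section with $\bsnu$ chosen translation invariant at infinity. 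Then $\In\JC(P)=\JC(\In P)$ follows from Theorem \ref{thm.complet.indicial}(v), and its Fourier transform is itself by Equation \eqref{eq.just.new.notation}. Adding $\pm\frac{\imath}{2}\JC$ to the previous identity yields the statement for $P_{0\pm}$.
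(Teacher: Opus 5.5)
Your proposal is correct and follows the paper's argument: restriction to $\Gamma$ commutes with $\In$, both commute with translations and hence with the Fourier transform, and the $\JC_{\pm}$ statements come from $\sigma_{-1}\circ\In=\In\circ\sigma_{-1}$. The paper only asserts the first commutation, whereas you prove it via the splitting $P=s_{0}(\In P)+R$ with $R\in\Psi^{m}_{\comp}+\Psi^{-\infty}_{\maS}$ together with the locality of the normal-limit construction. Since $\eta$ is not compactly supported, it is this locality, rather than a literal application of Theorem \ref{thm.main.jump1b}(iii), that gives $(\eta T\eta)_{0}=\eta T_{0}\eta$.
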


\begin{proof}
  The first relation is follows from the fact that limits at infinity and
  restrictions to $\Gamma$ commute. Both of them commute also with translations,
  and hence with the Fourier transform. This gives the second relation.
  The last relation follows from the fact that $\sigma_{-1}$ and $\In$
  commute, see Proposition \ref{propv1.lemma.onto}(iv).
\end{proof}

\subsection{\ADN\ operators: jumps, ellipticity, and Fredholmness}
The generalized Stokes operator $\bsXi := \bsXi_{V, V_{0}}$ (Equation
\ref{eq.def.bsXi}) is not elliptic in the usual sense, but is elliptic
in \ADN\ sense. As with the usual ellipticity property, the ellipticity
in \ADN\ sense (together with the invertibility of the limit operators) is
enough to establish the Fredholm property of our generalized Stokes operator
$\bsXi := \bsXi_{V, V_{0}}$, but under different Sobolev spaces. This is the
content of Theorem \ref{thm.ADN.Fredholm}. {\cn The definition of
$\ADN$-operators makes sense for general manifolds $M$, so, in the beginning
of this subsection, we allow $M$ to be a \emph{general} smooth manifold.}

\begin{definition}\label{def.ADN}
  Let $M$ be a smooth manifold, let $E_{i} \to M$ smooth vector bundles,
  $1 \le i \le k$, and let $s_i, t_j \in \RR$, $1 \le i, j \le k$. We set
  $\bss := (s_{1}, \ldots, s_{k})$, $\bst := (t_{1}, \ldots, t_{k})$,
  and $E := E_{1} \oplus \ldots \oplus E_{k}$. Then
  \begin{equation*}
    \Psi^{[\bss + \bst ]}(M; E) \ede
    \{ P=[P_{ij}] \mid\, P_{ij} \in \Psi^{s_i + t_j}(M; E_j, E_i),\
    1 \le i, j \le k \}\,.
  \end{equation*}
  An operator $P = [P_{ij}] \in \Psi^{[\bss + \bst ]}(M; TM \oplus \CC)$
  is said to be of \emph{\ADN-order $\le [\bss + \bst ]$}. For $P = [P_{ij}] \in
  \Psi^{[\bss + \bst ]}(M; TM \oplus \CC)$
  let
  \begin{equation*}
    \Symb_{\bss, \bst}(P) \ede [\sigma _{s_i+t_j}(P_{ij})]
  \end{equation*}
  be its \emph{$(\bss,\bst)$--principal symbol,} which is a suitable section of
  the lift of the endomorphism bundle $\End(E)$ to $T^{*}M \smallsetminus 0$.
  The operator $P$ is said to be {\it $(\bss,\bst)$--\ADN\ elliptic} if its
  $(\bss, \bst)$--principal symbol matrix $\Symb_{\bss, \bst}(P)$ is invertible
  outside the
  zero section (i.e., on $T^{*}M \smallsetminus 0$).
\end{definition}

The reason for introducing the spaces $\Psi^{[\bss + \bst ]}(M; E)$
is that our generalized Stokes operator fits into this framework.

\begin{example} \label{ex.bsXi}
  Let $k = 2$, let $\bss = (s_{1}, s_{2}) = \bst = (t_{1}, t_{2}) = (1, 0)$,
  let ${E_{1}} = TM$, and let ${E_{2}} = \CC$. Then
  \begin{equation*}
    \bsXi \ede \bsXi_{V, V_{0}} \in \Psi^{[\bss + \bst ]}(M; TM \oplus \CC) \ede
    \{ P=[P_{ij}] \mid\, P_{ij} \in \Psi^{s_i + t_j}(M; E_j, E_i) \}\,.
  \end{equation*}
  More importantly, $\bsXi$ is not elliptic
  in the usual sense, but we will see in Proposition \ref{prop.ADN.elliptic}
  that $\bsXi$ is $(\bss, \bst)$-\ADN-elliptic.
\end{example}

Recall that $\sharp : T^{*}M \to TM$ denotes the isomorphism defined by the Riemannian
metric $g$ of $M$ (it is sometimes called the ``musical isomorphism''). For instance,
using the notation introduced in Notation \eqref{not.def.endomorphism}, if
$0 \neq \eta \in T_{x}M$, then $\frac1{\|\eta\|^{2}} \eta \otimes \eta^{\sharp}$ is the
projection onto $\eta \neq 0$, regarded as a linear map $T_{x} M \to T_{x}M$.

\begin{proposition} \label{prop.ADN.elliptic}
  Let $(\bss, \bst) = (1,0)$. Then the generalized Stokes operator
  $\bsXi := \bsXi_{V, V_{0}}$ \eqref{eq.def.bsXi} belongs to
  $\Psi^{[\bss + \bst]}(M; TM \oplus \CC)$ and is
  $(\bss,\bst)$--\ADN\ elliptic {\lpar}Definition \ref{def.ADN}{\rpar}. The
  $(\bss,\bst)$--principal symbol of $\bsXi$ is
  \begin{equation*} 
    \Symb_{\bss, \bst}\left(\bsXi \right)( \xi)=
    \left(
      \begin{array}{cc}
        | \xi|^2 + \xi^{\sharp}  \otimes \xi  & \imath \xi^{\sharp}\\
        -\imath \xi & -V_{0}
      \end{array}
    \right) \in \End(TM \oplus \CC)\,,
  \end{equation*}
  which is invertible with inverse
  \begin{equation*} 
    \left(
      \begin{array}{cc}
        \frac{1}{| \xi |^2} - \frac{V_{0}+1}{2V_{0}+1}\frac{1}{| \xi |^4}
     \xi^{\sharp} \otimes \xi
        & \frac{\imath}{(2V_{0}+1)| \xi |^2} \xi^{\sharp}\\
        - \frac{\imath}{(2V_{0}+1)| \xi |^2} \xi & -\frac{2}{2V_{0}+1}
      \end{array}
    \right) \in \End(TM \oplus \CC)\,.
  \end{equation*}
\end{proposition}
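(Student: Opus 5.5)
The plan is to compute $\Symb_{\bss,\bst}(\bsXi)$ entry by entry and then check the claimed inverse on an adapted orthogonal splitting. With $\bss=\bst=(1,0)$, the orders $s_i+t_j$ are $2$ for the $(1,1)$ entry, $1$ for the $(1,2)$ and $(2,1)$ entries, and $0$ for the $(2,2)$ entry.

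\emph{Membership and the symbol.} We have $\bsL+V\in\Psi^{2}(M;TM)$, $\nabla\in\Psi^{1}(M;\CC,TM)$, $\nabla^{*}\in\Psi^{1}(M;TM,\CC)$ and $-V_{0}\in\Psi^{0}(M)$. So $\bsXi\in\Psi^{[\bss+\bst]}(M;TM\oplus\CC)$, as in Example \ref{ex.bsXi}.
\begin{itemize}
\item For the $(1,1)$ entry, $V$ has order $0<2$ and does not contribute. Corollary \ref{cor.formulas}(iv) then gives
\begin{equation*}
\sigma_{2}(2\Defstar\Def;\xi)=|\xi|^{2}+\xi^{\sharp}\otimes\xi .
\end{equation*}
\item For the $(1,2)$ entry, Remark \ref{rem.basic.ip}(iv) gives $\sigma_1(\nabla;\xi)=\imath\xi^{\sharp}$.
\item For the $(2,1)$ entry, the adjoint rule $\sigma_1(P^*)=\sigma_1(P)^*$ gives $\sigma_1(\nabla^{*};\xi)=(\imath\xi^{\sharp})^{*}=-\imath\xi$, viewed as the map $X\mapsto-\imath\xi(X)$.
\item For the $(2,2)$ entry, the order-$0$ symbol of $-V_0$ is $-V_0$ itself. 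This is why $V_0$, unlike $V$, survives in $\Symb_{\bss,\bst}(\bsXi)$.
\end{itemize}

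\emph{Invertibility.} Fix $x$ and $0\neq\xi\in T_x^{*}M$, and set $r:=|\xi|$ and $e:=\xi^{\sharp}/r$. Decompose $T_xM\oplus\CC=e^{\perp}\oplus\big(\CC e\oplus\CC\big)$. Both the symbol and the proposed inverse preserve this splitting.
\begin{itemize}
\item On $e^{\perp}$, the symbol acts as $r^{2}$ and the proposed inverse acts as $r^{-2}$. This is immediate, since $\xi^{\sharp}\otimes\xi$ and the off-diagonal terms vanish there.
\item On $\CC e\oplus\CC$, the symbol is the $2\times2$ matrix $\begin{pmatrix}2r^{2}&\imath r\\-\imath r&-V_{0}\end{pmatrix}$.
\item Its determinant is $-r^{2}(2V_{0}+1)$, which is nonzero because $V_{0}\ge0$. (More generally it suffices that $2V_0+1\neq0$; I will state the standing assumption $V_0\ge0$ explicitly.)
\item Cramer's rule gives the inverse $\begin{pmatrix}\frac{V_{0}}{(2V_{0}+1)r^{2}}&\frac{\imath}{(2V_{0}+1)r}\\-\frac{\imath}{(2V_{0}+1)r}&-\frac{2}{2V_{0}+1}\end{pmatrix}$.
\end{itemize}

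\emph{Matching with the stated formula.} It remains to check that the stated matrix restricts to this $2\times2$ inverse.
\begin{itemize}
\item On $e$, the $(1,1)$ entry acts as $\frac1{r^2}-\frac{V_0+1}{(2V_0+1)r^2}=\frac{V_0}{(2V_0+1)r^2}$.
\item The entry $\frac{\imath}{(2V_0+1)r^2}\xi^{\sharp}$ maps $1\mapsto\frac{\imath}{(2V_0+1)r}e$.
\item The entry $-\frac{\imath}{(2V_0+1)r^2}\xi$ maps $e\mapsto-\frac{\imath}{(2V_0+1)r}$.
\end{itemize}
This agrees entrywise, so the stated matrix is a two-sided inverse of $\Symb_{\bss,\bst}(\bsXi)(\xi)$. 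Hence $\bsXi$ is $(\bss,\bst)$--\ADN\ elliptic.

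There is no real obstacle. The only points needing care are the sign conventions for $\sigma_1(\nabla^{*})$ and the bookkeeping of $\xi$ versus $\xi^{\sharp}$ as in Notation \ref{not.def.endomorphism}, and both are handled by the splitting along $\xi^{\sharp}$.
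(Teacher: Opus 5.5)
Your proposal is correct and follows the paper's route: you read off the entries of $\Symb_{\bss,\bst}(\bsXi)$ from Corollary~\ref{cor.formulas}(iv), the symbol $\imath\xi^{\sharp}$ of $\nabla$, and the adjoint rule, and then check that the stated matrix is a two-sided inverse. Your splitting $e^{\perp}\oplus(\CC e\oplus\CC)$ with $e=\xi^{\sharp}/|\xi|$ is a clean way to carry out that product check, and also to derive the inverse, where the paper instead mentions a Schur-complement computation; your observation that only $2V_{0}+1\neq 0$ is needed is a correct refinement.
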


\begin{proof}
  The given formula for principal symbol of $\bsXi$ is a consequence of
  \cite[Chapter 5 \S 12, (12.20)]{Taylor1} (but see also Corollary
  \ref{cor.formulas} for the principal symbol of $\Defstar \Def$ and
  Remark \ref{rem.basic.ip}(iii) for the principal symbol of $\nabla$).
  The result then follows by verifying that the
  product of these two matrices is the identity.
\end{proof}

See also \cite[Theorem 5.11 and Remark 5.12]{KohrNistor-Stokes}
and \cite{Co-We}. We notice that
the formula for the inverse of the principal symbol was obtained using Schur's
complement formula.

{\cn We now return to the case when $M$ is a manifold with straight cylindrical ends
(Definition \ref{def.cyl.end}) and the vector bundles are compatible (Definition
\ref{def.comp.vb}).} We begin by adapting Definition \ref{def.ADN} to the
setting cylindrical ends.

\begin{remark}\label{rem.ADN.ess}
  Unless explicitly stated otherwise, we use the notation of the last definition,
  Definition \ref{def.ADN}. Let $M = \canDec$ be a manifold with straight cylindrical
  ends and $E_{i} \to M$ be compatible vector bundles. Recall the definitions of
  $\ePS{m}(M; E)$ and $\ePSsus{m}{M'}{E}$ from Definitions \ref{def.ess.one}
  and \ref{def.ePSm}. We now want to introduce the analogous spaces for \ADN-operators.
  First, we let
  \begin{equation*}
    \ePS{[\bss + \bst]}(M; E) \ede
    \{ P=[P_{ij}] \mid\, P_{ij} \in \ePS{s_i + t_j}(M; E_j, E_i),\
    1 \le i, j \le k \}\,.
  \end{equation*}
  Let also $m = \max\{s_{i} + t_{j}\}$. Then
  $\Psi^{[\bss + \bst ]}(M; E) \subset \Psi^{m}(M; E)$
  and we have
  \begin{equation*}
    \ePS{[\bss + \bst]}(M; E) \seq \ePS{m}(M; E) \cap \Psi^{[\bss + \bst]}(M; E)
    \,.
  \end{equation*}
  Second, by considering $M' \times \RR$ instead of $M$, we can define similarly,
  \begin{multline*}
    \ePSsus{[\bss + \bst]}{M'}{E} \seq \ePSsus{m}{M'}{E} \cap
    \Psi^{[\bss + \bst]}(M' \times \RR; E)\\
    \ede
    \ePS{m}(M' \times \RR; E)^{\RR} \cap \Psi^{[\bss + \bst]}(M' \times \RR; E)
    \,.
  \end{multline*}
  The limit and indicial operators $\In(P)$ and $\widehat{P}(\tau)$
  of $P \in \Psi^{[\bss + \bst ]}(M; E)$ are defined by restriction from
  $\ePS{m}(M; E)$. In particular,
  $\In (P) \in \ePSsus{[\bss + \bst ]}{M'}{TM \oplus \CC}$ (See Lemma \ref{lemma.def.indicial})
  and their indicial operators (of Fourier transforms)
  $\widehat{P}(\tau) \in \Psi^{[\bss + \bst ]}(M'; TM \oplus \CC)$ (see Definition
  \ref{def.indicial.fam}).
\end{remark}

We will use the \ADN-ellipticity of $\bsXi$ to establish (in the next subsection,
under certain positivity assumptions on the potentials $V$ and $V_{0}$) its Fredholm
property. To that end, we shall use the following characterization of the Fredholm
property in the essentially translation invariant calculus. (This result was discussed
and proved in great detail in \cite[Theorem 15.4.19]{KMNW-2025} and \cite[Theorem 7.12]{KNW-22},
so we will not include a proof here. It builds on earlier results of many people,
including Kondratiev, Melrose, and Schulze, see the above quoted works for references.)
Recall the notation of the last remark, Remark \ref{rem.ADN.ess}. Then we have the
following characterization of Fredholm property for a \ADN-elliptic operator on
a manifold with straight cylindrical ends (see \cite{KohrNistor-Stokes} and \cite{KMNW-2025}).

\begin{theorem}\label{thm.ADN.Fredholm}
  Let $M = \canDec$ be a manifold with straight cylindrical ends and $s_i, t_j \in \RR$,
  $1 \le i, j \le k$, as before. Let $P=[P_{ij}] \in \ePS{[\bss + \bst]}(M; E)$ and
  $m \in \RR$. Then the map
  \begin{multline*}
    P : H^{m + [\bst]}(M; E) \ede \oplus_{j=1}^{k} H^{m + t_{1}}(M; E_{j})
    \to H^{m - [\bss]}(M; E)
    \ede \oplus_{i=1}^{k} H^{m - s_{1}}(M; E_{i})
  \end{multline*}
  is Fredholm if, and only if,
  \begin{enumerate}[\rm (i)]
    \item $P$ is \ADN-elliptic and
    \item the limit operator $\In (P) \in \ePSsus{[\bss + \bst]}{M'}{E}$
    is invertible.
  \end{enumerate}
  \lpar If $M$ is closed, we replace $\ePS{[\bss + \bst]}(M; E)$
  with $\Psi^{[\bss + \bst]}(M; E)$ and we
  remove the condition {\rm (ii)}.\rpar
\end{theorem}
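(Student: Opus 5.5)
The plan is to reduce the \ADN\ statement to the corresponding statement for an operator of ordinary order $0$ acting on a single Sobolev space, by composing with diagonal order-reduction operators. Then I would prove the criterion ``elliptic plus invertible limit operator $\Leftrightarrow$ Fredholm'' in the $\ess$-calculus.

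\textbf{Step 1: order reductions.} For each compatible bundle $E_i$ and each $r\in\RR$, let $\Lambda_i^{r} := (1+\Delta_{E_i})^{r/2}$. I would verify the following properties of $\Lambda_i^{r}$:
\begin{itemize}
\item it lies in $\ePS{r}(M;E_i)$, using that the $\ess$-calculus is spectrally invariant and that $1+\Delta_{E_i}$ is elliptic, lies in $\operatorname{Diff}^2_{\inv}$, and is invertible;
\item it is an isomorphism $H^{\sigma}(M;E_i)\to H^{\sigma-r}(M;E_i)$ for every $\sigma$, by Definition \ref{def.rem.Bochner};
\item its principal symbol is $|\xi|^{r}$;
\item its limit operator is $(1+\Delta_{E_i'}-\pa_t^2)^{r/2}$, which is invertible.
\end{itemize}
Next set
\begin{equation*}
  Q \ede \operatorname{diag}(\Lambda_i^{-s_i})\, P\, \operatorname{diag}(\Lambda_j^{-t_j}).
\end{equation*}
Then $Q_{ij}\in\ePS{0}(M;E_j,E_i)$ by Theorem \ref{thm.prop.ePS}(i). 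Moreover, $P:H^{m+[\bst]}\to H^{m-[\bss]}$ is Fredholm if and only if $Q:H^{m}(M;E)\to H^{m}(M;E)$ is Fredholm, since the two differ by composition with isomorphisms.

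By multiplicativity of $\sigma$ (Theorem \ref{thm.prop.inv-calc}(ii)), we have
\begin{equation*}
  \sigma_0(Q)(\xi)=\operatorname{diag}(|\xi|^{-s_i})\,\Symb_{\bss,\bst}(P)(\xi)\,\operatorname{diag}(|\xi|^{-t_j}).
\end{equation*}
Hence $Q$ is elliptic if and only if $P$ is \ADN-elliptic. By multiplicativity of $\In$ (Theorem \ref{thm.complet.indicial}(i)), $\In(Q)$ is $\In(P)$ conjugated by invertible diagonal operators. Hence $\In(Q)$ is invertible if and only if $\In(P)$ is invertible.

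\textbf{Step 2: sufficiency for order $0$.} Assume $Q$ is elliptic and $\In(Q)$ is invertible.
\begin{itemize}
\item Theorem \ref{thm.prop.ePS}(iv) gives $Q_1\in\iPS{0}$ with $QQ_1-1,\ Q_1Q-1\in\ePS{-\infty}$.
\item By spectral invariance of the suspended calculus (Theorem \ref{thm.rem.invert}, together with the fact that the inverse of $\In(Q)$ stays in $\ePSsus{0}{M'}{E}$), set $Q_2 := Q_1 + s_0\big(\In(Q)^{-1} - \In(Q_1)\big)$.
\item Then $\In(QQ_2-1)=0$ and $QQ_2-1\in\ePS{-\infty}$. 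By Theorem \ref{thm.complet.indicial}(iii)--(iv), the error lies in $\Psi^{-\infty}_{\maS}$.
\item Operators in $\Psi^{-\infty}_{\maS}$ have Schwartz kernels and are therefore compact on every $H^{m}$. The same holds on the left, so $Q$ is Fredholm.
\end{itemize}

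\textbf{Step 3: necessity.} This is the step I expect to be the main obstacle. Assume $Q$ is Fredholm on $H^m$.
\begin{itemize}
\item \emph{Ellipticity.} Fredholmness gives the estimate $\|u\|_{H^m}\le C(\|Qu\|_{H^m}+\|Ku\|)$ with $K$ compact. Testing it on highly oscillating localized sections $e^{\imath\lambda\phi}\chi u_0$ and using Lemma \ref{lemma.stat.phase}-type asymptotics shows that $\sigma_0(Q)$ is injective at every point. Applying the same argument to $Q^*$ gives invertibility of $\sigma_0(Q)$.
\item \emph{Invertibility of $\In(Q)$.} Take $u\in\CIc(M'\times\RR;E)$ and apply the estimate to the translates $\Phi_s(u)$ with $s\to\infty$. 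Then $\Phi_s(u)\to0$ weakly, so $K\Phi_s(u)\to0$. By Lemma \ref{lemma.def.indicial}, $\Phi_{-s}Q\Phi_s(u)\to\In(Q)u$. This yields $\|u\|\le C\|\In(Q)u\|$, so $\In(Q)$ is injective with closed range. The same argument for $Q^*$, together with $\In(Q^*)=\In(Q)^*$, gives dense range. Hence $\In(Q)$ is invertible.
\end{itemize}
The technical care needed here is uniform control of the translates in the $H^m$ norms and the weak-to-strong passage under $K$. The closed-manifold case is the classical statement obtained by dropping the translation argument.
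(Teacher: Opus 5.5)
The paper does not prove this theorem; it only cites \cite[Theorem 15.4.19]{KMNW-2025} and \cite[Theorem 7.12]{KNW-22}. Your outline is the standard argument and its overall structure is sound: reduce to order $0$, build the $\ess$-parametrix and correct it at infinity by $s_0(\In(Q)^{-1}-\In(Q_1))$ so that the error lies in $\Psi^{-\infty}_{\maS}$, and prove necessity by oscillatory testing and translation to infinity.

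\emph{Step 1 rests on a claim you have not justified.} Spectral invariance gives $(1+\Delta_{E_i})^{-1}\in\ePS{-2}(M;E_i)$, hence integer powers of $1+\Delta_{E_i}$. It says nothing about $(1+\Delta_{E_i})^{r/2}$ for $r\notin 2\ZZ$. That is exactly the case you need already for Stokes, where $\bss=\bst=(1,0)$ forces $r=\pm 1$, let alone for arbitrary real $s_i,t_j$. Holomorphic functional calculus does not help either, since $0$ lies in the spectrum of $(1+\Delta)^{-1}$.

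\emph{A repair within what the paper provides.} For $r>0$, choose $B\in\iPS{r/2}(M;E_i)$ with $\sigma_{r/2}(B)=|\xi|^{r/2}$; this exists by the surjectivity in Theorem \ref{thm.prop.inv-calc}(ii). Set $\Lambda:=B^*B+1\in\iPS{r}(M;E_i)$.
\begin{itemize}
\item The parametrix of Theorem \ref{thm.prop.ePS}(iv) gives elliptic regularity, so $\Lambda$ with domain $H^r$ is self-adjoint and $\ge 1$ on $L^2$, hence invertible.
\item Spectral invariance in the invertible case (as quoted in Theorem \ref{thm.spectral.inv}) gives $\Lambda^{-1}\in\ePS{-r}(M;E_i)$.
\item Then $\Lambda^{\pm1}$ are isomorphisms $H^{\sigma}\to H^{\sigma\mp r}$ for all $\sigma$, lie in the calculus with principal symbols $|\xi|^{\pm r}$, and have mutually inverse limit operators by Theorem \ref{thm.complet.indicial}(i).
\end{itemize}
This is all Step 1 actually uses.

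Three smaller points.
\begin{itemize}
\item[(a)] Theorem \ref{thm.rem.invert} does not give $\In(Q)^{-1}\in\ePSsus{0}{M'}{E}$. That is spectral invariance on the cylinder $M'\times\RR$, together with translation invariance of the inverse. You should also state why $QQ_2-1\in\ePS{-\infty}(M;E)$: we have $\In(Q)^{-1}-\In(Q_1)=-\In(Q)^{-1}\In(QQ_1-1)\in\ePSsus{-\infty}{M'}{E}$, so $s_0$ of it lies in $\ePS{-\infty}(M;E)$.
\item[(b)] Lemma \ref{lemma.def.indicial} only gives $L^2$-convergence of $\Phi_{-s}Q\Phi_s u$. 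For the $H^m$ estimate, split $Q=Q_{\inv}+s_0(T)+R$ as in Definition \ref{def.ess.one}. Then $\Phi_{-s}Q_{\inv}\Phi_s u=\In(Q_{\inv})u$ exactly for large $s$ (Lemma \ref{lemma.limit.operator}). The $s_0(T)$ and $R$ terms converge in every $H^m$ by the Schwartz decay of their kernels.
\item[(c)] $\sigma_0(Q)(x,\xi)\in\End(E_x)$ is a square matrix, so injectivity already gives invertibility. The appeal to $Q^*$ in the ellipticity step is unnecessary.
\end{itemize}
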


The case $M$ closed is, of course, very classical, due to Seeley,
see \cite{KMNW-2025, Hormander3, H-W, Seeley59, Wl-Ro-La}.
The results of the previous subsections of this section on normal lateral
limites then extend immediately
to the components $P_{ij}$ of the operators $P = [P_{ij}]\in \Psi^{[\bss + \bst]}(M; E)$
that satisfy $s_{i} + t_{j} \le -1$.
(See also \cite{CarvalhoNistorQiao, Fr-Grrieser-Sc, Kapanadze-Schulze,
KohrNistor-Stokes, KMNW-2025, Kondratiev, Melrose-Mendoza, SchroheFrechet} and
the references therein.)

\subsection{Further properties: regularity and spectral invariance}
We shall repeatedly make use of the following spectral invariance and
elliptic regularity results from \cite{KMNW-2025, KNW-22}.
We have the following generalization of the classical elliptic regularity
result (see Theorem 15.3.30 of \cite{KMNW-2025} or
\cite[Proposition 6.5]{KNW-22}, see also \cite{MelroseAPS, SchulzeBook91}).
We use the notation of Definition
\ref{def.ADN}. Thus $E_{j}, F_i \to M$, $i, j = 1, \ldots, k$, are compatible vector
bundles, $E = \oplus_{j=1}^k E_{j}$, and $F = \oplus_{i=1}^k F_{i}$, and
$\bss = (s_{1}, \ldots, s_{k})$ and $\bst = (t_{1}, \ldots, t_{k})$, $s_{i}, t_{j} \in \RR$.

\begin{theorem}
\label{thm.regularity}
  Let $P \in \ePS{[\bss + \bst]}(M; E, F)$ be \ADN-elliptic. Let also $m, m' \in \RR$.
  If $u \in H^{m'+[\bst]}(M; E)$ is such that $P u \in H^{m-[\bss]}(M; F)$,
  then $u \in H^{m+[\bst]}(M; E)$.
\end{theorem}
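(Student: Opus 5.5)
We plan to build a parametrix inside the $\ess$-calculus and bootstrap, the usual route to elliptic regularity, checking that the \ADN\ bookkeeping of orders goes through. By Definition \ref{def.ADN}, $\Symb_{\bss,\bst}(P)$ is invertible on $T^{*}M \smallsetminus 0$. By Proposition \ref{prop.def.inv.symb} and Theorem \ref{thm.prop.ePS}(iii), its entries are translation invariant near infinity. Hence the inverse matrix $[q_{ji}]$ has entries $q_{ji}$ that are homogeneous of order $-s_{i}-t_{j}$, lie in $\CI_{\inv}(S^{*}M; \Hom(F_{i}, E_{j}))$, and are invariant at infinity. The surjectivity of the principal symbol map in Theorem \ref{thm.prop.inv-calc}(ii) then gives $Q_{0} = [Q_{ji}]$ with $Q_{ji} \in \iPS{-s_{i}-t_{j}}(M; F_{i}, E_{j})$ and $\Symb_{-\bst,-\bss}(Q_{0}) = \Symb_{\bss,\bst}(P)^{-1}$.

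Next we will show that $R_{0} := 1 - Q_{0}P$ has entries $(R_{0})_{jl}$ of order $\le t_{l} - t_{j} - 1$. The entry $(Q_{0}P)_{jl} = \sum_{i} Q_{ji}P_{il}$ has order $\le -s_{i}-t_{j}+s_{i}+t_{l} = t_{l}-t_{j}$. By the multiplicativity of $\sigma$ (Theorem \ref{thm.prop.inv-calc}(ii), valid in the $\ess$-calculus by Theorem \ref{thm.prop.ePS}), its principal symbol at that order is $\delta_{jl}$. Hence $R_{0} \in \ePS{[-\bst+\bst]-1}$ in the obvious \ADN\ sense. We then form the Neumann-type series $Q := (1 + R_{0} + \cdots + R_{0}^{N-1})Q_{0}$, which gives $QP = 1 - R_{0}^{N}$. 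The entries of $R_{0}^{N}$ have order $\le t_{l}-t_{j}-N$, because the shifts $t$ telescope under composition. Taking $N$ large, we can even use the asymptotic summation within $\iPS{}$ to get a remainder $R$ with entries in $\ePS{-\infty}$. Finite $N$ is enough, however.

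The regularity proof then proceeds as follows. Write $u = QPu + R u$ with $R = R_{0}^{N}$. By Theorem \ref{thm.prop.ePS}(ii), applied componentwise, $Q_{ji} : H^{m-s_{i}}(M; F_{i}) \to H^{m+t_{j}}(M; E_{j})$ is continuous. Hence $QPu \in H^{m+[\bst]}(M; E)$, since $Q = (\sum R_{0}^{k})Q_{0}$ and each $R_{0}^{k}$ maps $H^{m+[\bst]}$ to $H^{m+k+[\bst]} \subset H^{m+[\bst]}$. Likewise, $R$ maps $H^{m'+[\bst]}(M; E)$ into $H^{m'+N+[\bst]}(M; E)$, and we choose $N \ge m - m'$. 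This gives $u \in H^{m+[\bst]}(M;E)$. If $m \le m'$, there is nothing to prove.

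We expect the main obstacle to be the first step: checking that the matrix symbol inverse can be quantized entrywise within the $\inv$-calculus with the correct \ADN\ orders, and that the composition and order-counting rules of Theorems \ref{thm.prop.inv-calc} and \ref{thm.prop.ePS} hold entrywise for mixed orders. Since each entry is an honest element of some $\ePS{r}$, this reduces to scalar statements, so we expect it to go through. Note that no invertibility of $\In(P)$ is needed, because regularity requires only a left parametrix modulo operators smoothing in the Sobolev scale, not modulo compact ones.
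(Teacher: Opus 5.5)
Your argument is correct and complete. The individual steps all go through:
\begin{itemize}
\item the entrywise quantization of $\Symb_{\bss,\bst}(P)^{-1}$ in the $\inv$-calculus;
\item the order count $t_l - t_j - N$ for the entries of $R_0^N$;
\item the bootstrap $u = QPu + R_0^N u$, where $QP = 1 - R_0^N$ is first checked on $\CIc$ and then extended to $H^{m'+[\bst]}$ by density and continuity;
\item your observation that no invertibility of $\In(P)$ is needed.
\end{itemize}
The paper does not prove this statement itself; it quotes it from \cite[Theorem 15.3.30]{KMNW-2025} and \cite[Proposition 6.5]{KNW-22}. Your proposal is therefore a self-contained version of the standard parametrix argument behind those citations.
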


We have the following related spectral invariance property (\cite[Theorem 15.4.11]{KMNW-2025},
see also \cite{BealsSpInv, KNW-22, Mitrea-Nistor}).

\begin{theorem}\label{thm.spectral.inv}
  Let $s_i, t_j \geq 0$, $m \in \RR$, and $T \in \ePS {[\bss + \bst]}(M;E,F)$. Let
  \begin{equation*}
    \maD(T_{\rm{max}}) \ede \{\, \xi \in H^{m-[\bss]}(M;E) :\
    T\xi \in H^{m-[\bss]}(M;F)\, \}
  \end{equation*}
  and assume that $T : \maD(T_{\rm{max}}) \to H^{m-[\bss]}(M;E)$ is \emph{Fredholm} with
  pseudo-inverse denoted $T^{(-1)}$. If one of the $s_i$ or $t_i$ is \lpar strictly\rpar\ positive
  {\rm (i.e., $>0$)}, we also assume that $T$ is $(\bss, \bst)$-\ADN-elliptic. Then its
  Moore-Penrose pseudo-inverse $T^{(-1)} \in \ePS{[\mathbf{-s-t}]}(M; F, E)$.
\end{theorem}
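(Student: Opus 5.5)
The plan is to reduce Theorem \ref{thm.spectral.inv} to the scalar-order spectral invariance of the $\ess$-calculus by conjugating with order-reducing operators. The proof runs in four steps.

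\textbf{Step 1: order reduction.} For each compatible bundle $E_j$, choose an invertible elliptic $\Lambda_j^{r} := (1+\Delta_{E_j})^{r/2}$. Its inverse, and more generally its powers, lie in $\ePS{r}(M;E_j)$. This follows from Theorem \ref{thm.prop.ePS} together with the spectral invariance of the scalar $\ess$-calculus, or by invoking the version of this theorem with $\bss=\bst=0$. Set
\begin{equation*}
  \Lambda_{\bst} := \oplus_j \Lambda_j^{t_j}\,, \qquad \Lambda_{\bss} := \oplus_i \Lambda_i^{s_i}\,.
\end{equation*}
Then
\begin{equation*}
  T_0 := \Lambda_{\bss}^{-1}\, T\, \Lambda_{\bst}^{-1}
\end{equation*}
has entries in $\ePS{0}$, so $T_0 \in \ePS{0}(M;E,F)$.

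\textbf{Step 2: ellipticity and Fredholmness of $T_0$.} If $T$ is $(\bss,\bst)$-\ADN-elliptic, then $\sigma_0(T_0) = \Symb_{\bss,\bst}(T)$ up to the invertible diagonal factors $|\xi|^{-s_i}$ and $|\xi|^{-t_j}$, so $T_0$ is elliptic in the usual sense. By Theorem \ref{thm.regularity}, the maximal domain coincides with $H^{m+[\bst]}$. Hence $T$ is Fredholm between the natural Sobolev spaces, and consequently $T_0$ is Fredholm on $L^2$ after a further conjugation by $\Lambda^{m}$. If all $s_i,t_j=0$, no ellipticity is assumed. In that case I would use that a Fredholm operator in $\ePS{0}$ on $L^2$ is automatically elliptic with invertible limit operator, via Theorem \ref{thm.ADN.Fredholm}.

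\textbf{Step 3: inverting $T_0$ inside the calculus.} Take a parametrix $Q\in\ePS{0}$ from Theorem \ref{thm.prop.ePS}(iv). The limit operator $\In(T_0)$ is invertible, and the inverse of a translation-invariant operator with all indicial operators $\widehat{T_0}(\tau)$ invertible (Theorem \ref{thm.rem.invert}) lies in the suspended $\ess$-calculus. I expect this to be the main obstacle: one must show that $\tau\mapsto \widehat{T_0}(\tau)^{-1}$ has Schwartz-type dependence, so that the correction lies in $\ePSsus{-\infty}{M'}{}$. This is the key input of the $\ess$-calculus from \cite{KNW-22}, and I would quote it there.

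Next, correct $Q$ with $s_0$ of this suspended inverse, so that $QT_0-1$ and $T_0Q-1$ lie in $\Psi^{-\infty}_\maS(M)$. The Moore–Penrose inverse $T_0^{(-1)}$ then differs from $Q$ by an operator of the form
\begin{equation*}
  R_1 + R_2\,, \qquad R_1,R_2 \in \Psi^{-\infty}_\maS\,.
\end{equation*}
This uses the identities
\begin{equation*}
  T_0^{(-1)} = Q - Q(T_0Q-1)\ldots
\end{equation*}
and the fact that the finite-rank projections onto $\ker T_0$ and $(\operatorname{ran} T_0)^\perp$ have Schwartz kernels. The latter holds by elliptic regularity combined with the mapping properties of $\Psi_\maS^{-\infty}$, which give Schwartz decay of kernel elements. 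Hence $T_0^{(-1)}\in\ePS{0}$.

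\textbf{Step 4: undoing the conjugation.} Finally, $T^{(-1)} = \Lambda_{\bst}^{-1}\,T_0^{(-1)}\,\Lambda_{\bss}^{-1}$, with the pseudo-inverse taken relative to the conjugated inner products; the $\ker$ and range projections differ only by smoothing Schwartz terms. Its $(j,i)$ entry has order $-t_j-s_i$, giving $T^{(-1)}\in\ePS{[-\bss-\bst]}(M;F,E)$.
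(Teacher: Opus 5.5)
Your proof is correct and takes a genuinely different route from the paper.

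\textbf{The paper's route.} It treats only scalar orders ($E=F$, $\bss$ constant, $\bst=0$). It observes that $p_\maN\in\ePS{-\infty}$ and that $p_\maN+T^{*}T$ is elliptic, self-adjoint and invertible. It then applies the already-known invertible-case spectral invariance to $p_\maN+T^{*}T$ and concludes from $T^{(-1)}=(p_\maN+T^{*}T)^{-1}T^{*}$.

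\textbf{Your route.} You conjugate to order zero. You use the invertible-case result only for the limit operator $\In(T_0)$ on the cylinder, to upgrade the parametrix so that $R:=1-QT_0$ and $R':=1-T_0Q$ lie in $\Psi^{-\infty}_\maS$. You then absorb the finite-rank defects via
\begin{equation*}
  T_0^{(-1)} \seq Q - QP_2 + RQ - RP_1Q + R\,T_0^{(-1)}R'\,.
\end{equation*}
The last term lies in $\Psi^{-\infty}_\maS$ because $T_0^{(-1)}$ is bounded on $L^2$ and $R,R'$ have Schwartz kernels. Step 4 can be made precise by the identity $T^{(-1)}=(1-P_1')G'(1-P_2')$, where $G':=\Lambda_{\bst}^{-1}T_0^{(-1)}\Lambda_{\bss}^{-1}$ and $P_1',P_2'$ are the orthogonal projections onto $\ker T$ and $(\operatorname{ran}T)^{\perp}$.

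\textbf{What each buys.} Your argument is longer, but it handles general Douglis--Nirenberg multi-orders. It also proves that kernel and cokernel elements are Schwartz, via $u=Ru$ for $u\in\ker T_0$ and $v=R'^{*}v$ for $v\in\ker T_0^{*}$. That is exactly what is needed for the projections to lie in $\Psi^{-\infty}_\maS$. The paper deduces $p_\maN\in\ePS{-\infty}$ from $\maN\subset H^{\infty}(M)$ alone, which gives no decay along the ends. The paper's argument gains brevity at the cost of leaving that step implicit.

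\textbf{A repair needed in Step 1.} Theorem \ref{thm.prop.ePS} together with the invertible (or order-zero) case only gives the even-order reducers $(1+\Delta)^{k}$, $k\in\ZZ$, and their inverses. Already $\bss=\bst=(1,0)$ requires an order-one reducer such as $(1+\Delta)^{1/2}$. You can either invoke complex powers in the $\ess$-calculus, or take $\Lambda^{r}:=1+\imath D$ with $D\in\iPS{r}$ formally self-adjoint and elliptic. Such a $D$ is self-adjoint on $H^{r}$ by elliptic regularity, so $1+\imath D$ is invertible, and the invertible case then gives $(\Lambda^{r})^{-1}\in\ePS{-r}$.
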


This result was proved in case $T$ is actually invertible in the above mentioned publications,
so let us include a proof for the statement in case $T$ is Fredholm, using the corresponding
statement for ``$T$ invertible''.

\begin{proof}
  We can assume that $E = F$ and $\bss$ and $\bst=0$ are constant, so we work with the
  usual order (this is the only case used in this paper).
  Let us notice that $T$ is elliptic (if it is of order zero, this
  follows from the Fredholm property, Theorem \ref{thm.ADN.Fredholm}). Elliptic regularity,
  Theorem \ref{thm.regularity} gives that its kernel $\maN$ consists of functions in
  $H^{\infty}(M)$ and hence the orthogonal projection onto $p_{\maN} \in \ePS{-\infty}(M; E)$.
  Then $p_{\maN} + T^{*}T$ is invertible and hence $(p_{\maN} + T^{*}T)^{-1}
  \in \ePS{[-2\bss]}(M; E)$. The desired result follows from $T^{(-1)}
  = (p_{\maN} + T^{*}T)^{-1}T^{*} \in \ePS{[-2\bss]}(M; E)$.
\end{proof}

These results are true (and were known for a long time) if $M$ closed,
in which case we replace $\ePS{m}(M; E)$ with $\Psi^{m}(M; E)$.

\section{\cn Green formulas and invertibility of $\bsXi$ on cylinders}
\label{sec.Green2}

We now extend the Green formulas and the results in Section \ref{sec.Green} to the
case of a \emph{cylinder.} We then use these results to prove the invertibility of
our generalized Stokes operator $\bsXi := \bsXi_{V, V_{0}}$ (Equation \eqref{eq.def.bsXi})
on a cylinder.

{\cn We thus assume in this section that $M := \mfkS \times \RR$, where $\mfkS$ is a
closed manifold (i.e., smooth, compact, without boundary).
Moreover, all other objects used in this section will be translation
invariant, thus $\Omega = \Omega' \times \RR$ and $V$ and $V_{0}$ descend to functions
on $\mfkS$, denoted, respectively, $\newtilde V$ and $\newtilde V_{0}$
(see Equation \eqref{eq.just.new.notation} for a justification of this notation).}
Explicitly, we have $V(x', t) = \newtilde V(x')$ and $V_{0}(x', t) = \newtilde V_{0}(x')$
for $x' \in \mfkS$ and $t \in \RR$.
The results of this section will be used for $\mfkS$ possibly disconnected, so we
\emph{will not assume that $\mfkS$ is connected,} although the proofs of the results
in this section can be reduced immediately to the connected case. Note also that we
are continuing to assume that $M$ is a manifold with cylindrical ends, albeit of a
very particular kind. Recall that the scalar product on $L^{2}(M; E)$ is
\begin{equation*}
  (f,g)_{\Omega} \ede \int_{\Omega} f(x) \cdot g(x)\, \dvol_{g}(x)\,.
\end{equation*}
(We have $(f,g)_{\Omega} \ede \int_{\Omega} f(x) \overline{g(x)}\, \dvol_{g}(x)$
if $f$ and $g$ are functions.) Our convention is that
our scalar products are conjugate linear in the {\it second variable} and
linear in the first.

\subsection{Differential operators on cylinders and their indicial operators}
We now make explicit the differential operators studied in the
previous subsection for our manifold $M = \mfkS \times \RR$ (a cylinder). We then use these
explicit formulas to study the associated indicial operators and prove that they
have the $L^{2}$-unique continuation property.

Let $p_{i}$, $i=1, 2$ be the projections of $M = \mfkS \times \RR$ onto $\mfkS$ and,
respectively, $\RR $. Let $p_{1}^{*}(T\mfkS)$ be the {\it pull-back} of the tangent
vector bundle to $\mfkS$ to $M$. It coincides with the vertical tangent bundle to
the fibers of $\mfkS \times \RR \to \RR$. The pull-back $p_{2}^{*}(T\RR)$ is defined
similarly. We will use the following decomposition:
\begin{equation}\label{eq.decomp.TM}
  TM = p_{1}^{*}(T\mfkS) \oplus p_{2}^{*}(T\RR) = p_{1}^{*}(T\mfkS) \times \RR\,.
\end{equation}
Accordingly, any section $X$ of $TM$ will decompose as $X = (X_{1}, f_{X})$,
where $X_{1}$ is a section of $p_{1}^{*}(T\mfkS) \to M:= \mfkS \times \RR$
and $f_{X} \in \CI(M)$. Thus $X_{1}$ is a smooth family of vertical vector fields
on $M$ (i.e., tangent to the submanifolds $\mfkS \times \{t\}$, $t \in \RR$).
Similarly, $f_{X}$ corresponds to $f_{X} \pa_{t}$, where $t$ is the variable on $\RR$.

We shall denote with a `prime' (i.e., with ${}^{\prime}$) the ``vertical'' objects
associated to the fibers $\mfkS \times \{t\}$, $t \in \RR$, of the projection $M \to \RR$.
We shall sometimes write simply $T\mfkS$ instead of its pull-back $p_{1}^{*}T\mfkS$.
We shall do that especially when considering sections of the latter bundle, thus
\begin{equation*}
  \CI(M; T\mfkS) \ede \CI(M; p_{1}^{*}T\mfkS)\,.
\end{equation*}
Let $\nabla f := (df)^{\sharp}$ be the usual gradient.
If $f \in \CI(M)$ and $x = (x', t) \in M = \mfkS \times \RR$, then
$\nabla' f(x', t)$ is the gradient at $x'$ of the restriction
$f\vert_{\mfkS \times \{t\}}$, $t \in \RR$, called the {\it vertical gradient.}
The vertical gradient $\nabla' f$ is thus
a section of $p_{1}^{*} T\mfkS$.
Similarly, $d'$ is the {\it vertical differential} and $\Def'$ is the {\it
vertical deformation tensor}.
In particular, the vertical gradient $\nabla' : \CI(M) \to \CI(M; T\mfkS)$ is
given by $\nabla'(f) = (d'f)^{\sharp}$.

\begin{proposition}\label{prop.grad.prod}
  Let $M = \mfkS \times \RR$ and let $\nabla'$ be the vertical part of the gradient.
  In terms of the decomposition of Equation \eqref{eq.decomp.TM}, we obtain that
  the gradient of $f \in \CI(M)$, $M = \mfkS \times \RR$,
  is $\nabla f = (\nabla'f, \pa_{t} f)$. In matrix notation, we have
  \begin{equation}\label{eq.prod.nabla}
    \nabla \seq \cvector{\nabla'}{\pa_{t}} : \CI(M) \to
    \begin{array}{c}
      \CI(M; T\mfkS)\\
      \oplus\\
      \CI(M)
    \end{array}
  \end{equation}
\end{proposition}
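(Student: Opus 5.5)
The plan is to verify the formula pointwise, working in product coordinates. The cylinder $M = \mfkS \times \RR$ carries the product metric $g = g_{\mfkS} + (dt)^{2}$, so the approach is to compute $df$, split it according to the dual of the decomposition \eqref{eq.decomp.TM}, and then apply $\sharp$.

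First, fix $(x', t) \in M$ and write $T_{(x',t)}^{*}M = T_{x'}^{*}\mfkS \oplus \RR\, dt$. This splitting is orthogonal, because the metric is a product. In this splitting, $df = d'f + (\pa_{t} f)\, dt$, where $d'f$ is the differential of the restriction $f\vert_{\mfkS \times \{t\}}$.

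Next, use that the musical isomorphism of a product metric respects the orthogonal decomposition. Concretely, $\sharp$ restricted to $T^{*}\mfkS$ is the musical isomorphism of $g_{\mfkS}$, with values in $p_{1}^{*}T\mfkS$, and $dt^{\sharp} = \pa_{t}$ since $g(\pa_{t}, \pa_{t}) = 1$ and $\pa_{t} \perp T\mfkS$. Therefore
\[
\nabla f = (df)^{\sharp} = (d'f)^{\sharp} + (\pa_{t} f)\, \pa_{t} = (\nabla' f, \pa_{t} f)
\]
in the notation $X = (X_{1}, f_{X})$. This is exactly the matrix form \eqref{eq.prod.nabla}.

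The only point requiring care is the claim that $\sharp$ preserves the splitting. I would check it directly on a test vector $Y = (Y_{1}, c\, \pa_{t})$, computing $g\big((d'f)^{\sharp} + (\pa_{t} f)\pa_{t}, Y\big) = d'f(Y_{1}) + c\, \pa_{t} f = df(Y)$. This confirms that $(d'f)^{\sharp} + (\pa_{t} f)\pa_{t}$ is the vector dual to $df$. Everything else is routine.
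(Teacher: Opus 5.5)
Your proposal is correct and follows essentially the same route as the paper: the paper's proof consists of noting that $df = d'f + \pa_t f\, dt$ and then applying $\sharp$. Your test-vector check that $\sharp$ respects the orthogonal splitting for the product metric simply makes explicit a step the paper leaves implicit.
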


This follows by noticing that $df = d'f + \pa_t f dt$, where $t \in \RR$ is
the canonical variable on the cylinder $\mfkS \times \RR$.

\begin{proposition} \label{prop.Def.prod}
  Let $M = \mfkS \times \RR$ and let $\Def'$ be the vertical deformation operator
  (acting just on the $\mfkS$-directions) and similarly $d'$ is the vertical differentiation.
  Using the decomposition \eqref{eq.decomp.TM}, the deformation operator $\Def$ becomes
  \begin{equation}\label{eq.Def.prod}
    \Def \seq
    \left( \begin{array}{cc}
    \Def' & 0\\
    \frac1{\sqrt{2}} \sharp \pa_{t} & \frac1{\sqrt{2}}d'\\
    0 & \pa_{t}
    \end{array}\right) \ : \
    \begin{array}{c}
    \CI(M; T\mfkS)\\
    \oplus\\
    \CI(M)
    \end{array} \ \rightarrow \ \begin{array}{c}
    \CI(M; T^{*\otimes 2}\mfkS)\\
    \oplus\\
    \CI(M; T^{*}\mfkS)\\
    \oplus\\
    \CI(M) \,.
  \end{array}
\end{equation}
\end{proposition}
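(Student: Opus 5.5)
We compute $\Def X$ directly from the defining formula \eqref{eq.def.Def}, $\Def(X)(Y,Z) = \frac12[(\nabla^{LC}_{Y}X)\cdot Z + (\nabla^{LC}_{Z}X)\cdot Y]$, using that the product metric $g = g_{\mfkS} + dt^{2}$ has a Levi-Civita connection that splits. Write $X = (X_{1}, f_{X}) = X_{1} + f_{X}\pa_{t}$, with $X_{1}$ a vertical field. The first step records the standard facts for a Riemannian product: $\pa_{t}$ is parallel, so $\nabla^{LC}\pa_{t} = 0$. For vertical fields $Y, Z$ we have $\nabla^{LC}_{Y}Z = \nabla'_{Y}Z$, the vertical Levi-Civita connection applied slice by slice. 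Finally, $\nabla^{LC}_{\pa_{t}}X_{1} = \pa_{t}X_{1}$, the componentwise $t$-derivative in the pulled-back bundle; this holds because the coordinate frames of $\mfkS$ commute with $\pa_{t}$ and the Christoffel symbols mixing the two factors vanish.

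The second step decomposes $T^{*}M\otimes T^{*}M$ according to \eqref{eq.decomp.TM}. On symmetric tensors this gives the blocks $S^{2}T^{*}\mfkS$ (vertical--vertical), the mixed part $T^{*}\mfkS$ (the $(Y,\pa_{t})$ and $(\pa_{t},Y)$ entries, which are equal by symmetry), and the scalar $(\pa_{t},\pa_{t})$ entry. We evaluate $\Def X$ on each block separately.

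For $Y,Z$ vertical, $\nabla^{LC}_{Y}(f_{X}\pa_{t}) = (Yf_{X})\pa_{t}$, which is orthogonal to $Z$. The vertical--vertical block is therefore $\frac12[(\nabla'_{Y}X_{1})\cdot Z + (\nabla'_{Z}X_{1})\cdot Y] = \Def'(X_{1})(Y,Z)$.

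For $Y$ vertical and $Z = \pa_{t}$, we use $(\nabla_{Y}X)\cdot\pa_{t} = Y f_{X} = d'f_{X}(Y)$ and $(\nabla_{\pa_{t}}X)\cdot Y = (\pa_{t}X_{1})\cdot Y = (\pa_{t}X_{1})^{\sharp}(Y)$. The mixed entry is then $\frac12[\sharp\pa_{t}X_{1} + d'f_{X}](Y)$.

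For $Y = Z = \pa_{t}$, we get $\Def(X)(\pa_{t},\pa_{t}) = (\nabla_{\pa_{t}}X)\cdot\pa_{t} = \pa_{t}f_{X}$.

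The last step fixes the normalization so that the middle row carries $\frac1{\sqrt2}$. The mixed component appears twice in the full tensor, at $(Y,\pa_{t})$ and at $(\pa_{t},Y)$. The middle summand $\CI(M;T^{*}\mfkS)$ must therefore be identified with the mixed symmetric part through the isometric embedding $\omega \mapsto \frac{1}{\sqrt2}(\omega\otimes dt + dt\otimes\omega)$, which makes the decomposition of $S^{2}T^{*}M$ orthogonal and norm preserving. Under this embedding, a tensor whose mixed entries equal $\frac12\omega$ corresponds to $\frac{1}{\sqrt2}\omega$. This converts $\frac12(\sharp\pa_{t}X_{1} + d'f_{X})$ into $\frac1{\sqrt2}\sharp\pa_{t}X_{1} + \frac1{\sqrt2}d'f_{X}$, which is exactly the matrix \eqref{eq.Def.prod}.

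The main point requiring care is this normalization convention: one must state that the target decomposition is isometric, since this is what later makes $\Defstar$ equal to the transpose-adjoint of the matrix. The connection computations themselves are routine.
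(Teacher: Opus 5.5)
Your proposal is correct and follows essentially the same route as the paper. The paper writes the product formula for the Levi-Civita connection, identifies $S^{2}T^{*}M$ isometrically with $S^{2}T^{*}\mfkS \oplus T^{*}\mfkS \oplus \CC$ (which is where the $\frac{1}{\sqrt2}$ factors come from), computes $(\nabla_{Y}X)\cdot Z$ componentwise, and reads off the matrix from \eqref{eq.def.Def}; your block-by-block evaluation and explicit normalization of the mixed entries spell out the final step that the paper leaves to the reader.
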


\begin{proof}
  Let $X, Y, Z$ be three smooth vector fields on $M = \mfkS \times \RR$.
  The decomposition $TM = p_{1}^{*}(T\mfkS) \oplus p_{2}^{*}(T\RR)
  = p_{1}^{*}(T\mfkS) \times \RR$ of Equation \eqref{eq.decomp.TM}
  gives $X = (X_{1}, f_{X})$,
  where $X_{1}$ is a section of $p_{1}^{*}(T\mfkS) \to M:= \mfkS \times \RR$
  and $f_{X} \in \CI(M)$, as explained above. We decompose similarly
  $Y = (Y_{1}, f_{Y})$ and $Z = (Z_{1}, f_{Z})$. The Levi-Civita
  connection $\nabla_{X}^{LC, M}$ on $M$ is then, by unicity,
  \begin{equation}\label{eq.prod.LC}
    \nabla_{X}^{LC, M} Y \seq (\nabla_{X_{1}}^{LC, \mfkS} Y_{1} + f_{X} \pa_{t}Y_{1},
    X_{1}(f_{Y}) + f_{X} \pa_{t} f_{Y})\,.
  \end{equation}

  The formula \eqref{eq.decomp.TM} gives that the symmetric tensor product
  $S^{2}T^{*}M \subset T^{*\otimes 2}M$ can be written as
  \begin{equation}\label{eq.sym.sum}
    S^{2}T^{*}M \seq S^{2}p_{1}^{*}T^{*}\mfkS \oplus p_{1}^{*}T^{*}\mfkS \oplus \CC\,.
  \end{equation}
  (The factors involving $\sqrt{2}$ appear in order to make the above identification
  isometric.) With these identifications, we have
  \begin{equation}\label{eq.nabla.prod}
    \big(\nabla_{Y}^{LC, M}X\big) \cdot Z \seq
    \big(\nabla_{Y_{1}}^{LC, \mfkS}X_{1}\big) \cdot Z_{1} + f_{Y}(\pa_{t}X_{1})\cdot Z_{1}
      + Y_{1}(f_{X})f_{Z} + f_{Y}f_{Z}\pa_{t}f_{X}\,.
  \end{equation}
  Formula \ref{eq.def.Def} then yields the result.
\end{proof}

Recall that if $P \in \ePSsus{m}{\mfkS}{E, F} := \ePS{m}(\mfkS \times \RR; E, F)^{\RR}$,
then the Fourier transform yields the {\it indicial operators} $\widehat{P}(\tau)
\in \Psi^{m}(\mfkS; E, F)$, $\tau \in \RR$, (see Definition \ref{def.indicial.fam}).
We can now prove the following result.

\begin{proposition}\label{prop.uc.nabla}
  Let $M = \mfkS \times \RR$ and let $\nabla'$ be the vertical gradient.
  In terms of the decomposition of Equation \eqref{eq.decomp.TM}, we obtain that
  the indicial operator $\widehat \nabla (\tau)$ is given by
  \begin{equation*}
    \widehat \nabla (\tau) \seq \cvector{\nabla'}{\imath \tau} : \CI(\mfkS) \to
    \CI(\mfkS; TM) \ \simeq
    \begin{array}{c}
      \CI(\mfkS; T\mfkS)\\
      \oplus\\
      \CI(\mfkS)\,.
    \end{array}
  \end{equation*}
  Let $\Omega' \subset \mfkS$ be an open subset.
  We have that $\widehat \nabla (\tau)$ is injective on $L^{2}(\Omega')$
  for $\tau \neq 0$ and $\widehat \nabla (\tau)$ satisfies the $L^{2}$--unique
  continuation on $L^{2}(\Omega')$
  property for all $\tau$. The kernel of $\widehat \nabla (0)$ on
  $L^{2}(\Omega')$ consists of locally constant functions.
\end{proposition}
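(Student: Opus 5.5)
The plan is to compute $\widehat\nabla(\tau)$ first and then read off the injectivity, unique continuation and kernel statements directly from the resulting formula.

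\textbf{Step 1: the formula.} By Proposition \ref{prop.grad.prod}, $\nabla = (\nabla', \pa_{t})^{\top}$ with respect to the decomposition \eqref{eq.decomp.TM}. Apply Definition \ref{def.explicit.FT} to $u = e^{\imath t\tau}u_{0}$, with $u_{0}\in\CI(\mfkS)$. Since $\nabla'$ involves only the $\mfkS$-directions, it commutes with multiplication by $e^{\imath t \tau}$. On the other hand, $\pa_{t}(e^{\imath t\tau}u_{0}) = \imath\tau e^{\imath t\tau}u_{0}$. Multiplying by $e^{-\imath t\tau}$ then gives $\widehat\nabla(\tau)u_{0} = (\nabla' u_{0}, \imath\tau u_{0})$. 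This identity extends by continuity to distributions, so $\widehat\nabla(\tau)$ acts on $L^{2}(\Omega')$ in the distributional sense.

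\textbf{Step 2: injectivity for $\tau\neq 0$.} Let $u_{0}\in L^{2}(\Omega')$ with $\widehat\nabla(\tau)u_{0}=0$. The second component of the formula reads $\imath\tau u_{0}=0$, so $u_{0}=0$ whenever $\tau\neq0$. Injectivity trivially implies the $L^{2}$-unique continuation property (Definition \ref{def-L2-cont}).

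\textbf{Step 3: the case $\tau=0$.} Here the condition reduces to $\nabla' u_{0}=0$, i.e.\ $d'u_{0}=0$ in distribution sense on $\Omega'$. A distribution with vanishing differential on an open set is locally constant; one can see this by mollifying in local charts, or by noting that all first partial derivatives vanish. Hence the kernel consists of locally constant functions, and every locally constant $L^{2}$ function clearly lies in it. For unique continuation, take a connected component of $\Omega'$. If $u_{0}$ is in the kernel and vanishes on a nonempty open subset of that component, then the constant value of $u_{0}$ on the component is zero. So $u_{0}=0$ there, which is the unique continuation property on each component, as Definition \ref{def-L2-cont} requires.

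The only slightly delicate point is the regularity claim that $d'u_{0}=0$ for $u_{0}\in L^{2}$ forces local constancy. This is a standard fact about distributions on connected open sets, so I expect no real obstacle anywhere in the argument.
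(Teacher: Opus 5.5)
Your proposal is correct and follows essentially the same route as the paper: read off $\widehat\nabla(\tau)=(\nabla',\imath\tau)^{\top}$ from Proposition \ref{prop.grad.prod} and the definition of the indicial operator. Then use the second component for injectivity when $\tau\neq 0$, and local constancy on each connected component of $\Omega'$ when $\tau=0$. Your version is slightly more careful than the paper's, which argues only with smooth $f$, whereas you justify that $d'u_{0}=0$ forces local constancy for $u_{0}\in L^{2}(\Omega')$ in the distributional sense.
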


\begin{proof}
  The operator $\nabla'$ acts only in the vertical directions, and hence
  $\widehat \nabla'(\tau) = \nabla'$ (it is independent of $\tau$).
  On the other hand, $\widehat{(\pa_{t})}(\tau) = \imath \tau$.
  The formula for $\widehat \nabla (\tau)$ follows then from these formulas
  and from Equation \eqref{eq.prod.nabla}.
  For the rest of the proposition, it suffices to assume that $\mfkS$ is connected.
  Let $f \in \CI(\mfkS)$ be such that $\widehat \nabla (\tau) f = 0$.
  This is equivalent to $\nabla ' f = 0$ and $\imath \tau f = 0$. If
  $\tau \neq 0$, then, of course, $f = 0$, and hence $\widehat \nabla (\tau)$,
  besides being injective, satisfies also the $L^{2}$--unique continuation property on
  $\Omega'$. Let us assume then that $\tau = 0$. Then $f$ is constant
  on $\Omega'$. If, moreover, $f$ vanishes on some non-empty subset of some
  connected component $\Omega_{0}'$ of $\Omega'$, then $f = 0$ in $\Omega_{0}'$.
  (This is the $L^{2}$--unique continuation property of $\nabla'$.)
\end{proof}

This result and its proof extend to the $\Def$ operator.

\begin{proposition}\label{prop.uc.Def}
  Let $M = \mfkS \times \RR$ and let $\Def'$ be the vertical deformation
  operator and $d'$ be the vertical exterior derivative.
  In terms of the decomposition of Equation \eqref{eq.decomp.TM},
  the indicial operator $\widehat \Def (\tau)$ (Definition
  \ref{def.indicial.fam}) of the deformation operator $\Def$ on $M$ is given by
  \begin{equation}\label{eq.Def.prod.tau}
    \widehat{\Def} (\tau)\seq
    \left( \begin{array}{cc}
      \Def' & 0\\
      \frac{\imath}{\sqrt{2}} \sharp \tau & \frac1{\sqrt{2}}d'\\
      0 & \imath \tau
    \end{array}\right) \ : \
    \begin{array}{c}
      \CI(\mfkS; T\mfkS)\\
      \oplus\\
      \CI(\mfkS)
    \end{array} \ \rightarrow \ \begin{array}{c}
      \CI(\mfkS; T^{*\otimes 2}\mfkS)\\
      \oplus\\
      \CI(\mfkS; T^{*}\mfkS)\\
      \oplus\\
      \CI(\mfkS) \,.
    \end{array}
  \end{equation}
  Let $\Omega' \subset \mfkS$ be an open subset. We have that
  the operator $\widehat \Def(\tau)$ is injective on
  $L^{2}(\Omega'; TM) \simeq L^{2}(\Omega'; T\mfkS) \oplus L^{2}(\Omega')$
  for $\tau \neq 0$. For $\tau = 0$ we have
  \begin{equation*}
    \ker \widehat \Def (0) \seq \{ (\bsu, c) \in
    \CI(\Omega'; T\mfkS) \oplus \CI(\Omega') : \Def' \bsu = 0\,,
    \ d' c = 0 \}\,.
  \end{equation*}
  In particular, $\widehat \Def(\tau)$ satisfies the $L^{2}$--unique
  continuation property on $\Omega'$ for all $\tau \in \RR$.
\end{proposition}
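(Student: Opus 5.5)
The plan has two parts: derive the formula for $\widehat{\Def}(\tau)$ from Proposition \ref{prop.Def.prod}, then analyze its kernel. For the formula, I would apply Definition \ref{def.explicit.FT} entrywise to the matrix \eqref{eq.Def.prod}. The vertical operators $\Def'$, $d'$ and $\sharp$ act only along $\mfkS$ and commute with multiplication by $e^{\imath t\tau}$, so their indicial operators are themselves (compare \eqref{eq.just.new.notation}). Since $\pa_t(e^{\imath t\tau}u_0)=\imath\tau e^{\imath t\tau}u_0$, we get $\widehat{\pa_t}(\tau)=\imath\tau$. Substituting into \eqref{eq.Def.prod} gives \eqref{eq.Def.prod.tau}, exactly as in the proof of Proposition \ref{prop.uc.nabla}.

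For injectivity, let $(\bsu,c)\in L^2(\Omega';T\mfkS)\oplus L^2(\Omega')$ satisfy $\widehat{\Def}(\tau)(\bsu,c)=0$ in the distributional sense on $\Omega'$. The three rows give
\begin{equation*}
\Def'\bsu=0\,,\qquad \imath\tau\,\bsu^{\sharp}+d'c=0\,,\qquad \imath\tau c=0\,.
\end{equation*}
If $\tau\neq0$, the third equation gives $c=0$, and then the second gives $\bsu^\sharp=0$, so $\bsu=0$. If $\tau=0$, the equations reduce to $\Def'\bsu=0$ and $d'c=0$, which is the stated description of the kernel. Elements of the kernel are smooth: the system $\Def'\bsu=0$ is overdetermined elliptic, since $\Def'^*\Def'$ is elliptic by Corollary \ref{cor.formulas}(iv), and $d'c=0$ forces $c$ to be locally constant. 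This justifies writing $\CI$ in the description of $\ker\widehat{\Def}(0)$.

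For the $L^2$-unique continuation property, the case $\tau\neq0$ is immediate from injectivity. For $\tau=0$, suppose $(\bsu,c)$ is in the kernel and vanishes on a nonempty open subset of a connected component $\Omega'_0$ of $\Omega'$. Then $c$, being constant on $\Omega'_0$, is identically $0$ there. For $\bsu$ I would invoke the $L^2$-unique continuation property of the deformation operator $\Def'$ on $\mfkS$ (from \cite{Grosse-Kohr-Nistor-23}), as already used in the proof of Corollary \ref{cor.e.est}. The reasoning is that $\bsu$ is a Killing field on $\Omega'_0$, and a Killing field vanishing on an open set vanishes on the whole connected component, because it is determined by its value and covariant derivative at a single point.

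The only delicate points are the regularity step, which ensures that distributional kernel elements are smooth, and the precise citation for unique continuation of $\Def'$ on a possibly non-complete open subset $\Omega'\subset\mfkS$. The local Killing-field argument sidesteps both, since it is purely local and needs no completeness.
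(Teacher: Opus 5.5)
Your proposal is correct and follows the paper's proof essentially step for step. The formula comes entrywise from Proposition \ref{prop.Def.prod} using $\widehat{\Def'}(\tau)=\Def'$ and $\widehat{\pa_t}(\tau)=\imath\tau$; injectivity for $\tau\neq 0$ follows from the third row and then the second; and the case $\tau=0$ reduces to unique continuation for $\Def'$ and $d'$. Your additions are the elliptic-regularity argument showing kernel elements are smooth, and a direct Killing-field argument in place of the paper's citation of the $L^{2}$-unique continuation property of $\Def'$. Note that the Killing-field argument is pointwise, so it depends on that smoothness step rather than sidestepping it as your last sentence suggests.
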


\begin{proof}
  The proof of the formula for $\widehat \Def(\tau)$ is similar to the proof
  of the formula for $\widehat \nabla (\tau)$ obtained in the proof of
  Proposition \ref{prop.uc.nabla}.
  Indeed, we notice first that
  the operator $\Def'$ acts only in the vertical directions, and hence
  $\widehat \Def'(\tau) = \Def'$ (it is again independent of $\tau$).
  This formula, together with the formula $\widehat{(\pa_{t})}(\tau) = \imath \tau$
  already discussed (see the proof of Proposition \ref{prop.uc.nabla}) and Equation
  \eqref{eq.Def.prod} then yield the claimed formula for  $\widehat \Def (\tau)$.

  Let $U := \cvector{\bsu}{p} \in \CI(\Omega'; TM)\oplus \CI(\Omega')$ be such
  that $\widehat \Def (\tau) U  = 0$. Our assumption implies, in particular, that
  $\imath \tau p = 0$. Let us assume first that $\tau \neq 0$. Then $p = 0$. The
  second equation then becomes $\sharp \bsu = 0$, and hence $\bsu = 0$ also.
  Thus $\widehat \Def (\tau)$ is injective for $\tau \neq 0$.
  In particular, it satisfies the $L^{2}$-unique continuation property,
  as well.

  It remains to prove that $\widehat \Def (0)$ satisfies the $L^{2}$-unique
  continuation property on $\Omega'$ and to identify its kernel. Again, for the rest of the
  proof, it suffices to assume that $\Omega'$ is connected. Let
  $U = \cvector{\bsu}{p}$, as before. The relation $\widehat \Def (0)U = 0$ is equivalent
  to $\Def' \bsu = 0$ (from the first equation) and $d' p = 0$ (from the second
  equation, the third one being automatically satisfied). This gives the stated
  form for the kernel of $\widehat \Def (0)$. Therefore, if
  $\bsu$ and $p$ vanish on some non-empty open subsets of $\Omega'$,
  then $\bsu = 0$, because $\Def'$ satisfies the $L^{2}$-unique continuation
  property on $\Omega'$, and $p = 0$, because $d'$ satisfies the $L^{2}$-unique
  continuation property on $\Omega'$. Thus $\widehat \Def (0)$ satisfies the
  $L^{2}$-unique continuation property on $\Omega'$ and this completes the proof.
\end{proof}

\subsection{Green formulas and energy estimates for indicial families}
\label{ssec.Green2}
In view of further applications to manifolds with cylindrical ends,
we want to extend the Green and representation formulas and the energy
estimates of Section \ref{sec.Green} (especially, Subsection \ref{ssec.Green1})
to the case of the indicial family $\widehat \bsXi$ of $\bsXi := \bsXi_{V, V_{0}}$
(see Definition \ref{def.indicial.fam}).
{\cn Thus, in this subsection, we continue to consider the cylinder
$M = \mfkS \times \RR$, where $\mfkS$ is a closed manifold.} The results of this section
will be used for $\mfkS$ not necessarily connected, so we \emph{\cn will not
assume that $\mfkS$ is connected,} although the general case can immediately be reduced
to the connected case. Also, we let $\newtilde V (x') = \In V(x', t) = V(x', t) \ge 0$ and
$\newtilde V_{0} (x')= \In V_{0}(x', t)  = V_{0}(x', t) \ge 0$ be smooth, translation invariant
functions (this notation is justified by Equation \eqref{eq.just.new.notation}). Again, this
choice of $V$ and $V_{0}$ satisfies all our previous assumptions.
(Note that in some of our previous papers, the notation $\tilde V$ and $\tilde V_{0}$
was used instead of $\newtilde V$ and $\newtilde V_{0}$. However, in this paper,
the notation $\tilde u$ means $\tilde u(x) := u(-x)$.)

Recall the Fourier transform (in the variable $t \in \RR$)
$\maF : L^{2}(\mfkS \times \RR) \to L^{2}(\mfkS \times \RR)$,
$\hat u = \maF u$, which satisfies
\begin{equation}\label{eq.F.sp}
  2 \pi (u, v)_{M} \seq (\maF u, \maF v)_{M} \seq \int_{\RR}
  (\hat u(\tau), \hat v(\tau))_{\mfkS}\, d\tau \,.
\end{equation}
Note that the first equality in Equation \eqref{eq.F.sp} is a consequence of
the Parceval theorem for the Fourier transform, and we use the simplified
notation $\hat u(\tau)$ instead of $\hat u(x,\tau)$ for $(x,\tau )\in \mfkS\times \RR =M$.
We shall use the notations $\hat u(\tau)$ and $\maF u(\tau)$ interchangeably.

For any $P \in \ePS{m}(M; E)^{\RR}$ we have
\begin{equation}\label{eq.def.hatP}
  \widehat{P u}(\tau) \ede (\maF P u)(\tau) \seq \widehat{P}(\tau) \hat u(\tau)\,.
\end{equation}
This gives
\begin{equation}\label{eq.F.P}
  2 \pi(P u, v)_{M} \seq (\maF P u, \maF v)_{M} \seq
  \int_{\RR} (\widehat{P}(\tau) \hat u(\tau), \hat v(\tau))_{\mfkS} \, d\tau \,.
\end{equation}

For $\phi \in L^{\infty}(\RR)$, we shall need the Fourier multiplier $\MM_{\phi}$
defined by $\maF \MM_{\phi} = \phi \maF$, or, explicitely,
\begin{equation}\label{eq.def.Fm}
  [\maF \MM_{\phi} u](\tau) \ede \phi(\tau) \hat u(\tau)\,.
\end{equation}
All of the above definitions and relations extend to vector bundles that are
lifted from vector bundles on $\mfkS$ (and hence they come with a natural action of
$\RR$ by translations). We then have that the Fourier multiplier operators commute
with the operators in $\ePSsus{m}{M}{E, F} := \ePS{m}(M; E, F)^{\RR}$ (recall that
$M = \mfkS \times \RR$).

Also, in this subsection, we let $\Omega' \subset \mfkS$ be a smooth open subset with
boundary $\Gamma':=\pa \Omega'$ (a smooth manifold). Our results are formulated in
general, but there is no loss of generality to assume that $\Omega'$ is connected
(that is, a smooth domain).
Also, recall that, in this section,
$\newtilde V (x') = V(x', t)$ and $\newtilde V_{0} (x') = V_{0}(x', t)$, as motivated
by Equation \eqref{eq.just.new.notation}. In particular, $V = \newtilde V$ and $V_{0} =
\newtilde V_{0}$ are translation invariant. The main equation of Notation \ref{not.def.UWB}
becomes.
\begin{equation}\label{eq.def.UWBtau}
  \begin{gathered}
    U \ede (\bsu \ \ p)^{\top }\,, \ W \ede (\bsw \ \ q)^{\top }
    \in H^1(\Omega'; TM) \oplus L^{2}(\Omega')\,,\\
    \qquad \mathfrak v \ede ({\newtilde V} \bsu, \bsw)_{\Omega'}
    - ({\newtilde V_{0}}p, q)_{\Omega'}\,, \quad \mbox{and}\\
    B_{\Omega', \tau}(U, W) \ede 2(\widehat \Def(\tau) \bsu,
    \widehat \Def(\tau) \bsw)_{\Omega'}
    + (\widehat \nabla^{*}(\tau) \bsu, q)_{\Omega'}
    + (p, \widehat\nabla^{*}(\tau)\bsw)_{\Omega'} + \mathfrak v \,.
  \end{gathered}
\end{equation}
Also, in this subsection, we let $(\ , \ )' := (\ , \ )_{\pa \Omega'},$
the scalar product on the boundary $\Gamma' = \pa \Omega'$ of $\Omega'$.

Recall that $M = \mfkS \times \RR$ and $\Omega = \Omega' \times \RR$ and all
the operators of the form $\widehat{P}(\tau)$ are obtained from the Fourier transform
with respect to the action of $\RR$ by translation, see, for example,
Equation \eqref{eq.def.hatP}. Proposition \ref{prop.Green} becomes:

\begin{proposition} \label{prop2.Green.tau}
  Let $U^{\top} := (\bsu \ \ p)$ and  $W^{\top}
  := (\bsw \ \ q)$ belong to $H^{2}(\Omega'; TM) \oplus H^{1}(\Omega')$
  and let $(\ , \ )' := (\ , \ )_{\pa \Omega'}$. Then we have the following formulas:
  \begin{enumerate}[\rm (i)]
  \item 
    $(\widehat \bsXi(\tau) U, W \big )_{\Omega'}
    \seq B_{\Omega', \tau} (U, W) + \big(\widehat \bop(\tau) U, \bsw\big)'.$
  \item
    $\big ( \widehat \bsXi(\tau) U, W \big )_{\Omega'}
    - \big (U, \widehat \bsXi(\tau)  W \big )_{\Omega'}
    \seq (\widehat \bop (\tau)  U, \bsw)'
    - \big(\bsu , \widehat \bop (\tau) W\big)'\,.$
  \end{enumerate}
  If $\pa \Omega'$ is empty, then we drop the the inner products $(\ , \ )'$
  on $\pa \Omega'$.
\end{proposition}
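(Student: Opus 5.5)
The plan is to deduce both formulas from Proposition \ref{prop.Green} applied on the cylinder $M = \mfkS \times \RR$ with $\Omega = \Omega' \times \RR$. The bridge is the Parseval identity \eqref{eq.F.sp}, which passes between inner products on $\Omega$ and integrals over $\tau$ of inner products on $\Omega'$, together with the multiplicativity \eqref{eq.def.hatP} of the Fourier transform on translation invariant operators. Since (ii) follows from (i) by sesquilinearity of $B_{\Omega',\tau}$, exactly as in Proposition \ref{prop.Green}, the work lies in (i).

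For (i), I would argue directly rather than by Fourier inversion, mimicking the proof of Proposition \ref{prop.Green}. By density and continuity of all the terms on $H^{2}(\Omega';TM)\oplus H^{1}(\Omega')$, it suffices to take $U,W$ smooth up to $\pa\Omega'$.

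Fix $\tau$ and apply Proposition \ref{prop.bdry.op} on $\Omega'\subset\mfkS$ to the first order operators $\widehat\Def(\tau)$ and $\widehat\nabla(\tau)$. The $\tau$-dependent parts of these operators are of order zero (by Propositions \ref{prop.uc.nabla} and \ref{prop.uc.Def}), so their boundary operators $\pa^{P}_{\bsnu}=-\imath\sigma_1(P;\bsnu^\sharp)$ coincide with those of $\Def$ and $\nabla$. Here $\bsnu$ is tangent to $\mfkS$, being the outer normal of $\Omega'\times\RR$.

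This gives
\begin{equation*}
(2\widehat\Def(\tau)^*\widehat\Def(\tau)\bsu,\bsw)_{\Omega'}=2(\widehat\Def(\tau)\bsu,\widehat\Def(\tau)\bsw)_{\Omega'}-2(\Dnu\bsu,\bsw)'
\end{equation*}
and
\begin{equation*}
(\widehat\nabla(\tau)p,\bsw)_{\Omega'}=(p,\widehat\nabla^*(\tau)\bsw)_{\Omega'}+(p\bsnu,\bsw)'.
\end{equation*}
The first uses Corollary \ref{cor.formulas}(i) applied to the indicial operators. The potential terms contribute $\mathfrak v$ with no boundary term.

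Summing these, and using $\widehat\bsXi(\tau)=\begin{pmatrix}2\widehat\Def^*\widehat\Def+\newtilde V & \widehat\nabla\\ \widehat\nabla^* & -\newtilde V_0\end{pmatrix}$ (from multiplicativity, Theorem \ref{thm.complet.indicial}), yields (i). In that formula $\widehat\bop(\tau)=-2\widehat{\Dnu}(\tau)+\bsnu$, with $\widehat{\Dnu}(\tau)$ the restriction to $\pa\Omega'$ of the indicial operator of $\Dnu$. I also need $\widehat{\Def^*}(\tau)=\widehat\Def(\tau)^*$ on $L^2(\mfkS)$, which follows from Parseval \eqref{eq.F.P} by comparing $(Pu,v)_M$ and $(u,P^*v)_M$ for all $u,v$ and using continuity in $\tau$.

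The main obstacle is identifying the boundary term. Corollary \ref{cor.formulas}(i) gives $\pa_{\bsnu}^{\Defstar}\Def=-\Dnu$, but here I need the indicial analogue $\pa_{\bsnu}^{\widehat\Def(\tau)^*}\widehat\Def(\tau)=-\widehat{\Dnu}(\tau)$. This holds because $\pa_{\bsnu}^{\Defstar}$ is a translation invariant bundle map, hence commutes with the Fourier transform by \eqref{eq.just.new.notation}.

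An alternative, more robust route is available if the direct computation becomes messy. Apply Proposition \ref{prop.Green}(i) to $U=e^{\imath t\tau}U_0\,\phi(t)$ and $W=e^{\imath t\tau}W_0\,\psi(t)$, with $\phi,\psi$ cutoffs in $t$ spread out so that $\phi\to 1$, $\psi\to 1$. After normalizing by the length of the $t$-support, let the spread go to infinity; the commutator terms involving $\phi',\psi'$ then vanish in the limit, which recovers (i).
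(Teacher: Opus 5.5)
Your proposal is correct, but your main route differs from the argument the paper writes out. The paper's remark that the proof is ``similar to that of Proposition \ref{prop.Green}'' points toward your route, but it does not integrate by parts on $\Omega'$ at a fixed $\tau$. Instead, it applies Proposition \ref{prop.Green}(i) on $\Omega'\times\RR$ to arbitrary $U_{0},W_{0}$ and converts every term into a $\tau$-integral via Parseval \eqref{eq.F.P}. It then replaces $U_{0}$ by $\MM_{\phi}U_{0}$ for arbitrary $\phi\in L^{\infty}(\RR)$ to localize the identity in $\tau$, and finally chooses $U_{0},W_{0}$ with $\widehat U_{0}(\tau)=U$ and $\widehat W_{0}(\tau)=W$. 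Part (ii) is then obtained from the Hermitian symmetry of $B_{\Omega',\tau}$, exactly as you propose.

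The paper's route buys the boundary operator $\widehat\bop(\tau)$, the identity $\widehat{\Defstar}(\tau)=\widehat\Def(\tau)^{*}$, and the Green formula for the cylinder all for free, as Fourier transforms of objects already handled on $\Omega'\times\RR$. It needs no symbol bookkeeping for indicial operators. The price is a measure-theoretic step: the localized identity holds only for a.e.\ $\tau$, so one needs continuity in $\tau$, and one must realize the given $U,W$ as $\widehat U_{0}(\tau),\widehat W_{0}(\tau)$.

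Your direct fixed-$\tau$ argument on the compact manifold $\mfkS$ avoids all of this. In exchange it requires the two checks you correctly single out. First, $\pa_{\bsnu}^{\widehat\Def(\tau)}$ and $\pa_{\bsnu}^{\widehat\nabla(\tau)}$ must agree with $\pa_{\bsnu}^{\Def}$ and $\pa_{\bsnu}^{\nabla}$; this holds because the $\tau$-dependent terms have order zero and $\bsnu^{\sharp}$ has no $dt$-component. Second, you need $\widehat{\Defstar}(\tau)=\widehat\Def(\tau)^{*}$. This also follows directly: if $P=\sum_{j}a_{j}\pa_{t}^{j}$, then $\widehat P(\tau)=\sum_{j}a_{j}(\imath\tau)^{j}$ and $\widehat{P^{*}}(\tau)=\sum_{j}a_{j}^{*}(-\imath\tau)^{j}$.

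Your fallback, using the spreading cutoffs $e^{\imath t\tau}\phi(t/R)$, is essentially a hands-on version of the paper's Fourier-multiplier localization.
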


\begin{proof}
  The proof is similar to that of Proposition \ref{prop.Green} (sometimes
  it follows from that proposition using the Fourier transform).s
  Let then $U_{0} := (\, \bsu_{0} \ \ p_{0} \, )^{\top},
  W_{0} := (\, \bsw_{0} \ \ q_{0} \, )^{\top} \in
  H^{2}(\Omega' \times \RR; TM) \oplus H^{1}(\Omega' \times \RR)$,
  as in Notation \eqref{not.def.UWB} with $\Omega$ replaced with
  $\Omega' \times \RR$. The first part of Proposition \ref{prop.Green} and
  Equation \eqref{eq.F.P} give then that
  \begin{multline*}
    \frac1{2\pi}\int_{\RR} ( \widehat \bsXi(\tau) \widehat U_{0}(\tau),
    \widehat W_{0}(\tau) \big )_{\Omega'} \, d\tau
    \seq (\bsXi U_{0}, W_{0})_{\Omega} \seq B_{\Omega} (U_{0}, W_{0})
    + (\bop U_{0}, \bsw_{0})'\\
    \seq \frac1{2\pi} \int_{\RR} \big[ B_{\Omega', \tau}
    (\widehat U_{0}(\tau), \widehat W_{0}(\tau))
    + (\widehat \bop(\tau) \widehat U_{0}(\tau), \hat \bsw_{0} (\tau))'\big] \, d\tau \,.
  \end{multline*}
  By replacing $U_{0}$ with its image $\MM_{\phi}U_{0}$ through the
  Fourier multiplier $\MM_{\phi}$, we obtain
  \begin{multline*}
    \int_{\RR} \phi(\tau) (\widehat \bsXi(\tau) \widehat U_{0}(\tau),
    \widehat W_{0}(\tau) \big )_{\Omega'} \, d\tau\\
    \seq \int_{\RR} \phi(\tau) \big[ B_{\Omega', \tau} (\widehat U_{0}(\tau), \widehat W_{0}(\tau))
    + (\widehat \bop(\tau) \widehat U_{0}(\tau), \hat \bsw_{0} (\tau))'\big] \, d\tau\,.
  \end{multline*}
  Since $\phi \in L^{\infty}(\RR)$ is arbitrary, we obtain, for
  every $\tau \in \RR$, that
  \begin{equation*}
    (\widehat \bsXi(\tau) \widehat U_{0}(\tau), \widehat W_{0}(\tau) \big )_{\Omega'}
    \seq B_{\Omega', \tau} (\widehat U_{0}(\tau), \widehat W_{0}(\tau))
    + (\widehat \bop(\tau) \widehat U_{0}(\tau), \hat \bsw_{0} (\tau))'\,.
  \end{equation*}
  The point (i) (for any fixed $\tau$) then follows by choosing $U_{0}$ and $W_{0}$
  such that $\widehat U_{0}(\tau) = U$ and $\widehat W_{0}(\tau) = W$.

  The second point follows immediately from (i) using that $B_{\Omega', \tau}$
  is sesquilinear (following the same method of proof as that for Proposition
  \ref{prop.Green}).
\end{proof}

It follows from the definition of indicial operators (Definition
\ref{def.indicial.fam}) that
\begin{equation}\label{eq.def.Xitau}
  \widehat \bsXi(\tau) \seq
  \left( \begin{array}{cc}
    2\widehat \Defstar (\tau) \widehat \Def (\tau) + \newtilde V
    & \widehat \nabla  (\tau)\\
    \widehat \nabla^{*}  (\tau)  &  - \newtilde V_{0}
  \end{array}\right)\,,
\end{equation}
where, we recall, we are identifying $\newtilde V = \widehat V (\tau)$ and
$\newtilde V_{0} = \widehat V_{0} (\tau)$.
(The formula \eqref{eq.def.Xitau} is valid for any manifold with straight cylindrical ends,
not just for our manifold $M := \mfkS  \times \RR$.) The last proposition gives then the
following lemma analogous to Corollary \ref{cor.e.est}. Recall that, we are
assuming that $V, V_{0} \ge 0$. Also, recall that $\Omega' \subset \mfkS$ is a smooth
domain.

\begin{lemma}\label{lemma2.e.est}
  Let $\tau \in \RR$, let $\newtilde V \in \CI(\mfkS \times \RR; \End(TM))^{\RR} \simeq
  \CI(\mfkS; TM)$ and $\newtilde V_{0} \in \CI(\mfkS \times \RR)^{\RR} \simeq
  \CI(\mfkS)$ be non-negative (i.e., $\newtilde V, \newtilde V_{0} \ge 0$), and let
  \begin{equation*}
    U = \cvector{\bsu}{p} \in H^{2}(\Omega'; TM) \oplus H^{1}(\Omega')
  \end{equation*}
  satisfy $\widehat \bsXi (\tau) U = 0$ in $\Omega'$
  and $(\widehat \bop (\tau) U, \bsu)' = 0$. Then the following
  vanishing relations hold in $\Omega'$:
  \begin{equation*}
    \widehat \Def (\tau) \bsu \seq 0\,,\quad \newtilde V \bsu \seq 0\,,\quad
    \widehat \nabla^{*}(\tau) \bsu \seq 0\,,\quad \widehat \nabla (\tau) p \seq 0
    \,,\quad \mbox{and} \quad \newtilde V_{0}p \seq 0\,.
  \end{equation*}
\end{lemma}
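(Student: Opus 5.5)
The plan is to copy the proof of Corollary \ref{cor.e.est}, using the indicial Green formula of Proposition \ref{prop2.Green.tau}(i) in place of Proposition \ref{prop.Green}(i). First we record the adjoint relation needed to treat the mixed terms. By Equation \eqref{eq.F.P} and Theorem \ref{thm.complet.indicial}, $\widehat{P^{*}}(\tau) = \widehat P(\tau)^{*}$ for real $\tau$ (this can also be read off directly from Proposition \ref{prop.uc.nabla}). The notation $\widehat\nabla^{*}(\tau)$ in $B_{\Omega',\tau}$ is therefore the formal adjoint of $\widehat\nabla(\tau)$. In particular, $(\widehat\nabla^{*}(\tau)\bsu, q)_{\Omega'}$ and $(p,\widehat\nabla^{*}(\tau)\bsw)_{\Omega'}$ are simply inner products on $\Omega'$, with no integration by parts needed.

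We then take $W := U' := (\bsu \ \ {-p})^{\top}$ in Proposition \ref{prop2.Green.tau}(i). Using $\widehat\bsXi(\tau)U = 0$ and $(\widehat\bop(\tau)U,\bsu)' = 0$, this gives $0 = B_{\Omega',\tau}(U,U')$. Expanding $B_{\Omega',\tau}$ as in Equation \eqref{eq.def.UWBtau}, we obtain
\begin{equation*}
  0 \seq 2\|\widehat\Def(\tau)\bsu\|^{2}_{\Omega'} - (\widehat\nabla^{*}(\tau)\bsu, p)_{\Omega'}
  + (p, \widehat\nabla^{*}(\tau)\bsu)_{\Omega'} + (\newtilde V\bsu,\bsu)_{\Omega'}
  + (\newtilde V_{0}p,p)_{\Omega'}\,.
\end{equation*}
The two middle terms are $-z+\bar z$ with $z := (\widehat\nabla^{*}(\tau)\bsu,p)_{\Omega'}$, so their sum is purely imaginary. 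Taking real parts leaves a sum of three non-negative terms, since $\newtilde V,\newtilde V_{0}\ge 0$. Each term must therefore vanish, which gives $\widehat\Def(\tau)\bsu = 0$. The vanishing of $(\newtilde V\bsu,\bsu)_{\Omega'}$ gives $\newtilde V^{1/2}\bsu=0$ pointwise a.e., hence $\newtilde V\bsu=0$; similarly $\newtilde V_{0}p=0$.

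Finally, we substitute these into $\widehat\bsXi(\tau)U=0$ using formula \eqref{eq.def.Xitau}. The first row gives $2\widehat\Defstar(\tau)\widehat\Def(\tau)\bsu + \newtilde V\bsu + \widehat\nabla(\tau)p = \widehat\nabla(\tau)p = 0$. The second row gives $\widehat\nabla^{*}(\tau)\bsu - \newtilde V_{0}p = \widehat\nabla^{*}(\tau)\bsu = 0$. This yields all five vanishing relations.

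The only point requiring care is making sure that the sesquilinear form really has the mixed terms $(\widehat\nabla^{*}\bsu,q)+(p,\widehat\nabla^{*}\bsw)$ with \emph{adjoint} indicial operators. This is what makes their contribution cancel in the real part. Here $\widehat\nabla(\tau)=(\nabla', \imath\tau)^{\top}$ has adjoint $(\nabla')^{*}\oplus(-\imath\tau)$, which is consistent with $\tau$ real. Regularity is not an issue, since $U\in H^{2}\oplus H^{1}$ makes every pairing finite and the boundary terms were already handled in Proposition \ref{prop2.Green.tau}.
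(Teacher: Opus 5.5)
Your proposal is correct and follows the paper's proof essentially verbatim. Like the paper, you test Proposition~\ref{prop2.Green.tau}(i) with $W = (\bsu \ \ {-p})^{\top}$, take real parts so the mixed terms drop out, and conclude that the three non-negative terms vanish, then read off $\widehat\nabla(\tau)p = 0$ and $\widehat\nabla^{*}(\tau)\bsu = 0$ from the two rows of $\widehat\bsXi(\tau)U = 0$. One small correction: the mixed terms cancel in the real part simply because $(p, X)_{\Omega'} = \overline{(X, p)_{\Omega'}}$ for any $X$, so the identity $\widehat{\nabla^{*}}(\tau) = \widehat\nabla(\tau)^{*}$ is not what makes them cancel (it is only needed for the Green formula itself, which you take as given).
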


\begin{proof}
    Let us take $W = U' := \cvector{\bsu}{-p}$ in the formula
    \[\big (\widehat \bsXi (\tau) U, W \big )_{\Omega'} \seq B_{\Omega', \tau}
    (U, W) + (\widehat \bop (\tau) U, \bsw)'\] of Proposition \ref{prop2.Green.tau}.
    Let $\operatorname{Re}(z)$ denote the real part of $z \in \CC$.
    Then $\operatorname{Re}\big[- (\widehat \nabla^{*} (\tau) \bsu, p)_{\Omega'}
    + (p, \widehat \nabla^{*} (\tau) \bsu)_{\Omega'}\big] = 0$.
    Let $\mathfrak v := (\newtilde V \bsu, \bsu)_{\Omega'} + (\newtilde V_{0}p, p)_{\Omega'}$,
    as before. Together with the definition of $B_{\Omega', \tau}$ in Equation
    \eqref{eq.def.UWBtau}, this gives
    \begin{align*}
      0 & \seq \operatorname{Re} \big[\big (\widehat \bsXi (\tau) U,
      U' \big )_{\Omega'}
      - (\widehat \bop (\tau) U, \bsu)'\big]
      \seq \operatorname{Re} \big[  B_{\Omega', \tau} (U, U')\big]\\
      &
      \seq \operatorname{Re} \big[2(\widehat \Def (\tau) \bsu,
      \widehat \Def (\tau) \bsu)_{\Omega'}
      - (\widehat \nabla^{*} (\tau) \bsu, p)_{\Omega'}
      + (p, \widehat \nabla^{*} (\tau) \bsu)_{\Omega'} + \mathfrak v \big]
      \\
      &
      \seq 2(\widehat \Def (\tau) \bsu, \widehat \Def (\tau) \bsu)_{\Omega'}
      + \mathfrak v
      \\
      &
      \seq 2(\widehat \Def (\tau) \bsu, \widehat \Def (\tau) \bsu)_{\Omega'}
      + (\newtilde V \bsu, \bsu)_{\Omega'}
      + (\newtilde V_{0}p, p)_{\Omega'}\,.
    \end{align*}
    All three terms in the last sum are non-negative, so each of them equals zero.
    Therefore $\widehat \Def (\tau) \bsu = 0$,
    $\newtilde V \bsu = 0$, and $\newtilde V_{0}p=0$. We also have
    \begin{equation*}
      0 \seq \widehat \bsXi (\tau) U \seq
      \cvector{2 \widehat \Defstar (\tau) \widehat \Def (\tau) \bsu
      + \newtilde V \bsu + \widehat \nabla (\tau) p}{\widehat \nabla^{*} (\tau) \bsu - \newtilde V_{0}p}
      \seq \cvector{\widehat \nabla (\tau) p}{\widehat \nabla^{*} (\tau) u}\,.
    \end{equation*}
    Hence $\widehat \nabla^{*} (\tau) \bsu =0$ and $\widehat \nabla (\tau) p = 0$, as
    claimed. This completes the proof.
\end{proof}

Recall that $V \succ 0$ on $\Omega$ means that $V \ge 0$ and that
$V$ is (strictly) positive definite at at least one point in every connected
component of $\Omega$ (see Definition \ref{def.succ}).
We now take $\tau = 0$ in the above lemma. The $L^{2}$-unique continuation
properties of $\widehat \Def (0)$ and $\widehat \nabla (0)$ then give the
following result.

\begin{corollary}\label{cor2.e.est3}
  Let $\newtilde V, \newtilde V_{0} \ge 0$
  and let $U = (\bsu \ \ p)^{\top} \in H^{2}(\Omega'; TM)
  \oplus H^{1}(\Omega')$ satisfy $\widehat \bsXi (0) U = 0$ in $\Omega'$
  and $(\widehat \bop (0) U, \bsu)_{ \Gamma'} = 0$
  \lpar as in Lemma \ref{lemma2.e.est}, but for $\tau = 0$\rpar.
  \begin{enumerate}[\rm (i)]
    \item If $\newtilde V_{0} \succ 0$ on $\Omega '$ \lpar Definition \ref{def.succ}\rpar, then
    $p = 0$ on $\Omega '$.

    \item Similarly, if one of the following conditions is satisfied:
    \begin{enumerate}[\rm (a)]
      \item $\bsu = 0$ on $\Gamma '$ or
      \item $\newtilde V \succ 0$ on $\Omega '$,
    \end{enumerate}
    then $\bsu = 0$ in $\Omega '$.
  \end{enumerate}
\end{corollary}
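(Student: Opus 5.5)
The plan is to start from Lemma \ref{lemma2.e.est} with $\tau = 0$. Under the hypotheses $\newtilde V, \newtilde V_{0} \ge 0$, $\widehat \bsXi(0) U = 0$ and $(\widehat \bop(0) U, \bsu)' = 0$, that lemma gives, in $\Omega'$,
\begin{equation*}
  \widehat \Def(0)\bsu = 0, \quad \newtilde V \bsu = 0, \quad \widehat\nabla^{*}(0)\bsu = 0, \quad \widehat\nabla(0) p = 0, \quad \newtilde V_{0} p = 0 .
\end{equation*}
All remaining work uses the explicit forms of $\widehat\nabla(0)$ and $\widehat\Def(0)$ from Propositions \ref{prop.uc.nabla} and \ref{prop.uc.Def}, together with their $L^{2}$-unique continuation property. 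We may argue on each connected component $\Omega_{0}'$ of $\Omega'$ separately.

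For (i), Proposition \ref{prop.uc.nabla} says that $\widehat\nabla(0) p = 0$ means $\nabla' p = 0$, so $p$ equals a constant $c$ on $\Omega_{0}'$. Since $\newtilde V_{0} \succ 0$, there is a point $a_{0} \in \Omega_{0}'$ with $\newtilde V_{0}(a_{0}) > 0$. By continuity, $\newtilde V_{0} > 0$ on a neighborhood of $a_{0}$. Then $\newtilde V_{0} c = 0$ forces $c = 0$, so $p = 0$ on $\Omega_{0}'$. This is exactly the unique continuation argument of Corollary \ref{cor.e.est}(ii).

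For (ii), write $\bsu = (\bsu_{1}, f)$ according to \eqref{eq.decomp.TM}. By Proposition \ref{prop.uc.Def}, $\widehat\Def(0)\bsu = 0$ means $\Def'\bsu_{1} = 0$ and $d' f = 0$. Hence $\bsu_{1}$ is a Killing field on $\Omega_{0}'$ and $f$ is constant there.

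In case (b), $\newtilde V \succ 0$ gives an open set on which $\newtilde V$ is positive definite, by continuity of $\newtilde V$ and openness of positive definiteness. On that set, $\newtilde V\bsu = 0$ forces $\bsu = 0$. The $L^{2}$-unique continuation property of $\widehat\Def(0)$ (Proposition \ref{prop.uc.Def}) then gives $\bsu = 0$ on $\Omega_{0}'$.

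In case (a), $\bsu = 0$ on $\Gamma'$. For the constant component $f$, it suffices that $\Omega_{0}'$ has nonempty boundary. Indeed, if $\pa\Omega_{0}' = \emptyset$ then $\Omega_{0}'$ is a whole component of $\mfkS$; this situation must be handled by the hypothesis, that is, by interpreting (a) as requiring boundary on each component, as in Corollary \ref{cor.e.est}(iii)(c). With boundary present, the trace shows $f = 0$.

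For the Killing field $\bsu_{1}$, the main obstacle is passing from vanishing on the boundary to vanishing in the interior. A Killing field is determined by its value and covariant derivative at a point, but here only $\bsu_{1} = 0$ on the hypersurface $\Gamma'$ is given. I would first extend $\bsu$ by zero across $\Gamma'$ into a small collar outside $\Omega'$. The plan is then to check that the extension still satisfies $\Def'\bsu_{1} = 0$ in distribution sense. This requires showing that the boundary term produced by $\sigma_{1}(\Def')$ acting on the jump vanishes, and that term depends only on the trace $\bsu_{1}|_{\Gamma'} = 0$, since $\Def'$ is first order.

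Once the extension is a weak Killing field vanishing on an open set, the $L^{2}$-unique continuation of $\Def'$ (\cite{Grosse-Kohr-Nistor-23}) gives $\bsu_{1} = 0$ on $\Omega_{0}'$. Together with $f = 0$, this yields $\bsu = 0$ in $\Omega'$.
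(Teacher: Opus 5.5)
Your proof is correct and follows the paper's route: Lemma \ref{lemma2.e.est} at $\tau = 0$, followed by the unique continuation properties of $\widehat\nabla(0)$ and $\widehat\Def(0)$ from Propositions \ref{prop.uc.nabla} and \ref{prop.uc.Def}. The paper's proof is much terser and leaves two things implicit that you supply: the extension-by-zero argument for case (a), and the caveat that (a) only forces $\bsu = 0$ on components of $\Omega'$ that meet $\Gamma'$; without that caveat, taking $\bsu = (0, c)$, $p = 0$ and $\newtilde V = 0$ on a boundaryless component is a genuine counterexample.
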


\begin{proof}
  Lemma \ref{lemma2.e.est} gives $\widehat \nabla (0) p = 0$
  in $\Omega'$. The assumption that $\newtilde V_{0} \succ 0$
  in $\Omega$ together with Proposition \ref{prop.uc.nabla}
  (stating the $L^{2}$-unique continuation property of $\widehat \nabla (0)$
  in $\Omega'$) then give $p = 0$.
  Similarly, Lemma \ref{lemma2.e.est} gives $\widehat \Def (0) \bsu = 0$.
  Our assumptions allow us to use Proposition \ref{prop.uc.Def}
  (the $L^{2}$-unique continuation property of $\widehat \Def (0)$)
  to conclude that $\bsu = 0$. This completes the proof.
\end{proof}

As before, we obtain the following corollaries.

\begin{corollary}\label{cor2.e.est3.new}
  Let $\newtilde V, \newtilde V_{0} \ge 0$
  and let $U = (\bsu \ \ p)^{\top} \in H^{2}(\mfkS; TM)
  \oplus H^{1}(\mfkS)$ satisfy $\widehat \bsXi (0) U = 0$ in $\mfkS$.
  \begin{enumerate}[\rm (i)]
    \item If $\newtilde V_{0} \succ 0$ on $\mfkS$, then $p = 0$ on $\mfkS$.

    \item Similarly, if $\newtilde V \succ 0$ on $\mfkS$,
    then $\bsu = 0$ in $\mfkS$.
  \end{enumerate}
\end{corollary}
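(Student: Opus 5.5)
This is the closed-manifold version of Corollary \ref{cor2.e.est3}, and I would prove it by specializing Lemma \ref{lemma2.e.est} and Corollary \ref{cor2.e.est3} to the case $\Omega' = \mfkS$. Since $\mfkS$ is closed, $\pa \Omega' = \emptyset$. The boundary term $(\widehat \bop (0) U, \bsu)'$ in Proposition \ref{prop2.Green.tau} is then absent, as that proposition explicitly allows. So the hypothesis $(\widehat \bop(0)U,\bsu)'=0$ holds vacuously.

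Applying Lemma \ref{lemma2.e.est} with $\tau = 0$ gives the following identities in $\mfkS$:
\begin{equation*}
\widehat \Def (0) \bsu = 0, \quad \newtilde V \bsu = 0, \quad \widehat \nabla(0) p = 0, \quad \newtilde V_{0} p = 0.
\end{equation*}
The argument is the energy identity $\operatorname{Re} B_{\mfkS,0}(U,U') = 0$ with $U' = (\bsu\ \ {-p})^{\top}$, combined with $\newtilde V, \newtilde V_0 \ge 0$.

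For (i), Proposition \ref{prop.uc.nabla} says the kernel of $\widehat\nabla(0) = (\nabla'\ \ 0)^{\top}$ consists of locally constant functions. Hence $p$ is constant on each connected component $\mfkS_0$ of $\mfkS$. By Definition \ref{def.succ}, $\newtilde V_0 \succ 0$ gives a point of $\mfkS_0$ where $\newtilde V_0 > 0$, and there $\newtilde V_0 p = 0$ forces the constant to be $0$. So $p=0$ on every component.

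For (ii), I would argue per component using $\newtilde V \succ 0$. There is a point of the component where $\newtilde V$ is positive definite. By continuity, $\newtilde V$ stays invertible on a nonempty open set, and there $\newtilde V\bsu = 0$ gives $\bsu = 0$. Since $\widehat\Def(0)\bsu = 0$, the $L^2$-unique continuation property of $\widehat\Def(0)$ from Proposition \ref{prop.uc.Def} then yields $\bsu = 0$ on the whole component.

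The only point needing care is the unique continuation step. From the explicit kernel in Proposition \ref{prop.uc.Def}, $\bsu = (\bsu_1, c)$ satisfies $\Def'\bsu_1 = 0$ and $d'c = 0$. The scalar part $c$ is locally constant, so it vanishes once it vanishes on an open set. The vertical part $\bsu_1$ is a Killing field of $\mfkS$, so it vanishes by the unique continuation of $\Def'$ \cite{Grosse-Kohr-Nistor-23}. Regularity is not an issue here because $U\in H^2\oplus H^1$ on a closed manifold.
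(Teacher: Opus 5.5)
Your proof is correct and follows essentially the paper's argument. The paper obtains this corollary ``as before'' from Lemma~\ref{lemma2.e.est} with $\tau = 0$ and $\Omega' = \mfkS$, where the boundary pairing drops out, followed by the $L^{2}$-unique continuation properties of $\widehat{\nabla}(0)$ and $\widehat{\Def}(0)$ (Propositions~\ref{prop.uc.nabla} and~\ref{prop.uc.Def}); your added details, namely continuity of $\newtilde V$, $\newtilde V_{0}$ giving an open set where they are invertible and the explicit description of $\ker \widehat{\Def}(0)$, only make explicit what the paper leaves implicit.
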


The case $\tau \neq 0$ is very similar, in fact, even easier than the case
$\tau = 0$, because of the injectivity of the relevant operators for $\tau \neq 0$.

\begin{corollary}\label{cor2.e.est2}
  Let $\newtilde V, \newtilde V_{0} \ge 0$, $\tau \neq 0$, and $U^{\top} = (\bsu \ \ p)
  \in H^{2}(\Omega'; TM)
  \oplus H^{1}(\Omega')$ satisfy $\widehat \bsXi (\tau) U = 0$ in $\Omega'$ and
  $(\widehat \bop (\tau) U, \bsu)'  = 0$
  \lpar the same assumptions as in Lemma \ref{lemma2.e.est}, except that
  $\tau \neq 0$ in this corollary\rpar. Then $U = 0$.
\end{corollary}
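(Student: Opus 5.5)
The plan is to start from Lemma \ref{lemma2.e.est}, which already gives, for any $\tau$ (in particular $\tau \neq 0$), the vanishing relations
\begin{equation*}
  \widehat \Def (\tau) \bsu \seq 0\,, \quad \widehat \nabla (\tau) p \seq 0
\end{equation*}
in $\Omega'$, together with $\newtilde V \bsu = 0$, $\widehat \nabla^{*}(\tau) \bsu = 0$, and $\newtilde V_{0} p = 0$. Only the first two relations are needed. The point is that for $\tau \neq 0$ these two operators are injective, not merely uniquely continuing, so no positivity condition such as $\succ 0$ and no boundary condition on $\bsu$ is required.

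First, for the pressure I would use the explicit form of the indicial gradient from Proposition \ref{prop.uc.nabla}. There, $\widehat \nabla(\tau) = (\nabla' \ \ \imath\tau)^{\top}$, so the scalar component of $\widehat \nabla (\tau) p = 0$ reads $\imath \tau p = 0$. Since $\tau \neq 0$, this gives $p = 0$ in $\Omega'$. Alternatively, one can directly invoke the injectivity statement of that proposition on $L^{2}(\Omega')$.

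Second, for the velocity I would use Proposition \ref{prop.uc.Def}. We decompose $\bsu = (\bsu_{1}, f)$ according to Equation \eqref{eq.decomp.TM}. The third row of $\widehat \Def(\tau)\bsu = 0$ gives $\imath \tau f = 0$, hence $f = 0$. The second row then becomes $\frac{\imath}{\sqrt 2}\tau \bsu_{1}^{\sharp} = 0$, hence $\bsu_{1} = 0$. This is exactly the injectivity of $\widehat \Def(\tau)$ for $\tau \neq 0$ on $L^{2}(\Omega'; TM)$ stated in that proposition.

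Combining the two steps gives $U = 0$. There is no real obstacle here. The only point to check is that the regularity $U \in H^{2}(\Omega'; TM) \oplus H^{1}(\Omega')$ suffices for the Green formula in Proposition \ref{prop2.Green.tau} that underlies Lemma \ref{lemma2.e.est}, and that proposition is stated for exactly this space. The injectivity arguments are pointwise algebraic in the $\tau$-components, so they hold for $L^{2}$ sections without any connectedness assumptions on $\Omega'$ or $\mfkS$.
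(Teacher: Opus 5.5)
Your proposal is correct and matches the paper's proof: Lemma~\ref{lemma2.e.est} gives $\widehat \nabla(\tau) p = 0$ and $\widehat \Def(\tau)\bsu = 0$, and since both operators are injective for $\tau \neq 0$ (Propositions~\ref{prop.uc.nabla} and~\ref{prop.uc.Def}), it follows that $p = 0$ and $\bsu = 0$. Your row-by-row computation only spells out the injectivity argument already contained in those two propositions.
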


\begin{proof}
  Lemma \ref{lemma2.e.est} gives $\widehat \nabla (\tau) p = 0$
  and $\widehat \Def (\tau) \bsu = 0$. Because both operators $\widehat \nabla (\tau )$
  and $\widehat \Def (\tau)$ are injective for $\tau \neq 0$ (by Propositions
  \ref{prop.uc.nabla} and \ref{prop.uc.Def}), we obtain $\bsu=0$ and $p=0$.
\end{proof}

In the last Corollary, if $\Omega' = \mfkS$, then the condition
$(\widehat \bop (\tau) U, \bsu)'  = 0$ is void. It is automatically
satisfied, so, in particular, it is unnecessary. This yields the following
result.

\begin{corollary}\label{cor2.e.est2.new}
  Let $\newtilde V, \newtilde V_{0} \ge 0$, $\tau \neq 0$, and $U^{\top} = (\bsu \ \ p)
  \in H^{2}(\mfkS; TM) \oplus H^{1}(\mfkS)$ satisfy $\widehat \bsXi (\tau) U = 0$.
  Then $U = 0$.
\end{corollary}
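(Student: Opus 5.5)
The statement is the last corollary (Corollary \ref{cor2.e.est2.new}): it is Corollary \ref{cor2.e.est2} specialized to $\Omega' = \mfkS$. The plan is to rerun the energy argument on the closed manifold $\mfkS$, where there is no boundary term.

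First, apply Proposition \ref{prop2.Green.tau}(i) with $\Omega' = \mfkS$. Since $\pa \mfkS = \emptyset$, the boundary pairing drops and we get $(\widehat \bsXi(\tau) U, W)_{\mfkS} = B_{\mfkS, \tau}(U, W)$. Take $W = U' := (\bsu \ \ -p)^{\top}$ and use $\widehat \bsXi(\tau)U = 0$. The two mixed terms $-(\widehat\nabla^{*}(\tau)\bsu, p) + (p, \widehat\nabla^{*}(\tau)\bsu)$ are of the form $\bar z - z$, so their real part vanishes. Taking real parts then gives
\begin{equation*}
0 \seq 2\|\widehat\Def(\tau)\bsu\|^{2}_{\mfkS} + (\newtilde V\bsu,\bsu)_{\mfkS} + (\newtilde V_{0}p,p)_{\mfkS}.
\end{equation*}
By $\newtilde V, \newtilde V_{0} \ge 0$ each term is non-negative, hence each is zero, and in particular $\widehat\Def(\tau)\bsu = 0$.

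Next, read off the two components of $\widehat\bsXi(\tau)U = 0$ from \eqref{eq.def.Xitau}. From $\newtilde V \ge 0$ and $(\newtilde V\bsu,\bsu)=0$ we get $\newtilde V\bsu = 0$ pointwise (positive semidefiniteness), and likewise $\newtilde V_{0}p = 0$. The first component, together with $\widehat\Def(\tau)\bsu = 0$, then reduces to $\widehat\nabla(\tau)p = 0$. This is exactly the content of Lemma \ref{lemma2.e.est} with the boundary hypothesis void, so one can alternatively just cite that lemma with $\Omega'=\mfkS$.

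Finally, since $\tau \neq 0$, Propositions \ref{prop.uc.nabla} and \ref{prop.uc.Def} (with $\Omega' = \mfkS$) say that $\widehat\nabla(\tau)$ and $\widehat\Def(\tau)$ are injective. Concretely, $\imath\tau p = 0$ forces $p = 0$; the $\CI(\mfkS)$-component of $\widehat\Def(\tau)\bsu$ gives $\imath\tau f_{\bsu} = 0$, and then the mixed component gives $\frac{\imath}{\sqrt2}\tau\,\bsu_{1}^{\sharp} = 0$. Hence $U = 0$.

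The only point needing care is regularity. The identities are stated for $U \in H^{2}\oplus H^{1}$, and on the closed manifold $\mfkS$ the integration by parts is justified by density of smooth sections with no boundary contribution, so I expect no real obstacle.
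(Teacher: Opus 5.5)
Your proposal is correct and follows the paper's route. The paper obtains this corollary by applying Corollary \ref{cor2.e.est2} with $\Omega' = \mfkS$, where the boundary hypothesis is void. That corollary is proved by exactly the energy identity of Lemma \ref{lemma2.e.est} followed by the injectivity of $\widehat\nabla(\tau)$ and $\widehat\Def(\tau)$ for $\tau \neq 0$ (Propositions \ref{prop.uc.nabla} and \ref{prop.uc.Def}), which is what you inline.
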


\subsection{The invertibility of $\bsXi$ on a cylinder}
Recall from Theorem \ref{thm.ADN.Fredholm} that the Fredholm property
of an operator depends on the invertibility of its limit operator, which
acts on the cylinder associated to $M$. The limit operator of $\bsXi_{V, V_{0}}$
is $\bsXi_{\newtilde V, \newtilde V_{0}}$. It is the purpose of this subsection
to establish the invertibility of $\bsXi_{\newtilde V, \newtilde V_{0}}$ acting
on the associated cylinder under the assumption that $\newtilde V \succ 0$ and
$\newtilde V_{0} \succ 0$ and $\newtilde V_{0} \succ 0$ on $\mfkS$.

{\cn We thus continue to assume, in this subsection, that $M$ is the cylinder
$M = \mfkS \times \RR$ with $\mfkS$ a closed manifold and with a product metric.
We also assume that $V(x', t) = \newtilde V(x')$ and that $V_{0}(x', t) =
\newtilde V_{0}(x')$, and hence $V$ and $V_{0}$ are thus translation invariant.
We also assume that they are both $\succ 0$ on $M$ (see Definition \ref{def.succ}).}
This condition is equivalent to $\newtilde V \succ 0$ and $\newtilde V_{0} \succ 0$
on $\mfkS$.

Let us notice that, since $M := \mfkS \times \RR$ is endowed with the product
metric, this metric is translation invariant, and hence $\bsXi$ is also
translation invariant. We can then define its Fourier transform (or indicial
family) $\widehat \bsXi(\tau)$ as in Section \ref{ssec.FT}, especially
Definition \ref{def.explicit.FT} (and the discussion surrounding it). In particular,
we choose the definition that avoids duplications, see Remark also \ref{rem.indicial2}.

\begin{proposition}\label{prop2.invert3}
  Let us assume that $M = \mfkS \times \RR$ is a cylinder
  and that $\newtilde V \succ 0$ and $\newtilde V_{0}\succ 0$ on $\mfkS$.
  Let $\bss = \bst = (1, 0)$. Then $\widehat \bsXi(0) :=
  \widehat \bsXi_{V,V_{0}}(0)$ is invertible and
  $\widehat \bsXi(0)^{-1} \in \Psi^{[- \bss - \bst]}(\mfkS; TM \oplus \CC)$.
\end{proposition}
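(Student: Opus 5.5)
The plan is to show that $\widehat \bsXi(0)$ is an \ADN-elliptic operator on the closed manifold $\mfkS$, hence Fredholm, and then to show that it is injective and formally self-adjoint, so that it has index zero and is invertible.

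\textbf{Ellipticity.} By Equation \eqref{eq.def.Xitau}, $\widehat \bsXi(0)$ is a matrix of differential operators on $\mfkS$ acting on sections of $TM\vert_{\mfkS} \simeq T\mfkS \oplus \CC$ and of $\CC$. Its entries have \ADN-orders $[\bss+\bst]$ with $\bss=\bst=(1,0)$. Its $(\bss,\bst)$-principal symbol at $\xi' \in T^{*}\mfkS$ is the restriction of $\Symb_{\bss,\bst}(\bsXi)$ from Proposition \ref{prop.ADN.elliptic} to covectors of the form $\xi=(\xi',0)$. This is because $\widehat{\pa_t}(0)=0$, and lower order terms do not contribute. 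Proposition \ref{prop.ADN.elliptic} shows that this matrix is invertible for every $\xi \neq 0$, so $\widehat \bsXi(0)$ is \ADN-elliptic on $\mfkS$. Since $\mfkS$ is closed, the closed-manifold case of Theorem \ref{thm.ADN.Fredholm} applies, where condition (ii) is removed. It gives that
\begin{equation*}
  \widehat \bsXi(0) : H^{m+1}(\mfkS; TM) \oplus H^{m}(\mfkS) \to H^{m-1}(\mfkS; TM) \oplus H^{m}(\mfkS)
\end{equation*}
is Fredholm for every $m$.

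\textbf{Index zero.} The operator $\bsXi$ is formally self-adjoint, since $(\Defstar\Def)^{*}=\Defstar\Def$, $V$ and $V_{0}$ are real symmetric, and the off-diagonal entries are $\nabla$ and $\nabla^{*}$. Consequently $\widehat \bsXi(0)$ is formally self-adjoint on $\mfkS$; this also follows from Proposition \ref{prop2.Green.tau}(ii) with empty boundary. By elliptic regularity (Theorem \ref{thm.regularity}, closed case), the kernel and cokernel consist of smooth sections. The cokernel is identified with the kernel of the adjoint, which is $\widehat \bsXi(0)$ itself. So it suffices to prove injectivity on smooth sections.

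\textbf{Injectivity.} Let $U=(\bsu\ \ p)^{\top}$ be a smooth section with $\widehat \bsXi(0)U=0$. Apply Corollary \ref{cor2.e.est3.new}, which is Lemma \ref{lemma2.e.est} with empty boundary combined with the $L^{2}$-unique continuation of $\widehat\nabla(0)$ and $\widehat\Def(0)$ from Propositions \ref{prop.uc.nabla} and \ref{prop.uc.Def}. The hypothesis $\newtilde V_{0}\succ 0$ gives $p=0$, and $\newtilde V\succ0$ gives $\bsu=0$.

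One point needs care here. In the kernel of $\widehat\Def(0)$, the normal component $c$ is only locally constant. It is killed by $\newtilde V\bsu=0$ together with unique continuation, because the condition $\newtilde V \succ 0$ on each component forces $\bsu=0$ at some point. I expect this to be handled inside Corollary \ref{cor2.e.est3.new}, so the main care needed is only in the bookkeeping above.

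\textbf{Conclusion.} The operator is Fredholm with trivial kernel and cokernel, so it is invertible between the Sobolev spaces above. The inverse lies in $\Psi^{[-\bss-\bst]}(\mfkS; TM\oplus\CC)$ by spectral invariance, Theorem \ref{thm.spectral.inv} (closed case). Alternatively, an \ADN-parametrix built from the inverse symbol of Proposition \ref{prop.ADN.elliptic} differs from the inverse by smoothing operators.

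The main obstacle I expect is justifying the index-zero step for a non-elliptic \ADN\ system, where the relevant Sobolev spaces have mixed orders. I would handle it by identifying the cokernel with the kernel of the adjoint under the $L^{2}$ pairing of $H^{m-1}\oplus H^{m}$ with $H^{-m+1}\oplus H^{-m}$, and then applying regularity.
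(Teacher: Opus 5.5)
Your proposal is correct and follows essentially the same route as the paper. In both, \ADN-ellipticity of $\widehat \bsXi(0)$ on the closed manifold $\mfkS$ gives the Fredholm property, self-adjointness (via the Green formula with empty boundary) gives index zero, elliptic regularity together with Corollary \ref{cor2.e.est3.new} gives injectivity, and a Beals-type spectral invariance result places the inverse in $\Psi^{[-\bss-\bst]}(\mfkS; TM \oplus \CC)$. Your extra care in identifying the cokernel with the kernel of the adjoint across the mixed-order Sobolev spaces is a welcome refinement of the paper's one-line ``self-adjoint, hence index zero'' step, but it is not a different argument.
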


\begin{proof}
  The generalized Stokes operator $\widehat \bsXi(0)$ is
  $(\bss, \bst)$--Douglis-Nirenberg elliptic because $\bsXi$ is
  $(\bss, \bst)$--Douglis-Nirenberg elliptic (Proposition \ref{prop.ADN.elliptic})
  and because ellipticity is preserved by taking indicial families.
  Because $\mfkS$ is compact, Theorem \ref{thm.ADN.Fredholm} then
  gives that $\widehat \bsXi(0)$ is Fredholm as an operator
  \begin{equation*}
    \widehat \bsXi(0) : H^{1}(\mfkS; TM) \oplus L^{2}(\mfkS) \to H^{-1}(\mfkS; TM)
    \oplus L^{2}(\mfkS)\,.
  \end{equation*}
  The second Green formula of
  Proposition \ref{prop2.Green.tau} shows that $\widehat \bsXi(0)$ is self-adjoint,
  and hence it has index zero. Therefore, in order to show that $\widehat \bsXi(0)$ induces
  a bijection between the above spaces, it suffices to show that it is injective.
  Let then $U = \cvector{\bsu}{p} = (\, \bsu\ \ p\,)^{\top} \in L^{2}(\mfkS; TM \oplus \CC)$
  be such that $\widehat \bsXi(0) U = 0$. The ellipticity of $\widehat \bsXi(0)$
  guarantees then that $U \in H^{\infty}(\mfkS; TM \oplus \CC)$, by elliptic regularity
  (see Theorem \ref{thm.regularity} or \cite[Theorem 15.3.30]{KMNW-2025} or \cite[Proposition 6.5]{KNW-22}).
  Assumption \ref{assumpt.VV0} allow us to use Corollary \ref{cor2.e.est3.new}
  to conclude that $U = 0$. Therefore $\widehat \bsXi(0)$ is invertible as an operator
  between the indicated spaces. The property that its inverse belongs to the space
  $\Psi^{[- \bss - \bst]}(\mfkS; TM \oplus \CC)$ follows from
  the work of Beals and others (see Theorem 7.4 in \cite{KNW-22} for more details).
\end{proof}

The case $\tau \neq 0$ is completely similar, but only requires
the weaker assumptions $\newtilde V := \In (V) \ge 0$ and
$\newtilde V_{0} := \In(V_{0}) \ge 0$.

\begin{proposition}\label{prop2.invert2}
  Let us assume that $M = \mfkS \times \RR$ is a cylinder, that $\tau \neq 0$,
  and that $\newtilde V_{0}, \newtilde V \ge 0$. Let $\bss = \bst = (1, 0)$. Then
  $\widehat \bsXi(\tau)$ is invertible and $\widehat \bsXi(\tau)^{-1}
  \in \Psi^{[- \bss - \bst]}(\mfkS; TM \oplus \CC)$.
\end{proposition}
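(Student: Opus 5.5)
The argument follows the proof of Proposition \ref{prop2.invert3} almost verbatim, replacing the $\tau = 0$ injectivity argument by Corollary \ref{cor2.e.est2.new}. The plan has three steps.

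\textbf{Step 1: Fredholm property.} First I record that $\widehat \bsXi(\tau)$ is $(\bss,\bst)$--\ADN\ elliptic on the closed manifold $\mfkS$. Its principal symbol is the restriction of the principal symbol of $\bsXi$ computed in Proposition \ref{prop.ADN.elliptic}, since taking indicial families preserves ADN principal symbols; the parameter $\tau$ only enters lower order terms relative to the $\mfkS$-directions. Theorem \ref{thm.ADN.Fredholm}, in its closed-manifold form, then gives that
\begin{equation*}
  \widehat \bsXi(\tau) : H^{1}(\mfkS; TM) \oplus L^{2}(\mfkS) \to H^{-1}(\mfkS; TM) \oplus L^{2}(\mfkS)
\end{equation*}
is Fredholm.

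\textbf{Step 2: index zero.} Next I show that $\widehat \bsXi(\tau)$ has index zero. The second Green formula, Proposition \ref{prop2.Green.tau}(ii) with $\Omega' = \mfkS$ and no boundary terms, shows that $\widehat \bsXi(\tau)$ is formally self-adjoint. The formula is stated for $H^2 \oplus H^1$ and extends by density. Hence the cokernel of $\widehat \bsXi(\tau)$ is identified with the kernel of the same operator, and the index is zero.

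A caveat: the symmetry depends on whether $\widehat\nabla(\tau)^{*}$ and $\widehat{\nabla^{*}}(\tau)$ match, i.e.\ whether adjoint and Fourier transform commute for real $\tau$. I expect they do, since $\widehat{P^*}(\tau) = \widehat P(\tau)^*$ for real $\tau$ under the unitary Fourier transform \eqref{eq.F.P}. Even without full self-adjointness, the index would be constant in $\tau$ by continuity of the family, and equal to $0$ at $\tau = 0$, where the operator is self-adjoint. This gives a fallback argument.

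\textbf{Step 3: injectivity.} Take $U \in H^{1}(\mfkS;TM) \oplus L^{2}(\mfkS)$ in the kernel. Elliptic regularity, Theorem \ref{thm.regularity}, gives $U \in H^\infty$, so in particular $U \in H^{2}(\mfkS;TM)\oplus H^{1}(\mfkS)$. Corollary \ref{cor2.e.est2.new}, which uses only $\newtilde V, \newtilde V_0 \ge 0$ together with the injectivity of $\widehat\nabla(\tau)$ and $\widehat\Def(\tau)$ for $\tau \ne 0$ (Propositions \ref{prop.uc.nabla} and \ref{prop.uc.Def}), gives $U = 0$.

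Combining injectivity with index zero shows that $\widehat \bsXi(\tau)$ is bijective between the spaces above. To get $\widehat \bsXi(\tau)^{-1} \in \Psi^{[-\bss-\bst]}(\mfkS; TM\oplus\CC)$, I invoke the spectral invariance result, Theorem \ref{thm.spectral.inv} in the closed-manifold case, or equivalently Beals' argument as cited in Proposition \ref{prop2.invert3}.

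The only step I expect to need care is the index-zero argument, specifically the adjoint compatibility caveat in Step 2. Everything else is a direct citation.
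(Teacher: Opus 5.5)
Your proposal is correct and is essentially the paper's own argument: the paper proves this by rerunning the proof of Proposition~\ref{prop2.invert3} (ADN ellipticity on the closed manifold $\mfkS$, Fredholm of index zero via self-adjointness from the Green formula of Proposition~\ref{prop2.Green.tau}, injectivity via elliptic regularity, Beals' theorem for the inverse), with Corollary~\ref{cor2.e.est2.new} replacing Corollary~\ref{cor2.e.est3.new}. Your Step~2 caveat resolves affirmatively, so the fallback is not needed: for real $\tau$ one has $\widehat{P^{*}}(\tau)=\widehat{P}(\tau)^{*}$ (for instance, the $\pa_t$-part of $\nabla^{*}$ becomes $-\imath\tau$, which is the adjoint of the $\imath\tau$ entry of $\widehat\nabla(\tau)$), and hence $\widehat\bsXi(\tau)$ is self-adjoint.
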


\begin{proof}
  The proof is almost the same as that of Proposition \ref{prop2.invert3}
  (but easier). The only difference is that, instead of Corollary \ref{cor2.e.est3.new},
  we use Corollary \ref{cor2.e.est2.new}, which only requires that
  $\newtilde V$ and $\newtilde V_{0}$ be non-negative.
\end{proof}

Standard properties of pseudodifferential operators yield the following result,
which will, however, not be used in this paper. It does specify, however,
in what sense we are talking about the invertibility of our operators
$\widehat \bsXi(\tau)$.

\begin{remark} 
 Let us assume that $M = \mfkS \times \RR$ with  $\mfkS$ compact and
 that $\newtilde V$ and $\newtilde V_{0}$ are non-negative.
 Then, for any $\tau, r \in \RR$, $\tau \neq 0$, we have an isomorphism
  \begin{equation*}
    \widehat \bsXi(\tau) \ede \widehat \bsXi_{V, V_{0}}(\tau) : H^{r+1}(\mfkS; TM)
    \oplus H^{r}(\mfkS) \to H^{r-1}(\mfkS; TM) \oplus H^{r}(\mfkS)\,.
  \end{equation*}
  The result is true also for $\tau = 0$ if we further assume
  that $M$, $V$, and $V_{0}$ satisfy the non-vanishing Assumption
  \ref{assumpt.VV0}. Under the same conditions, we have
  $\widehat \bsXi(\tau)^{-1}
  \in \Psi^{[- \bss - \bst]}(\mfkS; TM \oplus \CC)$.
\end{remark}

The last two propositions then give the invertibility of $\bsXi$ on cylinders
(provided that the positivity at infinity assumption \ref{assumpt.VV0weak} is
satisfied).

\begin{theorem}\label{thm2.invert4}
  Let us assume that $M = \mfkS \times \RR$, that $V$ and $V_{0}$ are translation invariant
  and that $\newtilde V \succ 0$ and $\newtilde V_{0} \succ 0$ on $\mfkS$. Then the operator
  \begin{equation*}
    \bsXi \ede \bsXi_{V, V_{0}} :
    H^{1}(M; TM) \oplus L^{2}(M) \to H^{-1}(M; TM) \oplus L^{2}(M)
  \end{equation*}
  is invertible. If $\bss = \bst = (1, 0)$, its inverse satisfies $\bsXi^{-1}
  \in \ePS{[- \bss - \bst]}(M; TM \oplus \CC)$
  and maps $H^{r+1}(M; TM) \oplus H^{r}(M) \to H^{r-1}(M; TM)
  \oplus H^{r}(M)$ isomorphically for all $r \in \RR$.
\end{theorem}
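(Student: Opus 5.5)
The plan is to combine Theorem \ref{thm.rem.invert} with Propositions \ref{prop2.invert3} and \ref{prop2.invert2}. These give invertibility of every indicial operator $\widehat\bsXi(\tau)$. Since $\bsXi$ is a translation invariant Douglis-Nirenberg operator on the cylinder, $\bsXi \in \ePSsus{[\bss+\bst]}{\mfkS}{TM\oplus\CC}$, and $\In(\bsXi) = \bsXi$ by Lemma \ref{lemma.limit.operator}(iii).

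First I would establish injectivity and Fredholmness. By Proposition \ref{prop.ADN.elliptic}, $\bsXi$ is $(\bss,\bst)$-\ADN-elliptic. Its limit operator is $\bsXi$ itself, so Theorem \ref{thm.ADN.Fredholm} says $\bsXi$ is Fredholm exactly when $\bsXi$ is invertible; this alone is circular. I would therefore argue through the Fourier transform, as in Theorem \ref{thm.rem.invert}, adapted to the Douglis-Nirenberg grading. By Proposition \ref{propv1.mult.indicial}, $\maF\bsXi\maF^{-1}$ acts fiberwise as $\widehat\bsXi(\tau)$ on $\hat U(\tau)$, and Parseval gives
\begin{equation*}
  \|U\|^2_{H^{1}(M;TM)\oplus L^2(M)} \simeq \int_{\RR} \|\hat U(\tau)\|^2_{\tau}\, d\tau,
\end{equation*}
where $\|\cdot\|_\tau$ is the parameter-dependent norm in which $\tau$ contributes as a first order variable on the vector component. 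Each $\widehat\bsXi(\tau)$ is invertible: for $\tau\neq0$ this is Proposition \ref{prop2.invert2}, which only needs $\newtilde V,\newtilde V_0\ge0$, and for $\tau=0$ it is Proposition \ref{prop2.invert3}, which uses $\succ 0$. It then suffices to show that $\widehat\bsXi(\tau)^{-1}$ is bounded uniformly in $\tau$ between the $\tau$-dependent spaces $H^{-1}_\tau \oplus L^2 \to H^1_\tau\oplus L^2$. Given that, the inverse is defined by $\maF^{-1}\widehat\bsXi(\tau)^{-1}\maF$.

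The main obstacle is this uniform bound. For $\tau$ in a compact set, continuity of $\tau\mapsto\widehat\bsXi(\tau)$ together with pointwise invertibility gives a uniform bound, since the inverse is continuous in operator norm. For $|\tau|\to\infty$, I would use parameter-dependent ellipticity: the joint symbol in $(\xi',\tau)$ is the symbol from Proposition \ref{prop.ADN.elliptic} with $\xi=(\xi',\tau)$, and it is invertible for $(\xi',\tau)\neq0$. The inverse symbol formula there produces a parametrix whose remainder tends to zero as $|\tau|\to\infty$, which yields uniform estimates for large $|\tau|$. An alternative for large $|\tau|$ is a direct energy estimate using Proposition \ref{prop2.Green.tau} with $W=(\bsu,-p)$. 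This gives control of $2\|\widehat\Def(\tau)\bsu\|^2 + (\newtilde V\bsu,\bsu) + (\newtilde V_0 p,p)$, and from Equation \eqref{eq.Def.prod.tau} we have $\|\widehat\Def(\tau)\bsu\|\gtrsim |\tau|\,\|\bsu\|$ together with a Korn-type bound.

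Finally, the membership $\bsXi^{-1}\in\ePS{[-\bss-\bst]}(M;TM\oplus\CC)$ follows from the spectral invariance Theorem \ref{thm.spectral.inv}, since $\bsXi$ is \ADN-elliptic and invertible. The mapping properties $H^{r-1}\oplus H^r\to H^{r+1}\oplus H^r$ then follow from Theorem \ref{thm.prop.ePS}(ii) applied componentwise. Injectivity on all of these spaces follows from elliptic regularity (Theorem \ref{thm.regularity}), which reduces to the $L^2$ case, and surjectivity follows from $\bsXi\bsXi^{-1}=1$ extended by density (Remark \ref{rem.C0dense}).
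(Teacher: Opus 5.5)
Your proposal is correct and follows the paper's proof. Both use invertibility of every indicial operator $\widehat\bsXi(\tau)$ from Propositions \ref{prop2.invert3} and \ref{prop2.invert2}, then the Douglis--Nirenberg-graded Fourier criterion for translation-invariant operators on the cylinder, then the Beals-type spectral invariance for $\bsXi^{-1}\in\ePS{[-\bss-\bst]}(M;TM\oplus\CC)$ and the mapping properties. The only difference is that you sketch the uniform-in-$\tau$ bounds behind the Fourier criterion (compact $\tau$-sets by continuity, large $|\tau|$ by parameter-dependent ellipticity or a Korn-plus-energy estimate), which the paper simply cites as well known; you also correctly note that Theorem \ref{thm.ADN.Fredholm} would be circular here because $\In(\bsXi)=\bsXi$.
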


\begin{proof}
  We know from Propositions \ref{prop2.invert3} and \ref{prop2.invert2} that
  $\widehat \bsXi (\tau)$ is invertible for all $\tau \in \RR$. It is well-known
  from the works of Melrose, Schulze and others (see also Theorem 5.11 in
  \cite{KohrNistor-Stokes}) that then $\bsXi$ is invertible as stated. Once
  we have proved that $\bsXi$ is invertible, the fact that $\bsXi^{-1}
  \in \ePS{[- \bss - \bst]}(M; TM \oplus \CC)$ follows from the work of Beals
  and others (see Theorem 7.4 in \cite{KNW-22} for further details).
  The claimed isomorphisms are obtained from the mapping properties of
  translation invariant operators.
\end{proof}

\section{\cn Invertibility of $\bsXi$, layer potentials, and jump relations on
manifolds with cylindrical ends}
\label{sec.8}

We now use the results of the previous sections and prove the invertibility
of several Stokes-type operators (under suitable positivity conditions on the
potentials $V$ and $V_{0}$) and use these results to define our layer potentials.
We then prove the main jump formulas for these layer potentials.

\emph{We assume from now on that $M = \canDec$ is a manifold with straight
cylindrical ends, so $M'$ is a closed manifold (i.e., compact, boundaryless).}
The case of closed manifolds was treated in \cite{KNW-2025} and will be briefly
reminded in Subsection \ref{ssec.ToAdd}. Also, from now on, we shall assume
that $V$ and $V_{0}$ are translation invariant at infinity and that they are
are smooth. (Some more precise assumption will be formulated later, see Assumptions
\ref{assumpt.VV0weak} and \ref{assumpt.VV0}.)

\subsection{The Fredholm and invertibility properties of $\bsXi$}
Using the results of the previous subsection, we now recall the proof of the
Fredholm property, respectively, of the invertibility property of
$\bsXi := \bsXi_{V, V_{0}}$ under suitable positivity assumptions
on $V$ and $V_{0}$ (Assumptions \ref{assumpt.VV0weak} and,
respectively, \ref{assumpt.VV0}). These results were first
established in \cite{KohrNistor-Stokes}.
The proof relies on the ``Mitrea-Taylor'' trick \cite{M-T} and on the results
and methods from \cite{KNW-22}. To establish the Fredholm property,
we shall need the following assumption
(slightly stronger than Assumption ).

\begin{assumption}\label{assumpt.VV0weak}
  Let $M = \canDec$ be a smooth, connected manifold with straight cylindrical ends with
  $M' := \pa M_{0}$ closed, non-empty and let $V \in \CI_{\inv}(M; TM)$ and
  $V_{0} \in \CI_{\inv}(M)$ be our potentials. We shall say that $V$ and $V_{0}$ satisfy
  the \emph{positivity assumption at infinity on $M$} if
  \begin{equation*}
    \newtilde V \ede \In V \succ 0 \quad \mbox{and} \quad
    \newtilde V_{0} \ede \In V_{0} \succ 0 \quad \mbox{on } M\,.
  \end{equation*}
\end{assumption}

Notice that we are not assuming that, globally, $V$ and $V_{0}$ are
non-negative. This will be done in Assumption \ref{assumpt.VV0}, which is, hence
slightly stronger than Assumption \ref{assumpt.VV0weak}.
We also recall that, if $a$ is a multiplication operator, then we use also
the notation $\newtilde a$ for $\In (a)$ (see Equation
\eqref{eq.just.new.notation}
for a justification of this notation). We also recall (see Definition \ref{def.succ})
that the explicit forms of the last two assumptions amount to the following:
\begin{enumerate}[\rm (i)]
  \item $\newtilde V \ge 0$ and, in every connected component of $M'$, there is
  a point $x$ (and hence a whole open set containing it) where $\newtilde V(x)$
  is invertible (i.e., $V_{0}(x) > 0$ in $M_{n}(\CC)$).
  \item $\newtilde V_{0} \ge 0$ and, in every connected component of $M'$, there
  is a point
  $x$ (and hence a whole open set containing it) where $\newtilde V_{0}(x)$
  is invertible (i.e., $V_{0}(x) \in (0, \infty)$).
\end{enumerate}

Recall that by the statement ``$\phi \not \equiv 0$
on $A$,'' we mean that
there exists $a$ in the domain of $\phi$ such that $\phi(a) \not = 0$. To
negate this statement, we shall write ``$\phi = 0$ in $A$.''
The role of this assumption (Assumption \ref{assumpt.VV0weak}) is to be able to use
the results of the last subsection (which relied on the second part of Corollary
\ref{cor2.e.est3} to obtain the vanishing of $p$ and $\bsu$ in $\Omega'$, if
$\Omega = M$ we used Corollary \ref{cor2.e.est3} instead). The assumption that
$M$ is connected is just to simplify some of our statements. For instance, it
ensures that $V_{0} \not \equiv 0$ on all {\it connected components} of $M$
(since there is only one such connected component!). The general case follows
from the case $M$ connected easily. By contrast, we cannot assume $M'$ to be connected,
as it may be disconnected even if $M$ is connected.

Let us study first the Fredholm property of $\bsXi$ on our manifold $M = \canDec$
with straight cylindrical ends. Recall that $M' := \pa M_{0}$ is always assumed to
be compact. We first notice that the invertibility of the limit operator $\In \bsXi$
follows from the results of the previous section.

\subsection{The Moore-Penrose pseudoinverse of $\bsXi$}
Let $\mathcal N$ be the kernel of $\bsXi : H^{1} (M; TM) \oplus L^{2}(M)
\to H^{-1} (M; TM) \oplus L^{2}(M)$. The classical elliptic regularity of $\bsXi$ implies that
$\mathcal N$ will consist of {\it smooth}
sections. Assume that $\mathcal N \subset L^{2}(M; TM \oplus \CC)$ and that
it is finite dimensional and let $p_{\maN}$ be the $L^{2}$--orthogonal projection onto $\maN$.
Let $\bss =\bst =(1,0)$ in Definition \ref{def.ADN}.
Assume also that there exists a pseudodifferential operator
\begin{equation}\label{eq.def.psdinv}
  \psdinv \in \Psi_{\cl}^{[- \bss - \bst]}(M; TM \oplus \CC)
  \ede \left(
      \begin{array}{cc}
        \Psi^{-2}(M; TM)   & \Psi^{-1}(M; TM, \CC)\\
        \Psi^{-1}(M; \CC, TM) &  \Psi^{0}(M)
      \end{array}
    \right)
\end{equation}
such that
\begin{equation}\label{eq.rel.pnu}
  \begin{gathered}
    \psdinv (1 - p_{\maN}) \seq (1 - p_{\maN}) \psdinv \seq \psdinv \ \mbox{ and }\\
    \bsXi \psdinv \seq \psdinv \bsXi =  1 - p_{\maN}
  \end{gathered}
\end{equation}
(i.e., we assume that $\bsXi$ is invertible on the orthogonal complement of its kernel $\maN$).
The operator $\psdinv$ is uniquely determined by the relations \eqref{eq.rel.pnu}
and  will be called the {\it Moore-Penrose pseudoinverse} of $\bsXi := \bsXi_{V, V_{0}}$.

We also obtain the following result:

\begin{theorem}\label{thm2.main.Fredholm}
  Let $M$ be a manifold with straight cylindrical ends and assume that
  $\newtilde V \succ 0$ and $\newtilde V_{0} \succ 0$
  \lpar Assumption \ref{assumpt.VV0weak}\rpar. Then the following properties hold.
  \begin{enumerate}[\rm (i)]
    \item The limit operator $\In (\bsXi) \in \ePSsus{m}{M'}{TM, \CC}$
    is invertible \lpar see Lemma \ref{lemma.limit.operator}\rpar.

    \item The generalized Stokes operator $\bsXi := \bsXi_{V, V_{0}}$ is
    Fredholm.

    \item Let $\bss = \bst = (1, 0)$. Consequently, the Moore-Penrose pseudoinverse of
    $\bsXi$ satisfies $\psdinv \in \ePS{[- \bss - \bst]}(M; TM \oplus \CC)$.
  \end{enumerate}
\end{theorem}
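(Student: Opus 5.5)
The three statements will be proved in the order given, since (ii) rests on (i) and (iii) rests on (ii).

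\emph{Point (i).} By Definition \ref{def.not.diff.ops} and Lemma \ref{lemma.limit.operator}, the limit operator $\In(\bsXi_{V,V_{0}})$ is obtained by freezing the coefficients of $\bsXi$ at infinity. The operators $\Def$, $\Defstar$, $\nabla$ and $\nabla^{*}$ are built from the Levi-Civita connection, and the metric is a product on the end. Hence $\In(\bsL)$, $\In(\nabla)$ and $\In(\nabla^{*})$ are the corresponding operators on the cylinder $M'\times\RR$ with its product metric. This gives
\begin{equation*}
  \In(\bsXi_{V,V_{0}}) \seq \bsXi_{\newtilde V,\newtilde V_{0}}
\end{equation*}
on $M'\times\RR$, where $\newtilde V := \In V$ and $\newtilde V_{0} := \In V_{0}$ are translation invariant. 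Assumption \ref{assumpt.VV0weak} says exactly that $\newtilde V\succ 0$ and $\newtilde V_{0}\succ 0$ on $M'$. Theorem \ref{thm2.invert4}, applied with $\mfkS := M'$ (which may be disconnected; this is allowed in Section \ref{sec.Green2}), then shows that $\bsXi_{\newtilde V,\newtilde V_{0}}$ is invertible between the Douglis--Nirenberg spaces and that its inverse lies in the suspended $\ess$-calculus. This is (i).

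\emph{Point (ii).} Proposition \ref{prop.ADN.elliptic} gives that $\bsXi$ is $(\bss,\bst)$--\ADN-elliptic with $\bss=\bst=(1,0)$. The coefficients of $\bsXi$ are translation invariant near infinity, so by Remark \ref{rem.def.Cinv} we have $\bsXi\in\ePS{[\bss+\bst]}(M;TM\oplus\CC)$. Combining ellipticity with the invertibility of the limit operator from (i), Theorem \ref{thm.ADN.Fredholm} shows that
\begin{equation*}
  \bsXi : H^{m+1}(M;TM)\oplus H^{m}(M)\to H^{m-1}(M;TM)\oplus H^{m}(M)
\end{equation*}
is Fredholm for every $m$. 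In particular this holds for $m=0$, which is the statement of (ii).

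\emph{Point (iii).} The aim is to apply Theorem \ref{thm.spectral.inv} to $T=\bsXi$. Elliptic regularity (Theorem \ref{thm.regularity}) gives $\maN=\ker\bsXi\subset H^{\infty}(M;TM\oplus\CC)$. Since $\maN$ is finite dimensional, the projection $p_{\maN}$ has a smooth kernel. The element of $\maN$ are in $H^{\infty}$ only, so Schwartz decay is not automatic; it follows because they are annihilated by an operator whose limit operator is invertible. Concretely, a parametrix with $\ess$-remainders (Theorem \ref{thm.prop.ePS}(iv), extended to the \ADN\ setting) exhibits kernel elements as images of smoothing $\ess$-operators. Hence $p_{\maN}\in\ePS{-\infty}$. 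The Green formula \eqref{eq.prop.Green.M} shows that $\bsXi$ is formally self-adjoint, so $\bsXi$ is invertible on $\maN^{\perp}$ with range $\maN^{\perp}$. Therefore the Moore--Penrose pseudoinverse exists and satisfies \eqref{eq.rel.pnu}.

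The main obstacle is the \ADN\ version of the spectral invariance step. The proof given after Theorem \ref{thm.spectral.inv} treated only constant orders. Here we have different orders, so I would reduce to the invertible case in the following way. First, $\bsXi+p_{\maN}$ is invertible: it is self-adjoint, its kernel is trivial, and it differs from the Fredholm operator $\bsXi$ by a compact perturbation. Second, $\bsXi+p_{\maN}$ is still \ADN-elliptic with the same limit operator as $\bsXi$, since $p_{\maN}\in\ePS{-\infty}$. The invertible case of Theorem \ref{thm.spectral.inv}, as cited from \cite{KMNW-2025, KNW-22}, then gives $(\bsXi+p_{\maN})^{-1}\in\ePS{[-\bss-\bst]}$. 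Finally,
\begin{equation*}
  \psdinv \seq (\bsXi+p_{\maN})^{-1}-p_{\maN},
\end{equation*}
and the right-hand side lies in $\ePS{[-\bss-\bst]}(M;TM\oplus\CC)$, which proves (iii).
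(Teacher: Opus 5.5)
Your proofs of (i) and (ii) are exactly the paper's. You identify $\In(\bsXi_{V,V_0})=\bsXi_{\newtilde V,\newtilde V_0}$ on $M'\times\RR$ and invoke Theorem \ref{thm2.invert4}; you then combine \ADN-ellipticity (Proposition \ref{prop.ADN.elliptic}) with Theorem \ref{thm.ADN.Fredholm}.

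For (iii) your route differs. The paper only cites the Beals-type result \cite[Theorem 7.4]{KNW-22}, i.e.\ Theorem \ref{thm.spectral.inv}. Its written proof passes from the Fredholm case to the invertible one via $(p_{\maN}+T^{*}T)^{-1}T^{*}$ and explicitly restricts to constant orders, even though $\bss=\bst=(1,0)$ here. You instead use self-adjointness to write $\psdinv=(\bsXi+p_{\maN})^{-1}-p_{\maN}$, with $\bsXi+p_{\maN}$ invertible and of the same \ADN\ type. This is better suited to the mixed orders, since $T^{*}T$ need not stay in an elliptic \ADN\ class, and it only needs the invertible case of the Beals theorem. The identity is correct: $(\bsXi+p_{\maN})^{-1}$ is the identity on $\maN$ and inverts $\bsXi$ on $\maN^{\perp}$.

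One step needs tightening. The parametrix of Theorem \ref{thm.prop.ePS}(iv) only gives $R:=Q\bsXi-1\in\ePS{-\infty}$, and images of such operators need not decay along the ends. You do need decay here. Since $p_{\maN}$ has finite rank with $L^{2}$ range, $\In(p_{\maN})=0$. Hence $p_{\maN}\in\ePS{-\infty}$ is equivalent to $p_{\maN}\in\Psi_{\maS}^{-\infty}$, i.e.\ to the kernel elements being Schwartz.

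The invertibility of $\In(\bsXi)$ must therefore be used explicitly. Replace $Q$ by $Q':=Q-s_0\big(\In(R)\,\In(\bsXi)^{-1}\big)$. By multiplicativity of $\In$ and $\In\circ s_0=\mathrm{id}$, the new remainder $R':=Q'\bsXi-1\in\ePS{-\infty}$ satisfies $\In(R')=0$. The exact sequence in Theorem \ref{thm.complet.indicial}(iv) then gives $R'\in\Psi_{\maS}^{-\infty}$. So $u=-R'u$ is Schwartz for every $u\in\maN$, and $p_{\maN}\in\Psi_{\maS}^{-\infty}\subset\ePS{-\infty}$.

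With this correction your argument is complete. It is also more careful than the paper's own proof of Theorem \ref{thm.spectral.inv}, which asserts $p_{\maN}\in\ePS{-\infty}$ directly from $\maN\subset H^{\infty}$.
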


\begin{proof}
  Assumption \ref{assumpt.VV0weak} implies that $\In (\bsXi) =
  \bsXi_{\newtilde V, \newtilde V_{0}}$ satisfies the
  assumptions of Theorem \ref{thm2.invert4}, which then yields right away the first point.
  The generalized Stokes operator $\bsXi := \bsXi_{V, V_{0}}$ is elliptic by
  Proposition \ref{prop.ADN.elliptic}. Since $\In (\bsXi)$ is invertible,
  Theorem \ref{thm.ADN.Fredholm} implies that $\bsXi$ is Fredholm. The last point follows
  from the Beals-type result proved in \cite[Theorem 7.4]{KNW-22} (this result can also be
  found in \cite{KMNW-2025}).
\end{proof}

We now use this result to prove the invertibility of $\bsXi$ under the following
assumption.

\begin{assumption}\label{assumpt.VV0}
  We say that $V$ and $V_{0}$ satisfy the
  \emph{positivity assumption on $M$} if they satisfy the
  positivity assumption at infinity on $M$ \lpar Assumption
  \ref{assumpt.VV0weak}\rpar\ and $V \ge 0$
  and $V_{0} \ge 0$.
\end{assumption}

It follows from this formulation that Assumption \ref{assumpt.VV0} implies
Assumption \ref{assumpt.VV0weak}. We can now prove our main invertibility result
({Theorem 5.11} from \cite{KohrNistor-Stokes}).

\begin{theorem}\label{thm2.main.invert}
  Let $M$ be a manifold with straight cylindrical ends and assume that
  $V$ and $V_{0}$ satisfy the positivity assumption {\lpar Assumption \ref{assumpt.VV0}\rpar}.
  Then the generalized Stokes operator $\bsXi := \bsXi_{V, V_{0}}$ is
  invertible. Let $\bss = \bst = (1, 0)$.
  Consequently, its inverse satisfies $\bsXi^{-1}
  \in \ePS{[- \bss - \bst]}(M; TM \oplus \CC)$
  and
  \begin{equation*}
    \bsXi := \bsXi_{V, V_{0}} : H^{r+1}(M; TM) \oplus H^{r}(M) \to H^{r-1}(M; TM)
    \oplus H^{r}(M)
  \end{equation*}
  is an isomorphism for all $r \in \RR$.
\end{theorem}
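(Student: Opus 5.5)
The approach is to bootstrap from the Fredholm result, Theorem \ref{thm2.main.Fredholm}, which applies because Assumption \ref{assumpt.VV0} contains Assumption \ref{assumpt.VV0weak}. It gives three things. The limit operator $\In(\bsXi)$ is invertible. The operator $\bsXi : H^{1}(M; TM) \oplus L^{2}(M) \to H^{-1}(M; TM) \oplus L^{2}(M)$ is Fredholm. Its Moore--Penrose pseudoinverse lies in $\ePS{[-\bss-\bst]}(M; TM \oplus \CC)$. To get invertibility, it therefore suffices to show that this Fredholm operator is injective and has index zero.

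Index zero comes from formal self-adjointness. The second Green formula, Proposition \ref{prop.Green}(ii) with $\Omega = M$ (no boundary terms, cf. \eqref{eq.prop.Green.M}), gives $(\bsXi U, W)_{M} = (U, \bsXi W)_{M}$ for compactly supported smooth $U, W$. By density (Remark \ref{rem.C0dense}) this extends to the relevant Sobolev spaces. Hence the Banach adjoint of $\bsXi : H^{1}\oplus L^{2} \to H^{-1}\oplus L^{2}$ is $\bsXi$ itself, acting between the same pair of spaces under the $L^{2}$ duality. The cokernel is thus identified with the kernel, so $\operatorname{ind}\bsXi = 0$, and surjectivity follows from injectivity.

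For injectivity, let $U = (\bsu\ \ p)^{\top} \in H^{1}(M;TM)\oplus L^{2}(M)$ with $\bsXi U = 0$. Elliptic regularity for Douglis--Nirenberg operators (Theorem \ref{thm.regularity}, using Proposition \ref{prop.ADN.elliptic}) gives $U \in H^{r+1}\oplus H^{r}$ for all $r$, in particular $U \in H^{2}(M;TM)\oplus H^{1}(M)$. Corollary \ref{cor.e.est.new}(i) then yields $\Def \bsu = 0$, $V\bsu = 0$, $\nabla p = 0$ and $V_{0} p = 0$. This is where Assumption \ref{assumpt.VV0} is used: it supplies the global condition $V, V_{0} \ge 0$.

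The main obstacle is concluding $U=0$, because $V_{0}\succ 0$ and $V \succ 0$ are only assumed for the limits at infinity $\newtilde V_0, \newtilde V$, not on all of $M$. The plan is to pass to the end of $M$. Since $\nabla p = 0$ and $M$ is connected, $p$ is constant. On the cylindrical end, $V_{0}$ agrees with $\newtilde V_{0}$, which is positive somewhere, so $V_0 p = 0$ forces $p = 0$.

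For $\bsu$, I will use $V\bsu = 0$. On the end $V = \newtilde V$ is invertible on some open set, so $\bsu$ vanishes on a nonempty open subset of $M$. Together with $\Def\bsu = 0$ and the $L^{2}$-unique continuation property of $\Def$ \cite{Grosse-Kohr-Nistor-23}, this gives $\bsu = 0$ on the connected manifold $M$. As a backup argument, $\bsu \in L^{2}$ is a Killing field, and a Killing field on the cylindrical end that is square integrable must vanish by translation invariance. I expect this step to be the delicate one mainly in matching the $\succ 0$ definitions to open sets in the end.

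Finally, once $\bsXi$ is bijective, $\maN = 0$ and the pseudoinverse is the inverse, so $\bsXi^{-1} \in \ePS{[-\bss-\bst]}(M; TM\oplus\CC)$ by Theorem \ref{thm2.main.Fredholm}(iii) (equivalently Theorem \ref{thm.spectral.inv}). The mapping property $H^{r-1}\oplus H^{r} \to H^{r+1}\oplus H^{r}$ then follows componentwise from Theorem \ref{thm.prop.ePS}(ii): the entries have orders $-2$, $-1$, $-1$ and $0$. Together with the boundedness of $\bsXi$ itself, this shows that $\bsXi$ is an isomorphism for every $r$.
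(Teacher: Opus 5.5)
Your proposal is correct and follows essentially the same route as the paper. It uses Fredholmness from Theorem~\ref{thm2.main.Fredholm}, index zero from self-adjointness, injectivity via elliptic regularity and the energy argument of Corollary~\ref{cor.e.est.new}, and membership of $\bsXi^{-1}$ in the $\ess$-calculus via the Beals-type result, which you reach through part (iii) of the Fredholm theorem.

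The only difference is presentational. The paper first notes that $V,V_{0}\ge 0$, together with positivity at infinity and the connectedness of $M$, gives $V\succ 0$ and $V_{0}\not\equiv 0$ on $M$. It then cites Corollary~\ref{cor.e.est.new}(ii),(iii)(b) directly. You instead unpack that corollary by hand, and your Killing-field backup argument is not needed.
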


\begin{proof}
  The induced operator
  \begin{equation*}
    \bsXi \ede \bsXi_{V, V_{0}} :
    H^{1}(M; TM) \oplus L^{2}(M) \to H^{-1}(M; TM) \oplus L^{2}(M)
  \end{equation*}
  is Fredholm by Theorem {thm2.main.Fredholm} just proved. It is also self-adjoint,
  and hence it is of index zero (between the indicated spaces). To prove that $\bsXi$ is
  invertible, it hence suffices to prove that it is injective. We proceed as
  in the proof of the last two propositions, using energy estimates.

  Let $U \in H^{1}(M; TM) \oplus L^{2}(M)$ be such that $\bsXi U = 0$
  on $M$. Then $U \in H^{2}(M; TM) \oplus H^{1}(M)$, by elliptic regularity
  (see Theorem \ref{thm.regularity} or \cite[Theorem 15.3.30]{KMNW-2025} or \cite[Proposition 6.5]{KNW-22}).
  Let us notice that the fact that $V$ and $V_{0}$ satisfy the non-vanishing
  assumption on $M$ {\lpar Assumption \ref{assumpt.VV0}\rpar} and that $M$ is connected implies
  that $V_{0} \not \equiv 0$ and $V \succ 0$ on $M$. The assumption $\bsXi U = 0$
  then allows us to use Corollary \ref{cor.e.est.new} to conclude that $U = 0$.
  Thus $\bsXi$ is invertible between the indicated spaces, and hence also as a
  pseudodifferential operator. The fact that its inverse belongs to the set
  $\ePS{[- \bss - \bst]}(M; TM \oplus \CC)$ follows from a ``Beals-type theorem''
  (Theorem 7.4 in \cite{KNW-22}).
  The last stated isomorphism follows from the mapping properties of operators in
  the spaces $\ePS{[\bss + \bst]}(M; TM)$ and in
  $\ePS{[- \bss - \bst]}(M; TM)$, as in the last
  corollaries. This completes the proof.
\end{proof}

The invertibility of $\bsXi := \bsXi_{V, V_{0}}$ in the case $M$ closed is
obtained when both $V$ and $V_{0}$ do not vanish identically on $M$. Moreover,
the condition that $V$ does not vanish identically on $M$ can be replaced with
the condition that $M$ does not have non-vanishing Killing vector fields.
These results were proved in \cite[Theorem 6.1(4)]{KNW-2025}.
In relation to this result, the reader might have noticed that we are not considering
the case when $M = \canDec$ has straight cylindrical ends but does not have non-vanishing
Killing vector fields. In fact, this condition is not very useful for manifolds with
straight cylindrical ends because, even if $M$ does not have non-vanishing
Killing vector fields, its ``end'' $M' \times \RR$ will always have non-vanishing Killing
vector fields, for instance $\pa_{t}$ (the infinitesimal generator of the translations
along $\RR$).
(See Subsection \ref{ssec.ToAdd} for more on the case $M$ closed.)

\subsection{Definition of layer potentials on $M$}
\label{ssec.def.lp}
The Fredholm result of the previous section allow us to define and study several layer
potential type operators. {\it We continue to assume that $M$ has straight cylindrical ends.
We also assume that $V$ and $V_{0}$ satisfy the positivity at infinity
Assumption \ref{assumpt.VV0weak},}
in order to be able to use the Fredholmness results of the previous section. (The case $M$
closed was treated similarly in great detail in \cite{KNW-2025}.) Recall that
the Moore-Penrose pseudoinverse $\psdinv$ of $\bsXi$ (which was proved
to be Fredholm in Theorem \ref{thm2.invert4}) is defined as the inverse of the
operator $\bsXi$ acting from the orthogonal of $\ker \bsXi$ to its image
(see \cite{KNW-2025} for details and references).
As in \cite{KNW-2025}, we also obtain the following result:

\begin{proposition}\label{prop.invert.inverse}
  Let $M$ be a manifold with straight cylindrical ends.
  Let $\bss=\bst=(1,0)$, as in Proposition
  \ref{prop.ADN.elliptic}. We assume that $V$ and $V_{0}$ satisfy
  the non-vanishing Assumption \ref{assumpt.VV0}. Then
  \begin{enumerate}[\rm (i)]
    \item The pseudoinverse $\psdinv$ has the form
    \begin{equation*} 
      \psdinv \, =:\,
      \left(
      \begin{array}{cc}
        \maA  & \maB  \\
        \maC  & \maD
      \end{array}
      \right) \in \ePS{[- \bss - \bst]}(M; TM \oplus \CC)\,,
    \end{equation*}
    where $\maA \in \ePS{-2}(M; TM)$, $\maC = \maB^{*} \in \ePS{-1}(M; TM, \CC)$,
    and $\maD \in \ePS{0}(M)$.

    \item Consequently, we have $\sigma_{-2}(\maA )(x, \xi) \seq \frac{1}{| \xi |^2} -
    \frac{V_{0}+1}{2V_{0}+1}\frac{1}{| \xi |^4}
    \xi^{\sharp} \otimes \xi \,,$
    $\sigma_{-1}(\maB )(x, \xi) \seq \sigma_{-1}(\maC )(x, \xi)^{*} \seq
    \frac{\imath}{(2V_{0}+1)| \xi |^2} \xi^{\sharp} \,,$
    and $\sigma_{0}(\maD )(x, \xi) \seq -\frac{2}{2V_{0}+1}.$
  \end{enumerate}
\end{proposition}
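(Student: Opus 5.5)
Under Assumption \ref{assumpt.VV0}, Theorem \ref{thm2.main.invert} gives that $\bsXi$ is invertible. Hence $\maN = 0$, the Moore-Penrose pseudoinverse $\psdinv$ coincides with $\bsXi^{-1}$, and $\psdinv \in \ePS{[-\bss-\bst]}(M; TM\oplus\CC)$ with $\bss=\bst=(1,0)$. By Remark \ref{rem.ADN.ess}, this membership means exactly that the entries of the block decomposition lie in $\ePS{s_i+t_j}$. Thus $\maA\in\ePS{-2}(M;TM)$, $\maB\in\ePS{-1}(M;\CC,TM)$, $\maC\in\ePS{-1}(M;TM,\CC)$ and $\maD\in\ePS{0}(M)$.

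To get $\maC=\maB^{*}$, I will use that $\bsXi$ is formally self-adjoint: $(\bsL+V)^{*}=\bsL+V$ because $V$ is pointwise self-adjoint (it is $\ge 0$), the two off-diagonal entries are $\nabla$ and $\nabla^{*}$, and $V_0$ is real. The second Green formula, Proposition \ref{prop.Green}(ii) with $\Omega=M$ (Equation \eqref{eq.prop.Green.M}), confirms this on $\CIc$. Taking adjoints in $\bsXi\bsXi^{-1}=1$ then gives $(\bsXi^{-1})^{*}=\bsXi^{-1}$, and comparing off-diagonal blocks yields $\maC=\maB^{*}$. Adjoints stay in the $\ess$-calculus by Theorem \ref{thm.prop.ePS}(i).

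For (ii), I will use the multiplicativity of the $(\bss,\bst)$-principal symbol. Since $\bsXi\psdinv=1$, we get $\Symb_{\bss,\bst}(\bsXi)\,\Symb_{-\bss,-\bst}(\psdinv)=1$. Checking this entrywise is the one step needing care. Each entry of the product is a sum of terms $\sigma_{s_i+t_k}(\bsXi_{ik})\sigma_{-s_k-t_j}(\psdinv_{kj})$ of total order $s_i-s_j+t_i-t_j$, and only the leading orders survive because the orders add consistently. This rests on Theorem \ref{thm.prop.inv-calc}(ii) and Theorem \ref{thm.prop.ePS}(iii), applied componentwise. Therefore $\Symb_{-\bss,-\bst}(\psdinv)$ is the inverse of the matrix computed in Proposition \ref{prop.ADN.elliptic}, and reading off its entries gives the stated $\sigma_{-2}(\maA)$, $\sigma_{-1}(\maB)$ and $\sigma_0(\maD)$. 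The formula $\sigma_{-1}(\maC)=\sigma_{-1}(\maB)^{*}$ follows from $\maC=\maB^{*}$ and the $*$-compatibility of the principal symbol. As a check, the matrix inverse can be verified directly by multiplication, or rederived via Schur's complement, using that $\xi^{\sharp}\otimes\xi$ is $|\xi|^{2}$ times the projection onto $\xi^{\sharp}$.
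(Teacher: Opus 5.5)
Your proposal is correct and follows the paper's own proof. Part (i) is read off from Theorem \ref{thm2.main.invert} and the block structure of $\ePS{[-\bss-\bst]}(M; TM\oplus\CC)$, and part (ii) comes from multiplicativity of the Douglis--Nirenberg principal symbol combined with the explicit inverse in Proposition \ref{prop.ADN.elliptic}. Your self-adjointness argument for $\maC=\maB^{*}$ makes explicit what the paper leaves implicit here; it uses $(\psdinv)^{*}=\psdinv$ later, in the proof of Theorem \ref{thm.jump.rel}.

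One small slip, which does not affect your conclusion: the term $\sigma_{s_i+t_k}(\bsXi_{ik})\,\sigma_{-s_k-t_j}(\psdinv_{kj})$ has order $s_i+t_k-s_k-t_j=s_i-s_j$, not $s_i-s_j+t_i-t_j$. This order is independent of $k$ only because $\bss=\bst$. In general the inverse must be indexed by $(-\bst,-\bss)$, as in the paper's $\Symb_{-\bst,-\bss}$.
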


\begin{proof}
  The point (i) makes explicit (using the definition of \ADN-operators) the fact that
  $\bsXi^{-1} \in \ePS{[- \bss - \bst]}(M; TM \oplus \CC)$,
  a result that was already proved in Theorem
  \ref{thm2.main.invert}. Let $\bss=\bst=(1,0)$ be as in the statement.
  The multiplicativity of the principal symbol $\Symb$ gives that
  \begin{equation*}
    \quad \Symb_{\bss, \bst}(\bsXi)\Symb_{\bf -t, -s}(\psdinv)
    \seq \Symb_{\boldsymbol 0, 0}(1) \seq 1\,.
  \end{equation*}
  Therefore, the $({- \bss}, {- \bst})$-principal symbol
  of the inverse $\psdinv$ of $\bsXi$ is the inverse of the
  $(\bss, \bst)$-principal symbol of $\bsXi$,
  which is given by Proposition \ref{prop.ADN.elliptic}. Thus, the
  principal symbols of the operators $\maA$, $\maB$, $\maC$, and $\maD$
  (the entries of $\psdinv$)
  are as stated (as given by Proposition \ref{prop.ADN.elliptic}).
\end{proof}

The existence of the inverse $\psdinv$ of $\bsXi$ allows us to use classical methods to
define the single and double layer potential operators for the Stokes operator (see also
\cite{D-M, Ladyzhenskaya, M-T, 24}). We let $\Omega \subset M$ be a subdomain
with straight cylindrical ends, that is
\begin{equation*}
  \Omega \cap \big[ M'  \times (-\infty, 0) \big]
  \seq \Omega' \times (-\infty, 0)\,,
\end{equation*}
as before (recall that such an open subset is called a \emph{compatible}
open subset of $M$, see Equation \eqref{eq.def.Omega}).
Let $\Gamma := \pa \Omega$, as usual in this paper.
Thus $\Gamma$ is also a manifold with straight cylindrical ends. For the following
definition, recall the Stokes operator $\bsXi = \bsXi_{V, V_{0}}$ \eqref{eq.def.bsXi}.
Also, recall the distribution $\bsh\otimes \delta_{\Gamma}$ of Lemma
\ref{lemma.def.hdelta} and the operator $\bopstar$ of Lemma \ref{lemma.bop.star}.
We now introduce the layer potential operators associated to our generalized
Stokes operator $\bsXi := \bsXi_{V, V_{0}}$.

\begin{definition} \label{def.lp}
  Let $\bsh\in H^{s}(\Gamma; TM)$, $s \in \RR$. The {\it single-layer potential}
  $\maS_{\rm{ST}}(\bsh)$ for $\bsXi$ is then
  \begin{equation*} 
    \maS_{\rm{ST}}(\bsh)
      \ede \psdinv \, \left[\cvector{\bsh}{0}
      \otimes \delta_{\Gamma}\right]\,.
  \end{equation*}
  Similarly, the {\it double-layer potential} $\maD_{\rm{ST}}(\bsh)$ for $\bsXi$ is given by:
  \begin{equation*} 
    \maD_{\rm{ST}}(\bsh)
      \ede \psdinv \, \big[ \bopstar
      \left(
        \bsh\otimes \delta_{\Gamma}
      \right) \big]\,.
  \end{equation*}
\end{definition}

It is useful sometimes to separate the components of $\maS_{\rm{ST}}(\bsh)$ and
$\maD_{\rm{ST}}(\bsh)$ into \emph{velocity} and \emph{pressure} components.

\begin{definition} \label{def.lp2}
  We continue to assume that $\bsh\in H^{s}(\Gamma; TM)$, $s \in \RR$.
  The components of these definitions yield the
  {\it single-layer \underline{velocity} potential} $\maV_{\rm{ST}}(\bsh)$
  and the {\it single-layer \underline{pressure} potential} $\maP_{\rm{ST}}(\bsh)$ for $\bsXi$:
  \begin{equation*} 
    \left(
      \begin{array}{c}
        \maV_{\rm{ST}}(\bsh)\\
        \maP_{\rm{ST}}(\bsh)
      \end{array}
      \right)
      \ede \maS_{\rm{ST}}(\bsh)
      \ede \psdinv \, \left[\cvector{\bsh}{0}
      \otimes \delta_{\Gamma}\right]\,.
  \end{equation*}
  Similarly, we obtain the {\it double-layer
  \underline{velocity} potential} $\mathcal{W}_{\rm{ST}}(\bsh)$ and
  the {\it double-layer \underline{pressure} potential}
  $\mathcal{Q}_{\rm{ST}}(\bsh)$ for $\bsXi$
  \begin{equation*} 
    \left(
      \begin{array}{c}
        \maW_{\rm{ST}}(\bsh)\\
        \maQ_{\rm{ST}}(\bsh)
      \end{array}
      \right)
      \ede \maD_{\rm{ST}}(\bsh)
      \ede \psdinv \, \big[ \bopstar
      \left(
        \bsh\otimes \delta_{\Gamma}
      \right) \big]\,.
  \end{equation*}
\end{definition}

These definitions can be made more explicitly as follows.

\begin{remark}\label{rem.components}
  We have
  \begin{equation*} 
    \maW_{\rm{ST}}(\bsh) \seq
    \left(
      \begin{array}{cc}
        \maA  & \maB
      \end{array}
    \right)
    \left( \begin{array}{c}
        - 2\Dnu^{*} \\
        \bsnu^{\sharp}
      \end{array}
    \right)
    (\bsh\otimes \delta_{\Gamma })
    \seq (- 2 \maA  \Dnu^{*} + \maB \bsnu^{\sharp})\,
    (\bsh\otimes \delta_{\Gamma })
  \end{equation*}
  and
  \begin{equation*} 
    \maQ_{\rm{ST}}(\bsh)
    \seq \left(
      \begin{array}{cc}
        \maC  & \maD
      \end{array}
      \right)
      \left( \begin{array}{c}
        - 2\Dnu^{*} \\
        \bsnu^{\sharp}
      \end{array}
      \right)
      (\bsh\otimes \delta_{\Gamma }) \\
       \ \seq (- 2 \maC  \Dnu^{*} + \maD \bsnu^{\sharp})\,
      (\bsh\otimes \delta_{\Gamma })\,.
\end{equation*}
  Similarly, $\maS_{\rm{ST}}(\bsh) \seq \big(\, \maA (\bsh\otimes \delta_{\Gamma }) \ \ \
  \maC(\bsh\otimes \delta_{\Gamma })\, \big)^{\top }$.
\end{remark}

Theorem \ref{thm.mapping.lp} gives the following result.

\begin{proposition}\label{prop.H2estimates}
  We continue to assume that $M$ is a manifold with straight cylindrical ends and
  that $\Omega \subset M$ is a compatible open subset \lpar Equation \eqref{eq.def.Omega}\rpar.
  Let $\Omega_{+} := \Omega$, $\Omega_{-}:= M \smallsetminus \overline{\Omega}$,
  and $\Gamma := \pa \Omega$ be as before. Let $s\in \RR$ and $\bsh \in H^{s}(\Gamma; TM)$.
  Then
  \begin{equation*}
    \maV_{\rm{ST}}(\bsh)\vert_{\Omega_{\pm}}
    \in H^{s + \frac32}(\Omega_{\pm}; TM) \ \mbox{ and }\
    \maP_{\rm{ST}}(\bsh)\vert_{\Omega_{\pm}} \in H^{s + \frac12}(\Omega_{\pm})\,.
  \end{equation*}
  Similarly, let $s\in \RR$ and $\bsh \in H^{s}(\Gamma; TM)$. Then
  \begin{equation*}
    \maW_{\rm{ST}}(\bsh)\vert_{\Omega_{\pm}} \in H^{s + \frac12}(\Omega_{\pm}; TM)\
    \mbox{ and }\
    \maQ_{\rm{ST}}(\bsh)\vert_{\Omega_{\pm}} \in H^{s - \frac12}(\Omega_{\pm})\,.
  \end{equation*}
\end{proposition}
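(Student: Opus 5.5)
The plan is to reduce everything to Theorem \ref{thm.mapping.lp}, applied entry by entry to the components of $\psdinv$ and of its compositions with $\bopstar$, written out explicitly in Remark \ref{rem.components}. For the single layer, $\maV_{\rm{ST}}(\bsh) = \maA(\bsh\otimes\delta_\Gamma)$ and $\maP_{\rm{ST}}(\bsh) = \maC(\bsh\otimes\delta_\Gamma)$. Here $\maA \in \ePS{-2}(M;TM)$ and $\maC \in \ePS{-1}(M;TM,\CC)$, by Proposition \ref{prop.invert.inverse} (or by Theorem \ref{thm2.main.Fredholm}(iii) for the pseudoinverse under the weaker Assumption \ref{assumpt.VV0weak}). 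Theorem \ref{thm.mapping.lp} with $m=-2$ gives regularity $H^{s+2-1/2}=H^{s+3/2}$ on $\Omega_\pm$, and with $m=-1$ it gives $H^{s+1/2}$, as claimed.

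For the double layer, $\maW_{\rm{ST}}(\bsh) = (-2\maA\Dnu^* + \maB\bsnu^\sharp)(\bsh\otimes\delta_\Gamma)$. The operator $\maA\Dnu^*$ has order $-2+1=-1$ by Theorem \ref{thm.prop.ePS}(i), because $\Dnu^*$ is a first order differential operator in $\operatorname{Diff}^1_{\inv}$: we take $\bsnu$ translation invariant at infinity, which is possible since $\Omega$ is compatible. Also $\maB\bsnu^\sharp$ has order $-1$. So $\maW_{\rm{ST}} = \maS_P$ with $P\in\ePS{-1}$, and Theorem \ref{thm.mapping.lp} yields $H^{s+1/2}(\Omega_\pm;TM)$. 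Likewise, $\maQ_{\rm{ST}}(\bsh) = (-2\maC\Dnu^* + \maD\bsnu^\sharp)(\bsh\otimes\delta_\Gamma)$ has order $0$, giving $H^{s-1/2}(\Omega_\pm)$.

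The main obstacle is the \emph{rational type} hypothesis in Theorem \ref{thm.mapping.lp}. The principal symbols of $\maA,\maB,\maC,\maD$ in Proposition \ref{prop.invert.inverse}(ii) are rational in $\xi$, but the full symbols of the lower-order terms need not be. I would handle this by the usual parametrix argument. Since $\bsXi$ is a differential operator with polynomial symbol, the parametrix construction (Theorem \ref{thm.prop.ePS}(iv), in its \ADN\ version) produces inverse symbols whose homogeneous components are rational. This gives $\psdinv = Q + R$, where $Q$ has a symbol of rational type locally and $R \in \ePS{-\infty}$. The smoothing remainder $R$ maps $h\otimes\delta_\Gamma \in H^{s'}(M)$ (Lemma \ref{lemma.def.hdelta2}) into $H^\infty(M)$ by Theorem \ref{thm.prop.ePS}(ii), so it does not affect the regularity. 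The principal part $Q$ is then treated as in the proof of Theorem \ref{thm.mapping.lp}: locally via \cite{H-W} for compactly supported kernels, then with a translation-invariant partition of unity on the ends to get uniform constants.
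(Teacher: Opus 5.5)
Your proposal is correct and follows the paper's own argument. Both reduce the statement, via Remark~\ref{rem.components}, to Theorem~\ref{thm.mapping.lp} applied to operators of orders $-2$, $-1$, $-1$, and $0$. Your parametrix-plus-$\ePS{-\infty}$-remainder discussion supplies a justification for the rational-type hypothesis, which the paper simply asserts. Likewise, your choice of $\bsnu$ translation invariant at infinity makes explicit a convention the paper uses tacitly; it is harmless, since $\Dnu^{*}(\bsh\otimes\delta_{\Gamma})$ depends only on $\bsnu\vert_{\Gamma}$.
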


\begin{proof}
  Indeed, this follows from Theorem \ref{thm.mapping.lp}
  and Remark \ref{rem.components} because
  $\maA$ has order $-2$, $\maC$ and $\bsP := - 2 \maA  \Dnu^{*} + \maB \bsnu^{\sharp}$
  have order $-1$, and $- 2 \maC  \Dnu^{*} + \maD \bsnu^{\sharp}$ has
  order zero. Moreover, all these four operators have rational type symbols.

  We obtain in this way the eight stated relations (including the choices of $+/-$).
  Just to make things clearer,
  let us show in detail how the first relation is proved.
  Theorem \ref{thm.mapping.lp} for $m = -2$, $s \in \RR$, $P = \maA$,
  gives
  \begin{equation*}
    \maV_{\rm{ST}} (\bsh) \vert_{\Omega_{+}} \ede
    \maA ( \bsh \otimes \delta_{\Gamma})
    \vert_{\Omega_{+}} \in H^{s -(-2)-\frac12}(\Omega_{+}) \seq
    H^{s + \frac32}(\Omega_{+})\,.
  \end{equation*}
  This proves the first of the eight mentioned relations. The other seven are proved in
  exactly the same way.
\end{proof}

The following result is a consequence of the definition of the single and double
layer potentials. Recall that $\maN$ is the kernel of $\bsXi$ and $p_{\maN}$ is the
$L^{2}$-orthogonal projection onto $\maN$.

\begin{proposition}\label{prop.compatibility}
  Let us assume that $V$ and $V_{0}$ satisfy the positivity assumption on $M$
  (Assumption \ref{assumpt.VV0}, and hence that $\bsXi$ is invertible).
  Let $\bsh \in H^{s}(\Gamma; TM)$, $s \in \RR$. Then $\bsXi \maS_{\rm{ST}}(\bsh)$
  and $\bsXi \maD_{\rm{ST}}(\bsh)$ vanish on $M \smallsetminus \Gamma$.
\end{proposition}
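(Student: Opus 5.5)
Since $\bsXi$ is invertible under Assumption \ref{assumpt.VV0}, we have $\psdinv = \bsXi^{-1}$ and $p_{\maN} = 0$. The plan is therefore to compute $\bsXi \maS_{\rm{ST}}(\bsh)$ and $\bsXi \maD_{\rm{ST}}(\bsh)$ as distributions on all of $M$ and then restrict to $M \smallsetminus \Gamma$.

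First, fix $s$ and $\bsh \in H^{s}(\Gamma; TM)$. By Lemma \ref{lemma.def.hdelta2}, the distribution $\cvector{\bsh}{0}\otimes\delta_{\Gamma}$ lies in some Sobolev space $H^{s'}(M; TM\oplus\CC)$. Since $\bopstar$ is a first order differential operator with coefficients in $\CI_{\inv}$ (Lemma \ref{lemma.bop.star}, using that $\bsnu$ can be chosen translation invariant at infinity), the distribution $\bopstar(\bsh\otimes\delta_\Gamma)$ lies in $H^{s'-1}(M;TM\oplus\CC)$.

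Next, apply the mapping properties in Theorem \ref{thm2.main.invert}. The inverse $\bsXi^{-1}$ maps $H^{r-1}(M;TM)\oplus H^{r}(M)$ isomorphically onto $H^{r+1}(M;TM)\oplus H^{r}(M)$, and $\bsXi$ maps back. Hence, for a suitable $r$ depending on $s'$, and using $\bsXi\bsXi^{-1}=1$ on these spaces (with density of $\CIc$, Remark \ref{rem.C0dense}, to justify this at the distributional level),
\begin{equation*}
  \bsXi\maS_{\rm{ST}}(\bsh) \seq \cvector{\bsh}{0}\otimes\delta_\Gamma
  \quad\mbox{and}\quad
  \bsXi\maD_{\rm{ST}}(\bsh) \seq \bopstar(\bsh\otimes\delta_\Gamma)\,.
\end{equation*}
Note that $\cvector{\bsh}{0}$ sits in the $TM$ slot, so $H^{r-1}$ must be placed on the vector component. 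The two components have different orders, so one either uses a uniform index or simply works in $H^{-N}$ for $N$ large. This is a bookkeeping point rather than a real obstacle.

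Finally, both right-hand sides are supported in $\Gamma$. This holds because $h\otimes\delta_\Gamma$ pairs only with $\phi|_\Gamma$, and differential operators do not enlarge supports. Therefore, for any $\phi\in\CIc(M\smallsetminus\Gamma)$, pairing with $\phi$ gives zero, and both $\bsXi\maS_{\rm{ST}}(\bsh)$ and $\bsXi\maD_{\rm{ST}}(\bsh)$ vanish on $M\smallsetminus\Gamma$.

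The only delicate step is justifying $\bsXi\psdinv = 1$ on these very negative Sobolev spaces, and not merely on $L^2$. This follows from the isomorphism statement of Theorem \ref{thm2.main.invert}, which holds for all $r\in\RR$, so no genuine obstacle is expected.
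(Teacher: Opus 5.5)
Your proposal is correct and follows the same route as the paper. Since $\bsXi$ is invertible, you get $\bsXi\maS_{\rm{ST}}(\bsh)=\cvector{\bsh}{0}\otimes\delta_{\Gamma}$ and $\bsXi\maD_{\rm{ST}}(\bsh)=\bopstar(\bsh\otimes\delta_{\Gamma})$, and both of these distributions are supported in $\Gamma$. The paper writes out only the single-layer identity and omits the Sobolev bookkeeping; yours is harmless, and taking $r=s'$ from Lemma \ref{lemma.def.hdelta2} covers both potentials at once.
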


\begin{proof}
  We have
  \begin{equation*}
    \bsXi \maS_{\rm{ST}}(\bsh) \seq \bsXi \bsXi^{-1}(\bsh \otimes \delta_{\Gamma})
    \seq \bsh \otimes \delta_{\Gamma}\,
  \end{equation*}
  which vanishes outside $\Gamma$, as claimed.
\end{proof}

We notice that in \cite{KNW-2025} we have considered also the case when $\bsXi$ is
only Fredholm, when some additional compatibility relations need to be added to obtain
the vanishing outside $\Gamma$ (respectively, $\Gamma'$) in the above result.
We shall need the following consequences of the representation formula
in Proposition \ref{prop.Green}, the first one of which we will call
\emph{Pompeiu's formula}. Recall that $f_{+}$ denotes the trace of
$f \in H^{1}(\Omega)$ at $\Gamma := \pa \Omega$ (thus the trace from the {\it interior}).

\begin{proposition}\label{prop.rep.formula}
  Let us assume that $V$ and $V_{0}$ satisfy the positivity assumption on $M$
  (Assumption \ref{assumpt.VV0}, and hence that $\bsXi$ is invertible).
  Let $U$ be such that $U\vert_{\Omega} \in H^{2}(\Omega; TM) \oplus H^{1}(\Omega)$.
  We extend $U$ to be 0 outside $\Omega$, and call the extension ${\bf 1}_{\Omega}U$,
  where ${\bf 1}_{\Omega}$ be the characteristic function of $\Omega$ (as before).
  \begin{enumerate}[\rm (i)]
  \item
    ${\bf 1}_{\Omega} U \seq \bsXi^{-1} \left({\bf 1}_{\Omega}
    \big( \bsXi U  \big) \right) -
    \maS_{\rm{ST}} \big[(\bop U)_{+}\big]
    + \maD_{\rm{ST}}  (\bsu_{+})\,.$
  \item Consequently, if we also have $\bsXi U = 0$ in $\Omega $, then
    \begin{equation*}  
    \maD_{\rm{ST}} (\bsu_{+})(x) - \maS_{\rm{ST}} \big[(\bop U)_{+}
    \big] (x) \seq
    \begin{cases}
      \ U(x) & \mbox{ if } x \in \Omega\\
      \ \ \, 0    & \mbox{ if } x \in M \smallsetminus \overline{\Omega} \,.
    \end{cases}
    \end{equation*}
  \end{enumerate}
\end{proposition}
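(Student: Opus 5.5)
The plan is to apply $\bsXi^{-1}$ to the representation formula of Proposition \ref{prop.Green}(iii). Under Assumption \ref{assumpt.VV0}, Theorem \ref{thm2.main.invert} shows that $\bsXi$ is invertible. Its inverse $\bsXi^{-1} = \psdinv$ lies in $\ePS{[-\bss-\bst]}(M; TM \oplus \CC)$ and acts on all Sobolev spaces, hence on the distributions appearing below.

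The argument has three steps.

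\begin{enumerate}[\rm (1)]
\item Proposition \ref{prop.Green}(iii) is valid on $M$, since $M$ has straight cylindrical ends and therefore bounded geometry. It gives the identity
\begin{equation*}
  \bsXi({\bf 1}_{\Omega}U) \seq {\bf 1}_{\Omega}(\bsXi U)
  - (\tbop U)_{+} \otimes \delta_{\Gamma}
  + \tbopstar(U_{+} \otimes \delta_{\Gamma})
\end{equation*}
as distributions on $M$.

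\item We check that every term lies in a Sobolev space on which $\bsXi^{-1}$ acts. The section ${\bf 1}_{\Omega}U$ belongs to $L^2(M; TM \oplus \CC)$. The term ${\bf 1}_{\Omega}\bsXi U$ lies in $L^2 \oplus L^2$. The boundary terms lie in negative-order spaces by Lemma \ref{lemma.def.hdelta2}, using $U_{+} \in H^{3/2}(\Gamma) \oplus H^{1/2}(\Gamma)$ and $(\bop U)_{+} \in H^{1/2}(\Gamma; TM)$. The identity $\bsXi^{-1}\bsXi = 1$ holds on $L^2(M; TM \oplus \CC)$ in the distributional sense. To justify this, approximate by $\CIc$ sections (Remark \ref{rem.C0dense}) and use the continuity of $\bsXi$ and $\bsXi^{-1}$ on the scale $H^{r+1} \oplus H^{r}$ for negative $r$ (Theorem \ref{thm2.main.invert}). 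Applying $\bsXi^{-1}$ to both sides of the identity in step (1) then yields ${\bf 1}_{\Omega}U$ on the left.

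\item We identify the boundary terms with layer potentials. By \eqref{def.conormal.Tderiv2}, $(\tbop U)_{+} = \big((\bop U)_{+}, 0\big)^{\top}$. Therefore
\begin{equation*}
  \bsXi^{-1}\big[(\tbop U)_{+} \otimes \delta_{\Gamma}\big] \seq \maS_{\rm{ST}}\big[(\bop U)_{+}\big],
\end{equation*}
which is exactly Definition \ref{def.lp}. For the other term, Lemma \ref{lemma.bop.star} shows that the matrix $\tbopstar$ has zero second column. Hence $\tbopstar(U_{+} \otimes \delta_{\Gamma}) = \bopstar(\bsu_{+} \otimes \delta_{\Gamma})$, and applying $\bsXi^{-1}$ gives $\maD_{\rm{ST}}(\bsu_{+})$. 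This proves (i).
\end{enumerate}

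Part (ii) follows directly. If $\bsXi U = 0$ in $\Omega$, then ${\bf 1}_{\Omega}\bsXi U = 0$, and (i) gives ${\bf 1}_{\Omega}U = \maD_{\rm{ST}}(\bsu_{+}) - \maS_{\rm{ST}}[(\bop U)_{+}]$. Evaluating this at points of $\Omega$ and of $M \smallsetminus \overline{\Omega}$ gives the stated cases. Both potentials are smooth off $\Gamma$, because pseudodifferential operators preserve singular support, so these pointwise values are meaningful.

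The main obstacle is step (2). Justifying $\bsXi^{-1}\bsXi({\bf 1}_{\Omega}U) = {\bf 1}_{\Omega}U$ for a discontinuous $U$ requires care. It also requires verifying that the Green formula of Proposition \ref{prop.Green} holds for $U$ that is regular only on $\Omega$, not on all of $M$. For the latter, I would extend $U\vert_{\Omega}$ to an $H^2 \oplus H^1$ section on $M$ using an extension operator compatible with the cylindrical ends. This does not affect the formula, since the formula only involves $U$ on $\Omega$ and its traces on $\Gamma$.
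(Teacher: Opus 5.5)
Your proposal is correct and follows the paper's own proof. The paper likewise applies $\bsXi^{-1}$ to the representation formula of Proposition \ref{prop.Green}(iii). It then identifies the two boundary terms with $\maS_{\rm{ST}}[(\bop U)_{+}]$ and $\maD_{\rm{ST}}(\bsu_{+})$, using $(\tbop U)_{+} = \big((\bop U)_{+},\,0\big)^{\top}$ and the fact that $\tbopstar$ has a vanishing second column. Your Sobolev-space bookkeeping and your extension remark in step (2) just spell out justifications that the paper leaves implicit.
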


\begin{proof}
  Recall the representation formulas of Proposition \ref{prop.Green}(iii)
  \begin{align*}
    \bsXi \big( \textbf{1}_{\Omega} U \big) &
    \seq \textbf{1}_{\Omega} \big(  \bsXi U  \big)
    - (\tbop U)_{+} \otimes \delta_{\pa \Omega}
    + \tbopstar (U_{+} \otimes \delta_{\pa \Omega})\\
    & \seq \textbf{1}_{\Omega} \big(  \bsXi U  \big)
    - \cvector{(\bop U)_{+}}{0} \otimes \delta_{\pa \Omega}
    + \bopstar (\bsu_{+} \otimes \delta_{\pa \Omega})\,.
  \end{align*}
  By applying $\bsXi^{-1}$ to this formula, we obtain Pompeiu's formula:
  \begin{align}\label{eq.Pompeiu}
    \textbf{1}_{\Omega} U & \seq \bsXi^{-1} \left(\textbf{1}_{\Omega} \big( \bsXi U  \big) -
    \cvector{(\bop U)_{+}}{0} \otimes \delta_{\Gamma}
    + \bopstar \, (\bsu_{+} \otimes \delta_{\Gamma})\right)\\
    & \seq \bsXi^{-1} \left(\textbf{1}_{\Omega}  \big( \bsXi U  \big) \right) -
    \maS_{\rm{ST}} ({(\bop U)_{+}})
    + \maD_{\rm{ST}}  (\bsu_{+})\,. \nonumber
  \end{align}
  The second part follows immediately from Pompeiu's formula \ref{eq.Pompeiu}.
\end{proof}

\subsection{Jump relations}
\label{ssec.jump}
As usual, the limit and jump relations of our layer potentials will play a crucial
role. They are a delicate and difficult topic, especially for the Stokes operator.
We now establish the needed limit and jump relations following Subsection \ref{ssec.do}
and, especially, Section \ref{sec.NLimits} to establish some needed jump relations
for the needed components of our potential operators $\maS_{\rm{ST}}$ and $\maD_{\rm{ST}}$
(see Remark \ref{rem.components}). We continue to
assume that $M$ is with straight cylindrical ends and that $V$ and $V_{0}$ satisfy
the positivity assumption at infinity \lpar Assumption \ref{assumpt.VV0weak}\rpar,
which allows us to conclude that $\bsXi$ is Fredholm and hence that the layer
potentials are defined.
Close versions of the relations established in this section are contained also in
\cite{KMNW-2025} and \cite{KNW-2025}. They were also proved by Mitrea and Taylor
\cite{M-T} and Dindo\u{s} and Mitrea \cite{D-M} with a different approach.

\begin{proposition}\label{prop.local.stokes}
  Let $\bsP := - 2 \maA  \Dnu^{*} + \maB \bsnu^{\sharp}$ and denote
  $\mathfrak f = (V_{0} + 1)/(2V_{0} + 1)$ and $\mathfrak  g =
  1/(2V_{0} + 1)$, as before. Then
  \begin{equation*}
    \sigma_{-1}(\bsP; \xi) \, =: \, a( \xi) \seq
    \frac{\imath}{|\xi|^2} \left( \xi(\bsnu) + \bsnu \otimes \xi
    - \frac{2 \mathfrak f \xi(\bsnu)}{| \xi|^{2}} \xi^{\sharp} \otimes \xi
    + \mathfrak g \xi^{\sharp} \otimes \bsnu^{\sharp} \right)\,.
  \end{equation*}
  Consequently, $\JC = -\imath$ for this operator.
\end{proposition}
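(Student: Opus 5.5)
The plan is to compute the principal symbol directly from the multiplicativity of the principal symbol (Theorem \ref{thm.prop.ePS} and Theorem \ref{thm.prop.inv-calc}(ii)). We write
\begin{equation*}
  \sigma_{-1}(\bsP;\xi) \seq -2\,\sigma_{-2}(\maA;\xi)\,\sigma_{1}(\Dnu^{*};\xi) + \sigma_{-1}(\maB;\xi)\,\bsnu^{\sharp}.
\end{equation*}
Here $\bsnu^{\sharp}$ is an order-zero multiplication operator, so it is its own symbol. Every ingredient is already available: $\sigma_{-2}(\maA)$ and $\sigma_{-1}(\maB)$ come from Proposition \ref{prop.invert.inverse}(ii), with $\mathfrak f$ and $\mathfrak g$ substituted, and $\sigma_{1}(\Dnu^{*};\xi) = -\frac{\imath}{2}\big[\xi(\bsnu) + \bsnu\otimes\xi\big]$ comes from Corollary \ref{cor.formulas}(iii).

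Next we expand the product using Notation \ref{not.def.endomorphism}. The first term is
\begin{equation*}
  \imath\Big(\tfrac{1}{|\xi|^{2}} - \tfrac{\mathfrak f}{|\xi|^{4}}\,\xi^{\sharp}\otimes\xi\Big)\big(\xi(\bsnu) + \bsnu\otimes\xi\big).
\end{equation*}
The only nontrivial composition is $(\xi^{\sharp}\otimes\xi)(\bsnu\otimes\xi) = \xi(\bsnu)\,\xi^{\sharp}\otimes\xi$. With it, the two cross terms combine to $-\frac{2\mathfrak f\,\xi(\bsnu)}{|\xi|^{4}}\,\xi^{\sharp}\otimes\xi$. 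The second term is $\frac{\imath\mathfrak g}{|\xi|^{2}}\,\xi^{\sharp}\otimes\bsnu^{\sharp}$. Adding the two gives exactly the stated formula for $a(\xi)$.

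For the jump factor we use Notation \ref{not.ord-1}, which defines $\JC = \sigma_{-1}(\bsP;-\bsnu^{\sharp})$. First, $a$ is odd in $\xi$: every term has an odd total degree in $\xi$ divided by an even power of $|\xi|$. On $\Gamma$ the field $\bsnu$ is a unit vector, so evaluating at $\xi = -\bsnu^{\sharp}$ gives $|\xi| = 1$, $\xi(\bsnu) = -1$, $\xi^{\sharp} = -\bsnu$, and $\bsnu\otimes\xi = -\bsnu\otimes\bsnu^{\sharp}$. Substituting, we get
\begin{equation*}
  \JC \seq \imath\big(-1 - \bsnu\otimes\bsnu^{\sharp} + 2\mathfrak f\,\bsnu\otimes\bsnu^{\sharp} - \mathfrak g\,\bsnu\otimes\bsnu^{\sharp}\big).
\end{equation*}
Since $2\mathfrak f - \mathfrak g - 1 = \frac{2V_{0}+2-1-2V_{0}-1}{2V_{0}+1} = 0$, the projection terms cancel and $\JC = -\imath$.

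The main place to be careful is the sign and ordering conventions. The issues are the order of factors in $\xi^{\sharp}\otimes\xi$ versus $\bsnu\otimes\xi$, the choice of evaluation point $-\bsnu^{\sharp}$ rather than $\bsnu^{\sharp}$, and whether $\maB\bsnu^{\sharp}$ contributes $\xi^{\sharp}\otimes\bsnu^{\sharp}$ or its transpose. We will check each of these against Notation \ref{not.def.endomorphism} and Corollary \ref{cor.formulas}.
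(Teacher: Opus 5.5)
Your proposal is correct and follows the paper's argument. Like the paper, you compute $\sigma_{-1}(\bsP)$ by multiplicativity from Proposition \ref{prop.invert.inverse} and Corollary \ref{cor.formulas}(iii), and you use $1-2\mathfrak f+\mathfrak g=0$ to cancel the projection terms. The only cosmetic difference is in how $\JC$ is extracted: the paper works in half-space coordinates with $\bsnu=-e_n$ and takes $\lim_{t\to\pm\infty} t\,\sigma_{-1}(\bsP;\xi'+te_n^{\sharp})$, whereas you evaluate directly at $-\bsnu^{\sharp}$ as in Notation \ref{not.ord-1}; these agree by homogeneity, and your explicit oddness check is something the paper leaves implicit.
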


\begin{proof}
  The calculations are local, so we can assume that $\Omega = \RR_{+}^{n}$. Then
  $\bsnu = - e_{n} = (0, \ldots, 0, -1)$. We shall write, $\xi = \xi' + t \bsnu$.
Substituting $\xi = \xi' + t e_{n}$, Proposition \ref{prop.invert.inverse} gives
  \begin{equation*}
    \begin{gathered}
    \sigma_{-2}(\maA; \xi) \seq \frac{1}{| \xi |^2} - \frac{\mathfrak f}{| \xi |^4}
  \xi^{\sharp} \otimes \xi\,, \qquad
     \sigma_{-1}(\maB )(x, \xi)
     \seq \frac{\imath \mathfrak g}{| \xi |^2} \xi^{\sharp}\,,\\
    \mbox{and } \quad \sigma_{1}(\Dnu ^{*}; \xi) \seq - \frac{\imath}2
    \big [\xi(\bsnu) + \bsnu \otimes \xi \big] \seq \frac{\imath}2
    \big [t  + e_{n} \otimes \xi \big]
    \,.
    \end{gathered}
  \end{equation*}
  Using that $\bsnu^{\sharp} = -e_{n}^{\sharp}$, we can compute
  \begin{align*}
    \sigma_{-1}(\bsP ; \xi) & \seq -2 \sigma_{-2}(\maA; \xi)\sigma_{1}(\Dnu^{*}; \xi)
    + \sigma_{-1}(\maB; \xi)\bsnu^{\sharp}\\
    & \seq -2\left(\frac1{| \xi|^{2}} - \frac{\mathfrak f}{|\xi|^{4}}
 \xi^{\sharp} \otimes \xi \right)
    \frac{\imath}2 (t  + e_{n} \otimes \xi)
    - \frac{\imath \mathfrak g}{| \xi|^{2}} \xi^{\sharp} \otimes e_{n}^{\sharp}\\
    & \seq -\frac{\imath}{|\xi|^2} \big(   t  + e_{n} \otimes \xi
    - \frac{2 \mathfrak f t }{| \xi|^{2}} \xi^{\sharp} \otimes \xi
    + \mathfrak g \xi^{\sharp} \otimes e_{n}^{\sharp} \big)\,.
  \end{align*}
  To obtain the ``jump'' coefficient $\JC$, we expand the
  formula for the principal symbol from the last equation according
  to the highest powers of $t$, using $ \xi = \xi' + t e_{n}^{\sharp}$,
  with $\xi' = ( \xi_{1}, \ldots, \xi_{n-1}, 0)$, to obtain
  \begin{align*}
    \lim_{t \to \pm \infty} t \sigma_{-1}(\bsP ; \xi)
    &  \seq - \lim_{  t  \to \pm \infty}\frac{\imath t}{|\xi|^2}
    \big(t + e_{n} \otimes \xi
    - \frac{2 \mathfrak f t}{| \xi|^{2}} \xi^{\sharp} \otimes \xi
    + \mathfrak g \xi^{\sharp} \otimes {e_{n}^{\sharp}} \big)\\
    &  \seq -
    \lim_{t \to \pm \infty}\frac{\imath t^{2}}{| \xi|^2}
    \big(1 + e_{n} \otimes e_{n}^{\sharp}
    - \frac{2 \mathfrak f t ^{2}}{| \xi|^{2}} e_{n} \otimes e_{n}^{\sharp}
    + \mathfrak g e_{n} \otimes e_{n}^{\sharp} \big)\\
    &  \seq - \imath \big(1 + e_{n} \otimes e_{n}^{\sharp}
    - 2 \mathfrak f  e_{n} \otimes e_{n}^{\sharp}
    + \mathfrak g e_{n} \otimes e_{n}^{\sharp} \big)\\
    &  \seq - \imath
    \big[1 + (1 - 2 \mathfrak f + \mathfrak g) e_{n} \otimes e_{n}^{\sharp}
    \big] \seq - \imath \,,
  \end{align*}
  because $1 - 2 \mathfrak f + \mathfrak g = 0$.
\end{proof}

The following integrals can be obtained by using the Residue Theorem (the details
can be found in the proof of Lemma 16.3.2 of \cite{KMNW-2025}, see also
\cite{GabrielaBook}).

\begin{lemma}\label{lemma.Mirela} Let $a > 0$. We have
  \begin{equation*}
    \displaystyle\int_{\RR}\frac{x^2 dx}{(a^{2} + x^{2})^{2}}
    \seq \displaystyle\frac{\pi}{2a}\,,\quad
    \displaystyle\int_{\RR}\frac{dx}{(a^{2} + x^{2})^{2}}
    \seq \displaystyle\frac{\pi}{2a^3}\,, \quad
    \mbox{and} \quad \displaystyle\int_{\RR}\frac{dx}{a^{2} + x^{2}}
    \seq \displaystyle\frac{\pi}{a}\,.
  \end{equation*}
\end{lemma}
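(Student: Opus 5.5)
The three integrals can be obtained by elementary real-variable calculus rather than residues. We first normalize and then handle each one with the substitution $x = a\tan\theta$.

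First, scale. Substituting $x = a y$ gives
\begin{equation*}
  \int_{\RR}\frac{dx}{a^{2}+x^{2}} \seq \frac1a\int_{\RR}\frac{dy}{1+y^{2}}\,,\quad
  \int_{\RR}\frac{dx}{(a^{2}+x^{2})^{2}} \seq \frac1{a^{3}}\int_{\RR}\frac{dy}{(1+y^{2})^{2}}\,,\quad
  \int_{\RR}\frac{x^{2}\,dx}{(a^{2}+x^{2})^{2}} \seq \frac1{a}\int_{\RR}\frac{y^{2}\,dy}{(1+y^{2})^{2}}\,.
\end{equation*}
It therefore suffices to treat $a = 1$. All three integrands are continuous and $O(\jap{y}^{-2})$, so the integrals converge absolutely.

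Next, compute the case $a=1$.
\begin{itemize}
  \item The third integral is $[\arctan y]_{-\infty}^{\infty} = \pi$.
  \item For the second, substitute $y=\tan\theta$ with $\theta\in(-\pi/2,\pi/2)$. Then $dy = \sec^{2}\theta\,d\theta$ and $(1+y^{2})^{-2} = \cos^{4}\theta$, so the integral becomes $\int_{-\pi/2}^{\pi/2}\cos^{2}\theta\,d\theta = \pi/2$.
  \item The first integral follows without further computation from the identity $\frac{y^{2}}{(1+y^{2})^{2}} = \frac{1}{1+y^{2}} - \frac{1}{(1+y^{2})^{2}}$. It equals $\pi - \pi/2 = \pi/2$.
\end{itemize}

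Finally, undo the scaling. This gives $\pi/(2a)$, $\pi/(2a^{3})$ and $\pi/a$, respectively, as claimed.

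No step is a genuine obstacle. The only place requiring care is the power of $a$ in each scaling: the numerator $x^{2}$ contributes $a^{2}$ and $dx$ contributes $a$, so the first integral picks up $a^{3}/a^{4} = a^{-1}$ and the second picks up $a/a^{4} = a^{-3}$. As a cross-check, the residue computation at $y=\imath$ gives the same values, in line with the reference to Lemma 16.3.2 of \cite{KMNW-2025}.
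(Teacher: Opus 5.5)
Your proof is correct, but it takes a different route from the paper. The paper gives no computation: it says the integrals follow from the Residue Theorem and defers the details to Lemma 16.3.2 of \cite{KMNW-2025}. That means closing the contour in the upper half-plane around the pole at $x=\imath a$, which is simple for the third integrand and of order two for the first two.

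You instead argue entirely with real-variable calculus. You reduce to $a=1$ by the scaling $x=ay$ and evaluate $\int_{\RR}(1+y^{2})^{-1}\,dy$ with the arctangent. You evaluate $\int_{\RR}(1+y^{2})^{-2}\,dy$ with the substitution $y=\tan\theta$. You then get the $y^{2}$-integral for free from the identity $\frac{y^{2}}{(1+y^{2})^{2}}=\frac{1}{1+y^{2}}-\frac{1}{(1+y^{2})^{2}}$, so only two genuine computations are needed.

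What your approach buys is self-containment. You need no estimate of the contribution from the large semicircular arc and no second-order residue computation, and the scaling step makes the dependence on $a$ explicit and easy to check. What the residue approach buys is generality. It treats any integral over $\RR$ of a rational function with no real poles and enough decay in the same uniform way, which is presumably why the cited reference uses it.
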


For the rest of the paper, we shall let
$\bsP := - 2 \maA  \Dnu^{*} + \maB \bsnu^{\sharp}$
be the pseudodifferential operator defining the
vector part $\maW_{\rm{ST}}$ of the double layer potential (Definition
\ref{def.lp}) and let
\begin{equation}\label{eq.def.bsk}
  \bsK \ede \bsP _{0} \ede \big(- 2 \maA  \Dnu^{*} + \maB \bsnu^{\sharp})_{0}
  \in \ePS{0}(\Gamma; TM)
\end{equation}
be the restriction of $\bsP := - 2 \maA  \Dnu^{*} + \maB \bsnu^{\sharp}$
to $\Gamma :=\pa \Omega$ (see Theorem \ref{thm.main.jump1}).

\begin{theorem}\label{thm.jump.K1}
  Let $\bsK := \bsP_{0} \in \ePS{0}(\Gamma; TM)$ be as in Equation
  \eqref{eq.def.bsk}. Then
  \begin{equation*}
    \maW_{\rm{ST}}(\bsh)_{\pm}
    \ede \big[ \bsP  (\bsh \otimes \delta_{\Gamma})\big]_{\pm}
    \seq \left [\pm \frac12  +
    \bsK \right ] \bsh\,,
  \end{equation*}
  where $\sigma_{0}(\bsK; \xi') = \frac{\imath V_{0}}{2(2V_{0} + 1)| \xi'|}
  \big( \bsnu \otimes \xi' - \xi^{'\sharp} \otimes \bsnu^{\sharp} \big )$.
  In particular, the two operators $\pm \frac12  + \bsK$
  are elliptic and have self-adjoint principal symbols.
\end{theorem}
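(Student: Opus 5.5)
The plan is to apply Theorem \ref{thm.main.jump1} to the order $-1$ operator $\bsP := -2\maA\Dnu^{*} + \maB\bsnu^{\sharp} \in \ePS{-1}(M; TM)$. Its principal symbol is given by Proposition \ref{prop.local.stokes}. First I would check the hypothesis that $\sigma_{-1}(\bsP;\xi)$ is odd in $\xi$. In the formula of Proposition \ref{prop.local.stokes}, every term inside the bracket is linear in $\xi$ or of degree three over $|\xi|^{2}$, and it is then divided by $|\xi|^2$, so oddness is immediate. Theorem \ref{thm.main.jump1}(vi) then gives $[\bsP(\bsh\otimes\delta_\Gamma)]_{\pm} = (\pm\frac{\imath}{2}\JC(\bsP) + \bsP_0)\bsh$. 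By Proposition \ref{prop.local.stokes}, $\JC = -\imath$ with the convention $e_n=-\bsnu$ of Notation \ref{not.ord-1}, so $\pm\frac{\imath}{2}\JC = \pm\frac12$. This yields the jump formula with $\bsK := \bsP_0$. I would double-check the sign convention carefully here: $\JC(P;x) = \sigma_{-1}(P;-\bsnu^\sharp)$ is evaluated at $e_n$ in the local model, which matches the limit computed in Proposition \ref{prop.local.stokes}.

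Next I would compute $\sigma_0(\bsK) = b_0$ using Theorem \ref{thm.main.jump1}(iii) and Notation \ref{not.ord-1}. I work locally with $\bsnu=-e_n$ and $\xi = \xi' + t e_n^{\sharp}$, where $\xi'\perp e_n$. I then symmetrize $a(\xi'+te_n)+a(\xi'-te_n)$ in $t$. The terms odd in $t$ cancel. The surviving contributions are:
\begin{equation*}
  \frac{\imath}{|\xi|^{2}}\Big(e_n\otimes \xi' + \mathfrak g\, \xi'^{\sharp}\otimes e_n^{\sharp}\Big)
\end{equation*}
together with the even-in-$t$ part of $-\frac{2\mathfrak f t}{|\xi|^4}\,\xi^\sharp\otimes\xi$. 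That part equals $-\frac{2\mathfrak f t^{2}}{|\xi|^4}(e_n\otimes\xi' + \xi'^{\sharp}\otimes e_n^{\sharp})$, up to the overall $\imath$ and the sign coming from $\xi(\bsnu)=-t$.

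I would integrate in $t$ using Lemma \ref{lemma.Mirela} with $a=|\xi'|$:
\begin{equation*}
  \int \frac{dt}{|\xi'|^2+t^2}=\frac{\pi}{|\xi'|}, \qquad \int \frac{t^2\,dt}{(|\xi'|^2+t^2)^2}=\frac{\pi}{2|\xi'|}.
\end{equation*}
After dividing by $2\pi$, the coefficients are as follows. For $e_n\otimes\xi'$ it is $\frac{1}{2|\xi'|}(1-\mathfrak f)$ times $\pm\imath$. For $\xi'^{\sharp}\otimes e_n^\sharp$ it is $\frac{1}{2|\xi'|}(\mathfrak g - \mathfrak f)$. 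Since $1-\mathfrak f = \frac{V_0}{2V_0+1}$ and $\mathfrak g-\mathfrak f = -\frac{V_0}{2V_0+1}$, converting back via $e_n=-\bsnu$ gives $\frac{\imath V_0}{2(2V_0+1)|\xi'|}(\bsnu\otimes\xi' - \xi'^{\sharp}\otimes\bsnu^\sharp)$. The main obstacle is bookkeeping the signs in this step, namely from $\xi(\bsnu)=-t$, from $\bsnu^\sharp=-e_n^\sharp$, and from the $\imath$ factors. I would verify them by checking that the $\mathfrak f$ terms combine so that only $V_0$ survives; this mirrors the identity $1-2\mathfrak f+\mathfrak g=0$ used before.

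Finally, for ellipticity and self-adjointness of the principal symbol, I set $B := \bsnu\otimes\xi'^{\,\prime} - \xi'^{\sharp}\otimes\bsnu^\sharp$. By Notation \ref{not.def.endomorphism}, $B^{*}=-B$, so $\imath c B$ with $c$ real is self-adjoint. Since $\frac12$ is self-adjoint, $\sigma_0(\pm\frac12+\bsK)$ is self-adjoint. Its eigenvalues are $\pm\frac12+\lambda$ with $\lambda$ real. On the plane spanned by $\bsnu$ and $\xi'^{\sharp}$ (which are orthonormal after normalizing by $|\xi'|$), $\imath B/|\xi'|$ has eigenvalues $\pm1$, and it vanishes elsewhere. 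So $|\lambda|\le \frac{V_0}{2(2V_0+1)}<\frac14$ since $V_0\ge0$ (I would note this requires $V_0\ge 0$ on $\Gamma$, or at least $2V_0+1>0$ with a bound). Hence $\pm\frac12+\lambda\neq0$, giving ellipticity.
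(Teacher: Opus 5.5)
Your proposal is correct and follows the paper's proof essentially step for step. The jump term comes from Theorem~\ref{thm.main.jump1} with $\JC=-\imath$ from Proposition~\ref{prop.local.stokes}. The symbol $\sigma_{0}(\bsK)$ comes from symmetrizing in $t$ and integrating with Lemma~\ref{lemma.Mirela}. Ellipticity comes from the eigenvalue bound $|\lambda|<\frac14$. Your explicit checks that $\sigma_{-1}(\bsP)$ is odd and that $B^{*}=-B$ fill in details the paper leaves implicit.

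The only blemish is the sign bookkeeping you already flagged. With $\bsnu=-e_{n}$, the even part carries the prefactor $-\imath/|\xi|^{2}$, not $+\imath/|\xi|^{2}$. Alternatively, if you split $\xi=\xi'+t\bsnu^{\sharp}$ intrinsically (so $\xi(\bsnu)=t$), your display is correct with $\bsnu$ in place of $e_{n}$, and no conversion at the end is needed.
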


\begin{proof}
  Let $\mathfrak f = (V_{0} + 1)/(2V_{0} + 1)$ and $\mathfrak g = 1/(2V_{0} + 1)$ be as in
  Proposition \ref{prop.local.stokes}. As in the proof of Theorem \ref{thm.main.jump1}, we
  use local coordinates such that $\bsnu = -e_{n}$. We need only to identify
  $\sigma_{0}(\bsK; \xi') = \sigma_{0}(\bsP_{0}; \xi')$ (the rest follows
  from Theorem \ref{thm.main.jump1} and Proposition \ref{prop.local.stokes}).
  To that end, we separate the terms that are \emph{even} in $ t $ in
  $\sigma_{-1}(\bsP ; \xi)$. For instance, recalling that $\xi =\xi '+te_{n}$, the even
  part of $\xi^{\sharp} \otimes \xi$ is $\xi^{'\sharp} \otimes \xi'
  + t^{2} e_{n} \otimes e_{n}^{\sharp}$, whereas its odd part is
  $t(e_{n} \otimes \xi'+ \xi^{'\sharp} \otimes e_{n}^{\sharp})$.
  This gives
  \begin{multline*}
    b(\xi', t) \ede \sigma_{-1}( \bsP; \xi) + \sigma_{-1}(\bsP; \xi', -t) \\
    \seq -\frac{2 \imath}{|\xi|^2} \Big[ e_{n} \otimes \xi' +
    \mathfrak g \xi^{'\sharp} \otimes e_{n}^{\sharp} -
    \frac{2 \mathfrak f t ^{2}}{| \xi|^{2}}(e_{n} \otimes \xi'
    + \xi^{'\sharp} \otimes e_{n}^{\sharp}) \Big]\\
    \seq -2 \imath \Big[ \left( \frac1{| \xi|^2} -
    \frac{2 \mathfrak f t ^{2}}{| \xi|^{4}} \right)
    e_{n} \otimes \xi' + \left( \frac{\mathfrak g}{| \xi|^2} -
    \frac{2 \mathfrak f t ^{2}}{| \xi|^{4}} \right)
 \xi^{'\sharp} \otimes e_{n}^{\sharp} \Big]
    \,.
  \end{multline*}
  We next use Theorem \ref{thm.main.jump1} and the relations
  \begin{equation*}
    \int_{\RR} \frac{1}{| \xi|^2}\, dt \seq \frac{\pi}{| \xi'|}
    \ \mbox{ and }\
    \int_{\RR} \frac{t ^{2}}{| \xi|^4}\, dt  \seq \frac{\pi}{2| \xi'|}
    \,,\quad \xi' \neq 0\,,
  \end{equation*}
  (see Lemma \ref{lemma.Mirela}) to obtain
  \begin{align*}
    \sigma_{0}(\bsK; \xi') &\seq \frac1{4 \pi}
    \int_{\RR} b( \xi',   t )\, d  t \\
    &\seq - \frac{\imath}{2\pi} \int_{\RR}
    \Big[ \left( \frac1{|\xi|^2} - \frac{2 \mathfrak f t ^{2}}{| \xi|^{4}} \right)
    e_{n} \otimes \xi' +
    \left( \frac{\mathfrak  g}{| \xi|^2} - \frac{2 \mathfrak f   t ^{2}}{| \xi|^{4}} \right)
  \xi^{'\sharp} \otimes e_{n}^{\sharp} \Big]\, d  t \\
    &\seq \frac{\imath V_{0}}{2(2V_{0} + 1)| \xi'|}
    \big( \xi^{'\sharp} \otimes e_{n}^{\sharp} - e_{n} \otimes \xi'\big )\\
    & \seq \frac{\imath V_{0}}{2(2V_{0} + 1)| \xi'|}
    \big( \bsnu \otimes \xi' - \xi^{'\sharp} \otimes \bsnu^{\sharp} \big )\,.
  \end{align*}
  This explicit formula gives that $\sigma_{0}(\bsK)^{*} = \sigma_{0}(\bsK)$.
  To complete the proof, we notice that the \emph{non-zero} eigenvalues
  $\lambda = \pm \frac{V_{0}}{2(2V_{0} + 1)}$ of $\sigma_{0}(\bsK; \xi')$ satisfy
  $|\lambda|<1/4$, and hence the operators $\pm \frac 12  + \bsK$ are elliptic.
\end{proof}

\begin{remark}\label{rem.order.K}
  Let us notice that $\bsK = \bsP _{0} \in \Psi^{0}(\Gamma; TM)$ is of order $-1$
  if, and only if, $V_{0} = 0$ on $\Gamma$ (see Equation \eqref{eq.def.bsk}
  and Theorem \ref{thm.jump.K1} above).
\end{remark}

Let $\mathfrak f = (V_{0} + 1)/(2V_{0} + 1)$ and $\mathfrak g = 1/(2V_{0} + 1)$,
as in Proposition \ref{prop.local.stokes}. Also, recall the pseudodifferential
operators $\maA$ and $\maC$ introduced in Proposition \ref{prop.invert.inverse}
as two of the matrix components of $\psdinv$. (These operators were used then
in the definition of the single layer potential $\maS_{\rm{ST}}$.)
We obtain the following result.

\begin{theorem}\label{thm.jump.rel}
  Let $\bsh \in L^{2}(\Gamma; TM)$, $\Gamma := \pa \Omega$,
  $\mathfrak f := \frac{V_{0}+1}{2V_{0}+1}$ and $\mathfrak g = \frac1{2V_{0} + 1}$. Then
  \begin{enumerate}[\rm (i)]
    \item $\maV_{\rm{ST}}(\bsh)_{\pm} \seq \maA_{0}(\bsh)$ and
      $\sigma_{-1}(\maA_{0}; \xi') \seq
      \frac1{4|\xi'|}(2 - \mathfrak f \bsnu \otimes \bsnu^{\sharp}
      - \mathfrak f \eta^{\sharp} \otimes \eta)
      \,,$
    where $\eta := |\xi'|^{-1}\xi'$.  If $V_{0} \ge 0$,
    then $\maA_{0}$ is elliptic with self-adjoint symbol.

    \item $\maP_{\rm{ST}}(\bsh)_{\pm} = \left(\mp \displaystyle \frac {\mathfrak g}2
    \bsnu^{\sharp} + \maC_{0} \right) \bsh$, where $\sigma_{0}(\maC_{0};\xi')
    = -\displaystyle \frac{\imath \mathfrak g }{2|\xi'|} \xi'.$

    \item $[\bop \maS_{\rm{ST}}(\bsh)]_{\pm} \seq \left(\mp \frac12
    + {\bsK}^{*}\right)\bsh\,,$ where $\bsK = \bsP_{0} := (-2 \maA \Dnu^{*} +
    \maB \bsnu^{\sharp})_{0}$ is defined as in Theorem \ref{thm.jump.K1}.
  \end{enumerate}
\end{theorem}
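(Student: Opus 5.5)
The plan is to treat the three points separately, reducing each to the general limit and jump results of Section \ref{sec.NLimits}: Theorem \ref{thm.main.jump1b} for order $-2$, Theorem \ref{thm.main.jump1} for order $-1$ with odd principal symbol, and Corollary \ref{cor.adjoints}. The principal symbols are those of Proposition \ref{prop.invert.inverse}. All symbol computations are local, so, as in Proposition \ref{prop.local.stokes}, we work in coordinates with $\bsnu = -e_{n}$ and write $\xi = \xi' + t\bsnu^{\sharp}$ with $\xi' \perp \bsnu^{\sharp}$.

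For (i), we have $\maV_{\rm{ST}}(\bsh) = \maA(\bsh \otimes \delta_{\Gamma})$ with $\maA \in \ePS{-2}(M;TM)$. Theorem \ref{thm.main.jump1b} then gives $\maV_{\rm{ST}}(\bsh)_{\pm} = \maA_{0}\bsh$, with
\begin{equation*}
  \sigma_{-1}(\maA_{0};\xi') = \frac1{2\pi}\int_{\RR}\sigma_{-2}(\maA;\xi'+t\bsnu^{\sharp})\,dt .
\end{equation*}
Into this we insert $\sigma_{-2}(\maA;\xi) = |\xi|^{-2} - \mathfrak f |\xi|^{-4}\,\xi^{\sharp}\otimes\xi$ and expand $\xi^{\sharp}\otimes\xi$. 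The cross terms that are odd in $t$ drop out. For the remaining terms, Lemma \ref{lemma.Mirela} gives
\begin{equation*}
  \int_{\RR}|\xi|^{-2}\,dt = \frac{\pi}{|\xi'|},\qquad \int_{\RR} t^{2}|\xi|^{-4}\,dt = \frac{\pi}{2|\xi'|},\qquad \int_{\RR}|\xi|^{-4}\,dt = \frac{\pi}{2|\xi'|^{3}} .
\end{equation*}
Combining these produces $\frac{1}{4|\xi'|}\big(2-\mathfrak f\,\bsnu\otimes\bsnu^{\sharp}-\mathfrak f\,\eta^{\sharp}\otimes\eta\big)$. This symbol is self-adjoint. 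When $V_{0}\ge 0$ we have $0<\mathfrak f\le 1$, so its eigenvalues lie among $2$, $2-\mathfrak f$ and $2-2\mathfrak f$ (divided by $4|\xi'|$), and since $\bsnu\perp\eta$ it is invertible, which gives ellipticity.

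For (ii), we have $\maP_{\rm{ST}}(\bsh) = \maC(\bsh\otimes\delta_{\Gamma})$, where $\sigma_{-1}(\maC;\xi) = \sigma_{-1}(\maB;\xi)^{*} = -\imath\mathfrak g\,|\xi|^{-2}\xi$, which is odd. Theorem \ref{thm.main.jump1} applies with jump factor
\begin{equation*}
  \JC(\maC) = \sigma_{-1}(\maC;-\bsnu^{\sharp}) = \imath\mathfrak g\,\bsnu^{\sharp},
\end{equation*}
so that $\pm\frac{\imath}{2}\JC = \mp\frac{\mathfrak g}{2}\bsnu^{\sharp}$. The even-in-$t$ part of the symbol is $-\imath\mathfrak g\,\xi'|\xi|^{-2}$, and integrating it with the normalization $\frac{1}{2\pi}$ (half of $\frac1{4\pi}$ applied to the symmetrized sum) gives $\sigma_{0}(\maC_{0};\xi') = -\frac{\imath\mathfrak g}{2|\xi'|}\xi'$.

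For (iii), the key observation is that the traction of the single layer is again a potential of the form $Q(\bsh\otimes\delta_{\Gamma})$ with $Q := -2\Dnu\maA + \bsnu\maC$. Off $\Gamma$ this follows from \eqref{def.conormal.Tderiv} and Remark \ref{rem.components}, since $\bop$ is a differential operator and therefore commutes with restriction to $\Omega_{\pm}$ and with taking normal lateral limits. Because $\maA^{*}=\maA$ and $\maC=\maB^{*}$, we get $Q = P^{*}$ with $\bsP = -2\maA\Dnu^{*}+\maB\bsnu^{\sharp}$. The principal symbol in Proposition \ref{prop.local.stokes} is odd in $\xi$, since every term there is odd, so $\sigma_{-1}(\bsP^{*})$ is odd as well. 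Theorem \ref{thm.main.jump1} together with Corollary \ref{cor.adjoints} then gives $(\bsP^{*})_{0} = \bsK^{*}$ and $\JC(\bsP^{*}) = \JC(\bsP)^{*} = (-\imath)^{*} = \imath$. The jump term is therefore $\pm\frac{\imath}{2}\cdot\imath = \mp\frac12$, which yields $\big(\mp\frac12+\bsK^{*}\big)\bsh$.

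I expect the main obstacle to be (iii). One has to justify carefully that applying $\bop$ (with the extended field $\bsnu$) before taking lateral limits is the same as taking the limits of the potential of $\bsP^{*}$. One also has to track the sign conventions, since $\JC$ is defined through $-\bsnu^{\sharp}$ and the adjoint must be taken in the right variable; this is where the sign flip relative to Theorem \ref{thm.jump.K1} comes from.
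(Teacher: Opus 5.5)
Your proposal is correct and follows the paper's proof essentially step by step. For (i) you use Theorem \ref{thm.main.jump1b} with Lemma \ref{lemma.Mirela}; for (ii) the even-part integral and jump factor from Theorem \ref{thm.main.jump1}; for (iii) you identify the traction of the single layer with the potential of $\bsP^{*}$ and use Corollary \ref{cor.adjoints} to get $\JC(\bsP^{*})=\imath$ and $(\bsP^{*})_{0}=\bsK^{*}$. Your ellipticity argument in (i), that the eigenvalues are $2$ and $2-\mathfrak f\ge 1$ because $\bsnu\perp\eta$, is in fact more careful than the paper's appeal to $\mathfrak f<1$, which fails when $V_{0}=0$ (there $\mathfrak f=1$).
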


\begin{proof}
  Recall that, for $\xi \in T_{x}^{*}M$, the linear map
  $\xi^{\sharp} \otimes \xi \in \End(T_{x}M)$ is
  defined by $(\xi^{\sharp} \otimes \xi) (v) := \xi(v) \xi^{\sharp}$.
  In particular, $\sigma_{-2}(\maA, \xi) = \frac1{|\xi|^{4}}
  \big(|\xi|^{2} - \mathfrak{f}\xi^{\sharp} \otimes \xi\big)$ (recall that
  $\mathfrak{f}:= \frac{V_{0}+1}{2V_{0}+1}$).
  For $\xi \in T^{*}M$, let us write $\xi = \xi' + t \bsnu^{\sharp}$,
  with $\xi'(\bsnu) = 0$ and we identify $\xi'$ with an element
  of $T^{*}\Gamma$, when $\xi' \in T_{x}^{*}M$ with $x \in \Gamma$. To
  prove the first equality, we use Theorem \ref{thm.main.jump1b}(ii)
  (see also Theorem \ref{thm.main.jump-2}) and then Proposition
  \ref{prop.invert.inverse}
  to obtain the following relation
  \begin{align*}
    \sigma_{-1}(\maA_{0}; \xi') & \seq \frac1{2\pi} \int_{\RR}
    \sigma_{-2}(\maA; \xi) \, dt\\
    & \seq \frac1{2\pi} \int_{\RR} \frac1{|\xi|^{4}}
    \big(|\xi|^{2} - \mathfrak{f} \xi^{\sharp} \otimes \xi\big) \, dt\\
    & \seq \frac1{2\pi} \int_{\RR} \frac1{|\xi|^{4}}
    \big[|\xi|^{2} - \mathfrak{f} (\xi^{\prime \sharp} \otimes \xi'
    + t \bsnu \otimes \xi' +
    t \xi^{\prime \sharp} \otimes \bsnu^{\sharp}
    + t^{2} \bsnu \otimes \bsnu^{\sharp}) \big] \, dt\\
    & \seq \frac1{2\pi} \int_{\RR} \frac1{|\xi|^{4}}
    \big[|\xi|^{2} - \mathfrak{f} (\xi^{\prime \sharp} \otimes \xi' +
    t^{2} \bsnu \otimes \bsnu^{\sharp}) \big] \, dt\\
    & \seq \frac1{2\pi} \int_{\RR} \left(\frac1{|\xi'|^{2} + t^{2}}
    - \frac {\mathfrak{f}}{(|\xi'|^{2} + t^{2})^{2}} \xi^{\prime \sharp} \otimes \xi'
    - \frac {\mathfrak{f}t^{2}}{(|\xi'|^{2} + t^{2})^{2}}
    \bsnu \otimes \bsnu^{\sharp}\right) \, dt\\
    & \seq \frac1{2\pi} \left(\frac{\pi}{|\xi'|}
    - \frac {\mathfrak{f} \pi}{2|\xi'|^{3}} \xi^{\prime \sharp} \otimes \xi'
    - \frac {\mathfrak{f} \pi}{2|\xi'|}
    \bsnu \otimes \bsnu^{\sharp}\right) \\
    & \seq \frac{1}{2|\xi'|}
    - \frac {\mathfrak{f}}{4|\xi'|^{3}} \xi^{\prime \sharp} \otimes \xi'
    - \frac {\mathfrak{f}}{4|\xi'|} \bsnu \otimes \bsnu^{\sharp}\\
    & \seq \frac1{4|\xi'|}\left(2 - \mathfrak f \bsnu \otimes \bsnu^{\sharp}
      - \mathfrak f \eta^{\sharp} \otimes \eta\right)\,.
  \end{align*}
  This proves (i). The operator $\maA_{0}$ is elliptic because
  $\mathfrak f < 1$, and hence $\sigma_{-1}(\maA_{0})$ is invertible.

  For the relation (ii), we use the identity
  $\sigma_{-1}(\maC; \xi) = -\frac{g \imath}{|\xi|^{2}} \xi$ (see
  Proposition \ref{prop.invert.inverse}) and then Theorem \ref{thm.main.jump1}
  (see also Theorem \ref{thm.main.jump0}). We also write $\xi = \xi' + t \bsnu^{\sharp}$
  with $\xi' \perp \bsnu$, as in the proof of the previous point.
  Then we notice that the even part of $\sigma_{-1}(\maC; \xi)$
  (in $\tau$) is $-\frac{g \imath}{|\xi|^{2}} \xi'$. Therefore
  \begin{equation*}
    \sigma_{0}(\maC_{0}; \xi') \seq -\frac1{2\pi}
    \int_{\RR}\frac{g \imath}{|\xi|^{2}} \xi' \, d t
    \seq -\frac{g \imath}{2\pi}
    \left(\int_{\RR}\frac{1}{|\xi|^{2}} \, d t \right) \xi'
    \seq -\frac{g \imath}{2 |\xi'|} \xi'\,.
  \end{equation*}
  The ``jump part'' is also obtained from the principal symbol of
  $\maC$, namely, it is $\mp \frac{\imath}2 \sigma_{-1}(\maC; \bsnu^{\sharp})
  = \mp \frac{g}{2}\bsnu^{\sharp}$.

  Let us now prove the third relation. We have $\bsXi^{*} = \bsXi$, and hence
  $\big(\psdinv \big)^*= \psdinv$. Theorem \ref{thm.main.jump1} and Proposition
  \ref{prop.local.stokes} then give $\JC (\bop \psdinv) =
  \JC (\psdinv\bopstar)^{*} = \overline{(-\imath)} = \imath$.
  Moreover, $(\bop \psdinv)_{0} = (\psdinv \bopstar)_{0}^{*} = \bsK^{*}$.
  This then gives the following relation:
  \begin{multline*}
    [\bop \maS_{\rm{ST}}(\bsh)]_{\pm}
    \seq [\bop \psdinv(\bsh \otimes \delta_{\Gamma})]_{\pm}
    \seq \left( \pm \frac{\imath}2 \JC (\bop \psdinv)
      + (\bop \psdinv)_{0} \right) \bsh \\
     \seq \left( \pm \frac{\imath}2 \JC (\psdinv \bopstar)^{*}
      + (\psdinv \bopstar\big)_{0}^{*} \right) \bsh
    \seq \left(\mp \frac12 + \bsK^{*} \right) \bsh\,.
  \end{multline*}
  The proof is complete.
\end{proof}

For the Stokes operator $\bsXi _{0,0}$, see also\cite{D-M},
\cite[Lemma 3.1]{K-L-W}, \cite[(6.1), (6.2)]{K-W}, \cite[Theorem 3.1]{M-T},
\cite{M-W}, \cite[Lemma 1.3]{24}.
Note that $\bsS :=\maA_{0}\in \ePS{-1}(\Gamma ; TM)$ and
$\frac{1}{2} + \bsK \in \ePS{0}(\Gamma ; TM)$.
Therefore $\Big(\frac{1}{2} + \bsK\Big) \bsS \in  \ePS{-1}(\Gamma ; TM)$
and $\bsS \Big(\frac{1}{2} + \bsK^{*}\Big) \in \ePS{-1}(\Gamma ; TM)$.
In addition, we obtain the following formula.

\begin{proposition}\label{prop.S-K}
  We have the equality
  \begin{equation*}
    \Big(\frac{1}{2} + \bsK\Big) \bsS \seq
    \bsS\Big(\frac{1}{2}+ \bsK^*\Big) \in \ePS{-1}(\Gamma ; TM)\,.
  \end{equation*}
\end{proposition}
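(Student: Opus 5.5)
We derive the identity from the representation formula applied to single layer potentials, matching traces on both sides, in the spirit of the classical Calderón identity $KS = SK^{*}$. Throughout we assume the positivity Assumption \ref{assumpt.VV0}, so that $\bsXi$ is invertible and $\psdinv = \bsXi^{-1}$.

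Fix $\bsh \in \CIc(\Gamma; TM)$ and set $U := \maS_{\rm{ST}}(\bsh)$. By Proposition \ref{prop.compatibility}, $\bsXi U = 0$ on $M \smallsetminus \Gamma$. By Proposition \ref{prop.H2estimates}, the restrictions $U\vert_{\Omega_{\pm}}$ have the regularity needed to apply the Green formulas. We apply Pompeiu's formula, Proposition \ref{prop.rep.formula}(ii), on $\Omega = \Omega_{+}$:
\begin{equation*}
  \maD_{\rm{ST}}(\bsu_{+}) - \maS_{\rm{ST}}\big[(\bop U)_{+}\big] \seq \mathbf{1}_{\Omega} U\,.
\end{equation*}
Here $\bsu = \maV_{\rm{ST}}(\bsh)$, so Theorem \ref{thm.jump.rel}(i) gives $\bsu_{+} = \bsS\bsh$. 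Theorem \ref{thm.jump.rel}(iii) gives $(\bop U)_{+} = (-\frac12 + \bsK^{*})\bsh$.

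Next we take the velocity trace from the exterior side $\Omega_{-}$, where the right-hand side vanishes. Theorem \ref{thm.jump.K1} gives $\maW_{\rm{ST}}(\bsg)_{-} = (-\frac12 + \bsK)\bsg$, and Theorem \ref{thm.jump.rel}(i) gives $\maV_{\rm{ST}}(\bsg)_{-} = \bsS \bsg$. Substituting, we obtain
\begin{equation*}
  \Big(-\frac12 + \bsK\Big)\bsS\bsh - \bsS\Big(-\frac12 + \bsK^{*}\Big)\bsh \seq 0\,,
\end{equation*}
that is, $\bsK\bsS = \bsS\bsK^{*}$. Adding $\frac12\bsS$ to both sides yields the claimed identity for all such $\bsh$. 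Density of $\CIc(\Gamma;TM)$ and continuity of the operators in $\ePS{-1}(\Gamma;TM)$ then extend it to all $\bsh$. As a consistency check, the interior trace should give the same relation: the left side becomes $(\frac12 + \bsK)\bsS\bsh - \bsS(-\frac12+\bsK^{*})\bsh$ and the right side becomes $\bsS\bsh$, which again reduces to $\bsK\bsS = \bsS\bsK^{*}$.

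The main obstacle is justifying the use of Pompeiu's formula. Proposition \ref{prop.Green} requires $U\vert_{\Omega} \in H^{2}\oplus H^{1}$. For smooth $\bsh$ this follows from Proposition \ref{prop.H2estimates} with $s$ large, together with the rational-type symbols. A subtler point is that Pompeiu's formula is stated for the interior domain $\Omega$, while we use the vanishing outside $\overline\Omega$; this is exactly the content of part (ii), and the exterior traces are legitimate because $U$ is smooth up to $\Gamma$ from both sides. We must also check that the normal lateral limits coincide with the traces used in the Green formula; this is Theorem \ref{thm.main.jump1}(vii). If only Assumption \ref{assumpt.VV0weak} holds, the argument needs the projection $p_{\maN}$ and the compatibility corrections of \cite{KNW-2025}; since these are smoothing, the identity still holds in that case, but we would first prove it in the invertible case.
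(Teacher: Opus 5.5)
Your proposal is correct and is essentially the paper's argument: apply Pompeiu's formula (Proposition~\ref{prop.rep.formula}) to $U=\maS_{\rm{ST}}(\bsh)$, take the velocity trace, and insert the jump relations of Theorems~\ref{thm.jump.K1} and~\ref{thm.jump.rel}. The paper takes the trace from $\Omega$, where the right-hand side is $\bsS\bsh$, rather than from $\Omega_{-}$, where it vanishes; this difference is immaterial.

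Your closing remark about the merely Fredholm case is not justified as stated, since the $p_{\maN}$-corrections, though smoothing, need not cancel. It is not needed, however, because the paper's proof also relies on Assumption~\ref{assumpt.VV0}, through Propositions~\ref{prop.compatibility} and~\ref{prop.rep.formula}.
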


\begin{proof}
  Let $\bsh\in H^{1/2}(\Gamma; T^*M)$.
  Theorem \ref{thm.mapping.lp} then gives that
  \begin{equation*}
    \cvector{\bsu}{p} \ede U \ede \maS_{\rm{ST}}(\bsh)\vert_{\Omega}
    \in H^{2}(\Omega; TM) \oplus H^{1}(\Omega)\,.
  \end{equation*}
  Since $\bsXi \maS_{\rm{ST}}(\bsh) = 0$ in $M \smallsetminus \Gamma$
  (Proposition \ref{prop.compatibility}), we can use the Green representation formula
  (Proposition \ref{prop.rep.formula})
  applied to $\cvector{\bsu}{p} := U$, and this gives:
    \begin{equation*}  
    \maD_{\rm{ST}} (\bsu_{+})(x) - \maS_{\rm{ST}}
    \big[(\bop U)_{+} \big](x)\, \seq
    \, U(x) \ \mbox{ if }\  x \in \Omega\,.
  \end{equation*}

  The formula $U := \maS_{\rm{ST}}(\bsh)$ means that $\bsu
  := \maS_{\rm{ST}}(\bsh)\vert_{\Omega}$ and $p:=\maP_{\rm{ST}}(\bsh)\vert_{\Omega}$
  are the restrictions to $\Omega$ of the
  single-layer velocity and pressure potentials. By taking the traces
  on $\Gamma :=\pa \Omega $ from $\Omega $, we obtain
  \begin{equation*}  
    \maD_{\rm{ST}} \big(\big(\maV_{\rm{ST}}(\bsh)_{+}\big)_{+}
    - \maS_{\rm{ST}} \big(\big(\bop \maS_{\rm{ST}} (\bsh)\big)_{+} \big)_{+}
    \seq \maS_{\rm{ST}}(\bsh)_+ \in H^{1}(\Gamma; TM) \oplus L^{2}(\Gamma)\,.
  \end{equation*}
  The vector part of this relation is
  \begin{equation*}  
    \maW_{\rm{ST}} \big(\maV_{\rm{ST}}(\bsh)_{+}\big)_{+}
    - \maV_{\rm{ST}} \big(\big(\bop \maS_{\rm{ST}}
    (\bsh)\big)_{+}\big)_{+} \seq \maV_{\rm{ST}}(\bsh)_{+} \in H^{1}(\Gamma; TM)\,.
  \end{equation*}
  Using the limit relations in Theorems \ref{thm.jump.K1}
  and \ref{thm.jump.rel}, we obtain the formula
  \begin{equation*}  
    \Big(\frac12  + \bsK \Big) \bsS \bsh
    - \bsS \Big(-\frac12  + \bsK^{*} \Big)\bsh \seq \bsS \bsh \,,
  \end{equation*}
  which gives the desired result:
  \begin{equation*}  
    \Big(\frac12  + \bsK \Big) \bsS \bsh
    - \bsS \Big(\frac12  + \bsK^{*} \Big)\bsh \seq 0\,.
  \end{equation*}
  Since $\bsh\in H^{1/2}(\Gamma; T^*M)$ is arbitrary, the previous
  formula leads to the desired property. This completes the proof.
\end{proof}


\section{\cn Calculations on a cylinder}

We shall need definitions and results for the limit operators $\In(\bsXi)$ that are
analogous to those in the previous two sections. To this end, {\cn in this section we
consider the particular case when our manifold $M$ with straight cylindrical
ends is actually a cylinder $M = \mfkS \times \RR$ with a product metric. We do not
assume $\mfkS$ to be connected, though. Then, as in Section \ref{sec.Green2}, we let
$\Omega = \Omega' \times \RR \subset M$ be a product type open subset,
and hence with straight cylindrical ends, where $\Omega ' \subset \mfkS$.
Let $\Gamma ':=\partial \Omega '$.
In this section, in addition to $V$ and $V \ge 0$ smooth, we assume
that they are translation invariant, and hence $V(x', t) = \newtilde V(x')$ and
$V_{0}(x', t) = \newtilde V_{0}(x')$ identify to sections on $\mfkS$.}
Also, {\cn in this section, we use Assumption \ref{assumpt.VV0weak}, that is
that $\newtilde V \succ 0$ and that $\newtilde V_{0} \succ 0$ on $\mfkS$}
(that is, they have points where they are strictly positive
in every connected component of the space of definition, see Definition \ref{def.succ}).
{\cn Also, recall that $\Omega$ is on one side of its
boundary (Assumption \ref{assumpt.1side}), and hence $\Omega'$ is
also on one side of its boundary.}

We follow the usual convention and we let
\begin{equation}\label{eq.def.Omega'}
  \Omega_{-}' \ede \mfkS \smallsetminus \overline{\Omega}' \quad
  \mbox{and} \quad \Omega_{+}' \ede \Omega' \,.
\end{equation}
We also let $\Gamma' := \pa \Omega'$, the boundary of $\Omega'$, a smooth
manifold, because the boundary $\Gamma$ of $\Omega$ is a smooth manifold
(throughout this paper) and
\begin{equation}\label{eq.def.Gamma'}
  \Gamma \seq \Gamma' \times \RR
\end{equation}
in this section. The assumption that $\Omega'$ be on one side of its boundary
is then equivalent to
\begin{equation}\label{eq.1side.Omega'}
  \pa \Omega_{-}' \seq \Gamma' \ede \pa \Omega_{+}' \ede \pa \Omega'
\end{equation}
(see also Assumption \ref{assumpt.1side}). We also note that, for translation invariant
potentials $V$ and $V_{0}$ the assumptions \ref{assumpt.VV0weak} and \ref{assumpt.VV0}
are equivalent.

\subsection{Layer potentials and jump relations for the indicial
operators}

With these hypotheses, we know from Theorem \ref{thm2.invert4}
that $\bsXi$ is invertible on $M = \mfkS \times \RR$ and hence we
can define the layer potential operators. These layer potential operators
are translation invariant, so we can consider their indicial families
(their Fourier transforms, see Definition \ref{def.indicial.fam}).
We have $[\widehat \bsXi(\tau)]^{-1} = \widehat{(\bsXi^{-1})}(\tau)$,
so $[\widehat \bsXi(\tau)]^{-1}$ has a similar structure to
$\bsXi^{-1}$ of Proposition \ref{prop.invert.inverse}. That is,
we have the following remark.

\begin{remark}\label{rem.indicial.inv}
  The inverse $[\widehat \bsXi(\tau)]^{-1}$ has the form
  \begin{equation*} 
    [\widehat \bsXi(\tau)]^{-1} \, =:\,
    \left(
    \begin{array}{cc}
      \widehat \maA(\tau)  & \widehat \maB(\tau)  \\
      \widehat \maC(\tau)  & \widehat \maD(\tau)
    \end{array}
    \right) \in \ePS{[- \bss - \bst]}(M; TM \oplus \CC)\,,
  \end{equation*}
  where $\widehat \maA(\tau) \in \ePS{-2}(M; TM)$,
  $\widehat \maC(\tau) = \widehat \maB^{*}(\tau) \in \ePS{-1}(M; TM, \CC)$,
  and $\widehat \maD(\tau) \in \ePS{0}(M)$.
\end{remark}

\begin{definition} \label{def.lp.tau}
  Assume that $\newtilde V \succ 0$ and that $\newtilde V_{0} \succ 0$ on $\mfkS$
  \lpar Assumption \ref{assumpt.VV0weak}\rpar.
  Let $\bsh\in H^{s}(\Gamma'; TM)$, $s \in \RR$. The {\it indicial layer potentials
  operators} associated to $\widehat \bsXi(\tau)$ are defined by
  \begin{equation*} 
    \maS_{\tau}(\bsh) \ede
    \left(
      \begin{array}{c}
        \maV_{\tau}(\bsh)\\
        \maP_{\tau}(\bsh)
      \end{array}
      \right)
      \ede \widehat \bsXi(\tau)^{-1} \, \left[\cvector{\bsh}{0}
      \otimes \delta_{\Gamma}\right]\,,
  \end{equation*}
  and by
  \begin{equation*} 
    \maD_{\tau}(\bsh) \ede
    \left(
      \begin{array}{c}
        \maW_{\tau}(\bsh)\\
        \maQ_{\tau}(\bsh)
      \end{array}
      \right)
      \ede \widehat \bsXi(\tau)^{-1} \, \big[ \widehat \bopstar(\tau)
      \left(
        \bsh\otimes \delta_{\Gamma}
      \right) \big]\,.
  \end{equation*}
\end{definition}

We have the following analogue of Remark \ref{rem.components}.

\begin{remark}\label{rem.components.tau}
  We have
  \begin{equation*} 
    \maW_{\tau}(\bsh) \seq
    \left(
      \begin{array}{cc}
        \widehat \maA(\tau)  & \widehat \maB (\tau)
      \end{array}
    \right)
    \left( \begin{array}{c}
        - 2\Dnu^{*} \\
        \bsnu^{\sharp}
      \end{array}
    \right)
    (\bsh\otimes \delta_{\Gamma })
    \seq (- 2 \widehat \maA (\tau)  \Dnu^{*} + \widehat \maB (\tau) \bsnu^{\sharp})\,
    (\bsh\otimes \delta_{\Gamma })
  \end{equation*}
  and
  \begin{equation*} 
    \maQ_{\tau}(\bsh)
    \seq \left(
      \begin{array}{cc}
        \widehat \maC (\tau) & \widehat\maD (\tau)
      \end{array}
      \right)
      \left( \begin{array}{c}
        - 2\Dnu^{*} \\
        \bsnu^{\sharp}
      \end{array}
      \right)
      (\bsh\otimes \delta_{\Gamma }) \\
       \ \seq (- 2 \widehat \maC (\tau) \Dnu^{*} + \widehat \maD (\tau) \bsnu^{\sharp})\,
      (\bsh\otimes \delta_{\Gamma })\,.
\end{equation*}
  Similarly, $\maS_{\tau}(\bsh) \seq \big(\, \widehat \maA (\tau)
  (\bsh\otimes \delta_{\Gamma }) \ \ \
  \widehat \maC (\tau) (\bsh\otimes \delta_{\Gamma })\, \big)^{\top }$.
\end{remark}

The following result follows from the definition of the indicial layer potentials
and it has the same proof as Proposition \ref{prop.compatibility}

\begin{proposition}\label{prop.compatibility.tau}
  Let $\tau \in \RR$ and $\bsh' \in L^{2}(\Gamma'; TM)$.
  Then $\widehat \bsXi (\tau) \maS_{\tau}(\bsh')$
  and $\widehat \bsXi (\tau) \maD_{\tau}(\bsh')$ vanish on
  $\mfkS \smallsetminus {\Gamma'}$.
\end{proposition}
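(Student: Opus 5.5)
The argument follows the proof of Proposition \ref{prop.compatibility}, now on the compact manifold $\mfkS$ at a fixed value of $\tau$. First I will record that, under Assumption \ref{assumpt.VV0weak} (equivalently Assumption \ref{assumpt.VV0}, since here $V$ and $V_{0}$ are translation invariant), Propositions \ref{prop2.invert3} and \ref{prop2.invert2} make $\widehat \bsXi(\tau)$ invertible for every $\tau\in\RR$. They also give $\widehat \bsXi(\tau)^{-1} \in \Psi^{[-\bss-\bst]}(\mfkS; TM\oplus\CC)$ and the mapping properties of the Remark following Proposition \ref{prop2.invert2}. Consequently $\maS_{\tau}$ and $\maD_{\tau}$ in Definition \ref{def.lp.tau} are well defined as distributions on $\mfkS$. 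For $\bsh' \in L^{2}(\Gamma'; TM)$, Lemma \ref{lemma.def.hdelta2} (applied on the closed manifold $\mfkS$ with the hypersurface $\Gamma'$) places $\bsh'\otimes\delta_{\Gamma'}$ in $H^{s'}(\mfkS; TM)$ for every $s'<-1/2$. Applying the differential operator $\widehat\bopstar(\tau)$ of order one gives an element of $H^{s'-1}$ that is still supported in $\Gamma'$. Both distributions therefore lie in some Sobolev space on which $\widehat\bsXi(\tau)^{-1}$ acts.

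The key step is the identity $\widehat\bsXi(\tau)\widehat\bsXi(\tau)^{-1} = 1$, valid on all Sobolev spaces by the isomorphism property. It gives
\begin{equation*}
  \widehat\bsXi(\tau)\maS_{\tau}(\bsh') \seq \cvector{\bsh'}{0}\otimes\delta_{\Gamma'}\,, \qquad
  \widehat\bsXi(\tau)\maD_{\tau}(\bsh') \seq \widehat\bopstar(\tau)\big(\bsh'\otimes\delta_{\Gamma'}\big)\,.
\end{equation*}
The first right-hand side is supported in $\Gamma'$ by construction. The second is supported in $\Gamma'$ as well, because $\widehat\bopstar(\tau)$ is a differential operator and so does not enlarge supports. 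This locality holds because $\widehat\bopstar(\tau)$ is obtained from $\bopstar = (-2\Dnu^{*}\ \ \bsnu^{\sharp})^{\top}$ by replacing $\pa_t$ with $\imath\tau$, which does not change the differential character. Hence both expressions vanish as distributions on the open set $\mfkS\smallsetminus\Gamma'$.

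The only point needing care, which I regard as the main, though mild, obstacle, is to justify that $\widehat\bsXi(\tau)^{-1}$ is a genuine two-sided inverse on the negative-order spaces containing these distributions, and not only on $H^{1}\oplus L^{2}$. I would handle this with the Remark after Proposition \ref{prop2.invert2}, which gives $\widehat\bsXi(\tau)$ as an isomorphism $H^{r+1}\oplus H^{r}\to H^{r-1}\oplus H^{r}$ for all $r$, combined with the density of smooth sections. Alternatively, one can take the Fourier transform of the relation $\bsXi\bsXi^{-1}=1$ on the cylinder and use the multiplicativity in Proposition \ref{propv1.mult.indicial}. In the notation of the statement, $\delta_{\Gamma}$ is to be read as $\delta_{\Gamma'}$ on $\mfkS$.
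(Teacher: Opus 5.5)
Your proof is correct and follows the paper's argument: the paper just says the proof is the same as that of Proposition~\ref{prop.compatibility}, namely that applying $\widehat\bsXi(\tau)$ to $\widehat\bsXi(\tau)^{-1}$ of a distribution supported on $\Gamma'$ returns that distribution, which vanishes off $\Gamma'$. Your extra remarks fill in details the paper leaves implicit: that the inverse is two-sided on the relevant negative-order Sobolev spaces, and that the differential operator $\widehat{\bopstar}(\tau)$ does not enlarge supports.
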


The following result result for the layer potential operators associated to
$\widehat \bsXi(\tau)$ and its proof are similar to those of Proposition
\ref{prop.H2estimates}.

\begin{proposition}\label{prop2.H2estimates}
  Let $\Omega_{+}' := \Omega' \subset \mfkS$, $\Omega_{-}' := \mfkS \smallsetminus
  \overline{\Omega}'$, and $\Gamma' := \pa \Omega'$, as before. Let $s\in \RR$ and
  $\bsh' \in H^{s}(\Gamma'; TM)$. Then
  \begin{equation*}
    \maV_{\tau}(\bsh')\vert_{\Omega_{\pm}'}
    \in H^{s + \frac32}(\Omega_{\pm}'; TM) \ \mbox{ and }\
    \maP_{\tau}(\bsh')\vert_{\Omega_{\pm}'} \in H^{s +\frac12}(\Omega_{\pm}')\,.
  \end{equation*}
  Similarly, let $\bsh \in H^{s}(\Gamma'; TM)$. Then
  \begin{equation*}
    \maW_{\tau}(\bsh')\vert_{\Omega_{\pm}'} \in H^{s + \frac12}(\Omega_{\pm}'; TM)\
    \mbox{ and }\
    \maQ_{\tau}(\bsh')\vert_{\Omega_{\pm}'} \in H^{s - \frac12}(\Omega_{\pm}')\,.
  \end{equation*}
\end{proposition}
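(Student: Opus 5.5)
The plan is to copy the proof of Proposition \ref{prop.H2estimates}, replacing the manifold $M$ with the closed manifold $\mfkS$ and the operators $\maA,\maB,\maC,\maD$ with their indicial versions $\widehat\maA(\tau)$, $\widehat\maB(\tau)$, $\widehat\maC(\tau)$, $\widehat\maD(\tau)$ from Remark \ref{rem.indicial.inv}.

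\textbf{Step 1: the indicial operators are classical pseudodifferential operators of the right orders.} Fix $\tau\in\RR$. By Propositions \ref{prop2.invert3} and \ref{prop2.invert2}, $\widehat\bsXi(\tau)^{-1}\in\Psi^{[-\bss-\bst]}(\mfkS;TM\oplus\CC)$. Hence
\begin{itemize}
\item $\widehat\maA(\tau)$ has order $-2$;
\item $\widehat\maB(\tau)$ and $\widehat\maC(\tau)$ have order $-1$;
\item $\widehat\maD(\tau)$ has order $0$.
\end{itemize}
By Remark \ref{rem.components.tau}, each of the four layer potentials $\maV_\tau$, $\maP_\tau$, $\maW_\tau$, $\maQ_\tau$ has the form $P(\bsh'\otimes\delta_{\Gamma'})$. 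The corresponding operators $P$ are
\[
\widehat\maA(\tau),\quad \widehat\maC(\tau),\quad -2\widehat\maA(\tau)\Dnu^*+\widehat\maB(\tau)\bsnu^\sharp,\quad -2\widehat\maC(\tau)\Dnu^*+\widehat\maD(\tau)\bsnu^\sharp,
\]
with orders $-2,-1,-1,0$ respectively. Here $\Dnu^*$ is a first-order differential operator on $\mfkS$, read as the indicial version; it depends on $\tau$ only through lower-order terms. Multiplication by $\bsnu^\sharp$ has order $0$.

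\textbf{Step 2: the symbols are of rational type.} Theorem \ref{thm.mapping.lp} requires rational-type symbols. The full symbol of $\widehat\bsXi(\tau)$ in local coordinates is polynomial in $\xi'$, with coefficients polynomial in $\tau$. A parametrix is built symbol by symbol by inverting the principal symbol, which is rational (Proposition \ref{prop.ADN.elliptic} with $\xi=\xi'+\tau\,dt$). Every term of the symbolic expansion is therefore rational. The exact inverse differs from the parametrix only by a smoothing operator on the compact manifold $\mfkS$, and a smoothing operator contributes a $C^\infty$ term that does not affect the estimates. The same holds for compositions with the differential operators $\Dnu^*$ and $\bsnu^\sharp$.

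\textbf{Step 3: apply the mapping theorem.} Apply Theorem \ref{thm.mapping.lp} on the closed manifold $\mfkS$, where the remark after Theorem \ref{thm.main.jump1} says these results remain valid, with the domain $\Omega'_\pm$, which lies on one side of its boundary by \eqref{eq.1side.Omega'}. This gives $P(\bsh'\otimes\delta_{\Gamma'})|_{\Omega'_\pm}\in H^{s-m-1/2}(\Omega'_\pm)$. Inserting $m=-2,-1,-1,0$ yields the regularity indices $s+\tfrac32$, $s+\tfrac12$, $s+\tfrac12$, $s-\tfrac12$, which are the eight stated inclusions.

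The main obstacle is the rational-type hypothesis in Step 2, specifically passing from the parametrix to the exact inverse. The plan is to handle the smoothing remainder separately: on the compact $\mfkS$ it maps $H^{s'}$ into $C^\infty$, so the claimed estimates for that piece are automatic.
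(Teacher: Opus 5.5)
Your proposal is correct and follows the paper's route: the paper only says the proof is similar to that of Proposition~\ref{prop.H2estimates}, which amounts to applying Theorem~\ref{thm.mapping.lp} to the operators of orders $-2,-1,-1,0$ from Remark~\ref{rem.components.tau} and noting that their symbols are of rational type. Your Step~2 (parametrix with rational homogeneous terms, plus a smoothing remainder that maps into $C^\infty$ on the compact $\mfkS$) supplies a justification of the rational-type hypothesis that the paper simply asserts.
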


Recall the \emph{indicial layer potentials} of Definition \ref{def.lp.tau}.
Then we have the following result.

\begin{proposition}\label{prop2.jump.rel}
  Let $\bsh' \in L^{2}(\Gamma'; TM)$, $\Gamma' := \pa \Omega'$,
  $\newtilde{\mathfrak f} := \frac{\newtilde V_{0}+1}{2 \newtilde V_{0}+1}$
  $\newtilde{\mathfrak g} := \frac1{2 \newtilde V_{0} + 1}$. Let $\bsK := \bsP_{0}
  \in \Psi^{0}(\Gamma'; TM)$ be as in Equation \eqref{eq.def.bsk}. Then
  \begin{enumerate}[\rm (i)]
    \item $\maV_{\tau}(\bsh')_{\pm} \seq \widehat \maA_{0}(\tau) \bsh'$,
    and $\widehat \maA_{0}(\tau)$ is elliptic with self-adjoint symbol
    $\sigma_{-1}(\maA_{0}(\tau); \xi') \seq
      \frac1{4|\xi'|}(2 - \newtilde{\mathfrak f} \bsnu \otimes \bsnu^{\sharp}
      - \newtilde{\mathfrak f} \eta^{\sharp} \otimes \eta)
      \,,$
    where $\eta := |\xi'|^{-1}\xi'$.

    \item $\maP_{\tau}(\bsh')_{\pm} = \big(\mp \frac {\newtilde{\mathfrak g}}2
      \bsnu^{\sharp} + \widehat \maC_{0}(\tau) \big) \bsh'$.

    \item $\maW_{\tau}(\bsh')_{\pm} \ede \big[ \widehat \bsP (\tau)
    (\bsh \otimes \delta_{\Gamma})\big]_{\pm} \seq \left [\pm \frac12
    + \widehat \bsK (\tau)\right ]
    \bsh'$ and the two operators $\pm \frac12  + \widehat \bsK(\tau)$ are elliptic
    and have self-adjoint principal symbols.

    \item $[\bop \maS_{\tau}(\bsh')]_{\pm} \seq \left(\mp \frac12
      + \widehat \bsK{}^{*} (\tau)\right)\bsh',$ where $\bsK = \bsP_{0}
      := (-2 \maA \Dnu^{*} + \maB \bsnu^{\sharp})_{0}$, as in Theorem \ref{thm.jump.K1}.
  \end{enumerate}
\end{proposition}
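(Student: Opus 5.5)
The plan is to obtain Proposition \ref{prop2.jump.rel} from the corresponding results on the cylinder $M = \mfkS \times \RR$, Theorems \ref{thm.jump.K1} and \ref{thm.jump.rel}, using the compatibility of normal lateral limits with limit operators and indicial families (Theorem \ref{thm2.indicial.jump1}). On the cylinder, $\bsXi$ is invertible by Theorem \ref{thm2.invert4}, and it is translation invariant. Hence $\maA$, $\maB$, $\maC$, $\maD$, $\bsP := -2\maA\Dnu^{*} + \maB\bsnu^{\sharp}$ and $\bop\psdinv$ all lie in $\ePSsus{m}{\mfkS}{\cdot}$. Their indicial operators are exactly $\widehat\maA(\tau)$, $\widehat \maC(\tau)$, $\widehat\bsP(\tau) = -2\widehat\maA(\tau)\widehat{\Dnu^{*}}(\tau) + \widehat\maB(\tau)\bsnu^{\sharp}$, and so on, by multiplicativity (Proposition \ref{propv1.mult.indicial}) and Remark \ref{rem.indicial.inv}. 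The normal vector $\bsnu$ is translation invariant and tangent to $\mfkS$, because $\Gamma = \Gamma' \times \RR$.

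First I would establish the key identity at the level of potentials. For $\bsh' \in L^{2}(\Gamma'; TM)$ and $P$ any of the above operators, apply $P$ to $e^{\imath t\tau}\bsh' \otimes \delta_{\Gamma}$ with $\Gamma = \Gamma'\times\RR$. Since $e^{\imath t \tau}\bsh' \otimes \delta_{\Gamma} = e^{\imath t\tau}(\bsh' \otimes \delta_{\Gamma'})$, Definition \ref{def.explicit.FT} gives
\begin{equation*}
  P(e^{\imath t\tau}\bsh'\otimes\delta_{\Gamma}) \seq e^{\imath t\tau}\,\widehat P(\tau)(\bsh'\otimes\delta_{\Gamma'})\,.
\end{equation*}
This identity is first checked for smooth $\bsh'$ and then extended by continuity, using Lemma \ref{lemma.def.hdelta2}. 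The normal geodesics to $\Gamma$ are $t$-independent, coinciding with those of $\Gamma'$ in $\mfkS$. Consequently, the normal lateral limits at $\Gamma$ of the left side equal $e^{\imath t\tau}$ times the normal lateral limits at $\Gamma'$ of $\widehat P(\tau)(\bsh'\otimes\delta_{\Gamma'})$.

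Next I would invoke the jump results. On the left side, the limits are given by Theorems \ref{thm.jump.K1}, \ref{thm.jump.rel} (applied componentwise), and \ref{thm.main.jump1} as $(\pm\frac{\imath}{2}\JC(P) + P_{0})(e^{\imath t \tau}\bsh')$. Since $P_{0}$ is again translation invariant and $\JC(P)$ is a $t$-independent multiplication operator, this equals $e^{\imath t\tau}(\pm\frac{\imath}{2}\JC(P) + \widehat{P_{0}}(\tau))\bsh'$. By Theorem \ref{thm2.indicial.jump1}, $\widehat{P_{0}}(\tau) = (\widehat P(\tau))_{0}$, and the indicial family of the jump term is just $\JC$ with $V_{0}$ replaced by $\newtilde V_{0}$. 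This yields (i)--(iv) with $\widehat\maA_{0}(\tau)$, $\widehat \maC_{0}(\tau)$ and $\widehat\bsK(\tau) = \widehat{\bsP_{0}}(\tau)$. For (iv), I apply the same argument to $\bop\psdinv$, together with $\widehat{\bsK^{*}}(\tau) = \widehat\bsK(\tau)^{*}$, which follows from Corollary \ref{cor.adjoints} and Fourier transform.

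The symbol statements follow because the indicial operator $\widehat P(\tau)$ has principal symbol equal to the restriction of $\sigma(P)$ to covectors annihilating $\pa_t$. Indeed, $\tau$ enters only at lower order in $\widehat\nabla(\tau)$ and $\widehat\Def(\tau)$, as seen in Propositions \ref{prop.uc.nabla} and \ref{prop.uc.Def}. Hence the formulas of Theorems \ref{thm.jump.K1} and \ref{thm.jump.rel} persist with $\xi' \in T^{*}\Gamma'$. Ellipticity and self-adjointness of $\sigma_{-1}(\widehat\maA_0(\tau))$ and $\sigma_{0}(\pm\frac12 + \widehat\bsK(\tau))$ then follow from $0 < \newtilde{\mathfrak f} \le 1$ and the eigenvalue bound $|\lambda| < 1/4$. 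The ellipticity of $\sigma_{-1}(\widehat\maA_0(\tau))$ additionally uses $\newtilde V_{0} \ge 0$.

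The main obstacle I expect is the rigorous justification of the interchange in the first step: the normal lateral limits of $P(e^{\imath t\tau}\bsh'\otimes\delta_\Gamma)$ must be taken in a topology compatible with the non-$L^2$ section $e^{\imath t\tau}\bsh'$. I would handle this by testing against $\phi(t)\in\CIc(\RR)$, in the spirit of the Fourier multiplier argument in Proposition \ref{prop2.Green.tau}. Concretely, I would work with $\bsh = \psi(t)\bsh'$ in $L^2(\Gamma)$, apply Theorem \ref{thm.main.jump1}(vi) and Equation \eqref{eq.F.P}, and then recover the fixed-$\tau$ statement from the arbitrariness of $\hat\psi$ and continuity in $\tau$.
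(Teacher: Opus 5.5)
Your proposal is correct and follows the same route as the paper, whose proof simply cites the cylinder jump relations of Theorems \ref{thm.jump.K1} and \ref{thm.jump.rel} together with the compatibility of normal lateral limits with indicial families (Theorem \ref{thm2.indicial.jump1}). The rest of your argument fills in details the paper leaves implicit: the plane-wave computation with $e^{\imath t\tau}\bsh'$, the Fourier-multiplier justification via $\psi(t)\bsh'$, and the observation that $\widehat P(\tau)$ has principal symbol $\sigma(P)$ restricted to covectors annihilating $\pa_t$.
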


\begin{proof}
  The result follows from the ``jump relations'' established in Theorems \ref{thm.jump.K1}
  and \ref{thm.jump.rel}   and the compatibility of the normal limits with the indicial
  operators, Theorem \ref{thm2.indicial.jump1}.
\end{proof}

\subsection{Invertibility of $\bsS$ on a cylinder}
{\cn We keep the assumptions of the previous subsection. That is,
\begin{itemize}
  \item $M = \mfkS \times \RR$,
  \item $V$ and $V \ge 0$ are smooth, translation invariant,
  and hence $V(x', t) = \newtilde V(x')$ and $V_{0}(x', t) = \newtilde V_{0}(x')$ identify to
  sections on $\mfkS$.
  \item $V$ and $V_{0}$ satisfy the positivity assumption at
  infinity (Assumption \ref{assumpt.VV0weak}, that is,
  $\newtilde V \succ 0$ and $\newtilde V_{0} \succ 0$).
\end{itemize}}
(Recall that $\succ$ was introduced in Definition \ref{def.succ}.) We also keep the notation
and assumptions of equations \eqref{eq.def.Gamma'}, \eqref{eq.def.Omega'}, and
\eqref{eq.1side.Omega'}.

\emph{\cn From now on, all layer potential operators $\bsS$, $\bsK$, ...
\lpar possibly decorated with indices\rpar\ are
associated to the boundaries $\Gamma$ and $\Gamma'$
of the open sets $\Omega =: \Omega_{+}$ and $\Omega' =: \Omega_{+}'$,}
see Definitions \ref{def.lp2}, \ref{def.lp2}, and \ref{def.lp.tau}.

We now prove the invertibility of the single layer potential operator
$\bsS := \maA_{0}$, see Theorem \ref{thm.jump.rel} on our
cylinder $M = \mfkS \times \RR$, if $V$ and $V_{0}$ satisfy the positivity assumption at infinity
(Assumption \ref{assumpt.VV0weak}). In order to do that, we need to first study its indicial
operators $\widehat{\bsS\,} (\tau)$, $\tau \in \RR$. We begin with the case $\tau = 0$, which
requires more assumptions than the case $\tau \neq 0$.

\begin{proposition}\label{prop.Stau=0}
  Let us assume that $M = \mfkS \times \RR$ \lpar is a cylinder\rpar, that $V$ and $V_{0}$
  are smooth, translation invariant at infinity, that $\Omega = \Omega' \times \RR$,
  and that $\Omega' \subset \mfkS$ is on one side of
  its boundary $\Gamma' := \pa \Omega'$ (i.e., $\Omega'$ is the interior of
  $\overline{\Omega'}' \subset \mfkS$). We also assume that
  \begin{enumerate}[\rm (i)]
    \item $\newtilde V \succ 0$ on $\mfkS$
    and
    \item $\newtilde V_{0} \succ 0$ on $\mfkS \smallsetminus \Gamma '$.
  \end{enumerate}
  Let $\widehat{\bsS\,}(0) \in \Psi^{-1}(\Gamma'; TM)$ be
  the indicial operator of the operator $\bsS := \maA_{0} \in \Psi^{-1}(\Gamma; TM)$ of
  Theorem \ref{thm.jump.rel}. Then $\widehat{\bsS\,}(0)$
  is invertible and $\widehat{\bsS\,}(0)^{-1} \in \Psi^{1}(\Gamma'; TM)$.
\end{proposition}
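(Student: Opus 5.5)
We will follow the classical single-layer argument on the closed manifold $\mfkS$: Fredholm of index zero, then injectivity by an energy argument on both sides of $\Gamma'$.

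\textbf{Step 1: Fredholm of index zero.} By Proposition \ref{prop2.jump.rel}(i), $\widehat{\bsS\,}(0) = \widehat\maA_{0}(0)$ is an order $-1$ elliptic pseudodifferential operator on the closed manifold $\Gamma'$. Its principal symbol is self-adjoint; invertibility uses $\newtilde{\mathfrak f}<1$, which holds because $\newtilde V_{0}\ge 0$. Hence $\widehat{\bsS\,}(0): H^{-1/2}(\Gamma';TM)\to H^{1/2}(\Gamma';TM)$ is Fredholm. We also need it to be formally self-adjoint. Since $\widehat\bsXi(0)^{-1}$ is self-adjoint, its $(1,1)$ block $\widehat\maA(0)$ is self-adjoint, and Corollary \ref{cor.adjoints} gives $\widehat\maA_{0}(0)^{*}=\widehat\maA_{0}(0)$. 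So the index is zero, and by elliptic regularity the kernel consists of smooth sections. It therefore suffices to prove injectivity; the inverse then lies in $\Psi^{1}(\Gamma';TM)$ by the standard parametrix/spectral invariance argument on a closed manifold (Theorem \ref{thm.spectral.inv} with $M$ closed).

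\textbf{Step 2: injectivity.} Let $\bsh'\in\CI(\Gamma';TM)$ with $\widehat{\bsS\,}(0)\bsh'=0$. Set $U:=\maS_{0}(\bsh')=(\bsu\ \ p)^{\top}$. By Propositions \ref{prop.compatibility.tau} and \ref{prop2.H2estimates}, $U|_{\Omega'_{\pm}}\in H^{2}\oplus H^{1}$ and $\widehat\bsXi(0)U=0$ on $\Omega'_{\pm}$. By Proposition \ref{prop2.jump.rel}(i), $\bsu_{\pm}=\widehat\maA_{0}(0)\bsh'=0$, so $(\widehat\bop(0)U,\bsu)'=0$ from both sides. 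Applying Lemma \ref{lemma2.e.est} on $\Omega'_{+}$ and on $\Omega'_{-}$ (for $\Omega'_{-}$ the outer normal is $-\bsnu$, which only changes a sign in a term that vanishes anyway) gives $\widehat\Def(0)\bsu=0$, $\widehat\nabla(0)p=0$ and $\newtilde V_{0}p=0$ on $\Omega'_\pm$.

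We then apply Corollary \ref{cor2.e.est3}(ii)(a) on each side, using $\bsu=0$ on $\Gamma'$, to get $\bsu=0$ on $\Omega'_{\pm}$. By the kernel description in Proposition \ref{prop.uc.Def}, $\bsu$ has a locally constant normal component, and that component vanishes because $\bsu=0$ on $\Gamma'$. For $p$, Lemma \ref{lemma2.e.est} shows $p$ is locally constant on each of $\Omega'_\pm$. Hypothesis (ii), $\newtilde V_{0}\succ 0$ on $\mfkS\smallsetminus\Gamma'$, means $\newtilde V_{0}$ is positive somewhere in every component of $\Omega'_{+}\cup\Omega'_{-}$, so $p=0$ on both sides. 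Here hypothesis (i), $\newtilde V\succ 0$, is used only to guarantee invertibility of $\widehat\bsXi(0)$ through Proposition \ref{prop2.invert3}; some care is needed because we only have positivity of $\newtilde V_{0}$ off $\Gamma'$, but a point of positivity in an open set suffices there too.

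\textbf{Step 3: recovering $\bsh'$ from the jump.} We now have $U_{\pm}=0$ and in particular $p_{\pm}=0$ and $(\widehat\bop(0)U)_{\pm}=0$. By Proposition \ref{prop2.jump.rel}(iv),
\begin{equation*}
(\widehat\bop(0)U)_{+}-(\widehat\bop(0)U)_{-}=-\bsh',
\end{equation*}
so $\bsh'=0$. Alternatively, Proposition \ref{prop2.jump.rel}(ii) gives $p_{+}-p_{-}=-\newtilde{\mathfrak g}\,\bsnu^{\sharp}\bsh'$, which recovers only the normal component; the traction jump is what gives the full vector. The main obstacle I expect is Step 2 on $\Omega'_{-}$: the Green formula there must be checked with the correct orientation, and one must verify that $\newtilde V_{0}\succ 0$ off $\Gamma'$ reaches every component of $\Omega'_{-}$ as well as $\Omega'_{+}$, since the pressure is only determined up to constants otherwise.
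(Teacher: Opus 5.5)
Your proposal is correct and follows essentially the paper's argument. You get Fredholmness of index zero from the ellipticity and self-adjointness of $\widehat{\bsS\,}(0)$ on the closed manifold $\Gamma'$. You get injectivity by applying Corollary~\ref{cor2.e.est3} on both $\Omega'_{\pm}$ to $U=\maS_{0}(\bsh')$, with hypothesis (ii) killing the locally constant pressure. Your Step~3, using the traction jump of Proposition~\ref{prop2.jump.rel}(iv), makes explicit the final implication $U=0\Rightarrow\bsh'=0$; the paper leaves this implicit here and writes it out only in the proof of Theorem~\ref{thm2.S}.

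Two small corrections:
\begin{itemize}
\item You claim $\newtilde{\mathfrak f}<1$, but in fact $\newtilde{\mathfrak f}\le 1$, with equality wherever $\newtilde V_{0}=0$. This is harmless, since ellipticity of $\widehat{\bsS\,}(0)$ only needs $\newtilde{\mathfrak f}<2$.
\item On a component of $\mfkS$ disjoint from $\Gamma'$, the boundary condition $\bsu=0$ on $\Gamma'$ says nothing. There $\bsu=0$ has to come from hypothesis (i), either via Corollary~\ref{cor2.e.est3}(ii)(b) or via the invertibility of $\widehat\bsXi(0)$ on that component, not from (ii)(a).
\end{itemize}
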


\begin{proof}
  Our assumptions imply that $V$ and $V_{0}$ satisfy the non-vanishing conditions
  of Assumption \ref{assumpt.VV0} on $M = \mfkS \times \RR$, and hence the layer
  potential operator $\bsS := \maA_{0} \in \ePS{-1}(\Gamma; TM)$ is defined
  (see Proposition \ref{prop.invert.inverse}).
  We know from Proposition \ref{prop2.jump.rel} that $\widehat{\bsS\,} (0)$
  is elliptic with self-adjoint principal symbol. Because $\mfkS$ is compact,
  $\widehat{\bsS\,} (0)$ is then Fredholm of index zero, by classical results
  (the ``classical case'' of a closed manifold in Theorem \ref{thm.ADN.Fredholm}).

  Let us show now that $\ker \widehat{\bsS\,}(0) = 0$. Let then $\bsh' \in L^{2}(\Gamma' ; TM)$
  be such that $\widehat{\bsS\,}(0) \bsh' = 0$. Our assumptions imply that
  $\widehat \bsXi(0)$ is invertible (Proposition \ref{prop2.invert3}), and hence the
  {\it indicial single layer potential} $U = (\, \bsu \ \ p\, )^{\top} := \maS_{0}(\bsh')$
  is defined (see Definition \ref{def.lp.tau}).
  Since $\widehat{\bsS\,}(0)$ is elliptic and $\Gamma'$ is smooth and compact, we have that
  $\bsh' \in H^{s}(\Gamma'; TM)$ for all $s \in \RR$, by classical elliptic regularity.
  Proposition \ref{prop2.H2estimates} for $s = 1/2$ then gives that
  the restrictions of $U$ to $\Omega_{+}' := \Omega'$ and to $\Omega_{-}'
  := \mfkS \smallsetminus \overline \Omega{\,}'$ satisfy
  \begin{equation}\label{eq.needed.est}
    U\vert_{\Omega_{\pm}'} \ede (\, \bsu \ \ p\, )^{\top}\vert_{\Omega_{\pm}'}
    \ede \maS_{0}(\bsh') \vert_{\Omega_{\pm}'}
    \in H^{2}(\Omega_{\pm}'; TM) \oplus H^{1}(\Omega_{\pm}')\,.
  \end{equation}
  We also know that $\widehat \bsXi(0) U = 0$ in $\Omega_{\pm}'$ by Proposition
  \ref{prop.compatibility}. Proposition \ref{prop2.jump.rel} gives then that
  $\bsu_{+} = \bsu_{-} = \widehat{\bsS\,}(0) \bsh' = 0$.
  Therefore $\bsu = 0$ in $\Omega_{-}'$ and in $\Omega_{+}'$, by Corollary \ref{cor2.e.est3}.
  The same corollary gives that $p$ is constant on each connected component
  of $\mfkS \smallsetminus \Gamma'$. Because
  $\newtilde V_{0}$ is not identically equal to zero on any connected component of
  $\mfkS \smallsetminus \Gamma'$, we obtain that, in fact, $p = 0$. We have thus obtained
  that $U = 0$ and hence $\widehat{\bsS\,}(0)$ is injective.

  Finally, the fact that $\widehat{\bsS\,}(0)^{-1} \in \Psi^{1}(\Gamma'; TM)$
  follows from Beals' theorem (see, for instance, Theorem 7.4 in \cite{KNW-22}).
\end{proof}

The same result holds for $\tau \neq 0$ under the weaker hypothesis that
$V \succ 0$ and $V_{0} \succ 0$ on $M$
(equivalently, that $\newtilde V \succ 0$ and $\newtilde V_{0} \succ 0$ on $\mfkS$).

\begin{proposition}\label{prop.Sneq0}
  Let us assume that $M = \mfkS \times \RR$ is a cylinder and that $V$ and $V_{0}$ are
  smooth, translation invariant, and $\newtilde V \succ 0$ and $\newtilde V_{0} \succ 0$
  on $\mfkS$. We furthermore assume that $\Omega'$ is the interior of $\overline{\Omega '}$.
  Let $\widehat{\bsS\,}(\tau) \in \Psi^{-1}(\Gamma'; TM)$ be the
  indicial operator of the operator $\bsS := \maA_{0} \in \Psi^{-1}(\Gamma; TM)$
  of Theorem \ref{thm.jump.rel}. If $\tau \neq 0$, then $\widehat{\bsS\,}(\tau)$ is
  invertible and $\widehat{\bsS\,}(\tau)^{-1} \in \Psi^{1}(\Gamma'; TM)$.
\end{proposition}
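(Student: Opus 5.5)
We will follow the same scheme as in the proof of Proposition \ref{prop.Stau=0}, with Corollary \ref{cor2.e.est2} replacing Corollary \ref{cor2.e.est3}; this is why no non-vanishing assumption near $\Gamma'$ is needed. Since $\newtilde V \succ 0$ and $\newtilde V_{0} \succ 0$ on $\mfkS$, the operator $\widehat \bsXi(\tau)$ is invertible for every $\tau$ (Propositions \ref{prop2.invert3} and \ref{prop2.invert2}). Hence the indicial layer potentials $\maS_{\tau}$ of Definition \ref{def.lp.tau} are defined, and $\widehat{\bsS\,}(\tau) = \widehat \maA_{0}(\tau)$ by Theorem \ref{thm2.indicial.jump1}. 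By Proposition \ref{prop2.jump.rel}(i), $\widehat{\bsS\,}(\tau)$ is elliptic of order $-1$ with self-adjoint principal symbol. Because $\Gamma'$ is compact, it is therefore Fredholm of index zero between the appropriate Sobolev spaces, by the closed-manifold case of Theorem \ref{thm.ADN.Fredholm}. It thus suffices to prove injectivity.

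Let $\bsh' \in L^{2}(\Gamma'; TM)$ satisfy $\widehat{\bsS\,}(\tau)\bsh' = 0$. Elliptic regularity gives $\bsh' \in H^{s}(\Gamma'; TM)$ for all $s$. Set $U = (\bsu\ \ p)^{\top} := \maS_{\tau}(\bsh')$. By Proposition \ref{prop2.H2estimates}, $U\vert_{\Omega_{\pm}'} \in H^{2}(\Omega_{\pm}'; TM) \oplus H^{1}(\Omega_{\pm}')$, and by Proposition \ref{prop.compatibility.tau}, $\widehat\bsXi(\tau) U = 0$ on $\Omega_{\pm}'$. The jump relation of Proposition \ref{prop2.jump.rel}(i) gives $\bsu_{\pm} = \widehat{\bsS\,}(\tau)\bsh' = 0$. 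Consequently the boundary term $(\widehat\bop(\tau)U, \bsu)'$ vanishes on each side, computed on $\Gamma' = \pa\Omega_{+}' = \pa\Omega_{-}'$ using the one-side assumption. Corollary \ref{cor2.e.est2}, applied separately on $\Omega_{+}'$ and on $\Omega_{-}'$ with the respective outer normals, then gives $U = 0$ on $\mfkS \smallsetminus \Gamma'$, since $\tau \neq 0$.

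Next we recover $\bsh'$ from the jump of the traction. Proposition \ref{prop2.jump.rel}(iv) gives
\begin{equation*}
  [\bop \maS_{\tau}(\bsh')]_{-} - [\bop \maS_{\tau}(\bsh')]_{+} \seq \bsh' .
\end{equation*}
Both one-sided limits vanish because $U \equiv 0$ on both sides, so $\bsh' = 0$. We must check this step, the sign conventions included. Alternatively, injectivity is already enough for the argument, but we record that $\bsh' = 0$ directly. One could also note that $\widehat\bsXi(\tau)\maS_{\tau}(\bsh') = (\bsh'\ \ 0)^{\top}\otimes\delta_{\Gamma'}$ as distributions, while $U = 0$ in $L^{2}(\mfkS)$, which forces $\bsh' = 0$. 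This is the cleaner route and we will use it, since it avoids tracking normal orientations.

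Finally, Beals' theorem (Theorem 7.4 in \cite{KNW-22}) shows that $\widehat{\bsS\,}(\tau)^{-1} \in \Psi^{1}(\Gamma'; TM)$. The main obstacle is justifying the energy argument on $\Omega_{-}'$: the Green formula of Proposition \ref{prop2.Green.tau} must be applied there with the outward normal of $\Omega_{-}'$, and we need $H^{2}\oplus H^{1}$ regularity up to $\Gamma'$ from the exterior side. Both are provided by Proposition \ref{prop2.H2estimates} and the smoothness of $\Gamma'$.
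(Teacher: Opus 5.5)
Your proposal is correct and follows the paper's own route: the paper proves Proposition~\ref{prop.Sneq0} by repeating the proof of Proposition~\ref{prop.Stau=0} (elliptic with self-adjoint principal symbol, hence Fredholm of index zero; energy argument for the indicial single layer potential on $\Omega_{\pm}'$; Beals' theorem), with Corollary~\ref{cor2.e.est2} used in place of Corollary~\ref{cor2.e.est3}. Your explicit last step, deducing $\bsh' = 0$ from $U = 0$ via $\widehat\bsXi(\tau)U = (\bsh'\ \ 0)^{\top}\otimes\delta_{\Gamma'}$ (or via the traction jump of Proposition~\ref{prop2.jump.rel}(iv)), is sound; the paper leaves that step implicit in the $\tau = 0$ proof it refers to.
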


\begin{proof}
  The proof is exactly the same as that of Proposition \ref{prop.Stau=0},
  except that we are using Corollary \ref{cor2.e.est2} (which only
  requires $\newtilde V$ and $\newtilde V_{0}$ to be non-negative) instead of
  Corollary \ref{cor2.e.est3}.
\end{proof}

We can now prove the invertibility of the single layer potential $\bsS$
on $\Gamma = \Gamma' \times \RR$ in the case of cylinders and under Assumption
\ref{assumpt.VV0weak}.

\begin{theorem}\label{thm.cyl.S}
  Let us assume that $M = \mfkS \times \RR$ is cylinder, that $V$ and $V_{0}$
  are smooth, translation invariant at infinity, and $\newtilde V \succ 0$ on
  $\mfkS$ and $\newtilde V_{0} \succ 0$ on $\mfkS \smallsetminus \Gamma'$.
  We furthermore assume that $\Omega'$ is the interior of $\overline{\Omega '}$.
  Let $\bsS := \maA_{0} \in \ePS{-1}(\Gamma; TM)$ be as in Theorem \ref{thm.jump.rel}.
  Then $\bsS$ is invertible, $\bsS^{-1} \in \ePS{1}(\Gamma; TM)$, and hence
  $\bsS : H^{s}(\Gamma; TM) \to H^{s+1}(\Gamma; TM)$ is an isomorphism for
  all $s \in \RR$.
\end{theorem}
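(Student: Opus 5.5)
The plan is to reduce the invertibility of $\bsS$ on the cylinder $\Gamma = \Gamma' \times \RR$ to the invertibility of its indicial family, exactly as was done for $\bsXi$ in Theorem \ref{thm2.invert4}. Since $M = \mfkS \times \RR$, $\Omega = \Omega' \times \RR$, and $V$, $V_{0}$ are translation invariant, the inverse $\bsXi^{-1}$ is translation invariant. Hence so is its entry $\maA$, and by construction (restriction to $\Gamma$ commutes with translations) so is $\bsS := \maA_{0}$. Thus $\bsS \in \ePSsus{-1}{\Gamma'}{TM} := \ePS{-1}(\Gamma' \times \RR; TM)^{\RR}$, and its Fourier transform $\widehat{\bsS\,}(\tau)$ is defined as in Definition \ref{def.explicit.FT}. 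By Theorem \ref{thm2.indicial.jump1} (compatibility of restriction to $\Gamma$ with indicial operators), $\widehat{\bsS\,}(\tau) = \widehat{\maA}_{0}(\tau) = (\widehat\maA(\tau))_{0}$. This is the operator studied in Propositions \ref{prop.Stau=0} and \ref{prop.Sneq0}.

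The key step is then to invoke those two propositions. Under our hypotheses, $\newtilde V \succ 0$ on $\mfkS$ and $\newtilde V_{0} \succ 0$ on $\mfkS \smallsetminus \Gamma'$, so Proposition \ref{prop.Stau=0} gives the invertibility of $\widehat{\bsS\,}(0)$. For $\tau \neq 0$, Proposition \ref{prop.Sneq0} needs only the well-definedness of the layer potentials and non-negativity, which our hypotheses also provide; it gives the invertibility of every $\widehat{\bsS\,}(\tau)$. Thus all indicial operators $\widehat{\bsS\,}(\tau) : H^{s}(\Gamma'; TM) \to H^{s+1}(\Gamma'; TM)$, $\tau \in \RR$, are invertible.

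Next I apply Theorem \ref{thm.rem.invert} to $Q = \bsS \in \ePSsus{-1}{\Gamma'}{TM}$ with $\mfkS$ replaced by the closed manifold $\Gamma'$. This yields that $\bsS : H^{s}(\Gamma; TM) \to H^{s+1}(\Gamma; TM)$ is invertible for each $s$. Invertibility for one value of $s$ together with ellipticity (Theorem \ref{thm.jump.rel}(i)) and elliptic regularity (Theorem \ref{thm.regularity}) gives invertibility for all $s$ with the same inverse. Finally, the Beals-type spectral invariance result (Theorem \ref{thm.spectral.inv}, or \cite[Theorem 7.4]{KNW-22}) gives $\bsS^{-1} \in \ePS{1}(\Gamma; TM)$.

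The main obstacle I expect is the bookkeeping in the identification $\widehat{\bsS\,}(\tau) = (\widehat\maA(\tau))_{0}$. This requires checking that the hypotheses of Theorem \ref{thm2.indicial.jump1} hold for $\maA \in \ePS{-2}$, and that the layer potentials entering Propositions \ref{prop.Stau=0} and \ref{prop.Sneq0} are those built from $\widehat{\bsXi}(\tau)^{-1} = \widehat{(\bsXi^{-1})}(\tau)$. Here one must note that the hypothesis $\newtilde V_{0} \succ 0$ on $\mfkS \smallsetminus \Gamma'$ implies $\newtilde V_{0} \succ 0$ on $\mfkS$, so that $\bsXi$ is invertible by Theorem \ref{thm2.invert4}. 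A secondary subtlety is ensuring that Theorem \ref{thm.rem.invert} applies in the $\ess$-calculus rather than only the $\inv$-calculus; I would handle this by citing the same Melrose–Schulze results used in the proof of Theorem \ref{thm2.invert4}.
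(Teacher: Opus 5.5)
Your proposal is correct and follows the paper's proof. Propositions \ref{prop.Stau=0} and \ref{prop.Sneq0} give invertibility of every $\widehat{\bsS\,}(\tau)$. Your observation that $\newtilde V_{0} \succ 0$ on $\mfkS \smallsetminus \Gamma'$ forces $\newtilde V_{0} \succ 0$ on $\mfkS$ is exactly what makes the $\tau \neq 0$ case (and the invertibility of $\bsXi$) available. The Melrose--Schulze criterion (Theorem \ref{thm.rem.invert}, already stated in the $\ess$-calculus, so your secondary worry is moot) then gives invertibility, and a Beals-type spectral invariance argument gives $\bsS^{-1} \in \ePS{1}(\Gamma; TM)$.
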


\begin{proof}
  Propositions \ref{prop.Stau=0} and \ref{prop.Sneq0}
  show that $\widehat \bsS(\tau)$ is invertible for all $\tau \in \RR$.
  It is well-known from the work of Melrose, Schulze and others (see also Theorem 5.11 in
  \cite{KohrNistor-Stokes}) that $\bsS$ is then invertible.
\end{proof}

\subsection{Invertibility of $\frac12  + \bsK$ on a cylinder}
In this subsection, {\cn we keep the assumptions of the previous two subsections.
In particular, we assume that $M = \mfkS \times \RR$ is a cylinder, that
$V$ and $V_{0}$ are smooth, translation invariant at infinity, and
$\newtilde V \succ 0$ and $\newtilde V_{0} \succ 0$.}

In the following, we will need the operators
$\bsP := - 2 \maA  \Dnu^{*} + \maB \bsnu^{\sharp}$ and
$\bsK \ede \bsP_{0}$. The first one is needed in the definition of the double layer
potential operator $\maW_{\rm{ST}}$ and the second one was
introduced in Equation \eqref{eq.def.bsk} and studied in Theorem \ref{thm.jump.K1}.
Each of these operators depends on the potentials $V$ and $V_{0}$.
(The correspondence $\bsP  \mapsto \bsP _{0}$ is the basic correspondence studied,
for example, in Theorems \ref{thm.main.jump0} and \ref{thm.main.jump1}.)

The proof of the invertibility of $\frac12  + \bsK$ (under suitable
conditions on $V$ and $V_{0}$) requires the invertibility of its indicial operators.
As for the operator $\bsS$, we first consider the case $\tau = 0$.

\begin{proposition}\label{prop.Ktau=0}
  Let us assume that $M = \mfkS \times \RR$ is a cylinder, that $V$ and $V_{0}$ are smooth,
  translation invariant at infinity, that $\newtilde V \succ 0$ on $\Omega_{-}'$ and
  that $\newtilde V_{0} \succ 0$ on $\Omega_{+}' := \Omega'$. We furthermore assume
  that $\Omega'$ is on one side of its boundary, as usual \lpar i.e., $\Omega'$ is
  the interior of $\overline{\Omega}'$, see Assumption \ref{assumpt.1side}\rpar. Let
  $\widehat \bsK(0) \in
  \Psi^{0}(\Gamma'; TM)$ be the indicial operator of the operator $\bsK \in \ePS{0}(\Gamma; TM)$
  given in Theorem \ref{thm.jump.K1} {\lpar}see also Equation \eqref{eq.matching.ce0}\rpar.
  Then $\frac12  + \widehat \bsK(0)$ is invertible on $L^2(\Gamma'; TM)$ and
  $\big [\frac12  + \widehat \bsK(0) \big]^{-1} \in \Psi^{0}(\Gamma'; TM)$.
\end{proposition}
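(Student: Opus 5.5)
The plan follows the scheme of Proposition \ref{prop.Stau=0}: Fredholm of index zero plus injectivity. By Proposition \ref{prop2.jump.rel}(iii), $\frac12 + \widehat \bsK(0)$ is elliptic with self-adjoint principal symbol. Since $\Gamma'$ is a closed manifold, it is Fredholm of index zero on $L^2(\Gamma'; TM)$; the index is the same as that of the self-adjoint operator with the same principal symbol, and the index depends only on the principal symbol. It therefore suffices to prove injectivity, or equivalently surjectivity. I would actually prove injectivity of the adjoint $\frac12 + \widehat \bsK{}^{*}(0)$, because its natural potential is the single layer. Beals' theorem (Theorem 7.4 in \cite{KNW-22}) then gives the inverse in $\Psi^{0}(\Gamma'; TM)$.

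For injectivity, take $\bsh' \in L^2(\Gamma';TM)$ with $(\frac12 + \widehat \bsK{}^{*}(0))\bsh' = 0$. By elliptic regularity, $\bsh'$ is smooth. Set $U = (\bsu \ \ p)^\top := \maS_0(\bsh')$. By Propositions \ref{prop2.H2estimates} and \ref{prop.compatibility.tau}, $U|_{\Omega'_\pm} \in H^2 \oplus H^1$ and $\widehat\bsXi(0)U = 0$ in $\Omega'_\pm$. Proposition \ref{prop2.jump.rel}(iv) gives $[\widehat\bop(0) U]_- = (\frac12 + \widehat\bsK{}^*(0))\bsh' = 0$.

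The next step is the energy identity on $\Omega'_-$. Apply Lemma \ref{lemma2.e.est} on $\Omega'_-$, whose outward normal is $-\bsnu$; the boundary term vanishes because the conormal trace from the minus side is zero. This yields $\widehat\Def(0)\bsu = 0$, $\newtilde V\bsu = 0$, $\widehat\nabla(0)p = 0$ and $\newtilde V_0 p = 0$ in $\Omega'_-$. Since $\newtilde V \succ 0$ on $\Omega'_-$, Corollary \ref{cor2.e.est3}(ii)(b) gives $\bsu = 0$ on $\Omega'_-$, and $p$ is locally constant there. Then $\bsu_+ = \bsu_- = 0$ by the continuity of $\maV_0$ across $\Gamma'$ from Proposition \ref{prop2.jump.rel}(i).

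On $\Omega'_+$ I apply Lemma \ref{lemma2.e.est} again, now with the boundary term $(\widehat\bop(0)U,\bsu)'_+$ vanishing since $\bsu_+ = 0$. Corollary \ref{cor2.e.est3}(ii)(a) gives $\bsu = 0$ on $\Omega'_+$, and $\newtilde V_0 \succ 0$ on $\Omega'_+$ gives $p = 0$ there. From the jump relations, $[\widehat\bop(0)U]_+ - [\widehat\bop(0)U]_- = -\bsh'$ (difference of $\mp\frac12$). Computing, $[\widehat\bop(0)U]_+ = 0$ because $U = 0$ on $\Omega'_+$, while $[\widehat\bop(0)U]_- = p_-\bsnu$ because $\bsu \equiv 0$ on $\Omega'_-$. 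Hence $\bsh' = p_-\bsnu$, with $p_-$ locally constant on $\Omega'_-$.

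The main obstacle is killing this residual constant pressure on the components of $\Omega'_-$ where $\newtilde V_0$ may vanish. I would use the pressure jump from Proposition \ref{prop2.jump.rel}(ii): $p_+ - p_- = -\newtilde{\mathfrak g}\,\bsnu^\sharp\bsh'$, which here reads $-p_- = -\newtilde{\mathfrak g} p_-$. Hence $(1-\newtilde{\mathfrak g})p_- = 0$, that is $\frac{2\newtilde V_0}{2\newtilde V_0+1}p_- = 0$ on $\Gamma'$. This works wherever $\newtilde V_0 \neq 0$ on $\Gamma'$. If that fails, I would instead use the relation $\bsh' = p_-\bsnu$ together with $(\frac12 + \widehat\bsK{}^*(0))(c\bsnu) = 0$ to test against the equation for the normal component: integrate $\nabla^*$ of $\bsu = 0$ on $\Omega'_-$, or use the formula $\maS_0(c\bsnu)$ for a locally constant $c$, whose velocity vanishes identically. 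This would force $c\bsnu \otimes \delta$ to lie in the range of $\widehat\bsXi(0)$ applied to $(0,c\mathbf 1_{\Omega'_-})$, contradicting $\newtilde V_0 \succ 0$ on $\mfkS$ unless $c = 0$. Either way $\bsh' = 0$, giving injectivity and hence invertibility.
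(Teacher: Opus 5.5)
Up to your last paragraph this is the paper's own proof: Fredholm of index zero, injectivity of $\frac12+\widehat{\bsK}^{*}(0)$ via the indicial single layer $U=\maS_0(\bsh')$, then the energy argument on $\Omega'_-$ and on $\Omega'_+$. As written, however, the final step does not close, and the ``main obstacle'' you describe does not exist.

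You had already shown $[\widehat\bop(0)U]_- = (\frac12+\widehat{\bsK}^{*}(0))\bsh' = 0$. Since $\bsu\equiv 0$ on $\Omega'_-$, this same trace equals $p_-\bsnu$, so $p_-=0$; this is exactly how the paper kills the pressure on $\Omega'_-$. In any case you get
$\bsh' = [\widehat\bop(0)U]_- - [\widehat\bop(0)U]_+ = 0-0 = 0$,
because $U=0$ on $\Omega'_+$. When you computed $\bsh'=p_-\bsnu$, you replaced the known value $0$ of $[\widehat\bop(0)U]_-$ by $p_-\bsnu$ and so lost this information.

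Neither of your substitutes works in general.

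\emph{The pressure jump.} It gives only $\frac{2\newtilde V_0}{2\newtilde V_0+1}\,p_-=0$ on $\Gamma'$. This carries no information on a component $\Omega_0$ of $\Omega'_-$ with $\newtilde V_0=0$ on $\partial\Omega_0$. The hypotheses only require positivity of $\newtilde V_0$ on $\Omega'_+$, so this case is allowed.

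\emph{The fallback.} The contradiction you invoke is not there. Suppose $\newtilde V_0\equiv 0$ on $\Omega_0$, and let $W=(0\ \ c\mathbf{1}_{\Omega_0})^{\top}$. Then $\widehat\bsXi(0)W=(c\bsnu\otimes\delta_{\partial\Omega_0}\ \ 0)^{\top}$. So $W$ is the indicial single layer potential of the density $c\bsnu$ supported on $\partial\Omega_0$. This is perfectly consistent with $\newtilde V_0\succ 0$ on $\mfkS$, since that positivity may sit in $\Omega'_+$. What actually excludes $c\neq 0$ is again the conormal trace: $[\widehat\bop(0)W]_-=c\bsnu\neq 0$, that is, $c\bsnu\notin\ker(\frac12+\widehat{\bsK}^{*}(0))$.

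Replace your last paragraph by the one-line observation above, and the proof is complete.
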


Recall that by the statement ``$\phi \not \equiv 0$
on $A$,'' we mean that
there exists $a$ in the domain of $\phi$ such that $\phi(a) \not = 0$. To
negate this statement, we shall write ``$\phi = 0$ in $A$.''
Of course, in view of our conventions, the condition $\newtilde V_{0} \not\equiv 0$
on $\Omega'$ implies the condition $V_{0} \not\equiv 0$
on $\Omega$ (since $V$ and $V_{0}$ are translation invariant at infinity).

\begin{proof}
  We know that that the operator $\bsK \ede \bsP_{0}
  \ede \big(- 2 \maA  \Dnu^{*} + \maB \bsnu^{\sharp})_{0}$
  satisfies $\bsK \in \ePS{0}(\Gamma; TM)$, by Theorem \ref{thm.main.jump1}.
  Therefore $\widehat \bsK(0) \in \Psi^{0}(\Gamma'; TM)$, by the definition of the
  indicial family. We also know from Theorem \ref{thm.jump.K1}
  that $\frac12  + \bsK$ is elliptic with self-adjoint principal
  symbol. Consequently, $\frac12  + \widehat \bsK(0)$ is also
  elliptic with self-adjoint principal
  symbol. Because $\Gamma'$ is compact, $\frac12  + \widehat \bsK(0)$ is then
  Fredholm of index zero by classical results \cite{Hormander3, H-W}. Therefore,
  in order to prove that $\frac12 + \widehat \bsK(0)$ is invertible, it suffices
  to prove that its adjoint operator
  $\frac12 + \widehat \bsK(0)^{*}$ is injective.

  Let then $\bsh' \in L^{2}(\Gamma' ; TM)$ be such that
  $\big[\frac12 + \widehat \bsK(0)^{*}\big] \bsh' = 0$. Since
  $\frac12 + \widehat \bsK(0)^{*}$ is elliptic and $\Gamma'$
  is smooth and compact, we have that $\bsh' \in H^{s}(\Gamma'; TM)$
  for all $s \in \RR$, by classical elliptic regularity. Let us consider the
  \emph{\underline{indicial} single layer potential} $U := \maS_{0}\bsh'$
  (see Definition \ref{def.lp.tau} with $\tau = 0$).
  Proposition \ref{prop2.H2estimates} then gives that the restrictions of
  $U$ to $\Omega_{+}' := \Omega'$ and to $\Omega_{-}'
  := \mfkS \smallsetminus \overline{\Omega}'$
  satisfy
  \begin{equation*} 
    U\vert_{\Omega_{\pm}'} \ede (\, \bsu \ \ p\, )^{\top}\vert_{\Omega_{\pm}'}
    \ede \maS_{0}(\bsh')\vert_{\Omega_{\pm}'}
    \in H^{2}(\Omega_{\pm}'; TM) \oplus H^{1}(\Omega_{\pm}')\,.
  \end{equation*}
  We also have that $\widehat \bsXi(0) U = 0$ in $\Omega_{\pm}'$ by Proposition
  \ref{prop.compatibility}. Proposition \ref{prop2.jump.rel} then gives that
  $[\widehat \bop(0) U]_{-} = \big[\frac12 + \widehat \bsK(0)^{*}\big]\bsh' = 0$.
  (Recall that $[\widehat \bop(0) U]_{-}$ is the trace at $\Gamma'$ of
  the restriction of $\widehat \bop(0) U$ to the domain $\Omega_{-}'$.)

  Let next $\Omega_{0}$ be a connected component of $\Omega_{-}'$.
  Our assumptions give that $\newtilde V \not\equiv 0$ on $\Omega_{0}$.
  Hence we obtain that $\bsu = 0$
  on $\Omega_{0}$, by Corollary \ref{cor2.e.est3}. The same corollary gives
  that $p$ is constant in $\Omega_{0}$.
  Let us now show that this constant is actually 0.
  Indeed, the relation
  \begin{equation*}
    0 \seq [\widehat \bop(0) U]_{-} \ede  [-2 \widehat \Dnu (0)\bsu + p \bsnu]_{-}
    \seq p_{-} \bsnu
  \end{equation*}
  on $\Gamma$ implies that $p_{-} = 0$, and hence $p = 0$ on $\Omega_{0}$.
  Since $\Omega_{0}$ was an arbitrary connected component
  of $\Omega_{-}'$, we obtain that $U = 0$ on $\Omega_{-}'$.

  We have already observed that $\widehat \bsXi(0) U = 0$ in $\Omega_{+}'$.
  Proposition \ref{prop2.jump.rel} then gives the ``no-jump relation''
  $\bsu_{+} = \bsu_{-} = 0$ at the boundary $\Gamma' := \partial \Omega'$ of
  $\Omega'$ (interior and exterior traces).
  Corollary \ref{cor2.e.est3} then gives that $\bsu=0$ on $\Omega'$.
  Because $\newtilde V_{0} \not \equiv 0$ on {every connected component of }$\Omega'$,
  the same corollary gives that $p = 0$ on $\Omega'$.  We have thus obtained
  that $U = 0$ on both $\Omega_{\pm}'$, and hence $U = 0$ on $\mfkS$.


  We next use relation (iv) of Proposition \ref{prop2.jump.rel} to conclude
  that
  \begin{align*}
    \bsh' & \seq \left(\frac12
    + \widehat \bsK{}^{*} (\tau)\right)\bsh' - \left(-\frac12
    + \widehat \bsK{}^{*} (\tau)\right)\bsh'\\
    & \seq [\bop \maS_{\tau}(\bsh')]_{-} - [\bop \maS_{\tau}(\bsh')]_{+}\\
    & \seq [\bop U]_{-} - [\bop U]_{+} \seq 0 \,.
  \end{align*}
  Therefore $\widehat{\bsS}(0)$ is injective, and hence it is
  invertible.

  The fact that $\big[\frac12 + \widehat \bsK(0)\big]^{-1} \in \Psi^{0}(\Gamma'; TM)$
  is also a classical result on pseudodifferential operators
  \cite{BealsSpInv} (see also Theorem \ref{thm.spectral.inv};
  a proof can also be found in \cite{KMNW-2025}, Theorem 15.4.11, and in
  \cite{KNW-22}, Theorem 7.4). This completes the proof.
\end{proof}

As for the single layer potential, the case $\tau \neq 0$ requires weaker
assumptions on the potentials.

\begin{proposition}\label{prop.Kneq0}
  Let us assume that $M = \mfkS \times \RR$ is a cylinder $V$ and $V_{0}$ are smooth,
  translation invariant, and that $\newtilde V \succ 0$ and $\newtilde V_{0} \succ 0$
  on $\mfkS$. We furthermore assume that
  $\Omega'$ is the interior of $\overline{\Omega '}$. Let
  $\widehat \bsK(\tau) \in \Psi^{0}(\Gamma'; TM)$ be the indicial operator of the
  operator $\bsK \in \Psi^{0}(\Gamma; TM)$ of Theorem \ref{thm.jump.K1} (see also Equation
  \eqref{eq.matching.ce0}). If $\tau \neq 0$, then $\frac12 + \widehat \bsK(\tau)$ is
  invertible on $L^2(\Gamma'; TM)$ and $\big [\frac12 + \widehat \bsK(\tau) \big]^{-1}
  \in \Psi^{0}(\Gamma'; TM)$, see Theorem \ref{thm.spectral.inv}.
\end{proposition}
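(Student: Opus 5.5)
The plan follows the proof of Proposition \ref{prop.Ktau=0}, with the $\tau=0$ unique continuation statements replaced by the injectivity statement of Corollary \ref{cor2.e.est2}. First, $\frac12+\widehat\bsK(\tau)$ is an elliptic operator in $\Psi^{0}(\Gamma';TM)$ with self-adjoint principal symbol. This follows from Theorem \ref{thm.jump.K1} and the compatibility of principal symbols with indicial operators. Since $\Gamma'$ is closed, the operator is Fredholm of index zero. It therefore suffices to show that its adjoint $\frac12+\widehat\bsK(\tau)^{*}$ is injective on $L^{2}(\Gamma';TM)$. The statement $\big[\frac12+\widehat\bsK(\tau)\big]^{-1}\in\Psi^{0}(\Gamma';TM)$ then follows from the spectral invariance result, Theorem \ref{thm.spectral.inv}.

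For injectivity, take $\bsh'$ with $\big[\frac12+\widehat\bsK(\tau)^{*}\big]\bsh'=0$. Elliptic regularity gives $\bsh'\in H^{s}(\Gamma';TM)$ for every $s$. Set $U=(\bsu\ \ p)^{\top}:=\maS_{\tau}(\bsh')$; this is defined because $\widehat\bsXi(\tau)$ is invertible by Proposition \ref{prop2.invert2}. Proposition \ref{prop2.H2estimates} gives $U|_{\Omega_{\pm}'}\in H^{2}(\Omega_{\pm}';TM)\oplus H^{1}(\Omega_{\pm}')$. Proposition \ref{prop.compatibility.tau} gives $\widehat\bsXi(\tau)U=0$ on $\Omega_{\pm}'$.

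Proposition \ref{prop2.jump.rel}(iv) then gives $[\widehat\bop(\tau)U]_{-}=\big(\frac12+\widehat\bsK(\tau)^{*}\big)\bsh'=0$. In particular $(\widehat\bop(\tau)U,\bsu)'_{-}=0$ for the exterior domain $\Omega_{-}'$. Corollary \ref{cor2.e.est2} applies on $\Omega_{-}'$ with $\tau\neq0$, and it needs only $\newtilde V,\newtilde V_{0}\ge0$. It yields $U=0$ on $\Omega_{-}'$, so the positivity of the potentials is not used here.

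Next, Proposition \ref{prop2.jump.rel}(i) says the single layer velocity does not jump, so $\bsu_{+}=\bsu_{-}=0$ on $\Gamma'$. Hence $(\widehat\bop(\tau)U,\bsu)'_{+}=0$, and Corollary \ref{cor2.e.est2} applied on $\Omega_{+}'$ gives $U=0$ there as well. Finally, subtracting the two relations in Proposition \ref{prop2.jump.rel}(iv) gives
\begin{equation*}
\bsh'=[\widehat\bop(\tau)U]_{-}-[\widehat\bop(\tau)U]_{+}=0 .
\end{equation*}

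The main point needing care is the sign and orientation bookkeeping in the one-sided Green formula on $\Omega_{-}'$. The outer normal of $\Omega_{-}'$ is $-\bsnu$, so the boundary term there is $-(\widehat\bop(\tau)U,\bsu)'_{-}$. This term vanishes in any case, so the Lemma \ref{lemma2.e.est} argument carries over unchanged. One should also note that on $\Omega_{-}'$, which may have several components, injectivity for $\tau\neq0$ needs no component-wise condition. This is exactly why the hypotheses are weaker than in the case $\tau=0$.
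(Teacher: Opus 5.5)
Your proposal is correct and follows the paper's route: the paper proves this statement by rerunning the argument of Proposition \ref{prop.Ktau=0} with Corollary \ref{cor2.e.est2} in place of Corollary \ref{cor2.e.est3}. That argument consists of Fredholmness of index zero, injectivity of $\frac12+\widehat\bsK(\tau)^{*}$ via the indicial single layer potential $\maS_{\tau}(\bsh')$, vanishing of this potential first on $\Omega_{-}'$ and then on $\Omega_{+}'$, and the jump of $\widehat\bop(\tau)\maS_{\tau}$. Your remark on the reversed normal for $\Omega_{-}'$ supplies a detail the paper leaves implicit. The hypotheses $\newtilde V,\newtilde V_{0}\succ 0$ on $\mfkS$ are still needed to define $\bsK$ and $\widehat\bsK(\tau)$, since this goes through the invertibility of $\bsXi$ on the cylinder, which involves $\tau=0$. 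Only the injectivity step itself does without them.
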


\begin{proof}
  The proof follows the same arguments as those in the
  proof of Proposition \ref{prop2.Ktau=0}, except that, instead of
  Corollary \ref{cor2.e.est3}, we use Corollary \ref{cor2.e.est2}.
\end{proof}

We are ready to prove the invertibility of the double layer potential
operator $\frac12 + \bsK$ on the cylinder $\Gamma := \Gamma' \times \RR$.

\begin{theorem}\label{thm.cyl.K}
  Let us assume that $M = \mfkS \times \RR$ is cylinder, that $V$ and $V_{0}$
  are smooth, translation invariant at infinity, and that $\newtilde V \succ 0$ on
  $\Omega_{-}'$ and $\newtilde V_{0} \succ 0$ on $\Omega_{+}':= \Omega'$.
  We furthermore assume that $\Omega'$ is the interior of $\overline{\Omega '}$,
  as usual. Let $\bsK \in \ePS{0}(\Gamma; TM)$ be as in
  Theorem \ref{thm.jump.K1} (see also Equation \eqref{eq.matching.ce0}).
  Then $\frac12 + \bsK $ is
  invertible on $L^2(\Gamma ; TM)$ and $\big [\frac12 + \bsK \big]^{-1}
  \in \Psi^{0}(\Gamma ; TM)$.
\end{theorem}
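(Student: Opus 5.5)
The argument follows the proof of Theorem \ref{thm.cyl.S}: on the cylinder $\Gamma = \Gamma' \times \RR$, the operator $\frac12 + \bsK$ is translation invariant. By Theorem \ref{thm2.indicial.jump1}, its indicial family is $\tau \mapsto \frac12 + \widehat \bsK(\tau)$. Invertibility of $\frac12 + \bsK$ therefore reduces, via Theorem \ref{thm.rem.invert}, to invertibility of $\frac12 + \widehat\bsK(\tau)$ on $L^2(\Gamma'; TM)$ for every $\tau \in \RR$.

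First I must make sure $\bsK$ is defined. This needs $\bsXi$ to be invertible on $M$, i.e.\ Theorem \ref{thm2.invert4} with $\newtilde V \succ 0$ and $\newtilde V_0 \succ 0$ on $\mfkS$. The standing hypotheses of this subsection (Assumption \ref{assumpt.VV0weak}) provide this, in addition to the refined localized conditions $\newtilde V \succ 0$ on $\Omega_-'$ and $\newtilde V_0 \succ 0$ on $\Omega'$. I then split into two cases:
\begin{itemize}
\item for $\tau = 0$, Proposition \ref{prop.Ktau=0} applies directly under exactly the stated localized hypotheses;
\item for $\tau \neq 0$, Proposition \ref{prop.Kneq0} applies, since it only uses non-negativity through Corollary \ref{cor2.e.est2}.
\end{itemize}
Hence $\frac12 + \widehat\bsK(\tau)$ is invertible for all $\tau$, with inverse in $\Psi^0(\Gamma'; TM)$.

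Next I pass from invertibility of the indicial family to invertibility of $\frac12 + \bsK$ itself. I will invoke Theorem \ref{thm.rem.invert}, or equivalently the Melrose--Schulze result cited as Theorem 5.11 in \cite{KohrNistor-Stokes}, for $Q = \frac12 + \bsK \in \ePSsus{0}{\Gamma'}{TM}$. This gives invertibility on $L^2(\Gamma; TM)$. The statement that the inverse lies in the $\ess$-calculus of order $0$ (written $\Psi^0$ in the statement) follows from spectral invariance, Theorem \ref{thm.spectral.inv}, i.e.\ the Beals-type theorem of \cite{KNW-22}. Ellipticity of $\frac12 + \bsK$, from Theorem \ref{thm.jump.K1}, ensures that the hypotheses of that theorem hold.

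I expect the main obstacle to be the uniformity in $\tau$. Theorem \ref{thm.rem.invert} is stated as a pointwise criterion, so I rely on it as quoted. If a more hands-on argument were needed, I would use two facts. For large $|\tau|$, the parameter-dependent principal symbol is invertible, which gives uniform invertibility. On compact $\tau$-intervals, continuity of $\tau \mapsto \widehat\bsK(\tau)$ in operator norm combined with pointwise invertibility gives uniformly bounded inverses. Together these yield a bounded inverse by Plancherel, using Equation \eqref{eq.F.P}.
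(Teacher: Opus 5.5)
Your proposal is correct and is essentially the paper's proof. The paper also reduces, via the Melrose--Schulze criterion (Theorem \ref{thm.rem.invert}, Theorem 5.11 of \cite{KohrNistor-Stokes}), to the invertibility of $\frac12 + \widehat\bsK(\tau)$ for all $\tau$: Proposition \ref{prop.Ktau=0} handles $\tau = 0$ and Proposition \ref{prop.Kneq0} handles $\tau \neq 0$, with the standing assumption $\newtilde V, \newtilde V_0 \succ 0$ on $\mfkS$ ensuring that the layer potentials are defined. Your additional remarks on uniformity in $\tau$ and on spectral invariance for the inverse only make explicit what the paper leaves implicit; in particular, the ellipticity of $\frac12 + \bsK$ from Theorem \ref{thm.jump.K1} is what makes the pointwise criterion valid.
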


\begin{proof}
  The proof is the same as that of Theorem \ref{thm.cyl.S},
  but using Propositions \ref{prop.Ktau=0} and \ref{prop.Kneq0}
  instead of Propositions \ref{prop.Stau=0} and \ref{prop.Sneq0}.
\end{proof}


\section{\cn Fredholmness and invertibility of layer potential operators
on manifolds with straight cylindrical ends}
\label{sec.Fredholmness}
{\cn We now come back to the case of a general manifold
with straight cylindrical ends $M = \canDec$ (Definition \ref{def.cyl.end}),
where $M' := \pa M_{0}$ is compact,
as usual. We will also assume that $V$ and $V_{0}$ satisfy the corresponding
non-vanishing assumptions \lpar Assumption \ref{assumpt.VV0}\rpar.
Also, we assume that we are given a \emph{compatible} open subset
$\Omega \subset M \seq  \canDec$,
as in Equation \eqref{eq.def.Omega}.} Recall that this means that $\Omega$
also has straight cylindrical end at infinity, which are assumed to ``match''
that of $M$, in the sense that there exists $R_{\Omega} \le 0$ and an open subset
$\Omega' \subset M$ such that
\begin{equation}\label{eq.matching.ce}
  \Omega \cap \big[ M' \times (-\infty, R_{\Omega})\big]
  \seq \Omega' \times (-\infty, R_{\Omega})\,.
\end{equation}
Then $\Gamma := \pa \Omega$ is also a manifold with straight
cylindrical ends. Let $\Gamma' := \pa \Omega' \subset M'$, we thus have
also the compatibility relation
\begin{equation}\label{eq.matching.ce2}
  \Gamma \cap \big[M' \times (-\infty, R_{\Omega})\big]
  \seq \Gamma' \times (-\infty, R_{\Omega})\,,
\end{equation}
(compare to Equation \eqref{eq.matching.ce0}).
We can assume, in fact, that $R_{\Omega} = 0$ in the last equations.
We set $\Omega_{-} := M \smallsetminus \overline \Omega$,
$\Omega_{-}' := M' \smallsetminus \overline \Omega{\,}'$, $\Omega_{+} := \Omega$,
and $\Omega_{+}' := \Omega'$. In the results of this section, we can split our
generalized Stokes operator as a direct sum according
to the connected components of $\Omega$, so there is no loss of generality
to assume that {\it $\Omega$ is connected}. (We have assumed that $M$ is
connected for the same reason.) The general case follows immediately from this
particular case.

\subsection{Invertibility of $\bsS$}
We first prove the invertibility of the single layer potential operator
$\bsS := \maA_{0}$, see Theorem \ref{thm.jump.rel}. In order to do
that, we need to first study its indicial operators $\widehat{\bsS\,} (\tau)$,
$\tau \in \RR$. We begin with the case $\tau = 0$, which requires more
assumptions than the case $\tau \neq 0$. Recall the definition of $\succ$
(Definition \ref{def.succ}). We notice that Assumption \ref{assumpt.VV0}
implies Assumption \ref{assumpt.VV0weak} on $M' \times \RR$.

\begin{proposition}\label{prop2.Stau=0}
  Let us assume that $M$ is a manifold with straight cylindrical ends and that
  $V$ and $V_{0}$ are smooth, translation invariant at infinity,
  and that $\newtilde V \succ 0$ on $M'$ and $\newtilde V_{0} \succ 0$ on
  $M' \smallsetminus \Gamma'$. Let $\widehat{\bsS\,}(0) \in \Psi^{-1}(\Gamma'; TM)$ be the
  indicial operator of the operator $\bsS := \maA_{0} \in \Psi^{-1}(\Gamma; TM)$ of
  Theorem \ref{thm.jump.rel}. Then $\widehat{\bsS\,}(0)$
  is invertible and $\widehat{\bsS\,}(0)^{-1} \in \Psi^{1}(\Gamma'; TM)$.
\end{proposition}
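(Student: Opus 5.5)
The plan is to reduce the statement to the cylinder case already treated in Proposition \ref{prop.Stau=0}, using the compatibility of normal lateral limits with limit operators (Theorem \ref{thm2.indicial.jump1}). By definition, $\widehat{\bsS\,}(0)$ is the zero Fourier mode of $\In(\bsS) = \In(\maA_{0})$. Theorem \ref{thm2.indicial.jump1} gives
\begin{equation*}
  \In(\maA_{0}) \seq [\In(\maA)]_{0}.
\end{equation*}
Since $\In$ is multiplicative (Theorem \ref{thm.complet.indicial}), $\In(\maA)$ is the upper left entry of $\In(\bsXi)^{-1} = \bsXi_{\newtilde V, \newtilde V_{0}}^{-1}$. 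This inverse is taken on the cylinder $M' \times \RR$, where it exists by Theorem \ref{thm2.invert4} because $\newtilde V \succ 0$ and $\newtilde V_{0} \succ 0$ on $M'$.

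Consequently $\In(\bsS)$ coincides with the single layer operator $\bsS_{\rm cyl}$ that Theorem \ref{thm.jump.rel} attaches to the translation invariant data on the cylinder $\mfkS \times \RR$. Here $\mfkS := M'$, the domain is $\Omega' \times \RR$, the boundary is $\Gamma' \times \RR$, and the potentials are $\newtilde V, \newtilde V_{0}$. Passing to Fourier transforms, $\widehat{\bsS\,}(0) = \widehat{\bsS_{\rm cyl}}(0)$. I also need $\Omega'$ to be on one side of its boundary. This follows from Assumption \ref{assumpt.1side} for $\Omega$ together with the product structure \eqref{eq.matching.ce}, since taking interiors and closures commutes with $\times (-\infty, R_{\Omega})$ on the end.

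With these identifications, the hypotheses of Proposition \ref{prop.Stau=0} are exactly ours: $\newtilde V \succ 0$ on $\mfkS = M'$ and $\newtilde V_{0} \succ 0$ on $\mfkS \smallsetminus \Gamma'$. That proposition then yields the invertibility of $\widehat{\bsS\,}(0)$ and $\widehat{\bsS\,}(0)^{-1} \in \Psi^{1}(\Gamma'; TM)$.

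Alternatively, one can redo the argument of Proposition \ref{prop.Stau=0} directly. First, $\widehat{\bsS\,}(0)$ is elliptic with self-adjoint principal symbol by Proposition \ref{prop2.jump.rel}(i), so it is Fredholm of index zero on the compact $\Gamma'$. Next, take $\bsh'$ in its kernel; it is smooth by elliptic regularity. Form $U = \maS_{0}(\bsh')$, whose restrictions lie in $H^{2} \oplus H^{1}$ on $\Omega'_{\pm}$ by Proposition \ref{prop2.H2estimates}, and which satisfies $\widehat \bsXi(0) U = 0$ off $\Gamma'$. Its vector traces satisfy $\bsu_{\pm} = \widehat{\bsS\,}(0)\bsh' = 0$. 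Corollary \ref{cor2.e.est3}(ii)(a) on each side gives $\bsu = 0$, and part (i) gives $p = 0$ on every component of $M' \smallsetminus \Gamma'$. Finally, the jump relation of Proposition \ref{prop2.jump.rel}(iv) gives
\begin{equation*}
  \bsh' \seq [\widehat\bop(0) U]_{-} - [\widehat\bop(0) U]_{+} \seq 0,
\end{equation*}
so $\widehat{\bsS\,}(0)$ is injective, hence invertible. The statement $\widehat{\bsS\,}(0)^{-1} \in \Psi^{1}$ then follows from the Beals-type result (Theorem \ref{thm.spectral.inv}).

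I expect the identification step to be the main obstacle. One must check carefully that $\In$ commutes with taking the $(1,1)$ entry of the inverse and with the restriction map $P \mapsto P_{0}$, including the fact that $\bsnu$ is translation invariant near infinity. This is what lets the indicial operator of $\bsS$ on $M$ be computed entirely from cylinder data.
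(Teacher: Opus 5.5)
Your proposal is correct and is essentially the paper's proof: the paper likewise applies Proposition \ref{prop.Stau=0} to $\In(\bsXi)=\bsXi_{\newtilde V,\newtilde V_{0}}$ on $M'\times\RR$ and uses that layer potentials commute with limit operators (Theorem \ref{thm2.indicial.jump1}). One small addition: if $\bsXi$ is only Fredholm, $\maA$ is an entry of $\psdinv$ rather than of $\bsXi^{-1}$, and the identity $\In(\psdinv)=\In(\bsXi)^{-1}$ then follows from $\bsXi\psdinv=1-p_{\maN}$ together with $\In(p_{\maN})=0$ (as $p_{\maN}$ has finite rank with $L^{2}$ range); your direct alternative, including the final jump step $\bsh'=[\widehat\bop(0)U]_{-}-[\widehat\bop(0)U]_{+}$, is also sound.
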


\begin{proof}
  The hypotesis imply that the operator $\bsXi$ is Fredholm, and hence
  the layer potentials are defined. The result then follows from Proposition
  \ref{prop.Stau=0} applied to $\In(\bsXi)$ acting on $M' \times \RR$
  and the fact that layer potentials commute with limit operators,
  Theorem \ref{thm2.indicial.jump1}.
\end{proof}

The same result holds for $\tau \neq 0$ under the weaker hypothesis that
$\newtilde V \succ 0$ and $\newtilde V_{0} \succ 0$.

\begin{proposition}\label{prop2.Sneq0}
  Let us assume that $M$ is a manifold with straight cylindrical
  ends and that $V$ and $V_{0}$ are smooth, asymptotically translation invariant
  at infinity, and $\newtilde V \succ 0$ and $\newtilde V_{0} \succ 0$ on $M'$.
  Let $\widehat{\bsS\,}(\tau) \in \Psi^{-1}(\Gamma'; TM)$ be the
  indicial operator of the operator $\bsS := \maA_{0} \in \Psi^{-1}(\Gamma; TM)$
  of Theorem \ref{thm.jump.rel}. If $\tau \neq 0$, then $\widehat{\bsS\,}(\tau)$ is
  invertible and $\widehat{\bsS\,}(\tau)^{-1} \in \Psi^{1}(\Gamma'; TM)$.
\end{proposition}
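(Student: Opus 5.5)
The plan is to reduce to the cylinder case exactly as in the proof of Proposition \ref{prop2.Stau=0}, but invoking Proposition \ref{prop.Sneq0} instead of Proposition \ref{prop.Stau=0}.

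First, the hypotheses $\newtilde V \succ 0$ and $\newtilde V_{0} \succ 0$ on $M'$ are exactly Assumption \ref{assumpt.VV0weak}. By Theorem \ref{thm2.main.Fredholm}, $\bsXi$ is then Fredholm and its limit operator $\In(\bsXi) = \bsXi_{\newtilde V, \newtilde V_{0}}$ on $M' \times \RR$ is invertible (Theorem \ref{thm2.invert4}). Hence the layer potentials on $M$ are defined through the pseudoinverse $\psdinv$. The indicial family of $\psdinv$ coincides with $\widehat{\bsXi}(\tau)^{-1}$, because $\In$ is multiplicative (Theorem \ref{thm.complet.indicial}) and $\In(p_{\maN}) = 0$, since $p_{\maN} \in \Psi_{\maS}^{-\infty}$ has Schwartz kernel. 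Consequently $\In(\maA)$ is the $\maA$-component of $\In(\bsXi)^{-1}$, that is, the operator denoted $\widehat\maA(\tau)$ in Remark \ref{rem.indicial.inv} after Fourier transform.

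Second, we need to see that the indicial operator of $\bsS := \maA_{0}$ is the single layer operator of the cylinder problem. This follows from Theorem \ref{thm2.indicial.jump1} with $m = -2 \le -1$: $\widehat{(\maA_{0})}(\tau) = (\widehat{\maA}(\tau))_{0}$. The right-hand side is precisely the indicial single layer operator associated with $\In(\bsXi)$ on the cylinder $M' \times \RR$, with the domain $\Omega' \times \RR$ and boundary $\Gamma' \times \RR$ (Proposition \ref{prop2.jump.rel}(i)). Here we also use that $\Omega'$ lies on one side of its boundary, which is inherited from Assumption \ref{assumpt.1side} through the compatibility relation \eqref{eq.matching.ce}.

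Third, we apply Proposition \ref{prop.Sneq0} with $\mfkS = M'$ and the translation-invariant potentials $\newtilde V, \newtilde V_{0}$; its hypotheses are exactly the ones assumed here. That proposition gives, for every $\tau \neq 0$, that $\widehat{\bsS\,}(\tau)$ is invertible with inverse in $\Psi^{1}(\Gamma'; TM)$.

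The main obstacle is the identification in the second step. Justifying that restriction to $\Gamma$ commutes with taking $\In$ and with the Fourier transform is the content of Theorem \ref{thm2.indicial.jump1}, which we take as given. The remaining delicate point is that the pseudoinverse, rather than a true inverse, has limit operator equal to the inverse of the limit operator. I would handle this by noting that $\bsXi\psdinv - 1 = -p_{\maN}$ and that $\In$ annihilates the smoothing Schwartz-kernel operator $p_{\maN}$. Everything else is routine bookkeeping.
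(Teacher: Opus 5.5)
Your proposal is correct and is essentially the paper's proof: the paper also applies Proposition \ref{prop.Sneq0} to $\In(\bsXi) = \bsXi_{\newtilde V, \newtilde V_{0}}$ on the cylinder $M' \times \RR$, and identifies $\widehat{\bsS\,}(\tau)$ with the corresponding indicial single layer operator via Theorem \ref{thm2.indicial.jump1}. Your additional check that $\In(\psdinv) = \In(\bsXi)^{-1}$ makes explicit a step the paper leaves implicit. For that check you do not need Schwartz decay of elements of $\maN$: $\In(p_{\maN}) = 0$ already follows from $p_{\maN}$ being finite rank with range in $L^{2}$.
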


\begin{proof}
  The proof is the same as that of Proposition \ref{prop2.Stau=0},
  but using Proposition \ref{prop.Sneq0} instead of Proposition \ref{prop.Stau=0}.
  (As in that proof, the assumptions $\newtilde V \succ 0$ and $\newtilde V_{0} \succ 0$ on $M'$
  are needed for $\bsXi$ to be Fredholm, so that the layer potentials are defined.)
\end{proof}

The above propositions lead to the following useful corollary.

\begin{corollary}\label{cor2.SFred}
  Let us assume that $M = \canDec$ is a manifold with straight cylindrical ends,
  that $V$ and $V_{0}$ are smooth, translation invariant at infinity,
  and that $\Omega'$ is the interior of $\overline{\Omega}'$.
  We also assume that $\newtilde V \succ 0$ on $M'$
  and $\newtilde V_{0} \succ 0$ on $M' \smallsetminus \Gamma'$.
  Then $\bsS$ is an unbounded Fredholm operator on $L^2(\Gamma ; TM)$.
  Equivalently, it is a \lpar bounded\rpar\ Fredholm operator
  $\bsS : H^{-1/2}(\Gamma; TM) \to H^{1/2}(\Gamma; TM)$.
\end{corollary}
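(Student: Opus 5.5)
The plan is to apply the Fredholm criterion of Theorem \ref{thm.ADN.Fredholm} (with trivial \ADN\ weights, i.e., ordinary order $-1$ operators) to the operator $\bsS \in \ePS{-1}(\Gamma; TM)$ on the manifold with straight cylindrical ends $\Gamma$. By that theorem, $\bsS : H^{s}(\Gamma; TM) \to H^{s+1}(\Gamma; TM)$ is Fredholm, for any $s$ and in particular for $s = -1/2$, if and only if two conditions hold: $\bsS$ is elliptic, and its limit operator $\In(\bsS)$, acting on $\Gamma' \times \RR$, is invertible. The unbounded formulation on $L^{2}(\Gamma; TM)$, with domain $H^{1}(\Gamma; TM)$ for the inverse picture or equivalently the maximal domain, then follows by the standard equivalence between Fredholmness between Sobolev spaces for elliptic operators in the $\ess$-calculus and Fredholmness of the unbounded realization. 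Here one uses elliptic regularity (Theorem \ref{thm.regularity}) to identify the maximal domain with the appropriate Sobolev space.

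\emph{Ellipticity.} Theorem \ref{thm.jump.rel}(i) gives
\begin{equation*}
  \sigma_{-1}(\bsS; \xi') \seq \frac1{4|\xi'|}\big(2 - \mathfrak f \bsnu \otimes \bsnu^{\sharp} - \mathfrak f \eta^{\sharp} \otimes \eta\big).
\end{equation*}
The vectors $\bsnu$ and $\eta^{\sharp}$ are orthonormal. Hence the eigenvalues are $2/(4|\xi'|)$ and $(2-\mathfrak f)/(4|\xi'|)$, both nonzero since $0 < \mathfrak f \le 1$ whenever $V_{0} \ge 0$ near $\Gamma$. If only positivity at infinity is available, invertibility is still guaranteed on the ends by $\newtilde V_{0} \ge 0$. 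I expect this to be the one delicate point: global ellipticity may require $V_{0} \ge 0$ on $\Gamma$, which I would take from the standing Assumption \ref{assumpt.VV0}.

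\emph{Invertibility of the limit operator.} By Theorem \ref{thm2.indicial.jump1}, restriction to $\Gamma$ commutes with $\In$ and with the indicial families. Hence $\In(\bsS)$ is the single layer operator built from $\In(\bsXi) = \bsXi_{\newtilde V, \newtilde V_{0}}$ on the cylinder $M' \times \RR$, and $\widehat{\bsS}(\tau)$ is its indicial family. Propositions \ref{prop2.Stau=0} and \ref{prop2.Sneq0} show that $\widehat{\bsS}(\tau)$ is invertible for every $\tau \in \RR$ under exactly the stated hypotheses: $\newtilde V \succ 0$ on $M'$, and $\newtilde V_{0} \succ 0$ on $M' \smallsetminus \Gamma'$. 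Theorem \ref{thm.rem.invert} then yields the invertibility of $\In(\bsS)$, or equivalently this follows directly from Theorem \ref{thm.cyl.S} applied on $M' \times \RR$.

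Combining the two conditions, Theorem \ref{thm.ADN.Fredholm} gives that $\bsS : H^{-1/2}(\Gamma;TM) \to H^{1/2}(\Gamma;TM)$ is Fredholm. Since the principal symbol is self-adjoint, the same argument applies to every $s$, so the Fredholm property holds on the full Sobolev scale, and the unbounded $L^{2}$ statement follows. The main obstacle I anticipate is checking that the hypotheses suffice for the layer potentials to be defined in the first place. This requires the Fredholm property of $\bsXi$ from Theorem \ref{thm2.main.Fredholm}, which asks for $\newtilde V_{0} \succ 0$ on $M'$. The condition on $M' \smallsetminus \Gamma'$ implies this.
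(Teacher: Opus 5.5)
Your proposal is correct and follows the paper's proof: you identify $\In(\bsS)$ through Theorem \ref{thm2.indicial.jump1} as the single layer operator of $\bsXi_{\newtilde V, \newtilde V_{0}}$ on $M' \times \RR$, obtain its invertibility from Theorem \ref{thm.cyl.S} (the chain through Propositions \ref{prop2.Stau=0}, \ref{prop2.Sneq0} and Theorem \ref{thm.rem.invert} that you describe is exactly how that theorem is proved), and then conclude from ellipticity and Theorem \ref{thm.ADN.Fredholm}. Your two extra checks fill in points the paper leaves implicit: ellipticity via Theorem \ref{thm.jump.rel}(i) needs $V_{0} \ge 0$ near $\Gamma$, which the section's standing Assumption \ref{assumpt.VV0} supplies, and $\newtilde V_{0} \succ 0$ on $M' \smallsetminus \Gamma'$ forces $\newtilde V_{0} \succ 0$ on $M'$, so the layer potentials are defined.
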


\begin{proof}
  The operator $\In(\bsS)$ is the single layer potential
  operator associated to $\In(\bsXi_{V, V_{0}}) = \bsXi_{\newtilde V, \newtilde V_{0}}$,
  by Theorem \ref{thm2.indicial.jump1}.
  The assumptions of the corollary give that the assumptions
  of Theorem \ref{thm.cyl.S} are therefore satisfied by $\In(\bsS)$, which
  is hence invertible. Because $\bsS$ is elliptic, Theorem \ref{thm.ADN.Fredholm}
  gives that $\bsS$ is Fredholm.
\end{proof}

We can now prove the invertibility of the single layer potential $\bsS$
on $\Gamma$.

\begin{theorem}\label{thm2.S}
  Let us assume that $M, V$, and $V_{0}$ satisfy the non-vanishing assumptions
  \ref{assumpt.VV0} \lpar hence, in particular, $M$ is with straight cylindrical
  ends and $\newtilde V \succ 0$ on $M'$\rpar. Let $\bsS := \maA_{0} \in \ePS{-1}(\Gamma; TM)$
  be as in Theorem \ref{thm.jump.rel}. We furthermore assume the following conditions.
  \begin{enumerate}[\rm (i)]
    \item $\newtilde V_{0} \succ 0$ on $M' \smallsetminus \Gamma'$.

    \item $V_{0} \succ 0$ on $M \smallsetminus \Gamma$.
    \item $\Omega$ is on one side of its boundary $\Gamma$ \lpar i.e., $\Omega$ is
    the interior of $\overline{\Omega}$\rpar.
  \end{enumerate}
  Then $\bsS$ is invertible, $\bsS^{-1} \in \ePS{1}(\Gamma; TM)$, and hence
  $\bsS : H^{s}(\Gamma; TM) \to H^{s+1}(\Gamma; TM)$ is an isomorphism for
  all $s \in \RR$.
\end{theorem}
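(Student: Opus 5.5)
The plan is to combine the Fredholm property of $\bsS$ (Corollary \ref{cor2.SFred}) with an index-zero argument and an injectivity argument based on the energy identities, exactly as in the cylinder case (Proposition \ref{prop.Stau=0}), and then to upgrade invertibility to membership in the calculus by spectral invariance.

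First, by assumption (i), the hypothesis $\newtilde V \succ 0$ on $M'$ (contained in Assumption \ref{assumpt.VV0}), and (iii), Corollary \ref{cor2.SFred} gives that
\[
  \bsS : H^{-1/2}(\Gamma; TM) \to H^{1/2}(\Gamma; TM)
\]
is Fredholm. Its principal symbol is self-adjoint (Theorem \ref{thm.jump.rel}(i)). Moreover, $\bsS = \maA_{0}$ with $\maA^{*} = \maA$ (since $\bsXi^{-1}$ is self-adjoint), so Corollary \ref{cor.adjoints} gives $\bsS^{*} = \bsS$. Hence the index is zero, and it suffices to prove that $\bsS$ is injective.

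Next, let $\bsh \in H^{-1/2}(\Gamma; TM)$ satisfy $\bsS\bsh = 0$. Since $\bsS$ is elliptic, elliptic regularity in the $\ess$-calculus (Theorem \ref{thm.regularity}) gives $\bsh \in H^{s}(\Gamma; TM)$ for all $s$. Put $U = (\bsu \ \ p)^{\top} := \maS_{\rm{ST}}(\bsh)$; this is well defined because $\bsXi$ is invertible by Theorem \ref{thm2.main.invert}. Proposition \ref{prop.H2estimates} gives $U|_{\Omega_\pm} \in H^{2}(\Omega_\pm; TM) \oplus H^{1}(\Omega_\pm)$, and Proposition \ref{prop.compatibility} gives $\bsXi U = 0$ on $M\smallsetminus\Gamma$. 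By Theorem \ref{thm.jump.rel}(i), $\bsu_{+} = \bsu_{-} = \bsS\bsh = 0$, so $(\bop U, \bsu)' = 0$ on both sides.

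We then apply Corollary \ref{cor.e.est} on $\Omega_{+}$ and on $\Omega_{-}$; each has boundary $\Gamma \neq \emptyset$ since $\Omega$ is on one side of its boundary. Case (iii)(c) of that corollary gives $\bsu = 0$ on each component of $\Omega_\pm$ that touches $\Gamma$. Every component of $M\smallsetminus\Gamma$ does touch $\Gamma$, because $M$ is connected. Furthermore, $\nabla p = 0$, so $p$ is locally constant, and $V_{0} p = 0$ together with hypothesis (ii) ($V_0\succ0$ on $M\smallsetminus\Gamma$) forces $p = 0$. Hence $U = 0$ off $\Gamma$. The jump relation of Theorem \ref{thm.jump.rel}(iii) then yields
\[
  \bsh = [\bop U]_{-} - [\bop U]_{+} = 0.
\]

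The main obstacle is justifying the Green identities of Corollary \ref{cor.e.est} on the noncompact domains $\Omega_\pm$. The key point is that $U$ lies in global $H^2\oplus H^1$ rather than only locally, and this is supplied by the mapping property in Theorem \ref{thm.mapping.lp}, because $\bsh$ lies in every $H^{s}(\Gamma; TM)$. Finally, with injectivity and index zero, $\bsS$ is an isomorphism. Spectral invariance (Theorem \ref{thm.spectral.inv}, or \cite[Theorem 7.4]{KNW-22}) gives $\bsS^{-1}\in\ePS{1}(\Gamma;TM)$, and the mapping properties of Theorem \ref{thm.prop.ePS}(ii) give the isomorphisms $H^{s}(\Gamma; TM) \to H^{s+1}(\Gamma; TM)$ for all $s$.
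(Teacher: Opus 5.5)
Your proposal is correct and follows essentially the same route as the paper. It obtains Fredholmness from Corollary \ref{cor2.SFred} and index zero from self-adjointness. Injectivity then comes from the single layer potential $U=\maS_{\rm{ST}}(\bsh)$: the energy estimates of Corollary \ref{cor.e.est} on $\Omega_\pm$ kill $\bsu$, hypothesis (ii) kills $p$, and the conormal jump relation of Theorem \ref{thm.jump.rel}(iii) gives $\bsh=0$, with spectral invariance finishing the proof. Your justification of $\bsS^{*}=\bsS$ via $\maA^{*}=\maA$ and Corollary \ref{cor.adjoints} is, if anything, more explicit than the paper's appeal to Theorem \ref{thm.jump.rel}, which by itself only yields a self-adjoint principal symbol.
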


\begin{proof}
  Because $V$ and $V_{0}$ satisfy the non-vanishing assumptions
  \ref{assumpt.VV0} on $M$, Theorem \ref{thm2.main.invert} implies that the operator
  $\bsXi$ is invertible, and hence the layer potentials are defined. In
  particular, the single layer potential operator
  $\bsS := \maA_{0} \in \ePS{-1}(\Gamma; TM)$ is defined.
  In addition, Theorem \ref{thm.jump.rel} shows that $\bsS$ is elliptic and
  self-adjoint from $H^{-1/2}(\Gamma; TM)$ to $H^{1/2}(\Gamma; TM)$ with respect
  to the $L^{2}$-inner product.
  We know from Corollary \ref{cor2.SFred} just proved that
  $\bsS : H^{-1/2}(\Gamma; TM) \to H^{1/2}(\Gamma; TM)$ is a Fredholm operator.
  The self-adjoint property of $\bsS$ shows that its index is zero.
  Therefore, if we show that this operator is injective, then it will be invertible.

  To this end, let $\bsh \in H^{-1/2}(\Gamma; TM)$ be such that $\bsS \bsh = 0$ and let $U =
  (\, \bsu \ \ p\, )^{\top}:= \maS_{\rm{ST}}(\bsh)$ be the corresponding \emph{single layer
  potential}. The ellipticity of $\bsS \in \ePS{-1}(\Gamma; TM)$, the ellipticity property of
  Theorem \ref{thm.jump.rel}, and elliptic regularity (see Theorem \ref{thm.regularity} or
  \cite[Theorem 15.3.30]{KMNW-2025} or \cite[Proposition 6.5]{KNW-22}) assure that
  $\bsh \in H^{s}(\Gamma; TM)$ for all $s \in \RR$. Then, by
  Proposition \ref{prop.H2estimates},
  the restrictions of $U$ to $\Omega_{+} := \Omega$ and to $\Omega_{-}$
  satisfy the property
  \begin{equation}\label{eq.needed.est-2}
    U\vert_{\Omega_{\pm}} \ede (\, \bsu \ \ p\, )^{\top}\vert_{\Omega_{\pm}}
    \ede \maS_{\rm{ST}}(\bsh) \vert_{\Omega_{\pm}}
    \in H^{2}(\Omega_{\pm}; TM) \oplus H^{1}(\Omega_{\pm})\,.
  \end{equation}
  We also know that $\bsXi U = 0$ in $\Omega_{\pm}$ by Proposition
  \ref{prop.compatibility}. Theorem \ref{thm.jump.rel} gives then that
  $\bsu_{+} = \bsu_{-} = \bsS \bsh = 0$, and by Corollary \ref{cor.e.est},
  $\bsu = 0$ in $\Omega_{-}$ and in $\Omega_{+}$.
  The Stokes equation and again Corollary \ref{cor.e.est} give that $p$ is
  constant on each connected component of $M \smallsetminus \Gamma$, but
  this constant is $0$, because
  $V_{0}$ is not identically equal to zero on any connected component of
  $M \smallsetminus \Gamma$. Thus, $U = 0$. This implies that
  $[\tbop U]_{\pm} = 0$ on $\Gamma$, and Theorem \ref{thm.jump.rel}(iii) gives
  \begin{equation*}
    \bsh \seq \Big(\frac12  + K^{*} \Big )\bsh -
    \Big(-\frac12  + K^{*} \Big )\bsh \seq [\tbop U]_{-} - [\tbop U]_{+} \seq 0\,.
  \end{equation*}
  Consequently, $\bsS$ is injective and, in view of the above arguments, it
  is invertible.
  Finally, the property that $\bsS^{-1} \in \ePS{1}(\Gamma; TM)$ follows from
  Theorem \ref{thm.spectral.inv} (see Theorem 7.4 in \cite{KNW-22} or Theorem
  15.4.11 in \cite{KMNW-2025}).
\end{proof}

\subsection{Invertibility of $\frac12  + \bsK$}
In the following, we will use
the operators $\bsP := - 2 \maA  \Dnu^{*} + \maB \bsnu^{\sharp}$ and
$\bsK \ede \bsP_{0}$. The first one is needed in the definition of the double layer
potential operator $\maW_{\rm{ST}}$ and the second one was
introduced in Equation \eqref{eq.def.bsk} and studied in Theorem \ref{thm.jump.K1}.
Each of these operators depends on the potentials $V$ and $V_{0}$.
(The correspondence $\bsP  \mapsto \bsP _{0}$ is the basic correspondence studied,
for example, in Theorems \ref{thm.main.jump0} and \ref{thm.main.jump1}.)

The proof of the invertibility of $\frac12  + \bsK$ (under suitable
conditions on $V$ and $V_{0}$) requires the invertibility of its indicial operators.
We first consider the case $\tau = 0$. This case needs additional assumptions
and arguments. Recall that we have $V(x', t) = \newtilde V(x', t)$ for $(x', t) \in
M' \times (-\infty, 0)$
and $t \ll 0$ and that $\newtilde V(x', t) = \newtilde V(x')$
is independent of $t$. The function $\newtilde V_{0}$ on $M'$ is defined similarly.
The functions $\newtilde V_{0}$ and $\newtilde V$ correspond to the limit operators of
the operators defined by $V_{0}$ and $V$.

\begin{proposition}\label{prop2.Ktau=0}
  Let us assume that $M, V$, and $V_{0}$ satisfy the non-vanishing assumptions
  {\lpar Assumption \ref{assumpt.VV0}\rpar} and let
  $\widehat \bsK(0) \in \Psi^{0}(\Gamma'; TM)$ be the indicial operator of the
  operator $\bsK \in \ePS{0}(\Gamma; TM)$ given in Theorem \ref{thm.jump.K1}
  {\lpar}see also Equation \eqref{eq.matching.ce2}\rpar. We furthermore assume that
  $\newtilde V_{0} \succ 0$ on $\Omega'$, that $\newtilde V \succ 0$ on $\Omega_{-}'$,
  and that $\Omega'$ on one side of its boundary $\Gamma' := \overline{\Omega '}$.
  Then $\frac12  + \widehat \bsK(0)$ is
  invertible on $L^2(\Gamma'; TM)$ and $\big [\frac12  + \widehat \bsK(0) \big]^{-1}
  \in \Psi^{0}(\Gamma'; TM)$.
\end{proposition}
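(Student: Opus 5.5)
The plan is to reduce to the cylinder case already treated in Proposition \ref{prop.Ktau=0}, exactly as Proposition \ref{prop2.Stau=0} was reduced to Proposition \ref{prop.Stau=0}. First, Assumption \ref{assumpt.VV0} gives that $\bsXi$ is invertible (Theorem \ref{thm2.main.invert}), so the layer potentials and $\bsK := \bsP_{0}$ are defined. It also gives that the limit operator $\In(\bsXi) = \bsXi_{\newtilde V, \newtilde V_{0}}$ on $M' \times \RR$ is invertible (Theorem \ref{thm2.main.Fredholm}(i)), with $\In(\bsXi)^{-1} = \In(\bsXi^{-1})$ by multiplicativity of $\In$ (Theorem \ref{thm.complet.indicial}). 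Consequently the components $\maA, \maB$ of $\psdinv$ have limit operators equal to the corresponding components of $\In(\bsXi)^{-1}$. Hence $\In(\bsP)$ is the operator $\bsP$ built from $\bsXi_{\newtilde V,\newtilde V_{0}}$ on the cylinder $M'\times\RR$ relative to $\Omega'\times\RR$. Here one also uses that $\bsnu$ and $\Dnu^{*}$ are translation invariant near infinity, because $\Omega$ is compatible, as in Equation \eqref{eq.matching.ce}.

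Next I invoke Theorem \ref{thm2.indicial.jump1}: restriction to $\Gamma$ commutes with $\In$ and with the Fourier transform. This gives $\widehat{\bsK}(0) = \widehat{(\bsP_{0})}(0) = (\widehat{\bsP}(0))_{0}$. In other words, $\widehat{\bsK}(0)$ coincides with the indicial operator at $\tau = 0$ of the double layer operator $\bsK$ associated to the translation invariant data $(M'\times\RR, \Omega'\times\RR, \newtilde V, \newtilde V_{0})$. The jump coefficients $\JC_{\pm}$ commute as well, by the last statement of that theorem, so the $\pm\frac12$ terms match.

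It then remains to check that the hypotheses of Proposition \ref{prop.Ktau=0} hold for this cylinder with $\mfkS = M'$. The requirements there are $\newtilde V \succ 0$ on $\Omega_{-}'$, $\newtilde V_{0}\succ 0$ on $\Omega'$, and $\Omega'$ on one side of its boundary; all three are assumed in the statement. The well-definedness of the cylinder layer potentials also needs $\newtilde V\succ0$ and $\newtilde V_{0}\succ0$ on $M'$, and this comes from Assumption \ref{assumpt.VV0weak}, which is contained in Assumption \ref{assumpt.VV0}. Proposition \ref{prop.Ktau=0} then yields that $\frac12 + \widehat\bsK(0)$ is invertible on $L^{2}(\Gamma';TM)$ with inverse in $\Psi^{0}(\Gamma';TM)$.

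The main obstacle I expect is the identification $\widehat\bsK(0) = $ cylinder $\widehat\bsK(0)$, that is, making sure that taking limit operators commutes with forming $\bsP = -2\maA\Dnu^{*}+\maB\bsnu^{\sharp}$ and then restricting to $\Gamma$. The concern is that $\bsP(\bsh\otimes\delta_\Gamma)$ involves a distribution supported on $\Gamma$, not an ordinary composition. I would handle this with Proposition \ref{propv1.desc.kInP}, which describes the kernel of $\In(P)$ via translated kernels, together with Theorem \ref{thm.main.jump1}(iv): $k_{P_0}=k_P$ off the diagonal on $\Gamma$. Off the diagonal the kernels of both sides agree after translation, and since order-zero operators of this type are determined by their off-diagonal kernels plus the principal symbol (Theorem \ref{thm.main.jump0}(ii),(iii)), the equality follows.
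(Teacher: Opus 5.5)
Your proposal is correct and follows the paper's own proof. Both arguments apply the cylinder result, Proposition \ref{prop.Ktau=0}, to $\In(\bsXi)=\bsXi_{\newtilde V,\newtilde V_{0}}$ on $M'\times\RR$, identify $\widehat\bsK(0)$ with the corresponding cylinder operator through Theorem \ref{thm2.indicial.jump1}, and check that the stated positivity hypotheses match.

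Your extra kernel-plus-principal-symbol argument for that identification only spells out what the paper takes directly from Theorem \ref{thm2.indicial.jump1}. The membership $\big[\frac12+\widehat\bsK(0)\big]^{-1}\in\Psi^{0}(\Gamma';TM)$ is already supplied by Proposition \ref{prop.Ktau=0}.
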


Of course, in view of our conventions, the condition $\newtilde V_{0} \not\equiv 0$
on $\Omega'$ implies the condition $V_{0} \not\equiv 0$
on $\Omega$ (since $V$ and $V_{0}$ are translation invariant at infinity).

\begin{proof}
  The proof is very similar to that of Proposition \ref{prop2.Stau=0}, but
  using Proposition \ref{prop.Ktau=0} instead of \ref{prop.Stau=0}.
  Indeed, first, the hypotesis imply that the operator $\bsXi$ is Fredholm, and hence
  the layer potentials are defined. The result then follows from Proposition
  \ref{prop.Ktau=0} applied to $\In(\bsXi)$ acting on $M' \times \RR$
  and the fact that layer potentials commute with limit operators,
  Theorem \ref{thm2.indicial.jump1}. The last point follows from Theorem
  \ref{thm.spectral.inv}.
\end{proof}

\begin{proposition}\label{prop2.Kneq0}
  Let us assume that $V, V_{0} \ge 0$ satisfy Assumption \ref{assumpt.VV0}
  \lpar so $\newtilde V \succ 0$ and $\newtilde V_{0} \succ 0$ on $M'$\rpar and let
  $\widehat \bsK(\tau) \in \Psi^{0}(\Gamma'; TM)$ be the indicial operator of the
  operator $\bsK \in \Psi^{0}(\Gamma; TM)$ of Theorem \ref{thm.jump.K1} (see also Equation
  \eqref{eq.matching.ce2}). If $\tau \neq 0$, then $\frac12 + \widehat \bsK(\tau)$ is
  invertible on $L^2(\Gamma'; TM)$ and $\big [\frac12 + \widehat \bsK(\tau) \big]^{-1}
  \in \Psi^{0}(\Gamma'; TM)$.
\end{proposition}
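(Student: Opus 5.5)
The plan is to reduce the statement to the cylinder case, exactly as Proposition \ref{prop2.Kneq0}'s siblings (Propositions \ref{prop2.Stau=0}, \ref{prop2.Sneq0}, and \ref{prop2.Ktau=0}) were reduced. Three steps are needed.

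First, Assumption \ref{assumpt.VV0} gives the positivity at infinity Assumption \ref{assumpt.VV0weak}. Hence $\bsXi$ is Fredholm by Theorem \ref{thm2.main.Fredholm}, and in fact invertible by Theorem \ref{thm2.main.invert}. So the layer potentials, and in particular $\bsK = \bsP_{0}$, are defined.

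Second, I would identify $\widehat\bsK(\tau)$ with the indicial double layer operator attached to the translation invariant operator $\In(\bsXi) = \bsXi_{\newtilde V, \newtilde V_{0}}$ on the cylinder $M' \times \RR$, with the product domain $\Omega' \times \RR$.
\begin{itemize}
\item By Lemma \ref{lemma.limit.operator} and the multiplicativity of $\In$ (Theorem \ref{thm.complet.indicial}), we have $\In(\bsXi^{-1}) = \In(\bsXi)^{-1}$. Hence $\In(\maA)$ and $\In(\maB)$ are the corresponding entries of the inverse of $\bsXi_{\newtilde V, \newtilde V_{0}}$.
\item Also $\In(\Dnu^{*})$ and $\In(\bsnu^{\sharp})$ are the analogous operators for the normal field of $\Gamma' \times \RR$, since $\bsnu$ can be chosen translation invariant near infinity.
\item Theorem \ref{thm2.indicial.jump1} then gives $\In(\bsP_{0}) = (\In \bsP)_{0}$ and $(\widehat{\bsP}(\tau))_{0} = \widehat{(\bsP_{0})}(\tau)$. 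So $\widehat\bsK(\tau)$ coincides with the operator $\widehat\bsK(\tau)$ of Proposition \ref{prop2.jump.rel} for the cylinder problem.
\end{itemize}

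Third, I apply Proposition \ref{prop.Kneq0} to the cylinder $\mfkS = M'$ with potentials $\newtilde V, \newtilde V_{0}$. These are smooth, translation invariant, and $\succ 0$ on $M'$ by hypothesis. $\Omega'$ is on one side of its boundary because $\Omega$ is (Assumption \ref{assumpt.1side}). For $\tau \neq 0$ this yields the invertibility of $\frac12 + \widehat\bsK(\tau)$ on $L^{2}(\Gamma'; TM)$. The statement that the inverse lies in $\Psi^{0}(\Gamma'; TM)$ follows from spectral invariance on the closed manifold $\Gamma'$ (Theorem \ref{thm.spectral.inv} in the compact case, i.e.\ Beals' theorem).

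The only genuine content sits inside Proposition \ref{prop.Kneq0}. I take it as given, and recall its mechanism in case it must be reproved for $\tau\neq0$:
\begin{itemize}
\item $\frac12+\widehat\bsK(\tau)$ is elliptic with self-adjoint principal symbol (Theorem \ref{thm.jump.K1}), hence Fredholm of index zero on the compact $\Gamma'$. So it suffices to show that $\frac12+\widehat\bsK(\tau)^{*}$ is injective.
\item Take $\bsh'$ in its kernel and set $U=\maS_{\tau}\bsh'$. Then $U\in H^{2}\oplus H^{1}$ on each side of $\Gamma'$ (Proposition \ref{prop2.H2estimates}), and Proposition \ref{prop2.jump.rel}(iv) gives $[\widehat\bop(\tau)U]_{-}=0$.
\item Corollary \ref{cor2.e.est2}, which for $\tau\neq0$ uses only injectivity of $\widehat\Def(\tau)$ and $\widehat\nabla(\tau)$, gives $U=0$ on $\Omega'_{-}$.
\item The no-jump relation $\bsu_{+}=\bsu_{-}=0$ then gives $(\widehat\bop(\tau)U,\bsu)'_{+}=0$, and the same corollary gives $U=0$ on $\Omega'_{+}$.
\item Finally, the jump of $\widehat\bop(\tau)\maS_{\tau}$ across $\Gamma'$ gives $\bsh'=[\widehat\bop U]_{-}-[\widehat\bop U]_{+}=0$.
\end{itemize}

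I expect the main obstacle to be the bookkeeping in the second step. One must ensure that the Fourier transform of the limit of the restricted operator $\bsP_{0}$ really is the restriction of the indicial double layer, including the choice of normal extension $\bsnu$ and of the indicial family convention of Remark \ref{rem.indicial2}. If needed, I would check this on distribution kernels via Proposition \ref{propv1.desc.kInP} together with Theorem \ref{thm.main.jump1}(iv).
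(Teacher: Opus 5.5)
Your proposal is correct and is essentially the paper's argument: the paper likewise reduces to Proposition \ref{prop.Kneq0} for $\In(\bsXi)=\bsXi_{\newtilde V,\newtilde V_{0}}$ on $M'\times\RR$, using that restriction to $\Gamma$ commutes with limit operators and indicial families (Theorem \ref{thm2.indicial.jump1}), and obtains the inverse in $\Psi^{0}(\Gamma';TM)$ from Theorem \ref{thm.spectral.inv}. Your explicit identification of $\widehat\bsK(\tau)$ with the cylinder's indicial double layer, and your recap of the $\tau\neq0$ energy argument via Corollary \ref{cor2.e.est2}, spell out steps that the paper leaves implicit.
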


\begin{proof}
  The proof is the same as that of Proposition \ref{prop2.Ktau=0},
  but using Proposition \ref{prop.Kneq0} instead of Proposition \ref{prop.Ktau=0}.
\end{proof}

We above propositions lead to the following useful corollary.

\begin{corollary}\label{cor2.KFred}
  Let us assume that $M, V$, and $V_{0}$ satisfy the non-vanishing assumptions
  \ref{assumpt.VV0} and let $\bsK \in \Psi^{0}(\Gamma; TM)$ be as in
  Theorem \ref{thm.jump.K1} (as before, see also Equation \eqref{eq.matching.ce2}).
  We furthermore assume that $\newtilde V_{0} \succ 0$ on $\Omega'$, that
  $\newtilde V \succ 0$ on $\Omega_{-}'$, and that $\Omega '$ is the interior of
  $\overline{\Omega'}$. Then $\frac12 + \bsK$ is a Fredholm operator on $L^2(\Gamma ; TM)$
  and its pseudo-inverse $(\frac12 + \bsK)^{(-1)} \in \ePS{0}(\Gamma; TM)$.
\end{corollary}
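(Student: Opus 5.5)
The plan mirrors the proof of Corollary \ref{cor2.SFred}, with $\frac12+\bsK$ in place of $\bsS$. The claim reduces to two ingredients. The first is the ellipticity of $\frac12+\bsK\in\ePS{0}(\Gamma;TM)$. The second is the invertibility of its limit operator $\In(\frac12+\bsK)$ on the cylinder $\Gamma'\times\RR$. Once both are in hand, Theorem \ref{thm.ADN.Fredholm} (order zero case, $\bss=\bst=0$) gives the Fredholm property on $L^2(\Gamma;TM)$. Theorem \ref{thm.spectral.inv} then places the Moore--Penrose pseudo-inverse in $\ePS{0}(\Gamma;TM)$; for $\bss=\bst=0$ no extra ellipticity hypothesis is needed there, but we have it anyway.

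\emph{Step 1 (ellipticity).} By Theorem \ref{thm.jump.K1}, $\sigma_0(\bsK;\xi')$ is self-adjoint. Its nonzero eigenvalues are $\pm\frac{V_0}{2(2V_0+1)}$, which have absolute value $<\frac14$ because $V_0\ge0$ (Assumption \ref{assumpt.VV0}). Hence $\frac12+\sigma_0(\bsK)$ is invertible on $S^*\Gamma$, and this holds uniformly at infinity since $V_0\in\CI_{\inv}$.

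\emph{Step 2 (limit operator).} By Theorem \ref{thm2.indicial.jump1}, the restriction to $\Gamma$ commutes with $\In$, so $\In(\bsK)=\In(\bsP)_0$. Moreover $\In(\bsP)$ is the operator $-2\In(\maA)\Dnu^*+\In(\maB)\bsnu^\sharp$ built from $\In(\bsXi)^{-1}=\bsXi_{\newtilde V,\newtilde V_0}^{-1}$, using multiplicativity of $\In$ (Theorem \ref{thm.complet.indicial}). Thus $\In(\frac12+\bsK)$ is exactly the operator $\frac12+\bsK$ attached to the cylinder $M'\times\RR$, the domain $\Omega'\times\RR$, and the translation invariant potentials $\newtilde V,\newtilde V_0$. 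Under the hypotheses $\newtilde V_0\succ0$ on $\Omega'$, $\newtilde V\succ0$ on $\Omega'_-$, and $\Omega'$ on one side of its boundary, Theorem \ref{thm.cyl.K} gives its invertibility. Equivalently, Propositions \ref{prop2.Ktau=0} and \ref{prop2.Kneq0} make every indicial operator $\frac12+\widehat\bsK(\tau)$ invertible, and Theorem \ref{thm.rem.invert} then yields invertibility of the limit operator.

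\emph{Main obstacle.} The delicate point is Step 2. One must justify that the limit of the layer potential operator equals the layer potential operator of the limit, including the matching of $\bsnu$ and of the boundary $\Gamma'\times\RR$ at infinity. One must also check that the cylinder hypotheses of Theorem \ref{thm.cyl.K} are implied by the present ones. Theorem \ref{thm.cyl.K} requires $\bsXi_{\newtilde V,\newtilde V_0}$ to be invertible, i.e.\ $\newtilde V,\newtilde V_0\succ0$ on $M'$, and this is provided by Assumption \ref{assumpt.VV0}. I would cite Theorem \ref{thm2.indicial.jump1} together with Lemma \ref{lemma.def.indicial} for the first point, noting that $\bsnu$ and $\Gamma$ are translation invariant near infinity. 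With both ingredients in place, Theorem \ref{thm.ADN.Fredholm} and Theorem \ref{thm.spectral.inv} finish the proof.
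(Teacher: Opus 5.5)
Your proposal is correct and follows the same route as the paper. Ellipticity of $\frac12+\bsK$ comes from Theorem~\ref{thm.jump.K1}, and $\In(\frac12+\bsK)$ is identified via Theorem~\ref{thm2.indicial.jump1} with the double layer operator of $\bsXi_{\newtilde V,\newtilde V_0}$ on the cylinder. That operator is invertible by Theorem~\ref{thm.cyl.K}, and Theorems~\ref{thm.ADN.Fredholm} and~\ref{thm.spectral.inv} then give the Fredholm property and the pseudo-inverse. Your explicit eigenvalue bound and the alternative route through the indicial operators (Propositions~\ref{prop2.Ktau=0}, \ref{prop2.Kneq0} and Theorem~\ref{thm.rem.invert}) are consistent elaborations of the same argument, not a different one.
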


\begin{proof} The proof is similar to that of Corollary \ref{cor2.SFred},
  but using Theorem \ref{thm.cyl.K} instead of Theorem \ref{thm.cyl.S}.
  Indeed, the operator $\In(\frac12 + \bsK)$ is the double layer potential
  operator associated to $\In(\bsXi_{V, V_{0}}) = \bsXi_{\newtilde V, \newtilde V_{0}}$,
  by Theorem \ref{thm2.indicial.jump1}.
  The assumptions of the corollary give that the assumptions
  of Theorem \ref{thm.cyl.K} are therefore satisfied by $\In(\frac12 + \bsK)$, which
  is hence invertible. Because $\frac12 + \bsK$ is elliptic, Theorem \ref{thm.ADN.Fredholm}
  gives that $\frac12 + \bsK$ is Fredholm. The last point follows from
  Theorem \ref{thm.spectral.inv}.
\end{proof}

We are ready to prove the invertibility of the double layer potential
operator $\frac12 + \bsK$ on $\Gamma$.

\begin{theorem}\label{thm2.K}
  Let us assume that $M, V$, and $V_{0}$ satisfy Assumption
  \ref{assumpt.VV0} \lpar so, in particular $\newtilde V \succ 0$ on
  $M'$\rpar, that $\Omega$ is connected, and that $\Omega$ is
  the interior of $\overline{\Omega}$. We furthermore assume the following:
  \begin{enumerate}[\rm (i)]
    \item $\newtilde V_{0} \succ 0$ on $\Omega'$,
    \item $\newtilde V \succ 0$
    on $\Omega_{-}'$, and
    \item $V \succ 0$ on $\Omega_{-}$.
  \end{enumerate}
  Let $\bsK \in \ePS{0}(\Gamma; TM)$ be as in
  Theorem \ref{thm.jump.K1} \lpar see also Equation \eqref{eq.matching.ce2}\rpar.
  Then $\frac12 + \bsK $ is invertible on $L^2(\Gamma ; TM)$ and $\big [\frac12 + \bsK \big]^{-1}
  \in \Psi^{0}(\Gamma ; TM)$.
\end{theorem}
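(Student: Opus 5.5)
The plan mirrors the proof of Theorem \ref{thm2.S}, with the roles of $\Omega_{\pm}$ interchanged as in Proposition \ref{prop.Ktau=0}.

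First, Assumption \ref{assumpt.VV0} and Theorem \ref{thm2.main.invert} give that $\bsXi$ is invertible, so all layer potentials are defined. Corollary \ref{cor2.KFred} applies under hypotheses (i) and (ii), so $\frac12 + \bsK$ is Fredholm on $L^2(\Gamma; TM)$. By Theorem \ref{thm.jump.K1} it is elliptic with self-adjoint principal symbol. To get index zero, I would use the homotopy $\frac12 + s\bsK$, $s \in [0,1]$. The principal symbols stay invertible because the eigenvalues of $\sigma_0(\bsK)$ have modulus $<1/4$. For the limit operators, I would argue invertibility of $\frac12 + s\widehat\bsK(\tau)$ along the path. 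If this is awkward, the fallback is to compare $\frac12+\bsK$ with $\frac12+\bsK^*$ using Proposition \ref{prop.S-K}: since $\bsS$ is invertible (Theorem \ref{thm2.S}, whose hypotheses I expect to follow from ours via $V_0\succ 0$ where needed), we get $\frac12 + \bsK = \bsS(\frac12+\bsK^*)\bsS^{-1}$. Hence $\frac12+\bsK$ and its adjoint have equal index, so the index is zero. This index step is the main obstacle; if Theorem \ref{thm2.S}'s conditions on $V_0$ on $\Omega_-$ are not available, I would instead prove injectivity of both $\frac12+\bsK$ and $\frac12+\bsK^*$ directly.

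Once the index is zero, it suffices to show that $\frac12+\bsK^*$ is injective. Take $\bsh \in L^2(\Gamma;TM)$ with $(\frac12+\bsK^*)\bsh=0$. By elliptic regularity, $\bsh$ is smooth in every Sobolev space. Set $U=(\bsu\ \ p)^\top := \maS_{\rm ST}(\bsh)$. By Propositions \ref{prop.H2estimates} and \ref{prop.compatibility}, $U|_{\Omega_\pm}\in H^2\oplus H^1$ and $\bsXi U=0$ on $\Omega_\pm$. Theorem \ref{thm.jump.rel}(iii) then gives $[\bop U]_- = (\frac12+\bsK^*)\bsh = 0$.

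On $\Omega_-$ the boundary term vanishes, so Corollary \ref{cor.e.est}, with $V\succ 0$ on $\Omega_-$ (hypothesis (iii)), gives $\bsu=0$ on $\Omega_-$ and $p$ locally constant there. Then $[\bop U]_- = p_-\bsnu = 0$ forces $p=0$ on each component of $\Omega_-$ touching $\Gamma$. Components not touching $\Gamma$ need separate treatment: there $\bsXi U=0$ holds on a boundaryless region, and I would handle them using $V_0\succ0$ or the decay of $U$ in $L^2$. Next, Theorem \ref{thm.jump.rel}(i) says $\bsu$ has no jump, so $\bsu_+=\bsu_-=0$. Corollary \ref{cor.e.est}(iii)(c) then gives $\bsu=0$ on $\Omega$ and $p$ constant there. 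Since $\Omega$ has a cylindrical end and $p\in L^2$, or alternatively since (i) gives $V_0\not\equiv0$ on $\Omega$, we get $p=0$. Hence $U=0$, and the jump relation yields $\bsh=[\bop U]_- - [\bop U]_+ = 0$.

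Finally, invertibility together with Theorem \ref{thm.spectral.inv} gives $[\frac12+\bsK]^{-1}\in\ePS{0}(\Gamma;TM)$.
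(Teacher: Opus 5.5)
Your injectivity argument for $\frac12+\bsK^{*}$ is exactly Steps 2--5 of the paper's proof, and it is correct. The single layer $U=\maS_{\rm{ST}}(\bsh)$ vanishes on $\Omega_-$ by $[\bop U]_-=0$, hypothesis (iii) and Corollary~\ref{cor.e.est}. It then vanishes on $\Omega$ by $\bsu_+=\bsu_-=0$ and $V_0\not\equiv 0$ there, and so $\bsh=[\bop U]_- - [\bop U]_+=0$. (Components of $\Omega_-$ not touching $\Gamma$ cannot occur: since $M$ is connected, each component of $\Omega_-$ has nonempty boundary, and that boundary lies in $\Gamma$.)

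The gap is the index-zero step, which none of your three routes closes.
\begin{itemize}
\item The homotopy $\frac12+s\bsK$ proves nothing on a manifold with cylindrical ends. The family is Fredholm only for those $s$ at which $\frac12+s\widehat\bsK(\tau)$ is invertible for every $\tau$, and the index can jump elsewhere. You have no way to control $\frac12+s\widehat\bsK(\tau)$ for $0<s<1$: the jump-relation argument of Proposition~\ref{prop.Ktau=0} uses that $\widehat\bsK(\tau)$ comes from an actual layer potential, and $s\widehat\bsK(\tau)$ does not.
\item The fallback through Theorem~\ref{thm2.S} needs $\newtilde V_0\succ 0$ on $\Omega_-'$ and $V_0\succ 0$ on $\Omega_-$, which are not hypotheses here.
\item The natural direct proof of $\ker(\frac12+\bsK)=0$ via $\maD_{\rm{ST}}(\bsh)$ needs $\bop\maD_{\rm{ST}}(\bsh)$ to have no jump across $\Gamma$. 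The paper obtains this only in Theorem~\ref{jump-conormal-dl}, which comes downstream of this theorem.
\end{itemize}

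The paper's Step 1 combines Proposition~\ref{prop.S-K} with the logarithmic property of the index. From $(\frac12+\bsK)\bsS=\bsS(\frac12+\bsK^{*})\colon L^2\to H^1$ it gets $\operatorname{ind}_{H^1}(\frac12+\bsK)=\operatorname{ind}_{L^2}(\frac12+\bsK^{*})=-\operatorname{ind}_{L^2}(\frac12+\bsK)$. It then uses that the index does not depend on the Sobolev space (Corollary 15.4.18 of \cite{KMNW-2025}). Your conjugation $\bsS(\frac12+\bsK^{*})\bsS^{-1}$ tacitly needs this too, since it acts on $H^1$. The paper's argument requires only that $\bsS$ be Fredholm, not invertible.

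Be aware, though, that Fredholmness of $\bsS$ is itself guaranteed only by Corollary~\ref{cor2.SFred}, i.e.\ when $\newtilde V_0\succ 0$ on $\Omega_-'$. Suppose $\newtilde V_0\equiv 0$ on a component $C$ of $\Omega_-'$. Then $\widehat\bsXi(0)(0\ \ \mathbf{1}_C)^{\top}=(\pm\bsnu\mathbf{1}_{\pa C}\ \ 0)^{\top}\otimes\delta_{\Gamma'}$. Hence $\widehat{\bsS\,}(0)(\bsnu\mathbf{1}_{\pa C})=0$, and $\In(\bsS)$ is not invertible.

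One way to close the gap under the stated hypotheses is to deform $V_0$ to $V_0+s\phi$, $s\in[0,1]$. Take $\phi\ge 0$ supported in $\Omega_-$ away from $\Gamma$, with $\newtilde\phi\succ 0$ on $\Omega_-'$ and $\phi\not\equiv 0$ on every component of $\Omega_-$.
\begin{itemize}
\item All hypotheses persist along the path, so $\frac12+\bsK_s$ is Fredholm by Corollary~\ref{cor2.KFred}.
\item Since $\phi=0$ near $\Gamma$, the resolvent identity gives $\|\bsK_s-\bsK_{s'}\|=O(|s-s'|)$ on $L^2(\Gamma;TM)$, so the index is constant in $s$.
\item At $s=1$, Theorem~\ref{thm2.S} makes $\bsS$ invertible, and your conjugation argument gives index zero.
\end{itemize}
Alternatively, add these two conditions on $V_0$ to the statement. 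Nothing is lost in Theorem~\ref{thm.main.WP}, where $V_0$ may be modified on $\Omega_-$ just as $V$ is.
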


\begin{proof}
  We split our proof into five steps.
  \smallskip

  \noindent {\bf Step 1\ ($\frac12 + \bsK$ has index zero).}\
  Corollary \ref{cor2.KFred} implies that $\frac12 + \bsK$ is a Fredholm operator.
  Proposition \ref{prop.S-K} then gives that $\frac12 + \bsK$ has index zero.
  Indeed, for a Fredholm operator $T : X \to Y$, let
  $\operatorname{ind}_{X}(T)$ be its index. Proposition \ref{prop.S-K} implies the relation
  $\big(\frac12 + \bsK\big) \bsS \seq \bsS\big(\frac12+ \bsK^*\big) :
  L^{2}(M; TM) \to H^{1}(M; TM)$
  and the properties of the index give that
  \begin{equation*}
    \operatorname{ind}_{H^{1}} \Big(\frac12 + \bsK\Big)
    + \operatorname{ind}_{L^{2}}(\bsS) \seq
    \operatorname{ind}_{L^{2}} (\bsS)
    + \operatorname{ind}_{L^{2}}\Big(\frac12+ \bsK^*\Big)\,,
  \end{equation*}
  and hence $\operatorname{ind}_{H^{1}} \big(\frac12 + \bsK\big) =
  \operatorname{ind}_{L^{2}}\big(\frac12+ \bsK^*\big)$.
  On the other hand, Corollary 15.4.18 in \cite{KMNW-2025} implies that
  the indices above do not depend on the
  corresponding spaces. Thus, we have
  $\operatorname{ind}_{L^{2}}\big(\frac12+ \bsK^*\big) =
  - \operatorname{ind}_{L^{2}}\big(\frac12+ \bsK\big) =
  - \operatorname{ind}_{H^{1}}\big(\frac12+ \bsK\big)$, and hence
  $\operatorname{ind}_{H^{1}} \big(\frac12 + \bsK\big) =
  \operatorname{ind}_{L^{2}} \big(\frac12+ \bsK^*\big) =
  - \operatorname{ind}_{H^{1}} \big(\frac12 + \bsK\big)$.
  Therefore $\operatorname{ind}_{H^{1}} \big(\frac12 + \bsK\big) = \operatorname{ind}_{L^{2}}\big(\frac12+ \bsK\big) = 0$, as claimed.

  To complete the proof of our result, it is enough to determine the
  image of $\frac12 + \bsK$. This property will be obtained in the
  next \emph{four steps}.
  \smallskip

  \noindent {\bf Step 2 (Single layer $U := \maS_{\rm{ST}}(\bsh)$).}\
  To prove that $\frac12 + \bsK$ is onto, it suffices to show that 
  $\ker (\frac12 + \bsK^{*}) = 0$.
  To this end, let $\bsh \in L^{2}(\Gamma ; TM)$ be such that $(\frac12 + \bsK^{*})\bsh = 0$.
  The ellipticity of $\frac12 + \bsK^{*}$, the fact that $\Gamma$ has straight cylindrical ends, 
  and
  the elliptic regularity property (see Theorem \ref{thm.regularity} or 
  \cite[Theorem 15.3.30]{KMNW-2025}
  or \cite[Proposition 6.5]{KNW-22}) imply that $\bsh \in H^{s}(\Gamma; TM)$   
  for all $s \in \RR$. Let
  now the \emph{single layer potential} $\maS_{\rm{ST}}\bsh $. In view of Proposition 
  \ref{prop.H2estimates}
  we obtain that the restrictions of $U$ to $\Omega_{+} := \Omega$ and to $\Omega_{-}$ 
  satisfy
  \begin{equation}\label{eq.needed.est-3}
    U \ede (\, \bsu \ \ p\, )^{\top} \ede \maS_{\rm{ST}}(\bsh)
    \in H^{2}(\Omega_{\pm}; TM) \oplus H^{1}(\Omega_{\pm})\,.
  \end{equation}
  We next study the restrictions of $U$ on both domains $\Omega_{+}$ and $\Omega_{-}$.
  \smallskip

  \noindent {\bf Step 3 (Study of $U$ on $\Omega_{-}$).}\
  Let $U := \maS_{\rm{ST}}(\bsh)$ be as in the previous step.
  Proposition \ref{prop.compatibility} implies that $\bsXi U = 0$ in $\Omega_{-}$, 
  and Theorem \ref{thm.jump.rel} gives that
  $[\bop U]_{-} = (\frac12 + \bsK^{*})\bsh = 0$.
  (Recall that $[\bop U]_{-}$ is the trace at $\Gamma$ of $\bop U$ from the domain
  $\Omega_{-}$). Because $V$ does not vanish identically in any connected component of
  $\Omega_{-}$, we obtain $\bsu = 0$ in $\Omega_{-}$, by Corollary \ref{cor.e.est}.
  The same corollary gives that $p$ is constant
  in all connected components of $\Omega_{-}$.
  Moreover, the relation
  \begin{equation*}
    0 \seq [\bop U]_{-} \ede  [-2 \Dnu \bsu + p \bsnu]_{-} \seq p_{-} \bsnu
  \end{equation*}
  on $\Gamma$ gives that $p_{-} = 0$, and hence $p = 0$ on $\Omega_{-}$, since
  $p$ is constant in all connected components of $\Omega_{-}$.
  \smallskip

  \noindent {\bf Step 4 (Study of $U$ on $\Omega_{+}$).}\
  Let $U = (\, \bsu \ \ p\, )^{\top} := \maS_{\rm{ST}}(\bsh)$, as in the previous
  two steps. Recall that $\bsXi U = 0$ in $M \smallsetminus \Gamma$.
  Theorem \ref{thm.jump.rel} gives
  the ``no-jump relation'' $\bsu_{+} = \bsu_{-} = 0$ at the boundary
  $\Gamma := \partial \Omega$ of $\Omega$ (interior and exterior traces).
  The fact that $\bsu_{+} = 0$ at $\Gamma$ allows us to use
  Corollary \ref{cor.e.est} that gives then that $\bsu = 0$ in $\Omega$.
  Because $\Omega$ is connected and $\newtilde V_{0} \succ 0$ on $\Omega'$,
  we also have $V_{0} \succ 0$ on $\Omega$. Therefore, the same corollary
  implies that $p$ is constant on $\Omega$. Because $V_{0}$ does not vanish identically
  on $\Omega$ (as we have just noticed) we even obtain that
  $p = 0$ in $\Omega $ and $p_{+}=0$. Recalling that we have
  already proved that $\bsu = 0$ in $\Omega_{-}$, we see that
  $\bsu = 0$ in $M \smallsetminus \Gamma$ and hence
  $\Dnu \bsu = 0$ in $M \smallsetminus \Gamma$.
  \smallskip

  \noindent {\bf Step 5 (Injectivity).}\
  The definitions of $\bop$ and $U = (\, \bsu \ \ p\, )^{\top}
  := \maS_{\rm{ST}}(\bsh)$ then give
    $\bop U := -2 \Dnu \bsu + p \bsnu \seq p \bsnu$
  on $M \smallsetminus \Gamma$, and
  \begin{equation}\label{eq.p.limits}
    \bsh \seq [\bop \maS_{\rm{ST}}(\bsh)]_{-}
    - [\bop \maS_{\rm{ST}}(\bsh)]_{+} \seq (p_{-} - p_{+}) \bsnu \seq 0\,.
  \end{equation}
  Therefore the operator $\frac12 + \bsK^{*}:L^2(\Gamma ;TM)\to L^2(\Gamma ;TM)$
  is injective implying that the Fredholm operator of index zero
  $\frac12 + \bsK : L^{2}(\Gamma ; TM) \to L^{2}(\Gamma ; TM)$ is onto and hence
  an isomorphism, as asserted.

  The fact that $(\frac12 + \bsK)^{-1} \in \Psi^{0}(\Gamma; TM)$ follows from Theorem
  \ref{thm.spectral.inv} (see Theorem 7.4 in \cite{KNW-22} or Theorem 15.4.11 in
  \cite{KMNW-2025}). This completes the proof.
\end{proof}

In all the results above, the assumptions that $M$ and $\Omega$ are
connected do not really decrease the generality. Note, however, that we
are not assuming $\Omega_{-} := M \smallsetminus \overline{\Omega}$, $\Omega'$,
or $M'$ to be
connected.

\section{\cn The Dirichlet problem for the generalized Stokes system via
layer potentials}

In this section, we present two applications of the results of the previous section.
First, we obtain the well-posedness result of the Dirichlet problem for the generalized Stokes
system on a smooth domain $\Omega$ with straight cylindrical ends. Then, we use this
well-posedness result to show the no-jump property of the co-normal derivative
$\bop \maD_{\rm{ST}}(\bsh)$ across the boundary of the domain $\Omega$.

\subsection{The homogeneous Stokes system}
\label{ssec.WP}

The invertibility of each of the operators $\frac12 + \bsK$ and $\bsS$
gives the following well-posedness result for the modified
Stokes system with Dirichlet boundary condition, problem \eqref{eq.bvp.Brinkman}
(see Equation \eqref{eq.bvp.Brinkman.explicit} for a more explicit form
of this problem). This result is known for the usual Stokes operator (i.e. when
$V = 0$ and $V_{0} = 0$, see \cite[Proposition 10.5.1, Theorem 10.6.2]{M-W} in
the case of a bounded Lipschitz domain in $\RR^n$, $n\geq 2$, and
\cite[Theorem 5.1]{D-M} in the case of a $C^1$ domain on a compact manifold,
see also \cite{KMNW-2025} for further related applications).
We consider the \emph{homogeneous Dirichlet problem}
\begin{align}\label{eq.Dirichlet.pr}
  \bsXi U \ede \bsXi_{V, V_{0}} U \seq 0 \ \mbox{ in } \ \Omega  \quad \mbox{and}
  \quad \bsu \seq \bsf \ \mbox{ on } \ \Gamma \ede \pa \Omega\,.
\end{align}

The following result gives the well-posedness of this problem and the
representability of its solutions \emph{under global assumptions on $M$} for
$V$ and $V_{0}$. Recall Assumption \ref{assumpt.VV0}.

\begin{proposition}\label{prop.main.WP}
  Let $M$, $V$, and $V_{0}$ satisfy Assumption \ref{assumpt.VV0} \lpar so, in particular, $M$
  is has straight cylindrical ends\rpar\ and let $\Omega \subset M$ be a compatible smooth
  domain \lpar so $\Omega$ also has straight cylindrical ends, see Equation 
  \eqref{eq.def.Omega}\rpar.
  We assume that $V_{0} \succ 0$ on $\Omega$, that $\newtilde V_{0} \succ 0$ on $\Omega'$, and
  that $\Omega $ is on one side of its boundary $\Gamma := \partial {\Omega }$. We further assume
  one of the following conditions:
  \begin{enumerate}[\rm (a)]
    \item $V \succ 0$ on $\Omega_{-}$ and $\newtilde V \succ 0$ on $\Omega_{-}'$ or
    \item $V_{0} \succ 0$ on $\Omega_{-}$ and $\newtilde V_{0} \succ 0$ on $\Omega_{-}'$.
  \end{enumerate}
  Then, for every $m \in \ZZ_{+}$ and any $\bsf \in H^{m+1/2}(\Gamma; TM)$,
  the homogeneous Diriechlet problem \eqref{eq.Dirichlet.pr}
  has a unique solution $U = (\bsu \ \ p)^{\top} \in H^{m+1}(\Omega; TM) \oplus
  H^{m}(\Omega)$. Moreover, there exists a constant $C_{m} \ge 0$ such that,
  for any $\bsf \in H^{m+1/2}(\Gamma; TM)$, the unique solution $U$ of this problem satisfies
  \begin{equation*}
    \|\bsu\|_{H^{m+1}(\Omega; TM)} + \|p\|_{H^{m}(\Omega)} \le C_{m}
    \|\bsf\|_{H^{m+1/2}(\Gamma; TM)}\,.
  \end{equation*}
  Furthermore, according to which assumption is satisfied {\rm ((a) or (b))}, the solution
  $U$ can be represented by at least one of the formulas
  \begin{itemize}
    \item $U \ede \maD_{\rm{ST}}\Big(\Big(\frac{1}{2} + \bsK\Big)^{-1}\bsf\Big)$,
    if {\rm (a)} is satisfied, respectively
    \item $U \ede \maS_{\rm{ST}}\left(\bsS^{-1}\bsf\right)$, if {\rm (b)}
    is satisfied.
  \end{itemize}
\end{proposition}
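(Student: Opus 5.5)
The plan is to split the argument into existence via layer potentials, regularity/estimates via the mapping properties of the potentials, and uniqueness via the energy identity. Uniqueness does not depend on (a) or (b). Let $U = (\bsu\ \ p)^{\top} \in H^{m+1}(\Omega;TM)\oplus H^{m}(\Omega)$ solve \eqref{eq.Dirichlet.pr} with $\bsf = 0$.

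\textbf{Uniqueness.} First we upgrade regularity. For $m=0$, local \ADN\ regularity at the boundary for the Dirichlet Stokes problem gives $U\in H^{2}\oplus H^{1}$ near any compact piece of $\Gamma$. Translation invariance at infinity makes the estimates uniform, so $U\in H^{2}(\Omega;TM)\oplus H^{1}(\Omega)$ globally; this may also be obtained from the representation below once existence is known. Then $\bsu_{+}=0$, hence $(\bop U,\bsu)'=0$. Corollary \ref{cor.e.est}(iii)(c) gives $\bsu=0$, and since $V_{0}\succ 0$ on $\Omega$, Corollary \ref{cor.e.est}(ii) gives $p=0$.

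\textbf{Existence under (a).} Under (a) the hypotheses of Theorem \ref{thm2.K} hold: Assumption \ref{assumpt.VV0}, $\newtilde V_{0}\succ0$ on $\Omega'$, $\newtilde V\succ 0$ on $\Omega_{-}'$, and $V\succ0$ on $\Omega_{-}$. Hence $\frac12+\bsK$ is invertible and $(\frac12+\bsK)^{-1}\in\ePS{0}(\Gamma;TM)$, so it is bounded on every $H^{s}(\Gamma;TM)$. Set $\bsh:=(\frac12+\bsK)^{-1}\bsf\in H^{m+1/2}(\Gamma;TM)$ and $U:=\maD_{\rm{ST}}(\bsh)$. By Proposition \ref{prop.compatibility}, $\bsXi U=0$ in $\Omega$. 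By Proposition \ref{prop.H2estimates} with $s=m+\frac12$, $\bsu=\maW_{\rm{ST}}(\bsh)|_{\Omega}\in H^{m+1}$ and $p=\maQ_{\rm{ST}}(\bsh)|_{\Omega}\in H^{m}$, with norms bounded by $C\|\bsh\|_{H^{m+1/2}}\le C'\|\bsf\|_{H^{m+1/2}}$. Theorem \ref{thm.jump.K1} gives $\bsu_{+}=(\frac12+\bsK)\bsh=\bsf$. The identification of normal lateral limits with traces (Theorem \ref{thm.main.jump1}(vii)) is valid for $m\ge 1$. For $m=0$ we approximate $\bsf$ by smooth data and pass to the limit using the continuity of the trace from $H^{1}$.

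\textbf{Existence under (b).} Here we use Theorem \ref{thm2.S}. Its hypotheses are $V_{0}\succ 0$ on $M\smallsetminus\Gamma=\Omega\cup\Omega_{-}$ and $\newtilde V_{0}\succ0$ on $M'\smallsetminus\Gamma'=\Omega'\cup\Omega'_{-}$. These follow from (b) together with the standing assumptions on $\Omega$ and $\Omega'$, since $\succ$ is a componentwise condition. Then $\bsS^{-1}\in\ePS{1}$ maps $H^{m+1/2}\to H^{m-1/2}$. Set $\bsh:=\bsS^{-1}\bsf$ and $U=\maS_{\rm{ST}}(\bsh)$. Proposition \ref{prop.H2estimates} with $s=m-\frac12$ gives $\bsu\in H^{m+1}$ and $p\in H^{m}$ with the required estimate, and Theorem \ref{thm.jump.rel}(i) gives $\bsu_{+}=\bsS\bsh=\bsf$.

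The main obstacle is the bookkeeping at low regularity, $m=0$. There we must make sure the normal lateral limits coincide with Sobolev traces and that uniqueness applies to $H^{1}\oplus L^{2}$ solutions. The Green formula (Proposition \ref{prop.Green}) needs $H^{2}\oplus H^{1}$, so I would close that gap with density combined with boundary elliptic regularity.
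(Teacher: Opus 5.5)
Your proposal is correct and follows essentially the same route as the paper. Uniqueness comes from Corollary \ref{cor.e.est}, using $\bsu_{+}=0$ and $V_{0}\succ 0$ on $\Omega$. Existence comes from $U=\maD_{\rm{ST}}((\frac12+\bsK)^{-1}\bsf)$ via Theorem \ref{thm2.K} under (a), or from $U=\maS_{\rm{ST}}(\bsS^{-1}\bsf)$ via Theorem \ref{thm2.S} under (b), with the estimates given by the mapping properties of the potentials and the boundary values by the jump relations.

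Your extra care at $m=0$, upgrading the regularity before invoking the Green formula, addresses a point the paper passes over silently. The approximation step in (a) is unnecessary, however, because Theorem \ref{thm.main.jump1}(vii) already applies with $s=m+\frac12>0$.
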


\begin{proof}
  Let us check that the solution $U$ is unique. To this end, let $U_{0} = (\bsu_{0} \ \
  p_{0})^{\top}$ satisfies the homogeneous Stokes system in $\Omega$ (i.e., $\bsXi U_{0}
  = 0$) and the homogeneous Dirichlet condition on $\Gamma := \pa \Omega$ (i.e., $\bsu_{0}
  = 0$ on $\Gamma$). Because $V_{0} \not \equiv 0$ on $\Omega$ and $\bsu_{0} = 0$ on
  $\pa \Omega$, Corollary \ref{cor.e.est} then ensures that $U_{0} = 0$
  in $\Omega $, which implies the desired uniqueness.

  Our assumption {\rm (a)} implies that the operator $\frac12 + \bsK$ is invertible on
  $L^2(\Gamma ; TM)$ and $\big [\frac12 + \bsK \big]^{-1} \in
  \ePS{0}(\Gamma ; TM)$, by Theorem \ref{thm2.K}. The mapping properties of the
  ``$\ess$-calculus'' then gives that $\bsh := (\frac12 + \bsK)^{-1} \bsf \in
  H^{m+1/2}(\Gamma ;TM)$.
  Then $U := \maD_{\rm{ST}}(\bsh)$ satisfies the required
  properties, including its boundedness with respect to the given datum $\bsf$, with
  a constant $C_m$ given by $\| \maD_{\rm{ST}} (\frac12 + \bsK)^{-1}\|$.

  If, on the other hand, condition {\rm (b)} is satisfied, then the operator $\bsS$ is
  invertible on $L^2(\Gamma ; TM)$ and $\bsS^{-1} \in \ePS{1}(\Gamma ; TM)$,
  by Theorem \ref{thm2.S}. The mapping properties of the ``$\ess$-calculus'' then imply that
  $\bsh := \bsS^{-1}\bsf \in H^{m - 1/2}(\Gamma ; TM)$.
  Theorem \ref{thm.mapping.lp} then gives
  \begin{equation*}
    U \ede \maS_{\rm{ST}}(\bsh) \ede \maS_{\rm{ST}}(\bsS^{-1}\bsf)
    \in H^{m+1}(M; TM) \oplus H^{m}(M)\,,
  \end{equation*}
  and the estimate
  \begin{equation*}
    \|\bsu\|_{H^{m+1}(\Omega; TM)} + \|p\|_{H^{m}(\Omega)} \le
    \| \maS_{\rm{ST}} \bsS^{-1}\| \|\bsf\|_{H^{m+1/2}(\Gamma; TM)}\,.
  \end{equation*}
  Moreover, Proposition \ref{prop.compatibility} gives that $\bsXi U = 0$ in $\Omega$, and
  \begin{equation*}
    \bsu \vert_{\Gamma} \ede \big[ \maV_{\rm{ST}}(\bsh) \big]_{+} \seq
    \bsS\bsh \ede \bsS \bsS^{-1} \bsf \seq \bsf\,.
  \end{equation*}
  Hence $U$ satisfies all required properties. This completes the proof of the theorem.
\end{proof}

This proposition then allows us to prove the main result of this paper.

\begin{proof}[Proof of Theorem \ref{thm.main.WP}]
  Let us first prove the uniqueness result stated in Theorem \ref{thm.main.WP}.
  To this end, let $U_{0} = (\bsu_{0} \ \ p_{0})^{\top}$ be the difference of two solutions
  of our Dirichlet problem, i.e. $U_{0}$ satisfies the homogeneous Stokes system in $\Omega $
  and the homogeneous Dirichlet condition on $\pa \Omega$.
  Because $V_{0} \succ 0$ on $\Omega$
  and $\bsu_{0} = 0$ on $\pa \Omega$,
  Corollary \ref{cor.e.est} gives that $U_{0} = 0$ in $\Omega $, which implies the desired
  uniqueness.

  Next we prove the existence part of Theorem \ref{thm.main.WP}.
  Given the potential $V$ satisfying the assumptions of this theorem, we can modify it
  on $\Omega_{-}$ such that $\newtilde V \succ 0$ on $\Omega_{-}'$ and $V \succ 0$ on $\Omega_{-}$.
  Then our assumptions imply that the operator $\frac12 + \bsK $ is
  invertible on $L^2(\Gamma ; TM)$ and $\big [\frac12 + \bsK \big]^{-1}
  \in \Psi^{0}(\Gamma ; TM)$, by Theorem \ref{thm2.K}. Moreover, the elliptic
  regularity of the operator $\frac12 + \bsK $ (Theorem \ref{thm.jump.K1}) implies that
  $\bsh := (\frac12 + \bsK)^{-1} \bsf \in
  H^{m+1/2}(\Gamma ;TM)$.
  Then $U_1 := \maD_{\rm{ST}}(\bsh)$ satisfies the required
  properties, including its boundedness with respect to the given datum $\bsf$, with
  a constant $C_m$ given by $\| \maD_{\rm{ST}} (\frac12 + \bsK)^{-1}\|$.
  This completes the proof.
\end{proof}

The analogous result for the Laplace operator was proved in \cite{Mitrea-Nistor}, but
using a different method.

\begin{remark}
We notice that the solution $U$ of the above theorem can be represented
either by a single or as a double layer potential, depending on how
we modify $V$ and $V_{0}$ on $\Omega_{-}$.
Indeed, modifying $V$ on $\Omega_{-}$ such that $\newtilde V \succ 0$ on $\Omega_{-}'$ and
$V \succ 0$ on $\Omega_{-}$, then the assumptions of Theorem \ref{thm2.K} are satisfied and
the solution $U$ can be represented as a double layer potential
\begin{equation*}
    U \ede \maD_{\rm{ST}}\Big(\Big(\frac{1}{2} + \bsK\Big)^{-1}\bsf\Big)\,.
\end{equation*}
On the other hand, modifying $V$ and $V_{0}$ on $\Omega_{-}$ such that the conditions of
Theorem \ref{thm2.S} be satisfied, the solution $U$ can be represented as a single layer
potential
\begin{equation*}
    U \ede \maS_{\rm{ST}}\left(\bsS^{-1}\bsf\right)\,.
\end{equation*}
\end{remark}

\begin{remark}
The constant $\mathcal C_m$ in Theorem \ref{thm.main.WP} is independent of the given datum
$\bsf \in H^{m+1/2}(\Gamma; TM)$. Indeed, it depends only on the norms of the operators
$\maD_{\rm{ST}}$ and $\big [\frac12 + \bsK \big]^{-1}$ and can be chosen the product of them.
See also \cite{Al-Am-1} and \cite{Al-Am}
for other results related to the Stokes operator on unbounded domains.
\end{remark}

\subsection{The non-homogeneous Stokes system}
Let us now prove the well-posedness result for the non-homogeneous problem
using Theorems \ref{thm.main.WP} and \ref{thm2.main.invert}.


\begin{theorem}
\label{thm.main.WP-non-1}
  Let the assumptions of Theorem \ref{thm.main.WP} hold. Then, for every $m \in \ZZ_{+}$,
  there exists a constant ${\mathcal C}_{m} \in (0,\infty )$ such that,
  for all $\bsf \in H^{m+1/2}(\Gamma; TM)$, $r \in H^{m-1}(\Omega )$, and $\bsh \in H^{m-1}(\Omega ; TM)$, the
  \emph{non-homogeneous Dirichlet problem}
  \begin{equation}\label{eq.nhom.pr}
    \bsXi U \ede \bsXi_{V, V_{0}} U \seq \cvector{\bsh}{r} \ \mbox { in } \Omega\,, \
    \mbox{ and }\ \bsu \vert_{\Gamma} \seq \bsf\,,
  \end{equation}
  has a unique solution $U = (\bsu \ \ p)^{\top} \in H^{m+1}(\Omega; TM) \oplus H^{m}(\Omega)$
  and this solution satisfies
  \begin{equation*}
    \|\bsu\|_{H^{m+1}(\Omega; TM)} + \|p\|_{H^{m}(\Omega)} \le {\mathcal C}_{m}
    \left(\|\bsh\|_{H^{m-1}(\Omega; TM)}+\|r\|_{H^{m-1}(\Omega)}
    +\|\bsf\|_{H^{m+1/2}(\Gamma; TM)}\right)\,.
  \end{equation*}
\end{theorem}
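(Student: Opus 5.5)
The plan is to reduce the non-homogeneous problem \eqref{eq.nhom.pr} to the homogeneous one, Theorem \ref{thm.main.WP}. First I would extend the data $(\bsh, r)$ from $\Omega$ to all of $M$ by a bounded (Stein/Seeley-type) extension operator $E : H^{m-1}(\Omega) \to H^{m-1}(M)$, valid because $\Omega$ has straight cylindrical ends. The extension operator can be built from a finite atlas on the compact part together with a translation-invariant construction on $\Omega' \times (-\infty, 0)$.

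Since Theorem \ref{thm.main.WP} allows $V$ and $V_{0}$ to be modified on $\Omega_{-}$ without changing the problem in $\Omega$, I would assume that Assumption \ref{assumpt.VV0} holds on $M$. Theorem \ref{thm2.main.invert} then makes $\bsXi$ invertible on $M$ with $\bsXi^{-1} = \psdinv \in \ePS{[-\bss-\bst]}(M; TM\oplus\CC)$. I set
\begin{equation*}
  U_{1} \ede \bsXi^{-1}\cvector{E\bsh}{Er}\,, \qquad W \ede U - U_{1}\vert_{\Omega}\,.
\end{equation*}
The remainder $W$ must solve $\bsXi W = 0$ in $\Omega$ with boundary datum $\bsf - (\bsu_{1})_{+}$. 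Theorem \ref{thm.main.WP} (via Proposition \ref{prop.main.WP}) then supplies $W$ together with the estimate in terms of $\|\bsf - (\bsu_{1})_{+}\|_{H^{m+1/2}(\Gamma;TM)}$, and the trace theorem bounds $(\bsu_{1})_{+}$ by the norm of $\bsu_{1}$ in $H^{m+1}$.

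Uniqueness is immediate. The difference of two solutions solves the homogeneous problem with zero Dirichlet data, and Corollary \ref{cor.e.est} (using $V_{0}\succ 0$ on $\Omega$ and $\bsu = 0$ on $\Gamma$) forces it to vanish. The estimate then follows by adding the bounds for $U_{1}$ and $W$, once the regularity of $U_{1}$ is settled.

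The main obstacle is exactly the regularity of $U_{1}$ when $r$ is only in $H^{m-1}$. By Proposition \ref{prop.invert.inverse}, $\bsu_{1} = \maA E\bsh + \maB Er$ and $p_{1} = \maC E\bsh + \maD Er$, with $\maA$ of order $-2$, $\maB$ and $\maC$ of order $-1$, and $\maD$ of order $0$. By Theorem \ref{thm.prop.ePS}(ii) this gives directly only $\bsu_{1} \in H^{m}$ and $p_{1} \in H^{m-1}$. For the pressure I would bootstrap using the first equation: $\nabla p_{1} = E\bsh - (\bsL+V)\bsu_{1}$, so $p_{1}\in H^{m-1}$ and $\nabla p_{1}\in H^{m-1}$ would together give $p_{1}\in H^{m}$ by the characterization of Sobolev spaces (Corollary \ref{cor.char.Sobolev}), provided $\bsu_{1} \in H^{m+1}$. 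Everything therefore hinges on showing that the term $\maB Er$ gains two derivatives in the velocity.

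For this step, the first thing I would try is to exploit the symbol $\sigma_{-1}(\maB;\xi) = \imath(2V_{0}+1)^{-1}|\xi|^{-2}\xi^{\sharp}$, which is a gradient-type symbol. Concretely, I would aim to rewrite $\maB = \maA'\nabla + R$, with $\maA'$ of order $-2$ and $R$ of lower order, and then use the divergence equation $\nabla^{*}\bsu_{1} = r + V_{0}p_{1}$ to trade a derivative. If this algebraic route fails, I would work at the ADN level instead. I would apply the regularity result, Theorem \ref{thm.regularity}, with suitably shifted orders (choosing weights $\bst$ so that $\bsu \in H^{m+1}$ and $p\in H^{m}$ are the natural target spaces for data $(\bsh, r)$ of this type), and combine it with the gain for $p$ coming from the first equation described above.
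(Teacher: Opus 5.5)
The step you single out as the main obstacle cannot be closed: not by rewriting $\maB=\maA'\nabla+R$, not by re-weighting the Douglis--Nirenberg orders, not by any other device. With $r$ only in $H^{m-1}(\Omega)$, the statement is false. If $U=(\bsu\ \ p)^{\top}\in H^{m+1}(\Omega;TM)\oplus H^{m}(\Omega)$ solves \eqref{eq.nhom.pr}, then $r=\nabla^{*}\bsu-V_{0}p\in H^{m}(\Omega)$. This holds because $\nabla^{*}$ is a first-order operator with coefficients translation invariant at infinity and $V_{0}\in\CI_{\inv}(M)$. Hence for $r\in H^{m-1}(\Omega)\smallsetminus H^{m}(\Omega)$ there is no solution in the asserted space at all.

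At the operator level the obstruction is the following. The $(2,2)$ entry of $\bsXi\,\psdinv=1$ is $\nabla^{*}\maB=1+V_{0}\maD$, an elliptic operator of order $0$ with principal symbol $(2V_{0}+1)^{-1}$. So $\maB Er\in H^{m+1}(M;TM)$ would force $Er\in H^{m}(M)$ by Theorem \ref{thm.regularity}. Thus $\maB$ gains exactly one derivative on such data. Your rewrite only gives $\maA'\nabla Er\in H^{m}$, and the divergence equation only gives $\nabla^{*}\bsu_{1}\in H^{m-1}$.

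What you have run into is a misprint in the statement. The paper's own proof takes $r\in H^{m}(\Omega)$: it extends $r$ to $R\in H^{m}(M)$ with $\|R\|_{H^{m}(M)}\le C\|r\|_{H^{m}(\Omega)}$, and the final estimate should carry $\|r\|_{H^{m}(\Omega)}$. With that hypothesis you use a bounded extension $H^{m}(\Omega)\to H^{m}(M)$ for $r$. The isomorphism $\bsXi : H^{m+1}(M;TM)\oplus H^{m}(M)\to H^{m-1}(M;TM)\oplus H^{m}(M)$ of Theorem \ref{thm2.main.invert} (Sobolev index $m$ there) then puts $U_{1}$ in $H^{m+1}(M;TM)\oplus H^{m}(M)$ immediately, so the bootstrapping paragraph is unnecessary. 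The rest of your outline then coincides with the paper's proof: subtracting $U_{1}$, solving the homogeneous problem with datum $\bsf-(\bsu_{1})_{+}$ by Theorem \ref{thm.main.WP} together with the trace bound, and getting uniqueness from the homogeneous case.
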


\begin{proof}
  There exists $C_{1, m} \ge 1$ such that, for all $\bsh \in H^{m-1}(\Omega ; TM)$ and
  $\bsh \in H^{m}(\Omega )$
  there are ${\boldsymbol H}\in H^{m-1}(M; TM)$ and $R\in H^{m}(M)$ such that
  $\boldsymbol H|_{\Omega } \seq \bsh$, $R|_{\Omega }=r$, and
  \[\|\boldsymbol H\|_{H^{m-1}(M)} + \|R\|_{H^{m}(M)}\le C_{1, m}\big( \|\bsh\|_{H^{m-1}(\Omega; TM)}
  +\|r\|_{H^{m}(\Omega)}\big).\]
  Then the invertibility of the generalized Stokes operator
  $\bsXi \ede \bsXi_{V, V_{0}}$, Theorem \ref{thm2.main.invert}, implies that there exists a unique
  $U_1 =  (\bsu _1 \ \ p _1)^{\top} \in H^{m+1}(M; TM) \oplus H^{m}(M)$ such that
  $\bsXi U_1= (\boldsymbol H \ \ R)^{\top}$ in $M$
  and there exists a constant $C_{0;m}>0$ (which does not depend on $\bsh$ and $r$) such that
  \[\| U_1 \|_{H^{m+1}(M; TM) \oplus H^{m}(M)}\leq C_{0;m}\big(\|\boldsymbol H\|_{H^{m}(M; TM)}
  +\|R\|_{H^{m}(M)}\big).\]
  Consequently,
  \begin{equation*}
    \bsXi U_1 \seq \cvector{\bsh}{r} \ \mbox { in } \Omega\,.
  \end{equation*}

  Now let $\bsf_1 := \bsf - \bsu_1|_{\Gamma }$. Hence
  $\bsf_1\in H^{m+\frac{1}{2}}(\Gamma ;TM)$
  and Theorem \ref{thm.main.WP} implies that the {Dirichlet problem}
  \begin{equation*}
    \bsXi U_2 \ede \bsXi_{V, V_{0}} U_2 \seq 0 \ \mbox { in } \Omega \
    \mbox{ and }\ \bsu_2 \vert_{\Gamma} \seq \bsf_1
  \end{equation*}
  has a unique solution
  $U_2 = (\bsu _2 \ \ p_2)^{\top} \in H^{m+1}(\Omega; TM) \oplus H^{m}(\Omega)$
  and
  \begin{align*}
  \|\bsu _2\|_{H^{m+1}(\Omega; TM)} &+ \|p_2\|_{H^{m}(\Omega)}
  \le C_{m} \|\bsf_1\|_{H^{m+1/2}(\Gamma; TM)}\\
  &\leq C_{2;m}\left(\|\bsf\|_{H^{m+1/2}(\Gamma; TM)} + \|\bsh\|_{H^{m-1}(\Omega ; TM)}
  +\|r\|_{H^{m-1}(\Omega)}\right)\,,
  \end{align*}
  with some constants $C_m, C_{2;m} > 0$ independent of $\bsf$, $r$ and $\bsh$.

  Finally, let $U:=U_1+U_2$, $U =(\bsu  \ \ p)^{\top}$. The above properties of $U_1$
  and $U_2$ imply then that $U\in H^{m+1}(\Omega; TM) \oplus H^{m}(\Omega)$ satisfies
  the required properties. The uniqueness of $U$ with these properties follows from
  Theorem \ref{thm.main.WP}.
\end{proof}

Let us now consider the case $r=0$ in Equation \eqref{eq.nhom.pr}. 
Then a consequence of Theorem \ref{thm.main.WP-non-1} that will be used in 
what follows is the following corollary.

\begin{corollary}
\label{cor.main.WP-non}
  Let us further assume in Theorem \ref{thm.main.WP-non-1} that $r = 0$.
  Then, for every $m \in \ZZ_{+}$, there exists a constant ${\mathcal C}_{m} \in (0,\infty )$ 
  such that, for any $\bsf \in H^{m+1/2}(\Gamma; TM)$ and any $\bsh \in H^{m-1}(\Omega ; TM)$, the
  non-homogeneous Dirichlet problem \eqref{eq.nhom.pr}
  has a unique solution $U = (\bsu \ \ p)^{\top} \in H^{m+1}(\Omega; TM) \oplus H^{m}(\Omega)$
  and this solution satisfies
  \begin{equation*}
    \|\bsu\|_{H^{m+1}(\Omega; TM)} + \|p\|_{H^{m}(\Omega)} \le {\mathcal C}_{m}
    \left(\|\bsh\|_{H^{m-1}(\Omega; TM)}+\|\bsf\|_{H^{m+1/2}(\Gamma; TM)}\right)\,.
  \end{equation*}
  Let $U$ is the unique solution of Problem \eqref{eq.nhom.pr} for the given data
  $(\bsh \ \ \bsf)^{\top}$ and let ${\mathfrak A}(\bsh \ \ \bsf)^{\top} := U$.
  In particular, the map
  \begin{equation*}
    {\mathfrak A}:H^{m-1}(\Omega ; TM)\oplus H^{m+1/2}(\Gamma; TM)\to
    H^{m+1}(\Omega; TM) \oplus H^{m}(\Omega),
  \end{equation*}
  is linear and continuous.
\end{corollary}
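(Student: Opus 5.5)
This corollary is the special case $r = 0$ of Theorem \ref{thm.main.WP-non-1}, so I would obtain it by direct specialization rather than by a new argument.

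The first step is existence, uniqueness and the estimate. Fix $m \in \ZZ_{+}$, $\bsf \in H^{m+1/2}(\Gamma; TM)$ and $\bsh \in H^{m-1}(\Omega; TM)$. Apply Theorem \ref{thm.main.WP-non-1} with the scalar datum $r := 0$, which trivially lies in $H^{m-1}(\Omega)$, or in $H^{m}(\Omega)$ if one follows the extension step of that proof. The theorem then provides a unique $U = (\bsu \ \ p)^{\top} \in H^{m+1}(\Omega; TM) \oplus H^{m}(\Omega)$ with
\begin{equation*}
  \bsXi U = (\bsh \ \ 0)^{\top} \ \mbox{ in } \Omega \quad \mbox{and} \quad \bsu\vert_{\Gamma} = \bsf\,.
\end{equation*}
The bound of that theorem, with the term $\|r\|_{H^{m-1}(\Omega)} = 0$ dropped, is exactly the estimate claimed here, and it holds with the same constant ${\mathcal C}_{m}$.

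The second step is linearity of ${\mathfrak A}$. Let $U_{1}$ and $U_{2}$ be the solutions for data $(\bsh_{1} \ \ \bsf_{1})^{\top}$ and $(\bsh_{2} \ \ \bsf_{2})^{\top}$, and let $\alpha, \beta \in \CC$. The combination $\alpha U_{1} + \beta U_{2}$ lies in the right space and solves the problem for the data $\alpha(\bsh_{1} \ \ \bsf_{1})^{\top} + \beta(\bsh_{2} \ \ \bsf_{2})^{\top}$, because $\bsXi$ and the trace map are linear. Uniqueness then forces
\begin{equation*}
  {\mathfrak A}\big(\alpha(\bsh_{1} \ \ \bsf_{1})^{\top} + \beta(\bsh_{2} \ \ \bsf_{2})^{\top}\big) = \alpha U_{1} + \beta U_{2}\,,
\end{equation*}
so ${\mathfrak A}$ is well defined and linear.

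The third step is continuity. The estimate from the first step says that ${\mathfrak A}$ is a bounded linear map
\begin{equation*}
  H^{m-1}(\Omega; TM) \oplus H^{m+1/2}(\Gamma; TM) \to H^{m+1}(\Omega; TM) \oplus H^{m}(\Omega)\,,
\end{equation*}
with operator norm at most ${\mathcal C}_{m}$, so it is continuous.

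The only point needing care is the regularity class of the scalar datum. The statement of Theorem \ref{thm.main.WP-non-1} uses $r \in H^{m-1}(\Omega)$, while its proof extends $r$ into $H^{m}$. For $r = 0$ this mismatch is harmless, since $0$ belongs to every Sobolev space and contributes nothing to the estimate.
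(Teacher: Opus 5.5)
Your proposal is correct and matches the paper, which gives no separate proof and treats the corollary as the specialization $r=0$ of Theorem~\ref{thm.main.WP-non-1}; linearity of ${\mathfrak A}$ comes from uniqueness and its continuity from the a priori estimate, exactly as you argue. Your remark is also accurate: the scalar datum should really lie in $H^{m}(\Omega)$, as in the proof of that theorem, rather than in $H^{m-1}(\Omega)$, and this is harmless when $r=0$.
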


\subsection{Dirichlet-to-Neumann operator and boundary behavior of $\bop \maD_{\rm{ST}}$}
Let assumptions of Theorem \ref{thm.main.WP} hold. Let
$\maN_{\rm{ST}}:H^{3/2}(\partial \Omega ;TM)
\to H^{1/2}(\partial \Omega ;TM)$
be the {\it Dirichlet-to-Neumann operator} defined as follows
(see \cite[p.37]{Taylor2} in the case of the Laplace operator on a compact manifold).
For $\bsf\in H^{3/2}(\partial \Omega ;TM)$ arbitrary, let
$U := (\bsu \ \ p)^{\top}\in H^2(\Omega ;TM)\oplus
H^1(\Omega)$ be the unique solution of the Dirichlet problem
\eqref{eq.Dirichlet.pr} (that is, $\bsXi_{V, V_{0}} U \seq 0$ in $\Omega$
and $\bsu \vert_{\Gamma} \seq \bsf$ on $\Gamma := \pa \Omega$),
see Theorem \ref{thm.main.WP}. Then we define
\begin{equation}
\label{Dirichlet-oper}
  \maN_{\rm{ST}}\bsf \ede [\bop U]_{+} \ \mbox{ on } \Gamma \,,
\end{equation}
the limit being evaluated from $\Omega $.

The next result shows that there is no jump of the conormal derivative
$\bop \maD_{\rm{ST}} (\bsf)$ across $\Gamma :=\pa \Omega $
(see \cite[Proposition 11.4]{Taylor2} in the case of the Laplace operator).

\begin{theorem} \label{jump-conormal-dl}
  Under the assumptions of Theorem \ref{thm.main.WP},
  there is no jump
  across $\Gamma $ of $\bop \maD_{\rm{ST}} (\bsf)$. More precisely, for
  $\bsf \in  H^{3/2}(\Gamma ;TM)$,
  \begin{equation*}
    [\bop \maD_{\rm{ST}} (\bsf)]_{+} \seq \bop [\maD_{\rm{ST}} (\bsf)]_{-}
    \seq \left(\frac12+ {\bsK}^{*}\right)
    \maN_{\rm{ST}}\bsf\,.
  \end{equation*}
\end{theorem}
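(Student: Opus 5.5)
The strategy is the classical one (as in Taylor's treatment for the Laplacian): use the Dirichlet problem to express $\maD_{\rm{ST}}(\bsf)$ on each side of $\Gamma$ through the Pompeiu formula, and then read off the conormal traces using the jump relations of Theorem \ref{thm.jump.rel}(iii). We work under the hypotheses of Theorem \ref{thm.main.WP}. As in its proof, we may modify $V$ (and, if needed, $V_{0}$) on $\Omega_{-}$ so that Assumption \ref{assumpt.VV0} holds globally, which makes $\bsXi$ invertible and the layer potentials defined. Throughout, $\bsf \in H^{3/2}(\Gamma;TM)$.

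\emph{Step 1 (interior side).} Let $U = (\bsu \ \ p)^{\top} \in H^{2}(\Omega;TM)\oplus H^{1}(\Omega)$ be the solution of \eqref{eq.Dirichlet.pr} with datum $\bsf$, so that $\bsu_{+} = \bsf$ and $(\bop U)_{+} = \maN_{\rm{ST}}\bsf$. Proposition \ref{prop.rep.formula}(ii) gives
\begin{equation*}
  \maD_{\rm{ST}}(\bsf) - \maS_{\rm{ST}}(\maN_{\rm{ST}}\bsf) \seq \textbf{1}_{\Omega} U
\end{equation*}
on $M \smallsetminus \Gamma$.

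\emph{Step 2 (taking conormal traces).} Apply $\bop$ to this identity and take traces from both sides. Since $\maN_{\rm{ST}}\bsf \in H^{1/2}(\Gamma;TM)$, Proposition \ref{prop.H2estimates} gives $\maS_{\rm{ST}}(\maN_{\rm{ST}}\bsf)\vert_{\Omega_{\pm}} \in H^{2}\oplus H^{1}$. Likewise $\maD_{\rm{ST}}(\bsf)\vert_{\Omega_{\pm}} \in H^{2}\oplus H^{1}$, so both conormal traces are defined in $H^{1/2}$.

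From the interior, Theorem \ref{thm.jump.rel}(iii) gives
\begin{equation*}
  [\bop\maD_{\rm{ST}}(\bsf)]_{+} \seq \maN_{\rm{ST}}\bsf + \Big(-\frac12+\bsK^{*}\Big)\maN_{\rm{ST}}\bsf \seq \Big(\frac12+\bsK^{*}\Big)\maN_{\rm{ST}}\bsf\,.
\end{equation*}
From the exterior, the right-hand side $\textbf{1}_{\Omega}U$ vanishes, so
\begin{equation*}
  [\bop\maD_{\rm{ST}}(\bsf)]_{-} \seq [\bop\maS_{\rm{ST}}(\maN_{\rm{ST}}\bsf)]_{-} \seq \Big(\frac12+\bsK^{*}\Big)\maN_{\rm{ST}}\bsf\,.
\end{equation*}
The two one-sided conormal traces therefore coincide, which is exactly the claimed formula.

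\emph{Main obstacle.} The point needing the most care is the legitimacy of the $\bop$-traces. One has to check that $\bop$ (which involves the extended field $\bsnu$) commutes with restriction to $\Omega_{\pm}$, and that the traces of the potentials in $H^{2}(\Omega_{\pm})\oplus H^{1}(\Omega_{\pm})$ agree with the normal lateral limits used in Theorem \ref{thm.jump.rel}. For this we would invoke Lemma \ref{lemma.limits=traces} and Theorem \ref{thm.main.jump1}(vii) for the rational-type operators $\bop\psdinv$ and $\bop\maD_{\rm{ST}}$.

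A second issue is the modification of $V$ on $\Omega_{-}$. It changes $\maD_{\rm{ST}}$, $\maS_{\rm{ST}}$ and $\bsK$, but not $U$ or $\maN_{\rm{ST}}$, since these depend only on data in $\Omega$. The statement should therefore be read with the layer potentials associated to the modified, invertible $\bsXi$. Step 2 itself uses only invertibility of $\bsXi$, not the invertibility of $\frac12+\bsK$.
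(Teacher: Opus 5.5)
Your proposal is correct and follows the paper's proof: apply Pompeiu's formula (Proposition \ref{prop.rep.formula}(ii)) to the Dirichlet solution $U$, apply $\bop$ to both sides, take traces from $\Omega$ and from $M\smallsetminus\overline{\Omega}$, and evaluate $[\bop\maS_{\rm{ST}}(\maN_{\rm{ST}}\bsf)]_{\pm}$ by Theorem \ref{thm.jump.rel}(iii). The paper also derives $\bsS\maN_{\rm{ST}} = -\frac12+\bsK$ along the way, but that identity is not needed for this statement.

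One small correction to your ``obstacle'' paragraph: the operator $\bop\psdinv\bopstar$ behind $\bop\maD_{\rm{ST}}$ has order $0$, not $-1$, so Theorem \ref{thm.main.jump1}(vii) does not apply to it. You do not need it, though, since the one-sided traces of $\bop\maD_{\rm{ST}}(\bsf)$ already exist by Proposition \ref{prop.H2estimates}, and the identity itself identifies them.
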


\begin{proof}
  The second relation of Proposition \ref{prop.rep.formula}
  implies that
  \begin{align} \label{cons.rep.invert}
  \maD_{\rm{ST}} \bsf(x) - \maS_{\rm{ST}} (\maN_{\rm{ST}} \bsf )(x) \seq
  \begin{cases}
    \ U(x) & \mbox{ if } x \in \Omega\\
    \ \ \, 0    & \mbox{ if } x \in M \smallsetminus \overline{\Omega} \,,
  \end{cases}
  \end{align}
  where $U$ is the unique solution of the Dirichlet problem
  \eqref{eq.Dirichlet.pr} with $\bsf\in H^{3/2}(\Gamma ;TM)$.

  Now considering the vector part of identity \eqref{cons.rep.invert}, taking the limit of \eqref{cons.rep.invert} on $\Gamma $ from $\Omega $, and using Theorem
  \ref{thm.jump.K1} and Theorem \ref{thm.jump.rel}, we obtain
  \[
  \left(\frac12 + \bsK\right)\bsf - \bsS(\maN_{\rm{ST}} \bsf ) =
  \bsu \vert_{\Gamma} =\bsf\,,
  \]
  and hence
  \begin{align*}
    \bsS(\maN_{\rm{ST}} \bsf ) \seq \left(-\frac12 + \bsK\right)\bsf\,.
  \end{align*}
  Thus we obtain the identity
  \begin{align*}
    \bsS\maN_{\rm{ST}} \seq -\frac12 + \bsK \,.
  \end{align*}
  (The same identity follows if we take the limit of the vector part of \eqref{cons.rep.invert} 
  on $\Gamma $ from $\Omega_{-} := M\setminus \overline\Omega$ and use again Theorem
  \ref{thm.jump.K1} and Theorem \ref{thm.jump.rel}.)
  This identity and the ellipticity of the operators $\bsS$ and $-\frac12 + \bsK$ (see also 
  Theorem \ref{thm.jump.K1} and Theorem \ref{thm.jump.rel}) imply that $\maN_{\rm{ST}}$ is 
  elliptic as well.

  Next we apply the operator $\bop $ to both sides of identity \eqref{cons.rep.invert}.
  Evaluating it first from $\Omega $, we obtain
  \begin{align}\label{conormal-dl-plus}
    [\bop \maD_{\rm{ST}} (\bsf)]_{+} - [\bop \maS_{\rm{ST}} (\maN_{\rm{ST}}\bsf)]_{+} = [\bop U]_{+}
    = \maN_{\rm{ST}}\bsf\,,
  \end{align}
  while evaluating from $M\setminus \overline\Omega $ implies
  \begin{align}\label{conormal-dl-minus}
    [\bop \maD_{\rm{ST}} (\bsf)]_{-} - [\bop \maS_{\rm{ST}}(\maN_{\rm{ST}}\bsf)]_{-} = 0\,.
  \end{align}
  Since both limits $[\bop \maS_{\rm{ST}}(\maN_{\rm{ST}}\bsf)]_\pm $ exist,
  by Theorem \ref{thm.jump.rel}, formulas \eqref{conormal-dl-plus} and
  \eqref{conormal-dl-minus} show that the limits $\bop \maD_{\rm{ST}} (\bsf)_\pm $
  exist as well, and they are given by
  \begin{align*}
  &[\bop \maD_{\rm{ST}} (\bsf)]_{+} \seq \maN_{\rm{ST}}\bsf + \left(-\frac12+ {\bsK}^{*}\right)
  \maN_{\rm{ST}}\bsf\,,\\
  &[\bop \maD_{\rm{ST}} (\bsf)]_{-} \seq \left(\frac12 + {\bsK}^{*}\right)
  \maN_{\rm{ST}}\bsf \,,
  \end{align*}
  and hence
  \[
  [\bop \maD_{\rm{ST}} (\bsf)]_\pm \seq \left(\frac12+ {\bsK}^{*}\right)
  \maN_{\rm{ST}}\bsf\,.
  \]
  This completes the proof.
\end{proof}

\section{\cn Applications and extensions}
\label{sec.NS}

We now consider an application and an extension of our results so far.

\subsection{The Dirichlet problem for the generalized Navier-Stokes system}
\label{ssec.NS}

The first part is of this section is devoted to the Dirichlet problem for a Navier-Stokes
type system on a manifold with cylindrical ends. We analyze the Dirichlet problem when the norms
of the given data (the right hand side of the Navier-Stokes equation and the Dirichlet datum)
are restricted to satisfy a boundedness condition which ensures the well-posedness of the
Dirichlet problem.


A key role in the analysis of the nonlinear Dirichlet problem for the Navier-Stokes system
on a domain with cylindrical ends is played by estimates of the product
of two functions in suitable Sobolev spaces. An example is the following result,
which is well known in $\RR^n$ (see e.g. \cite[Theorem 7.3, Theorem 8.2]{Be},
\cite[\S 4.5.2, Theorem]{Runst-Sickel}, and \cite[Lemma 8.1]{medkova26}).

\begin{lemma}
  \label{lemma.product-spaces}
  Let $m \in \ZZ _+$ and $M$ be as in Assumption \ref{assumpt.VV0}.
  Assume that $2\leq \operatorname{dim}\, M =n < 2(m+2)$.
  Then there exists $C_{m} > 0$ with the following property: for all
  $u \in H^{m+1}(M)$ and $v \in H^{m}(M)$, we have
  $uv \in H^{m-1}(M)$ and
  \begin{equation*}
    \|uv\|_{H^{m-1}(M)} \le C_{m}\|u\|_{H^{m+1}(M)} \|v\|_{H^{m}(M)}\,.
  \end{equation*}
\end{lemma}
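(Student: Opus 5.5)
To prove the product estimate, I would reduce it to the Euclidean statement by localizing with a partition of unity that is uniform along the cylindrical end. Because $M = \canDec$ has straight cylindrical ends, it has bounded geometry. I would choose finitely many coordinate charts covering $M_{0}$ together with a collar. On the end $M' \times (-\infty, 0]$ I would use the translates $\Phi_{s}$, $s \in \NN$, of a fixed finite family of charts covering $M' \times [-2, 1]$. This produces a partition of unity $\{\phi_{j}\}$ and auxiliary cut-offs $\psi_{j}$ with $\psi_{j} = 1$ on $\supp \phi_{j}$. Both families have bounded overlap, and all derivatives of $\phi_{j}, \psi_{j}$ and of the transition maps are bounded uniformly in $j$. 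By the translation invariance of the metric on the end, there are only finitely many distinct charts up to translation.

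The first key step is the uniform localization principle for Sobolev norms on manifolds with bounded geometry (see \cite{GrosseSchneider, KMNW-2025}). For every $s \in \RR$ and every $w \in H^{s}(M)$ it gives
\begin{equation*}
  \|w\|_{H^{s}(M)}^{2} \simeq \sum_{j} \|(\phi_{j} w)\circ \kappa_{j}^{-1}\|_{H^{s}(\RR^{n})}^{2}\,,
\end{equation*}
with constants independent of $w$. The same equivalence holds with $\psi_{j}$ in place of $\phi_{j}$, in the sense of an upper bound $\sum_{j}\|(\psi_{j} w)\circ\kappa_{j}^{-1}\|^{2}_{H^{s}} \le C\|w\|^{2}_{H^{s}(M)}$. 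For $s \ge 0$ an integer this follows directly from Definition \ref{def.rem.Bochner} via local covariant derivatives. For $s = -1$, which is needed when $m = 0$, I would argue by duality, using the bounded-overlap property. I expect the negative-order case $m - 1 = -1$ to be the main technical point; the alternative is to handle it directly by the duality pairing $\langle uv, w\rangle = \langle v, uw\rangle$ with $w \in H^{1}$.

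The second key step is the Euclidean product estimate: for $2 \le n < 2(m+2)$,
\begin{equation*}
  \|fg\|_{H^{m-1}(\RR^{n})} \le C \|f\|_{H^{m+1}(\RR^{n})}\|g\|_{H^{m}(\RR^{n})}\,,
\end{equation*}
which is the quoted result from \cite{Be, Runst-Sickel, medkova26}. It is the standard condition $s_{1}+s_{2} > n/2 + s_{3}$ with $s_{1}=m+1$, $s_{2}=m$, $s_{3}=m-1$, that is $m+2 > n/2$, together with $s_{3} \le \min(s_{1}, s_{2})$. If I preferred to prove it rather than cite it, I would use Leibniz for integer $m \ge 1$ combined with Sobolev embeddings and H\"older's inequality, and for $m = 0$ the duality argument $H^{1}\cdot H^{1}\subset L^{q}$ pairing against $L^{2}$.

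Finally, I would assemble the estimate. Writing $\phi_{j} uv = (\phi_{j} u)(\psi_{j} v)$ and applying the Euclidean estimate in each chart gives
\begin{equation*}
  \|uv\|_{H^{m-1}(M)}^{2} \lesssim \sum_{j} \|\phi_{j}u\|_{H^{m+1}}^{2}\|\psi_{j}v\|_{H^{m}}^{2} \le \Big(\sup_{j}\|\psi_{j}v\|_{H^{m}}^{2}\Big)\sum_{j}\|\phi_{j}u\|_{H^{m+1}}^{2} \lesssim \|v\|_{H^{m}(M)}^{2}\|u\|_{H^{m+1}(M)}^{2}\,.
\end{equation*}
Here the constant $C$ from the Euclidean estimate is uniform in $j$, because there are only finitely many charts up to translation and the metric coefficients are uniformly controlled.
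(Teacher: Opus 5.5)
Your proposal is correct and follows essentially the same route as the paper, whose proof just combines the Euclidean product estimate with a dyadic partition of unity; your partition built from integer translates along the end is exactly that device. What you add is the detail the paper omits, including the duality argument needed to localize the $H^{-1}(M)$ norm when $m=0$.
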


\begin{proof}
  This follows from the analogous result on $\RR^{n}$ just mentioned above using
  a dyadic partition of unity (see, for instance \cite{KMNW-2025}).
\end{proof}

We notice that the same argument works for manifolds with bounded geometry.
Also, notice that the result of Lemma \ref{lemma.product-spaces}
extends to Sobolev spaces on sections of vector bundles over $M$
(see e.g. \cite[\S 4.2.2, Proposition]{Runst-Sickel} in the Euclidean setting),
and the next lemma is a consequence of this result
(see \cite[Lemma 7.5]{D-M} in the case of a Lipschitz domain on a compact manifold).

\begin{lemma}
\label{product-Sobolev}
  Let $M$ be as in Assumption
  \ref{assumpt.VV0} and $\Omega \subset M$ be as in Theorem \ref{thm.main.WP},
  that is, $\Omega $ is a smooth domain with straight cylindrical ends in the connected
  manifold with straight cylindrical ends $M$.
  Assume that $m\in \ZZ _+$ and that ${\rm{dim}}\, M =n < 2(m+2)$.
  Then there exists a constant $C>0$, such that
\begin{align}
  \label{NS-estimate1}
  &\|\nabla _{\bsu}\bsv\|_{H^{m-1}(\Omega ;TM)}\le C\|\bsu \|_{H^{m+1}(\Omega ;TM)}
  \|\bsv \|_{H^{m+1}(\Omega ;TM)},\\
  \label{NS-estimate2}
  &\|\nabla _{\bsu}\bsu - \nabla _{\bsv}\bsv\|_{H^{m-1}(\Omega ;TM)}\\
  &\hspace{2em}\le C\left(\|\bsu \|_{H^{m+1}(\Omega ;TM)}+\|\bsv \|_{H^{m+1}(\Omega ;TM)}\right)
  \|\bsu - \bsv \|_{H^{m+1}(\Omega ;TM)},\nonumber
\end{align}
for all $\bsu ,\bsv\in H^{m+1}(\Omega ;TM)$.
\end{lemma}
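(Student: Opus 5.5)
The statement to prove is Lemma \ref{product-Sobolev}. I would reduce it to the scalar product estimate of Lemma \ref{lemma.product-spaces}, applied on $M$ after extending from $\Omega$, combined with the vector bundle version of that lemma mentioned just before the statement.

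First, fix a bounded extension operator
\[
\maR : H^{k}(\Omega;TM)\to H^{k}(M;TM),\qquad k\le m+1 .
\]
Such an operator exists because $\Omega$ has straight cylindrical ends compatible with $M$. On the compact part one uses a standard (Stein or Seeley type) extension. On the end $\Omega'\times(-\infty,0)$ one takes the product of an extension operator for $\Omega'\subset M'$ with the identity in $t$. The two pieces are glued by a partition of unity, and translation invariance at infinity keeps the norms uniform; alternatively one uses the bounded geometry framework of \cite{KMNW-2025}.

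Next, write
\[
\nabla_{\bsu}\bsv=\iota_{\bsu}\big(\nabla^{LC}\bsv\big),
\]
which is a bilinear pointwise contraction $T^{*}M\otimes TM\times TM\to TM$ with bounded (indeed $\CI_{\inv}$) coefficients. For $\bsv\in H^{m+1}$, the extension $\maR\bsv$ has $\nabla^{LC}\maR\bsv\in H^{m}(M;T^*M\otimes TM)$ with norm controlled by $\|\bsv\|_{H^{m+1}(\Omega)}$, since $\nabla^{LC}\in\operatorname{Diff}^1_{\inv}$. Applying the bundle-valued Lemma \ref{lemma.product-spaces} with $u=\maR\bsu\in H^{m+1}$ and $v=\nabla^{LC}\maR\bsv\in H^{m}$ gives an $H^{m-1}(M)$ bound. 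This bundle version follows by local trivializations adapted to the cylindrical structure, in which the contraction has coefficients that are bounded together with all their derivatives. Restricting to $\Omega$ does not increase norms, which gives \eqref{NS-estimate1}.

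For \eqref{NS-estimate2}, use the algebraic identity
\[
\nabla_{\bsu}\bsu-\nabla_{\bsv}\bsv=\nabla_{\bsu-\bsv}\bsu+\nabla_{\bsv}(\bsu-\bsv)
\]
and apply \eqref{NS-estimate1} to each term.

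The main obstacle is justifying the bundle-valued, variable-coefficient form of the product estimate uniformly on the cylindrical end, together with the extension operator. I would handle both by a uniformly locally finite (dyadic) partition of unity on the ends. Translation invariance makes all local constants uniform, so the Euclidean results cited for Lemma \ref{lemma.product-spaces} transfer directly.
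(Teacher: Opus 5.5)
Your proposal is correct and follows essentially the paper's route: you reduce \eqref{NS-estimate1} to the bundle-valued product estimate of Lemma~\ref{lemma.product-spaces}, and you obtain \eqref{NS-estimate2} from the same splitting $\nabla_{\bsu}\bsu-\nabla_{\bsv}\bsv=\nabla_{\bsu-\bsv}\bsu+\nabla_{\bsv}(\bsu-\bsv)$. The only difference is that the paper simply asserts the product estimate holds directly on $\Omega$ because $\Omega$ has bounded geometry, whereas you pass through a bounded extension operator $H^{m+1}(\Omega;TM)\to H^{m+1}(M;TM)$ and then restrict, which is the natural way to make the paper's claim precise.
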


\begin{proof}
  First, we notice that $\Omega $ is a manifold with bounded geometry. Indeed,
  $\Omega $ is a smooth domain with straight cylindrical ends in the smooth
  manifold with straight cylindrical ends $M$, and hence both $M$ and $\Omega $
  have bounded geometry.
  Hence the result of Lemma \ref{lemma.product-spaces} holds also on $\Omega $.
  Thus, for each $m\in \ZZ _+$ such that $n < 2(m+2)$, there exists a constant
  $c_{m}>0$ (depending only on $n$ and the bounded geometry of $M$) such that
  the estimate \eqref{NS-estimate1} holds for all $u \in H^{m+1}(\Omega)$ and
  $v \in H^{m}(\Omega )$. Thus, we have
  $uv \in H^{m-1}(\Omega )$ and
  \begin{equation*}
    \|uv\|_{H^{m-1}(\Omega )} \le c_{m}\|u\|_{H^{m+1}(\Omega )} \|v\|_{H^{m}(\Omega)}\,.
  \end{equation*}
  (see \cite[\S 4.2.2, Proposition]{Runst-Sickel} in $\RR^n$). This result extends also
  to vector bundles and, thus, there exists a constant $C>0$ (depending only on $n$, $m$
  and the bounded geometry of $M$) such that the estimate \eqref{NS-estimate1} holds for
  all $\bsu ,\bsv\in H^{m+1}(\Omega ;TM)$.

  Moreover, the linearity of the covariant derivative $\nabla _{\bsu}\bsv$ with respect
  to $\bsu$ and the additivity with respect to $\bsv$, and
  estimate \eqref{NS-estimate1} give
  \begin{align*}
    \|\nabla _{\bsu}\bsu - \nabla _{\bsv}\bsv\|_{H^{m-1}(\Omega ;TM)}
    &\le \|\nabla _{\bsu - \bsv}\bsu \|_{H^{m-1}(\Omega ;TM)}+
    \|\nabla _{\bsv}(\bsu -\bsv)\|_{H^{m-1}(\Omega ;TM)}\\
    &\le C\left(\|\bsu \|_{H^{m+1}(\Omega ;TM)}+\|\bsv \|_{H^{m+1}(\Omega ;TM)}\right)
    \|\bsu - \bsv \|_{H^{m+1}(\Omega ;TM)}
  \end{align*}
  for all $\bsu ,\bsv\in H^{m+1}(\Omega ;TM)$, that is, the estimate \eqref{NS-estimate2}.
\end{proof}

Let us now prove the well-posedness of the Dirichlet problem for the Navier-Stokes
system under the assumption of small data (see also \cite{Am-Ciarlet, D-M, KMNW-2025,
medkova26} for results in bounded Lipschitz domains).

\begin{theorem}\label{thm.main.NS}
  Let $M$, $V$, and $V_{0}$ satisfy Assumption
  \ref{assumpt.VV0} and also the assumptions in Theorem \ref{thm.main.WP}.
  Let $\Omega \subset M$ be a smooth domain with straight cylindrical ends in the smooth
  connected manifold with straight cylindrical ends $M$. Assume also that $\Omega $ is the
  interior of $\overline{\Omega }$.
  If $m\in \ZZ_{+}$ and $2\leq {\rm{dim}}\, M =n < 2(m+2)$, then
  there exist two constants $\zeta , \eta \in (0,\infty )$ such that for all given data
  $\bsf \in H^{m+1/2}(\Gamma; TM)$ and $\bsh \in H^{m-1}(\Omega ; TM)$ satisfying the condition
  \begin{align}
  \label{cond-small-f-h}
  \|\bsh \|_{H^{m-1}(\Omega ; TM)}+\|\bsf \|_{H^{m+1/2}(\Gamma; TM)}\leq \zeta ,
  \end{align}
  the \emph{Dirichlet problem} for the Navier-Stokes system
  \begin{equation}
  \label{NS-Dirichlet}
    \bsXi U + \cvector{\nabla _{\bsu}\bsu}{0}
    \seq \cvector{\bsh}{0} \ \mbox { in } \Omega \
    \mbox{ and }\ \bsu \vert_{\Gamma} \seq \bsf
  \end{equation}
  has a unique solution
  $U = (\bsu \ \ p)^{\top} \in H^{m+1}(\Omega; TM) \oplus H^{m}(\Omega)$
  such that
  \begin{equation}
  \label{NS-Dirichlet-cond}
  \|\bsu\|_{H^{m+1}(\Omega ;TM)}\leq \eta \,.
  \end{equation}
  Moreover, there exists a constant $C_{0}\in (0,\infty )$ such that
  \[\|\bsu\|_{H^{m+1}(\Omega; TM)} + \|p\|_{H^{m}(\Omega)} \le C_{0}
  \left(\|\bsh \|_{H^{m-1}(\Omega ; TM)}+\|\bsf \|_{H^{m+1/2}(\Gamma; TM)}\right)\,.\]
 \end{theorem}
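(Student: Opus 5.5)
We will prove Theorem \ref{thm.main.NS} by a Banach fixed point argument built on the solution operator $\mathfrak A$ of the linear non-homogeneous Dirichlet problem (Corollary \ref{cor.main.WP-non}) and the product estimates of Lemma \ref{product-Sobolev}. Set $X := H^{m+1}(\Omega;TM)\oplus H^{m}(\Omega)$ and, for $\eta>0$, let $B_\eta := \{\bsv\in H^{m+1}(\Omega;TM) : \|\bsv\|_{H^{m+1}}\le \eta\}$, a closed subset of a Banach space. For $\bsv\in B_\eta$ define
\begin{equation*}
  \mathcal T(\bsv) \ede \text{vector part of } \mathfrak A\big(\bsh - \nabla_{\bsv}\bsv,\ \bsf\big)^{\top}\,,
\end{equation*}
which makes sense because Lemma \ref{product-Sobolev}, estimate \eqref{NS-estimate1}, gives $\nabla_{\bsv}\bsv\in H^{m-1}(\Omega;TM)$ under the dimension hypothesis $n<2(m+2)$. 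A fixed point $\bsu=\mathcal T(\bsu)$, together with the pressure component $p$ of $\mathfrak A(\bsh-\nabla_{\bsu}\bsu,\bsf)^\top$, is precisely a solution of \eqref{NS-Dirichlet}; conversely, any solution $U$ of \eqref{NS-Dirichlet} with $\|\bsu\|\le\eta$ satisfies $U=\mathfrak A(\bsh-\nabla_{\bsu}\bsu,\bsf)^\top$ by the uniqueness part of Corollary \ref{cor.main.WP-non}, so $\bsu$ is a fixed point in $B_\eta$.

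The key steps are as follows. First, with $\mathcal C_m$ from Corollary \ref{cor.main.WP-non} and $C$ from Lemma \ref{product-Sobolev}, we get for $\bsv\in B_\eta$
\begin{equation*}
  \|\mathcal T(\bsv)\|_{H^{m+1}} \le \mathcal C_m\big(\zeta + C\eta^{2}\big)\,,
\end{equation*}
and, by linearity of $\mathfrak A$ and \eqref{NS-estimate2}, for $\bsv,\bsw\in B_\eta$
\begin{equation*}
  \|\mathcal T(\bsv)-\mathcal T(\bsw)\|_{H^{m+1}} \le 2\mathcal C_m C\eta\,\|\bsv-\bsw\|_{H^{m+1}}\,.
\end{equation*}
Second, we choose the constants: take $\eta := 1/(4\mathcal C_m C)$, so that the contraction constant is $1/2$, and $\zeta := \eta/(2\mathcal C_m)$. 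Then $\mathcal C_m(\zeta + C\eta^2) \le \eta/2 + \eta/4 < \eta$, so $\mathcal T$ maps $B_\eta$ into itself and is a contraction there. Banach's theorem then yields existence, and uniqueness within $B_\eta$ follows from the equivalence noted in the first paragraph.

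For the final estimate, the fixed point satisfies
\begin{equation*}
  \|\bsu\|+\|p\| \le \mathcal C_m\big(\|\bsh\|+\|\bsf\| + C\|\bsu\|^2\big) \le \mathcal C_m\big(\|\bsh\|+\|\bsf\|\big) + \mathcal C_m C\eta\|\bsu\|\,.
\end{equation*}
Since $\mathcal C_m C\eta = 1/4$, the last term is absorbed, giving the bound with $C_0 = \tfrac43\mathcal C_m$. The main point to be careful about is not the contraction itself but the bookkeeping of Sobolev indices: the Navier–Stokes term must land exactly in $H^{m-1}(\Omega;TM)$, the data space of $\mathfrak A$. This is where the restriction $n<2(m+2)$ and the validity of the product estimate on $\Omega$ are used, and both are supplied by Lemma \ref{product-Sobolev}. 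We also need $m\ge 1$ or a suitable interpretation of $H^{-1}$ when $m=0$; we would follow the convention implicit in Corollary \ref{cor.main.WP-non}.
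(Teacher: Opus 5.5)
Your proposal is correct and follows the paper's proof essentially step for step. You use the same fixed-point map built from the solution operator $\mathfrak A$ of Corollary \ref{cor.main.WP-non}, the same radius $\eta = 1/(4C\mathcal C_m)$ giving contraction constant $1/2$, and the same absorption argument yielding $C_0 = \frac43\mathcal C_m$; the only differences are cosmetic, namely your smaller $\zeta = 1/(8C\mathcal C_m^2)$ in place of the paper's $3/(16C\mathcal C_m^2)$, and your explicit justification of uniqueness in $B_\eta$ via uniqueness for the linear problem, which the paper leaves implicit.
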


\begin{proof}
Let
$\left(\bsh \ \ {\bsf}\right)^\top
\in H^{m-1}(\Omega ; TM)\oplus H^{m+1/2}(\Gamma; TM)$ be the given data that will be kept
fixed in all arguments of the proof.
Also, for a fixed $\bsv \in H^{m+1}(\Omega ;TM)$, we consider the linear Dirichlet problem
for the generalized Stokes system
\begin{equation}
\label{Newtonian-D-B-F-new2-D-Rn-Lp}
  \begin{cases}
  \bsXi U^0
  \seq \cvector{\bsh}{0} - \cvector{\nabla _{\bsv}\bsv}{0}\ \mbox { in } \Omega\,, \\
  \bsu^0 \vert_{\Gamma} \seq \bsf
\end{cases}
\end{equation}
with the unknown $U^0=(\bsu^0\ \ p^0)^{\top}\in H^{m+1}(\Omega; TM) \oplus H^{m}(\Omega)$.

Lemma \ref{product-Sobolev} gives that $\nabla _{\bsv}\bsv\in H^{m-1}(\Omega ; TM)$.
Corollary \ref{cor.main.WP-non} then implies that the problem \eqref{Newtonian-D-B-F-new2-D-Rn-Lp}
with the given data $\left(\bsh -\nabla _{\bsv}\bsv \ \ \bsf\right)^{\top}
\in H^{m-1}(\Omega ; TM)\oplus H^{m+1/2}(\Gamma; TM)$
has a unique solution $U^0 = (\bsu^0 \ \ p^0)^{\top} \in H^{m+1}(\Omega; TM) \oplus H^{m}(\Omega)$, which can be expressed
in the form
\begin{align}
\label{solution-v0-Lp}
  \left(\bsu^0\ \ p^0\right)^{\top}
  ={\mathfrak A}\left(\bsh -\nabla _{\bsv}\bsv\ \ \bsf\right)^{\top}
  =:\left({\mathcal U}_{\bsh,{\bsf}}(\bsv )\ \
  \maP_{\bsh,{\bsf}}(\bsv )\right)^{\top},
\end{align}
where
${\mathfrak A}:H^{m-1}(\Omega ; TM)\oplus H^{m+1/2}(\Gamma; TM)
\to H^{m+1}(\Omega; TM) \oplus H^{m}(\Omega)$ is the solution operator given by Corollary
\ref{cor.main.WP-non}. This is a linear and continuous operator.
Hence the nonlinear operator
\begin{align}
\label{nonlinear-sol-oper}
\left({\mathcal U}_{\bsh,{\bsf}}\ \ \maP_{\bsh,{\bsf}}\right)^{\top}:H^{m+1}(\Omega; TM)\to H^{m+1}(\Omega; TM)\oplus H^{m}(\Omega)
\end{align}
satisfies the estimate
\begin{align}
\label{estimate-NS-1}
&\|{\mathcal U}_{\bsh,{\bsf}}(\bsv)\|_{H^{m+1}(\Omega; TM)}+
\|\maP_{\bsh,{\bsf}}(\bsv)\|_{H^{m}(\Omega)}\nonumber\\
&\hspace{3em}\leq {\mathcal C}_m\|\left(\bsh -\nabla _{\bsv}\bsv \ \ 
\bsf\right)^{\top}\|_{H^{m-1}(\Omega; TM)\oplus H^{m+1/2}(\Gamma; TM)}\nonumber\\
&\hspace{3em}\leq {\mathcal C}_m\left(\|\left(\bsh \ \ 
\bsf\right)^{\top}\|_{H^{m-1}(\Omega; TM)\oplus H^{m+1/2}(\Gamma; TM)}
+\|\nabla _{\bsv}\bsv\|_{H^{m-1}(\Omega; TM)}\right)\nonumber\\
&\hspace{3em}\leq {\mathcal C}_m\|\left(\bsh \ \ 
\bsf\right)^{\top}\|_{H^{m-1}(\Omega; TM)\oplus H^{m+1/2}(\Gamma; TM)}
+{\mathcal C}_mC\|\bsv\|_{H^{m+1}(\Omega; TM)}^2\,,
\end{align}
where ${\mathcal C}_m$ and $C$ are the constants provided by Corollary 
\ref{cor.main.WP-non} and Lemma \ref{product-Sobolev}.
Thus, the operator $\left({\mathcal U}_{\bsh,{\bsf}}\ \ 
\maP_{\bsh,{\bsf}}\right)^{\top}:H^{m+1}(\Omega; TM)
\to H^{m+1}(\Omega; TM)\oplus H^{m}(\Omega)$ is continuous.
In addition, by \eqref{Newtonian-D-B-F-new2-D-Rn-Lp} and 
\eqref{solution-v0-Lp} we have
\begin{equation*}
  \begin{cases}
  \bsXi \cvector{{\mathcal U}_{\bsh,{\bsf}}(\bsv )}{\maP_{\bsh,{\bsf}}(\bsv )}
  \seq \cvector{\bsh}{0} - \cvector{\nabla _{\bsv}\bsv}{0}\ \mbox { in } \Omega\,, \\
  {\mathcal U}_{\bsh,{\bsf}}(\bsv ) \vert_{\Gamma} \seq \bsf \,.
\end{cases}
\end{equation*}
Therefore, if we show that the nonlinear operator
${\mathcal U}_{\bsh,{\bsf}}$ has a fixed
point $\bsu\in H^{m+1}(\Omega; TM)$, i.e.,
${\mathcal U}_{\bsh,{\bsf}}(\bsu)=\bsu$,
then $\bsu$ and the pressure $p=\maP_{\bsh,{\bsf}}(\bsu)$
will determine a solution of the nonlinear problem \eqref{NS-Dirichlet}
in $H^{m+1}(\Omega; TM)\oplus H^{m}(\Omega)$.
To prove this claim, we consider the constants
\begin{align}
\label{zeta-eta}
  &\zeta :=\frac{3}{16C{\mathcal C}_m^2}>0\ \ \mbox{ and } \ \ 
  \eta :=\frac{1}{4C{\mathcal C}_m}>0
\end{align}
and the closed ball of the space $H^{m+1}(\Omega; TM)$
\begin{align}
  \label{gamma-0n-Lp}
  {\bf B}_{\eta }:=\left\{\bsw \in H^{m+1}(\Omega; TM):
\|\bsw \|_{H^{m+1}(\Omega; TM)}\leq \eta \right\},
\end{align}
and we assume that the given data $\left(\bsh \ \ {\bsf}\right)^\top$ 
satisfy the inequality \eqref{cond-small-f-h}, that is,
\begin{align*}
\|\left(\bsh \ \ {\bsf}\right)^{\top}\|_{H^{m-1}(\Omega; TM)\oplus H^{m+1/2}(\Gamma; TM)}
=\|\bsh \|_{H^{m-1}(\Omega; TM)}+\|\bsf \|_{H^{m+1/2}(\Gamma; TM)}\leq \zeta \,.
\end{align*}
In view of \eqref{cond-small-f-h}, \eqref{estimate-NS-1},
\eqref{zeta-eta}, and \eqref{gamma-0n-Lp} it follows that
\begin{align}
\label{Newtonian-D-B-F-new9-new-crackn-Lp}
  \left\|\left({\mathcal U}_{\bsh,{\bsf}}(\bsw )\ \
  \maP_{\bsh,{\bsf}}(\bsw )\right)^{\top}\right\|_{H^{m+1}(\Omega; TM)\oplus H^{m}(\Omega)}
  \leq {\frac{1}{4C\mathcal C}_m}= \eta \,,\quad \forall \, \bsw  \in \mathbf{B}_{\eta }\,.
\end{align}
Hence the nonlinear operator ${\mathcal U}_{\bsh,{\bsf}}$ maps ${\bf B}_{\eta }$
into ${\bf B}_{\eta }$.

Next we prove that ${\mathcal U}_{\bsh,{\bsf}}$ is a contraction
on ${\bf B}_{\eta }$. Indeed, the expression of ${\mathcal U}_{\bsh,{\bsf}}$
in \eqref{solution-v0-Lp}, the linearity and continuity of the operator
${\mathfrak A}$, the estimate of Corollary \ref{cor.main.WP-non},
and inequality \eqref{NS-estimate1} of Lemma \ref{product-Sobolev} give
\begin{multline*}
  \left\|{\mathcal U}_{\bsh,{\bsf}}(\bsv )
  -{\mathcal U}_{\bsh,{\bsf}}(\bsw )
  \right\|_{H^{m+1}(\Omega; TM)}
  \leq \big\|{\mathfrak A}\left(\nabla_{\bsv }\bsv -\nabla _{\bsw}\bsw \ \ 0 \right)^{\top}
  \big\|_{H^{m+1}(\Omega; TM)\oplus H^{m}(\Omega)}\\
  \leq {\mathcal C}_m\|\nabla_{\bsv }\bsv -\nabla _{\bsw}\bsw \|_{H^{m-1}(\Omega; TM)}\\
  \leq C{\mathcal C}_m\Big(\|\bsv \|_{H^{m+1}(\Omega; TM)}
  + \|\bsw \|_{H^{m+1}(\Omega; TM)}\Big)
  \|\bsv -\bsw \|_{H^{m+1}(\Omega; TM)}\\
  \leq 2\eta C{\mathcal C}_m\|\bsv -\bsw \|_{H^{m+1}(\Omega; TM)}
  \seq \frac{1}{2}\|\bsv -\bsw \|_{H^{m+1}(\Omega; TM)}\,,
\end{multline*}
for all $\bsv ,\bsw  \in \mathbf{B}_{\eta }$.
The Banach-Picard Fixed Point Theorem (see e.g. Theorem A.1.17 in \cite{KMNW-2025})
gives that there exists a unique fixed point $\bsu\in \mathbf{B}_{\eta }$ of
 ${\mathcal U}_{\bsh,{\bsf}}$, i.e.,
 ${\mathcal U}_{\bsh,{\bsf}}(\bsu)=\bsu$.
Moreover, $\bsu$ and the pressure function
$p=\maP_{\bsh,{\bsf}}(\bsu)$ (where $\maP_{\bsh,{\bsf}}$ is given by
\eqref{solution-v0-Lp}) determine a solution of the nonlinear problem
\eqref{NS-Dirichlet} in the space $H^{m+1}(\Omega; TM)\oplus H^{m}(\Omega)$. The
Banach-Picard Fixed Point Theorem implies also the uniqueness of the solution
under the assumptions \eqref{cond-small-f-h} and \eqref{NS-Dirichlet-cond}.

The condition $\bsu\in {\bf B}_{\eta }$ and inequality \eqref{estimate-NS-1} imply that
\begin{align*}
\|\bsu\|_{H^{m+1}(\Omega; TM)}
&+\|p\|_{H^{m}(\Omega)}\\
&\leq {\mathcal C}_m\big\|\left(\bsh \ \ {\bsf}\right)^{\top}\big\|_{H^{m-1}(\Omega; TM)\oplus H^{m+1/2}(\Gamma; TM)}
+\frac{1}{4}\|\bsu\|_{H^{m+1}(\Omega; TM)}
\end{align*}
and hence the estimate
\begin{align*}
\|\bsu\|_{H^{m+1}(\Omega; TM)}+
\|p\|_{H^{m}(\Omega)}
\leq \frac{4}{3}{\mathcal C}_m\big\|\left(\bsh \ \
{\bsf}\right)^{\top}\big\|_{H^{m-1}(\Omega; TM)\oplus H^{m+1/2}(\Gamma; TM)}\,,
\end{align*}
as asserted. This completes the proof.
\end{proof}

\subsection{The case $M$ closed}\label{ssec.ToAdd}
We now consider the extension of our results to
the case of a closed manifold (unlike most of the other
sections of this paper, in which we considered, for the most part, manifolds with straight
cylindrical ends).
Recall that a closed manifold is one that is smooth, compact and does not
have a boundary. Almost all the results on manifolds with straight cylindrical
ends hold also for closed manifolds (when they make sense).

Here are the main modifications:
\begin{enumerate}
  \item There are no restrictions ``or conditions'' at infinity on our operators,
  functions, and vector bundles. For instance, $\CI_{\inv}$ is replaced
  with $\CI$, all open subsets and all vector bundles are compatible, and
  $\iPS{m}$ and $\ePS{m}$
  are replaced with $\Psi^{m}$, $m \in \ZZ \cup \{\pm \infty\}$.

  \item The results about the limit at infinity $\In$ do not make sense anymore
  (and are not needed). They have to be ignored in the closed case.

  \item The same is true for the conditions involving $\In$; they are to be removed,
  as they are no longer needed.
\end{enumerate}

The case $M$ closed was considered in great detail in \cite{KNW-2025}, so we will not
go through the exercice of performing all the modifications needed here. Instead, we recall
here only two results from that paper that do not follow directly from the results
of this paper. The first result describes the kernel of $\bsXi$.
The second result formulates the well-posendess of the Stokes operator.

\begin{theorem}\label{thm.form}
  Let us assume that $V, V_{0}$ are smooth and non-negative
  and that $M$ is a smooth, compact manifold without boundary (i.e., a closed manifold).
  Then $\bsXi := \bsXi_{V, V_{0}}: H^{1}(M; TM) \oplus L^{2}(M) \to H^{-1}(M; TM) \oplus L^{2}(M)$
  is a self-adjoint Fredholm operator.
  Let $\maN \subset \CI(M; TM \oplus \CC)$ be defined by
  \begin{enumerate}[\rm (1)]
    \item $\maN := \{(\bsu, p) \mid \Def \bsu = 0 \,, \ \nabla p = 0\}$
    if $V = 0$ and $V_{0}=0$ on $M$;
    \item $\maN := \{(\bsu, 0) \mid \Def \bsu = 0\}$,
    if $V =0$ and $V_{0} \not \equiv 0$  on $M$;
    \item $\maN := \{(0, p) \mid \nabla p = 0\}$,
    if $V_{0}= 0$ on $M$ and either $V \not \equiv 0$ on $M$ or $M$ does not have
    non-zero Killing vector fields;
    \item $\maN := \{0\}$, if $V_{0} \not \equiv 0$ on $M$ and either $V \not \equiv 0$
    on $M$ or $M$ does not have non-zero Killing vector fields.
  \end{enumerate}
  Then $\maN$ is the kernel of $\bsXi_{V, V_{0}}$ \lpar i.e.,
  $\ker \bsXi_{V, V_{0}} = \maN$\rpar. Moreover, $\bsXi$ has a \lpar unique\rpar\
  Moore-Penrose   pseudoinverse $\psdinv \in \Psi_{\cl}^{-\bss - \bst}(M; TM \oplus \CC)$,
  with $\bss =\bst =(1,0)$, whose image is $\maN^{\perp}$.
\end{theorem}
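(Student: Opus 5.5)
The plan is to run the closed-manifold analogue of the arguments used above for $M$ with cylindrical ends, replacing $\ePS{m}$ by $\Psi^{m}$ and dropping all conditions involving $\In$.

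\textbf{Fredholmness and self-adjointness.} First, $\bsXi$ is $(\bss,\bst)$--Douglis--Nirenberg elliptic with $\bss=\bst=(1,0)$ by Proposition \ref{prop.ADN.elliptic}; this is a purely local statement about principal symbols. Since $M$ is closed, the closed-manifold case of Theorem \ref{thm.ADN.Fredholm} (no limit operator condition) gives that
\[
\bsXi : H^{1}(M;TM)\oplus L^{2}(M)\to H^{-1}(M;TM)\oplus L^{2}(M)
\]
is Fredholm. Self-adjointness follows from the Green formula \eqref{eq.prop.Green.M}, that is, Proposition \ref{prop.Green} with no boundary terms: $(\bsXi U,W)_{M}=(U,\bsXi W)_{M}$ on smooth sections, and this extends by density. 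By elliptic regularity (Theorem \ref{thm.regularity}), the kernel consists of smooth sections, so $\ker\bsXi\subset \CI(M;TM\oplus\CC)$.

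\textbf{Identification of the kernel.} For $\ker\bsXi\subset\maN$, take $U=(\bsu,p)$ with $\bsXi U=0$. Since $U$ is smooth, Corollary \ref{cor.e.est.new}(i) applies verbatim; its proof uses only Proposition \ref{prop.Green} on $M$ and nonnegativity of $V,V_{0}$. It gives $\Def\bsu=0$, $V\bsu=0$, $\nabla^{*}\bsu=0$, $V_{0}p=0$ and $\nabla p=0$. Then split into the four cases.
\begin{enumerate}[\rm (a)]
\item $\nabla p=0$ makes $p$ constant, since $M$ is connected. If $V_{0}\not\equiv0$, then $V_{0}p=0$ forces $p=0$.
\item If $V\not\equiv0$, then $\bsu$ is a Killing field vanishing on an open set, so $\bsu=0$ by $L^{2}$-unique continuation of $\Def$. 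If $M$ has no nonzero Killing fields, then $\bsu=0$ directly.
\end{enumerate}

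For the reverse inclusion, in each case take $(\bsu,p)\in\maN$ and check that $\bsXi U=0$.
\begin{enumerate}[\rm (a)]
\item $\Def\bsu=0$ gives $\bsL\bsu=0$.
\item A Killing field is divergence free, since $\nabla^{*}\bsu$ equals minus the trace of $\Def\bsu$ up to sign, so $\nabla^{*}\bsu=0$.
\item $\nabla p=0$ holds by assumption.
\item The potential terms vanish in each case: in case (2) we have $V=0$ and $p=0$; in case (3) we have $V_{0}=0$ and $\bsu=0$; in case (1) both $V$ and $V_{0}$ vanish.
\end{enumerate}

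\textbf{Pseudoinverse.} Because $\bsXi$ is self-adjoint and Fredholm, its range is $\maN^{\perp}$ (intersected with the target space). The Moore--Penrose pseudoinverse therefore exists and is unique, and its image is $\maN^{\perp}$. That $\psdinv\in\Psi_{\cl}^{-\bss-\bst}$ follows as in the proof of Theorem \ref{thm.spectral.inv}, in its closed version. Concretely, $p_{\maN}$ has a smooth kernel, since $\maN$ is finite dimensional and consists of smooth sections, so $p_{\maN}\in\Psi^{-\infty}$. Then $\bsXi+p_{\maN}$ is invertible, and
\[
\psdinv=(\bsXi+p_{\maN})^{-1}-p_{\maN}.
\]
The main technical point is Beals-type spectral invariance for the DN-graded classical calculus on closed manifolds. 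We cite it as classical, via Seeley's parametrix together with Theorem \ref{thm.spectral.inv}.
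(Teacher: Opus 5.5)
Your proposal is correct and matches the paper's approach. The paper defers the proof to \cite{KNW-2025}, but its cylindrical-end analogues (Theorems \ref{thm2.main.Fredholm}, \ref{thm2.main.invert}, \ref{thm.spectral.inv} and Corollary \ref{cor.e.est.new}) use the same chain of Douglis--Nirenberg ellipticity, self-adjointness, the energy identity and Beals-type spectral invariance; your formula $\psdinv=(\bsXi+p_{\maN})^{-1}-p_{\maN}$ actually fits the graded setting better than the $(p_{\maN}+T^{*}T)^{-1}T^{*}$ of the proof of Theorem \ref{thm.spectral.inv}, and your step ``$V\not\equiv 0$ forces $\bsu=0$ on an open set'' implicitly uses that $V$ is scalar, as in \cite{KNW-2025} (for $\End(TM)$-valued $V$ one needs $V\succ 0$).
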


The well-posedness result of the Dirichlet problem becomes

\begin{theorem}\label{thm.main.WP.compact}
  Let $\overline{\Omega}$ be a smooth, compact, connected manifold
  with boundary and let $V : \overline{\Omega} \to [0, \infty)$ be a
  smooth function that can be extended to a smooth, non-negative
  function on the {\it double} of $\overline{\Omega}$. We consider the
  generalized Stokes operator $\bsXi := \bsXi_{V, 0}$ \lpar thus $V_{0} = 0$
  in $\Omega$\rpar. Let $\Gamma
  := \pa \overline{\Omega}$, as before. Then, for every $m \in \ZZ_{+}$ and
  for any $\bsf \in H^{m+1/2}(\Gamma; T\overline{\Omega})$ such that
  $(\bsf, \bsnu)_{\Gamma } = 0$, there exists a solution
  $U = (\bsu \ \ p)^{\top} \in H^{m+1}(\Omega; T\overline{\Omega})
  \oplus H^{m}(\Omega)$ of the \emph{Dirichlet problem}
  \begin{equation}\label{eq.Dirichlet.p}
    \bsXi U \ede \bsXi_{V, 0} U \seq 0 \ \mbox { in } \Omega \
    \mbox{ and }\ \bsu \vert_{\Gamma} \seq \bsf \ \mbox { on } \pa \Omega \,.
  \end{equation}
  Any two solutions $U_{1}$ and $U_{2}$ of this Dirichlet problem differ by a
  constant scalar field: $U_{2} - U_{1} = (0 \ \ \ c)^{\top}$, $c \in \CC$.
  Moreover, there exists a constant $C_{m} \ge 0$, independent of
  $\bsh$, such that all solutions
  $U = (\bsu \ \ p)^{\top}$ satisfy
  \begin{equation*}
    \|\bsu\|_{H^{m+1}(\Omega; T\overline{\Omega})} +
    \Big \|p - \frac1{\operatorname{vol}(\Omega)}\int_{\Omega} p \,
    \dvol \Big \|_{H^{m}(\Omega)} \le C_{m}
    \|\bsf\|_{H^{m+1/2}(\Gamma; T\overline{\Omega})}\,.
  \end{equation*}
\end{theorem}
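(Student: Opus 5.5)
We reduce to the closed-manifold layer-potential machinery (the analogues for $M$ closed of Theorems \ref{thm.jump.K1}, \ref{thm.jump.rel} and \ref{thm2.K}), applied on the double $M := \overline{\Omega}\cup_{\Gamma}\overline{\Omega}$. Here $V$ extends to a smooth $V\ge 0$ on $M$, and we take $V_{0}=0$. Theorem \ref{thm.form} then gives the kernel of $\bsXi_{V,0}$ on $M$: it contains $\{(0,c)\}$, and also Killing fields $(\bsu,0)$ if $V=0$ and $M$ has Killing fields. We may modify $V$ on $\Omega_{-}$ so that $V\succ 0$ there. The Killing part of the kernel then vanishes by the $L^2$-unique continuation of $\Def$, so $\maN=\{(0,c)\}$ and $\bsXi$ has a Moore–Penrose pseudoinverse $\psdinv$ with image $\maN^{\perp}$.

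With this $\psdinv$ we define $\maS_{\rm{ST}}$, $\maD_{\rm{ST}}$, $\bsS$, $\bsK$ exactly as in Definition \ref{def.lp}. The one new point is that, since $\bsXi\psdinv=1-p_{\maN}$, Proposition \ref{prop.compatibility} now reads $\bsXi\maD_{\rm{ST}}(\bsh)= \bopstar(\bsh\otimes\delta_\Gamma) - p_{\maN}\bopstar(\bsh\otimes\delta_\Gamma)$. The last term is the constant $(0,c)$ with $c$ proportional to $\langle \bopstar(\bsh\otimes\delta_\Gamma),(0,1)\rangle = (\bsh,\bsnu)_\Gamma$. So on the subspace $\{(\bsh,\bsnu)_\Gamma=0\}$ the double layer potential solves $\bsXi U=0$ off $\Gamma$. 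The jump relations (Theorems \ref{thm.jump.K1}, \ref{thm.jump.rel}) are local and carry over unchanged.

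The key step is to show that $\frac12+\bsK$ is Fredholm of index zero on $H^{m+1/2}(\Gamma;TM)$ (it is elliptic by Theorem \ref{thm.jump.K1}, and $\Gamma$ is compact), and to identify its kernel and cokernel. We rerun the argument of Theorem \ref{thm2.K}. Take $\bsh$ with $(\frac12+\bsK^*)\bsh=0$ and set $U=\maS_{\rm{ST}}\bsh$, where again $\bsXi U=0$ off $\Gamma$ up to a constant. On $\Omega_{-}$, Corollary \ref{cor.e.est} together with $V\succ0$ gives $\bsu=0$, and $[\bop U]_-=0$ gives $p=0$. On $\Omega$ we have $\bsu_+=0$, so $\bsu=0$; but now $V_0=0$, so $p$ is only a constant $c$. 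The jump formula then gives $\bsh=(p_--p_+)\bsnu=-c\,\bsnu$. Hence $\ker(\frac12+\bsK^*)\subset\operatorname{span}\{\bsnu\}$, and the range of $\frac12+\bsK$ contains $\bsnu^{\perp}=\{(\bsf,\bsnu)_\Gamma=0\}$. I expect this step to be the main obstacle, in particular checking that the constant-pressure correction from $p_{\maN}$ does not spoil these identities. The relevant compatibility should follow from $\int_\Omega \nabla^*\bsu = (\bsu,\bsnu)_\Gamma$.

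Given $\bsf\perp\bsnu$, we then choose $\bsh$ with $(\frac12+\bsK)\bsh=\bsf$. We may take $\bsh\perp\bsnu$, after checking that $\frac12+\bsK$ preserves $\bsnu^\perp$ modulo $\bsnu$, or correct $\bsh$ by an element of the kernel. Then $U=\maD_{\rm{ST}}\bsh|_\Omega$ solves \eqref{eq.Dirichlet.p}, and the Sobolev regularity comes from Proposition \ref{prop.H2estimates}.

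For uniqueness, if $U_0$ solves the homogeneous problem, Corollary \ref{cor.e.est}(i),(iii)(c) gives $\bsu_0=0$ and $\nabla p_0=0$, so $p_0$ is constant because $\Omega$ is connected. For the estimate, the bound on $\bsu$ and on $p$ minus its mean follows from the boundedness of $\maD_{\rm{ST}}$ and of a right inverse of $\frac12+\bsK$ on $\bsnu^\perp$, which exists by the open mapping theorem since $\frac12+\bsK$ is Fredholm. Subtracting the mean is exactly the quotient by the constant ambiguity.
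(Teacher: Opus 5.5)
The paper does not prove this theorem here. It recalls it from \cite{KNW-2025}, and the remark after the statement indicates that the solution there comes from Moore--Penrose pseudo-inverses $\big(\frac12+\bsK\big)^{(-1)}$ and $\bsS^{(-1)}$. These boundary operators need not be invertible; for instance $\bsS\bsnu=0$ whenever $V_0=0$.

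Your route is different and cleaner. You use the freedom in extending $V$ to choose $V\succ0$ on $\Omega_-$, exactly as the paper does in its proof of Theorem \ref{thm.main.WP}. This reduces $\ker\bsXi$ to the constant pressures and lets the argument of Theorem \ref{thm2.K} run almost verbatim. Your approach gives a genuine boundary inverse and an explicit double layer formula. The cited approach leaves the extension of $V$ arbitrary and also gives the single layer representation. Your outline is correct, including the point that $p_{\maN}$ affects only $\maD_{\rm{ST}}$: for $\maS_{\rm{ST}}$ the pressure component of the source vanishes, so $\bsXi\maS_{\rm{ST}}\bsh=0$ off $\Gamma$ exactly.

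One step needs tightening, because one of your two proposed fixes cannot work. Your kernel argument actually gives $\ker\big(\frac12+\bsK^*\big)=0$. Indeed, if $\bsh=-c\bsnu$ lies in it, then $\maS_{\rm{ST}}\bsh=(0,c\,\mathbf{1}_{\Omega})$ a.e. Since $\operatorname{im}\psdinv\subset\maN^{\perp}$, this pressure has mean zero on $M$, which forces $c=0$. Moreover $\sigma_0(\bsK)=0$ when $V_0=0$ (Remark \ref{rem.order.K}), so $\frac12+\bsK$ is a compact perturbation of $\frac12$, hence Fredholm of index zero, hence invertible. There is therefore no kernel element with which to correct $\bsh$.

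What you need instead is that $\bsh:=\big(\frac12+\bsK\big)^{-1}\bsf$ automatically satisfies $(\bsh,\bsnu)_\Gamma=0$. This follows from the identity you named, with the sign corrected. Off $\Gamma$ the velocity of $\maD_{\rm{ST}}\bsh$ satisfies $\nabla^*\bsu=-\kappa$, where $\kappa=\operatorname{vol}(M)^{-1}\int_\Gamma\bsnu\cdot\bsh$. Remark \ref{rem.basic.ip}(iv) with $f=1$ gives $\int_\Omega\nabla^*\bsu=-\int_\Gamma\bsnu\cdot\bsu_+=-(\bsf,\bsnu)_\Gamma=0$, so $\kappa=0$.

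Equivalently, $\maS_{\rm{ST}}\bsnu=(0,c\,\mathbf{1}_\Omega+c'\,\mathbf{1}_{\Omega_-})$ with $c'-c=1$ and zero mean. Hence $\big(\frac12+\bsK^*\big)\bsnu=\frac{\operatorname{vol}(\Omega)}{\operatorname{vol}(M)}\bsnu$, and $\frac12+\bsK$ preserves $\bsnu^\perp$, which is your other suggestion.

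Finally, for $m=0$ run the uniqueness argument in weak form, testing with $\bsu_0\in H^1_0(\Omega)$. Corollary \ref{cor.e.est} is only stated for $H^2\oplus H^1$.
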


Moreover, using layer potentials, we obtain explicit formulas for the
solution. Indeed, any of the following two formulas provides a solution of
the Dirichlet problem for $\bsXi_{V, 0}$ in the above theorem:
  \begin{equation*}
    U_{1} \ede \maD_{\rm{ST}}\Big(\Big(\frac{1}{2} + \bsK\Big)^{(-1)}\bsf\Big)
    \quad \mbox{or} \quad \mbox
    U_{2} \ede \maS_{\rm{ST}}\left(\bsS^{(-1)}\bsf\right)\,.
  \end{equation*}

If, however, $V_{0} \not \equiv 0$ on $\Omega$ in the above theorem,
we obtain the uniqueness of the solution $U$, as in this paper.

\def\cprime{$'$}

\end{document}